	\renewcommand{\contentsname}%
	{Contents}%
\let\hide\iffalse
\newtheorem{theorem}{Theorem}
\newtheorem{corollary}[theorem]{Corollary}
\newtheorem{definition}[theorem]{Definition}
\newtheorem{lemma}[theorem]{Lemma}
\newtheorem{prop}[theorem]{Proposition}
\newtheorem{remark}[theorem]{Remark}
\let\e=\varepsilon
\let\p=\partial
\let\O=\Omega
\renewcommand{\b}{\mathbf{b}}
\newcommand{\be}{\begin{equation}}
	\newcommand{\bm}{\begin{multline}}
		\newcommand{\ee}{\end{equation}}
	\newcommand{\dd}{\mathrm{d}}
	\newcommand{\xb}{x_{\mathbf{b}}}
	\newcommand{\tb}{t_{\mathbf{b}}}
	\newcommand{\vb}{v_{\mathbf{b}}}
	\newcommand{\pb}{p_{\mathbf{b}}}
		\newcommand{\tpb}{\tilde{p}_{\mathbf{b}}}
		\newcommand{\tpB}{\tilde{p}_{\mathbf{B}}}
	\newcommand{\vbn}{v_{\mathbf{b}, 3}}
	\newcommand{\vbpn}{v_{\mathbf{b}, \pm, 3}}
	\newcommand{\pbn}{p_{\mathbf{b}, 3}}
	\newcommand{\pbpn}{p_{\mathbf{b}, \pm, 3}}
		\newcommand{\tpBn}{\tilde{p}_{\mathbf{B}, 3}}
	\newcommand{\tB}{t_{\mathbf{B}}}
	\newcommand{\tBf}{t_{\mathbf{B}, 1}}
	\newcommand{\xBf}{x_{\mathbf{B}, 1}}
	\newcommand{\pBf}{p_{\mathbf{B}, 1}}
	\newcommand{\tpBf}{\tilde{p}_{\mathbf{B}, 1}}
	\newcommand{\vB}{v_{\mathbf{B}}}
	\newcommand{\vBn}{v_{\mathbf{B}, 3}}
	\newcommand{\pB}{p_{\mathbf{B}}}
	\newcommand{\pBn}{p_{\mathbf{B}, 3}}
	\newcommand{\vBp}{v_{\mathbf{B}, \pm}}
	\newcommand{\xB}{x_{\mathbf{B}}}
	\newcommand{\tbp}{t_{\mathbf{b}, \pm}}
	\newcommand{\tfp}{t_{\mathbf{f}, \pm}}
	\newcommand{\tBp}{t_{\mathbf{B}, \pm}}
	\newcommand{\tFp}{t_{\mathbf{F}, \pm}}
	\newcommand{\xbp}{x_{\mathbf{b}, \pm}}
	\newcommand{\xbj}{x_{\mathbf{b}, j}}
	\newcommand{\pbj}{p_{\mathbf{b}, j}}
	\newcommand{\xBp}{x_{\mathbf{B}, \pm}}
	\newcommand{\pBp}{p_{\mathbf{B}, \pm}}
	\newcommand{\pBpn}{p_{\mathbf{B}, \pm, 3}}
	\newcommand{\vBpn}{v_{\mathbf{B}, \pm, 3}}
	\newcommand{\xfp}{x_{\mathbf{f}, \pm}}
	\newcommand{\vbp}{v_{\mathbf{b}, \pm}}
	\newcommand{\pbp}{p_{\mathbf{b}, \pm}}
		\newcommand{\tpbp}{\tilde{p}_{\mathbf{b}, \pm}}
		\newcommand{\tpBp}{\tilde{p}_{\mathbf{B}, \pm}}
	\newcommand{\pfp}{p_{\mathbf{f}, \pm}}
	\newcommand{\tF}{{t}_{\mathbf{F}}}
	\newcommand{\tf}{t_{\mathbf{f}}}
	\newcommand{\X}{\mathcal{X}}
	\newcommand{\V}{\mathcal{V}}
 	\newcommand{\Z}{\mathcal{Z}}
 	\renewcommand{\P}{\mathcal{P}}
		\newcommand{\w}{\mathfrak{w}}
	\newcommand{\Bes}{\begin{eqnarray*}}
		\newcommand{\Ees}{\end{eqnarray*}}
	\newcommand{\Be}{\begin{equation}}
		\newcommand{\Ee}{\end{equation}}
	\DeclareMathOperator{\sgn}{sgn}
	\numberwithin{equation}{section}
	\def\p{\partial}
	\def\O{\Omega}
	\def\R{\mathbb{R}}
	\def\N{\mathbb{N}}
	\def\B{\begin{equation}}
		\def\E{\end{equation}}
	\def\BN{\begin{eqnarray*}}
		\def\EN{\end{eqnarray*}}
	\renewcommand\contentsname{}
\newcommand{\px}{p_{\max}}
\newcommand{\tpx}{\tilde{p}_{\max}}
\begin{document}

\title[Stability of 3D Collisionless plasma states]{
Asymptotic Stability of 3D relativistic Collisionless plasma states in ambient magnetic fields with a boundary   
}

\author{Jiaxin Jin}
\address{Department of Mathematics, University of Louisiana at Lafayette, Lafayette, LA 70504}
\email{jiaxin.jin@louisiana.edu}

\author{Chanwoo Kim}
\address{Department of Mathematics, The University of Wisconsin-Madison, Madison, WI 53706}
\email{chanwookim.math@gmail.com; chanwoo.kim@wisc.edu}

\begin{abstract}
 
Motivated by the stellar wind ejected from the upper atmosphere (Corona) of a star, we explore a boundary problem of the two-species nonlinear relativistic Vlasov-Poisson systems in the 3D half space in the presence of a constant vertical magnetic field and strong background gravity. We allow species to have different mass and charge (as proton and electron, for example). As the main result, we construct stationary solutions and establish their nonlinear dynamical asymptotic stability in time and space. 
\end{abstract}

\maketitle

\tableofcontents

\section{Introduction}

The Vlasov equations serve as fundamental models in collisionless kinetic theory. One of their crucial applications lies in the solar wind model, which describes the dynamics and statics of plasma particles emitted from the solar corona under the effect of strong solar gravity (see \eqref{E/g2} and some explanation). Despite some significant progress in the field, several fundamental open problems persist within the solar wind model mathematically (\cite{Vidotto}) and scientifically (\cite{nature}). These challenges include collisionless shock phenomena, corona heating mechanisms, and the acceleration of solar wind particles (\cite{Morawetz, TK, Vidotto}). Recently, the Parker Solar Probe, launched in 2018, has provided accurate data on the solar atmosphere, drawing attention to these longstanding issues (see \cite{nature}, for example).

In this paper, we consider two species of relativistic Vlasov systems subjected to a vertical downward constant gravity $g > 0$, and a constant vertical ambient solar magnetic/electric fields $B = (0, 0, B_3)$ and $E = (0, 0, E_3)$ in a 3D half space 
$(x_1, x_2, x_3) \in \O : = \R^2 \times \R_+$: 
\begin{align}
	\p_t F_{\pm} + c \frac{p}{p^0_{\pm}} \cdot \nabla_x F_{\pm} +  \left(
	\frac{p}{p^0_{\pm}} \times e_{\pm} B
 + e_\pm ( E - 
  \nabla_x    \phi_F ) - m_{\pm} g  e_3
	\right) 
	\cdot \nabla_p F_{\pm} = 0,&
 \label{VP_F} \\
	\Delta_x \phi_F 
	=  - \int_{\R^3} ( e_+ F_+ + e_{-} F_{-} ) \dd p .
 \label{Poisson_F}
\end{align}  
Here, $p= (p_1, p_2, p_3) \in \R^3$  stands for the momentum, and $c$ is the speed of light. 
The notations with subscript symbols $``+''$ and $``-''$ correspond to protons and electrons, respectively. They are particle distribution functions in the phase space $F_\pm = F_\pm (t,x,p)$;  the magnitude of particle rest masses $m_\pm$; the particle electric charges $e_\pm$. We also define the particle energy and relativistic velocity for each species, respectively
\Be\label{totalE}
p_\pm^0 = \sqrt{ (m_\pm c)^2  + |p|^2 }, \ \ v_\pm =  c  {p} / {p^0_\pm}.
\Ee 
The model corresponds to the zero magnetic permittivity limit of the Vlasov-Maxwell system (see \cite{CK23}), which is also relevant when the ambient solar magnetic fields dominate the magnetic field generated by the plasma particles (see \cite{Vidotto}). 

The downward gravity of the Sun plays an important role in the solar wind. In \cite{Rosseland}, Rosseland investigated the ratio of gravitational force and electrical force in the atmosphere of a star consisting of electrons, each with a charge $e_-$ and mass $m_-$, and positive ions (e.g. protons), each with a charge $e_+$ and mass $m_+$. If the local balance of two net forces holds, electric field $E$ is upward ($E_3>0$) and the ratio of the ambient electric field $E$ and gravitational constant satisfies
\Be\label{E/g}
\frac{E_3}{g} = \frac{m_+ - m_-}{ e_+ - e_-}. 
\Ee
Note that a proton is much heavier than an electron $m_+ \gg m_-$ while they have the same magnitude of electric charge $e_+ = - e_-$. Therefore, the gravitation effect dominates the electrical effect:
\Be\label{E/g2}
m_+ g \sim 2 e_+ E_3.
\Ee
For the rest of the entire paper, we rightfully ignore the ambient electric field $E$. It is worth recalling that plasma in tokamak reactors is confined mainly by the magnetic field (while the solar wind is confined by the solar ambient gravity). 

Now we discuss the boundary and initial conditions. We consider the initial condition:
\Be \label{VP_0}
F_{\pm} (0,x,p) = F_{\pm, 0} (x,p) \ \ \text{in} \ \O \times \R^3. 
\Ee
Suppose that the lower half space is filled with the sea of plasma, which is already described (e.g. solutions of MHD). Since they would form a perfect conductor, we set the zero Dirichlet boundary condition for $\phi_F$ at the boundary (\cite{CK}):
\Be \label{Dbc:F}
\phi_F (t, x_{\|}, 0) = 0 \ \ \text{on} \ x \in \p \O.
\Ee
Here, we use the notation $x_{\|} := (x_1, x_2) \in \R^2$ and $x = (x_{\|}, x_3) \in \O$.
On the other hand, the particle distributions $F_\pm$ follow the given inflow boundary data $G_{\pm} (x, p) \geq 0$ on the incoming phase boundary $\gamma_- := \{(x,p): x \in \p \O \ \text{and } p_3 > 0\}$, which is described by the solar corona dynamics. Hence we set that, for given functions  $G_\pm \geq 0$ on $\gamma_-$, 
\Be \label{bdry:F}
F_{\pm}(x,p) = G_{\pm} (x,p) \ \ \text{on} \ (x, p) \in \gamma_-.
\Ee
(we let $\gamma_+ := \{(x,p): x \in \p \O \ \text{and } p_3 < 0\}$ denote the outgoing phase boundary.)
One of the typical inflow boundary data is the J\"uttner distribution of wall temperature $T_\pm$\footnote{We note that the notion of temperature in relativity has been a topic of debate since the birth of special relativity in 1907. It seems the J\"uttner distribution is one of the very rare cases, of which the temperature is defined  (\cite{Gavassino, Tolman}).}:
\Be  \label{def:juttner}
J_\pm (x_1, x_2, p) = 
\frac{ \exp \left( \frac{- c p^0_{\pm} }{ k_B T_{\pm} (x_1, x_2)} \right)}{4 \pi e_{\pm} m^2_{\pm} c k_B T_{\pm}(x_1, x_2 ) K_2 \left( \frac{m_{\pm} c^2}{   k_B T_{\pm}(x_1, x_2 )} \right) },
\Ee
where $K_2 (z) := \frac{z^2}{2} \int^{\infty}_{1} e^{-zt} (t^2-1)^{3/2} \dd t$ is the modified Bessel function of the second kind. We note that the wall temperate of protons $T_+$ and electrons $T_-$ can differ unless the whole system of corona reaches thermal equilibrium. Moreover their temperature asymptotic $ \lim_{x_1 \rightarrow \pm\infty}T_\pm (x_1,x_2) \rightarrow T_\pm^\pm $ could be different in general: 
$T^+_{+} \neq T^-_+$ and $T^+_{-} \neq T^-_-$.

\subsection{Informal statement of the Main theorem}

In this paper, we investigate the stabilizing effect of downward gravity and the boundary on the nonlinear relativistic Vlasov system in the presence of a vertical ambient magnetic field. 
We present an informal statement of the main result and explain the meaning/implication of results/assumptions/range of some important parameters.
Detailed technical statements of the main theorems can be found in Section \ref{sec:MR}.

\begin{theorem}[Rough version of the Main theorems]
\label{theo:inform}

Consider the problem \eqref{VP_F}-\eqref{bdry:F} with a constant ambient magnetic field $B= (0,0, B_3)$. Suppose that for some $0< \tilde \beta \leq \beta$ the inflow boundary data $G_\pm$ and their derivatives $\nabla_{x,p}G_{\pm}$ are bounded pointwisely by $e^{- \beta  p_\pm^0}$ and $e^{- \tilde \beta  p_\pm^0}$, respectively. We assume that the gravity is strong such that $g \beta \gg_{B_3} 1$ and $g \tilde \beta \gg_{B_3} 1$.
	
\smallskip 

(1) Steady problem. We construct a unique classical solution $(h_\pm, \Phi _h)$ to the steady problem. In particular $\nabla_p h_\pm$ is bounded pointwisely by $e^{- \frac{\tilde \beta}{2}  p_\pm^0}$.

\smallskip
	
(2) Dynamical Asymptotic Stability. We construct a unique classical solution around the steady solution: 
\Be \label{def:F}
F_{\pm} (t,x,p) = h_{\pm} (x,p) + f_{\pm} (t,x,p) , \ \ \ \phi_F (t,x) = \Phi_h(x)  + \Psi(t,x).
\Ee
Moreover, we show that the dynamical perturbation $(f_\pm, \nabla_x \Psi)$ decays exponentially as $e^{-\lambda t}$ where $\lambda \sim g \beta \min \{m_+, m_-\}$.
\end{theorem}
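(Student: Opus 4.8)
\emph{Steady state.} The whole argument should reduce to a careful study of the gravity-confined characteristic curves, together with contraction/bootstrap schemes whose smallness is furnished by $g\beta\gg_{B_3}1$ and $g\tilde\beta\gg_{B_3}1$. Fix a small potential $\Phi$ with $\Phi|_{\p\O}=0$ and let $s\mapsto(\X_\pm(s),\P_\pm(s))$ be the characteristic flow of the steady force $\frac{p}{p^0_\pm}\times e_\pm B-e_\pm\nabla_x\Phi-m_\pm g e_3$. Since $B=(0,0,B_3)$ the magnetic term only gyrates $(p_1,p_2)$ and leaves $p_3$ and $p^0_\pm$ alone, and one has the exact conservation law $\frac{\dd}{\dd s}\big(c\,p^0_\pm+m_\pm g\,\X_{\pm,3}+e_\pm\Phi(\X_\pm)\big)=0$; because $m_\pm g x_3$ grows linearly while $\Phi$ is small, every characteristic not trapped in $\{x_3=0\}$ rises to a finite apex and returns, so the backward exit time $\tb(x,p)$ and exit state $(\xb,\pb)\in\gamma_-$ are well defined, with $\tb$ finite and comparable to $\pbn/(m_\pm g)$ up to relativistic corrections. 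Put $h_\pm(x,p):=G_\pm(\xb,\pb)$; the conservation law gives $p^0_\pm(\pb)=p^0_\pm(p)+\tfrac{m_\pm g}{c}x_3+O(\|\Phi\|)\ge p^0_\pm(p)$, so $h_\pm$ inherits the bound $e^{-\beta p^0_\pm(p)}$ \emph{and} an extra height-decay factor $e^{-\beta m_\pm g x_3/c}$. Feeding $\rho[\Phi]:=-\int_{\R^3}(e_+h_++e_-h_-)\,\dd p$ into the half-space Dirichlet problem $\Delta\Phi_{\mathrm{new}}=\rho[\Phi]$, $\Phi_{\mathrm{new}}|_{\p\O}=0$ (solved with the explicit Green's function, with care since $\rho$ does not decay in $x_\|$), I would show $\Phi\mapsto\Phi_{\mathrm{new}}$ is a contraction on a ball of a weighted $C^1$-space: the contraction constant is a momentum/transit-time integral $\ls\int_{\R^3}e^{-\tilde\beta p^0_\pm}\,\tb\,\dd p$, which is $\ll1$ precisely under $g\tilde\beta\gg_{B_3}1$. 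This yields a unique $(h_\pm,\Phi_h)$; differentiating the flow — whose Jacobian is controlled away from the corner $\{x_3=0,\,p_3=0\}$ and at trajectory apices, where it has a mild square-root singularity — produces $|\nabla_p h_\pm|\ls e^{-\frac{\tilde\beta}{2}p^0_\pm}$, the halving of the exponent absorbing the worst growth of the backward-time map.

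\emph{Perturbation and decay.} With $F_\pm=h_\pm+f_\pm$, $\phi_F=\Phi_h+\Psi$, the perturbation solves a transport equation along the \emph{same} autonomous steady characteristics, with source $e_\pm\nabla_x\Psi\cdot\nabla_p(h_\pm+f_\pm)$ and coupled Poisson problem $\Delta\Psi=-\int_{\R^3}(e_+f_++e_-f_-)\,\dd p$, $\Psi|_{\p\O}=0$; crucially $f_\pm\equiv0$ on $\gamma_-$, because $F_\pm=G_\pm=h_\pm$ there. Duhamel along the flow (here $(\X_\pm(s),\P_\pm(s))$ is the characteristic through $(x,p)$ at time $t$) reads
\begin{multline*}
f_\pm(t,x,p)=\mathbf{1}_{\{t\le\tb(x,p)\}}\,f_{\pm,0}\big(\X_\pm(0),\P_\pm(0)\big)\\
-\int_{\max(0,\,t-\tb(x,p))}^{t}\big(e_\pm\nabla_x\Psi\cdot\nabla_p(h_\pm+f_\pm)\big)\big(s,\X_\pm(s),\P_\pm(s)\big)\,\dd s ,
\end{multline*}
so once $t$ beats the (momentum-localised) transit time the initial data has been flushed through the outgoing boundary and only the source remains. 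Measuring $f_\pm$ in a weight $e^{\frac{\beta}{2}p^0_\pm}$ (and $e^{\frac{\tilde\beta}{2}p^0_\pm}$ for one derivative), on the support of the indicator one has $\tb(x,p)\gtrsim t$, which forces $p^0_\pm(\pb)\gtrsim m_\pm g\,t$, so the data term is $\ls e^{\frac{\beta}{2}p^0_\pm(p)}e^{-\beta p^0_\pm(\pb)}\ls e^{-\frac{\beta}{2}p^0_\pm(\pb)}\ls e^{-c\beta m_\pm g\,t}$. Setting $A(t):=\sum_\pm\|e^{\frac{\beta}{2}p^0_\pm}f_\pm(t)\|_{L^\infty}+\|\nabla_x\Psi(t)\|_{L^\infty}$ and bounding $\nabla_x\Psi$ by $A$ through elliptic regularity, the source contributes $\int_{\max(0,t-\tb)}^{t}\mathcal K(s,t)A(s)\,\dd s$ with $\sup_t\int_0^t\mathcal K\,\dd s\ls(g\beta\min\{m_+,m_-\})^{-1}\ll1$ (using $|\nabla_p h_\pm|\ls e^{-\frac{\tilde\beta}{2}p^0_\pm}$, the short transit times, and the fast escape of high-momentum particles); a Gronwall argument on $e^{\lambda t}A(t)$ with $\lambda\sim g\beta\min\{m_+,m_-\}$ then closes, the $\min$ arising because the lighter species is the least confined. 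The same structure, fed with spatial decay of $F_{\pm,0}-h_\pm$ and the bounded horizontal excursion of characteristics, yields the spatial decay stated in the abstract.

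\emph{Closing the argument.} The only genuinely nonlinear term above is $e_\pm\nabla_x\Psi\cdot\nabla_p f_\pm$; it is absorbed by a Picard iteration in the exponentially-weighted-in-time norm, convergent once $A$ is small, which holds since $A(0)\ls\|F_{\pm,0}-h_\pm\|$ is assumed small, and this also gives uniqueness. Classical ($C^1$) regularity follows by propagating one $x,p$-derivative of $f_\pm$ along the flow with weight $e^{\frac{\tilde\beta}{2}p^0_\pm}$ using the Jacobian bounds of the steady step; the derivative of the source requires $\nabla_x^2\Psi$, obtained from Schauder estimates once the densities are shown to be Hölder — which uses Hölder control of $(x,p)\mapsto(\xb,\pb)$ away from, and the mild singularity at, the corner $\{x_3=p_3=0\}$ — and continuity in $t$ is immediate from Duhamel.

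\emph{Main obstacle.} I expect the principal difficulty to be the two-species coupling under the large mass ratio $m_+\gg m_-$: the lighter (electron) species is far less confined, reaches much higher, has much longer transit times, and pushes both the small constant and the decay rate down to the $m_-$-scale, so every momentum/height integral that must beat $(g\beta)^{-1}$ has to be estimated uniformly with the worst species in mind — this is exactly why $\lambda\sim g\beta\min\{m_+,m_-\}$ rather than $g\beta m_+$. A second, more technical obstacle is the square-root singularity of the backward-time map $\tb$ and of the flow Jacobian near the corner $\{x_3=0,\,p_3=0\}$ and at apices ($v_3=0$); absorbing it is precisely what the deliberate gap between $\beta$ and $\tilde\beta$, and the loss from $e^{-\tilde\beta p^0_\pm}$ to $e^{-\frac{\tilde\beta}{2}p^0_\pm}$ in the steady step, are spent on. Finally, keeping the self-consistent field controlled in $L^\infty$ (so that $\nabla_x\Psi\cdot\nabla_p$ makes classical sense) in the $x_\|$-unbounded half-space with a non-decaying boundary temperature, while paying only elliptic constants that do not spoil the $g$-smallness, is the balancing act running through the whole proof.
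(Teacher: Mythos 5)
Your steady-state argument is essentially the paper's: the fixed-point map $\Phi\mapsto\Phi_{\mathrm{new}}$ is exactly the iteration $(h^{\ell+1},\rho^\ell,\Phi^\ell)$ of Section 4, the conservation law and the bound $\tb\lesssim|\pb|/(m_\pm g)$ play the same roles, and the contraction constant is indeed beaten by $g\tilde\beta\gg1$ together with the smallness of $\nabla_{x_\parallel,p}G_\pm$ (condition \eqref{condition:G_xv}); the half-exponent loss in $|\nabla_p h_\pm|$ likewise matches Proposition \ref{prop:Reg}. The only caveat there is that you gloss over the ascent-phase estimate for $\tb$: near the apex $v_{\pm,3}$ can be small even when $p_3$ is not, because the relativistic velocity depends on $|p|$, and the paper spends Propositions \ref{lem:tb}--\ref{lem2:tb} choosing a time $\bar s$ at which $|P_3|$ is comparable to $|P|$ precisely to get the linear-in-$|\pb|$ bound you assert.

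The dynamical part, however, has a genuine gap created by your choice of decomposition. The paper writes the perturbation equation \eqref{eqtn:f} with $-e_\pm\nabla_x\Psi\cdot\nabla_p f_\pm$ absorbed into the transport operator, so the Duhamel formula \eqref{Lform:f} along the \emph{perturbed} characteristics has source $e_\pm\nabla_x\Psi\cdot\nabla_p h_\pm$ only, and the Gronwall closes in a norm controlling $f_\pm$ and $\nabla_x\Psi$ alone. You instead transport along the \emph{steady} characteristics and keep $e_\pm\nabla_x\Psi\cdot\nabla_p(h_\pm+f_\pm)$ as the source; but your norm $A(t)=\sum_\pm\|e^{\frac{\beta}{2}p^0_\pm}f_\pm(t)\|_{L^\infty}+\|\nabla_x\Psi(t)\|_{L^\infty}$ contains no control of $\nabla_p f_\pm$, so the term $\nabla_x\Psi\cdot\nabla_p f_\pm$ cannot be bounded by $\int\mathcal K(s,t)A(s)\,\dd s$ and the Picard/Gronwall loop does not close as stated. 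Augmenting the norm with a weighted $\nabla_p f_\pm$ does not immediately help, since differentiating your Duhamel formula produces $\nabla_x^2\Psi\cdot\nabla_p f_\pm$ and $\nabla_x\Psi\cdot\nabla_p^2 f_\pm$, i.e.\ a cascade of derivatives. The clean fix is the paper's: put the self-consistent field into the characteristics so that only the \emph{steady} derivative $\nabla_p h_\pm$ (already bounded by $e^{-\frac{\tilde\beta}{2}p^0_\pm}$ from part (1)) appears in the source, and recover $\nabla_{x,p}f_\pm$ afterwards from the regularity estimates for the full $F_\pm$ (Theorem \ref{theo:RD}). The rest of your decay mechanism --- the indicator $\mathbf{1}_{t\le\tb}$ forcing $p^0_\pm+\frac{m_\pm g}{c}x_3\gtrsim m_\pm g\,t$, the rate $\lambda\sim g\beta\min\{m_+,m_-\}$ coming from the worst-confined species, and the elliptic control of $\nabla_x\Psi$ by the density --- is the same as in Proposition \ref{prop:decay} and Theorem \ref{theo:AS}.
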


\begin{remark}
A physical implication of $e^{- \beta p^0_\pm}$ upper bound is the J\"uttner distribution of temperature $T_\pm\sim 1/\beta$. Note that the modified Bessel function of the second kind behaves like $K_2(\tau) \sim \frac{e^{- \tau}}{ \sqrt \tau}$ for $\tau\gg 1$ and $K_2 (\tau) \sim \frac{1}{|\tau|^2}$ for $\tau \ll 1$. Therefore the J\"uttner distribution of temperature $T_\pm \sim \frac{c}{k_B } \frac{1}{ \beta}$ behaves like
\Be
J_\pm \sim  \frac{1}{e_\pm m_\pm^3 c^3 } \frac{ c m_\pm  \beta }{ K_2 (c m_\pm \beta )} e^{- \beta p^0_\pm} \sim \begin{cases}
 \frac{1}{e_\pm m_\pm^3 c^3 }
 (c m_\pm \beta)^{3/2} e^{ c m_\pm \beta} e^{- \beta p^0_\pm}
  \ \ \text{for }  c m_\pm \beta \gg 1,
\\
 \frac{1}{e_\pm m_\pm^3 c^3 }  (c m_\pm \beta)^3 e^{- \beta p^0_\pm} 
\ \ \text{for  } c m_\pm \beta \ll 1.
 \end{cases}
\Ee
Therefore the condition $g \beta \gg 1$ roughly says that our stabilizing mechanism needs a stronger gravity for a higher boundary temperature. 
\end{remark}

\begin{remark}

In Section \ref{sec:asymptotic}, we study the steady-state problem \eqref{VP_h}-\eqref{eqtn:Dphi} under both isothermal and non-isothermal J\"uttner distributions.
Note that Theorem \ref{theo:CS} establishes the existence of two distinct solutions, $h_{I, \pm}$ and $h_{\pm}$, corresponding to these distributions.
Assuming that the non-isothermal wall temperature converges to a unit temperature as $|x_{\|}| \to \infty$, we investigate the asymptotic behavior of these solutions in Theorem \ref{thm:asymptotic_behavior}.
\end{remark}

\begin{remark}

Furthermore, in Section \ref{sec:specular} we extend the result to the scenario where the particle distributions $F_\pm$ adhere to a combination of inflow and specular boundary conditions defined as:
\Be \notag
F_{\pm} (t,x,p) = G_{\pm} (x,p) + \varepsilon F_{\pm} (t,x, \tilde{p})
\ \ \text{on} \ (x, p) \in \gamma_-,
\Ee
where $\varepsilon \ll 1$ and $\tilde{p} = (p_1, p_2, - p_3)$. Detailed technical statements of the theorems (Theorem \ref{theo:CS_spec} and Theorem \ref{theo:CD_spec}) and the corresponding proofs are provided in Section \ref{sec:specular}.
\end{remark}

\subsection{Novelties, Difficulties, and Ideas}

The construction of steady states that are \textit{dynamically asymptotically} stable in the presence of a nontrivial magnetic field is a challenging open problem in nonlinear collisionless plasma physics with important physical applications (\cite{BGK,  Guo_96, Guo_97, GL17, Morawetz, MS}). It is worth mentioning that stability fails not uncommonly \cite{GS_95}, global study of the Vlasov-Maxwell system is very challenging in 3D (\cite{GS85, GS86}). Our result provides an affirmative answer to this open question in the context of boundary problems. For some previous works on the boundary problem of Vlasov-Poisson equations, we refer to \cite{EH, EHS, HV,  Rein} and the reference therein. For the construction of steady solutions and proving their asymptotic stability we use the framework of the second author, recently established in \cite{Kim}. The first step is to achieve asymptotic stability of the vacuum state. The key ingredient is to control a particle's traveling time by the total energy of the particle. In the second step, we bootstrap the stability of the vacuum to the stability of nontrivial steady states. At this stage, the key ingredient is to establish an exponential decay of the velocity derivative of steady solutions. This framework is established for a single species Vlasov-Poisson equation of non-relativistic particles. 

There are several mathematical challenges to developing the framework for relativistic particles in the presence of a magnetic field as follows:

\smallskip

(a)
The {\it first difficulty} comes from the fact that the acceleration of a relativistic particle is not solely determined by the Lorentz force. Even if gravity is the dominant factor affecting the vertical component of the Lorentz force, the parallel momentum may still increase. This complication occurs because the relativistic velocity $v_i$ in \eqref{totalE} depends not only on its corresponding momentum $p_i$ but also on the total magnitude of the momentum $|p|$.

To accurately estimate $F_{\pm} (t,x,p)$ in \eqref{VP_F}, we need to trace $F_{\pm} (t,x,p)$ backward to either the boundary conditions $G_{\pm}$ or the initial data $F_{\pm, 0}$ along the characteristic trajectory. Thus, we need a thorough understanding of the backward exit time, which is the time it takes for the current particle to travel back to the boundary or initial data along the characteristic trajectory (see Definition \ref{def:tb} for details).

We overcome this difficulty by deeply leveraging the dominance of gravity. 
Consider any particle $(t,x,p) \in \R_+ \times \O \times \R^3$. Due to the gravitational dominance, we suppose this particle has experienced two phases (ascending and descending) during its characteristic trajectory from the boundary.
The period of descent can be determined by comparing $p_3$ with the acceleration of the vertical momentum. This is because the vertical momentum and vertical velocity reach zero simultaneously, and the vertical momentum is controlled by the vertical acceleration.
It is more challenging to bound the period of ascent. This is because the particle's vertical velocity is zero when it reaches the peak point along the characteristic trajectory. Hence we meticulously select a time frame based on the magnitude of the total momentum and demonstrate that at this chosen time, the vertical momentum is uniformly comparable to the total momentum. This critical comparison allows us to establish a lower bound for the vertical velocity, which in turn enables us to control the particle's travel time to the boundary. By doing so, we effectively manage the potential issues caused by the rapid growth of parallel momentum, ensuring the stabilization effect is maintained.

\smallskip

(b)
The {\it second difficulty} comes from the presence of a magnetic field. Specifically, some derivatives of the particle's trajectory can grow exponentially as a function of both the strength of the magnetic field and the particle's travel time, which may complicate the analysis of the particle's behavior.

We overcome this difficulty by a key observation: the magnetic field does not affect the vertical momentum. This insight is crucial because it allows us to focus on the third component of the particle's motion, which is not influenced by the magnetic field.
Then we leverage our control over the particle's travel time, which is derived from its total energy. By using this control, we can apply a strong weight function $e^{\beta p^0_\pm}$ to effectively manage the exponential growth term. This weight function helps to balance and control the potentially unbounded growth caused by the magnetic field.
Additionally, we employ elliptic estimates on the Poisson equations (see Lemma \ref{lem:rho_to_phi}) to further support our analysis. These estimates provide a way to control the regularity of the solutions in both steady and dynamic cases. By combining the strong weight function with these elliptic estimates, we can establish rigorous regularity estimates for the particle's behavior despite the complexities introduced by the magnetic field.

\smallskip

(c)
The {\it third difficulty} comes from the weaker weight in $|p|$ imposed by the J\"uttner upper bound $e^{-\beta p^0_\pm}$. In contrast, in the non-relativistic case we benefit from a stronger Gaussian bound derived from the conservation of energy, which simplifies the analysis.
In the relativistic case, although we have a modified conservation of energy (see Lemma \ref{lem:conservation_law}), the characteristic trajectory remains influenced by $|p|$. This weaker weight introduces challenges in ensuring the exponential decay of $f_{\pm} (t,x,p)$, which is crucial for stability, and in maintaining the regularity of $F_{\pm} (t,x,p)$.

We overcome this difficulty by carefully using the conservation laws and a precise control on $|\nabla_x \phi_F|$.
Beyond the influence of downward gravity across the field, this extra bound can evolve a better estimate of the backward momentum (see Proposition \ref{lem3:tB}). 
Specifically, we leverage the fact that energy is bounded by the backward momentum at the boundary. By using this relationship, we effectively manage the difficulties associated with the weaker weight and ensure both the stability of $f_{\pm}$ and the regularity of $F_{\pm}$.




\smallskip

(e) The fourth difficulty comes from analyzing the asymptotic behavior of the solution as $|x_\parallel | \rightarrow \infty$.
To address this, we consider both the isothermal and non-isothermal J\"uttner distributions, with the non-isothermal wall temperature $T_{\pm}(x)$ converging to a unit temperature as $|x_{\|}| \to \infty$.
Recall that Theorem\ref{theo:CS} establishes the existence and regularity of stationary solutions, $h_{I, \pm}$ and $h_{\pm}$, under the isothermal and non-isothermal J\"uttner distributions, respectively. However, it does not provide control over the solution in the $x_\parallel$-direction.

We overcome this difficulty by a careful analysis of the weight function $\langle x_\parallel \rangle^3 | \hat{w}_{\pm} (h_{I, \pm} - h_{\pm}) |$ defined in \eqref{def:hat_w_shock}.
By following the characteristic trajectories, we decompose the weight function into two parts as follows:
\Be \notag
\begin{split}
& \langle x_\parallel \rangle^3 | \hat{w} (h_{I} - h) (x,p) | 
\leq \langle x_\parallel \rangle^3 e^{ \beta' \sqrt{(m c)^2 + |\pb (x,p)|^2} } | h (\xb, \pb) - h_I (\xb, \pb) |
\\& \ \ + 
\langle x_\parallel \rangle^3 e^{ \beta' \left( \sqrt{(m c)^2 + |p|^2} + \frac{1}{c} ( e \Phi (x) + m g x_3 ) \right) } \int^0_{-\tb(x,p)} e \nabla \phi (X(s;x,p)) \cdot \nabla_p h_{I} (X(s;x,p), P(s;x,p)) \dd s.
\end{split}
\Ee
The first part relates to the boundary condition and can be controlled by the fact that $T_{\pm} (x) \to 1$ as $|x_{\|}| \to \infty$.
For the second part, we compute the relationship between $(X(s;x,p), P(s;x,p))$ and $(x, p)$ for all $-\tb(x,p) \leq s \leq 0$. By referring to the steps in the proof of Theorem \ref{theo:US} and utilizing the control on $\nabla_p h_{I}$ provided in Theorem \ref{theo:CS}, we obtain an estimate for the weight function.

\smallskip

(f)
The fifth difficulty arises when considering the combination of inflow and specular boundary conditions. 
The challenge stems from the fact that specular boundary conditions cause the characteristic trajectory to undergo continuous bouncing, whereas under pure inflow conditions, the trajectory terminates upon reaching the boundary.
As a result, under specular boundary conditions, the magnitude of the momentum $|p|$ can increase indefinitely after several bounces. This behavior can potentially lead to unbounded regularity issues for both stationary and dynamic solutions.

We overcome this difficulty by utilizing asymptotic stability and a priori estimates for the sequential functions $\{ f^{\ell+1}_{\pm} \}^{\infty}_{\ell = 0}$ and $\{ \nabla_x \Psi^{\ell} \}^{\infty}_{\ell = 0}$ derived from the construction process (see Section \ref{sec:DC_spec}). 
First, we impose precise restrictions on both the inflow and initial data to leverage asymptotic stability, demonstrating the overall stability of the solutions.
Next, we derive a priori estimates to manage the sequential functions. These estimates establish a framework that ensures each function in the sequence is appropriately related to its predecessor.
Subsequently, we use mathematical induction to show that the sequential functions $\{ f^{\ell+1}_{\pm} \}^{\infty}_{\ell = 0}$ have uniformly compact supports with respect to the momentum $p$, as detailed in Proposition \ref{prop:DC_spec}.
Finally, we apply similar methods under pure inflow boundary conditions to establish the existence and stability of the solutions, ensuring that the complexities introduced by the boundary conditions are effectively managed.

\section{Main Results} \label{sec:MR}

We record technical statements of main theorems (Theorem \ref{theo:CS} and Theorem \ref{theo:CD}) in this section.

\subsection{Stationary Results}

For the reader's convenience, we present the corresponding steady problem to \eqref{VP_F}-\eqref{bdry:F} for $F_\pm (x,p) = h_{\pm} (x,p)$ and $\phi_F (x) = \Phi_h (x)$:
\begin{align}
v_\pm \cdot \nabla_x h_{\pm} + \big( e_{\pm} ( \frac{v_\pm}{c} \times B - \nabla_x \Phi_h ) - \nabla_x (m_{\pm} g x_3) \big) \cdot \nabla_p h_{\pm} = 0 \ \ & \text{in} \ \O \times \R^3, \label{VP_h} \\
h_{\pm} (x,p) = G_{\pm} (x,p) \ \ & \text{in} \ \gamma_-, \label{bdry:h}
\end{align}
where $B= (0,0, B_3)$ and the relativistic velocity $v_\pm$ is defined in \eqref{totalE}. A steady electric potential is determined by solving:
\Be \label{eqtn:Dphi}
\begin{split}
- \Delta_x \Phi_h (x) = \rho_h (x) \ \ \text{in } \O,  
\ \ \text{and} \  \ 
\Phi_h  = 0 \ \ \text{on } \p\O,
\end{split}
\Ee
where the steady local charge density is given by
\Be \label{def:rho}
\rho_h (x) = \int_{\R^3} ( e_+ h_+ + e_{-} h_{-} ) \dd p. 
\Ee
For simplicity, we often let $(-\Delta_0)^{-1} \rho_h$ denote $\Phi_h$ solving \eqref{eqtn:Dphi}. 

\medskip

Next, we provide a precise definition of weak solutions to the steady problem. 

\begin{definition} \label{weak_sol}

(a) We say that $(h_{\pm}, \nabla_x  \Phi_h) \in \big( L^2_{loc}(\O \times \R^3) \; \cap \; L^2_{loc}(\p\O \times \R^3; \dd \gamma) \big) \times L^2_{loc}(\O \times \R^3)$ is a weak solution of \eqref{VP_h} and \eqref{bdry:h}, if all terms are bounded and the following condition holds for any test function $\psi \in C^\infty_c (\bar \O \times \R^3)$, 
\Be \label{weak_form}
\begin{split}
& \iint_{\O \times \R^3} h_{\pm} (x,p) v \cdot \nabla_x \psi(x, p) \dd p \dd x 
\\& \ \ \ \ - \iint_{\O \times \R^3} h_{\pm} (x, p) \big( e_{\pm} ( \frac{v_\pm}{c} \times B 
- \nabla_x \Phi_h ) - \nabla_x (m_{\pm} g x_3) \big) \cdot \nabla_p \psi (x,p)  \dd p \dd x 
\\& = \int_{ \p\O \times \{ p_3 <0 \}} h_{\pm} (x,p) \psi(x,p) \dd \gamma  - \int_{ \p\O \times \{ p_3 > 0 \}} G_{\pm} (x,p) \psi(x,p) \dd \gamma,
\end{split}
\Ee
where $\dd \gamma := |v_3| \dd S_x \dd p$ denotes the phase boundary measure.
	
(b) We say that $(h_{\pm}, \rho_h) \in L^2_{loc}(\O \times \R^3) \times L^2_{loc}(\O  )$ is a weak solution of \eqref{def:rho}, if all terms below are bounded and the following condition holds for any test function $\vartheta \in H^1_0 (\O) \cap C^\infty_c (\bar \O)$, 
\Be \label{weak_form_2}
\int_{\O} \vartheta (x) \rho_h (x) \dd x 
= \int_{\O} \vartheta (x) \int _{\R^3} ( e_+ h_{+} (x, p) + e_{-} h_{-} (x, p) ) \dd p \dd x.
\Ee
	
(c) We say that $(\rho_h, \Phi_h) \in L^2_{loc}(\O  ) \times W^{1,2}_{loc} (\O)$ is a weak solution of \eqref{eqtn:Dphi}, if all terms below are bounded and the following condition holds for any test function $\varphi \in H^1_0 (\O) \cap C^\infty_c (\bar \O)$, 
\Be \label{weak_form_3}
\int_{\O} \nabla_x \Phi_h \cdot \nabla_x \varphi \dd x 
= \int_{\O} \rho_h \varphi \dd x.
\Ee
\end{definition}

To state the main theorem for the steady problem, we introduce two weight functions. The first weight function is an exponential of the total particle energy, which remains invariant along the particle's trajectory.  

\begin{definition} 

Suppose $\Phi_h (x) $ is well-defined. For $\beta>0$, we define a weight function for the steady problem \eqref{VP_h}:
\Be \label{w^h}
w_{\pm} (x,p) =	w_{\pm, \beta} (x,p) = e^{ \beta \left( \sqrt{(m_{\pm} c)^2 + |p|^2} + \frac{1}{c} ( e_{\pm} \Phi_h (x) + m_{\pm} g x_3 ) \right) }.
\Ee
\end{definition}

The second weight function is used to measure the singularity of the derivative of the distribution functions, as discussed in \cite{Guo95}.

\begin{definition}

Suppose $\p_{x_3} \Phi_h (x_\parallel , 0)$ is well-defined for $x_\parallel = (x_1, x_2) \in \R^2$. Then we define a steady kinetic distance 
\Be \label{alpha}
\alpha_{\pm} (x, p) = \sqrt{ |x_3|^2  + |v_{\pm, 3} |^2 +  2 ( e_{\pm} \p_{x_3} \Phi_h (x_\parallel , 0) + m_{\pm} g ) \frac{x_3}{ \sqrt{ m_\pm^2 + |p|^2/ c^2} } }.
\Ee
\end{definition}

Now we state the main theorem of the steady problem.

\begin{theorem}[Main theorem of the Steady problem] 
\label{theo:CS}
	
Suppose $\| e^{ \beta \sqrt{(m_{\pm} c)^2 + |p|^2} }G_{\pm} \|_{L^\infty(\gamma_-)} < \infty$ and $g, \beta > 0$ with $g \beta \gg 1$ satisfy the following condition:
\Be \label{condition:beta}
\beta \geq
\frac{\mathfrak{C} }{\min \big( \frac{m_{+}}{e_{+}}, \frac{m_{-}}{e_{-}} \big) \times \  \frac{g}{2}} \big( e_{\pm} \| e^{ \beta \sqrt{(m_{\pm} c)^2 + |p|^2} } G_{\pm} \|_{L^\infty(\gamma_-)} \big) 
\times \big( 1 + \frac{2 c}{ \beta \hat{m} g } \big),
\Ee
where $\mathfrak{C}>0$ is defined in \eqref{est:nabla_phi} and $\hat{m} = \min\{m_+, m_- \}$.
Furthermore, let $\beta \geq \tilde \beta >0$ satisfy the following condition:
\Be \label{condition:tilde_beta}
\begin{split}
& \big( e_+ \| e^{ \beta \sqrt{(m_{+} c)^2 + |p|^2} } G_+ \|_{L^\infty(\gamma_-)} + e_{-} \| e^{ \beta \sqrt{(m_{-} c)^2 + |p|^2}} G_- \|_{L^\infty(\gamma_-)} \big)
\\& \ \ \ \ \times \log \Big( e+ \| e^{\tilde \beta \sqrt{(m_{+} c)^2 + |p|^2}} \nabla_{x_\parallel, p} G_+ \|_{L^\infty (\gamma_-)} + \| e^{\tilde \beta \sqrt{(m_{-} c)^2 + |p|^2}} \nabla_{x_\parallel, p} G_- \|_{L^\infty (\gamma_-)} \Big) 
\\& \leq \beta (\frac{m_+ + m_- }{8} g\tilde  \beta - (1 + B_3) ),
\end{split}	
\Ee
and
\be \label{condition:G_xv}
(1 + \frac{\tilde{\beta}}{\hat{m} g} ) \| e^{\tilde \beta \sqrt{(m_{\pm} c)^2 + |p|^2}} \nabla_{x_\parallel, p} G_{\pm} \|_{L^\infty (\gamma_-)} \leq \frac{1}{4}.
\ee
Then there exists a unique solution $(h_{\pm}, \rho_h, \Phi_h)$ to \eqref{VP_h}-\eqref{def:rho} in the sense of Definition \ref{weak_sol}. Moreover, the following estimates hold: 
\begin{align}
& \| w_{\pm, \beta} h_{\pm} \|_{L^\infty (\bar \O \times \R^3)} 
\leq \| e^{ \beta\sqrt{(m_{\pm} c)^2 + |p|^2} } G_{\pm} \|_{L^\infty (\gamma_-)},
\label{Uest:wh} 
\\& | \rho (x) |
\leq \frac{1}{\beta} \big( e_+ \| w_{+, \beta} G_+ \|_{L^\infty(\gamma_-)} e^{-  \beta \frac{ m_+ }{2 c} g x_3} + e_{-} \| w_{-, \beta} G_- \|_{L^\infty(\gamma_-)} e^{- \beta \frac{ m_{-} }{2 c} g x_3} \big), 
\label{Uest:rho} 
\\& \| \nabla_x \Phi_h \|_{L^\infty (\bar{\O})} 
\leq  \min \big( \frac{m_{+}}{e_{+}}, \frac{m_{-}}{e_{-}} \big) \times \  \frac{g}{2},
\label{Uest:DPhi}
\\& \frac{8}{\hat{m} g} (1 + B_3 + \| \nabla_x ^2 \Phi _h \|_\infty) \leq \tilde \beta \leq \beta.
\label{Uest:Phi_xx}
\end{align}
In addition, $\rho$ and $\nabla_x \Phi _h$ satisfy the following bounds:
\begin{align}
& e^{ \frac{\tilde \beta \hat{m} g}{4 c} x_3 } |\p_{x_i} \rho  (x)|   
\lesssim \| e^{\tilde \beta \sqrt{(m_+ c)^2 + |p|^2}} \nabla_{x_\parallel, p} G_+ \|_{L^\infty (\gamma_-)}  
\times \Big( 1 + \mathbf{1}_{|x_3| \leq 1} \frac{1}{\sqrt{ m_+ g x_3 }} \Big)
\notag \\
& \qquad \qquad \qquad \qquad + \| e^{\tilde \beta \sqrt{(m_- c)^2 + |p|^2}} \nabla_{x_\parallel, p} G_- \|_{L^\infty (\gamma_-)}  
\times \Big( 1 + \mathbf{1}_{|x_3| \leq 1} \frac{1}{\sqrt{ m_- g x_3 }} \Big),
\label{est_final:rho_x} \\
& \| \nabla_x^2 \Phi_h  \|_\infty \lesssim \frac{1}{\beta} \big( e_+ \| w_{+, \beta} G_+ \|_{L^\infty(\gamma_-)} + e_{-} \| w_{-, \beta} G_- \|_{L^\infty(\gamma_-)} \big)
\notag \\
& \ \ \times \log \big( e+ \| e^{\tilde \beta \sqrt{(m_+ c)^2 + |p|^2}} \nabla_{x_\parallel, p} G_+ \|_{L^\infty (\gamma_-)} + \| e^{\tilde \beta \sqrt{(m_- c)^2 + |p|^2}} \nabla_{x_\parallel, p} G_- \|_{L^\infty (\gamma_-)} \big).
\label{est_final:phi_C2}
\end{align}
Furthermore, given $\alpha_\pm (x, p)$ defined in $\eqref{alpha}$, then $h_{\pm}$ satisfies that
\begin{align}
& e^{ \frac{\tilde \beta}{2}|p^0_{\pm}|} e^{  \frac{\tilde \beta m_{\pm} g}{4 c} x_3} | \nabla_p h_{\pm} (x,p)| 
\notag \\
& \lesssim \big( 1 + \frac{ \| \nabla_x^2 \Phi _h \|_\infty + e B_3 + m_{\pm} g}{(m_{\pm} g)^2} \big) \| e^{\tilde \beta \sqrt{(m_{\pm} c)^2 + |p|^2}} \nabla_{x_\parallel, p} G_{\pm} \|_{L^\infty (\gamma_-)},
\label{est_final:hk_v} \\
& e^{ \frac{\tilde \beta}{2} |p^0_{\pm} |} e^{  \frac{\tilde \beta m_{\pm} g}{4 c} x_3} | \nabla_x h_{\pm} (x,p)|  
\notag \\
& \lesssim \big( \frac{\delta_{i3}}{\alpha_{\pm} (x,p)} + \frac{ \| \nabla_x^2 \Phi _h\|_\infty + e B_3 + m_{\pm} g}{(m_{\pm} g)^2} \big) \| e^{\tilde \beta \sqrt{(m_{\pm} c)^2 + |p|^2}} \nabla_{x_\parallel, p} G_{\pm} \|_{L^\infty (\gamma_-)}.
\label{est_final:hk_x}
\end{align}
\end{theorem}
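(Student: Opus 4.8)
The plan is to build $(h_\pm,\rho_h,\Phi_h)$ by a Picard iteration in the spirit of \cite{Kim}, adapted to relativistic particles with a vertical magnetic field. Starting from $\Phi^0\equiv 0$, given $\Phi^n$ satisfying the a priori bound $\|\nabla_x\Phi^n\|_\infty\le\min(\frac{m_+}{e_+},\frac{m_-}{e_-})\frac g2$, I would: (i) solve the characteristic ODE of the transport operator in \eqref{VP_h} with $\Phi_h$ replaced by $\Phi^n$, obtaining the flow $(X^n(s;x,p),P^n(s;x,p))$ and the backward exit time $\tb^n(x,p)$ of Definition \ref{def:tb}; (ii) set $h^{n+1}_\pm(x,p)=G_\pm\big(X^n(-\tb^n;x,p),P^n(-\tb^n;x,p)\big)$, which automatically solves the linear Vlasov equation and, after integration by parts, its weak form \eqref{weak_form}; (iii) form $\rho^{n+1}=\int_{\R^3}(e_+h^{n+1}_++e_-h^{n+1}_-)\,\dd p$ and $\Phi^{n+1}=(-\Delta_0)^{-1}\rho^{n+1}$. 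One then shows this map is well-defined (the exit time is finite for a.e.\ $(x,p)$ by gravitational dominance, Proposition \ref{lem3:tB}) and a contraction in a suitable weighted $L^\infty$ space; passing to the limit gives a weak solution in the sense of Definition \ref{weak_sol}, and applying the contraction estimate to the difference of two solutions gives uniqueness.

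\textbf{Zeroth-order estimates.} Along the characteristics the energy $\sqrt{(m_\pm c)^2+|P(s)|^2}+\frac1c(e_\pm\Phi_h(X(s))+m_\pm g X_3(s))$ is conserved (Lemma \ref{lem:conservation_law}), so $w_{\pm,\beta}$ is constant on trajectories; since $\Phi_h=0$ and $X_3=0$ on $\p\O$ we get $w_{\pm,\beta}(x,p)h_\pm(x,p)=e^{\beta\sqrt{(m_\pm c)^2+|\pb|^2}}G_\pm(\xb,\pb)$, which is \eqref{Uest:wh}. For the density, bound $|h_\pm(x,p)|\le\|e^{\beta\sqrt{(m_\pm c)^2+|p|^2}}G_\pm\|_{L^\infty(\gamma_-)}\,w_{\pm,\beta}(x,p)^{-1}$; using $\Phi_h(x_\parallel,0)=0$ and the bootstrap hypothesis to get $|e_\pm\Phi_h(x)|\le\frac{m_\pm g}{2}x_3$, the factor $w_{\pm,\beta}^{-1}$ is dominated by $e^{-\beta\sqrt{(m_\pm c)^2+|p|^2}}e^{-\beta\frac{m_\pm g}{2c}x_3}$, and integrating in $p$ yields \eqref{Uest:rho}. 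Feeding \eqref{Uest:rho} (in both $L^1$ and $L^\infty$ in $x_3$, which produces the factor $1+\frac{2c}{\beta\hat m g}$) into the elliptic estimate Lemma \ref{lem:rho_to_phi} with constant $\mathfrak C$ gives $\|\nabla_x\Phi_h\|_\infty\lesssim\frac{\mathfrak C}{\beta}(e_\pm\|w_{\pm,\beta}G_\pm\|)(1+\frac{2c}{\beta\hat m g})$, which is $\le\min(\frac{m_+}{e_+},\frac{m_-}{e_-})\frac g2$ precisely by \eqref{condition:beta}; this closes the first bootstrap and establishes \eqref{Uest:DPhi}. The decisive input here is difficulty (a): to bound $\tb$ one splits a trajectory into an ascending and a descending phase; the descending time is controlled by comparing $P_3$ with the nearly constant, gravity-dominated vertical acceleration, while for the ascending phase one chooses a time at which $|P_3|\sim|P|$, deduces a lower bound on $|v_{\pm,3}|$ there, and thereby bounds the return time to $\p\O$ in terms of the conserved energy, absorbing the possible growth of $P_\parallel$ into the weight $e^{\beta p^0_\pm}$.

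\textbf{Derivative estimates and the $C^2$ bootstrap.} Differentiating $h_\pm(x,p)=G_\pm(\xb,\pb)$ gives $\nabla_{x,p}h_\pm=(\nabla_{x,p}\xb)^\top\nabla_x G_\pm+(\nabla_{x,p}\pb)^\top\nabla_p G_\pm$, so one needs pointwise control of $\nabla_{x,p}(\tb,\xb,\pb)$, obtained by differentiating the characteristic ODEs and the implicit relation $X_3(-\tb;x,p)=0$. The $\p_{x_3}\tb$ derivative carries the grazing singularity $1/\alpha_\pm$ (in the spirit of \cite{Guo95}), while the vertical acceleration $e_\pm\p_{x_3}\Phi_h(x_\parallel,0)+m_\pm g$, the magnetic strength $B_3$, and $\nabla^2_x\Phi_h$ enter through the linearized Lorentz matrix, which explains the prefactors $1+\frac{\|\nabla^2_x\Phi_h\|_\infty+eB_3+m_\pm g}{(m_\pm g)^2}$ in \eqref{est_final:hk_v}--\eqref{est_final:hk_x}; the exponential weight degrades from $\beta$ to $\tilde\beta/2$ because of the weaker J\"uttner weight (difficulty (c)), and the magnetic exponential-in-time growth of the trajectory derivatives is absorbed using $e^{\beta p^0_\pm}$ together with the time control above (difficulty (b)). Integrating \eqref{est_final:hk_x} in $p$ --- the factor $1/\alpha_\pm$ is $p$-integrable and generates the $\mathbf 1_{|x_3|\le1}/\sqrt{m_\pm g x_3}$ term --- yields \eqref{est_final:rho_x}, and a borderline (log-Lipschitz) Schauder estimate for the Newtonian potential, Lemma \ref{lem:rho_to_phi}, converts the pointwise control of $\rho$ and $\nabla_x\rho$ into \eqref{est_final:phi_C2}, the logarithm reflecting the usual failure of $\nabla^2$-boundedness of the Newtonian potential on $L^\infty$. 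Since $\|\nabla^2_x\Phi_h\|_\infty$ appears on the right of \eqref{est_final:hk_v}--\eqref{est_final:hk_x}, hence inside \eqref{est_final:rho_x}, this is a coupled second-order bootstrap: assuming \eqref{Uest:Phi_xx} one derives \eqref{est_final:hk_v}--\eqref{est_final:rho_x}, then \eqref{est_final:phi_C2}, and the compatibility conditions \eqref{condition:tilde_beta}--\eqref{condition:G_xv} are exactly what is needed to reproduce \eqref{Uest:Phi_xx} and to make the iteration a contraction at the level of $\nabla_{x,p}$.

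\textbf{Main obstacle.} The hardest steps are, first, the quantitative control of the backward exit time $\tb$ --- in particular the ascending-phase estimate, which hinges on the careful choice of a comparison time where $|P_3|\sim|P|$ so that $|v_{\pm,3}|$ is bounded below despite a possibly growing $P_\parallel$ --- and, second, closing the two nested bootstraps (for $\|\nabla_x\Phi_h\|_\infty$ and for $\|\nabla^2_x\Phi_h\|_\infty$) against both the $1/\alpha_\pm$ grazing singularity and the logarithmic loss in the elliptic estimate; the conditions \eqref{condition:beta}, \eqref{condition:tilde_beta}, \eqref{condition:G_xv} are tuned precisely so that these loops converge, and verifying genuine contraction (not merely boundedness) in the weighted norms under exactly these hypotheses is the most delicate point.
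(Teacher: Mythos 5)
Your proposal follows essentially the same route as the paper: the same Picard iteration $(h^{\ell+1},\rho^\ell,\Phi^\ell)$ with the zeroth-order bootstrap for $\|\nabla_x\Phi^\ell\|_\infty$ closed by \eqref{condition:beta} (Proposition \ref{prop:Unif_steady}), the same exit-time analysis and differentiation of the characteristic flow to obtain the weighted derivative bounds and the nested $\nabla^2_x\Phi$ bootstrap (Propositions \ref{prop:Reg} and \ref{prop:Unif_D2xDp}), and the same contraction argument in weighted $L^\infty$ for both convergence of the iterates and uniqueness (Proposition \ref{prop:cauchy} and Theorem \ref{theo:US}). The only slip is a citation: the steady exit-time bound is Proposition \ref{lem:tb} (and \ref{lem2:tb}), not Proposition \ref{lem3:tB}, which concerns the dynamic characteristics.
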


A proof of this theorem is provided at the end of Section \ref{sec:EX_SS}.

\subsection{Dynamical Results}

We consider the dynamical problem \eqref{VP_F}-\eqref{bdry:F} for perturbation solutions around the steady solution $(h_\pm, \Phi_h)$ as described in Theorem \ref{theo:CS}. Specifically, the solutions are given by $F_\pm(t,x,p) = h_\pm(x,p) + f_\pm(t,x,p)$ in \eqref{def:F} and $\phi_F(t,x) = \Phi_h(x) + \Psi (t,x)$, where $(f_\pm, \Psi)$ solves the following equations:
\be \label{eqtn:f}
\begin{split}
	& \p_t f_{\pm}  + v_\pm \cdot \nabla_x f_{\pm} + \Big( e_{\pm} \big( \frac{v_\pm}{c} \times B - \nabla_x ( \Phi_h + \Psi ) \big) - \nabla_x ( m_\pm g x_3) \Big) \cdot \nabla_p f_{\pm} 
	\\& = e_{\pm} \nabla_x \Psi \cdot \nabla_p h_{\pm} 
	\ \ \text{in} \ \R_+ \times \O \times \R^3.
\end{split}
\ee

For the initial condition of \eqref{VP_0}, we let 
\be \label{def:F_0}
F_{\pm, 0} (x,p) = h_{\pm} (x,p) + f_{\pm, 0} (x,p) \ \ \text{in} \  \O \times \R^3,
\ee
which leads to the initial condition for $f_\pm$ given by
\be \label{f_0}
\begin{split}
f_{\pm} (0,x,p) = f_{\pm, 0} (x,p) 
\ \ \text{in} \ \{ 0 \} \times \O \times \R^3.
\end{split}
\ee
The boundary condition simplifies to an absorption condition:
\Be \label{bdry:f} 
f_{\pm} (t,x,p)= 0 \ \ \text{in} \ \R_+ \times \gamma_-. 
\Ee

An electric potential corresponding to the dynamical perturbation is determined by solving:
\be \label{Poisson_f}
- \Delta_x	\Psi (t,x)
= \varrho (t,x) \ \  \text{in} \ \R_+ \times \O, 
\ \text{and }
\Psi(t,x) = 0  \ \   \text{on} \ \R_+ \times \p\O,  
\ee  
where a local charge density of the dynamical fluctuation is given by
\Be \label{def:varrho}
\varrho(t,x) 
=  \int_{\R^3} ( e_+ f_+ + e_{-} f_{-} ) \dd p.
\Ee
For simplicity, we often let $(-\Delta_0)^{-1} \varrho$ denote $\Psi$ solving \eqref{Poisson_f}. 

\medskip

Next, we provide a precise definition of weak solutions to the dynamical problem. 

\begin{definition} \label{weak_sol_dy} 

Suppose $(h_{\pm}, \rho_h, \Phi_h)$ solves \eqref{VP_h}-\eqref{def:rho} in the sense of Definition \ref{weak_sol}.  
	
(a) We say that $(f_{\pm}, \nabla_x \Psi) \in \big( L^2_{loc}(\R_+ \times \O \times \R^3) \; \cap \; L^2_{loc}(\R_+ \times \p\O \times \R^3; \dd \gamma) \big) \times L^2_{loc}(\R_+ \times \O \times \R^3)$ is a weak solution of \eqref{eqtn:f}, \eqref{f_0}, and \eqref{bdry:f}, if all terms below are bounded and the following condition holds for any $t \in R_+$ and test function $\psi \in C^\infty_c (\R_+ \times \bar \O \times \R^3)$, 
\Be \label{weak_form_dy}
\begin{split}
& \iint_{\R_+ \times \O \times \R^3} f_{\pm} (t,x,p) \p_t \psi(t, x, p) \dd p \dd x \dd t + \iint_{\R_+ \times \O \times \R^3} f_{\pm} (t,x,p) v \cdot \nabla_x \psi(t, x, p) \dd p \dd x \dd t
\\& - \iint_{\R_+ \times \O \times \R^3} f_{\pm} (t, x, p) \big( e_{\pm} ( \frac{v_\pm}{c} \times B 
- \nabla_x (\Phi + \Psi) ) - \nabla_x (m_{\pm} g x_3) \big) \cdot \nabla_p \psi (t,x,p)  \dd p \dd x \dd t
\\& = \iint_{\R_+ \times \p\O \times \{ p_3 <0 \}} f_{\pm} (t,x,p) \psi(t,x,p) \dd \gamma \dd t 
- \iint_{ \O \times \R^3} f_{\pm, 0} (x,p) \psi(0, x,p) \dd p \dd x,
\end{split}
\Ee
where $\dd \gamma := |v_3| \dd S_x \dd p$ denotes the boundary measure.
	
(b) We say that $(f_{\pm}, \varrho) \in L^2_{loc}(R_+ \times \O \times \R^3) \times L^2_{loc} (R_+ \times \O)$ is a weak solution of \eqref{def:varrho}, if all terms below are bounded and the following condition holds for any $t \in R_+$ and test function $\vartheta \in H^1_0 (\O) \cap C^\infty_c (\bar \O)$, 
\Be \label{weak_form_2_dy}
\int_{\O} \vartheta (x) \varrho (t, x) \dd x 
= \int_{\O} \vartheta (x) \int _{\R^3} ( e_+ f_{+} (t, x, p) + e_{-} f_{-} (t, x, p) ) \dd p \dd x.
\Ee
	
(c) We say that $(\varrho, \Psi) \in L^2_{loc}(\O  ) \times W^{1,2}_{loc} (\O)$ is a weak solution of \eqref{Poisson_f}, if all terms below are bounded and the following condition holds for any $t \in R_+$ and test function $\varphi \in H^1_0 (\O) \cap C^\infty_c (\bar \O)$,
\Be \label{weak_form_3_dy}
\int_{\O} \nabla_x \Psi (t, x) \cdot \nabla_x \varphi (x) \dd x 
= \int_{\O} \varrho (t, x) \varphi (x) \dd x.
\Ee
\end{definition}

Analogous to the steady problem, we introduce two weight functions for the dynamic problem. 
Remark that unlike in the steady case, the first weight function is not invariant along the particle’s trajectory.

\begin{definition}

Suppose $\Phi_h (x) + \Psi(t,x)$ is well-defined. 
For $\beta>0$, we define a weight function for the dynamic problem \eqref{eqtn:f}:
\Be \label{w^F}
\begin{split}
\w_{\pm} (t,x,p) &= \w_{\pm, \beta} (t,x,p) = e^{ \beta \left( \sqrt{(m_{\pm} c)^2 + |p|^2} + \frac{1}{c} \big( e_{\pm} \big( \Phi_h (x) + \Psi(t,x) \big) + m_{\pm} g x_3 \big) \right) }, \\
\w_{\pm,0} (x,p) &= \w_{\pm, \beta,0} (x,p) = e^{ \beta \left( \sqrt{(m_{\pm} c)^2 + |p|^2} + \frac{1}{c} \big( e_{\pm} \big( \Phi_h (x) + \Psi(0,x) \big) + m_{\pm} g x_3 \big) \right) }.
\end{split}
\Ee
\end{definition}

\begin{definition} \label{def:alpha_F} 

Suppose $\p_{x_3} \phi_F (t,x_\parallel, 0) = \p_{x_3} \Phi_h(x_\parallel, 0) + \p_{x_3} \Psi (t,x_\parallel, 0)$ is well-defined for $x_\parallel = (x_1, x_2) \in \R^2$. Then we define a dynamic kinetic distance
\Be \label{alpha_F}
\alpha_{\pm, F} (t, x, p) = \sqrt{ |x_3|^2  + |v_{\pm, 3} |^2 +  2 ( e_{\pm} \p_{x_3} \phi_F (t, x_\parallel, 0) + m_{\pm} g ) \frac{x_3}{ \sqrt{ m_\pm^2 + |p|^2/ c^2}  } }.
\Ee
\end{definition}

To present the main theorem for the dynamical problem, we introduce additional notations that will be utilized in the theorem statement.
Given the initial data $F_{\pm, 0} (x,p)$ and the boundary data $G(x, p)$ and $\beta \geq \tilde \beta > 0$, we define:
\begin{align}
M & := 2 \sum\limits_{i = \pm} \big(
\| \w_{i, \beta, 0 } F_{i, 0} \|_{L^\infty (\O \times \R^3)}
+  \| e^{\beta \sqrt{(m_{i} c)^2 + |p|^2} } G_i \|_{L^\infty (\gamma_-)} \big),
\label{set:M} \\
L & := \sum\limits_{i = \pm} \big( \| \w_{i, \tilde \beta, 0}   \nabla_{x,v} F_{i, 0}  \|_{L^\infty (\O \times \R^3)} + \|  e^{\beta \sqrt{(m_{i} c)^2 + |p|^2} } \nabla_{x_\parallel,p} G_i \|_{L^\infty (\gamma_-)} \big).
\label{set:L}
\end{align}
Further, we define:
\Be \label{lambda}
\hat{m} = \min\{m_+, m_- \}
\ \text{ and } \
\lambda  = \frac{g \beta }{48} \hat{m}. 
\Ee

\medskip

Now we state the main theorem of the dynamic problem.

\begin{theorem}[Main theorem of the Dynamic problem]
\label{theo:CD}

We assume that all assumptions in Theorem \ref{theo:CS} hold with $g > 0, \beta \geq \tilde \beta >0$. Furthermore, suppose that
\Be \label{choice:g}
M \leq \beta e^{ - \frac{m_{\pm} g}{24} \beta}
\ \text{ and } \ 
L \leq \min \{ \tilde{\beta} e^{ - \frac{m_{\pm} g}{24} \tilde{\beta} }, \frac{1}{1024} \beta^2 e^{ - \frac{m_{\pm} g}{48} \beta } \}.
\Ee
Then there exists a unique solution $(f_\pm, \varrho, \Psi)$ to \eqref{eqtn:f}-\eqref{Poisson_f} in the sense of Definition \ref{weak_sol_dy}. Moreover, the following estimates hold:
\begin{align}
& \sup_{0 \leq t < \infty} \| e^{ \frac{\beta}{2} \sqrt{(m_{\pm} c)^2 + |p|^2} } e^{ \frac{m_{\pm} g}{4 c} \beta x_3} f_{\pm} (t,x,p)  \|_{L^\infty  (\O \times \R^3)} \leq M, 
\label{Uest:wh_dy} \\
& \sup_{0 \leq t < \infty} \|   \nabla_x  \Psi \|_{L^\infty (\bar{\O})} \leq \min \left\{\frac{m_+}{e_+}, \frac{m_-}{e_-} \right\} \times \frac{g}{48}.
\label{Uest:DxPsi}
\end{align}
In addition, $\phi_F =  \Phi_h + \Psi$ satisfies the following bounds:
\begin{align}
& \sup_{0 \leq t < \infty} \| \nabla_x \phi_F \|_{L^\infty (\bar{\O})} 
\leq \min \left\{\frac{m_+}{e_+}, \frac{m_-}{e_-} \right\} \times \frac{g}{2}, 
\label{Uest:Dxphi_F} \\ 
& (1 + B_3 + \| \nabla_x ^2 \phi_F  \|_\infty) + \| \p_t \p_{x_3} \phi_F (t, x_\parallel , 0) \|_{L^\infty(\p\O)} 
\leq \frac{\hat{m} g}{24} \tilde{\beta}.  
\label{Uest:D2xD3tphi_F}
\end{align}	
Furthermore, given $\alpha_{\pm, F} (t, x, p)$ defined in $\eqref{alpha_F}$, then $f_{\pm}$ satisfies that
\begin{align}
& \| e^{ \frac{\tilde{\beta}}{4} \sqrt{(m_{\pm} c)^2 + |p|^2} } e^{ \frac{m_{\pm} g}{8 c} \tilde{\beta} x_3} \nabla_p f_{\pm} (t,x,p) \|_{L^\infty(\O \times \R^3)} 
\lesssim 2 e^{ 2 } \tilde \beta + \frac{1}{24} {\tilde{\beta}}^2,
\label{Uest_final:F_v:dyn} \\
& e^{ \frac{\tilde{\beta}}{4} \sqrt{(m_{\pm} c)^2 + |p|^2} } e^{ \frac{m_{\pm} g}{8 c} \tilde{\beta} x_3} \big| \nabla_x f_{\pm} (t,x,p) \big|
\lesssim 2 e^{ 2 } \tilde{\beta} + ( \frac{ \mathbf{1}_{|x_3| \leq 1} }{\alpha_{\pm, F} (t,x,p)} + \frac{1}{24} \tilde{\beta} ) \tilde{\beta}.
\label{Uest_final:F_x:dyn}	
\end{align}
Finally, the solution $(f(t), \varrho(t), \nabla_x \Psi(t))$ exhibits exponential decay as follows:
\begin{align}
& \sup_{0 \leq t < \infty} e^{ \lambda t} \|  e^{\frac{\beta}{8} \sqrt{(m_{\pm} c)^2 + |p|^2} + \frac{m_{\pm} g}{16 c} \beta x_3} f_{\pm} (t ) \|_{L^\infty (\O \times \R^3)} \lesssim \beta, \label{Udecay:f} \\
& \sup_{0 \leq t < \infty} e^{ \lambda t} \| e^{  \frac{3 \lambda }{c}  x_3} \varrho (t)\|_{L^\infty(\O)} \lesssim \frac{e_+ + e_-}{\beta^2}, \label{Udecay:varrho}\\
& \sup_{0 \leq t < \infty} e^{ \lambda t} \|\nabla_x \Psi(t)\|_{L^\infty (\O)}
\lesssim \big( \frac{e_+ + e_-}{\beta^2} \big) \times (1 + \frac{16}{g \beta}). \label{Udecay:DxPsi}
\end{align} 
\end{theorem}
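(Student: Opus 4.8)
The plan is to run a fixed-point (iteration) scheme on the dynamic perturbation $(f_\pm,\Psi)$, mirroring the structure already used for the steady problem in Theorem \ref{theo:CS}, but now carrying the time variable through. First I would linearize: given a fixed ``previous'' field $\nabla_x\Psi^\ell$, solve the transport equation \eqref{eqtn:f} for $f_\pm^{\ell+1}$ along the characteristics $(X(s;t,x,p),P(s;t,x,p))$ associated with the full field $\Phi_h+\Psi^\ell$, with the absorbing boundary condition \eqref{bdry:f} and initial data \eqref{f_0}, then close the loop by setting $\Psi^{\ell+1}=(-\Delta_0)^{-1}\varrho^{\ell+1}$ via \eqref{Poisson_f}--\eqref{def:varrho} and the elliptic estimate of Lemma \ref{lem:rho_to_phi}. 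The backward-trajectory analysis is the crux: I would re-use the ascending/descending decomposition of the characteristic travel time from the steady case, with the key points that (i) the magnetic field $B=(0,0,B_3)$ never enters the $p_3$-equation, so the vertical momentum obeys $\dot p_3 = -e_\pm\partial_{x_3}(\Phi_h+\Psi^\ell) - m_\pm g$, which is $\lesssim -\tfrac{m_\pm g}{2}$ once \eqref{Uest:DPhi} and \eqref{Uest:DxPsi} hold (this is where the smallness \eqref{choice:g} and the gravity dominance $g\beta\gg1$ are consumed), and (ii) the conserved-up-to-error energy of Lemma \ref{lem:conservation_law} lets one bound the travel time $\tb$ by $\lesssim p_\pm^0/(m_\pm g)$ plus the boundary momentum, exactly as in Proposition \ref{lem3:tB}. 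Propagating the weight $\w_{\pm,\beta}$ along trajectories and using that it decays like $e^{-\beta\frac{m_\pm g}{4c}x_3}$ on the descent then gives the $L^\infty$ bound \eqref{Uest:wh_dy}: the gain term $e_\pm\nabla_x\Psi^\ell\cdot\nabla_p h_\pm$ is controlled by \eqref{est_final:hk_v} from Theorem \ref{theo:CS} and the time integral $\int_{-\tb}^0$ of it converges because of the exponential weight, producing the stated bound by $M$.

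Next I would upgrade to the derivative estimates \eqref{Uest_final:F_v:dyn}--\eqref{Uest_final:F_x:dyn}. Differentiating the Duhamel formula for $f_\pm^{\ell+1}$ brings in $\nabla_{x,p}(X,P)$, which can grow like $e^{C(1+B_3)\tb}$ — this is difficulty (b) from the introduction. The remedy is the same: the travel-time control $\tb\lesssim p_\pm^0/(m_\pm g)$ turns that exponential into $e^{C(1+B_3)p_\pm^0/(m_\pm g)}$, which is absorbed by passing from weight exponent $\beta$ (resp. $\tilde\beta$) to $\beta/2$ (resp. $\tilde\beta/4$) — precisely the weight losses visible in \eqref{Uest:wh_dy} vs. \eqref{Udecay:f} and in \eqref{Uest_final:F_v:dyn}. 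The singular factor $1/\alpha_{\pm,F}$ in \eqref{Uest_final:F_x:dyn} appears only in the $\delta_{i3}$, $|x_3|\le1$ regime and is handled exactly as in \cite{Guo95} and in the steady estimate \eqref{est_final:hk_x}: the kinetic distance $\alpha_{\pm,F}$ is (up to the error $\|\partial_t\partial_{x_3}\phi_F\|$, controlled by \eqref{Uest:D2xD3tphi_F}) an approximate invariant of the flow near the boundary. Elliptic regularity (Lemma \ref{lem:rho_to_phi}) then converts the density bounds into $\|\nabla_x\Psi\|_\infty$ and $\|\nabla_x^2\phi_F\|_\infty$ bounds, closing \eqref{Uest:DxPsi}--\eqref{Uest:D2xD3tphi_F} and verifying that the iteration map sends the ball defined by the right-hand sides of \eqref{Uest:wh_dy}--\eqref{Uest_final:F_x:dyn} into itself; a difference estimate in the same norms (with a slightly larger weight loss) gives contraction, hence a unique fixed point, which by the usual argument is the unique weak solution in the sense of Definition \ref{weak_sol_dy}.

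Finally, for the exponential decay \eqref{Udecay:f}--\eqref{Udecay:DxPsi} I would run the bootstrap in \cite{Kim}: first establish decay of the ``vacuum'' part (the homogeneous transport with absorbing boundary) using that every characteristic starting at time $t$ exits through $\gamma_-$ or reaches $t=0$ within time $\lesssim p_\pm^0/(m_\pm g)$, so that $f_\pm(t,x,p)=0$ unless the trajectory survives, which forces a factor $e^{-\beta(\text{energy gained})}\le e^{-c\,g\beta\hat m\,t}$ after splitting the weight; the exponent $\lambda=\frac{g\beta}{48}\hat m$ in \eqref{lambda} is what survives the weight losses. Then I would treat the full nonlinear equation perturbatively: write $f_\pm$ as vacuum-part plus a Duhamel term with source $e_\pm\nabla_x\Psi\cdot\nabla_p h_\pm$, and close a Gronwall-type inequality for $\sup_t e^{\lambda t}\|\cdots f_\pm(t)\|_\infty$ using \eqref{Uest:wh_dy}, the decay of the vacuum part, and the bound $\|\nabla_x\Psi(t)\|_\infty\lesssim\|\varrho(t)\|_\infty$ from Lemma \ref{lem:rho_to_phi}; integrating $\varrho$ in $p$ against the decaying weighted bound for $f_\pm$ gives \eqref{Udecay:varrho}, and one more elliptic estimate gives \eqref{Udecay:DxPsi}. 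The main obstacle I anticipate is making the time-weighted derivative estimates and the decay estimate simultaneously consistent — i.e. choosing the chain of weight exponents $\beta,\ \beta/2,\ \beta/8,\ \tilde\beta,\ \tilde\beta/4$ and the constant $48$ in $\lambda$ so that every absorption ($e^{C(1+B_3)\tb}$ against the weight gap, the $\log$ factors in \eqref{est_final:phi_C2}, the $1/\alpha_{\pm,F}$ singularity against its near-invariance) goes through at once; this is exactly the role played by the quantitative smallness hypotheses \eqref{choice:g} and the conditions \eqref{condition:beta}--\eqref{condition:G_xv} inherited from Theorem \ref{theo:CS}, and verifying them is bookkeeping rather than a new idea.
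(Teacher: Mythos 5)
Your proposal follows essentially the same route as the paper: an iteration with the field frozen at the previous step, travel-time control via the conserved energy and the gravity-dominated vertical momentum (Propositions \ref{lem:tB}--\ref{lem3:tB}), weighted $L^\infty$ and derivative estimates along backward characteristics with the kinetic distance $\alpha_{\pm,F}$, a conditional decay criterion obtained by splitting $f_\pm$ into the initial-data part and the Duhamel term driven by $\nabla_x\Psi\cdot\nabla_p h_\pm$, and finally a Cauchy/contraction difference estimate giving existence and a Gronwall-type stability estimate giving uniqueness — which is exactly the paper's Sections \ref{sec:DC}--\ref{sec:EX_DS}. The weight-exponent bookkeeping ($\beta\to\beta/2\to\beta/8$, $\tilde\beta\to\tilde\beta/4$, $\lambda=\frac{g\beta}{48}\hat m$) and the role of \eqref{choice:g} are identified correctly, so I see no gap.
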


A proof of this theorem is provided at the end of Section \ref{sec:EX_DS}.

\section{Relativistic Particle Trajectory} \label{sec:char}

Consider the characteristics $Z_{\pm} (s;x,p) = (X_{\pm} (s;x,p), P_{\pm} (s;x,p))$ for the steady problem \eqref{VP_h}: 
\Be \label{ODE_h}
\begin{split} 
\frac{d X_{\pm} (s;x,p) }{d s} & = V_{\pm} (s;x,p) = \frac{P_{\pm} (s;x,p)}{\sqrt{m^2_{\pm} + |P_{\pm} (s;x,p)|^2 / c^2}}, \\
\frac{d P_{\pm} (s;x,p) }{d s} & =  {e_{\pm}} ( V_{\pm} (s;x,p) \times \frac{B}{c} - \nabla_x \Phi_h (X_{\pm} (s;x,p) )) - {m_{\pm}} g \mathbf{e}_3,
\end{split}
\Ee
where $\Phi_h $ solves \eqref{eqtn:Dphi} and $\mathbf{e}_3= (0,0,1)^{\intercal}$. At $s = 0$, we have
\[
Z_{\pm} (0; x, p) = (X_{\pm} (0; x, p), P_{\pm} (0; x,p)) = (x, p) = z_{\pm}.
\]
Note that $\frac{dX_\pm (s;x,p) }{ds}$ is the particle velocity; and $\frac{d P_\pm (s;x,p) }{ds} $ is the relativistic 3-force. 
An important quantity of the vertical particle velocity: 
\Be \label{sign:v3}
\sgn \big( V_{\pm, 3} (s;x,p) \big) 
= \sgn \left( \frac{P_{\pm, 3} (s;x,p)}{\sqrt{m^2_{\pm} + |P_{\pm} (s;x,p)|^2 / c^2}} \right)
= \sgn \big( P_{\pm, 3} (s;x,p) \big).
\Ee

We also define the characteristics $\Z_{\pm} (s;t,x,p) = (\X_{\pm} (s;t,x,p), \P_{\pm} (s;t,x,p))$ for the dynamical problem \eqref{eqtn:f} solving 
\Be \label{ODE_F}
\begin{split}
\frac{d \X_{\pm} (s;t,x,p) }{ d s} & = \V_{\pm} (s;t,x,p) = \frac{\P_{\pm} (s;t,x,p)}{\sqrt{m^2_{\pm} + |\P_{\pm} (s;t,x,p)|^2 / c^2}}, \\
\frac{d \P_{\pm} (s;t,x,p)}{d s} & = {e_{\pm}} \big( \V_{\pm} (s;t,x,p) \times \frac{B}{c} - \nabla_x \Psi (s, \X(s;t,x,p)) 
\\& \qquad \qquad - \nabla_x \Phi_h (\X(s;t,x,p)) \big) - {m_{\pm}}  g  \mathbf{e}_3,
\end{split}
\Ee 
and satisfying 
$\Z_{\pm} (t;t,x,p) = (\X_{\pm} (t;t,x,p), \P_{\pm} (t;t,x,p))  = (x, p) = z_{\pm}$.
Here, $\Psi$ and $\Phi_h$ solve \eqref{Poisson_f} and \eqref{eqtn:Dphi}, respectively.  
In the dynamical case, $\frac{d \X_\pm (s;t,x,p) }{ds}$ is the particle velocity; and $\frac{d \P_\pm (s;t,x,p) }{ds} $ is the relativistic 3-force. 
Similarly, the vertical particle velocity has an important quantity as follows:
\Be \label{sign:v3_dy}
\sgn \big( \V_{\pm, 3} (s;t,x,p) \big) 
= \sgn \left( \frac{ \P_{\pm, 3} (s;t,x,p)}{\sqrt{m^2_{\pm} + |\P_{\pm} (s;t,x,p)|^2 / c^2}} \right)
= \sgn \big( \P_{\pm, 3} (s;t,x,p) \big).
\Ee

Now we introduce the key concepts: the backward/forward exit time, and the backward/forward exit position and momentum.

\begin{definition} 
\label{def:tb}

(a)
Suppose $\nabla_x \Phi_h \in W^{1,p} (\O)$ for $p>2$. Then $Z_{\pm} (s;x,p)$ is well-defined as long as $X_{\pm} (s;x,p) \in \O$. There exists a backward/forward exit time 
\Be \label{tb^h}
\begin{split}
\tbp (x, p) & : = \sup \{ s \in [0,\infty) : X_{\pm, 3} (-\tau; x,p )>0  \ \text{for all }  \tau \in (0, s)\} \geq 0, \\
\tfp (x, p) & : = \sup \{ s \in [0,\infty) : X_{\pm, 3} ( +\tau; x,p )>0  \ \text{for all }  \tau \in (0, s)\}  \geq 0.
\end{split}	
\Ee
In particular, $X_{\pm, 3} ( -\tbp (x,p); x, p ) = X_{\pm, 3} (\tfp (x,p); x, p ) = 0$. Moreover, $Z_{\pm} (s;x,p)$ is continuously extended in a closed interval of $s \in [-\tbp (x, p), \tfp (x, p)]$.  

We also define backward/forward exit position and momentum:
\Be \label{def:zb^h}
\begin{split}
& \xbp (x,p) = X_{\pm} ( -\tbp (x, p); x, p) \in \p\O, \ \ 
\pbp (x,p) = P_{\pm} ( -\tbp (x, p); x, p),
\\& \xfp (x,p) = X_{\pm} ( \tfp (x, p); x, p) \in \p\O, \ \ 
\pfp (x,p) = P_{\pm} ( \tfp (x, p); x, p).
\end{split}
\Ee

(b)
Suppose $\nabla_x \Phi_h, \nabla_x \Psi(t, \cdot) \in W^{1,p} (\O)$ for $p>1$. Then $\Z_{\pm} (s;t,x,p)$ is well-defined as long as $\X_{\pm} (s;t,x,p) \in \O$. There exists a backward/forward exit time 
\Be \label{tb}
\begin{split}
\tBp (t,x,p)
&: = \sup \{
s \in [0,\infty): \X_{\pm, 3} (t - \tau; t,x,p) > 0 \ \text{for all } \tau \in (0,s)
\}
\geq 0, \\
\tFp (t,x,p)
&: = \sup \{
s \in [0,\infty): \X_{\pm, 3} (t + \tau; t,x,p) > 0 \ \text{for all } \tau \in (0,s)
\}
\geq 0,
\end{split}
\Ee
such that $\X_{\pm, 3} (t - \tBp (t,x,p); t,x,p) = 0$ and backward exit position and momentum are defined
\Be \label{def:zb}
\begin{split}
& \xBp (t,x,p) = \X_{\pm} ( t - \tBp (t,x,p); t,x,p ) \in \p\O, 
\\& \pBp (t,x,p) = \P_{\pm} (t - \tBp (t,x,p); t,x,p ).
\end{split}
\Ee
Then $\Z_{\pm} (s;t,x,p)$ is continuously extended in a closed interval of $s \in [t - \tBp (t,x,p), t]$. 
\end{definition}

For the characteristics \eqref{ODE_h}, the weak solutions to the steady problem \eqref{VP_h}-\eqref{eqtn:Dphi} satisfy:
\Be \label{Lform:h}
\begin{split}
h_{\pm} (x, p)  
& =  \mathbf{1}_{t \leq \tbp (x,p)} h_{\pm} (X_{\pm} (-t; x,p), P_{\pm} (-t; x,p) ) 
\\& \ \ \ \ +  \mathbf{1}_{t > \tbp (x,p)} G_{\pm} (\xbp (x,p), \pbp (x,p))).
\end{split}
\Ee
For the characteristics \eqref{ODE_F}, the weak solutions to the dynamic problem \eqref{eqtn:f}-\eqref{Poisson_f} satisfy:
\Be
\begin{split} \label{Lform:f}
& f_{\pm} (t,x,p) 
\\& = \mathbf{1}_{t \leq \tBp (t,x,p)} f_{\pm} (0, \X_{\pm} (0;t,x,p), \P_{\pm} (0;t,x,p))
\\& \ \ \ \ + \int^t_{ \max\{0, t - \tBp (t,x,p)\}} e \nabla_x \Psi (s, \X (s;t,x,p)) \cdot \nabla_p h_{\pm} ( \X_{\pm} (s;t,x,p), \P_{\pm} (s;t,x,p)) \dd s.
\end{split}
\Ee

In the remainder of this section, we establish several key estimates for $\tbp$ and $\tBp$ (see Propositions \ref{lem:tb}-\ref{lem3:tB}), which will be frequently used in subsequent sections. Before doing so, we provide a brief overview of the approach used to derive these estimates.

For the reader's convenience, we restate the coupled system of Vlasov-Poisson equations as follows:
\be \notag
\p_t F_{\pm} 
+ c \frac{p}{p^0_{\pm}} \cdot \nabla_x F_{\pm} 
+  \left(
\frac{p}{p^0_{\pm}} \times e_{\pm} B - \nabla_x (m_{\pm} g x_3) \right) \cdot \nabla_p F_{\pm} 
- \nabla_x ( e_{\pm} \phi_F)  \cdot \nabla_p F_{\pm}
= 0.
\ee
From Definition \ref{def:tb}, we have
\[
\X_{\pm} (t - \tBp (t,x,p); t,x,p) \in \p\O
\text{ if }
t \geq \tBp (t,x,p).
\]
This reveals the importance of $\tBp$, since it build the connection between the status $(t,x,p)$ and the boundary $(\X_{\pm} (t - \tBp (t,x,p); t,x,p), \P_{\pm} (t - \tBp (t,x,p); t,x,p)) \in \gamma_-$.
Together with $F_{\pm}$ being invariant along the characteristics, $F_{\pm} (t,x,p)$ can be controlled if we bound $\tB$ properly. 

From the assumption on $|\nabla_x \phi_F|$ (see \eqref{Uest:Dxphi_F} in Theorem \ref{theo:CD}), for any $(t,x,p) \in \R_+ \times  \O \times \R^3$,
\be \notag
\frac{d \P_{\pm, 3} (s;t,x,p) }{ d s} 
\leq - \frac{1}{2} m_{\pm} g,
\ee
Meanwhile, under direct computation, we also derive  for any $(t,x,p) \in \R_+ \times  \O \times \R^3$,
\be \notag
\Big| \frac{d}{ds} \sqrt{|\P_{\pm, 1} (s;t, x,p)|^2 + |\P_{\pm, 2} (s;t, x,p)|^2} \Big|
\leq \frac{m g}{2 \sqrt{2}}.
\ee
These two estimates show that
\be \notag
\P_{\pm, 3} (s;t, x,p) \geq \frac{m_{\pm} g}{2} |s-t| + p_3.
\ee
and
\be \notag
\sqrt{|\P_{\pm, 1} (s;t, x,p)|^2 + |\P_{\pm, 2} (s;t, x,p)|^2}
\leq \sqrt{p^2_1 + p^2_2} + \frac{m g}{2\sqrt{2}} (t-s).
\ee
If $\tBp (t,x,p) < \frac{4}{m_{\pm} g} |p| + 1$, then we already get the control. Otherwise, we deduce that for all $s \in [t- \tBp (t,x,p), t - (\frac{4}{m_{\pm} g} |p| + 1)]$,
\be \notag
\begin{split}
\V_{\pm, 3} (s;t, x,p) 
= \frac{ \P_{\pm, 3} (s;t, x,p) }{\sqrt{m^2 + |\P_{\pm} (s;t, x,p) |^2 / c^2 }}
\gtrsim 1,
\end{split}  
\ee
and thus $\tBp$ is controlled since $\V_{\pm, 3} \gtrsim 1$ has a lower bound.

\smallskip

We now present the conservation law in the relativistic setting, as stated in Lemma \ref{lem:conservation_law}.

\begin{lemma} \label{lem:conservation_law}

(a)
Suppose that $(x, p) \in \O \times \R^3$. Consider the characteristics trajectory $Z_{\pm} (s;x,p) = (X_{\pm} (s;x,p), P_{\pm} (s;x,p))$ in \eqref{ODE_h}. Then for all $s \in [-\tbp (x,p), 0]$,
\Be \label{eq:tb_conservation}
\begin{split}
& \ \ \ \ \sqrt{(m_{\pm} c)^2 + |P_{\pm} (s;x,p)|^2} + 
\frac{1}{c} \big( e_{\pm} \Phi_h (X_{\pm} (s;x,p)) + m_{\pm} g X_{\pm, 3} (s;x,p) \big) 
\\& = \sqrt{(m_{\pm} c)^2 + |\pbp (x,p)|^2}. 
\end{split}
\Ee 
For all $s \in [0, \tfp (x,p)]$,
\Be \label{eq:tf_conservation}
\begin{split}
& \ \ \ \ \sqrt{(m_{\pm} c)^2 + |P_{\pm} (s;x,p)|^2} + 
\frac{1}{c} \big( e_{\pm} \Phi_h (X_{\pm} (s;x,p)) + m_{\pm} g X_{\pm, 3} (s;x,p) \big) 
\\& = \sqrt{(m_{\pm} c)^2 + |\pfp (x,p)|^2}.  
\end{split}
\Ee 

(b) 
Further, suppose $(t, x, p) \in \R_+ \times \O \times \R^3$. Consider the characteristics 
$\Z_{\pm} (s;t,x,p) = (\X_{\pm} (s;t,x,p), \P_{\pm} (s;t,x,p))$ in \eqref{ODE_F}. Then for all $s - t \in [-\tBp (t,x,p), \tFp (t,x,p)]$,
\Be \label{eq:energy_conservation_dy}
\begin{split}
& \ \ \ \ \frac{d}{ds} \Big( \sqrt{(m_{\pm} c)^2 + |\P_{\pm} (s;t,x,p)|^2} + \frac{1}{c} \big( e_{\pm} \Phi_h (\X_{\pm} (s;t,x,p)) + m_{\pm} g \X_{\pm, 3} (s;t,x,p) \big) \Big) 
\\& = - \frac{e_{\pm}}{c} \nabla_x \Psi (s, \X_{\pm} (s;t,x,p)) \cdot \V_{\pm} (s;t,x,p),
\end{split}
\Ee 
and 
\Be \label{eq:energy_conservation_dy_2}
\begin{split}
& \frac{d}{ds} \Big( \sqrt{(m_{\pm} c)^2 + |\P_{\pm} (s;t,x,p)|^2} + \frac{1}{c} \big( e_{\pm} \big( \Phi_h (\X_{\pm} (s;t,x,p)) + \Psi (s, \X_{\pm} (s;t,x,p)) \big) 
\\& \qquad + m_{\pm} g \X_{\pm, 3} (s;t,x,p) \big) \Big) 
= \frac{e_{\pm}}{c} \p_t \Psi (s, \X_{\pm} (s;t,x,p)).
\end{split}
\Ee 
\end{lemma}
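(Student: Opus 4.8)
The identities all follow by differentiating the proposed energy functionals along the characteristic flow; the single structural input is that the magnetic part of the Lorentz force does no work.

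For part (a), fix $(x,p)$, write $Z_\pm(s)=(X_\pm(s),P_\pm(s))$, and recall $V_\pm(s)=P_\pm(s)/\sqrt{m_\pm^2+|P_\pm(s)|^2/c^2}=cP_\pm(s)/\sqrt{(m_\pm c)^2+|P_\pm(s)|^2}$. Set
\[
\mathcal E_\pm(s):=\sqrt{(m_\pm c)^2+|P_\pm(s)|^2}+\tfrac1c\big(e_\pm\Phi_h(X_\pm(s))+m_\pm g X_{\pm,3}(s)\big).
\]
Differentiating the relativistic-energy term gives $\frac{d}{ds}\sqrt{(m_\pm c)^2+|P_\pm(s)|^2}=\frac{P_\pm(s)\cdot\dot P_\pm(s)}{\sqrt{(m_\pm c)^2+|P_\pm(s)|^2}}=\tfrac1c V_\pm(s)\cdot\dot P_\pm(s)$; substituting the momentum ODE of \eqref{ODE_h} and using $V_\pm\cdot(V_\pm\times\tfrac Bc)=0$, this equals $-\tfrac{e_\pm}{c}V_\pm(s)\cdot\nabla_x\Phi_h(X_\pm(s))-\tfrac{m_\pm g}{c}V_{\pm,3}(s)$. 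Since $\dot X_\pm(s)=V_\pm(s)$, the chain rule gives $\frac{d}{ds}\big[\tfrac1c(e_\pm\Phi_h(X_\pm(s))+m_\pm g X_{\pm,3}(s))\big]=\tfrac{e_\pm}{c}\nabla_x\Phi_h(X_\pm(s))\cdot V_\pm(s)+\tfrac{m_\pm g}{c}V_{\pm,3}(s)$, which cancels the previous expression term by term; hence $\mathcal E_\pm$ is constant on $[-\tbp(x,p),\tfp(x,p)]$. Evaluating at $s=-\tbp(x,p)$, where $X_{\pm,3}=0$ by Definition \ref{def:tb} and $\Phi_h=0$ on $\p\O$ by \eqref{eqtn:Dphi}, yields \eqref{eq:tb_conservation}; evaluating at $s=\tfp(x,p)$ yields \eqref{eq:tf_conservation}.

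For part (b), the computation is identical with \eqref{ODE_F} in place of \eqref{ODE_h}. Putting $\mathcal E_{1,\pm}(s):=\sqrt{(m_\pm c)^2+|\P_\pm(s)|^2}+\tfrac1c(e_\pm\Phi_h(\X_\pm(s))+m_\pm g\X_{\pm,3}(s))$, the only difference is the extra force $-e_\pm\nabla_x\Psi$ in \eqref{ODE_F}, which survives uncanceled and contributes $-\tfrac{e_\pm}{c}\nabla_x\Psi(s,\X_\pm(s))\cdot\V_\pm(s)$; this is \eqref{eq:energy_conservation_dy}. For \eqref{eq:energy_conservation_dy_2}, write $\mathcal E_{2,\pm}(s)=\mathcal E_{1,\pm}(s)+\tfrac{e_\pm}{c}\Psi(s,\X_\pm(s))$ and note $\frac{d}{ds}[\tfrac{e_\pm}{c}\Psi(s,\X_\pm(s))]=\tfrac{e_\pm}{c}\p_t\Psi(s,\X_\pm(s))+\tfrac{e_\pm}{c}\nabla_x\Psi(s,\X_\pm(s))\cdot\V_\pm(s)$; adding this to \eqref{eq:energy_conservation_dy}, the spatial-gradient terms cancel and only $\tfrac{e_\pm}{c}\p_t\Psi(s,\X_\pm(s))$ remains.

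The algebra is elementary; the only point requiring care is the validity of the chain rule, i.e.\ absolute continuity of $s\mapsto\Phi_h(X_\pm(s))$ and $s\mapsto\Psi(s,\X_\pm(s))$ with the stated derivatives. This is guaranteed by the regularity hypotheses in Definition \ref{def:tb}, which make the force fields (and hence the flows $Z_\pm,\Z_\pm$) well behaved, supplemented by a routine mollification/approximation argument for the weak-solution class. I expect this bookkeeping — not the computation — to be the only mild obstacle.
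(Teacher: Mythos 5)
Your proposal is correct and follows essentially the same route as the paper: differentiate the energy functional along the characteristics, use that the magnetic force does no work ($V_\pm\cdot(V_\pm\times B)=0$) together with $P_\pm/\sqrt{(m_\pm c)^2+|P_\pm|^2}=V_\pm/c$ to get exact cancellation in the steady case, keep the uncancelled $-\tfrac{e_\pm}{c}\nabla_x\Psi\cdot\V_\pm$ term in the dynamic case, and pass to \eqref{eq:energy_conservation_dy_2} via the chain rule for $\Psi(s,\X_\pm(s))$. The paper phrases the derivative as the transport operator acting on the functional rather than as $\tfrac{d}{ds}$ of the composition, but this is only a notational difference, and your explicit evaluation at $s=-\tbp$, $s=\tfp$ using the Dirichlet condition $\Phi_h|_{\p\O}=0$ is exactly the intended conclusion.
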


\begin{proof}

For part (a), both the backward part \eqref{eq:tb_conservation} and the forward part \eqref{eq:tf_conservation} can be proved by showing that 
\[
\sqrt{(m_{\pm} c)^2 + |P_{\pm} (s;x,p)|^2} + 
\frac{1}{c} \big( e_{\pm} \Phi_h (X_{\pm} (s;x,p)) + m_{\pm} g X_{\pm, 3} (s;x,p) \big),
\] is constant along the characteristics line \eqref{ODE_h}.
Thus we compute that
\be \label{eq:energy_conservation}
\begin{split}
& \ \ \ \ [ V_{\pm} \cdot \nabla_x + \big( e_{\pm} ( V_{\pm} \times \frac{B}{c} 
- \nabla_x \Phi_h ) - m_{\pm} g \mathbf{e}_3 \big) \cdot \nabla_p ]
\Big( \sqrt{(m_{\pm} c)^2 + |P_{\pm} (s;x,p)|^2} 
\\& \ \ \ \ + \frac{1}{c} \big( e_{\pm} \Phi_h (X_{\pm} (s;x,p)) + m_{\pm} g X_{\pm, 3} (s;x,p) \big) \Big) 
\\& = \big( e_{\pm} ( V_{\pm} \times \frac{B}{c} 
- \nabla_x \Phi_h ) - m_{\pm} g \mathbf{e}_3 \big) \cdot \frac{P_{\pm}}{\sqrt{(m_{\pm} c)^2 + |P_{\pm}|^2} } + \frac{V_{\pm}}{c} \cdot \big( e_{\pm} \nabla_x \Phi_h + m_{\pm} g \mathbf{e}_3 \big)
\\& = \big( - e_{\pm} \nabla_x \Phi_h - m_{\pm} g \mathbf{e}_3 \big) \cdot \frac{V_{\pm}}{c} + \frac{V_{\pm}}{c} \cdot \big( e_{\pm} \nabla_x \Phi_h + m_{\pm} g \mathbf{e}_3 \big) = 0,
\end{split}
\ee
where the last line follows from $V_{\pm} \times B = (B_3 V_{\pm, 2}, - B_3 V_{\pm, 1}, 0)$ and $\frac{P_{\pm}}{\sqrt{(m_{\pm} c)^2 + |P_{\pm}|^2} } = \frac{V_{\pm}}{c}$.

\smallskip

For part (b), we can directly compute
\be \notag
\begin{split}
& \ \ \ \ \frac{d}{ds} \Big( \sqrt{(m_{\pm} c)^2 + |\P_{\pm} (s;t,x,p)|^2} + \frac{1}{c} \big( e_{\pm} \Phi_h (\X_{\pm} (s;t,x,p)) + m_{\pm} g \X_{\pm, 3} (s;t,x,p) \big) \Big) 
\\& = [\p_t + \V_{\pm} \cdot \nabla_x + \big( e_{\pm} ( \V_{\pm} \times \frac{B}{c} - \nabla_x \Phi_h - \nabla_x \Psi ) - m_{\pm} g \mathbf{e}_3 \big) \cdot \nabla_p ]
\\& \qquad \ \ \Big( \sqrt{(m_{\pm} c)^2 + |\P_{\pm} (s;t,x,p)|^2}  + \frac{1}{c} \big( e_{\pm} \Phi_h (\X_{\pm} (s;t,x,p)) + m_{\pm} g \X_{\pm, 3} (s;t,x,p) \big) \Big)
\\& = \V_{\pm} \cdot \frac{1}{c} (e_{\pm} \nabla_x \Phi_h (\X) + m_{\pm} g \mathbf{e}_3 ) + \Big( e_{\pm} \big( \V \times \frac{B}{c} - \nabla_x \Phi_h - \nabla_x \Psi \big) - m_{\pm} g \mathbf{e}_3 \Big) \cdot  \frac{\V_{\pm}}{c}
\\& = \V_{\pm} \cdot \frac{1}{c} (e_{\pm} \nabla_x \Phi_h (\X) + m_{\pm} g \mathbf{e}_3 ) - \Big( e_{\pm} \big( \nabla_x \Phi_h + \nabla_x \Psi \big) + m_{\pm} g \mathbf{e}_3 \Big) \cdot  \frac{\V_{\pm}}{c} = - \frac{e_{\pm}}{c} \nabla_x \Psi \cdot  \V_{\pm},
\end{split}
\ee
and thus obtain \eqref{eq:energy_conservation_dy}.
Moreover, from the following equality:
\be \notag
\frac{d}{ds} \Psi (s, \X_{\pm} (s;t,x,p)) 
= \p_t \Psi (s, \X_{\pm} (s;t,x,p)) + \nabla_x \Psi (s, \X_{\pm} (s;t,x,p)) \cdot \V_{\pm} (s;t,x,p),
\ee
we conclude \eqref{eq:energy_conservation_dy_2}.
\end{proof}

Finally, we define \textit{the flux} as follows:
\Be \label{def:flux}
b (t,x) := \int_{\R^3} (v_+ e_+ f_+ + v_{-} e_{-} f_{-} ) \dd p,
\Ee
where $v_{\pm} = \frac{p_{\pm}}{\sqrt{m^2_{\pm} + |p_{\pm} |^2 / c^2}}$.
The equation \eqref{eqtn:f} yields a continuity equation 
\Be \label{cont_eqtn}
\p_t \varrho  + \nabla_x \cdot b  = 0 \ \ \text{in} \ \R_+ \times \O.
\Ee 
Note that \eqref{cont_eqtn} holds in the sense of distributions, and we can obtain 
\Be \label{identity:Psi_t}
\p_t \Psi (t,x)  = (-\Delta_0)^{-1} \p_t \varrho (t,x)  = - (-\Delta_0)^{-1} (\nabla_x \cdot b ) (t,x),
\Ee
where the first equality comes from \eqref{Poisson_f} and the second equality from \eqref{cont_eqtn}.

\smallskip

From Lemma \ref{lem:conservation_law}, we list some basic properties on weight functions $w_{\pm}$ and $\w_{\pm}$ as follows.

\begin{remark} \label{rmk:w_dy}	

(a) The steady weight $w_{\pm} (x,p)$ in \eqref{w^h} is invariant along the steady characteristics \eqref{ODE_h}, such that for all $s \in [-\tbp (x,p), \tfp (x,p)]$,
\Be \label{w:invar}
w_{\pm} ( X_{\pm} (s;x,p), V_{\pm} (s;x,p)) = w_{\pm} (x,p).
\Ee 

(b) The dynamic weight $\w_{\pm} (t,x,p)$ is not invariant along the dynamic characteristics. In particular, the dynamic total energy is not invariant along the dynamic characteristics, that is, for all $s - t \in [-\tB (t,x,p), \tF (t,x,p)]$,
\Be \label{dDTE}
\begin{split}
& \frac{d}{ds} \Big( \sqrt{(m_{\pm} c)^2 + |\P_{\pm} (s;t,x,p)|^2} + \frac{1}{c} \big( e_{\pm} \big( \Phi_h (\X_{\pm} (s;t,x,p)) 
\\& \qquad + \Psi (s, \X_{\pm} (s;t,x,p)) \big) + m_{\pm} g \X_{\pm, 3} (s;t,x,p) \big) \Big) 
\\& = \frac{e_{\pm}}{c} \p_t \Psi (s,\X_{\pm} (s;t,x,p)) 
= - \frac{e_{\pm}}{c} (-\Delta_0)^{-1}  (\nabla_x \cdot b ) (s, \X_{\pm} (s;t,x,p)).
\end{split}
\Ee

(c) At the boundary, the Dirichlet boundary conditions \eqref{eqtn:Dphi} and \eqref{Poisson_f} imply that 
\Be\label{w:bdry}
w_{\pm, \beta} (x,p) \equiv \w_{\pm, \beta}  (t,x,p) \equiv e^{\beta \sqrt{(m_{\pm} c)^2 + |p|^2} } \ \ \text{at}  \ \ x_3=0.
\Ee

(d) At the initial time $t=0$, 
\Be \label{W_t=0}
\begin{split}
\w_{\pm} (0,x,p) 
= \w_{\pm, \beta, 0} (x,p)
& = e^{ \beta \left( \sqrt{(m_{\pm} c)^2 + |p|^2} + \frac{1}{c} \big( e_{\pm} ( \Phi_h (x) + \Psi(0,x) ) + m_{\pm} g x_3 \big) \right) }
\\& 
= w_{\pm, \beta}(x,p) e^{ \beta \frac{e_{\pm}}{c} (-\Delta_0)^{-1} \varrho (0, x)}.
\end{split}
\Ee
\end{remark}

\begin{prop} \label{lem:tb}

Recall the steady characteristics \eqref{ODE_h} and its self-consistent potential $\Phi_h (x)$ in \eqref{eqtn:Dphi}. Suppose the condition \eqref{Uest:DPhi} holds. Then the backward exit time \eqref{tb^h} is bounded by 
\Be \label{est:tb^h}
\tbp (x,p) \leq \min \left\{ \frac{4}{m_{\pm} g} \big( \sqrt{(m_{\pm} c)^2 + |p|^2} + \frac{3}{2c} m_{\pm} g x_{3} \big), \ \frac{4}{m_{\pm} g} |\pbp (x, p)| \right\}. 
\Ee
Moreover, for all $s \in [-\tbp (x,p), \tfp (x,p)]$,
\Be \label{est:x3}
\frac{2}{3 m_{\pm} g}
\leq \frac{ X_{\pm, 3} (s,x,p) }{ \sqrt{(m_{\pm} c)^2 + |\pbp (s;x,p)|^2} - m_{\pm} c }
\leq \frac{2}{m_{\pm} g}. 
\Ee
\end{prop}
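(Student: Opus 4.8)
The plan is to exploit the single structural fact that the vertical momentum is blind to the magnetic field. Since $B=(0,0,B_3)$ one has $(V_\pm\times B)_3=0$, so \eqref{ODE_h} gives
\[
\frac{d}{ds}P_{\pm,3}(s;x,p)=-e_\pm\p_{x_3}\Phi_h(X_\pm(s;x,p))-m_\pm g,
\]
and since $\min\big(\tfrac{m_+}{e_+},\tfrac{m_-}{e_-}\big)\le\tfrac{m_\pm}{e_\pm}$ the bound \eqref{Uest:DPhi} forces $e_\pm|\p_{x_3}\Phi_h|\le\tfrac12 m_\pm g$, hence $-\tfrac32 m_\pm g\le \frac{d}{ds}P_{\pm,3}(s;x,p)\le-\tfrac12 m_\pm g$ along the whole characteristic. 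In particular $P_{\pm,3}$ is strictly decreasing, so by \eqref{sign:v3} the vertical velocity changes sign at most once. Reading the backward trajectory forward from the boundary by setting $\bar P_3(\tau):=P_{\pm,3}(\tau-\tbp;x,p)$ and $\bar X_3(\tau):=X_{\pm,3}(\tau-\tbp;x,p)$ for $\tau\in[0,\tbp]$ (so $\bar X_3(0)=0$, $\bar X_3(\tbp)=x_3$, $\bar P_3(\tbp)=p_3$), the particle first ascends and then at most descends; since $\bar X_3>0$ just after $\tau=0$, we must have $\bar P_3(0)=\pbpn\ge 0$.

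First I would bound $\tbp$ by integrating the momentum estimate. Integration over $[0,\tbp]$ gives $\tfrac12 m_\pm g\,\tbp\le\pbpn-p_3$; when $p_3\ge 0$ this already yields $\tbp\le\tfrac{2}{m_\pm g}\pbpn\le\tfrac{2}{m_\pm g}|\pbp|$. When $p_3<0$, letting $\tau_\ast\in[0,\tbp)$ be the time at which $\bar P_3$ vanishes, integration over $[0,\tau_\ast]$ and over $[\tau_\ast,\tbp]$ gives $\tau_\ast\le\tfrac{2}{m_\pm g}\pbpn$ and $\tbp-\tau_\ast\le\tfrac{2}{m_\pm g}|p_3|\le\tfrac{2}{m_\pm g}|p|$. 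To close the estimate I use the conservation law \eqref{eq:tb_conservation} at $s=0$, i.e. $\sqrt{(m_\pm c)^2+|p|^2}+\tfrac1c\big(e_\pm\Phi_h(x)+m_\pm g x_3\big)=\sqrt{(m_\pm c)^2+|\pbp|^2}$, together with $|e_\pm\Phi_h(x)|\le\|e_\pm\nabla_x\Phi_h\|_{L^\infty}x_3\le\tfrac12 m_\pm g x_3$ (valid since $\Phi_h|_{\p\O}=0$, so $x_3=\mathrm{dist}(x,\p\O)$): this gives both $|p|\le|\pbp|$ and $\sqrt{(m_\pm c)^2+|\pbp|^2}\le\sqrt{(m_\pm c)^2+|p|^2}+\tfrac{3}{2c}m_\pm g x_3$. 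Inserting $|p|\le|\pbp|$ into the $p_3<0$ bound yields $\tbp\le\tfrac{4}{m_\pm g}|\pbp|$ in all cases, and then $|\pbp|\le\sqrt{(m_\pm c)^2+|\pbp|^2}$ and the last display give the first term in the minimum of \eqref{est:tb^h}.

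For \eqref{est:x3} I would apply \eqref{eq:tb_conservation} on $[-\tbp,0]$ and \eqref{eq:tf_conservation} on $[0,\tfp]$ (evaluating both at $s=0$ forces $|\pbp|=|\pfp|$, so the two are consistent), which express the kinetic-energy drop along the trajectory as
\[
\sqrt{(m_\pm c)^2+|\pbp|^2}-\sqrt{(m_\pm c)^2+|P_\pm(s;x,p)|^2}=\tfrac1c\big(e_\pm\Phi_h(X_\pm(s;x,p))+m_\pm g X_{\pm,3}(s;x,p)\big).
\]
Since $X_{\pm,3}(s;x,p)\ge 0$ for $s\in[-\tbp,\tfp]$ and equals $\mathrm{dist}(X_\pm(s;x,p),\p\O)$, the same bound on $\Phi_h$ as above gives $|e_\pm\Phi_h(X_\pm(s))|\le\tfrac12 m_\pm g X_{\pm,3}(s)$, so the right-hand side is trapped between $\tfrac{m_\pm g}{2c}X_{\pm,3}(s)$ and $\tfrac{3m_\pm g}{2c}X_{\pm,3}(s)$; rearranging this two-sided inequality yields \eqref{est:x3} (understood, at the two endpoints where numerator and denominator both vanish, as the corresponding two-sided inequality).

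The one step requiring genuine care is the ascent: at the top of the trajectory $V_{\pm,3}$ vanishes, so the ascent time cannot be controlled via a lower bound on the vertical speed — this is exactly difficulty (a) in action. The way around it is to work purely at the level of the vertical momentum, which — precisely because the magnetic field drops out of the $P_{\pm,3}$-equation and gravity dominates the electric force — decreases at the uniform rate at least $\tfrac12 m_\pm g$ and therefore returns to zero within time $\tfrac{2\pbpn}{m_\pm g}$; the harmless bound $\pbpn\le|\pbp|$ then closes this phase. Everything else is bookkeeping with Lemma \ref{lem:conservation_law} and the $C^1$ control \eqref{Uest:DPhi} on $\Phi_h$.
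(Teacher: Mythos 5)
Your proof of \eqref{est:tb^h} is correct and is essentially the paper's argument: both rest on the observation that the magnetic field drops out of the $P_{\pm,3}$-equation, so that \eqref{Uest:DPhi} forces $\tfrac{d}{ds}P_{\pm,3}\le -\tfrac12 m_\pm g$, the trajectory has a single vertical peak, and the conservation law \eqref{eq:tb_conservation} plus $|e_\pm\Phi_h(x)|\le\tfrac12 m_\pm g x_3$ converts momentum bounds into the stated estimates. The only organizational difference is that the paper bounds $\tbp+\tfp$ by integrating the momentum drop on $[-\tbp,s^*]$ and $[s^*,\tfp]$ and using $|\pbp|=|\pfp|$, whereas you bound $\tbp$ alone by splitting the backward arc at its own peak and controlling the descent via $|p_3|\le|p|\le|\pbp|$; both give $\tbp\le\tfrac{4}{m_\pm g}|\pbp|$ and then the first term of the minimum.

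For \eqref{est:x3} there is a mismatch you should not paper over. Rearranging your two-sided bound on $e_\pm\Phi_h+m_\pm g X_{\pm,3}$ gives, for all $s$,
\[
\frac{2}{3m_\pm g}\;\le\;\frac{X_{\pm,3}(s;x,p)}{\sqrt{(m_\pm c)^2+|\pbp(x,p)|^2}-\sqrt{(m_\pm c)^2+|P_\pm(s;x,p)|^2}}\;\le\;\frac{2}{m_\pm g},
\]
whose denominator is $\sqrt{(m_\pm c)^2+|\pbp|^2}-\sqrt{(m_\pm c)^2+|P_\pm(s)|^2}$, not the constant $\sqrt{(m_\pm c)^2+|\pbp|^2}-m_\pm c$ appearing in \eqref{est:x3}; the two agree only when the \emph{entire} momentum vanishes, and indeed the displayed form of \eqref{est:x3} fails at $s=-\tbp$ (numerator zero, denominator positive), so it cannot hold for all $s$. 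The paper's own proof establishes \eqref{est:x3} only at the peak time $s=s^*$, and even there it tacitly sets $\sqrt{(m_\pm c)^2+|P_\pm(s^*)|^2}=m_\pm c$, which requires the horizontal momentum to vanish at the peak as well. So your derivation is sound and yields the inequality that is actually true for all $s$, but your claim that "rearranging yields \eqref{est:x3}" is not literally accurate; you should either state the corrected denominator explicitly, or restrict to $s=s^*$ and flag the additional assumption the literal statement requires.
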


\begin{proof} 

For simplicity, we may omit the subscript $\pm$ when the statement/formula holds for both cases. We name this as
\Be \label{abuse}
\text{the abuse of notation about $\pm$.}
\Ee	
	
\textbf{Step 1.}
From the assumption \eqref{Uest:DPhi}, we get
\be \label{Uest2:DPhi}
\| e_{\pm} \nabla_x \Phi_h \|_{L^\infty (\bar{\O})} \leq \min \left\{m_+, m_- \right\} \times \frac{g}{2}.
\ee 
From $V \times B = (B_3 V_2, - B_3 V_1, 0)$ and \eqref{Uest2:DPhi}, the vertical 3-force is negative and  bounded from above as
\Be \label{upper_dotV_3}
\frac{d P_{\pm, 3} (s;x,p)}{ds} 
= - e_{\pm} \nabla_{x_3} \Phi_h (X_{\pm} (s;x,p) )) - {m_{\pm}} g 
\leq - \frac{m_{\pm} g }{2}.
\Ee
Using the zero Dirichlet boundary condition of $\Phi_h$ in \eqref{eqtn:Dphi} and \eqref{Uest2:DPhi}, we obtain for $x \in \O$,
\be \label{est:phih}
- \frac{m_{\pm} g x_3}{2} \leq e_{\pm} \Phi_h (x) \leq \frac{m_{\pm} g x_3}{2}.
\ee
From Lemma \ref{lem:conservation_law}, then for $(x, p) \in \O \times \R^3$,
\Be \notag
\begin{split}
& \ \ \ \ \sqrt{(m_{\pm} c)^2 + |P_{\pm} (s;x,p)|^2} + 
\frac{1}{c} \big( e_{\pm} \Phi_h (X_{\pm} (s;x,p)) + m_{\pm} g X_{\pm, 3} (s;x,p) \big) 
\\& = \sqrt{(m_{\pm} c)^2 + |\pbp (x,p)|^2} = \sqrt{(m_{\pm} c)^2 + |\pfp (x,p)|^2}. 
\end{split}
\Ee 
This implies that for $(x, p) \in \O \times \R^3$,
\be \label{est:vb=vf}
\begin{split}
& |\pbp (x, p)| = |\pfp (x, p)| 
\\& \leq \sqrt{(m_{\pm} c)^2 + |P_{\pm} (s;x,p)|^2} + 
\frac{1}{c} \big( e_{\pm} \Phi_h (X_{\pm} (s;x,p)) + m_{\pm} g X_{\pm, 3} (s;x,p) \big).
\end{split}
\ee

\smallskip

\textbf{Step 2.}
Now we claim that 
\be \label{est:vb+vf}
\tb (x,p) + \tf (x, p) \leq \frac{2}{m g} (|p_{\mathbf{b},3} (x, p)| + |p_{\mathbf{f},3} (x, p)|).
\ee

Recall from \eqref{upper_dotV_3} that $\frac{d}{ds} P_{3} (s;x,p) < 0$. This shows that $P_{3} (s;x , p)$ is strictly decreasing along characteristics \eqref{ODE_h}.
Moreover, from \eqref{sign:v3} the vertical particle velocity satisfies
\be \label{sign:v3p3}
\sgn \big( V_{3} (s;x,p) \big) = \sgn \big( P_{3} (s;x,p) \big).
\ee
Hence we can always choose the unique time $s^*$ to achieve a vertical peak such as  
\Be \label{def:s^*}
X_{3} (s^*;x,p) = \max_{s \in [-\tb (x, p), \tf (x,p)]} { X_{3} (s;x,p) }.
\Ee
Note that at $s=s^*$, the vertical velocities vanish so that 
\Be \label{def2:s^*}
0 = \frac{d}{ds} X_{3} (s, x, p)|_{s= s^*} = V_{3} (s^*, x, p) =  c \frac{P_{3} (s^*;x,p)}{P^0 (s^*;x,p) }.
\Ee
From \eqref{sign:v3p3}, it implies that $s^*$ can be identified alternatively and equivalently as
\Be \label{def1:s^*}
\begin{split}
s^* \in    [-\tb (x, p), \tf (x,p)  ] \ \ \text{if and only if} \ \  P_{3} (s^*;x,p) =0;&   \\
\text{and such a $s^*$ is unique for each $x$ and $p$.}
\end{split}
\Ee  
Now we first bound the traveling time from $s^*$ to $\tf (x,p)$. From \eqref{upper_dotV_3},
\Be \notag
P_{3} ( \tf (x,p);x,p) - P_{3} (s^*;x,p)
= \int^{\tf (x,p)}_{s^*} \frac{dP_3(\tau;x,p) }{d \tau } \dd \tau 
\leq - \frac{m g}{2} ( \tf (x,p)  -s ^*).
\Ee
Using \eqref{def1:s^*}, we get $P_3 (s_*;x,p) = 0$. Then we derive that 
\Be\label{tb+tf:1}
| p_{\mathbf{f}, 3}| (x, p) = - P_3 (\tf (x,p);x,p) 
\geq \frac{m g}{2} ( \tf (x,p)  -s ^*).
\Ee

Next, following the same argument, we claim that 
\Be\label{tb+tf:2}
|p_{\mathbf{b}, 3} (x, p)| = P_3 (-\tb (x,p);x,p)
\geq \frac{m g}{2} ( s^* - \tb(x,p) ).
\Ee
This is again due to \eqref{upper_dotV_3} and therefore 
\Be \notag 
P_3 (s^*; x,p) - P_3 (-\tb(x,p);x,p) 
= \int_{-\tb(x,p)}^{s^*} \frac{d P _3 (\tau; x,p)}{d \tau} \dd \tau  
\leq - \frac{m g}{2} ( s^* - \tb(x,p)). 
\Ee
Finally, combining \eqref{tb+tf:1} and \eqref{tb+tf:2}, we conclude \eqref{est:vb+vf}.

\smallskip

\textbf{Step 3.}
Picking $s = 0$ in \eqref{est:vb=vf} and using \eqref{est:vb+vf}, we can get 
\Be \notag
\tb (x,p) + \tf (x, p)
\leq \frac{4}{m g} |\pb (x, p)| 
\leq \frac{4}{m g} \left( \sqrt{(m c)^2 + |p|^2} + 
\frac{1}{c} \big( e \Phi_h (x) + m g x_{3} \big) \right).
\Ee
Together with \eqref{est:phih}, we obtain
\Be \label{est1:tb}
\tb (x,p) + \tf (x, p) 
\leq \frac{4}{m g} \big( \sqrt{(m c)^2 + |p|^2} + 
\frac{3}{2c} m g x_{3} \big).
\Ee
Therefore, we conclude \eqref{est:tb^h}.

\smallskip

On the other hand, \eqref{def:s^*} shows
that given $(x, p) \in \O \times \R^3$, there exists a unique peak in $x_3$-axis along the characteristic line.
Suppose when $s = s^*$, 
\[
X_3 (s^*;x,p) = \max_{s \in [-\tb(x, p), \tf (x,p)]} { X_3 (s;x,p) }.
\]
Recall that $P_3(s, x, p)$ is monotone in $s$ due to the fact that $\frac{d}{ds} P_{3} (s;x , p)<0$. From \eqref{sign:v3p3} and \eqref{def2:s^*}, it is clear that 
\begin{equation} \label{est:v3}
P_3 (s;x,p) =
\begin{cases}
> 0, & \text{if } -\tb(x, p) \leq s < s^*, \\[2pt]
= 0, & \text{if } s = s^*, \\[2pt]
< 0, & \text{if } s^* < s \leq \tf (x,p).
\end{cases}
\end{equation}
From \eqref{est:phih}, we get
\be \label{est:phih_max}
\frac{mg}{2} X_3(s^*;x,p) \leq 
e \Phi_h (X(s^*;x,p)) + m g X_3(s^*;x,p) 
\leq \frac{3mg}{2} X_3(s^*;x,p).
\ee
Using Lemma \ref{lem:conservation_law} and \eqref{est:v3}, we derive
\Be 
\begin{split}
\sqrt{(m c)^2 + |\pb (s;x,p)|^2}
& = \sqrt{(m c)^2 + |P (s^*;x,p)|^2} + 
\frac{1}{c} \big( e \Phi_h (X (s^*;x,p)) + m g X_{3} (s^*;x,p) \big) 
\\& = m c + \frac{1}{c} \big( e \Phi_h (X (s^*;x,p)) + m g X_{3} (s^*;x,p) \big).
\end{split} 
\Ee 
This implies that 
\Be\label{bound:vb}
\sqrt{(m c)^2 + |\pb (s;x,p)|^2} - mc
= \frac{1}{c} \big( e \Phi_h (X (s^*;x,p)) + m g X_{3} (s^*;x,p) \big).
\Ee
Using \eqref{bound:vb} and \eqref{est:phih_max}, we complete the proof of \eqref{est:x3} via
\Be 
\frac{mg}{2} X_3(s^*;x,p) \leq
\sqrt{(m c)^2 + |\pb (s;x,p)|^2} - mc
\leq \frac{3mg}{2} X_3(s^*;x,p).
\Ee 
\end{proof} 

In the next Lemma, we show another estimate on $\tb (x,p)$ in which $\tb (x,p)$ can be controlled by $\pb (x, p)$ and $\pbn (x,p)$. This is crucial to the regularity estimates in Section \ref{sec:RS}.

\begin{prop} \label{lem2:tb} 

Recall the steady characteristics \eqref{ODE_h} and its self-consistent potential $\Phi_h (x)$ in \eqref{eqtn:Dphi}. Suppose the condition \eqref{Uest:DPhi} holds. Then the backward exit time \eqref{tb^h} is bounded by  
\Be \label{est:tbpb3}
\tbp (x,p) \leq \Big( \frac{2}{m_{\pm} g} + \frac{\sqrt{8} \sqrt[4]{(m_{\pm} c)^2 + |\pbp (x,p)|^2}}{\sqrt{ c (m_{\pm})^3 g^2}} \Big) | \pbpn (x, p) |. 
\Ee
Moreover, for all $s \in [-\tbp (x,p), \tfp (x,p)]$,
\Be \label{est:x3pb3}
X_{\pm, 3} (s,x,p) \leq \frac{2}{(m_{\pm})^2 g} (\pbpn (x, p))^2.
\Ee
\end{prop}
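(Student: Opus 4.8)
The plan is to bound $\tbp(x,p)$ by tracking only the vertical motion, exploiting that the magnetic force contributes nothing to the third component of $\frac{d}{ds}P_\pm$ in \eqref{ODE_h}. Following the abuse of notation \eqref{abuse}, drop $\pm$. Recall from \eqref{upper_dotV_3} that $\frac{d}{ds}P_3(s;x,p)\le-\frac{mg}{2}$, so $P_3$ is strictly decreasing and, by \eqref{sign:v3p3}, there is a unique $s^*\in[-\tbp(x,p),\tfp(x,p)]$ with $P_3(s^*;x,p)=0$, at which $X_3$ is maximal; moreover $P_3(-\tbp;x,p)=\pbpn(x,p)\ge 0$. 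Integrating $\frac{d}{ds}P_3\le-\frac{mg}{2}$ from $-\tbp$ to $s^*$ immediately gives $s^*+\tbp(x,p)\le\frac{2}{mg}\pbpn$. The key idea is to write $\tbp=(s^*+\tbp)+\max\{-s^*,0\}$: the ``ascending'' piece $s^*+\tbp$ is already controlled, so everything reduces to the ``descending'' piece $\max\{-s^*,0\}$.

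I would first prove the height bound \eqref{est:x3pb3}. Since $X_3$ peaks at $s^*$, it suffices to bound $X_3(s^*;x,p)$. On $[-\tbp,s^*]$ we have $P_3\ge 0$, hence $V_3=cP_3/\sqrt{(mc)^2+|P|^2}\le P_3/m$ (the only input being $\sqrt{(mc)^2+|P|^2}\ge mc$), and integrating $\frac{d}{ds}P_3\le-\frac{mg}{2}$ gives $P_3(\tau;x,p)\le\pbpn-\frac{mg}{2}(\tau+\tbp)$, which is nonnegative on this interval. Integrating $\dot X_3=V_3$ from $-\tbp$ to $s^*$ and substituting $u=\tau+\tbp$ bounds $X_3(s^*;x,p)$ by $\frac1m\int_0^{s^*+\tbp}\big(\pbpn-\frac{mg}{2}u\big)\,\dd u$; since $u\mapsto\pbpn u-\frac{mg}{4}u^2$ is increasing on $[0,\frac{2}{mg}\pbpn]$ and $s^*+\tbp\le\frac{2}{mg}\pbpn$, evaluating at the admissible endpoint yields $X_3(s^*;x,p)\le\frac{1}{m^2g}(\pbpn)^2$, which gives \eqref{est:x3pb3}.

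Finally I would establish \eqref{est:tbpb3}. If $s^*\ge 0$ then $\tbp\le s^*+\tbp\le\frac2{mg}\pbpn$, already below the claimed right-hand side. If $-\tbp\le s^*<0$, it remains to bound $|s^*|$, the time spent descending in $X_3$ from the peak back to $x_3$; a naive distance-over-speed estimate fails because $V_3\to 0$ as $\tau\to s^*$. Instead I use $-P_3(\tau;x,p)\ge\frac{mg}{2}(\tau-s^*)$ on $[s^*,0]$ together with $\sqrt{(mc)^2+|P(\tau;x,p)|^2}\le\sqrt{(mc)^2+|\pbp(x,p)|^2}$ --- the latter from the conservation law \eqref{eq:tb_conservation} and $e\Phi_h(X)+mgX_3\ge\frac{mg}{2}X_3\ge 0$ (cf.\ \eqref{est:phih}). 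Integrating $X_3(s^*;x,p)-x_3=\int_{s^*}^0 c(-P_3)/\sqrt{(mc)^2+|P|^2}\,\dd\tau$ from below gives the quadratic lower bound $X_3(s^*;x,p)-x_3\ge\frac{cmg}{4\sqrt{(mc)^2+|\pbp|^2}}(s^*)^2$; combining with $x_3\ge 0$ and the height bound \eqref{est:x3pb3} yields $(s^*)^2\le\frac{8\sqrt{(mc)^2+|\pbp|^2}}{cm^3g^2}(\pbpn)^2$, i.e.\ $|s^*|\le\frac{\sqrt8\,\sqrt[4]{(mc)^2+|\pbp(x,p)|^2}}{\sqrt{cm^3g^2}}\pbpn$, and adding $s^*+\tbp\le\frac2{mg}\pbpn$ proves \eqref{est:tbpb3}. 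The hard part is this last step: the vanishing of $V_3$ at the peak forces one to harvest quadratic (not linear) growth of the vertical displacement away from $s^*$, and the relativistic factor $V_3=cP_3/P^0$ is exactly what produces the fourth-root weight $\sqrt[4]{(mc)^2+|\pbp|^2}$ and the extra $\sqrt c$ in the denominator compared with the non-relativistic case.
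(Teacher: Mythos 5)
Your proposal is correct and follows essentially the same route as the paper's proof: the decomposition $\tbp=(s^*+\tbp)+\max\{-s^*,0\}$ via the unique vertical peak $s^*$, the linear bound $s^*+\tbp\le\frac{2}{m_{\pm}g}\pbpn$, the height estimate \eqref{est:x3pb3} from $V_3\le P_3/m$ on the ascent, and the quadratic lower bound on the descent displacement using $-P_3(\tau)\ge\frac{m_{\pm}g}{2}(\tau-s^*)$ together with the conservation-law bound $P^0\le\sqrt{(m_{\pm}c)^2+|\pbp|^2}$. The only (harmless) deviation is that your intermediate height bound is slightly sharper than the stated $\frac{2}{(m_{\pm})^2 g}(\pbpn)^2$.
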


\begin{proof}

In the proof, we adopt the abuse of notation about $\pm$ in \eqref{abuse}.
From the assumption \eqref{Uest:DPhi}, we get
\Be \notag
\| e_{\pm} \nabla_x \Phi_h \|_{L^\infty (\bar{\O})} \leq \min \left\{m_+, m_- \right\} \times \frac{g}{2}.
\Ee
This implies that
\be  \label{lower_upper_dotp_3}
- \frac{3 m g }{2} 
\leq \frac{d P_{3} (s;x,p)}{ds} 
= - e \nabla_{x_3} \Phi_h (X (s;x,p) ) - m g 
\leq - \frac{m g }{2}.
\ee
Using Lemma \ref{lem:conservation_law}, together with \eqref{est:phih}, then for $(x, p) \in \O \times \R^3$,
\be \label{est:pb>p}
\begin{split}
& \sqrt{(m c)^2 + |\pb (x,p)|^2} 
\\& = \sqrt{(m c)^2 + |P (s;x,p)|^2} + 
\frac{1}{c} \big( e \Phi_h (X (s;x,p)) + m g X_{3} (s;x,p) \big) 
\\& \geq \sqrt{(m c)^2 + |P (s;x,p)|^2} + \frac{m g}{2 c} X_{3} (s;x,p) \geq \sqrt{(m c)^2 + |P (s;x,p)|^2}.
\end{split}
\ee
Follow the proof in Proposition \ref{lem:tb}, $P_{3} (s;x, p)$ is strictly decreasing along characteristics \eqref{ODE_h}, and the vertical particle velocity satisfies
\be \notag
\sgn \big( V_{3} (s;x,p) \big) = \sgn \big( P_{3} (s;x,p) \big).
\ee
Hence there exists a unique time $s^*$ to achieve a vertical peak, such that 
\Be \notag
X_{3} (s^*;x,p) = \max_{s \in [-\tb (x, p), \tf (x,p)]} { X_{3} (s;x,p) }.
\Ee
Moreover, at $s=s^*$, the vertical velocities satisfies that 
\Be \label{eq:V3s^*}
0 = \frac{d}{ds} X_{3} (s; x, p)|_{s= s^*} = V_{3} (s^*; x, p) =  c \frac{P_{3} (s^*;x,p)}{P^0 (s^*;x,p) },
\Ee
where $P^0 = \sqrt{(m_\pm c)^2 + |P|^2}$.

Note that 
\be \label{eq:sign_V3s^*}
\sgn \big( V_{3} (s;x,p) \big) = \sgn \big( P_{3} (s;x,p) :=
\begin{cases}
& > 0, \ \text{ if } -\tb (x,p) \leq s < s^*, \\[5pt]
& < 0, \ \text{ if } s^* < s \leq  \tf (x,p).
\end{cases}
\ee

For $p_3 \geq 0$, using \eqref{eq:sign_V3s^*} and $P_{3} (0;x,p) = p_3 \geq 0$, we derive that
\be \label{est:s^*}
-\tb (x,p) \leq 0 \leq s^*, \text{ for } p_3 \geq 0.
\ee
Using \eqref{lower_upper_dotp_3} and \eqref{est:s^*}, we get
\be \label{est:pb3tb}
\begin{split}
P_{3} (s^*;x,p) - P_{3} (- \tb (x, p);x,p)
& = \int^{s^*}_{- \tb (x, p)} \big( - e \nabla_{x_3} \Phi_h (X (s;x,p) ) - m g \big) \dd s
\\& \leq - \frac{m g }{2} (s^* + \tb (x, p)) \leq - \frac{m g }{2} \tb (x, p).
\end{split}
\ee
Together with \eqref{est:pb3tb} and \eqref{eq:V3s^*}, we have
\be \notag
- \pbn (x, p) = P_{3} (s^*;x,p) - P_{3} (- \tb (x, p);x,p)
\leq - \frac{m g }{2} \tb (x, p).
\ee
This shows that
\be \label{est2:pb3tb}
\frac{2}{m g} \pbn (x, p) \geq \tb (x, p), \text{ for } p_3 \geq 0.
\ee

For $p_3 < 0$, using \eqref{eq:sign_V3s^*} and $P_{3} (0;x,p) = p_3 < 0$, we derive that
\be \label{est2:s^*}
-  \tb (x,p)  \leq s^* < 0 \leq  \tf (x,p), \text{ for } p_3< 0.
\ee

\smallskip

\textbf{Step 1. } 
First, we claim that \eqref{est:x3pb3} holds.
Similar from \eqref{est:pb3tb} and \eqref{est2:pb3tb}, we get
\be \notag
- \pbn (x, p) = P_{3} (s^*;x,p) - P_{3} (- \tb (x, p);x,p)
\leq - \frac{m g }{2} (s^* + \tb (x, p)),
\ee
and hence
\be \label{est3:pb3tb}
\frac{2}{m g} \pbn (x, p) \geq s^* + \tb (x, p).
\ee
Furthermore, we have
\Be \label{eq:x3s^*}
\begin{split} 
X_3 (s^*; x,p) 
&= X_3 (s^*; x,p) - X_3 (-\tb(x,p);x,p) 
\\& = \int_{-\tb(x,p)}^{s^*} V_{3} (s; x, p) \dd s
= \int_{-\tb(x,p)}^{s^*} c \frac{P_{3} (s;x,p)}{P^0 (s;x,p) } \dd s.
\end{split}
\Ee
Since $P_{3} (- \tb (x,p); x,p) = \pbn (x,p)$, from \eqref{lower_upper_dotp_3} we derive, for $- \tb (x, p) \leq s \leq s^*$,
\Be \notag
P_{3} ( s;x,p) - P_{3} (- \tb (x,p);x,p)
= \int^{s}_{- \tb (x,p)} \frac{dP_3(\tau;x,p) }{d \tau } \dd \tau 
\leq - \frac{m g}{2} ( s + \tb (x,p)).
\Ee
This, together with \eqref{eq:sign_V3s^*}, shows that 
\be \label{est:p3pb3}
0 \leq P_{3} ( s;x,p)
\leq \pbn (x,p) - \frac{m g}{2} ( s + \tb (x,p)),
\text{ for } - \tb (x, p) \leq s \leq s^*.
\ee
Inputting \eqref{est:p3pb3} into \eqref{eq:x3s^*}, we obtain
\Be \label{est:x3s^*}
\begin{split} 
X_3 (s^*; x,p) 
& = \int_{-\tb(x,p)}^{s^*} c \frac{P_{3} (s;x,p)}{P^0 (s;x,p) } \dd s
\leq c \int_{-\tb(x,p)}^{s^*} \frac{\pbn (x,p) - \frac{m g}{2} ( s + \tb (x,p))}{P^0 (s;x,p) } \dd s.
\end{split}
\Ee
Since $P^0 (s;x,p) = \sqrt{(mc)^2 + |P (s;x,p)|^2} \geq m c$, we bound \eqref{est:x3s^*} by
\Be \label{est2:x3s^*}
\begin{split} 
X_3 (s^*; x,p) 
& \leq \int_{-\tb(x,p)}^{s^*} \frac{\pbn (x,p) - \frac{m g}{2} ( s + \tb (x,p))}{m } \dd s
\\& = (s^* + \tb (x, p)) \frac{\pbn (x,p)}{m} -
\frac{g}{2} \int_{-\tb(x,p)}^{s^*} ( s + \tb (x,p) ) \dd s 
\\& =  (s^* + \tb (x, p)) \frac{\pbn (x,p)}{m} -
\frac{g}{4} ( s^* + \tb (x,p) )^2
\leq \frac{2}{m^2 g} (\pbn (x, p))^2.
\end{split}
\Ee
where the last inequality follows from \eqref{est3:pb3tb} and $( s^* + \tb (x,p) )^2 \geq 0$. Thus, we prove \eqref{est:x3pb3}.

\smallskip

\textbf{Step 2. }  
Second, we claim that \eqref{est:tbpb3} holds. The proof is already complete in the case of $p_3\geq 0$ due to \eqref{est2:pb3tb}. We only consider the other case $p_3<0$. Then, from \eqref{est2:s^*}, we note that  $s^*   \leq 0$. 
Hence we have that
\Be \label{eq:x3x3s^*}
X_3 (0; x,p) - X_3 (s^*;x,p) 
= \int^{0}_{s^*} V_{3} (s; x, p) \dd s
= \int^{0}_{s^*} c \frac{P_{3} (s;x,p)}{P^0 (s;x,p) } \dd s.
\Ee
Since $X_3 (0; x,p) = x_3 \geq 0$, \eqref{eq:x3x3s^*} implies
\Be \label{est:x3x3s^*}
- \int^{0}_{s^*} c \frac{P_{3} (s;x,p)}{P^0 (s;x,p) } \dd s \leq X_3 (0; x,p) - \int^{0}_{s^*} c \frac{P_{3} (s;x,p)}{P^0 (s;x,p) } \dd s = X_3 (s^*;x,p).
\Ee 
Recall that $P_{3} (s^*; x,p) = 0$, from \eqref{lower_upper_dotp_3} we derive, for $s^* \leq s \leq 0$,
\Be \label{est:p3p3s^*}
P_{3} ( s;x,p) = P_{3} ( s;x,p) - P_{3} (s^*; x,p)
= \int^{s}_{s^*} \frac{dP_3(\tau;x,p) }{d \tau } \dd \tau 
\leq - \frac{m g}{2} ( s - s^*).
\Ee
Using \eqref{est:p3p3s^*} and \eqref{est:pb>p}, we have
\be \label{est2:x3x3s^*}
\begin{split}
- \int^{0}_{s^*} c \frac{P_{3} (s;x,p)}{P^0 (s;x,p) } \dd s 
& \geq \int^{0}_{s^*} c \frac{ \frac{m g}{2} ( s - s^*) }{\sqrt{(m c)^2 + |\pb (x,p)|^2}} \dd s
\\& = \frac{c m g }{4 \sqrt{(m c)^2 + |\pb (x,p)|^2}} (s^*)^2.
\end{split}
\ee
Inputting \eqref{est2:x3x3s^*} into \eqref{est:x3x3s^*}, together with \eqref{est2:x3s^*}, we derive 
\Be \label{est3:s^*}
\frac{c m g }{4 \sqrt{(m c)^2 + |\pb (s;x,p)|^2}} (s^*)^2
\leq X_3 (s^*; x,p) \leq \frac{2 }{m^2 g} (\pbn (x, p))^2.
\Ee
Since $s^* < 0$, \eqref{est3:s^*} shows that
\be \label{est4:s^*}
- s^*
\leq \sqrt{ \frac{8 \sqrt{(m c)^2 + |\pb (s;x,p)|^2}}{c m^3 g^2}} | \pbn (x, p) |.
\ee
Finally, combining \eqref{est3:pb3tb} and \eqref{est4:s^*}, we conclude that
\be \notag
\tb (x, p) = \tb (x, p) + s^* - s^* \leq 
\Big( \frac{2}{m g}  + \frac{\sqrt{8} \sqrt[4]{(m c)^2 + |\pb (s;x,p)|^2}}{\sqrt{ c m^3 g^2}} \Big) | \pbn (x, p) |,
\ee
and prove \eqref{est:tbpb3}.
\end{proof} 

\begin{prop} \label{lem:tB}

Recall the dynamic characteristics \eqref{ODE_F} and its self-consistent potentials $\Phi(x)$ and $\Psi(t,x)$ in \eqref{eqtn:Dphi} and \eqref{Poisson_f}, respectively. Suppose the conditions \eqref{Uest:DPhi}, \eqref{Uest:DxPsi} and \eqref{Uest:Dxphi_F} hold.
Then for all $(t,x,p) \in [0, \infty) \times  \bar\O \times \R^3$ satisfying $t - \tBp (t,x,p) \geq 0$, the backward exit time \eqref{tb} is bounded by
\Be \label{est:tB}
\begin{split}
\tBp (t,x,p) 
& \leq \frac{1}{\min \{ \frac{g}{4 \sqrt{2}}, \frac{c}{\sqrt{10}} \}} \times \max \left\{ x_3, \ \frac{2 c}{m_{\pm} g} \big( \sqrt{(m_{\pm} c)^2 + |p|^2} - m_{\pm} c + \frac{3 m_{\pm} g}{2 c} x_{3} - p_3 \big) \right\} 
\\& \qquad + \frac{4}{m_{\pm} g} |p| + 1.
\end{split}
\Ee
Moreover, the forward exit time \eqref{tb} is bounded by
\be \label{est:tF}
\begin{split}
\tFp (t,x,p) 
& \leq \frac{1}{\min \{ \frac{g}{4 \sqrt{2}}, \frac{c}{\sqrt{10}} \}} \times \max \left\{ x_3, \ \frac{2 c}{m_{\pm} g} \big( \sqrt{(m_{\pm} c)^2 + |p|^2} - m_{\pm} c + \frac{3 m_{\pm} g}{2 c} x_{3} + p_3 \big) \right\} 
\\& \qquad + \frac{4}{m_{\pm} g} |p| + 1.
\end{split}
\ee
\end{prop}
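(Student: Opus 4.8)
The strategy follows that of Proposition~\ref{lem:tb}: split the backward characteristic $s\mapsto\Z_{\pm}(s;t,x,p)$ over $s\in[t-\tBp(t,x,p),\,t]$ into an ascending and a descending phase relative to the vertical peak, bound each phase, and add. First I would record the elementary pointwise estimates along \eqref{ODE_F}: since $\V_{\pm}\times B$ has vanishing third component, \eqref{Uest:Dxphi_F} gives $-\tfrac32 m_{\pm}g\le\tfrac{d}{ds}\P_{\pm,3}\le-\tfrac12 m_{\pm}g$, so $\P_{\pm,3}(\cdot;t,x,p)$ is strictly decreasing in $s$ and, by \eqref{sign:v3_dy}, $\sgn\V_{\pm,3}=\sgn\P_{\pm,3}$ throughout; since the magnetic force does no work on the horizontal momentum, \eqref{Uest:DxPsi} together with \eqref{Uest:DPhi} gives $\big|\tfrac{d}{ds}\sqrt{\P_{\pm,1}^2+\P_{\pm,2}^2}\big|\le\tfrac{m_{\pm}g}{2\sqrt2}$; and the Dirichlet conditions \eqref{eqtn:Dphi}, \eqref{Poisson_f} give $|e_{\pm}\Phi_h(x)|\le\tfrac{m_{\pm}g}{2}x_3$ for $x\in\O$.

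Because $\P_{\pm,3}$ is monotone in $s$, there is a well-defined peak time $\sigma$: if $p_3<0$, set $\sigma=s^{\ast}\in(t-\tBp,t)$, the unique instant with $\P_{\pm,3}(s^{\ast})=0$; if $p_3\ge0$, set $\sigma=t$. On $[t-\tBp,\sigma]$ one has $\P_{\pm,3}\ge0$, hence $\X_{\pm,3}$ is maximal at $\sigma$, say $h_{\max}:=\X_{\pm,3}(\sigma)$, and decreases monotonically to $0$ as $s\downarrow t-\tBp$. The ascending time is trivially bounded: integrating $\tfrac{d}{ds}\P_{\pm,3}\le-\tfrac12 m_{\pm}g$ over $[\sigma,t]$ gives $t-\sigma\le\tfrac{2|p_3|}{m_{\pm}g}\le\tfrac{2|p|}{m_{\pm}g}$. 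For $h_{\max}$: if $p_3\ge0$ then $h_{\max}=x_3$; if $p_3<0$ I would use the near-conservation law \eqref{eq:energy_conservation_dy}, which with \eqref{Uest:DxPsi} and $|\V_{\pm}|<c$ yields $\big|\tfrac{d}{ds}\mathcal E_{\pm}\big|\le\tfrac{m_{\pm}g}{48}$ for $\mathcal E_{\pm}(s):=\sqrt{(m_{\pm}c)^2+|\P_{\pm}(s;t,x,p)|^2}+\tfrac1c\big(e_{\pm}\Phi_h(\X_{\pm}(s))+m_{\pm}g\X_{\pm,3}(s)\big)$, hence $|\mathcal E_{\pm}(t)-\mathcal E_{\pm}(\sigma)|\le\tfrac{m_{\pm}g}{48}(t-\sigma)\le\tfrac{|p_3|}{24}$; combining this with $\mathcal E_{\pm}(t)\le\sqrt{(m_{\pm}c)^2+|p|^2}+\tfrac{3m_{\pm}g}{2c}x_3$ and $\mathcal E_{\pm}(\sigma)\ge m_{\pm}c+\tfrac{m_{\pm}g}{2c}h_{\max}$ (using $\P_{\pm,3}(\sigma)=0$ and $e_{\pm}\Phi_h\ge-\tfrac{m_{\pm}g}{2}\X_{\pm,3}$) gives $h_{\max}\le\tfrac{2c}{m_{\pm}g}\big(\sqrt{(m_{\pm}c)^2+|p|^2}-m_{\pm}c+\tfrac{3m_{\pm}g}{2c}x_3-p_3\big)$ after absorbing $\tfrac{|p_3|}{24}$ into $-p_3$. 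In either case $h_{\max}\le\max\big\{x_3,\ \tfrac{2c}{m_{\pm}g}(\cdots)\big\}$, the max appearing in \eqref{est:tB}.

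It remains to bound the descending time $D:=\sigma-(t-\tBp)$. If $D\le\tfrac{4}{m_{\pm}g}|p|+1$ it is already absorbed into the additive remainder; otherwise, on the sub-interval where $\sigma-s\ge\tfrac{4}{m_{\pm}g}|p|+1$ I would integrate the one-sided force bounds to get $\P_{\pm,3}(s)\ge\tfrac{m_{\pm}g}{2}(\sigma-s)$ while $\sqrt{\P_{\pm,1}^2+\P_{\pm,2}^2}(s)\le 2|p|+\tfrac{m_{\pm}g}{2\sqrt2}(\sigma-s)\lesssim m_{\pm}g(\sigma-s)$; thus $\P_{\pm,3}(s)$ dominates $|\P_{\pm}(s)|$ up to a fixed constant, and — splitting the regimes $\P_{\pm,3}(s)<m_{\pm}c$ (where $\sigma-s\ge1$ forces $\P_{\pm,3}(s)\ge\tfrac{m_{\pm}g}{2}$) and $\P_{\pm,3}(s)\ge m_{\pm}c$ — one obtains the uniform lower bound $\V_{\pm,3}(s)=\tfrac{\P_{\pm,3}(s)}{\sqrt{m_{\pm}^2+|\P_{\pm}(s)|^2/c^2}}\ge\min\{\tfrac{g}{4\sqrt2},\tfrac{c}{\sqrt{10}}\}$. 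Integrating against the height drop, $h_{\max}=\int_{t-\tBp}^{\sigma}\V_{\pm,3}(s)\,ds\ge\min\{\tfrac{g}{4\sqrt2},\tfrac{c}{\sqrt{10}}\}\,\big(D-\tfrac{4}{m_{\pm}g}|p|-1\big)$, so $D\le\tfrac{h_{\max}}{\min\{g/(4\sqrt2),\,c/\sqrt{10}\}}+\tfrac{4}{m_{\pm}g}|p|+1$; adding $t-\sigma\le\tfrac{2}{m_{\pm}g}|p|$, plugging in the bound for $h_{\max}$, and rebalancing the $|p|/(m_{\pm}g)$ terms gives \eqref{est:tB}. The forward bound \eqref{est:tF} follows by the same argument, mutatis mutandis, applied to $s\mapsto\Z_{\pm}(s;t,x,p)$ on $[t,t+\tFp]$, with $\P_{\pm,3}$ now crossing zero \emph{forward} in time and the roles of $-p_3$ and $p_3$ interchanged.

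The main obstacle is precisely the uniform positive lower bound on $\V_{\pm,3}$ on the far part of the descent — this is difficulty~(a) of the introduction, that a relativistic particle's vertical velocity is not governed by $\P_{\pm,3}$ alone. Two ingredients make it work: the structural cancellation that keeps $\sqrt{\P_{\pm,1}^2+\P_{\pm,2}^2}$ growing at most at the slow rate $\tfrac{m_{\pm}g}{2\sqrt2}$, and the choice of time frame $\sigma-s\ge\tfrac{4}{m_{\pm}g}|p|+1$, which simultaneously makes $\P_{\pm,3}(s)$ outgrow the horizontal momentum (so $\P_{\pm,3}\sim|\P_{\pm}|$) and pins down the non-relativistic regime via $\P_{\pm,3}(s)\ge\tfrac{m_{\pm}g}{2}$. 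A secondary point, handled quantitatively by \eqref{Uest:DxPsi}, is that $\mathcal E_{\pm}$ is only almost conserved here, unlike in the steady case; its drift over the already-controlled ascending time costs merely the harmless $\tfrac{|p_3|}{24}$ term.
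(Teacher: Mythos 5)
Your proposal is correct and follows essentially the same route as the paper: monotonicity of $\P_{\pm,3}$ and the sign relation \eqref{sign:v3_dy}, the peak decomposition, the near-conservation law \eqref{eq:energy_conservation_dy} to bound the peak height by the max term, the slow growth $\big|\tfrac{d}{ds}\sqrt{\P_{\pm,1}^2+\P_{\pm,2}^2}\big|\le\tfrac{m_\pm g}{2\sqrt2}$, and a time threshold of order $\tfrac{4}{m_\pm g}|p|+1$ beyond which $5\P_{\pm,3}^2\ge|\P_\pm|^2$ and $\P_{\pm,3}\gtrsim m_\pm g$, yielding $\V_{\pm,3}\ge\min\{\tfrac{g}{4\sqrt2},\tfrac{c}{\sqrt{10}}\}$. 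The one bookkeeping difference is that you anchor the threshold at the peak time $\sigma$ and then add the ascent $t-\sigma\le\tfrac{2|p_3|}{m_\pm g}$ separately, which gives remainder $\tfrac{6}{m_\pm g}|p|+1$ rather than the stated $\tfrac{4}{m_\pm g}|p|+1$; the clean "rebalancing" is the paper's choice $\bar s=t-\tfrac{4}{m_\pm g}|p|-1$ anchored at $t$, under which the ascent is already contained in $[\bar s,t]$ and need not be counted twice.
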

 
\begin{proof}

In the proof, we adopt the abuse of notation about $\pm$ in \eqref{abuse}.
We always assume that $t - \tB(t,x,p) \geq 0$ throughout this proof. 

\smallskip

First, we show \eqref{est:tB}. 
If $\tB (t,x,p) < \frac{4}{m g} |p| + 1$, then \eqref{est:tB} directly follows. Without loss of generality, from now on we assume that 
\Be \label{assume:tB}
\tB (t,x,p) \geq \frac{4}{m g} |p| + 1.
\Ee
Further, we always assume that $s \in [t- \tB (t,x,p), t]$. In particular, this implies that 
\Be \label{cond:t-s}
t-s \leq \tB(t,x,p).
\Ee
	
\smallskip	

\textbf{Step 1.}
From the assumption \eqref{Uest:Dxphi_F}, we get
\be \label{Bootstrap2}
e_{\pm} \sup_{0 \leq t < \infty}\| \nabla_x \big( \Phi + \Psi (t) \big) \|_{L^\infty(\O)}  
\leq \min \left\{m_+, m_- \right\} \times \frac{g}{2}.
\ee
From $\V \times B = (B_3 \V_2, - B_3 \V_1, 0)$ and \eqref{Bootstrap2}, the vertical 3-force is bounded from above as
\Be \label{upper_V_3_dy}
\frac{d \P_{3} (s;t, x, p)}{ds} 
\leq - \frac{m g}{2}, \ \text{for all} \ 0 \leq s \leq t < \infty.
\Ee
Since $p_3 = \P_{3} (t;t,x,p)$, this ensures that
\be \label{delta_V_3_dy}
\P_{3} (s;t, x,p) - p_3  
\geq \frac{m g}{2} |s-t|, 
\ \text{for $s - t \in [- \tB (t,x,p), 0]$ with $0 \leq s \leq t < \infty$.}
\ee
Note that \eqref{upper_V_3_dy} also implies $\P_{3} (s;x , p)$ is strictly decreasing along characteristics \eqref{ODE_F}.
Moreover, from \eqref{sign:v3_dy} the vertical particle velocity satisfies
\be \label{upper_Velocity_3_dy}
\sgn \big( \V_{3} (s;t,x,p) \big) = \sgn \big( \P_{3} (s;t,x,p) \big).
\ee

\smallskip

\textbf{Step 2.}
Recall the assumption that $t - \tB(t,x,p) \geq 0$.
From \eqref{ODE_F}, \eqref{upper_V_3_dy} and \eqref{upper_Velocity_3_dy}, we obtain that for any $(t, x, p) \in [0, \infty) \times \bar\O \times \R^3$, there exists a unique vertical peak $\X_3 (s;t,x,p)$ along the characteristics for $s - t \in [- \tB (t,x,p), 0]$ and $t < \infty$, that is, 
\be \label{def:s*_tB}
\X_3 (s^*;t,x,p) = \max_{s - t \in [- \tB (t,x,p), 0]} { \X_3 (s;t,x,p) }.
\ee
Using $\P_3(s;t, x, p)$ is monotone in $s$, together with \eqref{ODE_F} and \eqref{upper_Velocity_3_dy}, we derive that for $s - t \in [- \tB (t,x, p), 0]$, 
\be \label{V3s*}
s^* =
\begin{cases}
t, & \text{if } p_3 \geq 0, \\[2pt]
t - \tB (t,x, p) < s^* < t, & \text{if } p_3 < 0,
\end{cases}
\ \text{ and } \
P_3 (s^*;t,x,p) =
\begin{cases}
 p_3, & \text{if } p_3 \geq 0, \\[2pt]
0, & \text{if } p_3 < 0.
\end{cases}
\ee

Here we claim that at $s = s^*$ in \eqref{def:s*_tB},
\be \label{est:x3_dy}
\X _{3} (s^*;t,x,p)
\leq \max \left\{ x_3, \ \frac{2 c}{m g} \big( \sqrt{(m c)^2 + |p|^2} - mc + \frac{3 m g}{2 c} x_{3} - p_3 \big) \right\}.
\ee
To prove this claim, we consider two cases: $p_3 < 0$ and $p_3 \geq 0$.

\smallskip

\textbf{\underline{Case 1:} $p_3 < 0$.} 
By integrating  \eqref{upper_V_3_dy} over $(s^*, t)$, we obtain, for $p_3 = \P_3 (t;t,x,p) < 0$,
\be \notag
p_3 - \P_3 (s^*;t,x,p) = \int^{t}_{s^*} \frac{d \P_{3} (s;t,x,p)}{ds} \dd s \leq - \frac{m g}{2} (t - s^*).
\ee
From \eqref{V3s*} and $p_3 = \P_3 (t;t,x,p) < 0$, we have
\be \label{est:tb1}
t - s^* \leq - \frac{2}{m g} p_3.
\ee 
Using \eqref{Uest:DxPsi}, \eqref{eq:energy_conservation_dy} and \eqref{est:tb1}, then for $p_3 = \P_3 (t;t,x,p) < 0$ and $s^* \leq  s \leq t$,
\Be \label{est1:energy_conservation_dy}
\begin{split}
& \ \ \ \ \sqrt{(m c)^2 + |\P (s;t,x,p)|^2} + \frac{1}{c} \big( e \Phi_h (\X (s;t,x,p)) + m g \X_{3} (s;t,x,p) \big) 
\\& \leq \sqrt{(m c)^2 + |p|^2} + \frac{1}{c} \big( e | \Phi_h (x) | + m g x_3 \big) 
\\& \qquad \qquad + \sup_{s^* \leq  s \leq t} \Big| e \nabla_x \Psi (s, \X (s;t,x,p)) \cdot \frac{\V (s;t,x,p)}{c} \Big| \times (t - s^*) 
\\& \leq \sqrt{(m c)^2 + |p|^2} + \frac{1}{c} \big( e | \Phi_h (x) | + m g x_3 \big) + \frac{m g}{2} (t - s^*)
\\& \leq \sqrt{(m c)^2 + |p|^2} + \frac{1}{c} \big( e | \Phi_h (x) | + m g x_3 \big) - p_3.
\end{split}
\Ee 
On the other hand, using \eqref{Uest:DPhi} and $\Phi_h = \Psi = 0$ on $\p\O$, we derive that
\be \label{est2:energy_conservation_dy}
\begin{split}
& \sqrt{(m c)^2 + |\P (s^*;t,x,p)|^2} + \frac{1}{c} \big( e \Phi_h (\X (s^*;t,x,p)) + m g \X_{3} (s^*;t,x,p) \big) 
\\& \ \ \ \ \geq mc + \frac{m g}{2 c} \X _{3} (s^*;t,x,p), 
\\& \sqrt{(m c)^2 + |p|^2} + \frac{1}{c} \big( e | \Phi_h (x) | + m g x_3 \big) - p_3 
\leq \sqrt{(m c)^2 + |p|^2} + \frac{3 m g}{2 c} x_{3} - p_3. 
\end{split}
\ee
Combining \eqref{est1:energy_conservation_dy} with \eqref{est2:energy_conservation_dy}, we derive that
\be \notag
mc + \frac{m g}{2 c} \X _{3} (s^*;t,x,p)
\leq \sqrt{(m c)^2 + |p|^2} + \frac{3 m g}{2 c} x_{3} - p_3.
\ee
This implies that
\be \label{est1:peak_dy}
\X _{3} (s^*;t,x,p)
\leq \frac{2 c}{m g} \big( \sqrt{(m c)^2 + |p|^2} - mc + \frac{3 m g}{2 c} x_{3} - p_3 \big).
\ee

\smallskip

\textbf{\underline{Case 2:} $p_3 \geq 0$.} 
From \eqref{V3s*}, for $p_3 = \P_3 (t;t,x,p) \geq 0$, $s^* = t$. Thus, we obtain that
\be \label{est2:peak_dy}
\X_{3} (s^*;t,x,p) = x_3.
\ee
Finally we conclude \eqref{est:x3_dy} by combining \eqref{est1:peak_dy} and \eqref{est2:peak_dy}.

\smallskip

\textbf{Step 3.}
From $\V \times B = (B_3 \V_2, - B_3 \V_1, 0)$ and \eqref{ODE_F}, we derive that
\be \notag
\begin{split}
& \ \ \ \ \frac{d}{ds} \big( |\P_{1} (s;t, x,p)|^2 + |\P_{2} (s;t, x,p)|^2 \big)
\\& = 2 (\P_{1}, \P_{2}) \cdot \frac{d}{ds} \big( \P_{1} (s;t, x,p), \P_{2} (s;t, x,p) \big)
\\& = 2 (\P_{1}, \P_{2}) \cdot \big( \frac{e}{c} (B_3 \V_2, - B_3 \V_1) - e \nabla_{x_\|} \Psi  (s, \X) - e \nabla_{x_\|} \Phi_h (\X) \big)
\\& = - 2 (\P_{1}, \P_{2}) \cdot \big( e \nabla_{x_1} (\Phi(\X) + \Psi(s,\X)), e \nabla_{x_2} (\Phi(\X) + \Psi(s,\X)) \big).
\end{split}
\ee
From \eqref{Uest:Dxphi_F} and Cauchy–Schwarz inequality,
\be \notag
\Big| \frac{d}{ds} \big( |\P_{1} (s;t, x,p)|^2 + |\P_{2} (s;t, x,p)|^2 \big) \Big|
\leq \sqrt{|\P_{1} (s;t, x,p)|^2 + |\P_{2} (s;t, x,p)|^2} \times \frac{m g}{\sqrt{2}},
\ee
and
\be \notag
\Big| \frac{d}{ds} \sqrt{|\P_{1} (s;t, x,p)|^2 + |\P_{2} (s;t, x,p)|^2} \Big|
\leq \frac{m g}{2 \sqrt{2}}.
\ee
By Gronwall's inequality and $p = \P (t;t,x,p)$, we derive that for $s - t \in [-\tB (t,x,p), 0]$,
\be \label{back_delta_V_12_dy}
\sqrt{|\P_{1} (s;t, x,p)|^2 + |\P_{2} (s;t, x,p)|^2}
\leq \sqrt{p^2_1 + p^2_2} + \frac{m g}{2\sqrt{2}} (t-s).
\ee

Now we choose 
\Be \label{bar s}
\bar s :  = t - \frac{4}{m g} |p| - 1.
\Ee
Note that $t- \bar s = \frac{4}{m g} |p| + 1 \leq \tB(t,x,p) $ due to \eqref{assume:tB}, so that it satisfies the condition of \eqref{cond:t-s} for all $s \in [t- \tB (t,x,p), \bar s]$.  
From \eqref{delta_V_3_dy}, we derive for all $s \in [t- \tB (t,x,p), \bar s]$,
\be \notag
\P_{3} ( s;t,x,p) + | p_3 | 
\geq \P_{3} ( s;t,x,p) - p_3 
\geq \frac{m g}{2} | s-t |.
\ee
Therefore, we further derive that 
\be \label{est1:lower_V_3_dy}
\P_{3} ( s;t,x,p) \geq \frac{m g}{2} (t - s) - |p_3| 
\geq \frac{m g}{2} (t - s) - |p|, 
\ \text{for all } s \in [t- \tB (t,x,p), \bar s].
\ee
Using $t - s \geq t - \bar{s} = \frac{4}{mg} |p| + 1$, together with \eqref{est1:lower_V_3_dy}, we obtain 
\be \label{est:lower_V_3_dy}
\P_{3} ( s;t,x,p) \geq \frac{m g}{4} (t- s) + \frac{m g}{4}, 
\ \text{for all } s \in [t- \tB (t,x,p), \bar s].
\ee
Using \eqref{back_delta_V_12_dy} and our choice \eqref{bar s}, we have that 
\be \label{est:upper_V_12_dy}
\begin{split}
& \ \ \ \ \sqrt{ |\P_{1} ( s;t, x,p)|^2 + |\P_{2} (s;t, x,p)|^2}
\\& \leq |p| + \frac{m g}{2\sqrt{2}} (t-s) < \frac{m g}{2} (t- s) - \frac{m g}{4}
\\& \leq \frac{m g}{2} (t- s), 
\ \text{for all } s \in [t- \tB (t,x,p), \bar s].
\end{split}
\ee
Using \eqref{est:lower_V_3_dy} and \eqref{est:upper_V_12_dy}, we deduce that for all $s \in [t- \tB (t,x,p), \bar s]$,
\be \label{est:lower_P3_dy}
\begin{split}
\P_{3} ( s;t, x,p) & \geq \frac{m g}{4}, 
\\ 4 | \P_{3} (s;t, x,p) |^2  & \geq \frac{(m g)^2}{4} (t-s)^2 \geq |\P_{1} (s;t, x,p) |^2 + |\P_{2} (s;t, x,p)|^2.
\end{split}
\ee
Hence, we obtain that for all $s \in [t- \tB (t,x,p), \bar s]$,
\be \notag
\begin{split}
\V_3 (s;t, x,p) = \frac{ \P_{3} (s;t, x,p) }{\sqrt{m^2 + |\P (s;t, x,p) |^2 / c^2 }} > \frac{\P_3 (s;t, x,p)}{\sqrt{m^2 + 5 |\P_3 (s;t, x,p) |^2 / c^2 }}.
\end{split}
\ee
Using $m^2 + 5 |\P_3 (s;t, x,p) |^2 / c^2 \leq \max \{ 2 m^2, 10 |\P_3 (s;t, x,p) |^2 / c^2 \}$ and $\P_{3} ( s;t, x,p) \geq \frac{m g}{4}$, we derive that
\be \label{est:lower_HV_3_dy}
\begin{split}
\V_3 (s;t, x,p) 
& > \min \{ \frac{\P_3 (s;t, x,p)}{\sqrt{2m^2}}, \frac{\P_3 (s;t, x,p)}{\sqrt{10 |\P_3 (s;t, x,p) |^2 / c^2}} \}
\\& \geq c_1 := \min \{ \frac{g}{4 \sqrt{2}}, \frac{c}{\sqrt{10}} \}, 
\ \text{for all } s \in [t- \tB (t,x,p), \bar s].
\end{split}  
\ee
Using \eqref{est:lower_HV_3_dy} together with \eqref{ODE_F}, we derive that 
\be \notag
\X_{3} (s^*;t,x,p) \geq \X_{3} (\bar{s};t,x,p)
= \int_{t- \tB (t,x,p)}^{\bar s } \V_3 (\tau;t,x,p) \dd \tau
> c_1 (\bar{s} - (t - \tB (t,x,p))).
\ee
On the other hand, \eqref{est:x3_dy} further implies that 
\Be
c_1 (\bar{s} - (t - \tB (t,x,p))) \leq \max \left\{ x_3, \ \frac{2 c}{m g} \big( \sqrt{(m c)^2 + |p|^2} - mc + \frac{3 m g}{2 c} x_{3} - p_3 \big) \right\}.
\Ee
From the choice of $\bar{s}$ in \eqref{bar s_tF}, finally we derive 
\be \notag
\begin{split}
\tB (t,x,p) 
\leq \frac{1}{c_1} \times \max \left\{ x_3, \ \frac{2 c}{m g} \big( \sqrt{(m c)^2 + |p|^2} - mc + \frac{3 m g}{2 c} x_{3} - p_3 \big) \right\} + \frac{4}{m g} |p| + 1,
\end{split}
\ee
and thus conclude \eqref{est:tB}.

\smallskip 

Second, we prove \eqref{est:tF}.
If $\tF (t,x,p) < \frac{4}{m g} |p| + 1$, then \eqref{est:tF} directly follows. Without loss of generality, from now on we assume that 
\Be \label{assume:tF}
\tF (t,x,p) \geq \frac{4}{m g} |p| + 1.
\Ee
Following the step 1 in the proof of \eqref{est:tB}, we have that for any $(t, x, p) \in [0, \infty) \times \bar\O \times \R^3$, there exists a unique vertical peak $\X_3 (s;t,x,p)$ along the characteristics for $s - t \in [0, \tF (t,x,p)]$ and $t < \infty$, that is, 
\be \label{def:s*_tF}
\X_3 (s^{*};t,x,p) = \max_{s - t \in [0, \tF (t,x,p)]} { \X_3 (s;t,x,p) }.
\ee
Using $\P_3(s;t, x, p)$ is monotone in $s$, together with \eqref{ODE_F} and \eqref{upper_Velocity_3_dy}, we derive that for $s - t \in [0, \tF (t,x,p)]$, 
\be \label{V3s*_tF}
s^* =
\begin{cases}
t \leq s^* < t + \tF (t,x,p), & \text{if } p_3 \geq 0, \\[2pt]
t, & \text{if } p_3 < 0,
\end{cases}
\ \text{ and } \
P_3 (s^*;t,x,p) =
\begin{cases}
0, & \text{if } p_3 \geq 0, \\[2pt]
p_3, & \text{if } p_3 < 0.
\end{cases}
\ee

\smallskip

\textbf{Step 1.}
Similarly, we first claim that at $s = s^*$ in \eqref{def:s*_tF},
\be \label{est:x3_dy_tF}
\X _{3} (s^*;t,x,p)
\leq \max \left\{ x_3, \ \frac{2 c}{m g} \big( \sqrt{(m c)^2 + |p|^2} - mc + \frac{3 m g}{2 c} x_{3} + p_3 \big) \right\}.
\ee
To prove this claim, we consider two cases: $p_3 < 0$ and $p_3 \geq 0$.

\smallskip

\textbf{\underline{Case 1:} $p_3 < 0$.}
From \eqref{V3s*_tF}, for $p_3 = \P_3 (t;t,x,p) \geq 0$, $s^* = t$. Thus, we obtain that
\be \label{est2:peak_dy_tF}
\X_{3} (s^*;t,x,p) = x_3.
\ee

\smallskip

\textbf{\underline{Case 2:} $p_3 \geq 0$.} 
By integrating  \eqref{upper_V_3_dy} over $(t, s^*)$, we obtain, for $p_3 = \P_3 (t;t,x,p) \geq 0$,
\be \label{delta_V_3_dy_tF}
\P_3 (s^*;t,x,p) - p_3 = \int^{s^*}_{t} \frac{d \P_{3} (s;t,x,p)}{ds} \dd s \leq - \frac{m g}{2} (s^* - t).
\ee
From \eqref{V3s*_tF} and $p_3 = \P_3 (t;t,x,p) \geq 0$, we have
\be \label{est:tb1_tF}
s^* - t \leq \frac{2}{m g} p_3.
\ee
Using \eqref{Uest:DxPsi}, \eqref{eq:energy_conservation_dy} and \eqref{est:tb1_tF}, we have, for $p_3 = \P_3 (t;t,x,p) \geq 0$ and $t \leq s \leq s^*$,
\Be \label{est1:energy_conservation_dy_tF}
\begin{split}
& \ \ \ \ \sqrt{(m c)^2 + |\P (s;t,x,p)|^2} + \frac{1}{c} \big( e \Phi_h (\X (s;t,x,p)) + m g \X_{3} (s;t,x,p) \big) 
\\& \leq \sqrt{(m c)^2 + |p|^2} + \frac{1}{c} \big( e | \Phi_h (x) | + m g x_3 \big) 
\\& \qquad \qquad + \sup_{s^* \leq  s \leq t} \Big| e \nabla_x \Psi (s, \X (s;t,x,p)) \cdot \frac{\V (s;t,x,p)}{c} \Big| \times (s^* - t) 
\\& \leq \sqrt{(m c)^2 + |p|^2} + \frac{1}{c} \big( e | \Phi_h (x) | + m g x_3 \big) + \frac{m g}{2} (s^* - t)
\\& \leq \sqrt{(m c)^2 + |p|^2} + \frac{1}{c} \big( e | \Phi_h (x) | + m g x_3 \big) + p_3.
\end{split}
\Ee 
On the other hand, using \eqref{Uest:DPhi} and $\Phi_h = \Psi = 0$ on $\p\O$, we derive that
\be \label{est2:energy_conservation_dy_tF}
\begin{split}
& \sqrt{(m c)^2 + |\P (s^*;t,x,p)|^2} + \frac{1}{c} \big( e \Phi_h (\X (s^*;t,x,p)) + m g \X_{3} (s^*;t,x,p) \big) 
\\& \ \ \ \ \geq mc + \frac{m g}{2 c} \X _{3} (s^*;t,x,p), 
\\& \sqrt{(m c)^2 + |p|^2} + \frac{1}{c} \big( e | \Phi_h (x) | + m g x_3 \big) + p_3 
\leq \sqrt{(m c)^2 + |p|^2} + \frac{3 m g}{2 c} x_{3} + p_3. 
\end{split}
\ee
Combining \eqref{est1:energy_conservation_dy_tF} with \eqref{est2:energy_conservation_dy_tF}, we derive that
\be \notag
mc + \frac{m g}{2 c} \X _{3} (s^*;t,x,p)
\leq \sqrt{(m c)^2 + |p|^2} + \frac{3 m g}{2 c} x_{3} + p_3.
\ee
This implies that
\be \label{est1:peak_dy_tF}
\X _{3} (s^*;t,x,p)
\leq \frac{2 c}{m g} \big( \sqrt{(m c)^2 + |p|^2} - mc + \frac{3 m g}{2 c} x_{3} + p_3 \big).
\ee
Finally we conclude \eqref{est:x3_dy_tF} by combining \eqref{est2:peak_dy_tF} and \eqref{est1:peak_dy_tF}.

\smallskip

\textbf{Step 2.}
Now we are ready to prove \eqref{est:tF}.
From \eqref{back_delta_V_12_dy} in the proof of \eqref{est:tB}, we derive that for $s - t \in [0, \tF (t,x,p)]$,
\be \label{back_delta_V_12_dy_tF}
\sqrt{|\P_{1} (s;t, x,p)|^2 + |\P_{2} (s;t, x,p)|^2}
\leq \sqrt{p^2_1 + p^2_2} + \frac{m g}{2\sqrt{2}} (s - t).
\ee
Now we choose 
\Be \label{bar s_tF}
\bar s :  = t + \frac{4}{m g} |p| + 1.
\Ee
Note that $\bar s - t = \frac{4}{m g} |p| + 1 \leq \tF (t,x,p) $ due to \eqref{assume:tF}, thus for all $s \in [ \bar s, t + \tF (t,x,p)]$,
\Be \label{cond:t-s_tF}
t-s \leq \tF (t,x,p).
\Ee 
From \eqref{delta_V_3_dy_tF}, we derive, for all $s \in [ \bar s, t + \tF (t,x,p)]$,
\be \notag
\P_{3} ( s;t,x,p) - | p_3 | 
\leq \P_{3} ( s;t,x,p) - p_3 
\leq - \frac{m g}{2} (s-t).
\ee
Therefore, we further derive that 
\be \label{est1:lower_V_3_dy_tF}
- \P_{3} ( s;t,x,p) \geq \frac{m g}{2} (s - t) - |p_3| 
\geq \frac{m g}{2} (s - t) - |p|, 
\ \text{for all } s \in [ \bar s, t + \tF (t,x,p)].
\ee
Using $s - t \geq \bar{s} - t = \frac{4}{mg} |p| + 1$, together with \eqref{est1:lower_V_3_dy_tF}, we obtain 
\be \label{est:lower_V_3_dy_tF}
- \P_{3} ( s;t,x,p) \geq \frac{m g}{4} (s - t) + \frac{m g}{4}, 
\ \text{for all } s \in [ \bar s, t + \tF (t,x,p)].
\ee
Using \eqref{back_delta_V_12_dy_tF} and our choice \eqref{bar s_tF}, we have that 
\be \label{est:upper_V_12_dy_tF}
\sqrt{ |\P_{1} ( s;t, x,p)|^2 + |\P_{2} (s;t, x,p)|^2}
\leq \frac{m g}{2} (s - t), 
\ \text{for all } s \in [ \bar s, t + \tF (t,x,p)].
\ee
Using \eqref{est:lower_V_3_dy_tF} and \eqref{est:upper_V_12_dy_tF}, we deduce that for all $s \in [ \bar s, t + \tF (t,x,p)]$,
\be \label{est:lower_P3_dy_tF}
4 | \P_{3} (s;t, x,p) |^2  
\geq \frac{(m g)^2}{4} (t-s)^2 \geq |\P_{1} (s;t, x,p) |^2 + |\P_{2} (s;t, x,p)|^2.
\ee
Hence, we obtain that for all $s \in s \in [ \bar s, t + \tF (t,x,p)]$,
\be \notag
\begin{split}
- \V_3 (s;t, x,p) = \frac{ - \P_{3} (s;t, x,p) }{\sqrt{m^2 + |\P (s;t, x,p) |^2 / c^2 }} > \frac{ - \P_3 (s;t, x,p)}{\sqrt{m^2 + 5 |\P_3 (s;t, x,p) |^2 / c^2 }}.
\end{split}
\ee
Using $m^2 + 5 |\P_3 (s;t, x,p) |^2 / c^2 \leq \max \{ 2 m^2, 10 |\P_3 (s;t, x,p) |^2 / c^2 \}$ and \eqref{est:lower_V_3_dy_tF}, we derive 
\be \label{est:lower_HV_3_dy_tF}
\begin{split}
- \V_3 (s;t, x,p) 
& > \min \{ \frac{ - \P_3 (s;t, x,p)}{\sqrt{2m^2}}, \frac{ |\P_3 (s;t, x,p)| }{\sqrt{10 |\P_3 (s;t, x,p) |^2 / c^2}} \}
\\& \geq c_1 := \min \{ \frac{g}{4 \sqrt{2}}, \frac{c}{\sqrt{10}} \}, 
\ \text{for all } s \in s \in [ \bar s, t + \tF (t,x,p)].
\end{split}  
\ee
Using \eqref{est:lower_HV_3_dy_tF} together with \eqref{ODE_F} and $\X_{3} (t + \tF (t,x,p);t,x,p) = 0$, we derive that 
\be \notag
\X_{3} (s^*;t,x,p) \geq \X_{3} (\bar{s};t,x,p)
= \int^{t + \tF (t,x,p)}_{\bar s } - \V_3 (\tau;t,x,p) \dd \tau
> c_1 (t + \tF (t,x,p) - \bar{s}).
\ee
From \eqref{est:x3_dy_tF} and the choice of $\bar{s}$ in \eqref{bar s_tF}, finally we derive 
\be \notag
\begin{split}
\tF (t,x,p) 
\leq \frac{1}{c_1} \times \max \left\{ x_3, \ \frac{2 c}{m g} \big( \sqrt{(m c)^2 + |p|^2} - mc + \frac{3 m g}{2 c} x_{3} + p_3 \big) \right\} + \frac{4}{m g} |p| + 1,
\end{split}
\ee
and thus conclude \eqref{est:tF}.
\end{proof}

The following corollary is a direct consequence of \eqref{est:x3_dy} and \eqref{est:x3_dy_tF} in Proposition \ref{lem:tB}.

\begin{corollary} \label{cor:max_X_dy} 

Recall the dynamic characteristics \eqref{ODE_F} and its self-consistent potentials $\Phi(x)$ and $\Psi(t,x)$ in \eqref{eqtn:Dphi} and \eqref{Poisson_f}, respectively. Suppose the conditions \eqref{Uest:DPhi}, \eqref{Uest:DxPsi} and \eqref{Uest:Dxphi_F} hold.
Consider $(t,x,p) \in [0, \infty) \times  \bar\O \times \R^3$, then for all $s \in [\max \{0, t - \tBp (t,x,p) \}, t + \tFp (t,x,p)]$,
\Be \notag
\X_{\pm, 3} (s; t,x,p) 
\leq \frac{4 c}{m_{\pm} g} |p| + 3 x_3.
\Ee
Furthermore, if $t - \tBp (t,x,p) < 0$, then
\Be \notag
\begin{split}
t \leq \frac{1}{\min \{ \frac{g}{4 \sqrt{2}}, \frac{c}{\sqrt{10}} \}} \times \big( \frac{4 c}{m_{\pm} g} |p| + 3 x_{3} \big) + \frac{4}{m_{\pm} g} |p| + 1.
\end{split}
\Ee
Note that when $t - \tBp (t,x,p) < 0$, instead of boundary, we have to backward $(t,x,p)$ to the initial data $(0, \X(0; t,x,p), \P(0; t,x,p))$, thus the above estimate can help to control time $t$ by $x$ and $p$. These will be used in showing the decay in time in the Asymptotic Stability Criterion (see more details in Proposition \ref{prop:decay} and Theorem \ref{theo:AS}).
\end{corollary}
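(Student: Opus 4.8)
The plan is to read this off from the peak estimates \eqref{est:x3_dy} and \eqref{est:x3_dy_tF} of Proposition \ref{lem:tB} together with two elementary inequalities. By \eqref{upper_V_3_dy} the vertical momentum $\P_{\pm,3}(s;t,x,p)$ is strictly decreasing in $s$ on the whole of $[\max\{0,t-\tBp(t,x,p)\},\,t+\tFp(t,x,p)]$, so $\X_{\pm,3}(\,\cdot\,;t,x,p)$ has a single maximum along the trajectory. For $s\ge t$ this maximum lies in the forward branch $[t,t+\tFp(t,x,p)]$ (the peak $s^\ast$ of \eqref{V3s*_tF}), and \eqref{est:x3_dy_tF} applies; for $s<t$ \emph{and} $t-\tBp(t,x,p)\ge 0$ it lies in the backward branch $[\,t-\tBp(t,x,p),t\,]$ (the peak $s^\ast$ of \eqref{V3s*}), where \eqref{est:x3_dy} applies when $p_3<0$ and the branch is monotone with top value $x_3$ when $p_3\ge 0$. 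Since $\pm p_3\le|p_3|$, in each of these subcases
\[
\X_{\pm,3}(s;t,x,p)\ \le\ \max\Big\{x_3,\ \tfrac{2c}{m_\pm g}\Big(\sqrt{(m_\pm c)^2+|p|^2}-m_\pm c+\tfrac{3m_\pm g}{2c}x_3+|p_3|\Big)\Big\}.
\]
Then $\sqrt{(m_\pm c)^2+|p|^2}\le m_\pm c+|p|$ and $|p_3|\le|p|$ bound the second slot of the max by $\tfrac{2c}{m_\pm g}\big(2|p|+\tfrac{3m_\pm g}{2c}x_3\big)=\tfrac{4c}{m_\pm g}|p|+3x_3$, which also dominates $x_3$, giving the first asserted inequality whenever $t-\tBp(t,x,p)\ge 0$.

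The one subtlety is that \eqref{est:x3_dy} is established in Proposition \ref{lem:tB} only under the hypothesis $t-\tBp(t,x,p)\ge 0$, while the corollary also allows $t-\tBp(t,x,p)<0$, i.e.\ the backward characteristic reaching the initial slice $\{s=0\}$ inside $\O$. In that case I would simply re-run Step 2 of the proof of Proposition \ref{lem:tB} on $[0,t]$ in place of $[\,t-\tBp(t,x,p),t\,]$: the quasi-conservation law \eqref{eq:energy_conservation_dy} and the monotonicity of $\P_{\pm,3}$ hold there verbatim. If $\P_{\pm,3}(0;t,x,p)\ge 0$ the maximum of $\X_{\pm,3}$ over $[0,t]$ is attained at an interior $s^\ast$ with $\P_{\pm,3}(s^\ast)=0$ and the computation leading to \eqref{est1:peak_dy} carries over; if $\P_{\pm,3}(0;t,x,p)<0$, then integrating \eqref{upper_V_3_dy} forces $t<\tfrac{2}{m_\pm g}|p_3|\le\tfrac{2}{m_\pm g}|p|$ while $\X_{\pm,3}$ is monotone on $[0,t]$, so $\X_{\pm,3}(0;t,x,p)\le x_3+ct\le x_3+\tfrac{2c}{m_\pm g}|p|$, again below $\tfrac{4c}{m_\pm g}|p|+3x_3$. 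Hence the first statement holds in all cases.

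For the second statement, assume $t-\tBp(t,x,p)<0$, so the backward characteristic stays in $\O$ throughout $[0,t]$. Set $\bar s:=t-\tfrac{4}{m_\pm g}|p|-1$ as in \eqref{bar s}; if $\bar s\le 0$ then $t<\tfrac{4}{m_\pm g}|p|+1$ and the claim is immediate, so assume $\bar s\in(0,t)$. The lower-velocity computation of Proposition \ref{lem:tB}, namely the chain \eqref{est1:lower_V_3_dy}--\eqref{est:lower_HV_3_dy} (which used only $t-s\ge\tfrac{4}{m_\pm g}|p|+1$ together with the Gronwall and monotonicity bounds), is valid on $[0,\bar s]$ and gives $\V_{\pm,3}(s;t,x,p)\ge c_1:=\min\{\tfrac{g}{4\sqrt 2},\tfrac{c}{\sqrt{10}}\}$ there. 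Integrating \eqref{ODE_F} and using $\X_{\pm,3}(0;t,x,p)\ge 0$ together with the first part of the corollary,
\[
c_1\,\bar s\ \le\ \int_0^{\bar s}\V_{\pm,3}(\tau;t,x,p)\,\dd\tau\ \le\ \X_{\pm,3}(\bar s;t,x,p)\ \le\ \tfrac{4c}{m_\pm g}|p|+3x_3,
\]
and rearranging $c_1\big(t-\tfrac{4}{m_\pm g}|p|-1\big)\le\tfrac{4c}{m_\pm g}|p|+3x_3$ yields the stated bound on $t$.

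All of this is bookkeeping layered on Proposition \ref{lem:tB}; the only genuinely delicate point, and the step I would be most careful with, is the $t-\tBp(t,x,p)<0$ regime, where the characteristic exists only back to the initial slice so the estimates of Proposition \ref{lem:tB} must be re-localized to $[0,t]$, the degenerate sub-case $\P_{\pm,3}(0;t,x,p)<0$ being disposed of by the elementary travel-time bound above.
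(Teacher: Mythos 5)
Your proposal is correct and follows exactly the route the paper intends: the first bound is the stated "direct consequence" of the peak estimates \eqref{est:x3_dy} and \eqref{est:x3_dy_tF} combined with $\sqrt{(m_\pm c)^2+|p|^2}-m_\pm c\le|p|$ and $|\pm p_3|\le|p|$, and the time bound re-uses the Step‑3 lower bound $\V_{\pm,3}\ge c_1$ from the proof of Proposition \ref{lem:tB} on $[0,\bar s]$. The paper prints no proof, and your careful re-localization of the arguments to $[\max\{0,t-\tBp\},t]$ when $t-\tBp(t,x,p)<0$ (including the easy subcase $\P_{\pm,3}(0;t,x,p)<0$) correctly fills in the details it leaves implicit.
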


Similar to the steady case, we build two estimates on $\tB (t,x,p)$ in which $\tB$ can be controlled by $\pB (t,x,p)$ and $\pBn (t,x,p)$. These will be used in the regularity estimates in Section \ref{sec:RD}.

\begin{prop} \label{lem2:tB}

Recall the dynamic characteristics \eqref{ODE_F} and its self-consistent potentials $\Phi(x)$ and $\Psi(t,x)$ in \eqref{eqtn:Dphi} and \eqref{Poisson_f}, respectively. Suppose 
the condition \eqref{Uest:DPhi} holds. Further, we assume
\be \label{Bootstrap_first}
\sup_{0 \leq t < \infty}\| \nabla_x \big( \Phi + \Psi (t) \big) \|_{L^\infty(\O)}
\leq \min \left\{\frac{m_+}{e_+}, \frac{m_-}{e_-} \right\} \times \frac{g}{2}.
\ee
Then for all $(t,x,p) \in [0, \infty) \times  \bar\O \times \R^3$ satisfying $t - \tBp (t,x,p) \geq 0$, the backward exit time \eqref{tb} is bounded by
\Be \label{est:tBpB3}
\begin{split}
\tBp (t,x, p) 
& \leq | \pBpn (t,x, p) | \Big( \frac{2}{m_{\pm} g}  + \frac{ 16 (\sqrt{c m_{\pm}} + | \pBn (t,x,p) |)}{c (m_{\pm})^2 g} 
\\& \qquad \qquad \qquad \qquad \qquad \times \big(1 + \sqrt{(m_{\pm} c)^2 + |\pBp (t,x,p)|^2} + 2 \pBpn (t,x, p) \big) \Big).
\end{split}
\Ee
Moreover, for all $s \in [t - \tBp (t,x,p), t + \tFp (t,x,p)]$,
\Be \label{est:x3pB3}
\X_{\pm, 3} (s;t,x,p) \leq \frac{2}{(m_{\pm})^2 g} (\pBpn (t,x, p))^2.
\Ee
\end{prop}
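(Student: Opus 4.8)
The plan is to transcribe the steady argument of Proposition~\ref{lem2:tb}, the one essential change being that the conserved energy must be replaced by a slowly‑varying one. We adopt the abuse of notation about $\pm$ in \eqref{abuse}, abbreviate $\P(s)=\P(s;t,x,p)$, $\X(s)=\X(s;t,x,p)$, $\V(s)=\V(s;t,x,p)$, and work on $[t-\tB,t+\tF]$ with $t-\tB\ge 0$. First I would record the vertical structure and an energy substitute. As in \eqref{upper_V_3_dy}, hypothesis \eqref{Bootstrap_first} together with $\V\times B=(B_3\V_2,-B_3\V_1,0)$ gives $\tfrac{d}{ds}\P_3(s)\le-\tfrac{mg}{2}$, so $\P_3$ is strictly decreasing along the trajectory; since $\V_3$ has the sign of $\P_3$ by \eqref{sign:v3_dy}, $\X_3$ increases while $\P_3>0$ and decreases while $\P_3<0$, and as $\X_3$ vanishes at both endpoints of $[t-\tB,t+\tF]$ it is maximized at the unique $s^*$ with $\P_3(s^*)=0$; in particular $\pBn=\P_3(t-\tB)\ge 0$, and $s^*<t$ exactly when $p_3<0$. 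For the energy substitute, Lemma~\ref{lem:conservation_law}(b) (see \eqref{eq:energy_conservation_dy}) gives that $\mathcal{E}(s):=\sqrt{(mc)^2+|\P(s)|^2}+\tfrac1c\bigl(e\Phi_h(\X(s))+mg\X_3(s)\bigr)$ satisfies $\tfrac{d}{ds}\mathcal{E}(s)=-\tfrac ec\nabla_x\Psi(s,\X(s))\cdot\V(s)$; since $e|\nabla_x\Psi|\le e\|\nabla_x(\Phi_h+\Psi)\|_\infty+e\|\nabla_x\Phi_h\|_\infty\le mg$ by \eqref{Bootstrap_first} and \eqref{Uest:DPhi}, and $|\V|\le c$, we get $|\tfrac{d}{ds}\mathcal{E}(s)|\le mg$; moreover $\mathcal{E}(t-\tB)=\sqrt{(mc)^2+|\pB|^2}$ because $\X_3$ and $\Phi_h$ vanish on $\p\O$, and $\mathcal{E}(s)\ge\sqrt{(mc)^2+|\P(s)|^2}$ by \eqref{est:phih}.

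For \eqref{est:x3pB3}: since $\X_3(s)\le\X_3(s^*)$ on $[t-\tB,t+\tF]$ and on $[t-\tB,s^*]$ one has $0\le\P_3(s)\le\pBn-\tfrac{mg}{2}(s-(t-\tB))$, the ascent time $a:=s^*-(t-\tB)$ is at most $\tfrac{2\pBn}{mg}$, and integrating $\V_3\le\P_3/m$ from $t-\tB$ (where $\X_3=0$) gives $\X_3(s^*)\le\tfrac{\pBn a}{m}-\tfrac g4a^2\le\tfrac{(\pBn)^2}{m^2g}$ after maximizing the quadratic in $a$; this yields \eqref{est:x3pB3} with room to spare.

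For \eqref{est:tBpB3}, the case $p_3\ge 0$ is immediate: $\P_3$ decreases from $\pBn$ to $p_3\ge 0$ on $[t-\tB,t]$, so $\tB\le\tfrac2{mg}(\pBn-p_3)\le\tfrac{2\pBn}{mg}$, dominated by the first term. When $p_3<0$ I would write $\tB=a+b$ with $b:=t-s^*$ and $a\le\tfrac{2\pBn}{mg}$ as above. For $s\in[s^*,t]$ we have $-\P_3(s)\ge\tfrac{mg}{2}(s-s^*)$, while $\sqrt{(mc)^2+|\P(s)|^2}\le\mathcal{E}(s)\le\mathcal{E}(t-\tB)+mg\tB\le A+mgb$ with $A:=\sqrt{(mc)^2+|\pB|^2}+2\pBn$ (using $mga\le 2\pBn$); set $P_{\max}:=\max_{[s^*,t]}\sqrt{(mc)^2+|\P(s)|^2}\le A+mgb$. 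Since $\X_3(t)=x_3\ge 0$ and $\V_3\le 0$ on $[s^*,t]$, integrating $-\V_3$ gives
\[
\frac{cmg}{4P_{\max}}\,b^2\ \le\ \X_3(s^*)-x_3\ \le\ \X_3(s^*)\ \le\ \frac{2(\pBn)^2}{m^2g},
\]
hence $b\le\tfrac{2\sqrt2\,\pBn}{\sqrt{cm^3}\,g}\sqrt{P_{\max}}\le\tfrac{2\sqrt2\,\pBn}{\sqrt{cm^3}\,g}\sqrt{A+mgb}$. Squaring and solving this quadratic inequality in $b$ yields $b\le\tfrac{8(\pBn)^2}{cm^2g}+\tfrac{2\sqrt2\,\pBn\sqrt A}{\sqrt{cm^3}\,g}$; adding $a\le\tfrac{2\pBn}{mg}$ and coarsening the two $b$-terms (via $\tfrac{1}{\sqrt{cm^3}}=\tfrac{\sqrt{cm}}{cm^2}$, $\sqrt A\le 1+A$, $8,2\sqrt2\le 16$, and $\pBn\le\sqrt{cm}+\pBn$) into the factor $(\sqrt{cm}+\pBn)\bigl(1+\sqrt{(mc)^2+|\pB|^2}+2\pBn\bigr)$ gives exactly \eqref{est:tBpB3}.

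The main obstacle is the loss of energy conservation. In the steady case $\sqrt{(mc)^2+|P(s)|^2}\le\sqrt{(mc)^2+|\pb|^2}$ holds identically along the trajectory (cf.\ \eqref{est:pb>p}), whereas here the only available upper bound for $\sqrt{(mc)^2+|\P(s)|^2}$ over the descent interval $[s^*,t]$ — coming from the $mg$-bounded drift of $\mathcal{E}$ — itself contains $b=t-s^*$, so the estimate for $b$ is self‑referential. Closing it through a quadratic inequality, and then repackaging the constants into the precise algebraic form of \eqref{est:tBpB3}, is where essentially all the work lies; everything else is a routine adaptation of Proposition~\ref{lem2:tb}.
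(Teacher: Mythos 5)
Your proposal is correct, and the overall skeleton (monotone vertical momentum, unique peak $s^*$, ascent bounded by $\frac{2}{m g}\pBn$, peak height bounded via $\V_3\le \P_3/m$, near-conservation of the steady energy with drift $|\frac{d}{ds}\mathcal{E}|\le m g$) coincides with the paper's. Where you genuinely diverge is in the descent phase. The paper keeps the $s$-dependent denominator $\sqrt{(mc)^2+|\pB|^2}+(s-s^*)mg+2\pBn$ inside the integral of $-\V_3$, which produces the explicit primitive $\frac{1}{mg}\bigl(mg(t-s^*)-\ln(1+mg(t-s^*))\bigr)$ and then requires the elementary two-case lower bound $x-\ln(1+x)\ge x^2/8$ for $x<1$ and $\ge x/4$ for $x\ge1$ (see \eqref{est2:x3x3s^*_dy}--\eqref{est:A(x)}), yielding two separate bounds on $t-s^*$ that are merged at the end. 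You instead freeze the denominator at its worst value $P_{\max}\le A+mgb$, obtain the self-referential inequality $b\le C\sqrt{A+mgb}$, and close it by solving the quadratic, which avoids the logarithm and the case split entirely; your final coarsening into the factor $(\sqrt{cm}+\pBn)(1+\sqrt{(mc)^2+|\pB|^2}+2\pBn)$ with constant $8+2\sqrt2\le16$ reproduces \eqref{est:tBpB3} exactly. Your route is shorter and arguably cleaner; the paper's route gives a slightly sharper intermediate bound on $t-s^*$ in the regime $mg(t-s^*)<1$ (linear rather than quadratic in $\pBn$), but this sharpness is discarded when the two cases are merged, so nothing is lost by your argument. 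Your peak-height bound $\X_3(s^*)\le\frac{(\pBn)^2}{m^2g}$ is in fact a factor of two better than \eqref{est:x3pB3}, which is harmless.
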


\begin{proof}

In the proof, we adopt the abuse of notation about $\pm$ in \eqref{abuse}.
From the assumption \eqref{Bootstrap_first}, then for $0 \leq t < \infty$,
\Be \notag
\big\| e_{\pm} \nabla_x \big( \Phi_h (x) + \Psi (t, x) \big) \big\|_{L^\infty (\bar{\O})} \leq \min \left\{m_+, m_- \right\} \times \frac{g}{2}.
\Ee
From the dynamic characteristics \eqref{ODE_F}, this shows that
\be \label{lower_upper_dotp_3_dy}
- \frac{3 m g }{2} 
\leq \frac{d P_{3} (s;t,x,p)}{ds} 
= - e \nabla_{x_3} \big( \Psi (s, \X(s;t,x,p)) + \Phi_h (\X (s;t,x,p) ) \big) - m g 
\leq - \frac{m g }{2}.
\ee
Using the zero Dirichlet boundary condition of $\Phi_h$ in \eqref{eqtn:Dphi}, together with \eqref{Uest:DPhi}, we obtain that for $x \in \O$,
\be \label{est:phih_dy}
- \frac{m_{\pm} g x_3}{2} \leq e_{\pm} \Phi_h (x) \leq \frac{m_{\pm} g x_3}{2}.
\ee
From \eqref{Uest:DPhi} and \eqref{Bootstrap_first}, we derive
\be \label{est:DxPsi_first}
\sup_{0 \leq t < \infty}\| \nabla_x \Psi (t) \|_{L^\infty(\O)} \leq \min \left\{\frac{m_+}{e_+}, \frac{m_-}{e_-} \right\} \times g.
\ee
Using Lemma \ref{lem:conservation_law} and \eqref{est:DxPsi_first}, together with the zero Dirichlet boundary condition of $\Phi_h$, then for $(t,x,p) \in [0, \infty) \times  \bar\O \times \R^3$ and $t - \tB(t,x,p) \leq s \leq t$,
\be \label{est1:pB>p}
\begin{split}
& \sqrt{(m c)^2 + |\P (s;t,x,p)|^2} + \frac{1}{c} \big( e_{\pm} \Phi_h (\X (s;t,x,p)) + m g \X_{3} (s;t,x,p) \big)
\\& = \sqrt{(m c)^2 + |\pB (t,x,p)|^2} - \frac{1}{c} \int^{s}_{\tau = t - \tB (t,x,p) } e \nabla_x \Psi (\tau, \X (\tau;t,x,p)) \cdot \V (\tau;t,x,p) \dd \tau
\\& \leq \sqrt{(m c)^2 + |\pB (t,x,p)|^2}
+ (s - t + \tB (t,x,p) ) m g.
\end{split}
\ee
where we use $\big| \V (\tau;t,x,p) = \frac{\P (\tau;t,x,p)}{\sqrt{m^2_{\pm} + |\P (\tau;t,x,p)|^2 / c^2}} \big| \leq c$ in the last inequality.
Further, we apply \eqref{est:phih_dy} in \eqref{est1:pB>p}, and get
\be \label{est:pB>p}
\begin{split}
\sqrt{(m c)^2 + |\P (s;t,x,p)|^2}
\leq \sqrt{(m c)^2 + |\pB (t,x,p)|^2}
+ (s - t + \tB (t,x,p) ) m g.
\end{split}
\ee
Similar as the proof in Proposition \ref{lem2:tb}, $\P_{3} (s;t,x, p)$ is strictly decreasing along characteristics \eqref{ODE_F}, and the vertical particle velocity satisfies
\be \notag
\sgn \big( \V_{3} (s; t,x,p) \big) = \sgn \big( \P_{3} (s; t,x,p) \big).
\ee
Hence there exists a unique time $s^*$ to achieve a vertical peak, such that 
\Be \notag
\X_{3} (s^*; t,x,p) = \max_{s \in [t - \tB (t,x, p), \ t + \tF (t,x,p)]} { \X_{3} (s; t,x,p) }.
\Ee
Moreover, at $s=s^*$, the vertical velocities satisfies that 
\Be \label{eq:V3s^*_dy}
0 = \frac{d}{ds} \X_{3} (s; t, x, p)|_{s= s^*} = \V_{3} (s^*; t, x, p) =  c \frac{\P_{3} (s^*;t,x,p)}{\P^0 (s^*; t,x,p) },
\Ee
where $\P^0 = \sqrt{(m_\pm c)^2 + |\P|^2}$.
Further, we note that 
\be \label{eq:sign_V3s^*_dy}
\sgn \big( \V_{3} (s; t,x,p) \big) = \sgn \big( \P_{3} (s; t,x,p) :=
\begin{cases}
& > 0, \ \text{ if } t - \tB (t,x,p) \leq s < s^*, \\[5pt]
& < 0, \ \text{ if } s^* < s \leq  t + \tF (t,x,p).
\end{cases}
\ee

For $p_3 \geq 0$, using \eqref{eq:sign_V3s^*_dy} and $\P_{3} (t; t,x,p) = p_3 \geq 0$, we derive that
\be \label{est:s^*_dy}
t -\tB (t,x,p) \leq t \leq s^*, \text{ for } p_3 \geq 0.
\ee
Using \eqref{lower_upper_dotp_3_dy} and \eqref{est:s^*_dy}, we get
\be \label{est:pB3tB}
\begin{split}
& \P_{3} (s^*;t,x,p) - \P_{3} (t - \tB (t,x,p);t,x,p)
\\& = \int^{s^*}_{t - \tB (t,x,p)} \big( - e \nabla_{x_3} \big( \Psi (s, \X(s;t,x,p)) + \Phi_h (X (s; t,x,p) ) \big) - m g \big) \dd s
\\& \leq - \frac{m g }{2} (s^* - t + \tB (t,x, p)) \leq - \frac{m g }{2} \tB (t,x, p).
\end{split}
\ee
Together with \eqref{est:pB3tB} and \eqref{eq:V3s^*_dy}, we have
\be \notag
- \pBn (t,x, p) = \P_{3} (s^*;t,x,p) - \P_{3} (t - \tB (t,x,p); t,x,p)
\leq - \frac{m g }{2} \tB (t,x, p).
\ee
This shows that
\be \label{est2:pB3tB}
\frac{2}{m g} \pBn (t,x, p) \geq \tB (t,x, p), \text{ for } p_3 \geq 0.
\ee

For $p_3 < 0$, using \eqref{eq:sign_V3s^*_dy} and $\P_{3} (t; t,x,p) = p_3 < 0$, we derive that
\be \label{est2:s^*_dy}
t - \tB (t,x,p)  \leq s^* < t \leq t + \tF (t,x,p), \text{ for } p_3< 0.
\ee

\smallskip

\textbf{Step 1. } 
First, we claim that \eqref{est:x3pB3} holds.
Similar from \eqref{est:pB3tB} and \eqref{est2:pB3tB}, we get
\be \notag
- \pBn (t,x, p) = \P_{3} (s^*; t,x,p) - \P_{3} (t - \tB (t,x, p); t,x,p)
\leq - \frac{m g }{2} (s^* - t + \tB (t,x, p)),
\ee
and hence
\be \label{est3:pB3tB}
\frac{2}{m g} \pBn (t,x, p) \geq s^* - t + \tB (t,x, p).
\ee
Furthermore, we have
\Be \label{eq:x3s^*_dy}
\begin{split} 
\X_3 (s^*; t,x,p) 
&= \X_3 (s^*; t,x,p) - \X_3 (t - \tB(t,x,p); t,x,p) 
\\& = \int_{t-\tB(x,p)}^{s^*} \V_{3} (s; t, x, p) \dd s
= \int_{t-\tB(t,x,p)}^{s^*} c \frac{\P_{3} (s; t,x,p)}{\P^0 (s; t,x,p) } \dd s.
\end{split}
\Ee
Since $\P_{3} (t - \tB (t,x,p); t,x,p) = \pBn (t,x,p)$, together with \eqref{lower_upper_dotp_3_dy}, we derive that, for any $t - \tB (t,x, p) \leq s \leq s^*$,
\Be \notag
\begin{split}
\P_{3} ( s; t,x,p) - \P_{3} (t - \tB (t,x,p); t,x,p)
& = \int^{s}_{t - \tB (t,x,p)} \frac{d\P_3(\tau; t,x,p) }{d \tau } \dd \tau 
\\& \leq - \frac{m g}{2} ( s - t + \tB (x,p)).
\end{split}
\Ee
This, together with \eqref{eq:sign_V3s^*_dy}, shows that for $t - \tB (t,x, p) \leq s \leq s^*$,
\be \label{est:p3pB3}
0 \leq \P_{3} ( s; t,x,p)
\leq \pBn (t,x,p) - \frac{m g}{2} ( s - t + \tB (t,x,p)).
\ee
Inputting \eqref{est:p3pB3} into \eqref{eq:x3s^*_dy}, we obtain
\Be \label{est:x3s^*_dy}
\begin{split} 
\X_3 (s^*; t,x,p) 
& = \int_{t-\tB(x,p)}^{s^*} c \frac{\P_{3} (s; t,x,p)}{\P^0 (s; t,x,p) } \dd s
\\& \leq c \int_{t-\tB(x,p)}^{s^*} \frac{\pBn (t,x,p) - \frac{m g}{2} ( s - t + \tB (t,x,p))}{\P^0 (s; t,x,p) } \dd s
\end{split}
\Ee
Since $\P^0 (s; t,x,p) = \sqrt{(mc)^2 + |\P (s; t,x,p)|^2} \geq m c$, we bound \eqref{est:x3s^*_dy} by
\Be \label{est2:x3s^*_dy}
\begin{split} 
\X_3 (s^*; t,x,p) 
& \leq \int_{t-\tB(x,p)}^{s^*} \frac{\pBn (t,x,p) - \frac{m g}{2} ( s - t + \tB (x,p))}{m } \dd s
\\& = (s^* - t + \tB (t,x, p)) \frac{\pBn (t,x,p)}{m} -
\frac{g}{2} \int_{t-\tB(x,p)}^{s^*} ( s - t + \tB (x,p) ) \dd s 
\\& =  (s^* - t + \tB (t,x, p)) \frac{\pBn (t,x,p)}{m} -
\frac{g}{4} ( s^* - t + \tB (x,p) )^2
\\& \leq \frac{2}{m^2 g} (\pBn (t,x, p))^2.
\end{split}
\Ee
where the last inequality follows from \eqref{est3:pB3tB} and $( s^* - t + \tB (x,p) )^2 \geq 0$. Thus, we prove \eqref{est:x3pB3}.

\smallskip

\textbf{Step 2. }  
Second, we claim that \eqref{est:tBpB3} holds. The proof is already complete in the case of $p_3\geq 0$ due to \eqref{est2:pB3tB}. We only consider the other case $p_3<0$. Then, from \eqref{est2:s^*_dy}, we note that  $s^* \leq t$. 
Hence we have that
\Be \label{eq:x3x3s^*_dy}
\X_3 (t; t,x,p) - \X_3 (s^*; t,x,p) 
= \int^{t}_{s^*} \V_{3} (s; t, x, p) \dd s
= \int^{t}_{s^*} c \frac{\P_{3} (s; t,x,p)}{\P^0 (s; t,x,p) } \dd s.
\Ee
Since $\X_3 (t; t,x,p) = x_3 \geq 0$, \eqref{eq:x3x3s^*_dy} implies
\Be \label{est:x3x3s^*_dy}
- \int^{t}_{s^*} c \frac{\P_{3} (s; t,x,p)}{\P^0 (s; t,x,p) } \dd s \leq \X_3 (t; t,x,p) - \int^{t}_{s^*} c \frac{\P_{3} (s; t,x,p)}{\P^0 (s; t,x,p) } \dd s = \X_3 (s^*; t,x,p).
\Ee 
Recall that $\P_{3} (s^*; t,x,p) = 0$, from \eqref{lower_upper_dotp_3_dy} we derive, for $s^* \leq s \leq t$,
\Be \label{est:p3p3s^*_dy}
\P_{3} ( s; t,x,p) = \P_{3} ( s; t,x,p) - \P_{3} (s^*; x,p)
= \int^{s}_{s^*} \frac{d \P_3(\tau;x,p) }{d \tau } \dd \tau 
\leq - \frac{m g}{2} ( s - s^*).
\Ee
Using \eqref{est:p3p3s^*_dy} and \eqref{est:pB>p}, 
together with \eqref{est3:pB3tB}, we have
\be \label{est2:x3x3s^*_dy}
\begin{split}
& - \int^{t}_{s^*} c \frac{\P_{3} (s; t,x,p)}{\P^0 (s; t,x,p) } \dd s 
\\& \geq \int^{t}_{s^*} c \frac{ \frac{m g}{2} ( s - s^*) }{\sqrt{(m c)^2 + |\pB (t,x,p)|^2}
+ (s - t + \tB (t,x,p) ) m g} \dd s
\\& = \int^{t}_{s^*} c \frac{ \frac{m g}{2} ( s - s^*) }{ \sqrt{(m c)^2 + |\pB (t,x,p)|^2}
+ \big( (s - s^*) + (s^* - t + \tB (t,x,p) ) \big) m g } \dd s
\\& \geq \int^{t}_{s^*} c \frac{ \frac{m g}{2} ( s - s^*) }{ \sqrt{(m c)^2 + |\pB (t,x,p)|^2}
+ (s - s^*) m g + 2 \pBn (t,x, p) } \dd s
\\& \geq \frac{c}{2 (1 + \sqrt{(m c)^2 + |\pB (t,x,p)|^2} + 2 \pBn (t,x, p) )} 
\underbrace{ \int^{t}_{s^*} \frac{ m g ( s - s^*) }{ 1 + (s - s^*) m g } \dd s }_{\eqref{est2:x3x3s^*_dy}_*}.
\end{split}
\ee
Then we compute
\be \label{est3:x3x3s^*_dy}
\begin{split}
\eqref{est2:x3x3s^*_dy}_* 
& = ( s - s^*) \big|^{t}_{s^*} - \frac{\ln (1 + m g ( s - s^*) ) }{m g} \big|^{t}_{s^*}
\\& = \frac{1}{m g} \big( m g ( t - s^*) - \ln (1 + m g ( t - s^*) ) \big).
\end{split}
\ee
Consider the function $A (x) = x - \ln (1 + x)$. Now we claim that
\be \label{est:A(x)}
A (x) \geq
\begin{cases}
\frac{x^2}{8}, & \text{ for } 0 \leq x < 1, \\[5pt]
\frac{x}{4}, & \text{ for } 1 \leq x.
\end{cases}
\ee

For $0 \leq x < 1$, we compute that
\be \notag
A (x) \big|_{x = 0} = \frac{x^2}{8} \big|_{x = 0} = 0,
\ee
and
\be \notag
A' (x) = 1 - \frac{1}{1+x} \big|_{x = 0} = \frac{x}{4} \big|_{x = 0} = 0.
\ee
Further, from $0 \leq x < 1$, we get
\be \notag
A'' (x) = \frac{1}{(1+x)^2} \geq \frac{1}{4},
\ee
and thus derive the first part of \eqref{est:A(x)}.

For $1 \leq x$, we have
\be \notag
A (x) \big|_{x = 1} > \frac{1}{4}.
\ee
From $1 \leq x$, we get
\be \notag
A' (x) = 1 - \frac{1}{1+x} \geq \frac{1}{4},
\ee
and thus prove the second part of \eqref{est:A(x)}.

Applying \eqref{est:A(x)} on \eqref{est3:x3x3s^*_dy}, we obtain
\Be \label{est4:x3x3s^*_dy}
\eqref{est2:x3x3s^*_dy}_*
\geq
\begin{cases}
\frac{m g ( t - s^*)^2}{8}, & \text{ for } 0 \leq m g ( t - s^*) < 1, \\[5pt]
\frac{ t - s^* }{4}, & \text{ for } 1 \leq m g ( t - s^*).
\end{cases}
\Ee
Inputting \eqref{est2:x3x3s^*_dy} into \eqref{est:x3x3s^*_dy}, together with \eqref{est2:x3s^*_dy}, we derive 
\Be \notag
\frac{c \times \eqref{est2:x3x3s^*_dy}_*}{2 (1 + \sqrt{(m c)^2 + |\pB (t,x,p)|^2} + 2 \pBn (t,x, p) )} 
\leq \X_3 (s^*; x,p) \leq \frac{2}{m^2 g} (\pBn (t,x, p))^2,
\Ee
and thus,
\Be \label{est3:s^*_dy}
\frac{4}{c m^2 g} (\pBn (t,x, p))^2 (1 + \sqrt{(m c)^2 + |\pB (t,x,p)|^2} + 2 \pBn (t,x, p) )
\geq \eqref{est2:x3x3s^*_dy}_*
\Ee
Since $s^* < t$, \eqref{est4:x3x3s^*_dy} and \eqref{est3:s^*_dy} show that for $0 \leq m g ( t - s^*) < 1$,
\be \label{est4:s^*_dy}
t - s^*
\leq |\pBn (t,x, p)| \sqrt{\frac{32}{c m^3 g^2} (1 + \sqrt{(m c)^2 + |\pB (t,x,p)|^2} + 2 \pBn (t,x, p))},
\ee
and for $1 \leq m g ( t - s^*)$,
\be \label{est5:s^*_dy}
t - s^*
\leq \frac{16}{c m^2 g } (\pBn (t,x, p))^2 (1 + \sqrt{(m c)^2 + |\pB (t,x,p)|^2} + 2 \pBn (t,x, p) ).
\ee
Finally, combining \eqref{est3:pB3tB}, \eqref{est4:s^*_dy} and \eqref{est5:s^*_dy}, we conclude that
\be \notag
\begin{split}
\tB (t,x, p) 
& = s^* - t + \tB (t,x, p) + t - s^* 
\\& \leq | \pBn (t,x, p) | \Big( \frac{2}{m g}  + \frac{ 16 (\sqrt{c m} + | \pBn (t,x,p) |)}{c m^2 g} 
\\& \qquad \qquad \qquad \qquad \qquad \quad \times \big(1 + \sqrt{(m c)^2 + |\pB (t,x,p)|^2} + 2 \pBn (t,x, p) \big) \Big),
\end{split}
\ee
and prove \eqref{est:tBpB3}.
\end{proof} 

Based on Proposition \ref{lem2:tB}, in the following proof, we show that when $\nabla_x \Psi (t)$ is sufficiently small, $\tB (t,x, p) $ can be controlled by a linear form of $|\pB (t,x,p)|$. We skip some details that directly follow from the proof of Proposition \ref{lem2:tB}.

\begin{prop} \label{lem3:tB} 

Recall the dynamic characteristics \eqref{ODE_F} and its self-consistent potentials $\Phi(x)$ and $\Psi(t,x)$ in \eqref{eqtn:Dphi} and \eqref{Poisson_f}, respectively. Suppose the conditions \eqref{Uest:DPhi}, \eqref{Uest:DxPsi} and \eqref{Uest:Dxphi_F} hold.
Then for all $(t,x,p) \in [0, \infty) \times  \bar\O \times \R^3$ satisfying $t - \tBp (t,x,p) \geq 0$, the backward exit time \eqref{tb} is bounded by
\Be \label{est2:tBpB3}
\begin{split}
\tBp (t,x, p) 
& \leq \frac{8}{m_{\pm} g} \sqrt{(m_{\pm} c)^2 + |\pBp (t,x,p)|^2}.
\end{split}
\Ee
Moreover, for all $(t,x,p) \in [0, \infty) \times  \bar\O \times \R^3$ satisfying $t - \tBp (t,x,p) \geq 0$,
\Be \label{est:x3ppB}
\sqrt{(m_{\pm} c)^2 + |p|^2} + \frac{m_{\pm} g}{2 c} x_3
\leq \frac{7}{6} \sqrt{(m_{\pm} c)^2 + |\pBp (t,x,p)|^2}.
\ee
\end{prop}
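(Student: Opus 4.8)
The plan is to exploit the smallness of $\nabla_x\Psi$ (the new ingredient compared with Proposition~\ref{lem2:tB}) through an \emph{almost}-conservation of the modified total energy along the dynamic characteristics, and then to bootstrap this against the direct control of the descending phase used in Propositions~\ref{lem:tB}--\ref{lem2:tB}. Throughout I adopt the abuse of notation about $\pm$ in \eqref{abuse}, write $\P^0_{\mathbf B}:=\sqrt{(mc)^2+|\pB (t,x,p)|^2}$ for the right-hand side of \eqref{est2:tBpB3}, and always assume $t-\tB (t,x,p)\ge 0$, so that $0\le \tB (t,x,p)\le t<\infty$ (which will justify absorbing a $\tB$-term later).

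First I would establish a master inequality from energy. Set $\mathcal E(s):=\sqrt{(mc)^2+|\P (s;t,x,p)|^2}+\tfrac1c\big(e\Phi_h(\X (s;t,x,p))+mg\X_3 (s;t,x,p)\big)$. By \eqref{eq:energy_conservation_dy}, $\tfrac{d}{ds}\mathcal E(s)=-\tfrac{e}{c}\nabla_x\Psi(s,\X)\cdot\V$, and since $|\V|\le c$ and $\|e\nabla_x\Psi\|_{L^\infty(\bar\O)}\le mg/48$ by \eqref{Uest:DxPsi} (using $e_\pm\min\{m_+/e_+,m_-/e_-\}\le m_\pm$), we get $|\mathcal E'(s)|\le mg/48$ on $[t-\tB,t]$. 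Because $\X_3=0$ and $\Phi_h=0$ on $\p\O$, we have $\mathcal E(t-\tB)=\P^0_{\mathbf B}$, while $\mathcal E(t)=\sqrt{(mc)^2+|p|^2}+\tfrac1c(e\Phi_h(x)+mgx_3)$. Integrating the derivative bound over $[t-\tB,t]$ and using $e\Phi_h(x)\ge-\tfrac12 mgx_3$ (from \eqref{Uest:DPhi} and $\Phi_h=0$ on $\p\O$, cf.~\eqref{est:phih_dy}) together with $x_3\ge 0$ yields
\[
\sqrt{(mc)^2+|p|^2}+\tfrac{mg}{2c}\,x_3\ \le\ \P^0_{\mathbf B}+\tfrac{mg}{48}\,\tB (t,x,p),
\]
and in particular $|p|\le\P^0_{\mathbf B}+\tfrac{mg}{48}\tB (t,x,p)$.

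Next I would bound $\tB$ by splitting at the vertical peak time $s^*$ as in Propositions~\ref{lem:tB}--\ref{lem2:tB}: by \eqref{Uest:Dxphi_F} (cf.~\eqref{upper_V_3_dy}) we have $\tfrac{d}{ds}\P_3\le-\tfrac{mg}{2}$, so $\P_3$ is strictly decreasing, $\sgn(\V_3)=\sgn(\P_3)$, and $s^*=t$, $\P_3(s^*)=p_3$ when $p_3\ge0$, whereas $t-\tB<s^*<t$, $\P_3(s^*)=0$ when $p_3<0$ (cf.~\eqref{V3s*}). If $p_3\ge 0$, integrating $\tfrac{d}{ds}\P_3\le-\tfrac{mg}{2}$ over $[t-\tB,t]$ gives $\tB\le\tfrac2{mg}(\pBn-p_3)\le\tfrac2{mg}\pBn\le\tfrac2{mg}\P^0_{\mathbf B}$. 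If $p_3<0$, write $\tB=(s^*-(t-\tB))+(t-s^*)$; integrating over $[t-\tB,s^*]$ gives $s^*-(t-\tB)\le\tfrac2{mg}\pBn\le\tfrac2{mg}\P^0_{\mathbf B}$, while integrating over $[s^*,t]$ gives $t-s^*\le\tfrac2{mg}|p_3|\le\tfrac2{mg}|p|$, and the $|p|$-bound of the previous paragraph then yields $\tB\le\tfrac4{mg}\P^0_{\mathbf B}+\tfrac1{24}\tB$, i.e.\ $\tB\le\tfrac{96}{23\,mg}\P^0_{\mathbf B}\le\tfrac8{mg}\P^0_{\mathbf B}$. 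This is \eqref{est2:tBpB3}; substituting it back into the master inequality gives $\sqrt{(mc)^2+|p|^2}+\tfrac{mg}{2c}x_3\le\P^0_{\mathbf B}+\tfrac16\P^0_{\mathbf B}=\tfrac76\P^0_{\mathbf B}$, which is \eqref{est:x3ppB}.

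The main obstacle is to close the bootstrap with the asserted constant. In the case $p_3<0$ the descending time $t-s^*$ is controlled a priori only by $|p|$ and has no direct relation to $\pB$, so the cruder bounds of Proposition~\ref{lem:tB} cannot be propagated to the linear-in-$\P^0_{\mathbf B}$ form; what makes it work is that $\mathcal E$ is conserved up to an error proportional to $\|e\nabla_x\Psi\|_\infty\le mg/48$, and the $48$ in \eqref{Uest:DxPsi} is precisely tuned so that the resulting self-improving term $\tfrac1{24}\tB$ absorbs, leaving the constant $\tfrac{96}{23}<8$.
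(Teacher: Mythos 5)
Your proposal is correct and takes essentially the same route as the paper: the master inequality you derive from \eqref{eq:energy_conservation_dy}, $|\V|\le c$, \eqref{Uest:DxPsi} and the boundary values is exactly the paper's \eqref{est2:pB>p+tB}, and the bound on $\tBp$ comes from the same peak-time splitting with the strictly decreasing $\P_{3}$ followed by absorption of the small term produced by the factor $\tfrac{m_\pm g}{48}$. The only difference is bookkeeping in the case $p_3<0$ (you bound $|p|$ by the master inequality and absorb $\tfrac{1}{24}\tBp$ directly, getting $\tfrac{96}{23}$, while the paper substitutes $\tBp\le\tfrac{2}{m_\pm g}\pBpn+(t-s^*)$ and solves for $t-s^*$), and the final substitution yielding \eqref{est:x3ppB} is identical.
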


\begin{proof}

In the proof, we adopt the abuse of notation about $\pm$ in \eqref{abuse}.
Applying Lemma \ref{lem:conservation_law} with $s = t$, together with \eqref{Uest:DPhi} and the zero Dirichlet boundary condition of $\Phi_h$, we have, for $(t,x,p) \in [0, \infty) \times  \bar\O \times \R^3$,
\be \label{est1:pB>p+tB}
\begin{split}
& \sqrt{(m c)^2 + |p|^2} + \frac{1}{c} \big( e_{\pm} \Phi_h (x) + m g x_{3} \big)
\\& = \sqrt{(m c)^2 + |\pB (t,x,p)|^2} - \frac{1}{c} \int^{t}_{\tau = t - \tB (t,x,p) } e \nabla_x \Psi (\tau, \X (\tau;t,x,p)) \cdot \V (\tau;t,x,p) \dd \tau
\\& \leq \sqrt{(m c)^2 + |\pB (t,x,p)|^2}
+ \frac{m g}{48} \tB (t,x,p).
\end{split}
\ee
where last inequality follows from $\sup_{0 \leq t < \infty}\| e \nabla_x \Psi (t) \|_{L^\infty(\O)} \leq \frac{m g}{48}$ in \eqref{Uest:DxPsi} and $| \V | \leq c$.
Further, from \eqref{est:phih_dy} and \eqref{est1:pB>p+tB}, we get
\be \label{est2:pB>p+tB}
\begin{split}
\sqrt{(m c)^2 + |p|^2} + \frac{m g}{2 c} x_3
\leq \sqrt{(m c)^2 + |\pB (t,x,p)|^2}
+ \frac{m g}{48} \tB (t,x,p).
\end{split}
\ee

Recall from the proof of Proposition \ref{lem2:tB}, there exists a unique time $s^*$, such that 
\Be \notag
\X_{3} (s^*; t,x,p) = \max_{s \in [t - \tB (t,x, p), \ t + \tF (t,x,p)]} { \X_{3} (s; t,x,p) }.
\Ee
and
\be \notag
\sgn \big( \V_{3} (s; t,x,p) \big) = \sgn \big( \P_{3} (s; t,x,p) :=
\begin{cases}
& > 0, \ \text{ if } t - \tB (t,x,p) \leq s < s^*, \\[5pt]
& < 0, \ \text{ if } s^* < s \leq  t + \tF (t,x,p).
\end{cases}
\ee

For $p_3 \geq 0$, from \eqref{est2:pB3tB}, we have
\be \notag
\frac{2}{m g} \pBn (t,x, p) \geq \tB (t,x, p),
\ee
and conclude \eqref{est2:tBpB3}.

For $p_3 < 0$, from \eqref{est2:s^*_dy}, we derive that
\be \notag
t - \tB (t,x,p)  \leq s^* < t \leq t + \tF (t,x,p).
\ee
From \eqref{est3:pB3tB}, we get
\be \label{est:pB3tB_s^*-t}
\frac{2}{m g} \pBn (t,x, p) \geq s^* - t + \tB (t,x, p).
\ee
On the other hand, \eqref{est:p3p3s^*_dy} shows that
\Be \notag
p_3
= \int^{t}_{s^*} \frac{d \P_3(\tau;x,p) }{d \tau } \dd \tau 
\leq - \frac{m g}{2} ( t - s^*), \text{ for } p_3< 0.
\Ee
This implies that
\be \label{est:p3_t-s^*_dy}
\frac{m g}{2} ( t - s^*)
\leq - p_3 \leq \sqrt{(m c)^2 + |p|^2}.
\ee
Combining \eqref{est:pB3tB_s^*-t} with \eqref{est:p3_t-s^*_dy}, we obtain 
\be \label{est:pB3tB_dy}
\begin{split}
\tB (t,x, p) 
& = \big( s^* - t + \tB (t,x, p) \big) + ( t - s^*)
\\& \leq \frac{2}{m g} \pBn (t,x, p) + ( t - s^*).
\end{split}
\ee
Inputting \eqref{est:pB3tB_dy} into \eqref{est2:pB>p+tB}, together with \eqref{est:p3_t-s^*_dy}, we derive 
\be \label{est2:pB3tB_dy}
\begin{split}
\frac{m g}{2} ( t - s^*)
& \leq \sqrt{(m c)^2 + |p|^2} + \frac{m g}{2 c} x_3
\\& \leq \sqrt{(m c)^2 + |\pB (t,x,p)|^2}
+ \frac{m g}{48} \big( \frac{2}{m g} \pBn (t,x, p) + ( t - s^*) \big).
\end{split}
\ee
This shows that
\be \label{est3:pB3tB_dy}
\begin{split}
\frac{m g}{4} (t - s^*)
\leq \sqrt{(m c)^2 + |\pB (t,x,p)|^2} + \frac{1}{24} \pBn (t,x, p).
\end{split}
\ee
Together with \eqref{est:pB3tB_dy}, we conclude \eqref{est2:tBpB3}.

\smallskip

Now applying \eqref{est2:tBpB3} on \eqref{est2:pB>p+tB}, we deduce
\be \notag
\begin{split}
& \sqrt{(m c)^2 + |p|^2} + \frac{m g}{2 c} x_3
\\& \leq \sqrt{(m c)^2 + |\pB (t,x,p)|^2}
+ \frac{m g}{48} \times \frac{8}{m g} \sqrt{(m c)^2 + |\pB (t,x,p)|^2}
\\& = \frac{7}{6} \sqrt{(m c)^2 + |\pB (t,x,p)|^2}.
\end{split}
\ee
and conclude \eqref{est:x3ppB}.
\end{proof}

\section{Steady Solutions}

In this section, we construct a weak solution to the steady problem, and prove its regularity, and finally study a uniqueness theorem. 

\subsection{Construction} \label{sec:CS} 

We construct solutions to the steady problem \eqref{VP_h}-\eqref{eqtn:Dphi} via the following sequences: for any $\ell \in \N$,
\begin{align}
v_\pm \cdot \nabla_x h^{\ell+1}_{\pm} 
+ \big( e_{\pm} ( \frac{v_\pm}{c} \times B 
- \nabla_x \Phi^\ell ) - \nabla_x (m_{\pm} g x_3) \big) \cdot \nabla_p h^{\ell+1}_{\pm} = 0 & \ \ \text{in} \ \O \times \R^3, \label{eqtn:hk} \\
h^{\ell+1}_{\pm} (x,p) = G_{\pm} (x,p) & \ \ \text{on} \ \gamma_-, \label{bdry:hk} \\	
\rho^\ell = \int_{\R^3} ( e_+ h^\ell_+ + e_{-} h^\ell_{-} ) \dd p & \ \ \text{in} \ \O, \label{eqtn:rhok} \\
- \Delta \Phi^\ell = \rho^\ell & \ \ \text{in} \ \O, \label{eqtn:phik} \\
\Phi^\ell =0  & \ \ \text{on} \ \p\O, \label{bdry:phik}
\end{align}
where $v_\pm = \frac{p}{\sqrt{ m_\pm^2 + |p|^2/ c^2}}$ and the initial setting $h^0_{\pm} = 0$ and $(\rho^0,  \nabla_x \Phi^0) = (0, \mathbf{0})$.

\smallskip 

First, we construct $h^1_{\pm}$ solving \eqref{eqtn:hk} and \eqref{bdry:hk} from $(\rho^0,  \nabla_x \Phi^0) = (0, \mathbf{0})$. We consider the Lagrangian formulation of the following characteristics:
\Be \label{z1}
\begin{split}
\big( X^{1}_{\pm} (t;x,p), P^{1}_{\pm} (t;x,p) \big) |_{t=0} = (x, p) \ \ \text{at} \ t = 0, &
\\ \frac{d X^1_{\pm} }{d t} = V^1_{\pm} = \frac{P^1_{\pm}}{\sqrt{m^2_{\pm} + |P^1_{\pm}|^2 / c^2}}, \ \
\frac{d P^1_{\pm} }{d t} =  {e_{\pm}} V^1_{\pm} \times \frac{B}{c} - {m_{\pm}} g \mathbf{e}_3. &
\end{split}
\Ee
Then from \eqref{eqtn:rhok}, as long as it is well-defined, we obtain
\[
\rho^1 (x) = \int_{\R^3} ( e_+ h^1_+ + e_{-} h^1_{-} ) \dd p.
\]
Using the Green's function in \eqref{phi_rho}, together with \eqref{eqtn:phik}, we derive $\Phi^1$ and $\nabla_x \Phi^1$ by
\[
- \Delta \Phi^1 (x) = \rho^1 (x).
\]

\smallskip

Second, we construct $(h^{\ell+1}_{\pm}, \rho^{\ell}, \nabla_x \Phi^{\ell})$ solving \eqref{eqtn:hk}-\eqref{bdry:phik} for $\ell \geq 1$ by iterating the process. Given $\nabla_x \Phi^{\ell}$, we construct $h^{\ell+1}_{\pm}$ along the characteristics $(X^{\ell+1}_{\pm}, P^{\ell+1}_{\pm})$ as follows:
\be \label{VP_h^l+1}
\big( X^{\ell+1}_{\pm} (t;x,p)，P^{\ell+1}_{\pm} (t;x,p) \big) |_{t=0} = (x, p) \ \ \text{at} \ t = 0,
\ee
and   
\Be \label{bdry:h^l+1}
\begin{split}
& \frac{d X^{\ell+1}_{\pm} }{d t} = V^{\ell+1}_{\pm} = \frac{P^{\ell+1}_{\pm}}{\sqrt{m^2_{\pm} + |P^{\ell+1}_{\pm}|^2 / c^2}}, 
\\& \frac{d P^{\ell+1}_{\pm} }{d t} =  {e_{\pm}} ( V^{\ell+1}_{\pm} \times \frac{B}{c} - \nabla_x \Phi^{\ell} (X^{\ell+1}_{\pm} )) - {m_{\pm}} g \mathbf{e}_3. 
\end{split}
\Ee
Using the Peano theorem, together with the boundness of $\rho^\ell$ (see \eqref{Uest:rho^k}) and the continuity of $\nabla_x \Phi^\ell$ (see \eqref{phi_rho}), we conclude the existence of solutions (not necessarily unique). 
Then we obtain $\rho^{\ell+1}$ from \eqref{eqtn:rhok} and solve $\nabla_x \Phi^{\ell+1}$ from \eqref{eqtn:phik} and \eqref{bdry:phik}.

Now suppose that $(X^{\ell+1}, P^{\ell+1})$ exists, then   
\Be 
\begin{split}
P^{\ell+1}_{\pm} (t;x,p) & = p + \int^t_0 \Big( e_{\pm}\big( V^{\ell+1}_{\pm} \times \frac{B}{c} - \nabla_x  \Phi^{\ell}_{\pm} (X^{\ell+1}_{\pm} (s;x,p)) \big) - m_{\pm} g \mathbf{e}_3 \Big)  \dd s,
\\ X^{\ell+1}_{\pm} (t;x,p) & = x + \int^t_0 V^{\ell+1}_{\pm} (s;x,p) \dd s.
\end{split}
\Ee

Analogous to Definition \ref{def:tb}, we define the backward exit time, position, and momentum for every iteration, that is, for any $\ell \geq 0$,
\Be \label{def:tb_l}
\begin{split} 
& \tbp^\ell (x,p) : = \sup\{ s  \in [0,\infty) : X_{\pm, 3}^\ell (-\tau; x,p) \ \text{for all } \tau \in (0, s)\} \geq 0, 
\\& \xbp^{\ell} (x,p) := X^{\ell}_{\pm} (-\tbp^{\ell}(x,p); x,p), \ \
\pbp^{\ell} (x,p) := P^{\ell}_{\pm} (-\tbp^{\ell}(x,p); x,p).
\end{split}
\Ee
Given $\nabla_x \Phi^\ell$, the weak solution $h^{\ell+1}_{\pm}$ to \eqref{eqtn:hk}-\eqref{bdry:hk} follows
\Be \label{form:h^k}
\begin{split}
h^{\ell+1}_{\pm} (x,p) 
= h^{\ell+1}_{\pm}  (X_{\pm}^{\ell+1}  ( -t;x,p), P^{\ell+1}_{\pm} (-t;x,p) )  \ \ \text{for all} \ t \in [0, \tbp^{\ell+1} (x,p)].
\end{split}
\Ee
Similar to \eqref{w^h}, we define a weight function which is invariant along the characteristics
\Be \label{def:w^k}
w^{\ell+1}_{\pm, \beta} (x,p) 
= e^{ \beta \left( \sqrt{(m_{\pm} c)^2 + |p|^2} + \frac{1}{c} ( e_{\pm} \Phi^{\ell} (x) + m_{\pm} g x_3 ) \right) }
\ \ \text{in} \ \O \times \R^3.
\Ee
Note that, at the boundary the weight function satisfies
\Be \label{wk:bdry}
w^{\ell+1}_{\pm, \beta} (x, p) 
= e^{ \beta \sqrt{(m_{\pm} c)^2 + |p|^2} } 
\ \ \text{on} \ \p\O.
\Ee
Using \eqref{bdry:hk}, \eqref{form:h^k}, and \eqref{wk:bdry}, as long as $\tb^{\ell+1} (x,p) < \infty$, then we have  
\Be
\begin{split} \label{form:h^k_G}
w^{\ell+1}_{\pm, \beta} (x,p) h^{\ell+1}_{\pm} (x,p) 
& = w^{\ell+1}_{\pm, \beta} (\xbp^{\ell+1} (x,p), \pbp^{\ell+1} (x,p) )  G_{\pm} (\xbp^{\ell+1} (x,p), \pbp^{\ell+1} (x,p) )
\\&	= e^{ \beta \sqrt{(m_{\pm} c)^2 + | \pbp^{\ell+1} (x,p)|^2} } G_{\pm} (\xbp^{\ell+1} (x,p), \pbp^{\ell+1} (x,p) ).
\end{split}	
\Ee

So far, we have constructed the sequences $(h^{\ell+1}_{\pm}, \rho^{\ell}, \nabla_x \Phi^{\ell})$ for $\ell \geq 1$.
The main challenge now is to demonstrate the strong convergence of each component in the sequences. Without establishing this convergence, the limits of these sequences may not correspond to a steady solution.

Using the mathematical induction, we obtain the uniform-in-$\ell$ estimates on $\| h_{\pm}^{\ell+1} \|_{L^\infty (\bar \O \times \R^3)} $ and $|\rho^{\ell} (x)|$.
Together with the elliptic estimates (see Lemma \ref{lem:rho_to_phi}), we derive the following key estimate:
\be \notag
\| \nabla_x \Phi^ \ell \|_{L^\infty (\bar{\O})} 
\leq  \min \big( \frac{m_{+}}{e_{+}}, \frac{m_{-}}{e_{-}} \big) \times \  \frac{g}{2},
\ \text{ for any }
\ell \in \N.
\ee
This result ensures that the downward gravitational force dominates in the field (see Proposition \ref{prop:Unif_steady} below).

\begin{prop} \label{prop:Unif_steady}

Suppose the condition \eqref{condition:beta} holds for some $g, \beta > 0$. 
Then, under the above construction $(h^{\ell+1}_{\pm}, \rho^\ell, \nabla_x \Phi^\ell)$ satisfies the following uniform-in-$\ell$ estimates:
\begin{align}
& \| h_{\pm}^{\ell+1} \|_{L^\infty (\bar \O \times \R^3)} 
\leq \|  G_{\pm} \|_{L^\infty (\gamma_-)},
\label{Uest:h^k} 
\\& \| w^{\ell+1}_{\pm, \beta} h_{\pm}^{\ell+1}   \|_{L^\infty (\bar \O \times \R^3)} 
\leq \| e^{ \beta \sqrt{(m_{\pm} c)^2 + |p|^2}} G_{\pm} \|_{L^\infty (\gamma_-)}, \label{Uest:wh^k}
\\& 
| \rho^{\ell} (x) |
\leq \frac{e_+}{\beta} \| e^{ \beta \sqrt{(m_{+} c)^2 + |p|^2}} G_+ \|_{L^\infty(\gamma_-)} e^{-  \beta \frac{ m_+ }{2 c} g x_3} + \frac{e_-}{\beta} \| e^{ \beta \sqrt{(m_{-} c)^2 + |p|^2}} G_- \|_{L^\infty(\gamma_-)} e^{- \beta \frac{ m_{-} }{2 c} g x_3}, \label{Uest:rho^k}
\\& 
\| \nabla_x \Phi^ \ell \|_{L^\infty (\bar{\O})} 
\leq  \min \big( \frac{m_{+}}{e_{+}}, \frac{m_{-}}{e_{-}} \big) \times \  \frac{g}{2}. 
\label{Uest:DPhi^k}
\end{align} 
Furthermore, $\Phi^\ell \in H^1_{0, \text{loc}} (\O)$ and 
\Be\label{lower_w_st}
w^{\ell+1}_{\pm, \beta} (x,p) 
\geq e^{ \beta \left( \sqrt{(m_{\pm} c)^2 + |p|^2} + \frac{m_{\pm}}{2 c} g x_3 \right) }.
\Ee
\end{prop}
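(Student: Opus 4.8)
The plan is to establish \eqref{Uest:h^k}--\eqref{lower_w_st} simultaneously by induction on $\ell$, organized around the electric-field bound \eqref{Uest:DPhi^k}. The base case $\ell=0$ is immediate: $h^0_\pm=0$, $\rho^0=0$, $\nabla_x\Phi^0=\mathbf{0}$, so \eqref{Uest:rho^k} and \eqref{Uest:DPhi^k} hold trivially at $\ell=0$, and $\Phi^0\equiv 0\in H^1_{0,\mathrm{loc}}(\O)$.

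For the inductive step, assume $\nabla_x\Phi^\ell$ satisfies \eqref{Uest:DPhi^k}. First I would transcribe the characteristic analysis of Section \ref{sec:char} to the iterate \eqref{VP_h^l+1}--\eqref{bdry:h^l+1}. Since $\|e_\pm\nabla_x\Phi^\ell\|_{L^\infty(\bar\O)}\le\min\{m_+,m_-\}\,\tfrac g2$, the vertical force obeys $\tfrac{d}{ds}P^{\ell+1}_{\pm,3}=-e_\pm\p_{x_3}\Phi^\ell(X^{\ell+1}_\pm)-m_\pm g\le-\tfrac{m_\pm g}{2}<0$, which is the structural hypothesis of Proposition \ref{lem:tb}; running that argument verbatim (together with the conservation law of Lemma \ref{lem:conservation_law}, whose proof transcribes because the computation \eqref{eq:energy_conservation} uses $\Phi_h$ only through the matching pair ``$\Phi$ in the Hamiltonian / $-\nabla_x\Phi$ in the force'', which is satisfied here with $\Phi^\ell$) gives $\tbp^{\ell+1}(x,p)<\infty$ for every $(x,p)\in\bar\O\times\R^3$. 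Hence the Lagrangian representation \eqref{form:h^k}--\eqref{form:h^k_G} is valid, and $\sqrt{(m_\pm c)^2+|P^{\ell+1}_\pm(s)|^2}+\tfrac1c\big(e_\pm\Phi^\ell(X^{\ell+1}_\pm(s))+m_\pm g X^{\ell+1}_{\pm,3}(s)\big)$ is constant along the iterated characteristic, so $w^{\ell+1}_{\pm,\beta}$ is invariant along it and equals $e^{\beta\sqrt{(m_\pm c)^2+|\pbp^{\ell+1}(x,p)|^2}}$ at the exit point by \eqref{wk:bdry}. Feeding this into \eqref{form:h^k_G} gives \eqref{Uest:wh^k}. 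From the zero Dirichlet condition \eqref{bdry:phik} together with \eqref{Uest:DPhi^k} one records $-\tfrac{m_\pm g x_3}{2}\le e_\pm\Phi^\ell(x)\le\tfrac{m_\pm g x_3}{2}$ for $x\in\O$, which immediately yields the lower bound \eqref{lower_w_st} and in particular $w^{\ell+1}_{\pm,\beta}\ge 1$; combining the latter with \eqref{Uest:wh^k} gives \eqref{Uest:h^k}.

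Next I would deduce the density and potential bounds at level $\ell+1$. From \eqref{Uest:wh^k} and \eqref{lower_w_st} one has the pointwise bound $|h^{\ell+1}_\pm(x,p)|\le\|e^{\beta\sqrt{(m_\pm c)^2+|p|^2}}G_\pm\|_{L^\infty(\gamma_-)}\,e^{-\beta\sqrt{(m_\pm c)^2+|p|^2}}\,e^{-\beta\frac{m_\pm}{2c}g x_3}$; inserting this into \eqref{eqtn:rhok} and integrating in $p$ produces the density estimate \eqref{Uest:rho^k} at level $\ell+1$. Finally, I would feed \eqref{Uest:rho^k} into the elliptic estimate of Lemma \ref{lem:rho_to_phi} — whose constant $\mathfrak{C}$ is the one appearing in \eqref{est:nabla_phi} — exploiting the $x_3$-exponential decay of $\rho^{\ell+1}$ (which, integrated against the half-space Green kernel \eqref{phi_rho}, is precisely what generates the factor $1+\tfrac{2c}{\beta\hat{m} g}$) to obtain $\|\nabla_x\Phi^{\ell+1}\|_{L^\infty(\bar\O)}\le\tfrac{\mathfrak{C}}{\beta}\big(e_+\|e^{\beta\sqrt{(m_+c)^2+|p|^2}}G_+\|_{L^\infty(\gamma_-)}+e_-\|e^{\beta\sqrt{(m_-c)^2+|p|^2}}G_-\|_{L^\infty(\gamma_-)}\big)\big(1+\tfrac{2c}{\beta\hat{m} g}\big)$, which is $\le\min\big(\tfrac{m_+}{e_+},\tfrac{m_-}{e_-}\big)\tfrac g2$ exactly by hypothesis \eqref{condition:beta}. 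This is \eqref{Uest:DPhi^k} at level $\ell+1$; the membership $\Phi^{\ell+1}\in H^1_{0,\mathrm{loc}}(\O)$ follows from standard interior/boundary elliptic regularity applied to the bounded, exponentially decaying source $\rho^{\ell+1}$, closing the induction.

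The main obstacle is this last step: closing the $\|\nabla_x\Phi^{\ell+1}\|_{L^\infty}$ bound with the sharp constant. The difficulty is twofold — one needs an elliptic estimate that is global up to $\p\O$ and that genuinely uses the decay of $\rho$ in $x_3$ (a crude $L^\infty\!\to W^{1,\infty}$ bound is not even finite on the half-space), and the bookkeeping must reproduce exactly the combination $\mathfrak{C}\,\beta^{-1}\big(1+\tfrac{2c}{\beta\hat{m}g}\big)$ built into \eqref{condition:beta}, so that the inductive constant neither grows with $\ell$ nor forces a re-tuning of $\beta$. Everything else — finiteness of $\tbp^{\ell+1}$, invariance of $w^{\ell+1}_{\pm,\beta}$ along the iterated flow, and the weighted $L^\infty$ control of $h^{\ell+1}$ — is a direct transcription of arguments already in place in Section \ref{sec:char}.
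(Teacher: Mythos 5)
Your induction is organized exactly as in the paper: assume the field bound \eqref{Uest:DPhi^k} at level $\ell$, transcribe Lemma \ref{lem:conservation_law} and Proposition \ref{lem:tb} to the iterated characteristics to get finiteness of $\tbp^{\ell+1}$ and invariance of $w^{\ell+1}_{\pm,\beta}$, read off \eqref{Uest:wh^k} from \eqref{form:h^k_G}, integrate in $p$ using \eqref{lower_w_st} to get \eqref{Uest:rho^k}, and close via Lemma \ref{lem:rho_to_phi} with the choice of $A,B$ that makes \eqref{condition:beta} do exactly the right job. All of that matches the paper's proof.

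One step does not work as written: you deduce \eqref{Uest:h^k} by dividing the sup bound \eqref{Uest:wh^k} by the pointwise lower bound $w^{\ell+1}_{\pm,\beta}\ge 1$. That only yields $\|h^{\ell+1}_\pm\|_{L^\infty}\le e^{-\beta m_\pm c}\,\|e^{\beta\sqrt{(m_\pm c)^2+|p|^2}}G_\pm\|_{L^\infty(\gamma_-)}$, and since $\|e^{\beta\sqrt{(m_\pm c)^2+|p|^2}}G_\pm\|_{L^\infty(\gamma_-)}\ge e^{\beta m_\pm c}\|G_\pm\|_{L^\infty(\gamma_-)}$ (the sup of the weighted function can be attained at large $|p|$ while the pointwise weight at the evaluation point is only $e^{\beta m_\pm c}$), this is weaker than the asserted bound $\|G_\pm\|_{L^\infty(\gamma_-)}$. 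The fix is immediate from what you already have: the Lagrangian representation gives $h^{\ell+1}_\pm(x,p)=G_\pm(\xbp^{\ell+1}(x,p),\pbp^{\ell+1}(x,p))$ pointwise, so $\|h^{\ell+1}_\pm\|_{L^\infty}\le\|G_\pm\|_{L^\infty(\gamma_-)}$ directly, with no detour through the weight. With that one-line correction the argument is complete.
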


\begin{proof}

Under the initial setting $(\rho^0, \nabla_x \Phi^0) = (0, \mathbf{0})$, together with the invariance of $h^1_{\pm}$ and the weight function $w^{1}_{\pm, \beta} (x,p)$ along the characteristics \eqref{z1}, we deduce \eqref{Uest:h^k}-\eqref{Uest:DPhi^k} hold for $\ell = 0$.

\smallskip

Now we prove this by induction. In the following proof, we adopt \text{the abuse of notation about $\pm$} in \eqref{abuse}.
Assume a positive integer $k > 0$ and suppose that \eqref{Uest:h^k}-\eqref{Uest:DPhi^k} hold for $0 \leq \ell \leq k$.
From \eqref{eqtn:rhok}, then for $\ell = k + 1$, 
\be \label{est1:rho_k+1}
\begin{split}
| \rho^{k+1} (x) |
& = \Big| \int_{\R^3} ( e_+ h^{k+1}_+ + e_{-} h^{k+1}_{-} ) \dd p \Big|
\\& \leq \Big| \int_{\R^3} e_{+} h^{k+1}_+ \dd p \Big| + \Big| \int_{\R^3} e_{-} h^{k+1}_{-} \dd p \Big|
\\& \leq \Big|\int_{\R^3} w^{k+1}_{+, \beta}  h^{k+1}_{+} (x,p) \frac{e_{+}}{w^{k+1}_{+, \beta} (x,p)} \dd p \Big| 
+ \Big|\int_{\R^3} w^{k+1}_{-, \beta}  h^{k+1}_{-} (x,p) \frac{e_{-}}{w^{k+1}_{-, \beta} (x,p)} \dd p \Big|.
\end{split}
\ee
Using \eqref{Uest:wh^k} for $\ell=k$, we have
\be \label{est2:rho_k+1}
\Big|\int_{\R^3} w^{k+1}_{\beta} h^{k+1} (x,p) \frac{e}{w^{k+1}_{\beta} (x,p)} \dd p \Big| 
\leq \| e^{ \beta \sqrt{(m c)^2 + |p|^2}} G \|_{L^\infty(\gamma_-)} \int_{\R^3} \frac{e}{w^{k+1}_{\beta} (x, p)} \dd p.
\ee
Using \eqref{Uest:DPhi^k} for $\ell=k$ and the zero Dirichlet boundary condition \eqref{bdry:phik}, we derive 
\Be \notag
\begin{split}
|\Phi ^{k } (x_\parallel, x_3)| 
& = \Big| \Phi^{k} (x_\parallel, 0) + \int ^{x_3}_0 \nabla_{x_3} \Phi^{k} (x_\parallel, y) \dd y \Big| 
\\& \leq \int ^{x_3}_0 \big| \nabla_{x_3} \Phi^{k }  (x_\parallel, y) \big| \dd y 
\leq \min \big( \frac{m_{+}}{e_{+}}, \frac{m_{-}}{e_{-}} \big) \times \  \frac{g}{2} x_3. 
\end{split}
\Ee
Thus, we derive \eqref{lower_w_st} for $\ell=k+1$ via
\Be\notag
w^ {k+1 }_\beta(x,p)
\geq e^{ \beta \left( \sqrt{(m c)^2 + |p|^2} + \frac{1}{c} ( - \frac{1}{2} m g x_3 + m g x_3 ) \right) } 
\geq e^{ \beta \left( \sqrt{(m c)^2 + |p|^2} + \frac{m}{2 c} g x_3 \right) }.
\Ee
This shows that
\Be \label{est3:rho_k+1}
\int_{\R^3} \frac{1}{w_\beta^{k+1} (x,p)} \dd p
\leq \int_{\R^3} e^{ - \beta \left( \sqrt{(m c)^2 + |p|^2} + \frac{m}{2 c} g x_3 \right) } \dd p 
\lesssim \frac{1}{\beta} e^{-  \beta \frac{m}{2 c} g x_3}.
\Ee
Together with \eqref{est1:rho_k+1}-\eqref{est3:rho_k+1}, we obtain
\be \notag
\begin{split}
| \rho^{k+1} (x) |
& \leq \Big|\int_{\R^3} w^{k+1}_{+, \beta}  h^{k+1}_{+} (x,p) \frac{e_{+}}{w^{k+1}_{+, \beta} (x,p)} \dd p \Big| 
+ \Big|\int_{\R^3} w^{k+1}_{-, \beta}  h^{k+1}_{-} (x,p) \frac{e_{-}}{w^{k+1}_{-, \beta} (x,p)} \dd p \Big|
\\& \leq \| w^{\ell+1}_{+, \beta} h_{+}^{\ell+1} \|_{L^\infty (\bar \O \times \R^3)}  \frac{e_+ }{\beta} e^{-  \beta \frac{ m_+ }{2 c} g x_3} + \| w^{\ell+1}_{-, \beta} h_{-}^{\ell+1} \|_{L^\infty (\bar \O \times \R^3)} \frac{ e_- }{\beta} e^{- \beta \frac{ m_{-} }{2 c} g x_3}.
\end{split}
\ee
Using \eqref{Uest:wh^k} for $\ell=k$, we conclude \eqref{Uest:rho^k} for $\ell=k+1$.

Using \eqref{phi_rho}, we obtain $\Phi^{k+1} \in H^1_{0, \text{loc}} (\O)$ which is the weak solution to \eqref{eqtn:phik}-\eqref{bdry:phik}.
Applying \eqref{est:nabla_phi} in Lemma \ref{lem:rho_to_phi} with $A, B$ as follows:
\be \notag
A = \frac{1}{\beta} \big( e_+ \| e^{ \beta \sqrt{(m_{+} c)^2 + |p|^2}} G_+ \|_{L^\infty(\gamma_-)} + e_{-} \| e^{ \beta \sqrt{(m_{-} c)^2 + |p|^2}} G_- \|_{L^\infty(\gamma_-)} \big),
\ \text{and } \
B =  \beta \frac{ \hat{m} }{2 c} g,
\ee
with $\hat{m} = \min ( m_{+}, m_{-} )$, then we derive that  
\Be \notag
|\p_{x_j} \Phi^{k+1} (x)| 
\leq \frac{\mathfrak{C} }{\beta} \big( e_+ \| e^{ \beta \sqrt{(m_{+} c)^2 + |p|^2}} G_+ \|_{L^\infty(\gamma_-)} + e_{-} \| e^{ \beta \sqrt{(m_{-} c)^2 + |p|^2}} G_- \|_{L^\infty(\gamma_-)} \big) \times
\big( 1 + \frac{2 c}{ \beta \hat{m} g} \big).
\Ee
From the assumption of $\beta$ in \eqref{condition:beta}, we conclude that \eqref{Uest:DPhi^k} holds for $\ell=k+1$. 

Since $\| \nabla_x \Phi^{k} \|_{L^\infty (\bar \O)} \leq  \min \big( \frac{m_{+}}{e_{+}}, \frac{m_{-}}{e_{-}} \big) \times \  \frac{g}{2}$ holds, 
using \eqref{est:tb^h} in Proposition \ref{lem:tb}, we derive
\Be
\tb^{k+1}(x,p) \leq \frac{4}{m g} \big( \sqrt{(m c)^2 + |p|^2} + \frac{3}{2c} m g x_{3} \big) < \infty
\ \ \text{for all} \
(x, p) \in \bar \O \times \R^3.
\Ee  
From \eqref{form:h^k_G}, we show \eqref{Uest:h^k} and \eqref{Uest:wh^k} for $\ell=k+1$.

\smallskip

Now we have proved \eqref{Uest:h^k}-\eqref{Uest:DPhi^k} hold for $\ell = k+1$. Therefore, we complete the proof by induction. Using \eqref{Uest:DPhi^k} and the zero Dirichlet boundary condition \eqref{bdry:phik}, we can check that $\Phi^\ell \in H^1_{0, \text{loc}} (\O)$ and $\eqref{lower_w_st}$ holds.
\end{proof}

\subsection{A Priori Estimate} 
\label{sec:RS}

In this section, we establish a priori estimate of $(h_{\pm}, \rho, \Phi )$ solving \eqref{VP_h}-\eqref{eqtn:Dphi}. 
Based on the corresponding characteristic trajectory given in \eqref{ODE_h}, we have
\Be \notag
\nabla_{x} h_{\pm} (x,p) = \nabla_{x} \xbp (x,p) \cdot \nabla_{x_\parallel} G_{\pm} (\xbp, \pbp) + \nabla_{x} \pbp (x,p) \cdot \nabla_p G_{\pm} (\xbp, \pbp).
\Ee

Through direct computation (see Lemmas \ref{lem:XV_xv}-\ref{lem:nabla_zb}), a critical term $|\vbpn|^{-1}$ appears in the expansions of $\nabla_{x} \xbp$ and $\nabla_{x} \pbp$. To handle this, we introduce the kinetic distance function $\alpha_\pm (x, p)$ defined in $\eqref{alpha}$, where $\alpha_{\pm} (x, p) = |v_{\pm, 3}|$ for $x \in \p\O$. 
By analyzing the characteristic trajectory, we establish a lower bound for $|v_{\pm, 3}|$ using the kinetic distance function. Subsequently, we derive a key estimate for $| \nabla_p h_{\pm} |$ in Proposition \ref{prop:Reg}.

\begin{lemma} \label{VL} 

Let $\alpha_{\pm} (x,p)$ be defined as in \eqref{alpha}.
Suppose the assumption \eqref{Uest:DPhi} holds. 
Recall the characteristics $Z_{\pm} (t;x,p) = (X_{\pm} (t;x,p), P_{\pm} (t;x,p) )$ solving \eqref{ODE_h}. For all $(x,p) \in \O \times \R^3$ and $t  \in  [- \tbp (x,p), 0]$, then
\Be \label{est:alpha}
\begin{split}
\alpha_{\pm} (X_{\pm} (t;x,p), P_{\pm} (t;x,p)) 
& \leq  \alpha_{\pm} (x,p) e^{ (4g + \| \p_{x_3}^2  \Phi_h \|_{L^\infty(\bar{\O})} + \|  \nabla_{x_\parallel} \p_{x_3} \Phi_h \|_{L^\infty(\p\O)} ) |t| },\\
\alpha_{\pm} (X_{\pm} (t;x,p), P_{\pm} (t;x,p)) 
& \geq \alpha_{\pm} (x,p) e^{  - (4g + \| \p_{x_3}^2 \Phi_h \|_{L^\infty(\bar{\O})} + \|  \nabla_{x_\parallel} \p_{x_3} \Phi_h \|_{L^\infty(\p\O)}	 ) |t| }.
\end{split}
\Ee
Furthermore, we have  
\begin{align}
| \vbpn (x,p)| 
& \geq \alpha_{\pm} (x,p) e^{ - (4g + \| \p_{x_3}^2   \Phi_h \|_{L^\infty(\bar{\O})} + \|  \nabla_{x_\parallel} \p_{x_3} \Phi_h \|_{L^\infty(\p\O)}	) \tbp (x,p) } \label{est1:alpha}
\\& \geq {\alpha_{\pm} (x,p) }
 e^{ -\frac{4}{m_{\pm} g} (4g + 2 \| \nabla_x ^2 \Phi_h \|_{\infty} ) |\pbp|}. \label{est1:xb_x/w}
\end{align}
\end{lemma}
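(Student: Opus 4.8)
The plan is to obtain the two-sided estimate \eqref{est:alpha} by a Gronwall argument along the characteristic \eqref{ODE_h}, and then to read off \eqref{est1:alpha} and \eqref{est1:xb_x/w} from it. Throughout we use the abuse of notation about $\pm$ from \eqref{abuse}, abbreviate $\gamma(s):=\sqrt{m^2+|P(s;x,p)|^2/c^2}$ and $a(s):=e\p_{x_3}\Phi_h(X_\parallel(s;x,p),0)+mg$, so that $\alpha^2(X(s),P(s))=|X_3(s)|^2+|V_3(s)|^2+2a(s)X_3(s)/\gamma(s)$. Along the backward trajectory $X_3(s;x,p)\ge 0$ for $s\in[-\tb(x,p),0]$; assumption \eqref{Uest:DPhi} gives $|e\nabla_x\Phi_h|\le \tfrac{mg}{2}$, hence $\tfrac{mg}{2}\le a(s)\le \tfrac{3mg}{2}$, and therefore $\alpha^2\ge |X_3|^2+|V_3|^2$ and $\alpha^2\ge \tfrac{mg}{\gamma}X_3$. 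From these I record the elementary inequalities $2X_3|V_3|\le\alpha^2$, $|V_3|^2\le\alpha^2$, $X_3/\gamma\le\alpha^2/(mg)$, $X_3/\gamma^2\le\alpha^2/(m^2g)$, together with $\gamma\ge m$ and $|V|\le c$; these will do all the work.

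Next I would differentiate $\alpha^2$ along \eqref{ODE_h}. Using $\dot X_3=V_3$, $\dot P_3=-e\p_{x_3}\Phi_h(X)-mg$ (the magnetic force $V\times B$ has vanishing third component), and the identity $\dot\gamma=-\tfrac1{c^2}\big(eV\cdot\nabla_x\Phi_h(X)+mgV_3\big)$ — which holds because $P\cdot(V\times B)=0$ since $V\parallel P$, exactly as in the computation \eqref{eq:energy_conservation} — one finds $\dot V_3=\tfrac1\gamma\big(-e\p_{x_3}\Phi_h(X)-mg\big)+\tfrac{V_3}{c^2\gamma}\big(eV\cdot\nabla_x\Phi_h(X)+mgV_3\big)$ and hence
\[
\frac{d}{ds}\alpha^2(X(s),P(s))=2X_3V_3+2V_3\dot V_3+2a\frac{V_3}{\gamma}+2\dot a\frac{X_3}{\gamma}-2a\frac{X_3\dot\gamma}{\gamma^2}.
\]
The crucial point is the cancellation $2V_3\dot V_3+2a\tfrac{V_3}{\gamma}=\tfrac{2eV_3}{\gamma}\big(\p_{x_3}\Phi_h(X_\parallel,0)-\p_{x_3}\Phi_h(X_\parallel,X_3)\big)+\tfrac{2V_3^2}{c^2\gamma}\big(eV\cdot\nabla_x\Phi_h+mgV_3\big)$, in which the dangerous terms of size $mg/\gamma$ drop out; the residual discrepancy of $\p_{x_3}\Phi_h$ between the boundary and the interior is $\p_{x_3}\Phi_h(X_\parallel,0)-\p_{x_3}\Phi_h(X_\parallel,X_3)=-\int_0^{X_3}\p_{x_3}^2\Phi_h(X_\parallel,y)\,\dd y$, controlled by $\|\p_{x_3}^2\Phi_h\|_{L^\infty(\bar\O)}X_3$.

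Combining this with $\dot a=e\nabla_{x_\parallel}\p_{x_3}\Phi_h(X_\parallel,0)\cdot V_\parallel$ and the bounds recorded above ($|eV\cdot\nabla_x\Phi_h+mgV_3|\lesssim mgc$, $|a|\le\tfrac{3mg}{2}$, $|\dot\gamma|\lesssim mg/c$, $2|V_3|X_3\le\alpha^2$, $X_3/\gamma\le\alpha^2/(mg)$, $X_3/\gamma^2\le\alpha^2/(m^2g)$, $\gamma\ge m$), every term on the right-hand side of the displayed identity is a constant multiple of $\alpha^2$, the constants summing to at most $2\big(4g+\|\p_{x_3}^2\Phi_h\|_{L^\infty(\bar\O)}+\|\nabla_{x_\parallel}\p_{x_3}\Phi_h\|_{L^\infty(\p\O)}\big)$. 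Gronwall's inequality applied to $s\mapsto\alpha^2(X(s),P(s))$ on $[-\tb(x,p),0]$ (and likewise on $[0,\tf(x,p)]$) then yields both inequalities of \eqref{est:alpha}.

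Finally, \eqref{est1:alpha} follows by evaluating the lower bound in \eqref{est:alpha} at $t=-\tb(x,p)$: since $X_3(-\tb(x,p);x,p)=0$, the first and third terms of $\alpha^2$ vanish there, so $\alpha(\xb,\pb)=|\vbn(x,p)|$ and $|\vbn(x,p)|\ge\alpha(x,p)\,e^{-(4g+\|\p_{x_3}^2\Phi_h\|_{L^\infty(\bar\O)}+\|\nabla_{x_\parallel}\p_{x_3}\Phi_h\|_{L^\infty(\p\O)})\tb(x,p)}$. Then \eqref{est1:xb_x/w} is immediate from the second bound $\tb(x,p)\le\tfrac{4}{mg}|\pb(x,p)|$ in \eqref{est:tb^h} of Proposition \ref{lem:tb} together with the crude estimate $\|\p_{x_3}^2\Phi_h\|_{L^\infty(\bar\O)}+\|\nabla_{x_\parallel}\p_{x_3}\Phi_h\|_{L^\infty(\p\O)}\le 2\|\nabla_x^2\Phi_h\|_\infty$. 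The main obstacle is the middle step: arranging the differentiation of $\alpha^2$ so that the order-$mg$ terms visibly cancel between $2V_3\dot V_3$ and $2aV_3/\gamma$, and then handling the relativistic factor $\gamma(s)$ — absent in the non-relativistic prototype — carefully enough that the leftover terms (those involving $\dot\gamma$, $\dot a$, and the boundary-versus-interior discrepancy of $\p_{x_3}\Phi_h$) are genuinely $O(\alpha^2)$ with the stated rate rather than merely $O(1)$.
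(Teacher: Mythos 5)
Your proposal is correct and follows essentially the same route as the paper's proof: both differentiate the kinetic distance along \eqref{ODE_h} (you via $\alpha_{\pm}^2$, the paper via $\alpha_{\pm}$ itself, which only changes the Gronwall rate by a factor of $2$ that the square root removes), exploit the same cancellation of the order-$m_{\pm}g$ terms between $V_{\pm,3}\dot V_{\pm,3}$ and the boundary coefficient, control the residual by the fundamental-theorem-of-calculus comparison of $\p_{x_3}\Phi_h$ at $x_3$ and at the boundary, and then obtain \eqref{est1:alpha}--\eqref{est1:xb_x/w} by evaluating at $t=-\tbp(x,p)$ and invoking $\tbp(x,p)\leq \frac{4}{m_{\pm}g}|\pbp(x,p)|$ from Proposition \ref{lem:tb}. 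No gaps beyond the same implicit normalizations the paper itself uses when absorbing the factors $c/p^0_\pm$ and $|v_\parallel|/(m_{\pm}g)$.
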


\begin{proof}

For the sake of simplicity, we abuse the notation as in \eqref{abuse}.
Recall from \eqref{alpha},
\be \notag
\alpha (x,p) = \sqrt{ |x_3|^2  + |v_3|^2 +  2 ( e \p_{x_3} \Phi_h (x_\parallel , 0) + m g ) \frac{c x_3}{ p^0 } },
\ee
where $p^0 = \sqrt{ (m c)^2 + |p|^2}$ and $v =  \frac{p}{ \sqrt{ m^2 + |p|^2/ c^2} }$.
Under direct computation,
\be \notag
\nabla_p (v_3) = 
\big(\frac{- v_1 v_3}{ c p^0 }, \ \frac{- v_2 v_3}{ c p^0 }, \ \frac{c}{p^0} - \frac{(v_3)^2}{c p^0} \big) 
\ \text{ and } \ 
\nabla_p \big(\frac{c}{ p^0} \big) = - \frac{ v}{ (p^0)^2}.
\ee
Thus we get
\Be \label{est1:alpha_steady}
\begin{split}
& \big[ v \cdot \nabla_x + \big( e ( \frac{v}{c} \times B - \nabla_x \Phi_h ) - \nabla_x (m g x_3) \big) \cdot \nabla_p \big] \alpha (x, p)
\\& = \frac{ v_3 x_3 
- \left[ v_3 ( \frac{c}{p^0} - \frac{(v_3)^2}{c p^0} ) ( e \p_{x_3} \Phi_h (x) + m g ) 
- \frac{ (v_3)^2 }{ c p^0 } v_{\|} \cdot e \p_{x_{\|}} \Phi_h (x) \right]}{ \sqrt{ |x_3|^2  + |v_3|^2 +  2 (\p_{x_3} \Phi_h (x_\parallel , 0) + g ) \frac{c x_3}{ p^0 } } }
\\& \ \ \ \ \frac{ + \left[ v_\parallel \cdot \nabla_{x_\parallel} e \p_{x_3} \Phi_h   (x_\parallel, 0) \frac{c x_3}{ p^0 } + \frac{c v_3}{ p^0 } ( e \p_{x_3} \Phi_h (x_\parallel , 0) + m g ) \right] }{ * }
\\& \ \ \ \ \frac{ + \big( e \p_{x_3} \Phi_h (x_\parallel, 0) + m g \big) \frac{ x_3 }{ (p^0)^2 } v \cdot \big( e \nabla_x \Phi_h (x) + \nabla_x (m g x_3) \big) }{*} .
\end{split}
\Ee
Using the fundamental theorem of calculus, we have
\be \label{est2:alpha_steady}
\begin{split}
& - \frac{c v_3}{ p^0 } ( e \p_{x_3} \Phi_h (x) + m g ) + \frac{c v_3}{ p^0 } ( e \p_{x_3} \Phi_h (x_\parallel , 0) + m g )
\\& = \frac{c v_3}{ p^0 } \times \int^0_{x_3} \p_{x_3} \p_{x_3} \Phi_h (x_\parallel, y_3) \dd y_3 
\\& \leq \frac{c v_3}{ p^0 } x_3 \| \p_{x_3} \p_{x_3} \Phi_h \|_{L^\infty(\bar{\O})}.
\end{split}
\ee
Using \eqref{est2:alpha_steady}, we obtain
\be \label{este:alpha_steady}
\begin{split}
\eqref{est1:alpha_steady} 
& \leq \frac{ v_3 x_3 + \frac{c v_3}{ p^0 } x_3 \| \p_{x_3} \p_{x_3} \Phi_h \|_{L^\infty(\bar{\O})}
+ \frac{(v_3)^3}{ c p^0 } ( e \p_{x_3} \Phi_h (x)+ m g ) 
+ \frac{(v_3)^2}{ c p^0 } v_{\|} \cdot e \p_{x_{\|}} \Phi_h (x) }{ \sqrt{ |x_3|^2  + |v_3|^2 +  2 (\p_{x_3} \Phi_h (x_\parallel , 0) + g ) \frac{c x_3}{ p^0 } } }
\\& \frac{+ \ v_\parallel \cdot \nabla_{x_\parallel} e \p_{x_3} \Phi_h (x_\parallel, 0) \frac{c x_3}{ p^0 }
+ ( e \p_{x_3} \Phi_h (x_\parallel, 0) + m g ) \frac{ x_3 }{ (p^0)^2} v \cdot \big( e \nabla_x \Phi_h (x) + \nabla_x (m g x_3) \big) }{*}
\\& \leq \frac{  (|x_3|^2  + |v_3|^2) \big( \frac{1}{2} + \frac{c }{2 p^0 } \| \p_{x_3} \p_{x_3} \Phi_h \|_{L^\infty(\bar{\O})} 
+ \frac{ 3 |v| }{c p^0 } ( e \| \nabla_{x} \Phi_h \|_{L^\infty(\p\O)} + m g ) \big)}{ \sqrt{ |x_3|^2  + |v_3|^2 +  2 (\p_{x_3} \Phi_h (x_\parallel , 0) + g ) \frac{c x_3}{ p^0 } } }
\\& \frac{ + \ 2 |v_\parallel| \|  \nabla_{x_\parallel} \p_{x_3} \Phi_h \|_{L^\infty(\p\O)} \frac{c x_3}{ p^0 }
+ ( e \p_{x_3} \Phi_h (x_\parallel, 0) + m g )  \frac{c x_3}{ p^0 } \big( \frac{ v }{ c p^0 } \cdot \nabla_x (e \Phi_h (x) + m g x_3) \big) }{*} .
\end{split}
\ee
Using \eqref{Uest:DPhi}, we have 
\Be\notag
\begin{split}
& \big| \big[ v \cdot \nabla_x + \big( e ( \frac{v}{c} \times B - \nabla_x \Phi_h ) - \nabla_x (m g x_3) \big) \cdot \nabla_p \big] \alpha (x, p) \big|
\\& \leq  \big( 4g + \frac{c}{ p^0 } \| \p_{x_3} \p_{x_3} \Phi_h \|_{L^\infty(\bar{\O})} + \frac{2 |v_\parallel| }{m g} \| \nabla_{x_\parallel} \p_{x_3} \Phi_h \|_{L^\infty(\p\O)} \big) \alpha (x,p) 
\\& \leq \big( 4g + \| \p_{x_3} \p_{x_3} \Phi_h \|_{L^\infty(\bar{\O})} + \|  \nabla_{x_\parallel} \p_{x_3} \Phi_h \|_{L^\infty(\p\O)} \big) \alpha (x,p). 
\end{split}
\Ee
Using the Gronwall's inequality, we conclude both inequalities of \eqref{est:alpha}. 

\smallskip

For \eqref{est1:alpha}, we set $t =  - \tb (x, p)$ in \eqref{est:alpha} and get 
\[
\alpha (\xb, \pb) = | \vbn (x,p)|.
\]
Thus we obtain the first inequality \eqref{est1:alpha}.
Then using \eqref{ODE_h}, \eqref{est1:alpha} and Proposition \ref{lem:tb}, we have 
\Be \notag
\tb (x, p) \leq \frac{4}{m g} |\pb (x, p)|,
\Ee
and therefore prove \eqref{est1:xb_x/w}. 
\end{proof}

\begin{lemma} \label{lem:XV_xv}
	
Suppose \eqref{ODE_h} holds for all $(t,x,p)$. Then for $1 \leq i, \ j \leq 3$,
\Be \label{XV_x}
\begin{split} 
\p_{x_i} X_{\pm, j} (t;x,p) 
& = \delta_{ij} + \int^t_0 \p_{x_i} P_{\pm} (s;x,p) \cdot \nabla_p V_{\pm, j} (s;x,p) \dd s 	 	
\\&	=  \delta_{ij} + \int^t_0 \int^s_0 \Big(
\p_{x_i} P_{\pm} (\tau;x,p) \cdot \nabla_p \mathcal{E}_{\pm, j} (\tau, X_{\pm} (\tau;x,p), P_{\pm} (\tau;x,p))  
\\& \qquad\qquad\qquad\quad + \p_{x_i} X_{\pm} (\tau;x,p) \cdot \nabla_x \mathcal{E}_{\pm, j} (\tau, X_{\pm} (\tau;x,p), P_{\pm} (\tau;x,p)) \Big) \dd \tau \dd s,
\\ \p_{x_i} P_{\pm, j} (t;x,p) 
& = \int^t_0 e \big( \p_{x_i} P_{\pm} (s;x,p) \cdot \nabla_p ( V_{\pm} (s;x,p) \times \frac{B}{c} )_{j} 
\\& \qquad\qquad\qquad - \p_{x_i} X_{\pm} (s;x,p) \cdot \nabla_x \p_{x_j} \Phi_h ( X_{\pm} (s;x,p)) \big) \dd s,
\end{split}
\Ee
and
\Be \label{XV_v}
\begin{split}
& \p_{p_i} X_{\pm, j} (t;x,p)   
\\& = \int^t_0 \p_{p_i} P_{\pm} (s;x,p) \cdot \nabla_p V_{\pm, j} (s;x,p) \dd s 	
\\&	= t \p_{p_i} v_{\pm, j} + \int^t_0 \int^s_0 \Big(
\p_{p_i} P_{\pm} (\tau;x,p) \cdot \nabla_p \mathcal{E}_{\pm, j} (\tau, X_{\pm} (\tau; x, p), P_{\pm} (\tau; x, p))
\\& \qquad\qquad\qquad\qquad\qquad + \p_{p_i} X_{\pm} (\tau;x,p) \cdot \nabla_x \mathcal{E}_{\pm, j} (\tau, X_{\pm} (\tau; x, p), P_{\pm} (\tau; x, p)) \Big) \dd \tau \dd s,
\\& \p_{p_i} P_{\pm, j} (t;x,p) 
= \delta_{ij} + \int^t_0 e \big( \p_{p_i} P_{\pm} (s;x,p) \cdot \nabla_p ( V_{\pm} (s;x,p) \times \frac{B}{c} )_{j} 
\\& \qquad\qquad\qquad\qquad\qquad\qquad - \p_{p_i} X_{\pm} (s;x,p) \cdot \nabla_x \p_{x_j} \Phi_h ( X_{\pm} (s;x,p)) \big) \dd s.
\end{split}
\Ee
Here, we have defined $\mathcal{E}_\pm  (s, X_\pm (s; x, p), P_\pm(s; x, p) )$ as
\Be \notag
\begin{split}
& \mathcal{E}_\pm  (s, X_\pm (s; x, p), P_\pm(s; x, p) ) 
:= \frac{1}{\sqrt{m_\pm^2 + |P_\pm(s;x,p)|^2 / c^2 }} \frac{\dd}{\dd s} P_\pm (s;x,p) 
\\& \qquad\qquad\qquad\qquad\qquad - \frac{ P_\pm (s;x,p)}{ c^2 \big( m_\pm^2 + |P_\pm(s;x,p)|^2 / c^2 \big)^{3/2} } \big( P_\pm (s;x,p) \cdot \frac{\dd}{\dd s} P_\pm (s;x,p) \big).
\end{split}\
\Ee
Moreover, Suppose the assumption \eqref{Uest:DPhi} holds. Then
\begin{align}
	|\nabla_{x} X_{\pm} (t;x,p)| & \leq \min \big\{  e^{\frac{t^2 + 2t}{2} (\| \nabla_x^2 \Phi  \|_\infty + e B_3 + m_{\pm} g ) }, e^{ (1 + B_3 + \| \nabla_x ^2 \Phi  \|_\infty)t} \big\}, \label{est:X_x} \\
	|\nabla_{x} P_{\pm} (t;x,p)| &\leq   \min \big\{ t ( B_3 + \| \nabla_x ^2 \Phi \|_\infty) e^{ (1+ B_3 + \| \nabla_x ^2 \Phi \|_\infty)t},
	e^{ (1+ B_3 + \| \nabla_x ^2 \Phi \|_\infty)t}
	\big\}, \label{est:V_x} \\
	|\nabla_{p} X_{\pm} (t;x,p)| &\leq   \min \big\{ t e^{ (1 + B_3 + \| \nabla_x ^2 \Phi  \|_\infty)t},
	e^{ (1 + B_3 + \| \nabla_x ^2 \Phi  \|_\infty)t}
	\big\}, \label{est:X_v} \\
	|\nabla_{p} P_{\pm} (t;x,p)| & \leq  \min \big\{ 1 + t (B_3 + \| \nabla_x ^2 \Phi \|_{\infty}) e^{(1 + B_3 + \|\nabla_x^2 \Phi \|_\infty) t},
	e^{ (1 + B_3 + \| \nabla_x ^2 \Phi  \|_\infty)t}
	\big\}. \label{est:V_v}
\end{align}
\end{lemma}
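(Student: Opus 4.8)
The plan is to differentiate the Duhamel form of the characteristic system \eqref{ODE_h} in the parameters $x_i$, $p_i$ and then repackage the result. Writing $X_{\pm,j}(t) = x_j + \int_0^t V_{\pm,j}(s)\,\dd s$ and $P_{\pm,j}(t) = p_j + \int_0^t\big( e_\pm(V_\pm(s)\times\tfrac{B}{c})_j - e_\pm \p_{x_j}\Phi_h(X_\pm(s)) - m_\pm g\,\delta_{j3}\big)\dd s$, one differentiates under the integral sign; this is legitimate because \eqref{ODE_h} together with the elliptic regularity of $\Phi_h$ (giving $\nabla_x\Phi_h\in C^1$) yields a $C^1$ flow. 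Since $V_{\pm,j}$ and $(V_\pm\times\tfrac{B}{c})_j$ depend only on the momentum variable while $\p_{x_j}\Phi_h$ depends only on the space variable, this produces at once the first equalities in \eqref{XV_x} and \eqref{XV_v} and both $P$-derivative formulas. For the second (double-integral) forms, the key identity is the chain rule $\tfrac{\dd}{\dd s}V_{\pm,j}(s) = \mathcal{E}_{\pm,j}(s,X_\pm(s),P_\pm(s))$ — which is precisely the definition of $\mathcal{E}_\pm$. Hence $\p_{x_i}V_{\pm,j}(s)$ solves $\tfrac{\dd}{\dd s}\p_{x_i}V_{\pm,j} = \p_{x_i}P_\pm\cdot\nabla_p\mathcal{E}_{\pm,j} + \p_{x_i}X_\pm\cdot\nabla_x\mathcal{E}_{\pm,j}$ with initial value $\p_{x_i}V_{\pm,j}(0)=\p_{x_i}v_{\pm,j}=0$ (respectively $=\p_{p_i}v_{\pm,j}$ for the $p$-derivative); integrating in $s$ and substituting into $\p_{x_i}X_{\pm,j}(t)=\delta_{ij}+\int_0^t\p_{x_i}V_{\pm,j}(s)\,\dd s$ gives the stated double integrals, the extra term $t\,\p_{p_i}v_{\pm,j}$ in \eqref{XV_v} coming from the nonzero initial value.

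For the quantitative bounds I would first use the first-order formulas, which close into a coupled system of differential inequalities. Under \eqref{Uest:DPhi} one has $|V_\pm|\le c$ and $P^0_\pm\ge m_\pm c$, so $|\nabla_p V_\pm|\lesssim 1$, $|e_\pm\nabla_p(V_\pm\times\tfrac{B}{c})|\lesssim B_3$, $|e_\pm\nabla_x^2\Phi_h|\le\|\nabla_x^2\Phi_h\|_\infty$, and also $|\tfrac{\dd}{\dd s}P_\pm|\lesssim e_\pm B_3+m_\pm g$ and hence $|\nabla_{x,p}\mathcal{E}_\pm|\lesssim \|\nabla_x^2\Phi_h\|_\infty+e_\pm B_3+m_\pm g$ (these constants being normalized so as to match the displayed exponents). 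Setting $\phi(t)=|\nabla_x X_\pm(t)|$ and $\psi(t)=|\nabla_x P_\pm(t)|$, the first-order formulas give $\phi'\le\psi$ and $\psi'\le(B_3+\|\nabla_x^2\Phi_h\|_\infty)(\phi+\psi)$; adding and using $\psi\le\phi+\psi$ yields $(\phi+\psi)'\le(1+B_3+\|\nabla_x^2\Phi_h\|_\infty)(\phi+\psi)$ with $(\phi+\psi)(0)=1$, so Gronwall gives the linear-exponent bound in \eqref{est:X_x}, and the same bound controls $\psi$, i.e.\ \eqref{est:V_x}, with the extra factor $t$ there appearing because $\psi(0)=0$ and $\psi(t)=\int_0^t\psi'$. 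The $\p_p$-versions run identically, the leading $1$ in \eqref{est:V_v} being $|\nabla_p P_\pm(0)|=1$ and the factor $t$ in \eqref{est:X_v} being $|\nabla_p v_\pm|\lesssim 1$ integrated once.

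For the sharper, quadratic-exponent alternative in each of \eqref{est:X_x}--\eqref{est:V_v} I would instead use the double-integral representations. Because $\nabla_x V_\pm$ vanishes at $s=0$ (and $\nabla_p V_\pm$ is bounded by $1$), estimating $\nabla_x V_\pm(s)$ costs one extra factor of $s$, and integrating once more to recover $\nabla_x X_\pm$ costs another; iterating the coupled inequalities $|\nabla_x X_\pm(t)|\le 1+\int_0^t\!\int_0^s(\|\nabla_x^2\Phi_h\|_\infty+e_\pm B_3+m_\pm g)\big(|\nabla_x X_\pm|+|\nabla_x P_\pm|\big)$ and $|\nabla_x P_\pm(t)|\le\int_0^t(\ldots)$ leads to a differential inequality whose coefficient is weighted by $(1+t)$, so Gronwall produces the exponent $\int_0^t(1+\tau)\,\dd\tau\cdot(\|\nabla_x^2\Phi_h\|_\infty+e_\pm B_3+m_\pm g)=\tfrac{t^2+2t}{2}(\|\nabla_x^2\Phi_h\|_\infty+e_\pm B_3+m_\pm g)$; taking the minimum with the first bound gives the claim. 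I expect the bookkeeping in this last paragraph to be the main obstacle: one must set up the coupled Gronwall system carefully enough to extract both the clean linear-exponent bound and the quadratic-exponent bound that genuinely exploits the $t$-gain from the vanishing/bounded initial velocity-gradient, and one must track the precise roles of the constants $1$, $B_3$, $e_\pm B_3$, $m_\pm g$, and $\|\nabla_x^2\Phi_h\|_\infty$ so that they land exactly in the positions indicated in \eqref{est:X_x}--\eqref{est:V_v}.
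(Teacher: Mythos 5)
Your proposal is correct and follows essentially the same route as the paper: differentiate the Duhamel form of \eqref{ODE_h}, use $\tfrac{\dd}{\dd s}V_{\pm}=\mathcal{E}_{\pm}$ to obtain the double-integral representations, then run a coupled Gronwall argument on $|\nabla_{x,p}X_\pm|+|\nabla_{x,p}P_\pm|$ for the linear-exponent bounds and a $(t-\tau+1)$-weighted Gronwall (the paper gets this weight by swapping the order of integration in the double integral, which is the same Fubini step as your $(1+\tau)$ bookkeeping) for the quadratic-exponent bounds. The identification of the initial values $\p_{x_i}V_\pm(0)=0$, $\p_{p_i}V_\pm(0)=\p_{p_i}v_\pm$, and $\p_{p_i}P_\pm(0)=\delta_{ij}$ as the source of the extra $t$ factors and leading $1$'s matches the paper's derivation.
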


\begin{proof}

For the sake of simplicity, we abuse the notation as in \eqref{abuse}.
Recall from \eqref{ODE_h}, the characteristics $(X (s;x,p), P (s;x,p))$ follows
\Be \label{ODE_h_XV}
\begin{split} 
\frac{d X (s;x,p) }{d s} & = V (s;x,p) = \frac{P (s;x,p)}{\sqrt{m^2 + |P (s;x,p)|^2 / c^2}}, \\
\frac{d P (s;x,p) }{d s} & = e \big( V (s;x,p) \times \frac{B}{c} - \nabla_x \Phi_h (X (s;x,p) ) \big) - m g \mathbf{e}_3,
\end{split}
\Ee
Then we have   
\Be \label{form:XV}
\begin{split}
P (t;x,p) & = p + \int^t_0 \Big( e \big( V (s;x,p) \times \frac{B}{c} - \nabla_x  \Phi (X (s;x,p)) \big) - m g \mathbf{e}_3 \Big)  \dd s,
\\ X (t;x,p) & = x + \int^t_0 V (s;x,p) \dd s.
\end{split}
\Ee
Further, we compute that
\be \notag
\begin{split}
\frac{\dd}{\dd s} V (s;x,p) 
& = \frac{1}{\sqrt{m^2 + |P(s;x,p)|^2 / c^2 }} \frac{\dd}{\dd s} P (s;x,p) 
\\& \ \ \ \ - \frac{ P (s;x,p)}{ c^2 \big( m^2 + |P(s;x,p)|^2 / c^2 \big)^{3/2} } \big( P (s;x,p) \cdot \frac{\dd}{\dd s} P (s;x,p) \big)
\\& := \mathcal{E} (s, X(s; x, p), P(s; x, p) ). 
\end{split}
\ee
Thus, we rewrite \eqref{form:XV} as
\Be \label{form2:XV}
\begin{split}
X (t;x,p) 
& = x + v t + \int^t_0\int^s_0 \mathcal{E} (\tau, X (\tau; x, p), P (\tau; x, p)) \dd \tau \dd s,
\end{split}
\Ee
where $v = \frac{p}{\sqrt{m^2 + |p|^2 / c^2}}$.
From \eqref{form:XV} and \eqref{form2:XV}, we directly compute \eqref{XV_x} and \eqref{XV_v}. 

\smallskip

Next, from \eqref{ODE_h_XV}, together with $\p_{p_i} v = \frac{\delta_{ii}}{\sqrt{ m^2 + |p|^2/ c^2}} - \frac{v_i v}{c^2 \sqrt{ m^2 + |p|^2/ c^2}}$ , we have
\be \label{est1:diff_dvds}
\begin{split}
& \Big| \nabla_x \big( \frac{\dd P (s;x,p) }{\dd s} \big)\Big| 
= \big| e \nabla^2_x \Phi_h (X (s;x,p)) \big|,
\\& \Big| \nabla_p \big( \frac{\dd P (s;x,p) }{\dd s} \big) \Big| 
\lesssim \frac{ e B_3 }{ c \sqrt{ m^2 + |P (s;x,p)|^2/ c^2}}.
\end{split}
\ee
From \eqref{est1:diff_dvds}, by setting $\frac{P^0 (s;x,p)}{c} = \sqrt{m^2 + |P(s;x,p)|^2 / c^2 }$, we compute that
\be \label{est:E_x}
\begin{split}
& \Big| \nabla_x \Big( \mathcal{E} \big( s, X(s; x, p), P(s; x, p) \big) \Big) \Big|
\\& \lesssim \frac{1}{\langle P(s; x, p) \rangle} \big| \nabla^2_x \Phi_h (X (s;x,p)) \big| + \frac{\big| V (s;x,p) \big|^2}{ c P^0 (s;x,p) } \big| \nabla^2_x \Phi_h (X (s;x,p)) \big|
\\& \lesssim \frac{1}{ \frac{P^0 (s;x,p)}{c} } \big| \nabla^2_x \Phi_h (X (s;x,p)) \big|,
\end{split}
\ee
and 
\be \label{est:E_v}
\begin{split}
& \Big| \nabla_p \Big( \mathcal{E} \big( s, X(s; x, p), P(s; x, p) \big) \Big) \Big|
\\& \leq \Big| \nabla_p \big( \frac{1}{\frac{P^0 (s;x,p)}{c} } \big) \frac{\dd}{\dd s} P (s;x,p) \Big| 
+ \Big| \frac{1}{ \frac{P^0 (s;x,p)}{c} } \nabla_p \big( \frac{\dd}{\dd s} P (s;x,p) \big) \Big|
\\& \ \ \ \ + \Big| \nabla_p \big( \frac{1}{ c P^0 (s;x,p) } V (s;x,p) \cdot \frac{\dd}{\dd s} P (s;x,p) V (s;x,p) \big) \Big|
\\& \lesssim \frac{1}{ c \big( \frac{P^0 (s;x,p)}{c} \big)^2} \big( e ( B_3 + \big| \nabla_x \Phi_h (X (s;x,p)) \big| ) + m g \big).
\end{split}
\ee
For $\p_{x_i} X_{j}$ and $\p_{p_i} X_{j}$, we change the order of integrals in each last double integral to get
\Be \label{form:X_xv}
\begin{split}
\p_{x_i} X_{j} (t;x,p) 
& =  \delta_{ij} + \int^t_0 (t - \tau) \Big(
\p_{x_i} P (\tau;x,p) \cdot \nabla_p \mathcal{E}_j (\tau, X (\tau;x,p), P (\tau;x,p))  
\\& \qquad\qquad\qquad + \p_{x_i} X (\tau;x,p) \cdot \nabla_x \mathcal{E}_j (\tau, X(\tau;x,p), P (\tau;x,p))  \Big) \dd \tau, \\
\p_{p_i} X_{j} (t;x,p)   
& = t \p_{p_i} v_j + \int^t_0 (t - \tau) \Big(
\p_{p_i} P (\tau;x,p) \cdot \nabla_p \mathcal{E}_j (\tau, X (\tau; x, p), P (\tau; x, p))
\\& \qquad\qquad\qquad + \p_{p_i} X (\tau;x,p) \cdot \nabla_x \mathcal{E}_j (\tau, X (\tau; x, p), P (\tau; x, p)) \Big) \dd \tau.
\end{split}
\Ee  
From \eqref{est:E_x} and \eqref{est:E_v}, together with \eqref{Uest:DPhi} and the first line of \eqref{form:X_xv}, we get
\Be \label{est0:X_x}
\begin{split}
\big| \p_{x_i} X_{j} (t;x,p) \big|
& \leq 1 + \int^t_0 (t - \tau) \Big( \big| \p_{x_i} P (\tau;x,p) \big| \frac{e ( B_3 + \big| \nabla_x \Phi_h (X (s;x,p)) \big| ) + m g}{ c \big( \frac{P^0 (s;x,p)}{c} \big)^2}  
\\& \qquad\qquad\qquad\qquad\qquad + \big| \p_{x_i} X (\tau;x,p) \big| \frac{1}{ \frac{P^0 (s;x,p)}{c} } \| \nabla_x ^2 \Phi \|_{\infty} \Big) \dd \tau
\\& \lesssim 1 + \int^t_0 (t - \tau) \Big( \big| \p_{x_i} P (\tau;x,p) \big| \frac{e B_3 + m g}{ c m^2}
+ \big| \p_{x_i} X (\tau;x,p) \big| \frac{\| \nabla_x ^2 \Phi \|_{\infty}}{m}  \Big) \dd \tau.
\end{split}
\Ee  
From \eqref{XV_x}, we have
\Be \label{est0:V_x}
\begin{split} 
\big| \p_{x_i} P_j (t;x,p) \big|
& \leq e \int^t_0 \big| \p_{x_i} P (s;x,p) \big| \frac{B_3}{P^0 (s;x,p)} + \| \nabla_x ^2 \Phi \|_{\infty} \big| \p_{x_i} X (s;x,p) \big| \dd s
\\& \lesssim e \int^t_0 \big| \p_{x_i} P (s;x,p) \big| \frac{ B_3}{c m} + \| \nabla_x ^2 \Phi \|_{\infty} \big| \p_{x_i} X (s;x,p) \big| \dd s
\end{split}
\Ee
Using \eqref{est0:X_x} and \eqref{est0:V_x}, we obtain
\Be \label{est:X_x+V_x}
\begin{split}
& \big| \p_{x_i} X (t;x,p) \big| + \big| \p_{x_i} P (t;x,p) \big|
\\& \lesssim 1 + \int^t_0 (t - \tau) \Big( \big| \p_{x_i} P (\tau;x,p) \big| \frac{e B_3 + m g}{ c m^2}
+ \big| \p_{x_i} X (\tau;x,p) \big| \frac{\| \nabla_x ^2 \Phi \|_{\infty}}{m}  \Big) \dd \tau
\\& \qquad\qquad + e \int^t_0 \big| \p_{x_i} P (s;x,p) \big| \frac{ B_3}{c m} + \| \nabla_x ^2 \Phi \|_{\infty} \big| \p_{x_i} X (s;x,p) \big| \dd s
\\& \lesssim 1 + \int^t_0 (t - \tau + 1) (\| \nabla_x ^2 \Phi \|_{\infty} + e B_3 + mg)
\Big( \big| \p_{x_i} P (\tau;x,p) \big| + \big| \p_{x_i} X (\tau;x,p) \big| \Big) \dd \tau
\end{split}
\Ee  
Applying the Gronwall's inequality, we derive that 
\Be \label{est1:X_x+V_x}
\begin{split}
\big| \p_{x_i} X (t;x,p) \big| + \big| \p_{x_i} P (t;x,p) \big|
& \lesssim e^{\int^t_0 (t - \tau + 1) (\| \nabla_x ^2 \Phi \|_{\infty} + e B_3 + mg) \dd \tau}
\\& \lesssim e^{ \frac{t^2 + 2t}{2} (\| \nabla_x ^2 \Phi \|_{\infty} + e B_3 + mg)}.
\end{split}
\Ee 
From \eqref{est:E_x} and \eqref{est:E_v}, together with \eqref{Uest:DPhi} and the second line of \eqref{form:X_xv}, we get
\Be \label{est0:X_v}
\begin{split}
\big| \p_{p_i} X_{j} (t;x,p) \big|
& \leq \frac{t}{m} + \int^t_0 (t - \tau) \Big( \big| \p_{p_i} P (\tau;x,p) \big| \frac{e ( B_3 + \big| \nabla_x \Phi_h (X (s;x,p)) \big| ) + m g}{ c \big( \frac{P^0 (s;x,p)}{c} \big)^2}  
\\& \qquad\qquad\qquad\qquad\qquad + \big| \p_{p_i} X (\tau;x,p) \big| \frac{1}{ \frac{P^0 (s;x,p)}{c} } \| \nabla_x ^2 \Phi \|_{\infty} \Big) \dd \tau
\\& \lesssim \frac{t}{m} + \int^t_0 (t - \tau) \Big( \big| \p_{p_i} P (\tau;x,p) \big| \frac{e B_3 + m g}{ c m^2}
+ \big| \p_{p_i} X (\tau;x,p) \big| \frac{\| \nabla_x ^2 \Phi \|_{\infty}}{m}  \Big) \dd \tau.
\end{split}
\Ee  
From \eqref{XV_v}, we have
\Be \label{est0:V_v}
\begin{split} 
\big| \p_{p_i} P_j (t;x,p) \big|
& \leq 1+ e \int^t_0 \big| \p_{p_i} P (s;x,p) \big| \frac{B_3}{P^0 (s;x,p)} + \| \nabla_x ^2 \Phi \|_{\infty} \big| \p_{p_i} X (s;x,p) \big| \dd s
\\& \lesssim 1 + e \int^t_0 \big| \p_{p_i} P (s;x,p) \big| \frac{ B_3}{c m} + \| \nabla_x ^2 \Phi \|_{\infty} \big| \p_{p_i} X (s;x,p) \big| \dd s.
\end{split}
\Ee
Using \eqref{est0:X_v} and \eqref{est0:V_v}, we obtain
\Be \label{est:X_v+V_v}
\begin{split}
& \big| \p_{p_i} X (t;x,p) \big| + \big| \p_{p_i} P (t;x,p) \big|
\\& \lesssim (1 + \frac{t}{m} ) + \int^t_0 (t - \tau) \Big( \big| \p_{p_i} P (\tau;x,p) \big| \frac{e B_3 + m g}{ c m^2}
+ \big| \p_{p_i} X (\tau;x,p) \big| \frac{\| \nabla_x ^2 \Phi \|_{\infty}}{m}  \Big) \dd \tau
\\& \qquad\qquad + e \int^t_0 \big| \p_{p_i} P (s;x,p) \big| \frac{ B_3}{c m} + \| \nabla_x ^2 \Phi \|_{\infty} \big| \p_{p_i} X (s;x,p) \big| \dd s
\\& \lesssim (1 + t ) + \int^t_0 (t - \tau + 1) (\| \nabla_x ^2 \Phi \|_{\infty} + e B_3 + mg)
\Big( \big| \p_{x_i} P (\tau;x,p) \big| + \big| \p_{x_i} X (\tau;x,p) \big| \Big) \dd \tau.
\end{split}
\Ee  
Applying the Gronwall's inequality, we derive that 
\Be \label{est1:X_v+V_v}
\begin{split}
\big| \p_{p_i} X_{j} (t;x,p) \big| + \big| \p_{p_i} P_j (t;x,p) \big|
& \lesssim (1 + t) e^{\int^t_0 (t - \tau + 1) (\| \nabla_x ^2 \Phi \|_{\infty} + e B_3 + mg) \dd \tau}
\\& \lesssim (1 + t) e^{ \frac{t^2 + 2t}{2} (\| \nabla_x ^2 \Phi \|_{\infty} + e B_3 + mg)}.
\end{split}
\Ee 

\smallskip

On the other hand, using the first equality of $\p_{x_i} X_{j}$ and $\p_{x_i} P_{j}$ in \eqref{XV_x}, we derive 
\Be \notag
\begin{split} 
& \big| \p_{x_i} X (t;x,p) \big| + \big| \p_{x_i} P (t;x,p) \big|
\\& \leq 1 + \int^t_0 \frac{\big| \p_{x_i} P (s;x,p) \big|}{ m} \dd s 	
+ e \int^t_0 \big| \p_{x_i} P (s;x,p) \big| \frac{ B_3}{c m} + \| \nabla_x ^2 \Phi \|_{\infty} \big| \p_{x_i} X (s;x,p) \big| \dd s
\\& \lesssim 1 + \int^t_0 (1 + B_3 + \| \nabla_x ^2 \Phi \|_{\infty}) \Big( \big| \p_{x_i} X (t;x,p) \big| + \big| \p_{x_i} P (t;x,p) \big| \Big) \dd s.
\end{split}
\Ee
Using the Gronwall's inequality, we derive that  
\Be \label{est2:X_x+V_x}
\big| \p_{x_i} X (t;x,p) \big| + \big| \p_{x_i} P (t;x,p) \big| \leq e^{(1 + B_3 + \|\nabla_x^2 \Phi \|_\infty) t}.
\Ee
From \eqref{est0:V_x} and \eqref{est2:X_x+V_x}, we have
\Be \label{est1:V_x}
\big| \p_{x_i} P_j (t;x,p) \big|
\lesssim t (B_3 + \| \nabla_x ^2 \Phi \|_{\infty}) e^{(1 + B_3 + \|\nabla_x^2 \Phi \|_\infty) t}.
\Ee
Similarly, using the first equality of $\p_{p_i} X_{j}$ and $\p_{p_i} P_{j}$ in \eqref{XV_v}, we derive 
\Be \notag
\begin{split} 
& \big| \p_{p_i} X (t;x,p) \big| + \big| \p_{p_i} P (t;x,p) \big|
\\& \leq 1 + \int^t_0 \frac{\big| \p_{p_i} P (s;x,p) \big|}{ m} \dd s + e \int^t_0 \big| \p_{p_i} P (s;x,p) \big| \frac{ B_3}{c m} + \| \nabla_x ^2 \Phi \|_{\infty} \big| \p_{p_i} X (s;x,p) \big| \dd s
\\& \lesssim 1 + \int^t_0 (1 + B_3 + \| \nabla_x ^2 \Phi \|_{\infty}) \Big( \big| \p_{p_i} X (t;x,p) \big| + \big| \p_{p_i} P (t;x,p) \big| \Big) \dd s.
\end{split}
\Ee
Using the Gronwall's inequality, we derive that  
\Be \label{est2:X_v+V_v}
\big| \p_{p_i} X (t;x,p) \big| + \big| \p_{p_i} P (t;x,p) \big| \leq e^{(1 + B_3 + \|\nabla_x^2 \Phi \|_\infty) t}.
\Ee
From \eqref{est0:V_v} and \eqref{est2:X_v+V_v}, we have
\Be \label{est2:V_x}
\big| \p_{p_i} P_j (t;x,p) \big|
\lesssim 1 + t (B_3 + \| \nabla_x ^2 \Phi \|_{\infty}) e^{(1 + B_3 + \|\nabla_x^2 \Phi \|_\infty) t}.
\Ee
Finally, combining \eqref{est2:X_x+V_x}, \eqref{est1:V_x}, \eqref{est2:X_v+V_v}, and \eqref{est2:V_x}, we conclude \eqref{est:X_x}-\eqref{est:V_v}.
\end{proof}

\begin{lemma} \label{lem:exp_txvb}

Recall $(\tbp (x,p), \xbp (x,p), \pbp (x,p))$ in Definition \ref{def:tb}. Then
\begin{align}
\p_{x_i} \tbp (x,p) 
& = \frac{ \p_{x_i} X_{\pm, 3} (- \tbp (x,p) ; x,p )}{ \vbpn (x,p) } \notag \\
& = \frac{1}{ \vbpn (x,p)} \Big( \delta_{i3} + \int^{- \tbp (x,p)}_0 \p_{x_i} P_{\pm} (s;x,p) \cdot \nabla_p V_{\pm, 3} (s;x,p) \dd s  \Big), \label{tb_x} \\[5pt]
\p_{p_i} \tbp (x,p)  
& = \frac{\p_{p_i} X_{\pm, 3} (-\tbp (x,p) ; x,p) }{ \vbpn (x,p) } \notag \\
& = \frac{1}{ \vbpn (x,p)}
\int^{-\tbp (x,p)}_0 \p_{p_i} P_{\pm} (s;x,p) \cdot \nabla_p V_{\pm, 3} (s;x,p) \dd s, \label{tb_v}
\end{align}
\Be \label{xb_x}
\p_{x_i} \xbp (x,p) = - \frac{ \p_{x_i} X_{\pm, 3} ( - \tbp  (x,p); x,p) }{ \vbpn (x,p) } \vbp (x,p)+ \p_{x_i} X_{\pm} ( - \tbp (x,p); x,p),
\Ee
\Be \label{xb_v}
\p_{p_i} \xbp (x,p) = - \frac{ \p_{p_i} X_{\pm, 3} ( - \tbp  (x,p);x,p) }{ \vbpn (x,p)}  \vbp (x,p) + \p_{p_i} X_{\pm} ( - \tbp (x,p);x,p),
\Ee 
and
\Be \label{vb_x}
\begin{split}
& \p_{x_i} \pbp (x,p) 
\\& =  - \frac{ \p_{x_i} X_{\pm, 3} ( - \tbp (x,p);x,p) }{ \vbpn (x,p) }
\Big( e \big( \vbp (x,p) \times \frac{B}{c} - \nabla_x \big( \Phi_h (\xbp (x,p) ) \big) - m_{\pm} g \mathbf{e}_3 \Big)
\\& \qquad\qquad + \p_{x_i} P_{\pm} ( - \tbp (x,p); x,p),
\end{split}
\Ee
\Be \label{vb_v}
\begin{split}
& \p_{p_i} \pbp (x,p) 
\\& =  - \frac{ \p_{p_i} X_{\pm, 3} ( - \tbp (x,p);x,p) }{ \vbpn (x,p) }
\Big( e \big( \vbp (x,p) \times \frac{B}{c} - \nabla_x \big( \Phi_h (\xbp (x,p) ) \big) - m_{\pm} g \mathbf{e}_3 \Big)
\\& \qquad\qquad + \p_{p_i} P_{\pm} ( - \tbp (x,p); x,p).
\end{split}
\Ee 	
\end{lemma}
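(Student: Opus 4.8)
\noindent The plan is to derive all six identities from the single defining relation $X_{\pm, 3}(-\tbp(x,p);x,p) = 0$ (see Definition \ref{def:tb}) via the chain rule, after first securing the differentiability of the maps $(x,p)\mapsto(\tbp,\xbp,\pbp)$. For differentiability I would apply the implicit function theorem to $\Theta(s;x,p) := X_{\pm, 3}(s;x,p)$ at the exit time $s = -\tbp(x,p)$: since $\partial_s\Theta(-\tbp;x,p) = V_{\pm, 3}(-\tbp;x,p) = \vbpn(x,p)$, the whole argument hinges on the transversality $\vbpn(x,p)\neq 0$. This is precisely where the structure from Section \ref{sec:char} enters: by \eqref{upper_dotV_3} the vertical momentum $P_{\pm, 3}(s;x,p)$ is strictly decreasing along \eqref{ODE_h}, so by \eqref{sign:v3} it changes sign only at the peak time $s^*$ of \eqref{def:s^*}; since $X_{\pm, 3}(s^*;x,p) = \max_s X_{\pm, 3}(s;x,p) \geq x_3 > 0 = X_{\pm, 3}(-\tbp;x,p)$ for $x\in\O$, a short dichotomy in the sign of $p_3$ forces $-\tbp(x,p) < s^*$, whence $P_{\pm, 3}(-\tbp;x,p) > 0$ and $\vbpn(x,p) > 0$. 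The smoothness of the flow $(x,p)\mapsto(X_\pm,P_\pm)(s;x,p)$ that this presupposes follows from $\nabla_x^2\Phi_h\in L^\infty$, supplied by Theorem \ref{theo:CS}, and the corresponding derivative identities and bounds for $\nabla_{x,p}(X_\pm,P_\pm)$ are already recorded in Lemma \ref{lem:XV_xv}.

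\noindent Granting this, I would differentiate $X_{\pm, 3}(-\tbp(x,p);x,p) = 0$ in $x_i$, carefully separating the variation acting through the moving endpoint $-\tbp(x,p)$ from the variation acting through the dependence of the flow on its initial datum $(x,p)$. Using $\tfrac{d}{ds}X_{\pm, 3}(s;x,p) = V_{\pm, 3}(s;x,p)$, this reads $-\,\vbpn(x,p)\,\partial_{x_i}\tbp(x,p) + \partial_{x_i}X_{\pm, 3}(-\tbp(x,p);x,p) = 0$, which rearranges to the first line of \eqref{tb_x}; the second line follows by substituting the integral formula \eqref{XV_x} for $\partial_{x_i}X_{\pm, 3}$ evaluated at $t = -\tbp(x,p)$. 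Identity \eqref{tb_v} is obtained in exactly the same way, now differentiating in $p_i$ and using the $\partial_{p_i}X_{\pm, 3}$ formula of \eqref{XV_v} (there is no $\delta_{i3}$ term here because $X_{\pm, 3}(0;x,p) = x_3$ does not depend on $p$).

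\noindent With $\partial_{x_i}\tbp$ and $\partial_{p_i}\tbp$ in hand, the four remaining identities are pure chain rule applied to $\xbp(x,p) = X_\pm(-\tbp(x,p);x,p)$ and $\pbp(x,p) = P_\pm(-\tbp(x,p);x,p)$. Differentiating $\xbp$ produces the endpoint contribution $\tfrac{d}{ds}X_\pm\big|_{s=-\tbp}\cdot(-\partial_{x_i}\tbp) = -\,\vbp(x,p)\,\partial_{x_i}\tbp(x,p)$ together with the flow contribution $\partial_{x_i}X_\pm(-\tbp(x,p);x,p)$; inserting \eqref{tb_x} for $\partial_{x_i}\tbp$ gives \eqref{xb_x}, and the $p_i$-version gives \eqref{xb_v}. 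Differentiating $\pbp$ is the same computation, except that the endpoint term now involves $\tfrac{d}{ds}P_\pm\big|_{s=-\tbp}$, which by the momentum equation in \eqref{ODE_h} equals $e_\pm\big(\vbp(x,p)\times\tfrac{B}{c} - \nabla_x\Phi_h(\xbp(x,p))\big) - m_\pm g\,\mathbf{e}_3$; this produces \eqref{vb_x}, and its $p_i$-analogue produces \eqref{vb_v}.

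\noindent I expect the only genuine obstacle to be this first step: verifying that $\tbp,\xbp,\pbp$ are $C^1$ and that $\vbpn(x,p)\neq 0$ so that the implicit function theorem applies, together with confirming that $\Phi_h$ has enough regularity for the second-order flow derivatives of Lemma \ref{lem:XV_xv} to exist. Once the non-degeneracy of $\vbpn$ and the differentiability are in place, the six identities are a bookkeeping exercise in the chain rule with no further analytic subtlety.
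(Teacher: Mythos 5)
Your proposal is correct and follows essentially the same route as the paper: differentiate the defining relation $X_{\pm,3}(-\tbp(x,p);x,p)=0$ and the compositions $\xbp=X_\pm(-\tbp;x,p)$, $\pbp=P_\pm(-\tbp;x,p)$ by the chain rule, use the ODE \eqref{ODE_h} for the endpoint terms, and substitute the flow-derivative formulas of Lemma \ref{lem:XV_xv}. The only difference is that you also justify differentiability via the implicit function theorem and the non-vanishing of $\vbpn$ (using the strict monotonicity of $P_{\pm,3}$), a point the paper's proof takes for granted.
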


\begin{proof}

For the sake of simplicity, we abuse the notation as in \eqref{abuse}.
Recall from Definition \ref{def:tb}, $X_3 (-\tb (x,p); x, p)=0$. By taking the derivatives on $X_3 (-\tb (x,p); x, p)$, together with \eqref{ODE_h}, we obtain that
\Be \notag
- \p_{x_i} \tb (x, p) V_3 (- \tb (x,p); x,p) + \p_{x_i} X_3 (- \tb (x,p) ; x,p ) = 0,
\Ee
and
\be \notag
- \p_{p_i} \tb(x,p) V_3 (- \tb (x,p); x,p) + \p_{p_i} X_3 (- \tb (x,p) ; x,p ) = 0.
\ee
This shows that
\Be \notag
\begin{split}
& - \p_{x_i} \tb (x, p) \vbn (x,p) + \p_{x_i} X_3 (- \tb (x,p); x,p ) = 0,
\\& - \p_{p_i} \tb(x,p) \vbn (x,p) + \p_{p_i} X_3 (- \tb (x,p); x,p ) = 0.
\end{split}
\ee
Now using the identities of $\p_{x_i} X_{j} (t;x,p) $ and $\p_{p_i} X_{j} (t;x,p)$ in \eqref{XV_x} and \eqref{XV_v}, we conclude $\eqref{tb_x}$ and $\eqref{tb_v}$.

\smallskip

From \eqref{def:zb^h}, $\xb (x,p) = X ( -\tb (x, p); x, p)$. Taking the derivatives on $\xb (x,p)$, together with \eqref{ODE_h}, we derive that
\Be \notag
\p_{x_i} x_{\b} = - \p_{x_i} \tb  v_{\b} + \p_{x_i} X  ( - \tb ; x,p),
\Ee 
and
\Be \notag
\p_{p_i} x_{\b} = - \p_{p_i} \tb  v_{\b}  + \p_{p_i} X  ( - \tb ; x,p).
\Ee 
Using the identities in \eqref{tb_x} and \eqref{tb_v}, we conclude \eqref{xb_x} and \eqref{xb_v}. 

\smallskip

Again from \eqref{def:zb^h}, $\pb (x,p) = P ( -\tb (x, p); x, p)$. Taking the derivatives on $\pb (x,p)$, together with \eqref{ODE_h}, we get 
\Be \notag
\p_{x_i} \pb  (x,p) = - \p_{x_i} \tb  (x,p) \dot{P} (- \tb  (x,p);x,p) + \p_{x_i} P ( - \tb  (x,p); x,p),
\Ee
and
\Be \notag
\p_{p_i} \pb  (x,p) = - \p_{p_i} \tb  (x,p) \dot{P} (- \tb  (x,p);x,p) + \p_{p_i} P ( - \tb  (x,p); x,p).
\Ee
Finally, using the identities of $\p_{x_i} P_{j} (t;x,p) $ and $\p_{p_i} P_{j} (t;x,p)$ in \eqref{XV_x} and \eqref{XV_v}, we conclude \eqref{vb_x} and \eqref{vb_v}. 
\end{proof}

\begin{lemma} \label{lem:nabla_zb}

Suppose the assumption \eqref{Uest:DPhi} holds. Recall $(\tbp (x,p), \xbp (x,p), \pbp (x,p))$ in Definition \ref{def:tb}. Then
\Be \label{est:xb_x}
\begin{split}
&| \p_{x_i} \xbp (x,p) | 
\leq \frac{ | \vbp (x,p) | }{ | \vbpn (x,p) | } \delta_{i3} 
\\& \ \ \ \ + \Big(1 +  \frac{|\vbp (x,p)|}{|\vbpn (x,p)|}
\frac{|\tbp (x,p)|^2}{2} ( \| \nabla_x^2 \Phi  \|_\infty + e B_3 + m_{\pm} g ) \Big) \times e^{ (1 + B_3 + \| \nabla_x ^2 \Phi  \|_\infty) \tbp},
\end{split}
\Ee
\Be \label{est:xb_v}
\begin{split}
& |\p_{p_i} \xbp (x,p)| 
\leq \frac{|\vbp  (x,p)| |\tbp (x,p)|  }{|\vbpn  (x,p)| \sqrt{ (m_{\pm})^2 + |p|^2/ c^2}}  
\\& \ \ \ \ + \Big(1 +   \frac{|\vbp (x,p)|}{|\vbpn (x,p)|}
\frac{|\tbp (x,p)|^2}{2} ( \| \nabla_x^2 \Phi  \|_\infty + e B_3 + m_{\pm} g ) \Big) \times e^{ (1 + B_3 + \| \nabla_x ^2 \Phi  \|_\infty) \tbp},
\end{split}
\Ee
\Be \label{est:vb_x}
\begin{split}
& |\p_{x_i} \pbp (x,p)| \leq \frac{ |e B_3 + m_{\pm} g| }{|\vbpn  (x,p)|} \delta_{i3}
\\& \ \ \ \ + \Big(1 +  \frac{|e B_3 + m_{\pm} g|}{|\vbpn (x,p)|}
\frac{|\tbp (x,p)|^2}{2} ( \| \nabla_x^2 \Phi  \|_\infty + e B_3 + m_{\pm} g ) \Big) \times e^{ (1 + B_3 + \| \nabla_x ^2 \Phi  \|_\infty) \tbp},
\end{split}
\Ee
\Be \label{est:vb_v}
\begin{split}
& |\p_{p_i} \pbp (x,p)| \leq \frac{|e B_3 + m_{\pm} g| |\tbp (x,p)|  }{|\vbpn  (x,p)| \sqrt{ (m_{\pm})^2 + |p|^2/ c^2}}  
\\& \ \ \ \ + \Big(1 +   \frac{|e B_3 + m_{\pm} g|}{|\vbpn (x,p)|}
\frac{|\tbp (x,p)|^2}{2} ( \| \nabla_x^2 \Phi \|_\infty + e B_3 + m_{\pm} g ) \Big) \times e^{ (1 + B_3 + \| \nabla_x ^2 \Phi  \|_\infty) \tbp}.
\end{split}
\Ee 
\end{lemma}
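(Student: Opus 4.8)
The plan is to obtain all four estimates \eqref{est:xb_x}--\eqref{est:vb_v} by a direct substitution: feed the exact chain-rule identities of Lemma~\ref{lem:exp_txvb} into the pointwise trajectory bounds of Lemma~\ref{lem:XV_xv}, evaluated at time $t=-\tbp(x,p)$. First note that under \eqref{Uest:DPhi} Proposition~\ref{lem:tb} gives $\tbp(x,p)<\infty$ for all $(x,p)\in\O\times\R^3$, so the backward trajectory $(X_\pm,P_\pm)$, the exit data $(\tbp,\xbp,\pbp)$, and all the derivative identities of Lemma~\ref{lem:exp_txvb} are meaningful, with $|t|=\tbp(x,p)$ in every exponential.

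For \eqref{est:xb_x} I would use \eqref{xb_x} to split $\p_{x_i}\xbp$ into $-\tfrac{\p_{x_i}X_{\pm,3}(-\tbp;x,p)}{\vbpn(x,p)}\,\vbp(x,p)$ and $\p_{x_i}X_\pm(-\tbp;x,p)$. In the first piece insert the double-integral representation $\p_{x_i}X_{\pm,3}(-\tbp)=\delta_{i3}+\int_0^{-\tbp}\!\int_0^s(\cdots)\,\dd\tau\,\dd s$ from \eqref{XV_x}: the $\delta_{i3}$ term produces the leading $\tfrac{|\vbp|}{|\vbpn|}\delta_{i3}$, and the double integral is estimated exactly as in the proof of Lemma~\ref{lem:XV_xv} (bounding $|\nabla_p\mathcal E_\pm|,|\nabla_x\mathcal E_\pm|$ by $\|\nabla_x^2\Phi_h\|_\infty+eB_3+m_\pm g$ up to the inert mass/speed-of-light constants, and $|\p_{x_i}X_\pm|,|\p_{x_i}P_\pm|$ by $e^{(1+B_3+\|\nabla_x^2\Phi_h\|_\infty)\tau}$ via \eqref{est2:X_x+V_x}), which after $\int_0^{\tbp}(\tbp-\tau)\,\dd\tau=\tfrac{\tbp^2}{2}$ yields the factor $\tfrac{|\tbp|^2}{2}(\|\nabla_x^2\Phi_h\|_\infty+eB_3+m_\pm g)\,e^{(1+B_3+\|\nabla_x^2\Phi_h\|_\infty)\tbp}$; the second piece $\p_{x_i}X_\pm(-\tbp)$ is bounded by the second branch of \eqref{est:X_x}, i.e.\ $e^{(1+B_3+\|\nabla_x^2\Phi_h\|_\infty)\tbp}$. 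Grouping the three contributions gives precisely \eqref{est:xb_x}.

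The remaining three are the same argument with the appropriate identity and the appropriate branch of Lemma~\ref{lem:XV_xv}. For \eqref{est:xb_v} I use \eqref{xb_v} together with $\p_{p_i}X_{\pm,3}(-\tbp)=(-\tbp)\p_{p_i}v_{\pm,3}+\int_0^{-\tbp}\!\int_0^s(\cdots)$ and $|\p_{p_i}v_{\pm,3}|\le (m_\pm^2+|p|^2/c^2)^{-1/2}$, which yields the $\tbp$-linear term, then \eqref{est:X_v} for $\p_{p_i}X_\pm(-\tbp)$. For \eqref{est:vb_x} and \eqref{est:vb_v} I start from \eqref{vb_x} and \eqref{vb_v}; the one new point is that the relativistic force $e\big(\vbp\times\tfrac{B}{c}-\nabla_x\Phi_h(\xbp)\big)-m_\pm g\mathbf e_3$ is bounded by a universal multiple of $|eB_3+m_\pm g|$, using $|\vbp\times B/c|\le B_3$ (since $|\vbp|\le c$) and $\|e\nabla_x\Phi_h\|_\infty\le m_\pm g/2$ from \eqref{Uest:DPhi}; this plays the role of $|\vbp|$ from the previous cases, and then \eqref{est:V_x}, \eqref{est:V_v} control $\p_{x_i}P_\pm(-\tbp)$ and $\p_{p_i}P_\pm(-\tbp)$ respectively.

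I expect the only real difficulty to be bookkeeping: carrying the many cross terms through the identities of Lemma~\ref{lem:exp_txvb} and, at each occurrence, selecting the branch of the competing bounds in Lemma~\ref{lem:XV_xv} (the quadratic-in-$t$ Gronwall bound versus the linear-in-$t$ one) that reproduces the stated form, while keeping the $|\vbpn|^{-1}$ factors explicit. No estimate beyond those already proved is required; the singular dependence on $|\vbpn|^{-1}$ is retained here on purpose, to be converted into the kinetic-distance lower bound of Lemma~\ref{VL} when these bounds are used in Proposition~\ref{prop:Reg}.
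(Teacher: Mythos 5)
Your proposal is correct and follows essentially the same route as the paper: substitute the exit-data identities of Lemma~\ref{lem:exp_txvb} into the double-integral representations of Lemma~\ref{lem:XV_xv}, bound $\nabla_{x,p}\mathcal{E}_\pm$ via \eqref{est:E_x}--\eqref{est:E_v} and the trajectory derivatives via \eqref{est:X_x}--\eqref{est:V_v} at $t=-\tbp$, and group terms (the paper proves \eqref{est:xb_x} this way and notes the other three follow similarly, exactly as you outline, including bounding the relativistic force by a multiple of $eB_3+m_\pm g$ using \eqref{Uest:DPhi} and $|\vbp|\le c$).
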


\begin{proof}	

For the sake of simplicity, we abuse the notation as in \eqref{abuse}. 
From \eqref{XV_x} in Lemma \ref{lem:XV_xv}, we have 
\be \notag
\begin{split}
\p_{x_i} X_{ 3} (- \tb (x,p) ; x,p )
& = \delta_{i3} + \int^{- \tb (x,p)}_0 \int^s_0 \Big(
\p_{x_i} P (\tau;x,p) \cdot \nabla_p \mathcal{E}_3 (\tau, X (\tau;x,p), P (\tau;x,p))  
\\& \qquad\qquad\qquad + \p_{x_i} X (\tau;x,p) \cdot \nabla_x \mathcal{E}_3 (\tau, X(\tau;x,p), P (\tau;x,p))  \Big) \dd \tau \dd s.
\end{split}
\Ee
Using \eqref{est:E_x} and \eqref{est:E_v}, together with \eqref{Uest:DPhi}, we get
\be \label{est:X3x}
\begin{split}
\big| \p_{x_i} X_{ 3} (- \tb (x,p) ; x,p ) \big|
& \leq \delta_{i3} + \frac{|\tb (x,p)|^2}{2}
( \| \nabla_x^2 \Phi  \|_\infty + e B_3 + mg ) e^{ (1 + B_3 + \| \nabla_x ^2 \Phi  \|_\infty) \tb}.
\end{split}
\Ee
Now using Lemma \ref{lem:exp_txvb}, we have
\Be \notag
\begin{split}
| \p_{x_i} \xb (x,p) | 
& = \Big| - \frac{ \p_{x_i} X _{  3} ( - \tb  (x,p);x,p) }{v_{\b, 3} (x,p) } \vb (x,p)+ \p_{x_i} X ( - \tb (x,p) ;x,p) \Big|
\\& \leq \Big| \frac{ \p_{x_i} X _{  3} ( - \tb  (x,p);x,p) }{v_{\b, 3} (x,p) } \vb (x,p) \Big| + \big| \p_{x_i} X ( - \tb (x,p) ;x,p) \big|
\end{split}
\Ee
From \eqref{est:X3x}, together with \eqref{est:X_x} and \eqref{est:V_x}, we conclude \eqref{est:xb_x} by
\Be \notag
\begin{split}
| \p_{x_i} \xb (x,p) | 
& \leq \frac{ | \vb (x,p) | }{ | v_{\b, 3} (x,p) | }   \Big( \delta_{i3} + \frac{|\tb (x,p)|^2}{2}
( \| \nabla_x^2 \Phi  \|_\infty + e B_3 + mg )
 \times e^{ (1 + B_3 + \| \nabla_x ^2 \Phi  \|_\infty) \tb} \Big) 
\\& \qquad + e^{ (1 + B_3 + \| \nabla_x ^2 \Phi  \|_\infty) \tb}.
\end{split}
\Ee
We skip the rest of proof, since \eqref{est:xb_v}-\eqref{est:vb_v} follow similarly from Lemma \ref{lem:XV_xv} and \ref{lem:exp_txvb}.
\end{proof}

\begin{prop} \label{prop:Reg}

Suppose $(h_{\pm}, \rho , \nabla_x \Phi )$ solve \eqref{VP_h}-\eqref{eqtn:Dphi} in the sense of Definition \ref{weak_sol}. Suppose the condition \eqref{Uest:DPhi} holds. 
Assume $\| e^{{\tilde \beta } \sqrt{(m_{\pm} c)^2 + |p|^2}} \nabla_{x_\parallel, p} G_{\pm} (x,p) \|_{L^\infty (\gamma_-)} < \infty$ holds for $\tilde \beta >0$, and let $\hat{m} = \min ( m_{+}, m_{-} )$ satisfy
\Be \label{choice_beta}
\frac{8}{ \hat{m} g} (1 + B_3 + \| \nabla_x ^2 \Phi  \|_\infty) \leq \tilde \beta.
\Ee
Then, for  $(x, p) \in \bar \O \times \R^3$, 
\Be \label{est:rho_x}
\begin{split}
e^{ \frac{\tilde \beta \hat{m} g}{4 c} x_3 } |\nabla_{x_i} \rho  (x)|   
& \lesssim \| e^{\tilde \beta \sqrt{(m_+ c)^2 + |p|^2}} \nabla_{x_\parallel, p} G_+ \|_{L^\infty (\gamma_-)}  
\times \Big(
1 + \mathbf{1}_{|x_3| \leq 1} \frac{1}{\sqrt{ m_+ g x_3 }}
\Big)
\\& \ \ \ \ + \| e^{\tilde \beta \sqrt{(m_- c)^2 + |p|^2}} \nabla_{x_\parallel, p} G_- \|_{L^\infty (\gamma_-)}  
\times \Big(
1 + \mathbf{1}_{|x_3| \leq 1} \frac{1}{\sqrt{ m_- g x_3 }}
\Big),
\end{split}
\Ee
and
\Be \label{est:phi_C2}
\begin{split}
\| \nabla_x^2 \Phi  \|_\infty
& \lesssim \frac{1}{\beta} \big( e_+ \| w_{+, \beta} G_+ \|_{L^\infty(\gamma_-)} + e_{-} \| w_{-, \beta} G_- \|_{L^\infty(\gamma_-)} \big)
\\& \ \ \ \ + \sum\limits_{i = \pm} \| e^{\tilde \beta \sqrt{(m_i c)^2 + |p|^2}} \nabla_{x_\parallel, p} G_i \|_{L^\infty (\gamma_-)}.
\end{split}	
\Ee
Moreover, 
\Be \label{est:hk_v}
\begin{split}
& e^{ \frac{\tilde \beta}{2}|p^0_{\pm}|} e^{  \frac{\tilde \beta m_{\pm} g}{4 c} x_3} | \nabla_p h_{\pm} (x,p)| 
\\& \lesssim \big( 1 + \frac{ \| \nabla_x^2 \Phi  \|_\infty + e B_3 + m_{\pm} g}{(m_{\pm} g)^2} \big) \| e^{\tilde \beta \sqrt{(m_{\pm} c)^2 + |p|^2}} \nabla_{x_\parallel, p} G_{\pm} \|_{L^\infty (\gamma_-)},
\end{split}
\Ee
and
\Be \label{est:hk_x}
\begin{split}
& e^{ \frac{\tilde \beta}{2}|p^0_{\pm}|} e^{  \frac{\tilde \beta m_{\pm} g}{4 c} x_3} | \nabla_x h_{\pm} (x,p)|  
\\& \lesssim \big( \frac{\delta_{i3}}{\alpha_{\pm} (x,p)} + \frac{ \| \nabla_x^2 \Phi  \|_\infty + e B_3 + m_{\pm} g}{(m_{\pm} g)^2} \big) \| e^{\tilde \beta \sqrt{(m_{\pm} c)^2 + |p|^2}} \nabla_{x_\parallel, p} G_{\pm} \|_{L^\infty (\gamma_-)}.
\end{split}
\Ee
\end{prop}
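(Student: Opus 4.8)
The plan is to differentiate the Lagrangian representation of $h_\pm$ through its backward exit data and bound the resulting terms by combining the trajectory‑derivative estimates of Lemmas~\ref{lem:XV_xv}--\ref{lem:nabla_zb} with the travel‑time estimates of Section~\ref{sec:char} and the smallness hypothesis \eqref{choice_beta}. Since \eqref{Uest:DPhi} is assumed, Proposition~\ref{lem:tb} gives $\tbp(x,p)<\infty$ for all $(x,p)\in\bar\O\times\R^3$, so $h_\pm(x,p)=G_\pm(\xbp(x,p),\pbp(x,p))$ and
\[
\nabla_x h_\pm=\nabla_x\xbp\cdot\nabla_{x_\parallel}G_\pm(\xbp,\pbp)+\nabla_x\pbp\cdot\nabla_p G_\pm(\xbp,\pbp),
\]
with the analogous identity for $\nabla_p h_\pm$ (a computation made rigorous on the iterates $h^{\ell+1}_\pm$ of Section~\ref{sec:CS}). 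Hence it suffices to control $\nabla_{x,p}\xbp$ and $\nabla_{x,p}\pbp$ — exactly what Lemmas~\ref{lem:exp_txvb}--\ref{lem:nabla_zb} supply — against the weighted derivatives of $G_\pm$, and then to integrate in $p$ and apply elliptic regularity for \eqref{est:rho_x} and \eqref{est:phi_C2}.

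For the weight bookkeeping I would use the conservation law (Lemma~\ref{lem:conservation_law}): $\sqrt{(m_\pm c)^2+|p|^2}+\tfrac1c(e_\pm\Phi_h(x)+m_\pm gx_3)=\sqrt{(m_\pm c)^2+|\pbp|^2}$. Together with $-\tfrac{m_\pm gx_3}{2}\le e_\pm\Phi_h(x)\le\tfrac{m_\pm gx_3}{2}$ (immediate from \eqref{Uest:DPhi} and $\Phi_h=0$ on $\p\O$) this gives $|p^0_\pm|+\tfrac{m_\pm g}{2c}x_3\le\sqrt{(m_\pm c)^2+|\pbp|^2}$, so $e^{\frac{\tilde\beta}{2}|p^0_\pm|}e^{\frac{\tilde\beta m_\pm g}{4c}x_3}\,|\nabla_{x_\parallel,p}G_\pm(\xbp,\pbp)|\le\|e^{\tilde\beta\sqrt{(m_\pm c)^2+|p|^2}}\nabla_{x_\parallel,p}G_\pm\|_{L^\infty(\gamma_-)}\,e^{-\frac{\tilde\beta}{2}\sqrt{(m_\pm c)^2+|\pbp|^2}}$, leaving a spare decay $e^{-\frac{\tilde\beta}{2}\sqrt{(m_\pm c)^2+|\pbp|^2}}$ to absorb the trajectory‑derivative bounds. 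By Lemmas~\ref{lem:XV_xv}--\ref{lem:nabla_zb} those bounds consist of: (i) polynomial‑in‑$|\pbp|$ factors times $e^{(1+B_3+\|\nabla_x^2\Phi_h\|_\infty)\tbp}$, which via $\tbp\le\tfrac{4}{m_\pm g}|\pbp|$ (Proposition~\ref{lem:tb}) and the hypothesis \eqref{choice_beta}, i.e.\ $\tfrac{8}{\hat m g}(1+B_3+\|\nabla_x^2\Phi_h\|_\infty)\le\tilde\beta$, is dominated by a fraction of the spare decay, with enough left over to kill the polynomials; and (ii) $1/|\vbpn|$ factors, which always carry a compensating power of $\tbp$ except for the single term $\delta_{i3}/|\vbpn|$ in $\nabla_x\xbp,\nabla_x\pbp$. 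For the compensated ones I would use the sharper $\tbp\lesssim\big(\tfrac{2}{m_\pm g}+\tfrac{\sqrt8\,\sqrt[4]{(m_\pm c)^2+|\pbp|^2}}{\sqrt{cm_\pm^3g^2}}\big)|\pbpn|$ of Proposition~\ref{lem2:tb}, together with $|\vbpn|=|\pbpn|/\sqrt{m_\pm^2+|\pbp|^2/c^2}$, so the dangerous $|\pbpn|$ cancels and only a polynomial in $|\pbp|$ survives; this yields \eqref{est:hk_v}. For \eqref{est:hk_x} the remaining $\delta_{i3}/|\vbpn|$ is handled by writing $|\vbpn|=\alpha_\pm(\xbp,\pbp)$ and transporting it to the base point by Lemma~\ref{VL} (again absorbing the exponential there with \eqref{choice_beta}), producing the irreducible $\delta_{i3}/\alpha_\pm(x,p)$ term.

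The $\rho$‑estimate \eqref{est:rho_x} follows by differentiating $\rho=\int_{\R^3}(e_+h_++e_-h_-)\,\dd p$ under the integral sign, inserting \eqref{est:hk_x}, and using $\sqrt{(m_\pm c)^2+|\pbp|^2}\ge\sqrt{(m_\pm c)^2+|p|^2}$ (conservation plus $e_\pm\Phi_h+m_\pm gx_3\ge0$) to transfer the weight to the integration variable. Because \eqref{choice_beta} already forces $\|\nabla_x^2\Phi_h\|_\infty\le\tfrac18\hat m g\tilde\beta$, the quantity $(\|\nabla_x^2\Phi_h\|_\infty+eB_3+m_\pm g)/(m_\pm g)^2$ is an absolute constant whose contribution integrates to the ``$1$'' in \eqref{est:rho_x}; for the $\delta_{i3}/\alpha_\pm(x,p)$ term I would use the crude lower bound $\alpha_\pm(x,p)^2\ge 2(e_\pm\p_{x_3}\Phi_h(x_\parallel,0)+m_\pm g)\,x_3/\sqrt{m_\pm^2+|p|^2/c^2}\ge m_\pm g\,x_3/\sqrt{m_\pm^2+|p|^2/c^2}$ (valid by \eqref{Uest:DPhi}), hence $\alpha_\pm(x,p)^{-1}\le\big(\sqrt{m_\pm^2+|p|^2/c^2}/(m_\pm gx_3)\big)^{1/2}$, which is integrable against $e^{-\frac{\tilde\beta}{2}\sqrt{(m_\pm c)^2+|p|^2}}$ and yields the $\mathbf{1}_{|x_3|\le1}/\sqrt{m_\pm gx_3}$ factor, while for $x_3\ge1$ one simply uses $\alpha_\pm\ge x_3\ge1$. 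Finally, \eqref{est:phi_C2} is obtained by feeding the bound \eqref{Uest:rho} on $\|\rho\|_\infty$ together with \eqref{est:rho_x} (which gives Hölder‑$\tfrac12$ control of $\rho$ up to the flat boundary, with exponential decay in $x_3$) into the boundary second‑order elliptic estimate of Lemma~\ref{lem:rho_to_phi}; the exponential decay supplies integrability at infinity. The apparent circularity — $\|\nabla_x^2\Phi_h\|_\infty$ sits on the right of \eqref{est:hk_x} and the intermediate estimates — is harmless since \eqref{choice_beta} bounds $\|\nabla_x^2\Phi_h\|_\infty$ a priori.

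The hardest part is precisely the interplay in step~(ii): reconciling the grazing singularity $1/|\vbpn|$, the magnetic‑field‑induced exponential growth $e^{C\tbp}$ of the characteristic derivatives, and the comparatively weak J\"uttner weight $e^{-\tilde\beta p^0_\pm}$. One cannot afford the crude bound $\tbp\lesssim|\pbp|/(m_\pm g)$ for the $1/|\vbpn|$ terms — it is essential to use the finer $\tbp\lesssim|\pbpn|\cdot\mathrm{poly}(|\pbp|)$ of Proposition~\ref{lem2:tb} so that the factor $1/|\pbpn|$ cancels exactly — while simultaneously identifying which contribution (the $\delta_{i3}/\alpha_\pm$ one) is genuinely singular and must therefore survive into \eqref{est:hk_x} and \eqref{est:rho_x}.
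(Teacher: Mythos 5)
Your proposal is correct and follows essentially the same route as the paper's proof: differentiating the Lagrangian representation through $(\xbp,\pbp)$, using the conservation law and \eqref{Uest:DPhi} to transfer the exponential weight, absorbing the $e^{C\tbp}$ growth via \eqref{choice_beta} and $\tbp\lesssim|\pbp|/(m_\pm g)$, cancelling the $1/|\vbpn|$ factors against the refined bound $\tbp\lesssim|\pbpn|\,\mathrm{poly}(|\pbp|)$ of Proposition~\ref{lem2:tb}, transporting the surviving $\delta_{i3}/|\vbpn|$ term to $\delta_{i3}/\alpha_\pm(x,p)$ via Lemma~\ref{VL}, and then integrating in $p$ (with the $\alpha^{-1}$ lower bound giving $\mathbf{1}_{|x_3|\le1}/\sqrt{m_\pm gx_3}$) and invoking the elliptic estimate of Lemma~\ref{lem:rho_to_phi} for \eqref{est:phi_C2}. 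The only cosmetic deviation is that the paper takes H\"older exponent $\delta<\tfrac12$ rather than exactly $\tfrac12$ when converting \eqref{est:rho_x} into $C^{0,\delta}$ control of $\rho$, which does not affect the argument.
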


\begin{proof}	

For the sake of simplicity, we abuse the notation as in \eqref{abuse}.
From \eqref{form:h^k_G}, we get $h (x,p) = G (\xb, \pb)$. By taking the derivative on $h (x,p)$, we have 
\Be \label{form:nabla_h}
\nabla_{x, p} h (x,p) = \nabla_{x, p} \xb (x,p) \cdot \nabla_{x_\parallel} G (\xb, \pb) + \nabla_{x, p} \pb (x,p) \cdot \nabla_p G (\xb, \pb).
\Ee
Here, for simplicity, we 
use the notation: $p_\pm^0 = \sqrt{(m_\pm c)^2 + |p|^2}$ in the rest of proof.

\smallskip

\textbf{Step 1. Proof of \eqref{est:rho_x}.}
From \eqref{def:rho} and \eqref{form:nabla_h}, we have
\Be \label{est1:rho_x}
\begin{split}
& \ \ \ \ | \nabla_{x_i} \rho (x) | 
\\& = \Big| \int_{\R^3} e_+ \nabla_{x_i} h_+ (x,p) + e_{-} \nabla_{x_i} h_{-} (x,p) \dd p \Big| 
\\& \leq \sum_{j = \pm} e_{j} \Big|\int_{\R^3} \nabla_{x_i} \xbj \cdot \nabla_{x_\parallel} G_j (\xbj, \pbj)  + \nabla_{x_i} \pbj \cdot \nabla_p G_j (\xbj, \pbj) \dd p \Big| 
\\& \leq \sum_{j = \pm} e_{j} \bigg\{ \| e^{\tilde \beta |p^0_j|} \nabla_{x_\parallel} G_j \|_{L^\infty (\gamma_-)}  
\times \int_{\R^3} \frac{| \nabla_{x_i} \xbj (x,p) |}{ e^{\tilde \beta |\pbj^0 (x,p)| }} \dd p  
\\& \qquad\qquad\qquad\qquad\qquad + \| e^{\tilde \beta |p^0_j|} \nabla_{p} G_j \|_{L^\infty (\gamma_-)}  
\times \int_{\R^3} \frac{| \nabla_{x_i} \pbj (x,p) |}{ e^{\tilde \beta |\pbj^0 (x,p)| }} \dd p \bigg\}
\\& \leq \sum_{j = \pm} e_{j} \| e^{\tilde \beta |p^0_j|} \nabla_{x_\parallel, p} G_j \|_{L^\infty (\gamma_-)}  
\times \bigg\{ 
\underbrace{ \int_{\R^3} \frac{| \nabla_{x_i} \xbj (x,p) |}{ e^{\tilde \beta |\pbj^0 (x,p)| }} \dd p }_{\eqref{est1:rho_x}_1}
 + \underbrace{ \int_{\R^3} \frac{| \nabla_{x_i} \pbj (x,p) |}{ e^{\tilde \beta |\pbj^0 (x,p)| }} \dd p }_{\eqref{est1:rho_x}_2}
 \bigg\}.
\end{split}
\Ee 

First, we bound $\eqref{est1:rho_x}_1$. From $\tb (x,p) \leq \frac{4}{m g} |\pb (x, p)|$ in \eqref{est:tb^h}, together with \eqref{est:tbpb3} and \eqref{est:xb_x}, we derive that
\begin{align}
& \frac{| \nabla_{x_i} \xb (x,p) |}{ e^{\tilde \beta |\pb^0 (x,p)| } }
\leq \frac{ | \vb (x,p) | }{ | v_{\b, 3} (x,p) | } \delta_{i3} e^{- \tilde{\beta} |\pb^0 (x,p)| }
\label{est:xb_x/w1}
\\& \ \ \ \ + \Big(1 +  \frac{|\vb (x,p)|}{|\vbn (x,p)|}
\frac{2 |\pb (x,p)|}{m g} \Big( \frac{2}{m g}  + \frac{\sqrt{8} \sqrt[4]{(m c)^2 + |\pb (s;x,p)|^2}}{\sqrt{ c m^3 g^2}} \Big) | \pbn (x, p) |  \notag
\\& \qquad\qquad\qquad \times ( \| \nabla_x^2 \Phi  \|_\infty + e B_3 + mg ) \Big) e^{ \frac{4}{m g} (1 + B_3 + \| \nabla_x ^2 \Phi  \|_\infty) |\pb|} e^{- \tilde{\beta} |\pb^0 | }.
\label{est:xb_x/w2}
\end{align}	
Using \eqref{est1:xb_x/w} and $\tilde{\beta}$ in \eqref{choice_beta}, we get that
\Be \label{est2:xb_x/w}
\begin{split}
\eqref{est:xb_x/w1} 
& \leq \frac{ \delta_{i3} }{ {\alpha (x,p) } } e^{- \frac{\tilde \beta}{2} |\pb^0 (x,p)| } 
\leq \frac{ \delta_{i3} }{\alpha (x,p)} e^{ - \frac{\tilde \beta}{2} \big( |p^0| + \frac{m g}{2 c} x_3 \big) },
\\ \eqref{est:xb_x/w2} 
& \lesssim e^{- \frac{\tilde \beta}{2} |\pb^0 (x,p)| }
+ ( \| \nabla_x^2 \Phi  \|_\infty + e B_3 + mg ) e^{- \frac{\tilde \beta}{2} |\pb^0 (x,p)| }
\lesssim e^{- \frac{\tilde \beta}{2} \big( |p^0| + \frac{m g}{2 c} x_3 \big)},
\end{split}
\Ee
where we use the bound from the condition \eqref{Uest:DPhi} and Lemma \ref{lem:conservation_law} as follows:
\be \label{lower:vb}
\pb^0 (x,p) = \sqrt{(m_{\pm} c)^2 + |\pb (x,p)|^2} \geq \sqrt{(m_{\pm} c)^2 + |p|^2} + \frac{m g}{2 c} x_3.
\ee
Recall from $\alpha (x, p)$ in \eqref{alpha}, together with the condition \eqref{Uest:DPhi}, we compute
\Be \label{est:1/alpha}
\begin{split}
& \int_{\R^3} \frac{1}{\alpha (x,p)} e^{ - \frac{\tilde \beta}{2} |p^0| } \dd p
\\& = \int_{\R^3} \frac{1}{\sqrt{ |x_3|^2  + |v_{3} |^2 +  2 ( e \p_{x_3} \Phi_h (x_\parallel , 0) + m g ) \frac{c x_3}{ p^0 } }} e^{ - \frac{\tilde \beta}{2} |p^0| } \dd p 
\\& = \mathbf{1}_{|x_3| > 1} \int_{\R^3} \frac{1}{\sqrt{ |x_3|^2  + |v_{3} |^2 +  2 ( e \p_{x_3} \Phi_h (x_\parallel , 0) + m g ) \frac{c x_3}{ p^0 } }} e^{ - \frac{\tilde \beta}{2} |p^0| } \dd p  \\& \ \ \ \ +  \mathbf{1}_{|x_3| \leq 1} \int_{\R^3} \frac{1}{\sqrt{ |x_3|^2  + |v_{3} |^2 +  2 ( e \p_{x_3} \Phi_h (x_\parallel , 0) + m g ) \frac{c x_3}{ p^0 } }} e^{ - \frac{\tilde \beta}{2} |p^0| } \dd p 
\\& \leq \mathbf{1}_{|x_3| > 1} \int_{\R^3} \frac{e^{ - \frac{\tilde \beta}{2} |p^0| }}{\sqrt{ |x_3|^2}} \dd p + \mathbf{1}_{|x_3| \leq 1} \int_{\R^3} \frac{e^{ - \frac{\tilde \beta}{2} |p^0| }}{\sqrt{ |x_3|^2  + |v_{3} |^2 + m g \frac{c x_3}{ p^0 } }}  \dd p 
\\& \lesssim \mathbf{1}_{|x_3| > 1} + \mathbf{1}_{|x_3| \leq 1} \int_{\R^3} \frac{e^{ - \frac{\tilde \beta}{2} |p^0| } \sqrt{ \frac{p^0}{c} } }{\sqrt{ m g x_3 }} \dd p 
\lesssim 1 + \mathbf{1}_{|x_3| \leq 1} \frac{1}{\sqrt{ m g x_3 }}.
\end{split}
\Ee
Combining \eqref{est:1/alpha} with \eqref{est2:xb_x/w}, we obtain that
\Be \label{est3:xb_x/w}
\begin{split}
\int_{\R^3} \eqref{est:xb_x/w1} \dd p 
& \leq \int_{\R^3} \frac{1}{\alpha (x,p)} e^{ - \frac{\tilde \beta}{2} \big( |p^0| + \frac{m g}{2 c} x_3 \big) } \dd p 
\\& \leq e^{ - \frac{\tilde \beta m g}{4 c} x_3 } \int_{\R^3} \frac{1}{\alpha (x,p)} e^{ - \frac{\tilde \beta}{2} |p^0| } \dd p
\\& \lesssim e^{ - \frac{\tilde \beta m g}{4 c} x_3 } \Big(
1 + \mathbf{1}_{|x_3| \leq 1} \frac{1}{\sqrt{ m g x_3 }}
\Big),
\end{split}
\Ee
and
\Be \label{est4:xb_x/w}
\begin{split}
\int_{\R^3} \eqref{est:xb_x/w2} \dd p 
& \leq \int_{\R^3} e^{- \frac{\tilde \beta}{2} \big( |p^0| + \frac{m g}{2 c} x_3 \big)} \dd p
\\& \leq e^{ - \frac{\tilde \beta m g}{4 c} x_3 } \int_{\R^3} e^{ - \frac{\tilde \beta}{2} |p^0| } \dd p
\lesssim e^{ - \frac{\tilde \beta m g}{4 c} x_3 }.
\end{split}
\Ee
From \eqref{est3:xb_x/w} and \eqref{est4:xb_x/w}, we derive that
\be \label{est5:xb_x/w}
\eqref{est1:rho_x}_1 =
\int_{\R^3} \frac{| \nabla_x \xb (x,p) |}{ e^{\tilde \beta |\pb^0 (x,p)| }} \dd p
\lesssim e^{ - \frac{\tilde \beta m g}{4 c} x_3 } \Big(
1 + \mathbf{1}_{|x_3| \leq 1} \frac{ \delta_{i3} }{\sqrt{ m g x_3 }}
\Big).
\ee

Second, we bound $\eqref{est1:rho_x}_2$. From $\tb (x,p) \leq \frac{4}{m g} |\pb (x, p)|$ in \eqref{est:tb^h}, together with \eqref{est:vb_x}, we derive that
\be \label{est:vb_x/w}
\begin{split}
& \frac{| \nabla_{x_i} \pb (x,p) |}{ e^{\tilde \beta |\pb^0 (x,p)| }} \leq \frac{ |e B_3 + m g| }{|v_{\b, 3}  (x,p)|}  \delta_{i3} e^{- \tilde{\beta} |\pb^0 (x,p)| }
\\& \ \ \ \ + \Big(1 +  \frac{|e B_3 + m g|}{|\vbn (x,p)|}
\frac{8 |\pb (x,p)|^2}{(m g)^2} ( \| \nabla_x^2 \Phi  \|_\infty + e B_3 + mg ) \Big) e^{ \frac{4}{m g} (1 + B_3 + \| \nabla_x ^2 \Phi  \|_\infty) |\pb| } e^{- \tilde{\beta} |\pb^0| }.
\end{split}
\ee
Similarly, from \eqref{est1:xb_x/w}, $\tilde{\beta}$ in \eqref{choice_beta}, together with \eqref{est:tbpb3} and \eqref{lower:vb}, we get that
\Be \notag
\eqref{est:vb_x/w} 
\lesssim \frac{\delta_{i3}}{\alpha (x,p)} e^{ - \frac{\tilde \beta}{2} \big( |p^0| + \frac{m g}{2 c} x_3 \big) }
+ e^{ - \frac{\tilde \beta}{2} \big( |p^0| + \frac{m g}{2 c} x_3 \big) }.
\Ee
Using the same argument of \eqref{est3:xb_x/w}-\eqref{est5:xb_x/w}, we derive that 
\Be \label{est1:vb_x/w}
\eqref{est1:rho_x}_2 = \int_{\R^3} \frac{| \nabla_{x_i} \pb (x,p) |}{ e^{\tilde \beta |\pb^0 (x,p)| }} \dd p
\lesssim e^{ - \frac{\tilde \beta m g}{4 c} x_3 } \Big(
1 + \mathbf{1}_{|x_3| \leq 1} \frac{\delta_{i3}}{\sqrt{ m g x_3 }}
\Big).
\Ee

Inputting \eqref{est5:xb_x/w} and \eqref{est1:vb_x/w} into \eqref{est1:rho_x}, we have
\Be \label{est2:rho_x}
\begin{split}
| \nabla_{x_i} \rho (x) | 
\lesssim \sum_{j = \pm} \| e^{\tilde \beta |p^0_j|^2} \nabla_{x_\parallel, p} G_j \|_{L^\infty (\gamma_-)}  
\times e^{ - \frac{\tilde \beta m_j g}{4 c} x_3 } \Big(
1 + \mathbf{1}_{|x_3| \leq 1} \frac{\delta_{i3}}{\sqrt{ m_j g x_3 }}
\Big).
\end{split}
\Ee 
and conclude \eqref{est:rho_x}.

\smallskip

\textbf{Step 2. Proof of \eqref{est:phi_C2}.}
In order to apply \eqref{est:nabla^2phi}, we start with $|\rho |_{C^{0,\delta}(\O)}$. 
Given any $x \in \O$, consider $0<h<1$ such that $x \pm h \mathbf{e}_i \in \O$ with $1 \leq i \leq 3$.
Using \eqref{est:rho_x}, we get
\Be \label{rho_DQ}
\begin{split}
& \frac{|\rho(x \pm h \mathbf{e}_i) - \rho(x)|}{h^\delta} 
\\& \leq \frac{1}{h^\delta} \int^h_0 | \nabla_{x_i} \rho (x \pm \tau \mathbf{e}_i) | \dd \tau 
\\& \leq \sum_{i = \pm} \| e^{\tilde \beta |p^0|} \nabla_{x_\parallel, p} G_i \|_{L^\infty (\gamma_-)} \times \frac{1}{h^\delta} \int^h_0 \big( 1 + \mathbf{1}_{|x_3| \leq 1} \frac{\delta_{i3}}{\sqrt{ m g (x \pm \tau \mathbf{e}_i)_3 }} \big) \dd \tau.
\end{split}
\Ee

For $i = 1, 2$, from $\delta_{i3} = 0$ and \eqref{rho_DQ}, we have
\Be \label{est:rho_DQ_12}
\begin{split}
\frac{1}{h^\delta} \int^h_0 \big( 1 + \mathbf{1}_{|x_3| \leq 1} \frac{\delta_{i3}}{\sqrt{ m g (x \pm \tau \mathbf{e}_i)_3 }} \big) \dd \tau
& \leq h^{1-\delta} \lesssim_{\delta} 1.
\end{split}
\Ee

For $i = 3$, from \eqref{rho_DQ} we obtain
\Be \label{est:rho_DQ_3}
\begin{split}
\frac{1}{h^\delta} \int^h_0 \big( 1 + \mathbf{1}_{|x_3| \leq 1} \frac{1}{\sqrt{ m g (x \pm \tau \mathbf{e}_i)_3 }} \big) \dd \tau
& \lesssim h^{1-\delta} + \frac{1}{h^\delta} \int^h_0  \frac{1}{\sqrt{ x_3 \pm \tau }}  \dd \tau.
\end{split}
\Ee
Consider $\frac{|\rho(x + h \mathbf{e}_i) - \rho(x)|}{h^\delta}$, then the integrand in \eqref{est:rho_DQ_3} is $\frac{1}{\sqrt{ x_3 + \tau }}$. Note that $x + h \mathbf{e}_i \in \O$ for any $0< h < 1$. Picking $0 < \delta < 1/2$, we get
\Be \label{est1:rho_DQ_3}
\begin{split}
h^{-\delta } \Big|	\int^h_0  \frac{1}{\sqrt{ x_3 + \tau }}  \dd \tau \Big|
& \lesssim  h^{-\delta } \Big| \sqrt{ x_3 + h } - \sqrt{x_3} \Big|
\\& =  h^{-\delta } \big| \frac{h}{\sqrt{ x_3 + h } + \sqrt{x_3} } \big| 
\lesssim h^{1-\delta } \big| \frac{1}{ \sqrt{h} } \big|
= h^{\frac{1}{2} - \delta} \lesssim_{\delta} 1.
\end{split}
\Ee 
Consider $\frac{|\rho(x - h \mathbf{e}_i) - \rho(x)|}{h^\delta}$, then the integrand in \eqref{est:rho_DQ_3} is $\frac{1}{\sqrt{ x_3 - \tau }}$. Note that $x + h \mathbf{e}_i \in \O$ for any $0< h < \min \{ x_3, 1\}$. Picking $0 < \delta < 1/2$, we get
\Be \label{est2:rho_DQ_3}
\begin{split}
h^{-\delta } \Big|	\int^{\min \{h, x_3\}}_0  \frac{1}{\sqrt{ x_3 - \tau }}  \dd \tau \Big|
& \lesssim  h^{-\delta } \Big| \sqrt{ x_3} - \sqrt{ x_3 - \min \{h, x_3\}} \Big|
\\& =  h^{-\delta } \big| \frac{\min \{h, x_3\}}{ \sqrt{ x_3} + \sqrt{ x_3 - \min \{h, x_3\}} } \big| 
\lesssim h^{\frac{1}{2} - \delta} \lesssim_{\delta} 1.
\end{split}
\Ee 
Inputting \eqref{est:rho_DQ_12}-\eqref{est2:rho_DQ_3} into \eqref{rho_DQ}, we bound $|\rho |_{C^{0,\delta}(\O)}$ by
\Be \label{est:rho_Hol}
\begin{split}
|\rho |_{C^{0,\delta}(\O)}   \leq \sup_{0 < h < 1}\frac{|\rho(x \pm h \mathbf{e}_i) - \rho(x)|}{h^\delta}   \lesssim_\delta
\sum_{i = \pm} \| e^{\tilde \beta |p^0| } \nabla_{x_\parallel, p} G_i \|_{L^\infty (\gamma_-)}.
\end{split}
\Ee

On the other hand, similar to \eqref{est1:rho_x}, we have
\Be \label{est1:rho_Hol}
\begin{split}
| \rho (x) | 
& = \Big| \int_{\R^3} e_+ h_+ (x,p) + e_{-} h_{-} (x,p) \dd p \Big| 
\\& \leq \sum_{i = \pm} e_{i} \int_{\R^3} \frac{1}{w_{i, \beta} (x, p)} \dd p \times \| w_{i, \beta} G_i \|_{L^\infty(\gamma_-)}
\\& \leq \frac{1}{\beta} \big( e_+ \| w_{+, \beta} G_+ \|_{L^\infty(\gamma_-)} e^{-  \beta \frac{ m_+ }{2 c} g x_3} + e_{-} \| w_{-, \beta} G_- \|_{L^\infty(\gamma_-)} e^{- \beta \frac{ m_{-} }{2 c} g x_3} \big),
\end{split}
\Ee 
where the last inequality follows from \eqref{Uest:DPhi}.

Now using \eqref{est:nabla^2phi} and \eqref{est:rho_Hol}, together with \eqref{est1:rho_Hol}, we have
\Be  \notag
\begin{split}
\| \nabla_x^2 \Phi \|_{L^\infty(\O)} 
& \lesssim_\delta \|\rho \|_{L^\infty(\O)} + \|\rho \|_{C^{0,\delta}(\O)} + \frac{1}{\beta} \big( e_+ \| w_{+, \beta} G_+ \|_{L^\infty(\gamma_-)} + e_{-} \| w_{-, \beta} G_- \|_{L^\infty(\gamma_-)} \big),
\end{split}
\Ee 
and thus conclude \eqref{est:phi_C2}.

\smallskip

\textbf{Step 3. Proof of \eqref{est:hk_v}.} 
From \eqref{form:nabla_h}, we get
\Be \notag
\nabla_{p} h (x,p)
= \frac{\nabla_{p} \xb (x,p)}{e^{\tilde{\beta} |\pb^0 (x,p)| }} \cdot e^{\tilde{\beta} |\pb^0| } \nabla_{x_\parallel} G (\xb, \pb) + \frac{\nabla_{p} \pb (x,p)}{e^{\tilde{\beta} |\pb^0 (x,p)|}} \cdot e^{\tilde{\beta} |\pb^0|} \nabla_p G (\xb, \pb).
\Ee
This shows that
\be \label{est1:hk_v}
| \nabla_{p} h (x,p) | 
\leq \Big( \frac{ | \nabla_p \xb (x,p) |}{e^{\tilde{\beta} |\pb^0 (x,p)|}} + \frac{ |\nabla_p \pb (x,p)|}{e^{\tilde{\beta} |\pb^0 (x,p)|}} \Big) \| e^{\tilde{\beta} |p^0|}   \nabla_{x_\parallel, p} G   \|_{L^\infty (\gamma_-)}.
\ee
Using \eqref{est:xb_v}, we get
\Be \notag
\begin{split}
& \frac{ |\p_{p_i} \xb (x,p)| }{ e^{\tilde{\beta} |\pb^0 (x,p)|} }
\\& \leq \frac{|\vb (x,p)| }{|v_{\b, 3}  (x,p)| \sqrt{ m^2 + |p|^2/ c^2}} \times \big( \frac{2}{m g}  + \frac{\sqrt{8} \sqrt[4]{(m c)^2 + |\pb (x,p)|^2}}{\sqrt{ c m^3 g^2}} \big) | \pbn (x, p) | e^{- \tilde{\beta} |\pb^0 (x,p)|} 
\\& \ \ \ \ + \Big( 1 + \frac{|\vb (x,p)|}{|\vbn (x,p)|}
\frac{|\tb (x,p)|}{2} \times \big( \frac{2}{m g}  + \frac{\sqrt{8} \sqrt[4]{(m c)^2 + |\pb (x,p)|^2}}{\sqrt{ c m^3 g^2}} \big) | \pbn (x, p) | 
\\& \qquad \qquad \quad \times ( \| \nabla_x^2 \Phi  \|_\infty + e B_3 + mg ) \Big) e^{ (1 + B_3 + \| \nabla_x ^2 \Phi  \|_\infty) \tb} e^{- \tilde{\beta} |\pb^0 (x,p)|}
\end{split}
\Ee
Further, from \eqref{est:tbpb3}, we have 
\Be \label{est1:xb_v_a}
\begin{split}
& \frac{ |\p_{p_i} \xb (x,p)| }{ e^{\tilde{\beta} |\pb^0 (x,p)|} }
\\& \leq \frac{|\pb (x,p)|}{\sqrt{ m^2 + |p|^2/ c^2}} \big( \frac{2}{m g} + \frac{\sqrt{8} \sqrt[4]{(m c)^2 + |\pb (x,p)|^2}}{\sqrt{ c m^3 g^2}} \big) e^{- \tilde{\beta} |\pb^0 (x,p)| } 
\\& \ \ \ \ + \Big( 1 + \frac{|\pb (x,p)| |\tb (x,p)|}{2}  \big( \frac{2}{m g}  + \frac{\sqrt{8} \sqrt[4]{(m c)^2 + |\pb (x,p)|^2}}{\sqrt{ c m^3 g^2}} \big) ( \| \nabla_x^2 \Phi  \|_\infty + e B_3 + mg ) \Big)
\\& \qquad \quad \times e^{ (1 + B_3 + \| \nabla_x ^2 \Phi  \|_\infty) \tb} e^{- \tilde{\beta} |\pb^0 (x,p)|}.
\end{split}
\Ee
Next, using \eqref{lower:vb} and $\tilde{\beta}$ in \eqref{choice_beta}, together with \eqref{est:tb^h}, we obtain 
\Be \label{est1:xb_v}
\begin{split}
\eqref{est1:xb_v_a}
& \leq \frac{|\pb (x,p)|}{m^2 g} \big (2  + \frac{\sqrt{8} \sqrt[4]{(m c)^2 + |\pb (x,p)|^2}}{\sqrt{ c m}} \big) e^{- \tilde{\beta} |\pb^0 (x,p)|} 
\\& \ \ \ \ + \Big( 1 + \frac{4 |\pb (x,p)|^2}{(m g)^2}  \big( 1 + \frac{\sqrt{2} \sqrt[4]{(m c)^2 + |\pb (x,p)|^2}}{\sqrt{ c m}} \big) ( \| \nabla_x^2 \Phi  \|_\infty + e B_3 + mg ) \Big)
\\& \qquad \quad \times e^{ (1 + B_3 + \| \nabla_x ^2 \Phi  \|_\infty) \frac{4}{m g} |\pb (x,p)| } e^{- \tilde{\beta} |\pb^0 (x,p)|}
\\& \lesssim \frac{1}{m^2 g} e^{- \frac{\tilde{\beta}}{2} |\pb^0 (x,p)|} + \big( 1 + \frac{\| \nabla_x^2 \Phi  \|_\infty + e B_3 + mg}{(m g)^2} \big) e^{- \frac{\tilde{\beta}}{2} |\pb^0 (x,p)|} 
\\& \lesssim \big( 1 + \frac{ \| \nabla_x^2 \Phi  \|_\infty + e B_3 + mg}{(m g)^2} \big) e^{ - \frac{\tilde \beta}{2} \big( |p^0| + \frac{m g}{2 c} x_3 \big) }.
\end{split}
\Ee

Analogously, using \eqref{est:vb_v}, \eqref{lower:vb}, and $\tilde{\beta}$ in \eqref{choice_beta}, together with \eqref{est:tb^h}, \eqref{est:tbpb3} and \eqref{est:pb>p}, we obtain that
\Be \label{est1:vb_v}
\begin{split}
& \frac{ |\p_{p_i} \pb (x,p)| }{e^{\tilde{\beta} |\pb^0 (x,p)|}}
\\& \leq \frac{|e B_3 + m g| }{|v_{\b, 3}  (x,p)| \sqrt{ m^2 + |p|^2/ c^2}} \times \big( \frac{2}{m g} + \frac{\sqrt{8} \sqrt[4]{(m c)^2 + |\pb (x,p)|^2}}{\sqrt{ c m^3 g^2}} \big) | \pbn (x, p) | e^{- \tilde{\beta} |\pb^0 (x,p)|}
\\& \ \ \ \ + \Big( 1 + \frac{|e B_3 + m g|}{|\vbn (x,p)|}
\frac{|\tb (x,p)|}{2} \times \big( \frac{2}{m g}  + \frac{\sqrt{8} \sqrt[4]{(m c)^2 + |\pb (x,p)|^2}}{\sqrt{ c m^3 g^2}} \big) | \pbn (x, p) | 
\\& \qquad \qquad \quad \times ( \| \nabla_x^2 \Phi  \|_\infty + e B_3 + mg ) \Big) e^{ (1 + B_3 + \| \nabla_x ^2 \Phi  \|_\infty) \tb} e^{- \tilde{\beta} |\pb^0 (x,p)|}
\\& \leq \frac{|e B_3 + m g|}{m^2 g} \big( 2 + \frac{\sqrt{8} \sqrt[4]{(m c)^2 + |\pb (x,p)|^2}}{\sqrt{ c m}} \big) \sqrt{m^2 + |\pb (x,p)|^2 / c^2} \times e^{- \tilde{\beta} |\pb^0 (x,p)|} 
\\& \ \ \ \ + \Big( 1 + \frac{ 4 |e B_3 + m g| |\pb (x,p)|}{(mg)^2}  \big( 1  + \frac{\sqrt{2} \sqrt[4]{(m c)^2 + |\pb (x,p)|^2}}{\sqrt{ c m}} \big) \sqrt{m^2 + |\pb (x,p)|^2 / c^2}
\\& \qquad \qquad \quad \times ( \| \nabla_x^2 \Phi  \|_\infty + e B_3 + mg ) \Big) e^{ (1 + B_3 + \| \nabla_x ^2 \Phi  \|_\infty) \tb} e^{- \tilde{\beta} |\pb^0 (x,p)|}
\\& \lesssim \frac{|e B_3 + m g|}{m^2 g} e^{- \frac{\tilde{\beta}}{2} |\pb^0 (x,p)|} + \big( 1 + \frac{\| \nabla_x^2 \Phi  \|_\infty + e B_3 + mg}{(m g)^2} \big) e^{- \frac{\tilde{\beta}}{2} |\pb^0 (x,p)|}
\\& \lesssim \big( 1 + \frac{ \| \nabla_x^2 \Phi  \|_\infty + e B_3 + mg}{(m g)^2} \big) e^{ - \frac{\tilde \beta}{2} \big( |p^0| + \frac{m g}{2 c} x_3 \big) }.
\end{split}
\Ee

Inputting \eqref{est1:xb_v} and \eqref{est1:vb_v} into \eqref{est1:hk_v}, we conclude \eqref{est:hk_v}.

\smallskip

\textbf{Step 4. Proof of \eqref{est:hk_x}.} 
Again from \eqref{form:nabla_h}, we get
\Be \notag
\nabla_{x} h (x,p)
= \frac{\nabla_{x} \xb (x,p)}{e^{\tilde{\beta} |\pb^0 (x,p)|}} \cdot e^{\tilde{\beta} |\pb^0|} \nabla_{x_\parallel} G (\xb, \pb) + \frac{\nabla_{x} \pb (x,p)}{e^{\tilde{\beta} |\pb^0 (x,p)|}} \cdot e^{\tilde{\beta} |\pb^0|} \nabla_p G (\xb, \pb).
\Ee
This shows that
\be \label{est1:hk_x}
| \nabla_{x} h (x,p) | 
\leq \Big( \frac{ | \nabla_x \xb (x,p) |}{e^{\tilde{\beta} |\pb^0 (x,p)|}} + \frac{ |\nabla_x \pb (x,p)|}{e^{\tilde{\beta} |\pb^0 (x,p)|}} \Big) \| e^{\tilde{\beta} |p^0|} \nabla_{x_\parallel, p} G   \|_{L^\infty (\gamma_-)}.
\ee

Using \eqref{est:xb_x}, \eqref{est1:xb_x/w} and \eqref{est:tbpb3}, we have
\Be \label{est1:xb_x_a}
\begin{split}
& \frac{ |\p_{x_i} \xb (x,p)| }{ e^{\tilde{\beta} |\pb^0 (x,p)|} }
\\& \leq \frac{|\vb (x,p)| \delta_{i3}}{\alpha  (x,p)} e^{ \frac{4}{m g} (4g + 2 \| \nabla_x ^2 \Phi \|_{\infty} ) |\pb |} e^{- \tilde{\beta} |\pb^0 (x,p)|}
\\& \ \ \ \ + \Big( 1 + \frac{|\vb (x,p)|}{|\vbn (x,p)|}
\frac{|\tb (x,p)|}{2} \times \big( \frac{2}{m g}  + \frac{\sqrt{8} \sqrt[4]{(m c)^2 + |\pb (x,p)|^2}}{\sqrt{ c m^3 g^2}} \big) | \pbn (x, p) | 
\\& \qquad \qquad \quad \times ( \| \nabla_x^2 \Phi  \|_\infty + e B_3 + mg ) \Big) e^{ (1 + B_3 + \| \nabla_x ^2 \Phi  \|_\infty) \tb} e^{- \tilde{\beta} |\pb^0 (x,p)|}
\\& \leq \frac{|\pb (x,p)| \delta_{i3}}{\alpha  (x,p)} \frac{1}{\sqrt{m^2 + |\pb (x,p)|^2 / c^2}} \times e^{ \frac{4}{m g} (4g + 2 \| \nabla_x ^2 \Phi \|_{\infty} ) |\pb |} e^{- \tilde{\beta} |\pb^0 (x,p)|}
\\& \ \ \ \ + \Big( 1 + \frac{|\pb (x,p)| |\tb (x,p)|}{2}  \big( \frac{2}{m g}  + \frac{\sqrt{8} \sqrt[4]{(m c)^2 + |\pb (x,p)|^2}}{\sqrt{ c m^3 g^2}} \big) ( \| \nabla_x^2 \Phi  \|_\infty + e B_3 + mg ) \Big)
\\& \qquad \quad \times e^{ (1 + B_3 + \| \nabla_x ^2 \Phi  \|_\infty) \tb} e^{- \tilde{\beta} |\pb^0 (x,p)|}.
\end{split}
\Ee
Further, using \eqref{lower:vb} and $\tilde{\beta}$ in \eqref{choice_beta}, together with \eqref{est:tb^h}, we obtain 
\Be \label{est1:xb_x}
\begin{split}
\eqref{est1:xb_x_a}
& \leq \frac{c \delta_{i3}}{\alpha (x,p)} e^{ \frac{4}{m g} (4g + 2 \| \nabla_x ^2 \Phi \|_{\infty} ) |\pb |} e^{- \tilde{\beta} |\pb^0 (x,p)|}  
\\& \ \ \ \ + \Big( 1 + \frac{4 |\pb (x,p)|^2}{(m g)^2}  \big( 1 + \frac{\sqrt{2} \sqrt[4]{(m c)^2 + |\pb (x,p)|^2}}{\sqrt{ c m}} \big) ( \| \nabla_x^2 \Phi  \|_\infty + e B_3 + mg ) \Big)
\\& \qquad \quad \times e^{ (1 + B_3 + \| \nabla_x ^2 \Phi  \|_\infty) \frac{4}{m g} |\pb (x,p)| } e^{- \tilde{\beta} |\pb^0 (x,p)|}
\\& \lesssim \frac{\delta_{i3}}{\alpha (x,p)} e^{- \frac{\tilde{\beta}}{2} |\pb^0 (x,p)|} + \big( 1 + \frac{\| \nabla_x^2 \Phi  \|_\infty + e B_3 + mg}{(m g)^2} \big) e^{- \frac{\tilde{\beta}}{2} |\pb^0 (x,p)|} 
\\& \lesssim \big( \frac{\delta_{i3}}{\alpha (x,p)} + \frac{ \| \nabla_x^2 \Phi  \|_\infty + e B_3 + mg}{(m g)^2} \big) e^{ - \frac{\tilde \beta}{2} \big( |p^0| + \frac{m g}{2 c} x_3 \big) }.
\end{split}
\Ee

Analogously, using \eqref{est:vb_x}, \eqref{lower:vb}, and $\tilde{\beta}$ in \eqref{choice_beta}, together with \eqref{est:tb^h}, \eqref{est:tbpb3} and \eqref{est:pb>p}, we obtain that
\Be \label{est1:vb_x}
\begin{split}
& \frac{ |\p_{x_i} \pb (x,p)| }{e^{\tilde{\beta} |\pb^0 (x,p)|}}
\\& \leq \frac{ |e B_3 + m g| \delta_{i3}}{\alpha  (x,p)} e^{ \frac{4}{m g} (4g + 2 \| \nabla_x ^2 \Phi \|_{\infty} ) |\pb |} e^{- \tilde{\beta} |\pb^0 (x,p)|}
\\& \ \ \ \ + \Big( 1 + \frac{|e B_3 + m g|}{|\vbn (x,p)|}
\frac{|\tb (x,p)|}{2} \times \big( \frac{2}{m g}  + \frac{\sqrt{8} \sqrt[4]{(m c)^2 + |\pb (x,p)|^2}}{\sqrt{ c m^3 g^2}} \big) | \pbn (x, p) | 
\\& \qquad \qquad \quad \times ( \| \nabla_x^2 \Phi  \|_\infty + e B_3 + mg ) \Big) e^{ (1 + B_3 + \| \nabla_x ^2 \Phi  \|_\infty) \tb} e^{- \tilde{\beta} |\pb^0 (x,p)|}
\\& \leq \frac{ |e B_3 + m g| \delta_{i3}}{\alpha  (x,p)} e^{ \frac{4}{m g} (4g + 2 \| \nabla_x ^2 \Phi \|_{\infty} ) |\pb |} e^{- \tilde{\beta} |\pb^0 (x,p)|}
\\& \ \ \ \ + \Big( 1 + \frac{ 4 |e B_3 + m g| |\pb (x,p)|}{(mg)^2}  \big( 1  + \frac{\sqrt{2} \sqrt[4]{(m c)^2 + |\pb (x,p)|^2}}{\sqrt{ c m}} \big) \sqrt{m^2 + |\pb (x,p)|^2 / c^2}
\\& \qquad \qquad \quad \times ( \| \nabla_x^2 \Phi  \|_\infty + e B_3 + mg ) \Big) e^{ (1 + B_3 + \| \nabla_x ^2 \Phi  \|_\infty) \tb} e^{- \tilde{\beta} |\pb^0 (x,p)|}
\\& \lesssim \frac{|e B_3 + m g| \delta_{i3}}{\alpha  (x,p)} e^{- \frac{\tilde{\beta}}{2} |\pb^0 (x,p)|} + \big( 1 + \frac{\| \nabla_x^2 \Phi  \|_\infty + e B_3 + mg}{(m g)^2} \big) e^{- \frac{\tilde{\beta}}{2} |\pb^0 (x,p)|}
\\& \lesssim \big( \frac{\delta_{i3}}{\alpha  (x,p)} + \frac{ \| \nabla_x^2 \Phi  \|_\infty + e B_3 + mg}{(m g)^2} \big) e^{ - \frac{\tilde \beta}{2} \big( |p^0| + \frac{m g}{2 c} x_3 \big) }.
\end{split}
\Ee

Inputting \eqref{est1:xb_x} and \eqref{est1:vb_x} into \eqref{est1:hk_x}, we conclude \eqref{est:hk_x}.
\end{proof}

\begin{remark}
Proposition \ref{prop:Reg} shows the crucial regularities on $h_{\pm}$, $\rho$ and $\nabla_x \Phi$ under the assumption \eqref{choice_beta}. 
We remark that the term $B_3$ in \eqref{choice_beta} will not appear in the non-relativistic model, in which the backward exit time $\tBp$ admits a better estimate.
\end{remark}

Collecting the results in Proposition \ref{prop:Reg},
we conclude the following regularity estimate.

\begin{theorem}[Regularity Estimate] \label{theo:RS}
 
Suppose $(h, \rho_h, \Phi_h )$ solves \eqref{VP_h}-\eqref{eqtn:Dphi} in the sense of Definition \ref{weak_sol}. Suppose \eqref{Uest:DPhi} holds. Moreover, we assume
\be \notag
\| e^{{\tilde \beta } \sqrt{(m_{\pm} c)^2 + |p|^2}} \nabla_{x_\parallel, p} G_{\pm} (x,p) \|_{L^\infty (\gamma_-)} < \infty, \text{ for } \tilde \beta >0.
\ee
Furthermore, let $\beta,\tilde \beta >0$ satisfy  
\Be \label{condition:G}
\begin{split}
& \frac{1}{\beta} \big( e_+ \| e^{ \beta \sqrt{(m_{+} c)^2 + |p|^2} } G_+ \|_{L^\infty(\gamma_-)} + e_{-} \| e^{ \beta \sqrt{(m_{\-} c)^2 + |p|^2}} G_- \|_{L^\infty(\gamma_-)} \big)
\\& \ \ \ \ + \| e^{\tilde \beta \sqrt{(m_{+} c)^2 + |p|^2}} \nabla_{x_\parallel, p} G_+ \|_{L^\infty (\gamma_-)} + \| e^{\tilde \beta \sqrt{(m_{-} c)^2 + |p|^2}} \nabla_{x_\parallel, p} G_- \|_{L^\infty (\gamma_-)}
\\& \leq \frac{m_{\pm} g}{8} \tilde \beta - (1 + B_3).
\end{split}	
\Ee
and
\Be \label{condition:Phi_xx}
\frac{8}{m_{\pm} g} (1 + B_3 + \| \nabla_x ^2 \Phi  \|_\infty) \leq \tilde \beta \leq \beta.
\Ee
Then, $h, \rho$ and $\nabla_x \Phi $ are locally Lipschitz continuous and the following bounds hold:
\Be \label{theo:rho_x}
\begin{split}
e^{ \frac{\tilde \beta m g}{4 c} x_3 } |\p_{x_i} \rho  (x)|   
& \lesssim \| e^{\tilde \beta \sqrt{(m_+ c)^2 + |p|^2}} \nabla_{x_\parallel, p} G_+ \|_{L^\infty (\gamma_-)}  
\times \Big(
1 + \mathbf{1}_{|x_3| \leq 1} \frac{1}{\sqrt{ m_+ g x_3 }}
\Big)
\\& \ \ \ \ + \| e^{\tilde \beta \sqrt{(m_- c)^2 + |p|^2}} \nabla_{x_\parallel, p} G_- \|_{L^\infty (\gamma_-)}  
\times \Big(
1 + \mathbf{1}_{|x_3| \leq 1} \frac{1}{\sqrt{ m_- g x_3 }}
\Big),
\end{split}
\Ee
and
\Be \label{theo:phi_C2}
\begin{split}
\| \nabla_x^2 \Phi  \|_\infty
& \lesssim \frac{1}{\beta} \big( e_+ \| w_{+, \beta} G_+ \|_{L^\infty(\gamma_-)} + e_{-} \| w_{-, \beta} G_- \|_{L^\infty(\gamma_-)} \big)
\\& \ \ \ \ + \| e^{\tilde \beta \sqrt{(m_+ c)^2 + |p|^2}} \nabla_{x_\parallel, p} G_+ \|_{L^\infty (\gamma_-)} + \| e^{\tilde \beta \sqrt{(m_- c)^2 + |p|^2}} \nabla_{x_\parallel, p} G_- \|_{L^\infty (\gamma_-)}.
\end{split}	
\Ee
Furthermore,
\Be \label{theo:hk_v}
\begin{split}
& e^{ \frac{\tilde \beta}{2}|p^0|} e^{  \frac{\tilde \beta m_{\pm} g}{4 c} x_3} | \nabla_p h_{\pm} (x,p)| 
\\& \lesssim \big( 1 + \frac{ \| \nabla_x^2 \Phi  \|_\infty + e B_3 + mg}{(m g)^2} \big) \| e^{\tilde \beta \sqrt{(m_{\pm} c)^2 + |p|^2}} \nabla_{x_\parallel, p} G_{\pm} \|_{L^\infty (\gamma_-)},
\end{split}
\Ee
and
\Be \label{theo:hk_x}
\begin{split}
& e^{ \frac{\tilde \beta}{2}|p^0|} e^{  \frac{\tilde \beta m_{\pm} g}{4 c} x_3} | \nabla_x h_{\pm} (x,p)|  
\\& \lesssim \big( \frac{\delta_{i3}}{\alpha  (x,p)} + \frac{ \| \nabla_x^2 \Phi  \|_\infty + e B_3 + mg}{(m g)^2} \big) \| e^{\tilde \beta \sqrt{(m_{\pm} c)^2 + |p|^2}} \nabla_{x_\parallel, p} G_{\pm} \|_{L^\infty (\gamma_-)}.
\end{split}
\Ee
\end{theorem}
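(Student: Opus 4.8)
The plan is to obtain the four displayed estimates as a direct specialization of Proposition \ref{prop:Reg}, and then to upgrade the resulting pointwise bounds to local Lipschitz continuity by controlling the two degenerate quantities $x_3$ and $\alpha_\pm(x,p)$ away from the spatial boundary.

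\textbf{Step 1: reduction to Proposition \ref{prop:Reg}.} Reading \eqref{condition:Phi_xx} for both signs and using $\hat m=\min(m_+,m_-)$, the second inequality there implies $\frac{8}{\hat m g}(1+B_3+\|\nabla_x^2\Phi_h\|_\infty)\le\tilde\beta$, which is exactly the smallness requirement \eqref{choice_beta} of Proposition \ref{prop:Reg}. Together with the standing assumption \eqref{Uest:DPhi} and the assumed finiteness of $\|e^{\tilde\beta\sqrt{(m_\pm c)^2+|p|^2}}\nabla_{x_\parallel,p}G_\pm\|_{L^\infty(\gamma_-)}$, Proposition \ref{prop:Reg} then applies and produces \eqref{est:rho_x}, \eqref{est:phi_C2}, \eqref{est:hk_v}, \eqref{est:hk_x}, which are literally \eqref{theo:rho_x}, \eqref{theo:phi_C2}, \eqref{theo:hk_v}, \eqref{theo:hk_x}. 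Note that \eqref{condition:G} is not consumed in this step, since it is not among the hypotheses of Proposition \ref{prop:Reg}. Its role is to close the loop: feeding the bound \eqref{theo:phi_C2} for $\|\nabla_x^2\Phi_h\|_\infty$ into the left side of \eqref{condition:Phi_xx}, the smallness \eqref{condition:G} renders that inequality self-consistent, which is what makes the hypothesis \eqref{condition:Phi_xx} attainable when the present theorem is combined with the construction of Section \ref{sec:CS} (this is carried out in the proof of Theorem \ref{theo:CS} at the end of Section \ref{sec:EX_SS}).

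\textbf{Step 2: local Lipschitz continuity.} For $\nabla_x\Phi_h$ this is immediate: \eqref{theo:phi_C2} gives $\nabla_x^2\Phi_h\in L^\infty(\O)$, so $\nabla_x\Phi_h$ is Lipschitz on $\bar\O$. For $\rho$, fix a compact $K\Subset\O$ and choose $\varepsilon>0$ with $x_3\ge\varepsilon$ on $K$; then the factor $\mathbf 1_{|x_3|\le1}(m_\pm g x_3)^{-1/2}$ in \eqref{theo:rho_x} is bounded on $K$, hence $\nabla_x\rho\in L^\infty(K)$ and $\rho$ is locally Lipschitz in $\O$. For $h_\pm$, fix a compact $K\Subset\O\times\R^3$ and again take $\varepsilon>0$ with $x_3\ge\varepsilon$ on $K$; the definition \eqref{alpha} of $\alpha_\pm$ together with \eqref{Uest:DPhi}, which forces $e_\pm\p_{x_3}\Phi_h(x_\parallel,0)+m_\pm g\ge\frac{m_\pm g}{2}>0$, gives $\alpha_\pm(x,p)^2\ge|x_3|^2\ge\varepsilon^2$ on $K$, so $1/\alpha_\pm$ is bounded there; then \eqref{theo:hk_v} and \eqref{theo:hk_x} yield $\nabla_{x,p}h_\pm\in L^\infty(K)$, and $h_\pm$ is locally Lipschitz in $\O\times\R^3$.

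\textbf{Main difficulty.} Analytically nothing hard remains once Proposition \ref{prop:Reg} is in hand: the genuinely technical work --- differentiating the characteristics and the exit data (Lemmas \ref{lem:XV_xv}--\ref{lem:nabla_zb}), bounding the kinetic distance from below (Lemma \ref{VL}), and the elliptic $C^{1,\delta}$ bound for the Poisson problem --- has already been absorbed into \eqref{est:rho_x}--\eqref{est:hk_x}. The only delicate point is logical rather than computational: \eqref{condition:Phi_xx} constrains the very quantity $\|\nabla_x^2\Phi_h\|_\infty$ that the theorem estimates, so it can only be verified, not assumed, at the level of the construction. Resolving this --- running the Proposition \ref{prop:Reg} estimates uniformly along the iterates $(h^{\ell+1}_\pm,\rho^\ell,\nabla_x\Phi^\ell)$ of Section \ref{sec:CS}, where the analogue of \eqref{choice_beta} is arranged uniformly in $\ell$ via \eqref{condition:G} and the uniform-in-$\ell$ bounds of Proposition \ref{prop:Unif_steady}, and then passing to the limit --- is where the smallness hypothesis \eqref{condition:G} is actually used.
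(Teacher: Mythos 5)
Your proposal is correct and follows essentially the same route as the paper: the paper's own proof is a one-liner that verifies the hypothesis \eqref{choice_beta} (via \eqref{condition:G}/\eqref{condition:Phi_xx}) and then cites Proposition \ref{prop:Reg} for \eqref{theo:rho_x}--\eqref{theo:hk_x}, exactly as in your Step 1. Your Step 2 (deducing local Lipschitz continuity from the derivative bounds, using $\nabla_x^2\Phi_h\in L^\infty$ and the lower bounds $x_3\ge\varepsilon$, $\alpha_\pm\ge x_3$ on compacts away from $\p\O$) is a correct elaboration of a point the paper leaves implicit, and your remark on the logical role of \eqref{condition:G} matches how it is actually used in Proposition \ref{prop:Unif_D2xDp} and Theorem \ref{theo:CS}.
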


\begin{proof}

Using the condition \eqref{condition:G}, we can compute that \eqref{choice_beta}-\eqref{est:phi_C2} hold.
We omit the rest of the proof since it follows directly from Proposition \ref{prop:Reg}.
\end{proof}

\subsection{Uniqueness Theorem} 
\label{sec:US}

In this section, we prove Theorem \ref{theo:US}: a conditional uniqueness theorem subjected to a regularity of the solutions. 

The basic idea is as follows: suppose there are two steady-state solutions, $(h_{1, \pm}, \rho_1, \Phi_1)$ and $(h_{2, \pm}, \rho_2, \Phi_2)$, to the system \eqref{VP_h}-\eqref{eqtn:Dphi}.
By considering the difference $h_{1, \pm} - h_{2, \pm}$, we demonstrate that $h_{1, \pm} = h_{2, \pm}$ almost everywhere in $\Omega \times \mathbb{R}^3$, thereby establishing the uniqueness of the solution.

\begin{theorem}[Uniqueness Theorem] \label{theo:US}

Let $(h_{1, \pm}, \rho_1, \Phi_1)$ and $(h_{2, \pm}, \rho_2, \Phi_2)$ solve \eqref{VP_h}-\eqref{eqtn:Dphi} in the sense of Definition \ref{weak_sol}. 
Assume both follow all assumptions in Theorem \ref{theo:CS} and satisfy \eqref{Uest:wh}-\eqref{Uest:DPhi}. 
Further, assume there exist $\e, \bar \beta > 0$, such that
\Be \label{condition_unique}
\| w_{\pm, \bar \beta} \nabla_p h_{i, \pm} \|_\infty < \e \beta^2 
\ \ \text{for }  i = 1, 2,
\Ee
where $w_{\pm, \bar \beta}(x,v)$ is defined in \eqref{w^h}.
Then $h_{1, \pm} = h_{2, \pm}$ a.e. in $\O \times \R^3$ and $\Phi_1 = \Phi_2$ a.e. in $\O$. 
\end{theorem}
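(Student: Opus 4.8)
The plan is to establish uniqueness by a weighted $L^\infty$ absorption estimate on the difference of the two solutions. Set $g_\pm := h_{1,\pm}-h_{2,\pm}$, $\Phi := \Phi_1-\Phi_2$ and $\rho := \rho_1-\rho_2$. Subtracting the two copies of \eqref{VP_h} and regrouping the Lorentz and potential terms around $\Phi_1$, one finds that $g_\pm$ solves the \emph{linear} transport equation
\[
v_\pm\cdot\nabla_x g_\pm + \Big(e_\pm\big(\tfrac{v_\pm}{c}\times B-\nabla_x\Phi_1\big)-\nabla_x(m_\pm g x_3)\Big)\cdot\nabla_p g_\pm = e_\pm\,\nabla_x\Phi\cdot\nabla_p h_{2,\pm}\quad\text{in }\O\times\R^3,
\]
with zero inflow data, since $h_{1,\pm}=h_{2,\pm}=G_\pm$ on $\gamma_-$ and $\gamma_-$ is the same geometric set for both solutions. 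Since both triples $(h_{i,\pm},\rho_i,\Phi_i)$ satisfy the hypotheses of Theorem \ref{theo:CS}, each $h_{i,\pm}$ is $C^1$ and $\nabla_x\Phi_i\in W^{1,\infty}_{\mathrm{loc}}(\O)$, so the characteristics $Z_{1,\pm}=(X_{1,\pm},P_{1,\pm})$ of \eqref{ODE_h} for $\Phi_1$ are well defined, the backward exit time $\tbp(x,p)$ (read off for $\Phi_1$, with exit momentum $\pbp(x,p)$) is finite by Proposition \ref{lem:tb}, and integrating the transport equation along $Z_{1,\pm}$ gives the Duhamel representation
\[
g_\pm(x,p)=\int_{-\tbp(x,p)}^{0} e_\pm\,\nabla_x\Phi\big(X_{1,\pm}(s;x,p)\big)\cdot\nabla_p h_{2,\pm}\big(X_{1,\pm}(s;x,p),P_{1,\pm}(s;x,p)\big)\,ds .
\]

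Next I would run the weighted bound. Put $E_1:=\sqrt{(m_\pm c)^2+|\pbp(x,p)|^2}$; by invariance of the weight \eqref{w^h} along $Z_{1,\pm}$ and its boundary value (Remark \ref{rmk:w_dy}), $w_{1,\pm,\beta''}(x,p)=e^{\beta'' E_1}$ for every $\beta''>0$. Along the trajectory, the conservation law \eqref{eq:tb_conservation}, the crude bounds $|e_\pm\Phi_i|\le\tfrac12 m_\pm g x_3$ (from \eqref{Uest:DPhi} and $\Phi_i|_{\p\O}=0$) and the peak-height estimate \eqref{est:x3} give, for $-\tbp(x,p)\le s\le 0$,
\[
w_{2,\pm,\bar\beta}\big(Z_{1,\pm}(s)\big)\;\ge\;e^{\bar\beta\,(1-2/c)\,E_1},\qquad \tbp(x,p)\;\le\;\tfrac{4}{m_\pm g}\,E_1,
\]
the second from \eqref{est:tb^h} together with $|\pbp|\le E_1$. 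Inserting $|\nabla_p h_{2,\pm}|<\e\beta^2/w_{2,\pm,\bar\beta}$ from \eqref{condition_unique} (with $i=2$) into the Duhamel formula, multiplying by $e^{\beta'' E_1}$, and using $\tau e^{-a\tau}\lesssim a^{-1}e^{-a\tau/2}$ to absorb the factor $E_1$ coming from $\tbp$, one sees that choosing $\beta''$ a sufficiently small fraction of $\bar\beta$ (namely $\beta''\le\tfrac12\bar\beta(1-2/c)$; here $c$ is a fixed large constant) cancels all $E_1$-growth, yielding
\[
\|w_{1,\pm,\beta''}g_\pm\|_{L^\infty(\bar\O\times\R^3)}\;\lesssim\;\frac{e_\pm\,\e\,\beta^2}{m_\pm g\,\bar\beta}\,\|\nabla_x\Phi\|_{L^\infty(\bar\O)}.
\]

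To close the loop I would feed this back through the Poisson problem $-\Delta_x\Phi=\rho$ in $\O$, $\Phi|_{\p\O}=0$. Using $w_{1,\pm,\beta''}(x,p)\ge e^{\beta''(\sqrt{(m_\pm c)^2+|p|^2}+m_\pm g x_3/(2c))}$ and integrating in $p$,
\[
|\rho(x)|\le\sum_\pm e_\pm\|w_{1,\pm,\beta''}g_\pm\|_\infty\int_{\R^3}\frac{dp}{w_{1,\pm,\beta''}(x,p)}\;\lesssim\;\sum_\pm e_\pm\|w_{1,\pm,\beta''}g_\pm\|_\infty\,e^{-\beta'' m_\pm g x_3/(2c)},
\]
so the elliptic estimate \eqref{est:nabla_phi} of Lemma \ref{lem:rho_to_phi} (with decay rate $\beta''\hat m g/(2c)$, $\hat m:=\min\{m_+,m_-\}$) gives $\|\nabla_x\Phi\|_\infty\lesssim(1+\tfrac{2c}{\beta''\hat m g})\sum_\pm e_\pm\|w_{1,\pm,\beta''}g_\pm\|_\infty$. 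Substituting into the previous display and summing over $\pm$,
\[
\sum_\pm e_\pm\|w_{1,\pm,\beta''}g_\pm\|_\infty\;\le\;\mathcal K\sum_\pm e_\pm\|w_{1,\pm,\beta''}g_\pm\|_\infty,\qquad \mathcal K=\e\,\beta^2\,C\big(\beta'',\bar\beta,m_\pm,g,c,e_\pm\big),
\]
with $C$ explicit and independent of the two solutions. Hence, for $\e$ small enough that $\mathcal K<1$ (a smallness compatible with \eqref{condition_unique}), one gets $g_\pm=0$ a.e. in $\O\times\R^3$; then $\rho\equiv0$ and $\Phi_1=\Phi_2$ a.e. in $\O$ by uniqueness for the Dirichlet Laplacian.

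The main obstacle is the weighted estimate of the second paragraph: the two solutions carry \emph{different} invariant weights, so the source $\nabla_p h_{2,\pm}$ (controlled by $w_{2,\pm,\bar\beta}$) must be measured against $w_{1,\pm,\beta''}$ at the same phase points, and at the same time the travel time $\tbp$ multiplying it grows linearly in $|\pbp|$. Both are resolved only by deliberately giving up a fixed fraction of the available exponential decay — the point of taking $\beta''$ strictly below $\bar\beta$ — and the real work lies in the bookkeeping that turns $|e_\pm\Phi_i|\le\tfrac12 m_\pm g x_3$, the conservation law \eqref{eq:tb_conservation} and the peak estimate \eqref{est:x3} into the two displayed inequalities; everything else is a routine $L^\infty$ argument of the kind already carried out in Section \ref{sec:RS}.
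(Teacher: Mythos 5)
Your proposal is correct and follows essentially the same route as the paper's proof: a Duhamel representation of $h_{1,\pm}-h_{2,\pm}$ along the $\Phi_1$-characteristics with zero inflow data, a weighted $L^\infty$ bound combining \eqref{condition_unique} with the exit-time bound of Proposition \ref{lem:tb} and the conservation law, and then the elliptic estimate of Lemma \ref{lem:rho_to_phi} to close a contraction in the weighted norm. The differences are only bookkeeping: the paper fixes explicit weight fractions ($\tfrac{3\bar\beta}{4}$, $\tfrac{\bar\beta}{4}$, $\beta'$, $\hat\beta$) in place of your $\beta''\le\tfrac12\bar\beta(1-2/c)$, and, exactly as in your last step, it uses the (implicit) smallness of $\e$ in \eqref{condition_unique} to make the contraction factor at most $1/2$.
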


\begin{proof}

We only prove the case when the condition \eqref{condition_unique} holds for $i= 2$, since the case when \eqref{condition_unique} holds for  $i = 1$ can be deduced in a similar way. 
For the sake of simplicity, we abuse the notation as in \eqref{abuse}.

\smallskip

Since both $(h_{1, \pm}, \rho_1, \Phi_1)$ and $(h_{2, \pm}, \rho_2, \Phi_2)$ solve \eqref{VP_h}-\eqref{eqtn:Dphi}, we have 
\Be \label{VP_diff}
\begin{split}
& v_\pm \cdot \nabla_x ( h_{1, \pm} - h_{2, \pm} )
+ \big( e_{\pm} ( \frac{v_\pm}{c} \times B 
- \nabla_x \Phi_1 ) - \nabla_x (m_{\pm} g x_3) \big) \cdot \nabla_p ( h_{1, \pm} - h_{2, \pm} ) 
\\& = e_{\pm} \nabla_x (\Phi_1 - \Phi_2 ) \cdot \nabla_p h_{2, \pm} \ \ \text{in} \ \O \times \R^3, 
\end{split}
\ee
and
\be \label{VP_diff_bdy}
h_1 (x,p) - h_2 (x,p) = 0 \ \ \text{in} \ \gamma_-,
\Ee
where
\Be \notag
v_\pm =  c \frac{p}{p^0_\pm} = \frac{p}{\sqrt{ m_\pm^2 + |p|^2/ c^2}}.
\Ee
Suppose $(X (s;x,p), P (s;x,p))$ is the characteristics in \eqref{ODE_h} with $\Phi_h = \Phi_1$. Then, from \eqref{VP_diff} and \eqref{VP_diff_bdy}, we get
\Be \label{diff:h}
\begin{split}
& (h_1- h_2) (x,p) 
\\& = \int^0_{-\tb(x, p)} e (\nabla_x\Phi_1(X (s;x,p)) -\nabla_x \Phi_2(X (s;x,p))) \cdot \nabla_p h_2(X (s;x,p), P (s;x,p)) \dd s,
\end{split}
\Ee
where $\tb(x,p)$ is the backward exit time of the characteristics $(X (s;x,p), P (s;x,p))$.
From the condition \eqref{condition_unique}, we obtain that
\Be \label{bound:diff_h}
\begin{split}
& |\eqref{diff:h}| 
\\& \leq e \| \nabla_x \Phi_1 - \nabla_x \Phi_2 \|_\infty \int^0_{-\tb(x, p)} | \nabla_p h_2(X (s;x,p), P (s;x,p)) | \dd s
\\& \leq e \| \nabla_x \Phi_1 - \nabla_x \Phi_2 \|_\infty \| w_{\bar\beta} \nabla_p h_2 \|_\infty \int^0_{-\tb(x, p)} \frac{1}{ w_{\bar\beta} (X (s;x,p), P (s;x,p))} \dd s
\\& \leq e \| \nabla_x \Phi_1 - \nabla_x \Phi_2 \|_\infty \| w_{\bar\beta} \nabla_p h_2 \|_\infty 
\underbrace{\tb(x, p) \sup_{ s \in [- \tb(x,p), 0]}   \left( \frac{1}{w_{\bar \beta}(X(s;x,p), P(s;x,p))}\right )}_{\eqref{bound:diff_h}_*}.
\end{split}
\Ee
Since $\Phi$ satisfies \eqref{Uest:DPhi}, we have
\Be \label{lower_w}
w_{\bar \beta} (x,p) = e^{ \bar \beta \big( \sqrt{(m c)^2 + |p|^2} + \frac{1}{c} ( e \Phi_1 (x) + m g x_3 ) \big) } 
\geq e^{ \bar \beta \big( \sqrt{(m c)^2 + |p|^2} + \frac{1}{2 c} m g x_3 \big) },
\Ee
and
\Be \label{upper_w}
w_{\bar \beta} (x,p) = e^{ \bar \beta \big( \sqrt{(m c)^2 + |p|^2} + \frac{1}{c} ( e \Phi_1 (x) + m g x_3 ) \big) } 
\leq e^{ \bar \beta \big( \sqrt{(m c)^2 + |p|^2} + \frac{3}{2 c} m g x_3 \big) }.
\Ee
Using Proposition \ref{lem:tb}, together with \eqref{w:invar} and \eqref{lower_w}, we get 
\Be \label{est:tb/w}
\begin{split}
\eqref{bound:diff_h}_* 
= \frac{\tb (x, p)}{w_{\bar \beta} (x,p)}
& \leq \frac{\frac{4}{m g} \big( \sqrt{(m c)^2 + |p|^2} + \frac{3}{2c} m g x_{3} \big)}{e^{ \bar \beta \big( \sqrt{(m c)^2 + |p|^2} + \frac{1}{2 c} m g x_3 \big) }}
\\& \leq \frac{4}{m g} \big( \sqrt{(m c)^2 + |p|^2} + \frac{3}{2c} m g x_{3} \big) e^{ - \bar \beta \big( \sqrt{(m c)^2 + |p|^2} + \frac{1}{2 c} m g x_3 \big) }
\\& \leq \frac{12}{m g} \big( \sqrt{(m c)^2 + |p|^2} + \frac{1}{2c} m g x_{3} \big) e^{ - \bar \beta \big( \sqrt{(m c)^2 + |p|^2} + \frac{1}{2 c} m g x_3 \big) }
\\& \leq \frac{48}{m g \bar \beta} e^{ - \frac{3 \bar \beta}{4} \big( \sqrt{(m c)^2 + |p|^2} + \frac{1}{2 c} m g x_3 \big) }.
\end{split}
\Ee
Applying \eqref{est:tb/w} and \eqref{bound:diff_h} into \eqref{diff:h}, we have
\be \label{bound2:diff_h}
\begin{split}
|h_1 (x, p) - h_2 (x, p)| 
\leq e \| \nabla_x \Phi_1 - \nabla_x \Phi_2 \|_\infty \| w_{\bar\beta} \nabla_p h_2 \|_\infty \frac{48}{m g \bar \beta} e^{ - \frac{3 \bar \beta}{4} \big( \sqrt{(m c)^2 + |p|^2} + \frac{1}{2 c} m g x_3 \big) }.
\end{split}
\ee
This shows that
\be \label{bound3:diff_h}
\begin{split}
e^{ \frac{3 \bar \beta}{4} \big( \sqrt{(m c)^2 + |p|^2} + \frac{1}{2 c} m g x_3 \big) } |h_1 (x, p) - h_2 (x, p)|
\leq \frac{48 e}{m g \bar \beta}  \| \nabla_x \Phi_1 - \nabla_x \Phi_2 \|_\infty \| w_{\bar\beta} \nabla_p h_2 \|_\infty.
\end{split}
\ee

Note that both $(h_{1, \pm}, \rho_1, \Phi_1)$ and $(h_{2, \pm}, \rho_2, \Phi_2)$ satisfy \eqref{Uest:wh}-\eqref{Uest:DPhi}, from \eqref{Uest:rho} we deduce
\be \label{Uest:rho_12}
| \rho_{1, 2} (x) |
\leq \frac{1}{\beta} \big( e_+ \| w_{+, \beta} G_+ \|_{L^\infty(\gamma_-)} e^{-  \beta \frac{ m_+ }{2 c} g x_3} + e_{-} \| w_{-, \beta} G_- \|_{L^\infty(\gamma_-)} e^{- \beta \frac{ m_{-} }{2 c} g x_3} \big)
\ee
Consider $\hat{m} = \min \{ m_{-},  m_{+} \}$, then we pick $\beta'$ as follows:
\be \label{def:beta'}
0 < \beta' =  \min\{ \frac{\bar \beta}{4}, \beta \} \times \hat{m}.
\ee
Hence $\beta' \leq \beta \hat{m}$, and \eqref{Uest:rho_12} shows that
\be \notag
\| e^{\beta' \frac{g}{2c} x_3} \rho_{1, 2} (x) \|_\infty
\leq \frac{1}{\beta} \big( e_+ \| w_{+, \beta} G_+ \|_{L^\infty(\gamma_-)} + e_{-} \| w_{-, \beta} G_- \|_{L^\infty(\gamma_-)} \big) < \infty.
\ee
Set $A = \| e^{\beta' \frac{g}{2c} x_3} (\rho_1 - \rho_2 ) \|_\infty$ and $B = \beta' \frac{g}{2c}$, we get 
\be \label{est:rho_12}
| \rho_1 - \rho_2 | \leq A e^{- B x_3}.
\ee
Using \eqref{est:nabla_phi} in Lemma \ref{lem:rho_to_phi}, together with $- \Delta ( \Phi_1 - \Phi_2 ) = \rho_1 - \rho_2$, we obtain
\Be \label{est:phi_12_x}
\begin{split}
\| \nabla_x \Phi_1 - \nabla_x \Phi_2 \|_\infty 
&\leq \mathfrak{C} A \big( 1 +  \frac{1}{B} \big)
\\& \leq \mathfrak{C} (1 + \frac{2 c}{\beta^\prime g} )
\| e^{\beta' \frac{g}{2c} x_3} (\rho_1 - \rho_2 ) \|_\infty.
\end{split}
\Ee
Now setting $\hat{\beta} = \frac{\beta'}{\min \{ m_{-},  m_{+} \}}$, we can bound $\rho_1 - \rho_2$ by
\Be \notag
\begin{split}
\rho_1 - \rho_2 
& = \int_{\R^3} e_+ (h_{1, +} - h_{2, +} ) + e_{-} ( h_{1, -} - h_{2, -} ) \dd p
\\& \leq e_+ \| w_{+, \hat{\beta}} (h_{1, +} - h_{2, +} ) \|_{L^{\infty} (\O \times \R^3)} \times \int_{\R^3} \frac{1}{w_{+, \hat{\beta}}} \dd p
\\& \ \ \ \ + e_- \| w_{-, \hat{\beta}} (h_{1, -} - h_{2, -} ) \|_{L^{\infty} (\O \times \R^3)} \times \int_{\R^3} \frac{1}{w_{-, \hat{\beta}}} \dd p.
\end{split}
\Ee
This, together with \eqref{lower_w}, shows that
\Be \notag
\begin{split}
| \rho_1 - \rho_2 |
& \leq e_+ \| w_{+, \hat{\beta}} (h_{1, +} - h_{2, +} ) \|_{L^{\infty} (\O \times \R^3)} \times \frac{1}{\hat{\beta}} e^{- \hat{\beta} \frac{m _+ g}{2c} x_3}
\\& \ \ \ \ + e_- \| w_{-, \hat{\beta}} (h_{1, -} - h_{2, -} ) \|_{L^{\infty} (\O \times \R^3)} \times \frac{1}{\hat{\beta}} e^{- \hat{\beta} \frac{m_- g}{2c} x_3}
\\& \leq e_+ \| w_{+, \hat{\beta}} (h_{1, +} - h_{2, +} ) \|_{L^{\infty} (\O \times \R^3)} \times \frac{1}{\hat{\beta}} e^{- \beta' \frac{g}{2c} x_3}
\\& \ \ \ \ + e_- \| w_{-, \hat{\beta}} (h_{1, -} - h_{2, -} ) \|_{L^{\infty} (\O \times \R^3)} \times \frac{1}{\hat{\beta}} e^{- \beta' \frac{g}{2c} x_3}.
\end{split}
\Ee
Note that \eqref{def:beta'} implies $\hat{\beta} \leq \frac{\bar{\beta}}{4}$. Further, using \eqref{upper_w}, we derive that
\Be \label{est:e^beta*rho_12}
\begin{split}
e^{\beta' \frac{g}{2c} x_3} | \rho_1 - \rho_2 |
& \leq e_+ \| e^{ \hat{\beta} \big( \sqrt{(m_+ c)^2 + |p|^2} + \frac{3}{2 c} m_+ g x_3 \big) } (h_{1, +} - h_{2, +} ) \|_{L^{\infty} (\O \times \R^3)} \times \frac{1}{\hat{\beta}}
\\& \ \ \ \ + e_- \| e^{ \hat{\beta} \big( \sqrt{(m_- c)^2 + |p|^2} + \frac{3}{2 c} m_- g x_3 \big) } (h_{1, -} - h_{2, -} ) \|_{L^{\infty} (\O \times \R^3)} \times \frac{1}{\hat{\beta}}
\\& \leq e_+ \| e^{ \frac{\bar{\beta}}{4} \big( \sqrt{(m_+ c)^2 + |p|^2} + \frac{3}{2 c} m_+ g x_3 \big) } (h_{1, +} - h_{2, +} ) \|_{L^{\infty} (\O \times \R^3)} \times \frac{1}{\hat{\beta}}
\\& \ \ \ \ + e_- \| e^{ \frac{\bar{\beta}}{4} \big( \sqrt{(m_- c)^2 + |p|^2} + \frac{3}{2 c} m_- g x_3 \big) } (h_{1, -} - h_{2, -} ) \|_{L^{\infty} (\O \times \R^3)} \times \frac{1}{\hat{\beta}}.
\end{split}
\Ee
Inputting \eqref{est:e^beta*rho_12} into \eqref{est:phi_12_x}, we obtain
\Be \label{est2:phi_12_x}
\begin{split}
& \| \nabla_x \Phi_1 - \nabla_x \Phi_2 \|_\infty 
\\& \leq \mathfrak{C} (1 + \frac{2 c}{\beta^\prime g} ) \frac{1}{\hat{\beta}} \Big(
e_+ \| e^{ \frac{\bar{\beta}}{4} \big( \sqrt{(m_+ c)^2 + |p|^2} + \frac{3}{2 c} m_+ g x_3 \big) } (h_{1, +} - h_{2, +} ) \|_{L^{\infty} (\O \times \R^3)}
\\& \qquad \qquad \qquad \qquad + e_- \| e^{ \frac{\bar{\beta}}{4} \big( \sqrt{(m_- c)^2 + |p|^2} + \frac{3}{2 c} m_- g x_3 \big) } (h_{1, -} - h_{2, -} ) \|_{L^{\infty} (\O \times \R^3)}  \Big).
\end{split}
\Ee
Using \eqref{bound3:diff_h}, together with \eqref{est2:phi_12_x}, we bound
\be \label{bound4:diff_h}
\begin{split}
& e^{ \frac{3 \bar \beta}{4} \big( \sqrt{(m_+ c)^2 + |p|^2} + \frac{1}{2 c} m_+ g x_3 \big) } |h_{1, +} (x, p) - h_{2, +} (x, p)|
\\& \ \ \ \ + e^{ \frac{3 \bar \beta}{4} \big( \sqrt{(m_- c)^2 + |p|^2} + \frac{1}{2 c} m_- g x_3 \big) } |h_{1, -} (x, p) - h_{2, -} (x, p)|
\\& \leq \mathfrak{C} (1 + \frac{2 c}{\beta^\prime g} ) \frac{1}{\hat{\beta}} \Big(
e_+ \| e^{ \frac{\bar{\beta}}{4} \big( \sqrt{(m_+ c)^2 + |p|^2} + \frac{3}{2 c} m_+ g x_3 \big) } (h_{1, +} - h_{2, +} ) \|_{L^{\infty} (\O \times \R^3)}
\\& \qquad \qquad \qquad \qquad + e_- \| e^{ \frac{\bar{\beta}}{4} \big( \sqrt{(m_- c)^2 + |p|^2} + \frac{3}{2 c} m_- g x_3 \big) } (h_{1, -} - h_{2, -} ) \|_{L^{\infty} (\O \times \R^3)}  \Big)
\\& \ \ \ \ \times \big( \frac{48 e_+}{m g \bar \beta} \| w_{+, \bar\beta} \nabla_p h_{2, +} \|_\infty + \frac{48 e_-}{m g \bar \beta} \| w_{-, \bar\beta} \nabla_p h_{2, -} \|_\infty \big).
\end{split}
\ee
Together with $\beta'$ in \eqref{def:beta'} and $\hat{\beta} = \frac{\beta'}{\min \{ m_{-},  m_{+} \}}$, we get
\be \label{bound5:diff_h}
\begin{split}
\eqref{bound4:diff_h}
& \leq \mathfrak{C} (1 + \frac{2 c}{\beta^\prime g} ) \frac{ \max\{ e_+, e_- \} }{\hat{\beta}} \big( \frac{48 e_+}{m g \bar \beta} \| w_{+, \bar\beta} \nabla_p h_{2, +} \|_\infty + \frac{48 e_-}{m g \bar \beta} \| w_{-, \bar\beta} \nabla_p h_{2, -} \|_\infty \big)
\\& \ \ \ \ \times \Big( \| e^{ \frac{3 \bar{\beta}}{4} \big( \sqrt{(m_+ c)^2 + |p|^2} + \frac{1}{2 c} m_+ g x_3 \big) } (h_{1, +} - h_{2, +} ) \|_{L^{\infty} (\O \times \R^3)} 
\\& \qquad \qquad \qquad + \| e^{ \frac{3 \bar{\beta}}{4} \big( \sqrt{(m_- c)^2 + |p|^2} + \frac{1}{2 c} m_- g x_3 \big) } (h_{1, -} - h_{2, -} ) \|_{L^{\infty} (\O \times \R^3)}  \Big).
\end{split}
\ee
Under the condition \eqref{condition_unique}, we bound \eqref{bound5:diff_h} by
\be \label{bound6:diff_h}
\begin{split}
\eqref{bound5:diff_h} 
& \leq \frac{1}{2} \Big( \| e^{ \frac{3 \bar{\beta}}{4} \big( \sqrt{(m_+ c)^2 + |p|^2} + \frac{1}{2 c} m_+ g x_3 \big) } (h_{1, +} - h_{2, +} ) \|_{L^{\infty} (\O \times \R^3)} 
\\& \qquad \qquad \qquad + \| e^{ \frac{3 \bar{\beta}}{4} \big( \sqrt{(m_- c)^2 + |p|^2} + \frac{1}{2 c} m_- g x_3 \big) } (h_{1, -} - h_{2, -} ) \|_{L^{\infty} (\O \times \R^3)}  \Big).
\end{split}
\ee
Finally, together with \eqref{bound4:diff_h}-\eqref{bound6:diff_h}, we derive that
\Be \notag
\begin{split}
& \| e^{ \frac{3 \bar{\beta}}{4} \big( \sqrt{(m_+ c)^2 + |p|^2} + \frac{1}{2 c} m_+ g x_3 \big) } (h_{1, +} - h_{2, +} ) \|_{L^{\infty} (\O \times \R^3)} 
\\& \qquad \qquad + \| e^{ \frac{3 \bar{\beta}}{4} \big( \sqrt{(m_- c)^2 + |p|^2} + \frac{1}{2 c} m_- g x_3 \big) } (h_{1, -} - h_{2, -} ) \|_{L^{\infty} (\O \times \R^3)}
\\& \leq \frac{1}{2} \Big( \| e^{ \frac{3 \bar{\beta}}{4} \big( \sqrt{(m_+ c)^2 + |p|^2} + \frac{1}{2 c} m_+ g x_3 \big) } (h_{1, +} - h_{2, +} ) \|_{L^{\infty} (\O \times \R^3)} 
\\& \qquad \qquad + \| e^{ \frac{3 \bar{\beta}}{4} \big( \sqrt{(m_- c)^2 + |p|^2} + \frac{1}{2 c} m_- g x_3 \big) } (h_{1, -} - h_{2, -} ) \|_{L^{\infty} (\O \times \R^3)}  \Big),
\end{split}
\Ee
and conclude the uniqueness. 
\end{proof}

\subsection{Proof of the Main Theorem: Stationary Problem} 
\label{sec:EX_SS}

In this section, we prove the main result, Theorem \ref{theo:CS}.
Utilizing the a priori estimates from Section \ref{sec:RS} and applying mathematical induction, we derive the uniform-in-$\ell$ estimates on $\nabla_p h^{\ell}_{\pm}$ in Proposition \ref{prop:Unif_D2xDp}.

Then we consider the difference functions $ h^{\ell+1}_{\pm} - h^{\ell}_{\pm}$ for any $\ell \geq 1$. They satisfy the following:
\Be \notag
\begin{split}
& v_\pm \cdot \nabla_x ( h^{\ell+1}_{\pm} - h^{\ell}_{\pm} )
+ \big( e_{\pm} ( \frac{v_\pm}{c} \times B 
- \nabla_x \Phi^{\ell} ) - \nabla_x (m_{\pm} g x_3) \big) \cdot \nabla_p ( h^{\ell+1}_{\pm} - h^{\ell}_{\pm} ) 
\\& = e_{\pm} \nabla_x (\Phi^{\ell} - \Phi^{\ell-1} ) \cdot \nabla_p h^{\ell}_{\pm}.
\end{split}
\ee
By applying the uniform bound on $\nabla_p h^{\ell}_{\pm}$, we can demonstrate that the sequences $\{ h^{\ell+1}_{\pm} \}^{\infty}_{\ell=0}$, and $\{ \rho^\ell \}^{\infty}_{\ell=0}$, $\{ \nabla_x \Phi^\ell \}^{\infty}_{\ell=0}$ are Cauchy sequences in $L^{\infty} (\O \times \R^3)$ and $L^{\infty} (\O)$ respectively (see Proposition \ref{prop:cauchy}).
Therefore, we establish their strong convergence and conclude the existence of a steady solution.

\begin{prop} \label{prop:Unif_D2xDp}

Suppose the condition \eqref{condition:beta} holds for some $g, \beta > 0$.
Moreover, the condition \eqref{condition:G} holds with $\| e^{{\tilde \beta } \sqrt{(m_{\pm} c)^2 + |p|^2}} \nabla_{x_\parallel, p} G_{\pm} (x,p) \|_{L^\infty (\gamma_-)} < \infty$ for some $\beta \geq \tilde \beta >0$. Then $(h^{\ell+1}_{\pm}, \rho^\ell, \nabla_x  \Phi^\ell)$ from the construction satisfies the following uniform-in-$\ell$ estimates:
\be \label{Uest:DDPhi^l}
\frac{8}{m_{\pm} g} (1 + B_3 + \| \nabla_x ^2 \Phi^{\ell} \|_\infty) 
\leq \tilde \beta,
\ee
and
\be \label{Uest:h_v^l}
\begin{split}
& e^{ \frac{\tilde \beta}{2}|p^0_{\pm}|} e^{  \frac{\tilde \beta m_{\pm} g}{4 c} x_3} | \nabla_p h^{\ell+1}_{\pm} (x,p)| 
\\& \lesssim \big( 1 + \frac{ \| \nabla_x^2 \Phi^{\ell} \|_\infty + e B_3 + m_{\pm} g}{(m_{\pm} g)^2} \big) \| e^{\tilde \beta \sqrt{(m_{\pm} c)^2 + |p|^2}} \nabla_{x_\parallel, p} G_{\pm} \|_{L^\infty (\gamma_-)}.
\end{split}
\ee
\end{prop}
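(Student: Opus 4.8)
I would prove the family \eqref{Uest:DDPhi^l} by induction on $\ell$, and then obtain \eqref{Uest:h_v^l} at each level as an immediate corollary. The key observation making this work is that Lemmas \ref{VL}, \ref{lem:XV_xv}, \ref{lem:exp_txvb}, \ref{lem:nabla_zb} and Proposition \ref{prop:Reg}, although stated for the genuine steady solution of \eqref{VP_h}--\eqref{eqtn:Dphi}, use the self-consistent potential only through the Lagrangian representation \eqref{form:h^k_G} of the distribution and through the gravity-dominance bound \eqref{Uest:DPhi}; since Proposition \ref{prop:Unif_steady} supplies \eqref{Uest:DPhi^k} for every iterate, the arguments of Section \ref{sec:RS} apply \emph{verbatim} to each triple $(h^{\ell+1}_\pm,\rho^\ell,\Phi^\ell)$ as soon as an a priori bound on $\|\nabla_x^2\Phi^\ell\|_\infty$ is in hand. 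This is exactly the quantity the induction propagates.

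\textbf{Base case $\ell=0$.} Here $h^0_\pm=0$ and $\Phi^0\equiv 0$, so $\|\nabla_x^2\Phi^0\|_\infty=0$ and \eqref{Uest:DDPhi^l} reduces to $\frac{8}{m_\pm g}(1+B_3)\le\tilde\beta$, which is contained in \eqref{condition:G} since the left-hand side of \eqref{condition:G} is nonnegative. Estimate \eqref{Uest:h_v^l} at $\ell=0$ is the $\nabla_p$-part of Proposition \ref{prop:Reg} applied to the field-free characteristics \eqref{z1}, for which the $\nabla_x^2\Phi$ contribution is simply absent.

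\textbf{Inductive step.} Suppose \eqref{Uest:DDPhi^l} holds for all $\ell\le k$. The bound on $\|\nabla_x^2\Phi^k\|_\infty$ from \eqref{Uest:DDPhi^l} at level $k$, together with \eqref{condition:beta} and \eqref{condition:G}, puts us in position to run Proposition \ref{prop:Reg} on $(h^{k+1}_\pm,\rho^k,\Phi^k)$: this yields the analogues of \eqref{est:hk_v} and \eqref{est:hk_x} for $\nabla_p h^{k+1}_\pm$ and $\nabla_x h^{k+1}_\pm$, and the analogue of \eqref{est:rho_x} for $\nabla_x\rho^{k+1}$, where one uses $\rho^{k+1}=\int_{\R^3}(e_+h^{k+1}_++e_-h^{k+1}_-)\,\dd p$ and the $p$-integrability of $\alpha_\pm^{-1}$ recorded in \eqref{est:1/alpha}. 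Combining this $C^1$-type control of $\rho^{k+1}$ with the uniform $L^\infty$ bound \eqref{Uest:rho^k} and the Hölder argument of Step 2 of Proposition \ref{prop:Reg} (taking $0<\delta<\frac12$ to tame the near-boundary singularity $\mathbf{1}_{|x_3|\le1}/\sqrt{x_3}$), the elliptic estimate \eqref{est:nabla^2phi} of Lemma \ref{lem:rho_to_phi} gives a bound on $\|\nabla_x^2\Phi^{k+1}\|_\infty$ by the right-hand side of \eqref{est:phi_C2}, which depends only on the inflow data $G_\pm$ and \emph{not} on $\|\nabla_x^2\Phi^k\|_\infty$. Condition \eqref{condition:G} forces that right-hand side to be at most $\frac{m_\pm g}{8}\tilde\beta-(1+B_3)$, i.e. \eqref{Uest:DDPhi^l} at level $k+1$. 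Feeding this new bound back into Proposition \ref{prop:Reg} once more, now for $(h^{k+2}_\pm,\rho^{k+1},\Phi^{k+1})$, yields \eqref{Uest:h_v^l} at $\ell=k+1$, completing the induction.

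\textbf{Main obstacle.} The individual estimates are by now routine, being the content of Section \ref{sec:RS}; the genuinely delicate point is the \emph{uniform closure} of the loop. The derivative bounds for $h^{k+1}$, hence for $\rho^{k+1}$, are controlled in terms of $\|\nabla_x^2\Phi^k\|_\infty$, so a naive argument would let the constant grow with $\ell$; what rescues the induction is that the final elliptic step returns a bound on $\|\nabla_x^2\Phi^{k+1}\|_\infty$ in terms of $G_\pm$ alone, and the structural conditions \eqref{condition:beta}--\eqref{condition:G} are calibrated precisely so that this bound sits below the threshold $\frac{8}{m_\pm g}(1+B_3+\|\nabla_x^2\Phi^{k+1}\|_\infty)\le\tilde\beta$ needed to re-enter the next step. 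Verifying that the implicit constants emerging from Lemmas \ref{lem:nabla_zb} and from the elliptic estimate are genuinely absorbed by these conditions — rather than merely bounded at each fixed level — is where care is required.
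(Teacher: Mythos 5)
Your proposal is correct and follows essentially the same route as the paper: induction on $\ell$ propagating the bound on $\|\nabla_x^2\Phi^\ell\|_\infty$, re-running the machinery of Section \ref{sec:RS} (Lemma \ref{lem:nabla_zb} and Proposition \ref{prop:Reg}) on each iterate, and closing the loop via the elliptic estimate \eqref{est:nabla^2phi} together with the calibration \eqref{condition:G}. Your remark on the "uniform closure" — that the elliptic step returns a bound depending only on $G_\pm$, so the constants do not accumulate in $\ell$ — is exactly the point the paper's argument hinges on.
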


\begin{proof}

Under the initial setting $(\rho^0, \nabla_x \Phi^0) = (0,0)$, together with the condition \eqref{condition:G}, we deduce that $\| \nabla^2_x \Phi^0 \|_{\infty} = 0$, and thus \eqref{Uest:DDPhi^l} holds for $\ell = 0$. From the construction, $h^1_{\pm}$ is the solution to \eqref{VP_h} and \eqref{bdry:h} with $\nabla_x \Phi_h = \nabla_x \Phi^0 = 0$, that is,
\be \label{VP^0}
\begin{split}
& v_\pm \cdot \nabla_x h^1_{\pm}
+ \big( e_{\pm} ( \frac{v_\pm}{c} \times B 
- \nabla_x \Phi^0 ) - \nabla_x (m_{\pm} g x_3) \big) \cdot \nabla_p h^1_{\pm} = 0 \ \ \text{in} \ \O \times \R^3, 
\\& h^1_{\pm} (x,p) = G_{\pm} (x,p) \ \ \text{in} \ \gamma_-. 
\end{split}
\ee
From \eqref{z1}, we consider the characteristic $(X^1, P^1)$  and the corresponding $(\tb^1, \xb^1, \pb^1)$ for \eqref{VP^0}.
By replacing $\Phi_h$ with $\Phi^0$, Lemmas \ref{VL}-\ref{lem:nabla_zb} hold for the characteristic $(X^1, P^1)$ and the corresponding $(\tb^1, \xb^1, \pb^1)$.
This is because we only work with Vlasov equations under the assumption \eqref{Uest:DPhi} in the proof of these Lemmas, and $\nabla_x \Phi^0 = 0$ clearly satisfies this assumption.

Analogously, by replacing $\Phi_h$ with $\Phi^0$ in Proposition \ref{prop:Reg}, we can check that $\| \nabla^2_x \Phi^0 \|_{\infty}$ satisfies the assumption \eqref{choice_beta}.
From Proposition \ref{prop:Reg}, we bound $\| \nabla^2_x \Phi^0 \|_{\infty}$ by
\Be \notag
\begin{split}
\| \nabla_x^2 \Phi^0 \|_\infty
& \lesssim \frac{1}{\beta} \big( e_+ \| w_{+, \beta} G_+ \|_{L^\infty(\gamma_-)} + e_{-} \| w_{-, \beta} G_- \|_{L^\infty(\gamma_-)} \big)
\\& \ \ \ \ + \big( \| e^{\tilde \beta \sqrt{(m_+ c)^2 + |p|^2}} \nabla_{x_\parallel, p} G_+ \|_{L^\infty (\gamma_-)} + \| e^{\tilde \beta \sqrt{(m_- c)^2 + |p|^2}} \nabla_{x_\parallel, p} G_- \|_{L^\infty (\gamma_-)}
\big) .
\end{split}	
\Ee
Following step 3 (Proof of \eqref{est:hk_v}) in the proof Proposition \ref{prop:Reg}, we deduce that \eqref{Uest:h_v^l} holds for $\ell = 0$.

\smallskip

Now we prove this by induction. 
In the following proof, we abuse the notation as in \eqref{abuse}.
Assume a positive integer $k > 0$ and suppose that \eqref{Uest:DDPhi^l} and \eqref{Uest:h_v^l} hold for $0 \leq \ell \leq k$.
From \eqref{eqtn:hk}-\eqref{bdry:phik}, then for $\ell=k+1$, 
\be \label{rho_phi^k+1}
\begin{split}	
\rho^{k+1} = \int_{\R^3} ( e_+ h^{k+1}_+ + e_{-} h^{k+1}_{-} ) \dd p & \ \ \text{in} \ \O, \\
- \Delta  \Phi^{k+1} = \rho^{k+1} & \ \ \text{in} \ \O, \\
\Phi^{k+1} =0  & \ \ \text{on} \ \p\O,
\end{split}
\ee
and
\be \label{h^k+2}
\begin{split}
& v_\pm \cdot \nabla_x h^{k+2}_{\pm} 
+ \big( e_{\pm} ( \frac{v_\pm}{c} \times B 
- \nabla_x \Phi^{k+1} ) - \nabla_x (m_{\pm} g x_3) \big) \cdot \nabla_p h^{k+2}_{\pm} = 0 \ \ \text{in} \ \O \times \R^3, 
\\& h^{k+2}_{\pm} (x,p) = G_{\pm} (x,p) \ \ \text{on} \ \gamma_-.
\end{split}
\ee
On the other side, from the construction, $h^{k+1}_{\pm}$ is the solution to \eqref{VP_h} and \eqref{bdry:h} with $\nabla_x \Phi_h = \nabla_x \Phi^{k}$, that is,
\be \label{VP^k}
\begin{split}
& v_\pm \cdot \nabla_x h^{k+1}_{\pm}
+ \big( e_{\pm} ( \frac{v_\pm}{c} \times B 
- \nabla_x \Phi^k ) - \nabla_x (m_{\pm} g x_3) \big) \cdot \nabla_p h^{k+1}_{\pm} = 0 \ \ \text{in} \ \O \times \R^3, 
\\& h^{k+1}_{\pm} (x,p) = G_{\pm} (x,p) \ \ \text{on} \ \gamma_-. 
\end{split}
\ee
Recall \eqref{Uest:DPhi^k} in Proposition \ref{prop:Unif_steady}, we have, for any $\ell \geq 0$,
\be \label{est:Phi_x^0}
\| \nabla_x \Phi^ \ell \|_{L^\infty (\bar{\O})} 
\leq  \min \big( \frac{m_{+}}{e_{+}}, \frac{m_{-}}{e_{-}} \big) \times \  \frac{g}{2}.
\ee 
From \eqref{VP_h^l+1}, \eqref{bdry:h^l+1} and \eqref{def:tb_l}, we consider the characteristic $(X^{k+1}, P^{k+1})$ for \eqref{VP^k} and the corresponding $(\tb^{k+1}, \xb^{k+1}, \pb^{k+1})$.
Similar as the initial case, by replacing $\Phi_h$ with $\Phi^k$, Lemmas \ref{VL}-\ref{lem:nabla_zb} hold for $(X^{k+1}, P^{k+1})$ and $(\tb^{k+1}, \xb^{k+1}, \pb^{k+1})$.

\smallskip

First, we show \eqref{Uest:DDPhi^l} holds for $\ell = k+1$.
From \eqref{rho_phi^k+1}, \eqref{VP^k} and the condition \eqref{condition:G}, we have
\Be \notag
\begin{split}
& \ \ \ \ | \nabla_x \rho^{k+1} (x) | 
\\& = \Big| \int_{\R^3} e_+ \nabla_x h^{k+1}_+ (x,p) + e_{-} \nabla_x h^{k+1}_{-} (x,p) \dd p \Big| 
\\& \leq \sum_{i = \pm} e_{i} \Big|\int_{\R^3} \nabla_x \xb^{k+1} \cdot \nabla_{x_\parallel} G_i (\xb^{k+1}, \pb^{k+1})  + \nabla_x \pb^{k+1} \cdot \nabla_p G_i (\xb^{k+1}, \pb^{k+1}) \dd p \Big|.
\end{split}
\Ee 
Applying Lemma \ref{lem:nabla_zb} on $(\tb^{k+1}, \xb^{k+1}, \pb^{k+1})$, together with the step 1 (Proof of \eqref{est:rho_x}) in the proof of Proposition \ref{prop:Reg}, we deduce that
\Be \label{est:rho_x^k+1}
\begin{split}
e^{ \frac{\tilde \beta m g}{4 c} x_3 } |\p_{x_i} \rho^{k+1} (x)|   
& \lesssim \| e^{\tilde \beta \sqrt{(m_+ c)^2 + |p|^2}} \nabla_{x_\parallel, p} G_+ \|_{L^\infty (\gamma_-)}  
\times \Big(
1 + \mathbf{1}_{|x_3| \leq 1} \frac{1}{\sqrt{ m_+ g x_3 }}
\Big)
\\& \ \ \ \ + \| e^{\tilde \beta \sqrt{(m_- c)^2 + |p|^2}} \nabla_{x_\parallel, p} G_- \|_{L^\infty (\gamma_-)}  
\times \Big(
1 + \mathbf{1}_{|x_3| \leq 1} \frac{1}{\sqrt{ m_- g x_3 }}
\Big),
\end{split}
\Ee
Using \eqref{est:rho_x^k+1}, together with the step 2 (Proof of \eqref{est:phi_C2}) in the proof of Proposition \ref{prop:Reg}, we bound
\Be \notag
\begin{split}
& \frac{|\rho^{k+1} (x \pm h \mathbf{e}_i) - \rho^{k+1} (x)|}{h^\delta} 
\\& \leq \frac{1}{h^\delta} \int^h_0 | \nabla_x \rho^{k+1} (x \pm \tau \mathbf{e}_i) | \dd \tau 
\\& \leq \sum_{i = \pm} \| e^{\tilde \beta |p^0|} \nabla_{x_\parallel, p} G_i \|_{L^\infty (\gamma_-)} \times \frac{1}{h^\delta} \int^h_0 \big( 1 + \mathbf{1}_{|x_3| \leq 1} \frac{1}{\sqrt{ m g (x \pm \tau \mathbf{e}_i)_3 }} \big) \dd \tau.
\end{split}
\Ee
Then we bound $|\rho^{k+1} |_{C^{0,\delta}(\O)}$ by
\Be \label{est:rho_Hol^k+1}
\begin{split}
|\rho^{k+1} |_{C^{0,\delta}(\O)} & \leq \sup_{0 < h < 1}\frac{|\rho^{k+1} (x \pm h \mathbf{e}_i) - \rho^{k+1} (x)|}{h^\delta} \\& \lesssim_\delta
\sum_{i = \pm} \| e^{\tilde \beta |p^0| } \nabla_{x_\parallel, p} G_i \|_{L^\infty (\gamma_-)}.
\end{split}
\Ee
Using \eqref{est:nabla^2phi}, \eqref{Uest:rho^k} and \eqref{est:rho_Hol^k+1}, we derive
\Be \label{est:DDPhi^k+1}
\begin{split}
\| \nabla_x^2 \Phi^{k+1} \|_{L^\infty(\O)} 
& \lesssim_\delta \|\rho \|_{L^\infty(\O)} + \|\rho \|_{C^{0,\delta}(\O)}
\\& \qquad + \frac{1}{\beta} \big( e_+ \| e^{ \beta \sqrt{(m_{+} c)^2 + |p|^2}} G_+ \|_{L^\infty(\gamma_-)}  + e_- \| e^{ \beta \sqrt{(m_{-} c)^2 + |p|^2}} G_- \|_{L^\infty(\gamma_-)} \big) .
\end{split}
\Ee 
Inputting \eqref{rho_phi^k+1} and the upper bound on $|\rho^{k+1} (x)|$ from \eqref{Uest:rho^k} into \eqref{est:DDPhi^k+1}, we have
\Be \notag
\begin{split}
\| \nabla_x^2 \Phi^{k+1} \|_\infty
& \lesssim \frac{1}{\beta} \big( e_+ \| w_{+, \beta} G_+ \|_{L^\infty(\gamma_-)} + e_{-} \| w_{-, \beta} G_- \|_{L^\infty(\gamma_-)} \big)
\\& \ \ \ \ + \big( \| e^{\tilde \beta \sqrt{(m_+ c)^2 + |p|^2}} \nabla_{x_\parallel, p} G_+ \|_{L^\infty (\gamma_-)} + \| e^{\tilde \beta \sqrt{(m_- c)^2 + |p|^2}} \nabla_{x_\parallel, p} G_- \|_{L^\infty (\gamma_-)}
\big).
\end{split}	
\Ee
Under the condition \eqref{condition:G}, this shows that
\be \label{est1:DDPhi^k+1}
\frac{8}{m g} (1 + B_3 + \| \nabla_x ^2 \Phi^{k+1} \|_\infty) \leq \tilde \beta,
\ee
and \eqref{Uest:DDPhi^l} holds for $\ell = k+1$.

\smallskip

Second, we show \eqref{Uest:h_v^l} holds for $\ell = k+1$.
From \eqref{VP_h^l+1}, \eqref{bdry:h^l+1} and \eqref{def:tb_l}, we consider the characteristic $(X^{k+2}, P^{k+2})$ for \eqref{h^k+2} and the corresponding $(\tb^{k+2}, \xb^{k+2}, \pb^{k+2})$.
From \eqref{h^k+2}, we have
\Be \notag
\nabla_{p} h^{k+2} (x,p)
= \nabla_{p} \xb^{k+2} (x,p) \cdot \nabla_{x_\parallel} G (\xb^{k+2}, \pb^{k+2}) + \nabla_{p} \pb^{k+2} (x,p) \cdot \nabla_p G (\xb^{k+2}, \pb^{k+2}).
\Ee
Similarly, by replacing $\Phi_h$ with $\Phi^{k+1}$, Lemmas \ref{VL}-\ref{lem:nabla_zb} hold for the characteristic $(X^{k+2}, P^{k+2})$ and $(\tb^{k+2}, \xb^{k+2}, \pb^{k+2})$.
Applying Lemma \ref{lem:nabla_zb} on $(\tb^{k+2}, \xb^{k+2}, \pb^{k+2})$ and the upper bound on $\| \nabla_x ^2 \Phi^{k+1} \|_\infty$ from \eqref{est1:DDPhi^k+1}, together with the step 3 (Proof of \eqref{est:hk_v}) in the proof of Proposition \ref{prop:Reg}, we conclude that \eqref{Uest:h_v^l} holds for $\ell = k+1$.

\smallskip

Now we have proved \eqref{Uest:DDPhi^l} and \eqref{Uest:h_v^l} hold for $\ell = k+1$. Therefore, we complete the proof by induction.
\end{proof}

\begin{prop} \label{prop:cauchy}

Suppose the condition \eqref{condition:beta} holds for some $g, \beta > 0$.
Moreover, the condition \eqref{condition:G} holds with $\| e^{{\tilde \beta } \sqrt{(m_{\pm} c)^2 + |p|^2}} \nabla_{x_\parallel, p} G_{\pm} (x,p) \|_{L^\infty (\gamma_-)} < \infty$, and for some $\beta \geq \tilde \beta >0$,
\be \label{weight_G_xv}
(1 + \frac{\tilde{\beta}}{m_{\pm} g} ) \| e^{\tilde \beta \sqrt{(m_{\pm} c)^2 + |p|^2}} \nabla_{x_\parallel, p} G_{\pm} \|_{L^\infty (\gamma_-)} \leq \frac{1}{4}.
\ee
Then for any $\ell \geq 1$, $h^{\ell}_{\pm}$ from the construction satisfies that there exists some $\bar \beta > 0$, 
\be \label{est:h_cauchy}
\begin{split}
& \| e^{ \frac{3 \bar{\beta}}{4} \big( \sqrt{(m_+ c)^2 + |p|^2} + \frac{1}{2 c} m_+ g x_3 \big) } (h^{\ell+1}_{+} - h^{\ell}_{+} ) \|_{L^{\infty} (\O \times \R^3)} 
\\& \qquad \qquad + \| e^{ \frac{3 \bar{\beta}}{4} \big( \sqrt{(m_- c)^2 + |p|^2} + \frac{1}{2 c} m_- g x_3 \big) } (h^{\ell+1}_{-} - h^{\ell}_{-} ) \|_{L^{\infty} (\O \times \R^3)}
\\& \leq \frac{1}{2} \Big( \| e^{ \frac{3 \bar{\beta}}{4} \big( \sqrt{(m_+ c)^2 + |p|^2} + \frac{1}{2 c} m_+ g x_3 \big) } (h^\ell_{+} - h^{\ell-1}_{+} ) \|_{L^{\infty} (\O \times \R^3)} 
\\& \qquad \qquad + \| e^{ \frac{3 \bar{\beta}}{4} \big( \sqrt{(m_- c)^2 + |p|^2} + \frac{1}{2 c} m_- g x_3 \big) } (h^\ell_{-} - h^{\ell-1}_{-} ) \|_{L^{\infty} (\O \times \R^3)}  \Big),
\end{split}
\Ee
Furthermore, $\{ h^{\ell+1}_{\pm} \}^{\infty}_{\ell=0}$, and $\{ \rho^\ell \}^{\infty}_{\ell=0}$, $\{ \nabla_x \Phi^\ell \}^{\infty}_{\ell=0}$ from the construction are Cauchy sequences in $L^{\infty} (\O \times \R^3)$ and $L^{\infty} (\O)$ respectively.
\end{prop}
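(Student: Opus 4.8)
The plan is to run the contraction argument of the Uniqueness Theorem \ref{theo:US} one iteration level at a time, with the a priori hypotheses made there replaced by the uniform-in-$\ell$ bounds of Propositions \ref{prop:Unif_steady} and \ref{prop:Unif_D2xDp} (whose hypotheses \eqref{condition:beta}, \eqref{condition:G} are exactly those assumed here). First I would subtract the $\ell$-th equation from the $(\ell+1)$-st: since $h^{\ell+1}_\pm$ solves \eqref{eqtn:hk}--\eqref{bdry:hk} with potential $\Phi^\ell$ while $h^\ell_\pm$ solves it with $\Phi^{\ell-1}$, the difference satisfies
\begin{multline*}
v_\pm\cdot\nabla_x(h^{\ell+1}_\pm-h^\ell_\pm)+\big(e_\pm(\tfrac{v_\pm}{c}\times B-\nabla_x\Phi^\ell)-\nabla_x(m_\pm g x_3)\big)\cdot\nabla_p(h^{\ell+1}_\pm-h^\ell_\pm)\\=e_\pm\nabla_x(\Phi^\ell-\Phi^{\ell-1})\cdot\nabla_p h^\ell_\pm
\end{multline*}
in $\O\times\R^3$, with the homogeneous inflow condition $h^{\ell+1}_\pm-h^\ell_\pm=0$ on $\gamma_-$. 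Integrating along the characteristics $(X^{\ell+1}_\pm,P^{\ell+1}_\pm)$ of \eqref{VP_h^l+1}--\eqref{bdry:h^l+1} (which are governed by $\Phi^\ell$, so this transport operator becomes $\tfrac{\dd}{\dd s}$) and using that the inflow data vanish gives the Duhamel representation
\begin{multline*}
(h^{\ell+1}_\pm-h^\ell_\pm)(x,p)\\=\int_{-\tbp^{\ell+1}(x,p)}^{0}e_\pm\nabla_x(\Phi^\ell-\Phi^{\ell-1})\big(X^{\ell+1}_\pm(s;x,p)\big)\cdot\nabla_p h^\ell_\pm\big(X^{\ell+1}_\pm(s;x,p),P^{\ell+1}_\pm(s;x,p)\big)\,\dd s,
\end{multline*}
the exact analogue of \eqref{diff:h}.

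Next I would estimate the right-hand side through the three ingredients already present in the proof of Theorem \ref{theo:US}. (i) By Proposition \ref{prop:Unif_D2xDp} the gradient $\nabla_p h^\ell_\pm$ obeys the uniform-in-$\ell$ weighted estimate \eqref{Uest:h_v^l}; since $\|\nabla^2_x\Phi^{\ell-1}\|_\infty$ is uniformly controlled by \eqref{Uest:DDPhi^l}, this together with \eqref{weight_G_xv} yields a uniform small bound $\|w^{\ell+1}_{\pm,\bar\beta}\nabla_p h^\ell_\pm\|_\infty\lesssim(1+\tfrac{\tilde\beta}{m_\pm g})\|e^{\tilde\beta\sqrt{(m_\pm c)^2+|p|^2}}\nabla_{x_\parallel,p}G_\pm\|_{L^\infty(\gamma_-)}$, where $w^{\ell+1}_{\pm,\bar\beta}$ is the weight \eqref{def:w^k} (invariant along the $(\ell+1)$-characteristics by Lemma \ref{lem:conservation_law}) and $\bar\beta$ is fixed as a small fraction of $\tilde\beta$ so that the decay rate $\tilde\beta/2$ of $\nabla_p h^\ell_\pm$ beats $\bar\beta$ in that weight (using the two-sided bound $e^{\bar\beta(\sqrt{(m_\pm c)^2+|p|^2}+\frac{m_\pm g}{2c}x_3)}\le w^{\ell+1}_{\pm,\bar\beta}(x,p)\le e^{\bar\beta(\sqrt{(m_\pm c)^2+|p|^2}+\frac{3m_\pm g}{2c}x_3)}$ coming from \eqref{Uest:DPhi^k} and \eqref{lower_w_st}). (ii) Writing $\rho^\ell-\rho^{\ell-1}=\int_{\R^3}(e_+(h^\ell_+-h^{\ell-1}_+)+e_-(h^\ell_--h^{\ell-1}_-))\,\dd p$ and applying the elliptic estimate \eqref{est:nabla_phi} of Lemma \ref{lem:rho_to_phi} to $-\Delta(\Phi^\ell-\Phi^{\ell-1})=\rho^\ell-\rho^{\ell-1}$ --- comparing weights exactly as in \eqref{est:e^beta*rho_12}--\eqref{est2:phi_12_x} with $\beta',\hat\beta$ chosen as in \eqref{def:beta'} --- bounds $\|\nabla_x(\Phi^\ell-\Phi^{\ell-1})\|_\infty$ by the weighted sup-norm of $h^\ell_\pm-h^{\ell-1}_\pm$ with constant $\lesssim(1+\tfrac{2c}{\beta'g})\tfrac{1}{\hat\beta}$. (iii) By Proposition \ref{lem:tb} (applicable to $(X^{\ell+1}_\pm,P^{\ell+1}_\pm)$ since $\Phi^\ell$ satisfies \eqref{Uest:DPhi^k}) together with the invariance of $w^{\ell+1}_{\pm,\bar\beta}$,
\begin{multline*}
\tbp^{\ell+1}(x,p)\sup_{s\in[-\tbp^{\ell+1}(x,p),\,0]}\frac{1}{w^{\ell+1}_{\pm,\bar\beta}\big(X^{\ell+1}_\pm(s;x,p),P^{\ell+1}_\pm(s;x,p)\big)}\\=\frac{\tbp^{\ell+1}(x,p)}{w^{\ell+1}_{\pm,\bar\beta}(x,p)}\lesssim\frac{1}{m_\pm g\bar\beta}\,e^{-\frac{3\bar\beta}{4}\big(\sqrt{(m_\pm c)^2+|p|^2}+\frac{1}{2c}m_\pm g x_3\big)},
\end{multline*}
the computation being identical to \eqref{est:tb/w}.

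Combining (i)--(iii) exactly as in \eqref{bound:diff_h}--\eqref{bound6:diff_h}: the product of the elliptic constant from (ii), the decay factor from (iii), and the bound on $\|w^{\ell+1}_{\pm,\bar\beta}\nabla_p h^\ell_\pm\|_\infty$ from (i) is made $\le\tfrac12$ by hypothesis \eqref{weight_G_xv} (using also $g\beta\gg1$ and the fixed choice of $\beta',\hat\beta$) --- this is precisely the role of \eqref{condition_unique} in Theorem \ref{theo:US} --- and yields the contraction \eqref{est:h_cauchy}. Iterating, $\sum_{\ell\ge1}\big\|e^{\frac{3\bar\beta}{4}(\cdots)}(h^{\ell+1}_\pm-h^\ell_\pm)\big\|_{L^\infty(\O\times\R^3)}$ is finite; since the exponential weight is $\ge1$, $\{h^{\ell+1}_\pm\}_{\ell\ge0}$ is Cauchy in $L^\infty(\O\times\R^3)$. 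Integrating the consecutive differences against the ($p$-integrable, $x_3$-decaying) weight $e^{-\frac{3\bar\beta}{4}(\cdots)}$ shows $\{\rho^\ell\}_{\ell\ge0}$ is Cauchy in $L^\infty(\O)$, and a final application of \eqref{est:nabla_phi} of Lemma \ref{lem:rho_to_phi} shows $\{\nabla_x\Phi^\ell\}_{\ell\ge0}$ is Cauchy in $L^\infty(\O)$, completing the proof.

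I expect the main obstacle to be purely the weight bookkeeping that forces the contraction constant below $\tfrac12$: one must take $\bar\beta$ small (yet comparable to $\tilde\beta$) so that $\tilde\beta/2$ dominates $\bar\beta$ in the invariant weight, while simultaneously keeping the auxiliary decay rate $\beta'=\hat{m}\min\{\bar\beta/4,\beta\}$ of the elliptic step compatible with the $\tfrac{3\bar\beta}{4}$-weight on the left of \eqref{est:h_cauchy} --- all constraints already met in the proof of Theorem \ref{theo:US}. The only genuinely new point is that the per-level constants must be fed the uniform-in-$\ell$ control of $\|\nabla^2_x\Phi^\ell\|_\infty$ and of $\|w^{\ell+1}_{\pm,\bar\beta}\nabla_p h^{\ell+1}_\pm\|_\infty$ from Propositions \ref{prop:Unif_steady} and \ref{prop:Unif_D2xDp} so that they do not degenerate as $\ell\to\infty$.
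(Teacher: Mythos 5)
Your proposal is correct and follows essentially the same route as the paper's proof: the same Duhamel formula along the $(X^{\ell+1},P^{\ell+1})$-characteristics with zero inflow data, the uniform-in-$\ell$ bounds of Propositions \ref{prop:Unif_steady} and \ref{prop:Unif_D2xDp} feeding the weighted $\nabla_p h^{\ell}$ estimate, the $\tb/w$ decay from Proposition \ref{lem:tb}, the elliptic step via Lemma \ref{lem:rho_to_phi} with the same $\beta',\hat\beta$, and the contraction closed by \eqref{weight_G_xv} exactly as in Theorem \ref{theo:US} (the paper fixes $\bar\beta=\tilde\beta/6$, matching your "small fraction of $\tilde\beta$").
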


\begin{proof}

For the sake of simplicity, we abuse the notation as in \eqref{abuse}.
We first prove \eqref{est:h_cauchy}. 
Given $\ell \geq 1$, from the construction in \eqref{eqtn:hk}-\eqref{bdry:phik}, we have 
\Be \label{VP_diff^l+1}
\begin{split}
& v_\pm \cdot \nabla_x ( h^{\ell+1}_{\pm} - h^{\ell}_{\pm} )
+ \big( e_{\pm} ( \frac{v_\pm}{c} \times B 
- \nabla_x \Phi^{\ell} ) - \nabla_x (m_{\pm} g x_3) \big) \cdot \nabla_p ( h^{\ell+1}_{\pm} - h^{\ell}_{\pm} ) 
\\& = e_{\pm} \nabla_x (\Phi^{\ell} - \Phi^{\ell-1} ) \cdot \nabla_p h^{\ell}_{\pm} \ \ \text{in} \ \O \times \R^3, 
\end{split}
\ee
with 
\be \label{VP_diff_bdy^l+1}
h^{\ell+1}_{\pm} (x,p) - h^{\ell}_{\pm} (x,p) = 0 \ \ \text{in} \ \gamma_-.
\Ee
Following \eqref{VP_h^l+1}, \eqref{bdry:h^l+1} and \eqref{def:tb_l}, we consider the characteristic $(X^{\ell+1}, P^{\ell+1})$ for \eqref{VP_diff^l+1}
and the corresponding backward exit time $\tb^{\ell+1} (x,p)$. Thus, from \eqref{VP_diff^l+1} and \eqref{VP_diff_bdy^l+1},
\Be \label{diff:h^l+1}
\begin{split}
& (h^{\ell+1} - h^{\ell} ) (x,p) 
\\& = \int^0_{-\tb^{\ell+1} (x, p)} e (\nabla_x \Phi^{\ell} - \nabla_x \Phi^{\ell-1} ) \cdot \nabla_p h^{\ell} ( X^{\ell+1} (s;x,p), P^{\ell+1} (s;x,p)) \dd s.
\end{split}
\Ee
Using Proposition \ref{prop:Unif_D2xDp}, together with the conditions \eqref{condition:beta} and \eqref{condition:G}, then for any $\ell \geq 1$,
\be \label{est:h_v^l+1}
\begin{split}
& e^{ \frac{\tilde \beta}{2} |p^0|} e^{  \frac{\tilde \beta m g}{4 c} x_3} | \nabla_p h^{\ell}_{\pm} (x,p)| 
\\& \lesssim \big( 1 + \frac{ \| \nabla_x^2 \Phi^{\ell} \|_\infty + e B_3 + mg}{(m g)^2} \big) \| e^{\tilde \beta \sqrt{(m_{\pm} c)^2 + |p|^2}} \nabla_{x_\parallel, p} G_{\pm} \|_{L^\infty (\gamma_-)}
\\& \lesssim (1 + \frac{\tilde{\beta}}{m g} ) \| e^{\tilde \beta \sqrt{(m_{\pm} c)^2 + |p|^2}} \nabla_{x_\parallel, p} G_{\pm} \|_{L^\infty (\gamma_-)}.
\end{split}
\ee
From \eqref{Uest:DPhi^k} in Proposition \ref{prop:Unif_steady}, then for any $\ell \geq 1$,
\be \label{est:phi_x^l}
\| \nabla_x \Phi^{\ell} \|_{L^\infty (\bar{\O})} 
\leq  \min \big( \frac{m_{+}}{e_{+}}, \frac{m_{-}}{e_{-}} \big) \times \  \frac{g}{2}. 
\ee
Recall $w^{\ell+1}_{\pm, \beta} (x,p)$ defined in \eqref{def:w^k}, from \eqref{est:phi_x^l} we get for any $\ell \geq 1$,
\be \label{upper_w^l+1}
w^{\ell+1}_{\pm, \beta} (x,p) 
= e^{ \beta \left( \sqrt{(m_{\pm} c)^2 + |p|^2} + \frac{1}{c} ( e_{\pm} \Phi^{\ell} (x) + m_{\pm} g x_3 ) \right) } 
\leq e^{ \beta \left( \sqrt{(m_{\pm} c)^2 + |p|^2} + \frac{3}{2 c} ( m_{\pm} g x_3 ) \right) },
\ee
and
\Be \label{lower_w^l+1}
w^{k+1}_{\beta} (x,p) 
\geq e^{\beta \big( \sqrt{(m c)^2 + |p|^2} + \frac{1}{2 c} m g x_3 \big) }.
\Ee
By setting $\bar{\beta} = \frac{\tilde \beta}{6}$, we can rewrite \eqref{est:h_v^l+1} as
\be \label{est1:h_v^l+1}
w^{\ell+1}_{\pm, \bar{\beta}} (x,p)  | \nabla_p h^{\ell}_{\pm} (x,p)| 
\lesssim (1 + \frac{\tilde{\beta}}{m g} ) \| e^{\tilde \beta \sqrt{(m_{\pm} c)^2 + |p|^2}} \nabla_{x_\parallel, p} G_{\pm} \|_{L^\infty (\gamma_-)}.
\ee
Thus, we obtain that for any $\ell \geq 1$,
\Be \label{bound:diff_h^l+1}
\begin{split}
& |\eqref{diff:h^l+1}| 
\\& \leq e \| \nabla_x \Phi^{\ell} - \nabla_x \Phi^{\ell-1} \|_\infty \int^0_{-\tb^{\ell+1} (x, p)} | \nabla_p h^{\ell} (X^{\ell+1} (s;x,p), P^{\ell+1} (s;x,p)) | \dd s
\\& \leq e \| \nabla_x \Phi^{\ell} - \nabla_x \Phi^{\ell-1} \|_\infty \| w^{\ell+1}_{\bar\beta} \nabla_p h^{\ell} \|_\infty \int^0_{-\tb^{\ell+1} (x, p)} \frac{1}{ w^{\ell+1}_{\bar\beta} (X^{\ell+1} (s;x,p), P^{\ell+1} (s;x,p)) } \dd s
\\& \leq e \| \nabla_x \Phi^{\ell} - \nabla_x \Phi^{\ell-1} \|_\infty \| w^{\ell+1}_{ \bar{\beta} } \nabla_p h^{\ell} \|_\infty 
\\& \qquad \qquad \times
\underbrace{\tb^{\ell+1} (x, p) \sup_{ s \in [- \tb(x,p), 0]}   \left( \frac{1}{ w^{\ell+1}_{\bar{\beta}} (X^{\ell+1} (s;x,p), P^{\ell+1} (s;x,p)) }\right )}_{\eqref{bound:diff_h^l+1}_*}.
\end{split}
\Ee
Furthermore, by replacing $\Phi_h$ with $\Phi^{\ell}$, Proposition \ref{lem:tb} holds for $\tb^{\ell+1} (x,p)$, that is,
\Be \label{est:tb^l+1}
\tb^{\ell+1} (x,p) \leq \min \left\{ \frac{4}{m g} \big( \sqrt{(m c)^2 + |p|^2} + \frac{3}{2c} m g x_{3} \big), \ \frac{4}{m g} |\pb (x, p)| \right\}. 
\Ee
Using \eqref{est:tb^l+1}, together with \eqref{w:invar} and \eqref{lower_w^l+1}, we get 
\Be \label{est:tb/w^l+1}
\begin{split}
\eqref{bound:diff_h^l+1}_* 
= \frac{\tb (x, p)}{w^{\ell+1}_{\bar \beta} (x,p)}
& \leq \frac{\frac{4}{m g} \big( \sqrt{(m c)^2 + |p|^2} + \frac{3}{2c} m g x_{3} \big)}{e^{ \bar \beta \big( \sqrt{(m c)^2 + |p|^2} + \frac{1}{2 c} m g x_3 \big) }}
\\& \leq \frac{4}{m g} \big( \sqrt{(m c)^2 + |p|^2} + \frac{3}{2c} m g x_{3} \big) e^{ - \bar \beta \big( \sqrt{(m c)^2 + |p|^2} + \frac{1}{2 c} m g x_3 \big) }
\\& \leq \frac{12}{m g} \big( \sqrt{(m c)^2 + |p|^2} + \frac{1}{2c} m g x_{3} \big) e^{ - \bar \beta \big( \sqrt{(m c)^2 + |p|^2} + \frac{1}{2 c} m g x_3 \big) }
\\& \leq \frac{48}{m g \bar \beta} e^{ - \frac{3 \bar \beta}{4} \big( \sqrt{(m c)^2 + |p|^2} + \frac{1}{2 c} m g x_3 \big) }.
\end{split}
\Ee
Applying \eqref{est:tb/w^l+1} and \eqref{bound:diff_h^l+1} into \eqref{diff:h^l+1}, we have
\be \label{bound2:diff_h^l+1}
\begin{split}
|h^{\ell+1} (x, p) - h^{\ell} (x, p)| 
\leq e \| \nabla_x \Phi^{\ell} - \nabla_x \Phi^{\ell-1} \|_\infty \| w^{\ell+1}_{ \bar{\beta} } \nabla_p h^{\ell} \|_\infty \frac{48}{m g \bar \beta} e^{ - \frac{3 \bar \beta}{4} \big( \sqrt{(m c)^2 + |p|^2} + \frac{1}{2 c} m g x_3 \big) }.
\end{split}
\ee
This shows that
\be \label{bound3:diff_h^l+1}
\begin{split}
e^{ \frac{3 \bar \beta}{4} \big( \sqrt{(m c)^2 + |p|^2} + \frac{1}{2 c} m g x_3 \big) } |h^{\ell+1} (x, p) - h^{\ell} (x, p)|
\leq \frac{48 e}{m g \bar \beta} \| \nabla_x \Phi^{\ell} - \nabla_x \Phi^{\ell-1} \|_\infty \| w^{\ell+1}_{ \bar{\beta} } \nabla_p h^{\ell} \|_\infty.
\end{split}
\ee

On the other hand, from \eqref{Uest:rho^k} in Proposition \ref{prop:Unif_steady}, we obtain
\be \label{Uest:rho_12^l+1}
\begin{split}
| \rho^{\ell} (x) |
& \leq \frac{1}{\beta} \big( e_+ \| w_{+, \beta} G_+ \|_{L^\infty(\gamma_-)} e^{-  \beta \frac{ m_+ }{2 c} g x_3} + e_{-} \| w_{-, \beta} G_- \|_{L^\infty(\gamma_-)} e^{- \beta \frac{ m_{-} }{2 c} g x_3} \big),
\\ | \rho^{\ell-1} (x) |
& \leq \frac{1}{\beta} \big( e_+ \| w_{+, \beta} G_+ \|_{L^\infty(\gamma_-)} e^{-  \beta \frac{ m_+ }{2 c} g x_3} + e_{-} \| w_{-, \beta} G_- \|_{L^\infty(\gamma_-)} e^{- \beta \frac{ m_{-} }{2 c} g x_3} \big).
\end{split}
\ee
Consider $\hat{m} = \min \{ m_{-},  m_{+} \}$, then we pick $\beta'$ as follows:
\be \label{def:beta'^l+1}
0 < \beta' =  \min\{ \frac{\bar \beta}{4}, \beta \} \times \hat{m}.
\ee
Hence, $\beta' \leq \beta \hat{m}$ and \eqref{Uest:rho_12^l+1} shows that
\be \notag
\begin{split}
\| e^{\beta' \frac{g}{2c} x_3} \rho^{\ell} (x) \|_\infty
& \leq \frac{1}{\beta} \big( e_+ \| w_{+, \beta} G_+ \|_{L^\infty(\gamma_-)} + e_{-} \| w_{-, \beta} G_- \|_{L^\infty(\gamma_-)} \big) < \infty, \\
\| e^{\beta' \frac{g}{2c} x_3} \rho^{\ell-1} (x) \|_\infty
& \leq \frac{1}{\beta} \big( e_+ \| w_{+, \beta} G_+ \|_{L^\infty(\gamma_-)} + e_{-} \| w_{-, \beta} G_- \|_{L^\infty(\gamma_-)} \big) < \infty.
\end{split}
\ee
Set $A = \| e^{\beta' \frac{g}{2c} x_3} (\rho^{\ell} - \rho^{\ell-1} ) \|_\infty$ and $B = \beta' \frac{g}{2c}$, we get 
\be \label{est:rho_12^l+1}
| \rho^{\ell} - \rho^{\ell-1} | \leq A e^{- B x_3}.
\ee
Using \eqref{est:nabla_phi} in Lemma \ref{lem:rho_to_phi}, together with $- \Delta ( \Phi^{\ell} - \Phi^{\ell-1} ) = \rho^{\ell} - \rho^{\ell-1}$, we obtain
\Be \label{est:phi_12_x^l+1}
\begin{split}
\| \nabla_x \Phi^{\ell} - \nabla_x \Phi^{\ell-1} \|_\infty 
&\leq \mathfrak{C} A \big( 1 +  \frac{1}{B} \big)
\\& \leq \mathfrak{C} (1 + \frac{2 c}{\beta^\prime g} )
\| e^{\beta' \frac{g}{2c} x_3} ( \rho^{\ell} - \rho^{\ell-1} ) \|_\infty.
\end{split}
\Ee
Now setting $\hat{\beta} = \frac{\beta'}{\min \{ m_{-},  m_{+} \}}$, we bound $\rho^{\ell} - \rho^{\ell-1}$ by
\Be \notag
\begin{split}
\rho^{\ell} - \rho^{\ell-1}
& = \int_{\R^3} e_+ (h^{\ell}_{+} - h^{\ell-1}_{+} ) + e_{-} ( h^{\ell}_{-} - h^{\ell-1}_{-} ) \dd p
\\& \leq e_+ \| w^{\ell}_{+, \hat{\beta}} (h^{\ell}_{+} - h^{\ell-1}_{+} ) \|_{L^{\infty} (\O \times \R^3)} \times \int_{\R^3} \frac{1}{w^{\ell}_{+, \hat{\beta}}} \dd p
\\& \ \ \ \ + e_- \| w^{\ell}_{-, \hat{\beta}} (h^{\ell}_{-} - h^{\ell-1}_{-}) \|_{L^{\infty} (\O \times \R^3)} \times \int_{\R^3} \frac{1}{w^{\ell}_{-, \hat{\beta}}} \dd p.
\end{split}
\Ee
This, together with \eqref{lower_w^l+1}, shows that
\Be \notag
\begin{split}
| \rho^{\ell} - \rho^{\ell-1} |
& \leq e_+ \| w^{\ell}_{+, \hat{\beta}} (h^{\ell}_{+} - h^{\ell-1}_{+} ) \|_{L^{\infty} (\O \times \R^3)} \times \frac{1}{\hat{\beta}} e^{- \hat{\beta} \frac{m _+ g}{2c} x_3}
\\& \ \ \ \ + e_- \| w^{\ell}_{-, \hat{\beta}} (h^{\ell}_{-} - h^{\ell-1}_{-}) \|_{L^{\infty} (\O \times \R^3)} \times \frac{1}{\hat{\beta}} e^{- \hat{\beta} \frac{m_- g}{2c} x_3}
\\& \leq e_+ \| w^{\ell}_{+, \hat{\beta}} (h^{\ell}_{+} - h^{\ell-1}_{+} ) \|_{L^{\infty} (\O \times \R^3)} \times \frac{1}{\hat{\beta}} e^{- \beta' \frac{g}{2c} x_3}
\\& \ \ \ \ + e_- \| w^{\ell}_{-, \hat{\beta}} (h^{\ell}_{-} - h^{\ell-1}_{-}) \|_{L^{\infty} (\O \times \R^3)} \times \frac{1}{\hat{\beta}} e^{- \beta' \frac{g}{2c} x_3}.
\end{split}
\Ee
Note that \eqref{def:beta'^l+1} implies $\hat{\beta} \leq \frac{\bar{\beta}}{4}$. Further, using \eqref{upper_w^l+1}, we derive that
\Be \label{est:e^beta*rho_12^l+1}
\begin{split}
e^{\beta' \frac{g}{2c} x_3} | \rho^{\ell} - \rho^{\ell-1} |
& \leq e_+ \| e^{ \hat{\beta} \big( \sqrt{(m_+ c)^2 + |p|^2} + \frac{3}{2 c} m_+ g x_3 \big) }  (h^{\ell}_{+} - h^{\ell-1}_{+} ) \|_{L^{\infty} (\O \times \R^3)} \times \frac{1}{\hat{\beta}}
\\& \ \ \ \ + e_- \| e^{ \hat{\beta} \big( \sqrt{(m_- c)^2 + |p|^2} + \frac{3}{2 c} m_- g x_3 \big) } (h^{\ell}_{-} - h^{\ell-1}_{-}) \|_{L^{\infty} (\O \times \R^3)} \times \frac{1}{\hat{\beta}}
\\& \leq e_+ \| e^{ \frac{\bar{\beta}}{4} \big( \sqrt{(m_+ c)^2 + |p|^2} + \frac{3}{2 c} m_+ g x_3 \big) }  (h^{\ell}_{+} - h^{\ell-1}_{+} ) \|_{L^{\infty} (\O \times \R^3)} \times \frac{1}{\hat{\beta}}
\\& \ \ \ \ + e_- \| e^{ \frac{\bar{\beta}}{4} \big( \sqrt{(m_- c)^2 + |p|^2} + \frac{3}{2 c} m_- g x_3 \big) } (h^{\ell}_{-} - h^{\ell-1}_{-}) \|_{L^{\infty} (\O \times \R^3)} \times \frac{1}{\hat{\beta}}.
\end{split}
\Ee
Inputting \eqref{est:e^beta*rho_12^l+1} into \eqref{est:phi_12_x^l+1}, we obtain
\Be \label{est2:phi_12_x^l+1}
\begin{split}
& \| \nabla_x \Phi^{\ell} - \nabla_x \Phi^{\ell-1} \|_\infty 
\\& \leq \mathfrak{C} (1 + \frac{2 c}{\beta^\prime g} ) \frac{1}{\hat{\beta}} \Big(
e_+ \| e^{ \frac{\bar{\beta}}{4} \big( \sqrt{(m_+ c)^2 + |p|^2} + \frac{3}{2 c} m_+ g x_3 \big) } (h^{\ell}_{+} - h^{\ell-1}_{+}) \|_{L^{\infty} (\O \times \R^3)}
\\& \qquad \qquad \qquad \qquad + e_- \| e^{ \frac{\bar{\beta}}{4} \big( \sqrt{(m_- c)^2 + |p|^2} + \frac{3}{2 c} m_- g x_3 \big) } (h^{\ell}_{-} - h^{\ell-1}_{-}) \|_{L^{\infty} (\O \times \R^3)}  \Big).
\end{split}
\Ee
Using \eqref{bound3:diff_h^l+1}, together with \eqref{est2:phi_12_x^l+1}, we bound
\be \label{bound4:diff_h^l+1}
\begin{split}
& e^{ \frac{3 \bar \beta}{4} \big( \sqrt{(m_+ c)^2 + |p|^2} + \frac{1}{2 c} m_+ g x_3 \big) } |h^{\ell+1}_{+} (x, p) - h^{\ell}_{+} (x, p)|
\\& \ \ \ \ + e^{ \frac{3 \bar \beta}{4} \big( \sqrt{(m_- c)^2 + |p|^2} + \frac{1}{2 c} m_- g x_3 \big) } |h^{\ell+1}_{-} (x, p) - h^{\ell}_{-} (x, p)|
\\& \leq \mathfrak{C} (1 + \frac{2 c}{\beta^\prime g} ) \frac{1}{\hat{\beta}} \Big(
e_+ \| e^{ \frac{\bar{\beta}}{4} \big( \sqrt{(m_+ c)^2 + |p|^2} + \frac{3}{2 c} m_+ g x_3 \big) } (h^{\ell}_{+} - h^{\ell-1}_{+}) \|_{L^{\infty} (\O \times \R^3)}
\\& \qquad \qquad \qquad \qquad + e_- \| e^{ \frac{\bar{\beta}}{4} \big( \sqrt{(m_- c)^2 + |p|^2} + \frac{3}{2 c} m_- g x_3 \big) } (h^{\ell}_{-} - h^{\ell-1}_{-}) \|_{L^{\infty} (\O \times \R^3)}  \Big)
\\& \ \ \ \ \times \big( \frac{48 e_+}{m g \bar \beta} \| w^{\ell+1}_{+, \bar\beta} \nabla_p h^{\ell}_{+} \|_\infty + \frac{48 e_-}{m g \bar \beta} \| w^{\ell+1}_{-, \bar\beta} \nabla_p h^{\ell}_{-} \|_\infty \big).
\end{split}
\ee
Together with $\beta'$ in \eqref{def:beta'^l+1} and $\hat{\beta} = \frac{\beta'}{\min \{ m_{-},  m_{+} \}}$, we get
\be \label{bound5:diff_h^l+1}
\begin{split}
\eqref{bound4:diff_h^l+1}
& \leq \mathfrak{C} (1 + \frac{2 c}{\beta^\prime g} ) \frac{ \max\{ e_+, e_- \} }{\hat{\beta}} \big( \frac{48 e_+}{m g \bar \beta} \| w^{\ell+1}_{+, \bar\beta} \nabla_p h^{\ell}_{+} \|_\infty + \frac{48 e_-}{m g \bar \beta} \| w^{\ell+1}_{-, \bar\beta} \nabla_p h^{\ell}_{-} \|_\infty \big)
\\& \ \ \ \ \times \Big( \| e^{ \frac{3 \bar{\beta}}{4} \big( \sqrt{(m_+ c)^2 + |p|^2} + \frac{1}{2 c} m_+ g x_3 \big) } (h^{\ell}_{+} - h^{\ell-1}_{+} ) \|_{L^{\infty} (\O \times \R^3)} 
\\& \qquad \qquad \qquad + \| e^{ \frac{3 \bar{\beta}}{4} \big( \sqrt{(m_- c)^2 + |p|^2} + \frac{1}{2 c} m_- g x_3 \big) } (h^{\ell}_{-} - h^{\ell-1}_{-}) \|_{L^{\infty} (\O \times \R^3)}  \Big)
\\& \lesssim_{\bar \beta, \beta'} 
\big\{ \| w^{\ell+1}_{+, \bar\beta} \nabla_p h^{\ell}_{+} \|_\infty + \| w^{\ell+1}_{-, \bar\beta} \nabla_p h^{\ell}_{-} \|_\infty \big\}
\\& \qquad \qquad \times \Big( \| e^{ \frac{3 \bar{\beta}}{4} \big( \sqrt{(m_+ c)^2 + |p|^2} + \frac{1}{2 c} m_+ g x_3 \big) } (h^{\ell}_{+} - h^{\ell-1}_{+} ) \|_{L^{\infty} (\O \times \R^3)} 
\\& \qquad \qquad \qquad + \| e^{ \frac{3 \bar{\beta}}{4} \big( \sqrt{(m_- c)^2 + |p|^2} + \frac{1}{2 c} m_- g x_3 \big) } (h^{\ell}_{-} - h^{\ell-1}_{-} ) \|_{L^{\infty} (\O \times \R^3)}  \Big).
\end{split}
\ee
Under the condition \eqref{weight_G_xv}, together with \eqref{est1:h_v^l+1}, we derive for any $\ell \geq 1$,
\be \notag
w^{\ell+1}_{\pm, \bar{\beta}} (x,p)  | \nabla_p h^{\ell}_{\pm} (x,p)| 
\lesssim (1 + \frac{\tilde{\beta}}{m g} ) \| e^{\tilde \beta \sqrt{(m_{\pm} c)^2 + |p|^2}} \nabla_{x_\parallel, p} G_{\pm} \|_{L^\infty (\gamma_-)} \leq \frac{1}{4}.
\ee 
Thus, we bound \eqref{bound5:diff_h^l+1} by
\be \label{bound6:diff_h^l+1}
\begin{split}
\eqref{bound5:diff_h^l+1} 
& \leq \frac{1}{2} \Big( \| e^{ \frac{3 \bar{\beta}}{4} \big( \sqrt{(m_+ c)^2 + |p|^2} + \frac{1}{2 c} m_+ g x_3 \big) } (h^{\ell}_{+} - h^{\ell-1}_{+}) \|_{L^{\infty} (\O \times \R^3)} 
\\& \qquad \qquad \qquad + \| e^{ \frac{3 \bar{\beta}}{4} \big( \sqrt{(m_- c)^2 + |p|^2} + \frac{1}{2 c} m_- g x_3 \big) } (h^{\ell}_{-} - h^{\ell-1}_{-} ) \|_{L^{\infty} (\O \times \R^3)}  \Big).
\end{split}
\ee
Finally, together with \eqref{bound4:diff_h^l+1}-\eqref{bound6:diff_h^l+1}, we derive that
\Be \notag
\begin{split}
& \| e^{ \frac{3 \bar{\beta}}{4} \big( \sqrt{(m_+ c)^2 + |p|^2} + \frac{1}{2 c} m_+ g x_3 \big) } (h^{\ell+1}_{+} (x, p) - h^{\ell}_{+} (x, p)) \|_{L^{\infty} (\O \times \R^3)} 
\\& \qquad \qquad + \| e^{ \frac{3 \bar{\beta}}{4} \big( \sqrt{(m_- c)^2 + |p|^2} + \frac{1}{2 c} m_- g x_3 \big) } (h^{\ell+1}_{-} (x, p) - h^{\ell}_{-} (x, p) ) \|_{L^{\infty} (\O \times \R^3)}
\\& \leq \frac{1}{2} \Big( \| e^{ \frac{3 \bar{\beta}}{4} \big( \sqrt{(m_+ c)^2 + |p|^2} + \frac{1}{2 c} m_+ g x_3 \big) } (h^{\ell}_{+} - h^{\ell-1}_{+} ) \|_{L^{\infty} (\O \times \R^3)} 
\\& \qquad \qquad + \| e^{ \frac{3 \bar{\beta}}{4} \big( \sqrt{(m_- c)^2 + |p|^2} + \frac{1}{2 c} m_- g x_3 \big) } (h^{\ell}_{-} - h^{\ell-1}_{-} ) \|_{L^{\infty} (\O \times \R^3)}  \Big),
\end{split}
\Ee
and conclude \eqref{est:h_cauchy}. 

\smallskip

Next, using the condition \eqref{condition:beta} and \eqref{Uest:wh^k} in Proposition \ref{prop:Unif_steady}, we have, for any $k \geq 0$,
\be
\| w^{k+1}_{\pm, \beta} h_{\pm}^{k+1} \|_{L^\infty (\bar \O \times \R^3)} 
\leq \| e^{ \beta \sqrt{(m_{\pm} c)^2 + |p|^2}} G_{\pm} \|_{L^\infty (\gamma_-)} < \infty.
\ee
Since $\bar{\beta} = \frac{\tilde \beta}{6}$ and $0 < \tilde \beta \leq \beta$, we obtain that for any $k \geq 0$,
\be
\begin{split}
& \| e^{ \frac{3 \bar{\beta}}{4} \big( \sqrt{(m_{\pm} c)^2 + |p|^2} + \frac{1}{2 c} m_{\pm} g x_3 \big) } (h^{k}_{{\pm}} - h^{k-1}_{{\pm}} ) \|_{L^{\infty} (\O \times \R^3)} 
\leq 2 \| w^{k+1}_{\pm, \beta} h_{\pm}^{k+1}   \|_{L^\infty (\bar \O \times \R^3)}  < \infty.
\end{split}
\ee
Together with \eqref{est:h_cauchy}, we derive for any $\ell \geq 0$,
\Be \label{bound7:diff_h^l+1}
\begin{split}
& \| e^{ \frac{3 \bar{\beta}}{4} \big( \sqrt{(m_+ c)^2 + |p|^2} + \frac{1}{2 c} m_+ g x_3 \big) } (h^{\ell+1}_{+} (x, p) - h^{\ell}_{+} (x, p)) \|_{L^{\infty} (\O \times \R^3)} 
\\& \qquad \qquad + \| e^{ \frac{3 \bar{\beta}}{4} \big( \sqrt{(m_- c)^2 + |p|^2} + \frac{1}{2 c} m_- g x_3 \big) } (h^{\ell+1}_{-} (x, p) - h^{\ell}_{-} (x, p) ) \|_{L^{\infty} (\O \times \R^3)}
\\& \leq \frac{1}{2^{\ell-1}} \Big( \| e^{ \frac{3 \bar{\beta}}{4} \big( \sqrt{(m_+ c)^2 + |p|^2} + \frac{1}{2 c} m_+ g x_3 \big) } (h^{2}_{+} - h^{1}_{+} ) \|_{L^{\infty} (\O \times \R^3)} 
\\& \qquad \qquad \qquad + \| e^{ \frac{3 \bar{\beta}}{4} \big( \sqrt{(m_- c)^2 + |p|^2} + \frac{1}{2 c} m_- g x_3 \big) } (h^{2}_{-} - h^{1}_{-} ) \|_{L^{\infty} (\O \times \R^3)}  \Big)
\\& \leq \frac{2}{2^{\ell-1}} \big( \| e^{ \beta \sqrt{(m_{+} c)^2 + |p|^2}} G_{+} \|_{L^\infty +  (\gamma_-)} + \| e^{ \beta \sqrt{(m_{-} c)^2 + |p|^2}} G_{-} \|_{L^\infty (\gamma_-)} \big).
\end{split}
\Ee
Hence, we conclude that $\{ h^{\ell+1} \}^{\infty}_{\ell=0}$ forms a Cauchy sequences in $L^{\infty} (\O \times \R^3)$.

\smallskip

Now inputting \eqref{bound7:diff_h^l+1} into \eqref{est:e^beta*rho_12^l+1}, we deduce for any $\ell \geq 0$,
\Be \label{est:cauchy_rho}
\begin{split}
& e^{\beta' \frac{g}{2c} x_3} | \rho^{\ell} - \rho^{\ell-1} |
\\& \leq e_+ \| e^{ \frac{\bar{\beta}}{4} \big( \sqrt{(m_+ c)^2 + |p|^2} + \frac{3}{2 c} m_+ g x_3 \big) }  (h^{\ell}_{+} - h^{\ell-1}_{+} ) \|_{L^{\infty} (\O \times \R^3)} \times \frac{1}{\hat{\beta}}
\\& \ \ \ \ + e_- \| e^{ \frac{\bar{\beta}}{4} \big( \sqrt{(m_- c)^2 + |p|^2} + \frac{3}{2 c} m_- g x_3 \big) } (h^{\ell}_{-} - h^{\ell-1}_{-}) \|_{L^{\infty} (\O \times \R^3)} \times \frac{1}{\hat{\beta}}
\\& \leq \frac{2}{2^{\ell-1}} \big( \| e^{ \beta \sqrt{(m_{+} c)^2 + |p|^2}} G_{+} \|_{L^\infty +  (\gamma_-)} + \| e^{ \beta \sqrt{(m_{-} c)^2 + |p|^2}} G_{-} \|_{L^\infty (\gamma_-)} \big) \times \frac{e_+ + e_-}{\hat{\beta}}.
\end{split}
\Ee
Similarly, we input \eqref{bound7:diff_h^l+1} into \eqref{est2:phi_12_x^l+1}, and obtain for any $\ell \geq 0$,
\Be \label{est:cauchy_phi_x}
\begin{split}
& \| \nabla_x \Phi^{\ell} - \nabla_x \Phi^{\ell-1} \|_\infty 
\\& \leq \mathfrak{C} (1 + \frac{2 c}{\beta^\prime g} ) \frac{1}{\hat{\beta}} \Big(
e_+ \| e^{ \frac{\bar{\beta}}{4} \big( \sqrt{(m_+ c)^2 + |p|^2} + \frac{3}{2 c} m_+ g x_3 \big) } (h^{\ell}_{+} - h^{\ell-1}_{+}) \|_{L^{\infty} (\O \times \R^3)}
\\& \qquad \qquad \qquad \qquad + e_- \| e^{ \frac{\bar{\beta}}{4} \big( \sqrt{(m_- c)^2 + |p|^2} + \frac{3}{2 c} m_- g x_3 \big) } (h^{\ell}_{-} - h^{\ell-1}_{-}) \|_{L^{\infty} (\O \times \R^3)}  \Big)
\\& \leq \frac{2}{2^{\ell-1}}  \mathfrak{C} (1 + \frac{2 c}{\beta^\prime g} ) \frac{e_+ + e_-}{\hat{\beta}}
 \big( \| e^{ \beta \sqrt{(m_{+} c)^2 + |p|^2}} G_{+} \|_{L^\infty +  (\gamma_-)} + \| e^{ \beta \sqrt{(m_{-} c)^2 + |p|^2}} G_{-} \|_{L^\infty (\gamma_-)} \big).
\end{split}
\Ee
Therefore, we conclude that $\{ \rho^\ell \}^{\infty}_{\ell=0}$ and $\{ \nabla_x \Phi^\ell \}^{\infty}_{\ell=0}$ are both Cauchy sequences in $L^{\infty} (\O)$.
\end{proof}

Finally, using the Cauchy sequences of $L^\infty$-spaces in Proposition \ref{prop:cauchy}, we construct a weak solution of $(h_{\pm}, \rho, \Phi)$ solving \eqref{VP_h}-\eqref{eqtn:Dphi}. 

\begin{proof}[\textbf{Proof of Theorem \ref{theo:CS}}]

From the assumption, then for some $g, \beta > 0$,
\be \notag
\| e^{ \beta \sqrt{(m_{\pm} c)^2 + |p|^2} } G_{\pm} \|_{L^\infty(\gamma_-)} < \infty, 
\ee
and for some $\beta \geq \tilde{\beta} > 0$,
\be \notag
\| e^{{\tilde \beta } \sqrt{(m_{\pm} c)^2 + |p|^2}} \nabla_{x_\parallel, p} G_{\pm} (x,p) \|_{L^\infty (\gamma_-)} < \infty,
\ee
and the conditions \eqref{condition:beta}, \eqref{condition:tilde_beta} and \eqref{condition:G_xv} hold for $g, \beta, \tilde{\beta} > 0$.

\smallskip

\textbf{Step 1. Regularity: Proof of \eqref{Uest:wh}-\eqref{Uest:DPhi}.}
From \eqref{est:h_cauchy}, \eqref{est:cauchy_rho} and \eqref{est:cauchy_phi_x}, the arguments of the Cauchy sequences of $L^\infty$-spaces in Proposition \ref{prop:cauchy}, there exists 
\be \notag
h_{\pm} (x, p) \in L^\infty (\bar \O \times \R^3),
\ \text{ and } \ \rho(x), \nabla_x \Phi (x) \in L^\infty (\bar \O) 
\ \text{ with } \ 
\Phi= 0 \ \ \text{on} \ \p\O,
\ee
such that as $k \to \infty$,
\begin{align}
e^{ \frac{3 \bar{\beta}}{4} ( \sqrt{(m_{\pm} c)^2 + |p|^2} + \frac{1}{2 c} m_{\pm} g x_3) } h^{k}_{\pm}
\to e^{ \frac{3 \bar{\beta}}{4} ( \sqrt{(m_{\pm} c)^2 + |p|^2} + \frac{1}{2 c} m_{\pm} g x_3 ) } h_{\pm}
& \ \text{ in } \ L^\infty (\bar \O \times \R^3),
\label{weakconv_whst} \\
e^{\beta' \frac{g}{2c} x_3} \rho^{k} \to e^{\beta' \frac{g}{2c} x_3} \rho 
& \ \text{ in } \ L^\infty (\bar \O),
\label{weakconv_rhost} \\
\nabla_x \Phi^k \to \nabla_x \Phi
& \ \text{ in } \ L^\infty (\bar \O),
\label{ae_converge_par_Phist}
\end{align} 
where $\bar{\beta} = \frac{\tilde \beta}{6}$ and $\beta' =  \min\{ \frac{\bar \beta}{4}, \beta \} \times \min \{ m_{-},  m_{+} \}$. This clearly implies that
\be \label{strong_conv_h_rho}
\begin{split}
h^{k}_{\pm} \to h_{\pm}
&\ \text{ in } \ L^\infty (\bar \O \times \R^3) 
\ \text{ as } \ k \to \infty, 
\\ \rho^{k} \to \rho 
& \ \text{ in } \ L^\infty (\bar \O)
\ \text{ as } \ k \to \infty.
\end{split}
\ee
Using \eqref{Uest:h^k}, \eqref{Uest:rho^k} and \eqref{Uest:DPhi^k} in Proposition \ref{prop:Unif_steady}, together with the $L^\infty$ convergence in \eqref{weakconv_whst}-\eqref{ae_converge_par_Phist}, we prove that $(h_{\pm}, \rho, \nabla_x \Phi)$ satisfies \eqref{Uest:rho} and \eqref{Uest:DPhi}.

Then, from zero Dirichlet boundary condition of \eqref{bdry:phik} and $\Phi= 0$ on $\p\O$, together with \eqref{ae_converge_par_Phist}, we obtain
\be \label{ae_converge_Phist}
\Phi^{k} (x) \to \Phi(x)
\ \text{ a.e. in $\O$}
\ \ \text{as} \ \ k \rightarrow \infty.
\ee
We consider the weight functions $w_{\pm} (x, p)$ as follows:
\Be \notag
w_{\pm, \beta} (x,p) = e^{ \beta \left( \sqrt{(m_{\pm} c)^2 + |p|^2} + \frac{1}{c} ( e_{\pm} \Phi (x) + m_{\pm} g x_3 ) \right) }.
\Ee
Using \eqref{ae_converge_Phist}, we have
\be \label{wi_conv_w}
w^{k}_{\pm, \beta} (x,p) = e^{ \beta \left( \sqrt{(m_{\pm} c)^2 + |p|^2} + \frac{1}{c} ( e_{\pm} \Phi^{k} (x) + m_{\pm} g x_3 ) \right) }
\rightarrow  w_{\pm, \beta} (x,p)
\ \text{ a.e. in $\O$}
\ \ \text{as} \ \ k \rightarrow \infty.
\ee
From \eqref{strong_conv_h_rho} and \eqref{wi_conv_w}, we get that
\Be \notag
w^{k}_{\pm, \beta} (x,p) h^k_{\pm}
\to w_{\pm, \beta} (x,p) h_{\pm}
\ \text{ a.e. in $\O$}
\ \ \text{as} \ \ k \rightarrow \infty.
\Ee
Thus, we conclude \eqref{Uest:wh} by using \eqref{Uest:wh^k}.

\smallskip

\textbf{Step 2. Existence.}
Next we will prove that $(h_{\pm}, \rho, \Phi)$ obtained in 
\eqref{weakconv_whst}-\eqref{ae_converge_par_Phist} is the solution to \eqref{VP_h}-\eqref{eqtn:Dphi} in the sense of Definition \ref{weak_sol}.

First, we show $(h_{\pm}, \nabla_x \Phi)$ is a weak solution of \eqref{VP_h}-\eqref{bdry:h} and $(\rho, \Phi)$ is a weak solution of \eqref{eqtn:Dphi}.
Recall from the construction and Proposition \ref{prop:Unif_steady}, for any $k \in \N$, $h^{k}_{\pm} (x,p)$ is the weak solution to \eqref{eqtn:hk} and \eqref{bdry:hk} with the field containing $\nabla_x \Phi^{k}$.
Therefore, given any $k \in \N$, we have, for any $\psi \in  C^\infty_{c} (\bar \O \times \R^3)$,
\begin{align}
& \ \ \ \ \iint_{\O \times \R^3} h^{k+1}_{\pm} v_{\pm} \cdot \nabla_x \psi \dd p \dd x 
- \big( \int_{\gamma_+} h^{k+1}_{\pm} \psi \dd \gamma
- \int_{\gamma_-} G_{\pm} \psi \dd \gamma \big) \label{weak_stk} \\
& = \iint_{\O \times \R^3} h^{k+1}_{\pm} \big( e_{\pm} ( \frac{v_{\pm}}{c} \times B 
- \nabla_x \Phi^{k} ) - \nabla_x (m_{\pm} g x_3) \big) \cdot \nabla_p \psi \dd p \dd x, \label{weak_stk_2}
\end{align}
Moreover, from \eqref{eqtn:phik} and \eqref{bdry:phik}, then for any $\varphi \in H^1_0 (\O) \cap C^\infty_c (\bar \O)$, 
\Be \label{weak_Poisson_k}
\int_{\O} \nabla_x \Phi^{k} \cdot \nabla_x \varphi \dd x 
= \int_{\O} \rho^{k} \varphi \dd x.
\Ee
Suppose two test functions $\psi (x, p), \varphi (x)$ have compact support in $x$ as follows:
\Be \notag
\begin{split}
\text{spt}_x (\psi)&:= \overline{  \{x \in \O:  \psi(x, p) \neq 0 \ \text{for some } p \in \R^3\}} \subset\subset \O,
\\ \text{spt} (\varphi)&:= \overline{\{x \in \O:  \varphi(x) \neq0 \}} \subset\subset \O.
\end{split}
\Ee

For the first term in \eqref{weak_stk}, from $\psi \in  C^\infty_{c} (\bar \O \times \R^3)$ and $L^{\infty}$ convergence $h^{k}_{\pm} \to h_{\pm}$ in \eqref{strong_conv_h_rho}, we derive that
\Be \notag
\iint_{\O \times \R^3} h^{k + 1}_{\pm} v_{\pm} \cdot \nabla_x \psi \dd p \dd x
\rightarrow 
\iint_{\O \times \R^3} h_{\pm} v_{\pm} \cdot \nabla_x \psi \dd p \dd x
\ \text{ as } \ k \to \infty.
\Ee
Moreover, using the $L^{\infty}$ convergence $\nabla_x \Phi^k \to \nabla_x \Phi$ in \eqref{ae_converge_par_Phist}, together with $\psi \in  C^\infty_{c} (\bar \O \times \R^3)$, we obtain
\Be \label{conv:hDPDpsi}
\iint_{\O \times \R^3} h^{k+1}_{\pm} \nabla_x \Phi^{k} \cdot \nabla_p \psi \dd p \dd x
\rightarrow 
\iint_{\O \times \R^3} h_{\pm} \nabla_x \Phi \cdot \nabla_p \psi \dd p \dd x
\ \text{ as } \ k \to \infty.
\Ee
Analogously, we deduce the convergence of every term in \eqref{weak_stk} and \eqref{weak_stk_2}.
From the convergence of every other term in \eqref{weak_stk} and \eqref{weak_stk_2}, we conclude that $(h_{\pm}, \nabla_x \Phi)$ solves \eqref{weak_form}, that is, it is a weak solution of \eqref{VP_h}-\eqref{bdry:h}.

On the other side, from the $L^{\infty}$ convergence $\nabla_x \Phi^k \to \nabla_x \Phi$ in \eqref{ae_converge_par_Phist}, together with $\varphi \in H^1_0 (\O) \cap C^\infty_c (\bar \O)$, we obtain
\be \label{conv:DPhiDphi}
\int_{\O} \nabla_x \Phi^{k} \cdot \nabla_x \varphi \dd x 
\to \int_{\O} \nabla_x \Phi \cdot \nabla_x \varphi \dd x
\ \text{ as } \ k \to \infty.
\ee
Using the $L^{\infty}$ convergence $\rho^k \to \rho$ in \eqref{strong_conv_h_rho}, we further have
\be \label{conv:rhophi}
\int_{\O} \rho^{k} \varphi \dd x 
\to \int_{\O} \rho \varphi \dd x
\ \text{ as } \ k \to \infty.
\ee
Inputting \eqref{conv:DPhiDphi} and \eqref{conv:rhophi} into \eqref{weak_Poisson_k}, we conclude $(\rho, \Phi)$ is a weak solution of \eqref{eqtn:Dphi}. 

\smallskip

Second, we show $(h_{\pm}, \rho)$ is a weak solution of \eqref{def:rho}.
Recall from the construction, for any $k \in \N$, $(h^k_{\pm}, \rho^k)$ is the weak solution to \eqref{eqtn:rhok}. Therefore, given any $k \in \N$, then for any $\vartheta \in H^1_0 (\O) \cap C^\infty_c (\bar \O)$, 
\Be \label{weak_rho_k}
\int_{\O} \vartheta (x) \rho^{k} (x) \dd x 
= \int_{\O} \vartheta (x) \int _{\R^3} ( e_+ h_{+}^{k} (x, p) + e_{-} h^{k}_{-} (x, p) ) \dd p \dd x.
\Ee
Using the $L^{\infty}$ convergence $\rho^k \to \rho$ in \eqref{strong_conv_h_rho}, together with $\vartheta \in H^1_0 (\O) \cap C^\infty_c (\bar \O)$, we get
\be \label{conv:rhotheta}
\int_{\O} \vartheta \rho^{k}  \dd x 
\to \int_{\O} \vartheta \rho \dd x
\ \text{ as } \ k \to \infty.
\ee

Now we claim that for any $\vartheta \in H^1_0 (\O) \cap C^\infty_c (\bar \O)$, 
\Be \label{conv:theta_hk+}
\begin{split}
\int_{\O} \vartheta (x) \int _{\R^3} h_{+}^{k} (x, p) \dd p \dd x  
\to \int_{\O} \vartheta (x) \int _{\R^3} h_{+} (x, p) \dd p \dd x
\ \text{ as } \ k \to \infty.
\end{split}
\Ee
For the left-hand side of \eqref{conv:theta_hk+}, using \eqref{weakconv_whst}, we have
\be \notag
\begin{split}
& \int_{\O} \vartheta (x) \int _{\R^3} h_{+}^{k} (x, p) \dd p \dd x 
\\& = \int_{\O} \int_{\R^3} e^{ \frac{3 \bar{\beta}}{4} ( \sqrt{(m_{+} c)^2 + |p|^2} + \frac{1}{2 c} m_{+} g x_3) } h_{+}^{k}
\frac{\vartheta (x)}{e^{ \frac{3 \bar{\beta}}{4} ( \sqrt{(m_{+} c)^2 + |p|^2} + \frac{1}{2 c} m_{+} g x_3) } } \dd p \dd x.
\end{split}
\ee
Since $\vartheta \in H^1_0 (\O) \cap C^\infty_c (\bar \O)$, we can check that
\be \label{est:weight_theta}
\int_{\O} \int_{\R^3} \frac{|\vartheta (x)|}{e^{ \frac{3 \bar{\beta}}{4} ( \sqrt{(m_{+} c)^2 + |p|^2} + \frac{1}{2 c} m_{+} g x_3) } } \dd p \dd x < \infty.
\ee
We repeat the above steps on the right-hand side of \eqref{conv:theta_hk+}, then 
\be \label{est:theta_hk+}
\begin{split}
& \Big| \int_{\O} \vartheta (x) \int _{\R^3} h_{+}^{k} (x, p) \dd p \dd x - \int_{\O} \vartheta (x) \int _{\R^3} h_{+} (x, p) \dd p \dd x \Big|
\\& = \Big| \int_{\O} \int _{\R^3} \big( e^{ \frac{3 \bar{\beta}}{4} ( \sqrt{(m_{\pm} c)^2 + |p|^2} + \frac{1}{2 c} m_{+} g x_3) } h^{k}_{+}
- e^{ \frac{3 \bar{\beta}}{4} ( \sqrt{(m_{+} c)^2 + |p|^2} + \frac{1}{2 c} m_{+} g x_3 ) } h_{+} \big)
\\& \qquad \qquad \qquad \times \frac{\vartheta (x)}{e^{ \frac{3 \bar{\beta}}{4} ( \sqrt{(m_{+} c)^2 + |p|^2} + \frac{1}{2 c} m_{+} g x_3) } } \dd p \dd x  \Big|
\\& \leq \| e^{ \frac{3 \bar{\beta}}{4} ( \sqrt{(m_{\pm} c)^2 + |p|^2} + \frac{1}{2 c} m_{+} g x_3) } h^{k}_{+}
- e^{ \frac{3 \bar{\beta}}{4} ( \sqrt{(m_{+} c)^2 + |p|^2} + \frac{1}{2 c} m_{+} g x_3 ) } h_{+} \|_{L^\infty (\bar \O \times \R^3)}
\\& \qquad \times \int_{\O} \int_{\R^3} \frac{|\vartheta (x)|}{e^{ \frac{3 \bar{\beta}}{4} ( \sqrt{(m_{+} c)^2 + |p|^2} + \frac{1}{2 c} m_{+} g x_3) } } \dd p \dd x.
\end{split}
\ee
Using \eqref{weakconv_whst}, together with \eqref{est:weight_theta}, we prove the claim.
Analogously, we can obtain a similar argument on $h^k_{-}$ and $h_{-}$ as follows:
\Be \label{conv:theta_hk-}
\begin{split}
\int_{\O} \vartheta (x) \int _{\R^3} h_{-}^{k} (x, p) \dd p \dd x  
\to \int_{\O} \vartheta (x) \int _{\R^3} h_{-} (x, p) \dd p \dd x
\ \text{ as } \ k \to \infty.
\end{split}
\Ee
Together with \eqref{conv:theta_hk+} and \eqref{conv:theta_hk-}, we deduce 
\Be \notag
\begin{split}
& \int_{\O} \vartheta (x) \int _{\R^3} ( e_+ h_{+}^{k} (x, p) + e_{-} h^{k}_{-} (x, p) ) \dd p \dd x
\\& \to \int_{\O} \vartheta (x) \int _{\R^3} ( e_+ h_{+}(x, p) + e_{-} h_{-} (x, p) ) \dd p \dd x
\ \text{ as } \ k \to \infty.
\end{split}
\Ee
Therefore we prove that 
\be \notag
\rho(x) = \int _{\R^3} ( e_+ h_{+} (x, p) + e_{-} h_{-} (x, p) ) \dd p,
\ee
and thus $(h_{\pm}, \rho)$ is a weak solution of \eqref{def:rho}.

\smallskip

\textbf{Step 3. Regularity: Proof of \eqref{Uest:Phi_xx}-\eqref{est_final:hk_x}.}
Recall from \eqref{strong_conv_h_rho}, we have the $L^{\infty}$ convergence $\rho^k \to \rho$. This implies that
\be \notag
|\rho |_{C^{0,\delta}(\O)} \leq \sup_{k \in \N} |\rho^{k+1} |_{C^{0,\delta}(\O)}.
\ee
Together with \eqref{Uest:rho}, \eqref{est:rho_Hol^k+1} and \eqref{est:nabla^2phi}, we conclude that $\Phi$ constructed in \eqref{ae_converge_Phist} satisfies \eqref{Uest:Phi_xx}.
From Theorem \ref{theo:RS}, together with \eqref{condition:tilde_beta}, \eqref{condition:G_xv} and \eqref{Uest:Phi_xx}, one can check that the solution $(h_{\pm}, \rho, \nabla_x \Phi)$ satisfies \eqref{est_final:rho_x}, \eqref{est_final:phi_C2}, \eqref{est_final:hk_v}, and \eqref{est_final:hk_x}.

\smallskip

\textbf{Step 4. Uniqueness.}
Finally, using the weighted bound on $| \nabla_p h_{\pm} (x,p)|$ in \eqref{est_final:hk_v}, we apply Theorem \ref{theo:US}, and thus conclude the uniqueness of the solution $(h_{\pm}, \rho, \Phi)$.
\end{proof}

\section{Dynamic Solutions}  
\label{sec:DS}

In this section, we construct solutions to the dynamic problem \eqref{eqtn:f}-\eqref{Poisson_f}, and study their properties such as regularity and uniqueness. 

We begin with a brief overview of the steps outlined in this section. 
Given a steady solution 
$(h, \rho, \Phi)$, we construct a sequence $(f^{\ell+1}_{\pm}, \varrho^{\ell}, \nabla_x \Psi^{\ell})$ for $\ell \geq 1$ as described in \eqref{eqtn:fell}-\eqref{bdry:Psi_fell}.
Using mathematical induction, we establish that for any $\ell \in \N$,
\be \notag
\begin{split}
\sup_{0 \leq t < \infty} \| \nabla_x \big( \Phi + \Psi^{\ell} (t) \big) \|_{L^\infty(\O)}  
	& \leq \min \left\{\frac{m_+}{e_+}, \frac{m_-}{e_-} \right\} \times \frac{g}{2},
	\\ \sup_{0 \leq t < \infty}\| \nabla_x \Psi^{\ell} (t) \|_{L^\infty(\O)} & \leq \min \left\{\frac{m_+}{e_+}, \frac{m_-}{e_-} \right\} \times \frac{g}{48},
\end{split}
\ee
and
\be \notag
\sup_{0 \leq t < \infty} \| e^{ \frac{\beta}{2} \sqrt{(m_{\pm} c)^2 + |p|^2} } e^{ \frac{m_{\pm} g}{4 c} \beta x_3} f^{\ell+1}_{\pm} (t,x,p)  \|_{L^\infty  (\O \times \R^3)} < \infty.
\ee
These results demonstrate the dominance of downward gravity in the field and provide a weighted $L^\infty$ estimate on $f^{\ell+1}_{\pm}$ in Proposition \ref{prop:DC}.

Next, suppose $f_{\pm} (t,x,p)$ solves \eqref{eqtn:f}-\eqref{Poisson_f}. We can express $f_{\pm} (t,x,p)$ as follows:
\Be \notag
\begin{split}
	f_{\pm} (t,x,p) 
	& = \mathbf{1}_{t \leq \tBp (t,x,p)} f_{\pm} (0, \Z_{\pm} (0;t,x,p) ) 
	\\& \ \ \ \ + \int^t_{ \max\{0, t - \tBp (t,x,p)\}} e_{\pm} \nabla_x \Psi (s, \X_{\pm} (s;t,x,p)) \cdot \nabla_p h_{\pm} ( \Z_{\pm} (0;t,x,p) ) \dd s. 
\end{split}
\Ee 
Using the bound on the backward exit time $\tBp$ in \eqref{est:tB}, we obtain 
\be \notag
\mathbf{1}_{t \leq \tBp (t,x,p)} e^{ - \frac{\beta}{4} \sqrt{(m_{\pm} c)^2 + |p|^2} } e^{ - \frac{m_{\pm} g}{8 c} \beta x_3}
\leq e^{- \frac{m_{\pm} g}{24} \beta (t-1) }.
\ee
Combined with the weighted $L^\infty$ estimate on $f^{\ell+1}_{\pm} (t,x,p)$, this result leads to the conclusion of asymptotic stability in Theorem \ref{theo:AS}.

The remaining steps are analogous to the steady case. We perform a priori estimates on on $(F_{\pm}, \phi_F)$ solving \eqref{VP_F}, \eqref{Poisson_F}, \eqref{VP_0}, \eqref{Dbc:F} and \eqref{bdry:F}.
By employing the modified kinetic distance function $\alpha_{\pm, F}$ in \eqref{alpha_F}, we derive the weighted $L^\infty$ estimate on $\nabla_p F_{\pm}$ in Theorem \ref{theo:RD}.
Using the a priori estimates for the sequence $(f^{\ell+1}_{\pm}, \varrho^{\ell}, \nabla_x \Psi^{\ell})$ for $\ell \geq 1$, we show that all components in the sequence are Cauchy sequences in $L^{\infty} (\R_+ \times \O \times \R^3)$ or $L^{\infty} (\R_+ \times \O)$ (Proposition \ref{prop:cauchy_dy}).
This allows us to establish strong convergence and conclude the existence of a dynamic solution.
Finally, the uniqueness follows from a similar argument as in the proof of Proposition \ref{prop:cauchy}.

\subsection{Construction} \label{sec:DC}

Suppose $(h_{\pm}, \nabla_x \Phi)$ is the steady solution to \eqref{VP_h}-\eqref{eqtn:Dphi}. We construct solutions to the dynamic problem \eqref{eqtn:f}-\eqref{Poisson_f} via the following sequences: for any $\ell \in \N$,
\be \label{eqtn:fell}
\begin{split}
& \p_t f^{\ell+1}_{\pm}  + v_\pm \cdot \nabla_x f^{\ell+1}_{\pm} + \Big( e_{\pm} \big( \frac{v_\pm}{c} \times B - \nabla_x ( \Phi + \Psi^{\ell} ) \big) - \nabla_x ( m_\pm g x_3) \Big) \cdot \nabla_p f^{\ell+1}_{\pm} 
\\& \ \ \ \ = e_{\pm} \nabla_x \Psi^{\ell} \cdot \nabla_p h_{\pm} \ \ \text{in} \ \R_+ \times  \O \times \R^3,
\end{split}
\ee
and
\begin{align}
f^{\ell+1}_{\pm} (t,x,p) = 0 
& \ \ \text{on} \ \gamma_-,
\label{bdry:fell} \\
f^{\ell+1}_{\pm, 0} (x, p) = F_{\pm, 0} - h_{\pm} 
& \ \ \text{in} \ \{ 0 \} \times \O \times \R^3, 
\label{initial:fell} \\
\varrho^{\ell} (t, x) 
=  \int_{\R^3} ( e_+ f^{\ell}_+ + e_{-} f^{\ell}_{-} ) \dd p 
& \ \ \text{in} \ \R_+ \times \O, &
\label{varrhoell}	\\
- \Delta \Psi^\ell (t, x) = \varrho^\ell (t, x) 
& \ \ \text{in} \ \R_+ \times \O,
\label{Poisson_fell} \\
\Psi^\ell (t, x) = 0 
& \ \ \text{on} \ \R_+ \times \p\O,
\label{bdry:Psi_fell}
\end{align}
where $v_\pm = \frac{p}{\sqrt{ m_\pm^2 + |p|^2/ c^2}}$ and the initial setting $f^0_{\pm} = 0$ and $(\varrho^0,  \nabla_x \Psi^0) = (0, \mathbf{0})$.

\smallskip

First, we construct $f^1_{\pm}$ solving \eqref{eqtn:fell}-\eqref{initial:fell} from $(\varrho^0, \nabla_x \Psi^0) = (0, \mathbf{0})$. 
We consider the Lagrangian formulation of the following characteristics:
\Be \label{Z1}
\Z^{1}_{\pm} (s;t,x,p) = ( \X^{1}_{\pm} (s;t,x,p), \P^{1}_{\pm} (s;t,x,p) ),
\Ee
which solves
\Be \notag 
\big( \X^{1}_{\pm} (t;t,x,p), \P^{1}_{\pm} (t;t,x,p) \big) = (x, p), 
\Ee
and
\Be \label{ODE1}
\\ \frac{d \X^1_{\pm}}{d t} = \V^1_{\pm} = \frac{ \P^1_{\pm} }{\sqrt{m^2_{\pm} + |P^1_{\pm}|^2 / c^2}}, \ \
\frac{d \P^1_{\pm} }{d t} = e_{\pm} \big( \V^1_{\pm} \times \frac{B}{c} - \nabla_x \Phi \big) - {m_{\pm}} g \mathbf{e}_3. 
\Ee
Then from \eqref{varrhoell}, as long as it is well-defined, we obtain
\[
\varrho^{1} (t, x) 
=  \int_{\R^3} ( e_+ f^{1}_+ + e_{-} f^{1}_{-} ) \dd p.
\]
Using \eqref{phi_rho} and Lemma \ref{lemma:G}, together with \eqref{Poisson_fell} and \eqref{bdry:Psi_fell}, we derive $\Psi^1$ and $\nabla_x \Psi^1$ by
\be \notag
- \Delta \Psi^1 (t, x) = \varrho^1 (t, x).
\ee

\smallskip

Second, we construct $(f^{\ell+1}_{\pm}, \varrho^{\ell}, \nabla_x \Psi^{\ell})$ solving \eqref{eqtn:fell}-\eqref{bdry:Psi_fell} for $\ell \geq 1$ by iterating the process. Given $\nabla_x \Psi^{\ell}$, we construct $f^{\ell+1}_{\pm}$ along the characteristics as follows:
\Be \label{Zell}
\Z^{\ell+1}_{\pm} (s;t,x,p) 
= (\X^{\ell+1}_{\pm} (s;t,x,p), \V^{\ell+1}_{\pm} (s;t,x,p) ), 
\Ee
which solves
\Be \notag 
\big( \X^{\ell+1}_{\pm} (t;t,x,p), \P^{\ell+1}_{\pm} (t;t,x,p) \big) = (x, p), 
\Ee
and
\Be \label{ODEell}
\begin{split}
& \frac{d \X^{\ell+1}_{\pm} }{ds} = \V^{\ell+1}_{\pm} = \frac{ \P^{\ell+1}_{\pm} }{\sqrt{m^2_{\pm} + |P^{\ell+1}_{\pm}|^2 / c^2}},   
\\& \frac{d \P^{\ell+1}_{\pm} }{ds} = e_{\pm} \big( \V^{\ell+1}_{\pm} \times \frac{B}{c} - \nabla_x ( \Phi + \Psi^\ell ) \big) - {m_{\pm}} g \mathbf{e}_3.
\end{split}
\Ee
Using the Peano theorem, together with the bound of $\varrho^\ell$ (see Proposition \ref{prop:DC}) and the continuity of $\nabla_x \Psi^\ell$ (see \eqref{phi_rho}), we conclude the existence of solutions (not necessarily unique). 
Then we obtain $\varrho^{\ell+1}$ from \eqref{varrhoell} and solve $\nabla_x \Psi^{\ell+1}$ from \eqref{Poisson_fell} and \eqref{bdry:Psi_fell}.

\smallskip

Analogous to Definition \ref{def:tb}, we define the backward exit time, position, and momentum for every iteration: for any $\ell \in \N$,
\Be \label{def:tB_l}
\begin{split} 
&	\tBp^\ell(t,x,p) : = \sup\{ s  \in [0,\infty) : \X_{\pm, 3}^\ell (t-\tau; t,x,p)  \ \text{for all }  \tau \in (0, s)\}  \geq 0, 
\\& \xBp^{\ell} (t,x,p) := \X^{\ell}_{\pm} (t - \tBp^{\ell}(t,x,p); t,x,p), \ \
\pBp^{\ell} (t,x,p) := \P^{\ell}_{\pm} (t - \tBp^{\ell}(t,x,p); t,x,p).
\end{split}
\Ee

\smallskip

Further, similar to \eqref{def:flux}, \eqref{cont_eqtn}, and \eqref{identity:Psi_t}, we define the following: for any $\ell \in \N$,
\begin{align}
b^{\ell} (t,x) := \int_{\R^3} (v_+ e_+ f^{\ell}_+ + v_{-} e_{-} f^{\ell}_{-} ) \dd p \ \ \text{in} \ \R_+ \times \O,\label{def:flux_ell} \\
\p_t \varrho^\ell + \nabla_x \cdot b^\ell =0 \ \ \text{in} \ \R_+ \times \O,
\label{cont_eqtn_ell} \\
\p_t \Psi^\ell (t,x)  = (-\Delta_0)^{-1} \p_t \varrho^\ell(t,x)  = - (-\Delta_0)^{-1}  (\nabla_x \cdot b^\ell) (t,x)\ \ \text{in} \ \R_+ \times \O. \label{identity:Psi_t_ell}
\end{align}

\smallskip

In addition, similar to \eqref{w^F}, we define the dynamic weight function for every iteration: for any $\ell \in \N$,
\Be \label{w^ell}
\w^{\ell+1}_{\pm} (t,x,p) = \w^{\ell+1}_{\pm, \beta} (t,x,p) = e^{ \beta \left( \sqrt{(m_{\pm} c)^2 + |p|^2} + \frac{1}{c} \big( e \big( \Phi (x) + \Psi^\ell (t,x) \big) + m_{\pm} g x_3 \big) \right) }.
\Ee
and at the initial time $t=0$, 
\Be \label{w^ell_t=0}
\w^{\ell+1}_{\pm} (0,x,p) = \w^{\ell+1}_{\pm, \beta, 0} (x,p)
= e^{ \beta \left( \sqrt{(m_{\pm} c)^2 + |p|^2} + \frac{1}{c} \big( e_{\pm} ( \Phi (x) + \Psi^{\ell} (0,x) ) + m_{\pm} g x_3 \big) \right) }.
\Ee
From Remark \ref{rmk:w_dy} and \eqref{identity:Psi_t_ell}, each dynamic weight is not invariant along the dynamic characteristics: for any $\ell \in \N$ and $s \in [t - \tBp^{\ell+1} (t,x,p), t + \tFp^{\ell+1} (t,x,p)]$,
\Be \label{dDTE^ell}
\begin{split}
& \frac{d}{ds} \Big( \sqrt{(m_{\pm} c)^2 + |\P^{\ell+1}_{\pm} (s;t,x,p)|^2} + \frac{1}{c} \big( e_{\pm} \big( \Phi (\X^{\ell+1}_{\pm} (s;t,x,p)) 
\\& \qquad + \Psi^{\ell} (s, \X^{\ell+1}_{\pm} (s;t,x,p)) \big) + m_{\pm} g \X^{\ell+1}_{\pm, 3} (s;t,x,p) \big) \Big) 
\\& = \frac{e_{\pm}}{c} \p_t \Psi^{\ell} (s,\X^{\ell+1}_{\pm} (s;t,x,p)) 
\\& = \frac{e_{\pm}}{c} (-\Delta_0)^{-1} \p_t \varrho^\ell(t,x) 
= - \frac{e_{\pm}}{c} (-\Delta_0)^{-1}  (\nabla_x \cdot b^{\ell} ) (s, \X^{\ell+1}_{\pm} (s;t,x,p)).
\end{split}
\Ee
Using \eqref{w:bdry} and \eqref{bdry:Psi_fell}, we derive that, for any $\ell \in \N$,
\be \label{w_bdry}
\w^{\ell+1}_{\pm, \beta} (t,x,p) 
= w_{\pm, \beta} (x,p)
= e^{\beta \sqrt{(m_{\pm} c)^2 + |p|^2} } 
\ \ \text{on} \ (x,p) \in \p\O \times \R^3.
\ee
On the other hand, \eqref{W_t=0} and \eqref{initial:fell} show that, for any $\ell \in \N$, 
\be \label{w_initial}
\w^{\ell+1}_{\pm, \beta} (0,x,p) = \w_{\pm, \beta, 0} (x, p)
\ \ \text{in} \ (x,p) \in \bar \O \times \R^3.
\ee

\smallskip

Finally, we define the following: for any $\ell \in \N$,
\be \label{def:Fell}
F^{\ell+1}_{\pm} (t,x,p) = h_{\pm} (x,p) + f^{\ell+1}_{\pm} (t,x,p).
\ee
Under direct computation, we obtain that, for any $\ell \in \N$,
\be \label{eqtn:Fell}
\begin{split}
& \p_t F^{\ell+1}_{\pm}  + v_\pm \cdot \nabla_x F^{\ell+1}_{\pm} + \Big( e_{\pm} \big( \frac{v_\pm}{c} \times B - \nabla_x ( \Phi + \Psi^{\ell} ) \big) - \nabla_x ( m_\pm g x_3) \Big) \cdot \nabla_p F^{\ell+1}_{\pm} 
\\& \ \ \ \ = 0 \ \ \text{in} \ \R_+ \times  \O \times \R^3,
\end{split}
\ee
and
\be \label{bdry_initial:Fell}
F^{\ell+1}_{\pm} (0, x, p) = F_{\pm, 0} 
\ \ \text{in} \ \O \times \R^3,
\ \
F^{\ell+1}_{\pm} (t, x, p) = G_{\pm} (x,p)
\ \ \text{on} \ \gamma_-.
\ee
We also define 
\be \label{eqtn:phiFell}
\phi_{F^{\ell}} (t, x) = \Phi (x) + \Psi^{\ell} (t, x),
\ee
which solves the following Poisson equation: 
\Be \label{Poisson_Fell}
\begin{split}
- \Delta \phi_{F^{\ell}} (t, x)
= \int_{\R^3} ( e_+ F^{\ell}_+ + e_{-} F^{\ell}_{-} ) \dd p
\ \ \text{in} \ \R_+ \times \O,
\ \
\phi_{F^{\ell}} (t, x) = 0 \ \ \text{on} \ \R_+ \times \p\O.
\end{split}
\Ee
One can check that for any $\ell \in \N$, $F^{\ell+1}_{\pm}$ and $f^{\ell+1}_{\pm}$ share the same characteristic $\Z$ in \eqref{Z1}, \eqref{ODE1} and \eqref{Zell}, \eqref{ODEell}. Thus, they also share the same backward exit time, position, and momentum for every iteration.

\begin{lemma} \label{lem:tB_ell}

(a)
Suppose the following condition holds: for any $\ell \in \N$,
\Be \label{Bootstrap_ell}
\sup_{0 \leq t < \infty} \| \nabla_x \big( \Phi + \Psi^{\ell} (t) \big) \|_{L^\infty(\O)}  
\leq \min \left\{\frac{m_+}{e_+}, \frac{m_-}{e_-} \right\} \times \frac{g}{2}.
\Ee 
Then for $(t,x,p) \in [0, \infty) \times  \bar\O \times \R^3$ satisfying $t - \tBp^{\ell + 1} (t,x,p) \geq 0$, the backward exit time is bounded by	
\Be \label{est:tB_ell}
\begin{split}
\tBp^{\ell + 1} (t,x,p) 
& \leq \frac{1}{\min \{ \frac{g}{4 \sqrt{2}}, \frac{c}{\sqrt{10}} \}} \times \max \left\{ x_3, \ \frac{2 c}{m_{\pm} g} \big( \sqrt{(m_{\pm} c)^2 + |p|^2} - m_{\pm} c + \frac{3 m_{\pm} g}{2 c} x_{3} - p_3 \big) \right\}
\\& \qquad + 1 + \frac{4}{m_{\pm} g} |p|,
\\ \tBp^{\ell + 1} (t,x,p) 
& \leq | \pBpn^{\ell + 1} (t,x, p) | \Big( \frac{2}{m_{\pm} g}  + \frac{ 16 (\sqrt{c (m_{\pm})^2 g} + | \pBpn^{\ell + 1} (t,x,p) |)}{c (m_{\pm})^3 g^2} 
\\& \qquad \qquad \qquad \qquad \qquad \times \big(1 + \sqrt{(m_{\pm} c)^2 + |\pBp^{\ell + 1} (t,x,p)|^2} + 2 \pBpn^{\ell + 1} (t,x, p) \big) \Big).
\end{split}
\Ee
Moreover, the forward exit time is bounded by
\be \label{est:tF_ell}
\begin{split}
\tFp^{\ell + 1} (t,x,p) 
& \leq \frac{1}{\min \{ \frac{g}{4 \sqrt{2}}, \frac{c}{\sqrt{10}} \}} \times \max \left\{ x_3, \ \frac{2 c}{m_{\pm} g} \big( \sqrt{(m_{\pm} c)^2 + |p|^2} - m_{\pm} c + \frac{3 m_{\pm} g}{2 c} x_{3} + p_3 \big) \right\}
\\& \qquad + \frac{4}{m_{\pm} g} |p| + 1.
\end{split}
\ee

(b)
In addition, if for any $\ell \in \N$,
\be \label{Bootstrap_psi_ell}
\sup_{0 \leq t < \infty}\| \nabla_x \Psi^{\ell} (t) \|_{L^\infty(\O)} \leq \min \left\{\frac{m_+}{e_+}, \frac{m_-}{e_-} \right\} \times \frac{g}{48}.
\ee
Then for $(t,x,p) \in [0, \infty) \times  \bar\O \times \R^3$ satisfying $t - \tB^{\ell + 1} (t,x,p) \geq 0$, the backward exit time is bounded by
\Be \label{est2:tB_ell}
\begin{split}
\tBp^{\ell + 1} (t,x,p) 
& \leq \frac{8}{m_{\pm} g} \sqrt{(m_{\pm} c)^2 + |\pBp^{\ell + 1} (t,x,p)|^2}.
\end{split}
\Ee
Moreover, for all $s \in [t - \tBp^{\ell + 1} (t,x,p), t + \tFp^{\ell + 1} (t,x,p)]$,
\Be \label{est:x3ppB_ell}
\sqrt{(m_{\pm} c)^2 + |p|^2} + \frac{m_{\pm} g}{2 c} x_3
\leq \frac{7}{6} \sqrt{(m_{\pm} c)^2 + |\pBp^{\ell + 1} (t,x,p)|^2}.
\ee
\end{lemma}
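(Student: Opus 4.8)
The plan is to recognize that Lemma \ref{lem:tB_ell} is nothing more than the "$\ell$-th iterate version" of Propositions \ref{lem:tB}, \ref{lem2:tB}, and \ref{lem3:tB}, and to run exactly the same arguments on the iterated characteristics $\Z^{\ell+1}_{\pm}$ solving \eqref{ODEell}. The only structural difference between \eqref{ODEell} and the true dynamic characteristics \eqref{ODE_F} is that the self-consistent potential $\Psi$ is replaced by the $\ell$-th iterate $\Psi^{\ell}$, and the crucial point is that the proofs of Propositions \ref{lem:tB}--\ref{lem3:tB} never used that $\Psi$ is self-consistent: they used only the pointwise field bounds \eqref{Uest:DPhi}, \eqref{Uest:DxPsi}, \eqref{Uest:Dxphi_F}, together with the sign structure $\V\times B = (B_3\V_2,-B_3\V_1,0)$ of the magnetic force and the zero Dirichlet condition $\Phi_h=\Psi^{\ell}=0$ on $\p\O$. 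Hypothesis \eqref{Bootstrap_ell} plays the role of \eqref{Uest:Dxphi_F} (equivalently \eqref{Bootstrap2}), and hypothesis \eqref{Bootstrap_psi_ell} plays the role of \eqref{Uest:DxPsi}; since $\Psi^{\ell}(t,\cdot)\in W^{1,p}(\O)$ for $p>1$ by the elliptic regularity \eqref{phi_rho} (and $\nabla_x\Phi\in W^{1,p}$ by Theorem \ref{theo:CS}), the iterated characteristics and their backward/forward exit times $\tBp^{\ell+1},\tFp^{\ell+1}$ are well-defined in the sense of Definition \ref{def:tb}.

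First I would prove part (a). Under \eqref{Bootstrap_ell} we get, exactly as in \eqref{upper_V_3_dy}, that $\tfrac{d}{ds}\P^{\ell+1}_{\pm,3}(s;t,x,p)\le -\tfrac{m_\pm g}{2}$ for $0\le s\le t$, hence $\P^{\ell+1}_{\pm,3}$ is strictly decreasing, and by \eqref{sign:v3_dy} $\sgn\V^{\ell+1}_{\pm,3}=\sgn\P^{\ell+1}_{\pm,3}$. This gives a unique vertical peak $s^*$ and, repeating verbatim Steps 1--3 of the proof of Proposition \ref{lem:tB} (the energy estimate \eqref{eq:energy_conservation_dy} now holds for $\Psi^{\ell}$ by the computation in Lemma \ref{lem:conservation_law}(b), and $|\V^{\ell+1}_{\pm}|\le c$ is automatic), we obtain the peak bound \eqref{est:x3_dy} and then the travel-time bound, i.e.\ the first line of \eqref{est:tB_ell}. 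The second line of \eqref{est:tB_ell} is the iterate analogue of \eqref{est:tBpB3}: one reruns the proof of Proposition \ref{lem2:tB} with $\Phi_h\rightsquigarrow\Phi+\Psi^{\ell}$, using \eqref{Bootstrap_ell} to get the two-sided bound $-\tfrac{3m_\pm g}{2}\le \tfrac{d}{ds}\P^{\ell+1}_{\pm,3}\le -\tfrac{m_\pm g}{2}$, then the estimate \eqref{est:pB>p} (whose iterate form follows from Lemma \ref{lem:conservation_law}(b) and $|\V^{\ell+1}_\pm|\le c$, giving the same integral-of-$\ln$ bound via the elementary inequality \eqref{est:A(x)}). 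The forward estimate \eqref{est:tF_ell} is the mirror image, exactly as \eqref{est:tF} mirrors \eqref{est:tB}. Part (b) is then the iterate version of Proposition \ref{lem3:tB}: under the \emph{sharper} field bound \eqref{Bootstrap_psi_ell} one has $\sup_t\|e_\pm\nabla_x\Psi^{\ell}(t)\|_{L^\infty}\le \tfrac{m_\pm g}{48}$, so repeating the proof of Proposition \ref{lem3:tB} (the key input being $\tfrac{1}{c}\int e\nabla_x\Psi^{\ell}\cdot\V^{\ell+1}\,d\tau\le \tfrac{m_\pm g}{48}\tBp^{\ell+1}$) yields the linear bound \eqref{est2:tB_ell} and the energy comparison \eqref{est:x3ppB_ell}.

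I do not expect any genuine obstacle here — the statement is designed precisely so that the uniform field bounds \eqref{Bootstrap_ell}--\eqref{Bootstrap_psi_ell} (which will be verified by induction in Proposition \ref{prop:DC}) let the single-trajectory results carry over mechanically. The one point requiring a line of care is confirming that all the ingredients of the earlier proofs that looked self-consistency-dependent are in fact not: in particular that the conservation-law identities of Lemma \ref{lem:conservation_law}(b) hold for any $\Psi$ along the ODE it generates (they do, by the direct differentiation there), and that the well-posedness of the backward exit time only needs $\nabla_x\Psi^{\ell}(t,\cdot)\in W^{1,p}$ with $p>1$, which is supplied by the Green-function representation \eqref{phi_rho} once $\varrho^{\ell}$ is bounded (Proposition \ref{prop:DC}). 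With those observations in place, the proof reduces to the statement: "apply Propositions \ref{lem:tB}, \ref{lem2:tB}, \ref{lem3:tB} with $(\Phi_h,\Psi)$ replaced by $(\Phi,\Psi^{\ell})$," and I would write it that way, indicating the substitutions and citing the relevant displayed equations rather than reproducing the computations.
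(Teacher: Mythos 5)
Your proposal is correct and matches the paper exactly: the paper omits the proof of Lemma \ref{lem:tB_ell}, stating only that it follows directly from Propositions \ref{lem:tB}, \ref{lem2:tB} and \ref{lem3:tB} applied to the iterated characteristics, which is precisely the substitution argument you describe. Your additional remarks verifying that those proofs use only the pointwise field bounds (and not self-consistency of $\Psi$) are a sound justification of the step the paper leaves implicit.
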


\begin{proof}

Here we omit the proof since it directly follows from Propositions \ref{lem:tB}, \ref{lem2:tB} and \ref{lem3:tB}.
\end{proof}

\subsection{Asymptotic Stability Criterion} \label{sec:AS}

In this section, we always assume $f_{\pm} (t,x,p)$ is a Lagrangian solution to \eqref{eqtn:f}-\eqref{Poisson_f} for a given  $\nabla_p h$.
The main purpose of this section is to prove Theorem \ref{theo:AS}, which shows a conditional asymptotic stability result of the dynamic perturbation. 

From \eqref{Lform:f}, the Lagrangian formulation of $f$ is 
\Be \label{form:f}
f_{\pm} (t,x,p) = \mathcal{I}_{\pm} (t,x,p) + \mathcal{N}_{\pm} (t,x,p), 
\Ee 
where 
\be \label{form:I}
\mathcal{I}_{\pm} (t,x,p ) 
:= \mathbf{1}_{t \leq \tB (t,x,p)} f_{\pm} (0, \Z_{\pm} (0;t,x,p) ),  
\ee
and
\be \label{form:N}
\mathcal{N}_{\pm} (t,x,p ) 	
:= \int^t_{ \max\{0, t - \tB (t,x,p)\}} e_{\pm} \nabla_x \Psi (s, \X_{\pm} (s;t,x,p)) \cdot \nabla_p h_{\pm} ( \Z_{\pm} (0;t,x,p) ) \dd s. 
\ee

\begin{lemma} \label{lem:w/w}

Suppose the conditions \eqref{Uest:DPhi}, \eqref{Uest:DxPsi} and \eqref{Uest:Dxphi_F} hold. Recall $w_\beta(x,p)$ in \eqref{w^h}, $\w_\beta (t,x,p)$ in \eqref{w^F}, and $\Z = (\X, \P)$ solving \eqref{ODE_F}. 
Further, we assume that
\be \label{est:beta_b_dy}
\beta \frac{e_{\pm}}{c} \| (-\Delta_0)^{-1} (\nabla_x \cdot b ) \|_{L^\infty_{t,x}} 
\leq 1 \leq \frac{\beta}{2}.
\ee
Consider $(t,x,p) \in [0, \infty) \times  \bar\O \times \R^3$, for any $s, s^\prime \in [\max \{0, t -\tBp (t,x,p) \}, t + \tFp (t,x,p)]$,
\Be \label{est:1/w_h}
\begin{split}
\frac{\w_{\pm, \beta} (s^\prime, \Z_{\pm} (s^\prime;t,x,p) )}{\w_{\pm, \beta} (s, \Z_{\pm} (s ;t,x,p))}
& \leq e^{\beta \frac{e_{\pm}}{c} \| (-\Delta_0)^{-1} (\nabla_x \cdot b ) \|_{L^\infty_{t,x}} 
\big( \frac{ \frac{4 c}{m_{\pm} g} |p| + 4 x_{3} }{c_1} + \frac{8}{m_{\pm} g} |p| + 2 \big) }  ,
\\ \frac{1}{\w_{\pm, \beta} (s, \Z_{\pm} (s;t,x,p))}  
& \leq e^{2 \beta \frac{e_{\pm}}{c} \| (-\Delta_0)^{-1} (\nabla_x \cdot b ) \|_{L^\infty_{t,x}} } e^{ - \frac{\beta}{2} \sqrt{(m_{\pm} c)^2 + |p|^2} } e^{ - \frac{m_{\pm} g}{4 c} \beta x_3},
\end{split}
\Ee
where $c_1 := \min \{ \frac{g}{4 \sqrt{2}}, \frac{c}{\sqrt{10}} \}$.
Moreover, recall $w_{\pm, \beta} (x,p)$ in \eqref{w^h}, then
\Be \label{est:1/w}
\frac{1}{w_{\pm, \beta} ( \Z_{\pm} (s;t,x,p))}   
\leq e^{2 \beta \frac{e_{\pm}}{c} \| (-\Delta_0)^{-1} (\nabla_x \cdot b ) \|_{L^\infty_{t,x}} } e^{- \frac{\beta}{4} \sqrt{(m_{\pm} c)^2 + |p|^2} } e^{- \frac{m_{\pm} g}{8 c} \beta x_3}.
\Ee
\end{lemma}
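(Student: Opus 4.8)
\textbf{Proof plan for Lemma \ref{lem:w/w}.}

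The plan is to exploit the key identity \eqref{dDTE}, which expresses the derivative of the dynamic total energy along the characteristics $\Z_{\pm}(\cdot; t,x,p)$ purely in terms of $\p_t \Psi = (-\Delta_0)^{-1}\p_t\varrho = -(-\Delta_0)^{-1}(\nabla_x\cdot b)$. Since $\log \w_{\pm,\beta}$ is exactly $\beta$ times this dynamic total energy, we get
\[
\frac{d}{ds}\log \w_{\pm,\beta}\big(s,\Z_{\pm}(s;t,x,p)\big)
= \beta\,\frac{e_{\pm}}{c}\,\p_t\Psi\big(s,\X_{\pm}(s;t,x,p)\big)
= -\beta\,\frac{e_{\pm}}{c}\,(-\Delta_0)^{-1}(\nabla_x\cdot b)\big(s,\X_{\pm}(s;t,x,p)\big).
\]
Integrating this over $[s,s']$ (or $[s,t]$) and bounding the integrand in absolute value by $\beta\frac{e_\pm}{c}\|(-\Delta_0)^{-1}(\nabla_x\cdot b)\|_{L^\infty_{t,x}}$ immediately yields that the ratio in the first line of \eqref{est:1/w_h} is at most $\exp\big(\beta\frac{e_\pm}{c}\|(-\Delta_0)^{-1}(\nabla_x\cdot b)\|_{L^\infty_{t,x}}\cdot |s-s'|\big)$. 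The only remaining task for that line is to bound the time span $|s-s'|$, which is at most $\tBp(t,x,p)+\tFp(t,x,p)$; here I would invoke Corollary \ref{cor:max_X_dy} together with \eqref{est:tB} and \eqref{est:tF} to get $\tBp+\tFp \le \frac{1}{c_1}\big(\frac{4c}{m_\pm g}|p|+4x_3\big) + \frac{8}{m_\pm g}|p|+2$ (using $\max\{x_3,\frac{2c}{m_\pm g}(\cdots)\}\le \frac{4c}{m_\pm g}|p|+3x_3$ from the corollary, plus the $x_3$ coming from either endpoint). This accounts for the constant appearing in the exponent of the first estimate.

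Next, for the second line of \eqref{est:1/w_h}, I would write $\w_{\pm,\beta}(s,\Z_{\pm}(s;t,x,p))^{-1} = \w_{\pm,\beta}(t,x,p)^{-1}\cdot \frac{\w_{\pm,\beta}(t,x,p)}{\w_{\pm,\beta}(s,\Z_{\pm}(s;t,x,p))}$. The second factor is a ratio of the type just estimated, but now the time span is only $|t-s|\le \max\{\tBp,\tFp\}$, so by the same Gronwall/integration argument and the bound $\beta\frac{e_\pm}{c}\|(-\Delta_0)^{-1}(\nabla_x\cdot b)\|_{L^\infty_{t,x}}\le 1$ from \eqref{est:beta_b_dy}, the ratio is bounded by $e^{\max\{\tBp,\tFp\}}$... actually more carefully, by $e^{\beta\frac{e_\pm}{c}\|\cdots\|\cdot|t-s|}$, and one uses the definition of $\w_{\pm,\beta}$ at the point $(t,x,p)$ together with $\Phi_h,\Psi=0$ on $\p\O$ and the sign/size controls on $\nabla_x\Phi_h$, $\nabla_x\Psi$ from \eqref{Uest:DPhi}, \eqref{Uest:DxPsi} to lower-bound $\w_{\pm,\beta}(t,x,p)\ge e^{\beta(\sqrt{(m_\pm c)^2+|p|^2} + \frac{m_\pm g}{2c}x_3)}$ hence $\w_{\pm,\beta}(t,x,p)^{-1}\le e^{-\beta\sqrt{(m_\pm c)^2+|p|^2}}e^{-\frac{m_\pm g}{2c}\beta x_3}$. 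Combining, with the $e^{\max\{\tBp,\tFp\}\cdot 1}$ absorbed: here I expect the cleanest route is instead to absorb the Gronwall factor into a loss of a fraction of the exponent, i.e. bound $e^{\beta\frac{e_\pm}{c}\|\cdots\|(t-s)}\cdot e^{-\beta(\cdots)}$ and use $\beta\frac{e_\pm}{c}\|\cdots\|\le 1$ to reduce $-\beta$ to $-\beta/2$; this is where the $e^{-\frac{\beta}{2}\sqrt{(m_\pm c)^2+|p|^2}}e^{-\frac{m_\pm g}{4c}\beta x_3}$ and the prefactor $e^{2\beta\frac{e_\pm}{c}\|\cdots\|}$ come from, the latter being a crude bound on whatever constant remains after the split. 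Finally, \eqref{est:1/w} and the $w_{\pm,\beta}$ (steady-weight) version follow the same way: $w_{\pm,\beta}$ is invariant along the \emph{steady} characteristics but not the dynamic ones, so along $\Z_{\pm}$ one picks up the same $\p_t\Psi$-type error plus the difference between $\Phi_h$ and $\Phi_h+\Psi$; since $w_{\pm,\beta}$ and $\w_{\pm,\beta}$ differ only by $e^{\beta\frac{e_\pm}{c}(-\Delta_0)^{-1}\varrho}$ which is bounded using \eqref{Uest:DxPsi}, I would deduce \eqref{est:1/w} from the second line of \eqref{est:1/w_h} after shaving the exponent from $\beta/2$ down to $\beta/4$ to swallow that extra factor.

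The main obstacle I anticipate is purely bookkeeping: tracking the exact numerical coefficients in the exponents so that the losses from the Gronwall step (controlled by $\beta\frac{e_\pm}{c}\|(-\Delta_0)^{-1}(\nabla_x\cdot b)\|_{L^\infty_{t,x}}\le 1$) and from the travel-time bounds of Proposition \ref{lem:tB} / Corollary \ref{cor:max_X_dy} combine to leave precisely a $\beta/2$ (resp.\ $\beta/4$) decay rate in $\sqrt{(m_\pm c)^2+|p|^2}$ and $x_3$, with all residual terms lumped into the harmless prefactor $e^{2\beta\frac{e_\pm}{c}\|\cdots\|}$. There is no genuine analytic difficulty beyond \eqref{dDTE}; the content is entirely in organizing these estimates in the right order and using $\Phi_h=\Psi=0$ on $\p\O$ together with the field bounds \eqref{Uest:DPhi}, \eqref{Uest:DxPsi}, \eqref{Uest:Dxphi_F} to convert potentials into the clean exponential-in-$x_3$ weights.
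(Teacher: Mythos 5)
Your plan coincides with the paper's proof: the paper integrates \eqref{dDTE} to obtain the exact multiplicative formula for $\w_{\pm,\beta}$ along $\Z_{\pm}$, bounds $|s-s'|$ by $\tBp+\tFp$ via \eqref{est:tB}--\eqref{est:tF}, and gets the second line of \eqref{est:1/w_h} by setting $s'=t$, using the lower bound $\w_{\pm,\beta}(t,x,p)\ge e^{\beta(\sqrt{(m_{\pm}c)^2+|p|^2}+\frac{m_{\pm}g}{2c}x_3)}$ and absorbing the Gronwall factor into half the exponent, exactly as you describe. Your deduction of \eqref{est:1/w} from the pointwise relation $w_{\pm,\beta}=\w_{\pm,\beta}\,e^{-\beta\frac{e_{\pm}}{c}\Psi}$, bounding $|\Psi(s,\X_{\pm}(s;t,x,p))|$ through \eqref{Uest:DxPsi}, the Dirichlet condition and Corollary \ref{cor:max_X_dy}, is just a repackaging of the paper's computation of $\frac{d}{ds}w_{\pm,\beta}(\Z_{\pm}(s;t,x,p))$ with the split $\nabla_x\Psi\cdot\V_{\pm}=\frac{d}{ds}\Psi-\p_t\Psi$, and it produces the same $\frac{\beta}{4}$ and $\frac{m_{\pm}g}{8c}\beta$ exponents.
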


\begin{proof}

We abuse the notation as in \eqref{abuse} in the proof.
From \eqref{dDTE}, we get
\Be \label{dEC}
\begin{split}
& \frac{d}{ds} \Big( \sqrt{(m_{\pm} c)^2 + |\P_{\pm} (s;t,x,p)|^2} + \frac{1}{c} \big( e_{\pm} \big( \Phi_h (\X_{\pm} (s;t,x,p)) 
\\& \qquad \qquad + \Psi (s, \X_{\pm} (s;t,x,p)) \big) + m_{\pm} g \X_{\pm, 3} (s;t,x,p) \big) \Big)
\\& = \frac{e_{\pm}}{c} \p_t \Psi (s,\X_{\pm} (s;t,x,p)) 
= \frac{e_{\pm}}{c} (-\Delta_0)^{-1} \p_t \varrho (s, \X_{\pm} (s;t,x,p)) 
\\& = - \frac{e_{\pm}}{c} (-\Delta_0)^{-1}  (\nabla_x \cdot b ) (s, \X_{\pm} (s;t,x,p)).
\end{split}
\Ee
Using \eqref{dEC}, together with the dynamic weight function $\w_{\pm, \beta} (t,x,p)$ in \eqref{w^F}, we derive
\Be	\label{w(Z)_s}
\begin{split}
& \frac{d}{ds} \w_\beta (s, \X (s;t,x,p), \P (s;t,x,p))
\\& = \w_\beta (s, \X  (s;t,x,p), \P (s;t,x,p)) \big( - \beta \frac{e}{c} (-\Delta_0)^{-1}  (\nabla_x \cdot b ) (s, \X (s;t,x,p)) \big).
\end{split}
\Ee 
Thus, if $\max \{0, t-\tB  (t,x,p) \} \leq s, s^\prime \leq t + \tF  (t,x,p)$, then 
\Be \label{est:w_ell}
\begin{split}
& \w_\beta  (s, \X (s;t,x,p), \P (s;t,x,p))
\\& = \w _\beta (s^\prime,\X  (s^\prime;t,x,p), \P (s^\prime;t,x,p)) 
e^{ - \beta \frac{e}{c}  
 \int^s_{s^\prime} (-\Delta_0)^{-1} (\nabla_x \cdot b  )(\tau, \X (\tau;t,x,p)) \dd \tau }.
\end{split}
\Ee 
Using \eqref{cont_eqtn}, \eqref{est:tB} and \eqref{est:tF}, we bound the integration term above by 
\Be \label{est1:w_ell}
\int^s_{s^\prime} (-\Delta_0)^{-1} (\nabla_x \cdot b  )(\tau, \X (\tau;t,x,p)) \dd \tau
\leq |s-s^\prime| \| (-\Delta_0)^{-1} (\nabla_x \cdot b ) \|_{L^\infty_{t,x}},
\ee
and further, by setting $c_1 := \min \{ \frac{g}{4 \sqrt{2}}, \frac{c}{\sqrt{10}} \}$, we have
\Be \label{est:expo_w_ell}
\begin{split}
& |s-s^\prime| \| (-\Delta_0)^{-1} (\nabla_x \cdot b ) \|_{L^\infty_{t,x}}
\\& \leq |t_{\mathbf{B }} (t,x,p)+ t_{\mathbf{F} }  (t,x,p)| \| (-\Delta_0)^{-1} (\nabla_x \cdot b ) \|_{L^\infty_{t,x}}
\\& \leq 
\Big\{ \frac{x_3 + \frac{2 c}{m g} \big( \sqrt{(m c)^2 + |p|^2} - mc + \frac{3 m g}{2 c} x_{3} + |p_3| \big)}{c_1} + \frac{8}{m g} |p| + 2 \Big\}
\| (-\Delta_0)^{-1} (\nabla_x \cdot b ) \|_{L^\infty_{t,x}}
\\& \leq 
\big( \frac{ \frac{4 c}{m g} |p| + 4 x_{3} }{c_1} + \frac{8}{m g} |p| + 2 \big)
\| (-\Delta_0)^{-1} (\nabla_x \cdot b ) \|_{L^\infty_{t,x}}.
\end{split}
\Ee 
and obtain the first part of \eqref{est:1/w_h}.
Further, from \eqref{Uest:Dxphi_F}, we bound
\Be \label{est:wbeta}
\w_{\beta} (t,x,p) = e^{ \beta \left( \sqrt{(m c)^2 + |p|^2} + \frac{1}{c} \big( e \big( \Phi_h (x) + \Psi(t,x) \big) + m g x_3 \big) \right) }
\geq e^{ \beta \big( \sqrt{(m c)^2 + |p|^2} + \frac{m g}{2 c} x_3 \big) }.
\Ee
Inputting $s^\prime = t$ in \eqref{est:w_ell}, together with \eqref{est:beta_b_dy} and \eqref{est:expo_w_ell}, we get
\Be \notag
\begin{split}
& \w _\beta  (s,\X  (s;t,x,p), \V  (s;t,x,p)) 
\\& \geq \w_\beta (t,x,p) e^{- \beta \frac{e}{c} \| (-\Delta_0)^{-1} (\nabla_x \cdot b ) \|_{L^\infty_{t,x}} 
\big( \frac{ \frac{4 c}{m g} |p| + 4 x_{3} }{c_1} + \frac{8}{m g} |p| + 2 \big) }  
\\& \geq e^{ \beta \big( \sqrt{(m c)^2 + |p|^2} + \frac{m g}{2 c} x_3 \big) } e^{- \beta \frac{e}{c} \| (-\Delta_0)^{-1} (\nabla_x \cdot b ) \|_{L^\infty_{t,x}} 
\big( \frac{ \frac{4 c}{m g} |p| + 4 x_{3} }{c_1} + \frac{8}{m g} |p| + 2 \big) }  
\\& \geq e^{- 2 \beta \frac{e}{c} \| (-\Delta_0)^{-1} (\nabla_x \cdot b ) \|_{L^\infty_{t,x}} } e^{\frac{\beta}{2} \sqrt{(m c)^2 + |p|^2} } e^{\frac{m g}{4 c} \beta x_3}.
\end{split}
\Ee
and conclude the second part of \eqref{est:1/w_h}.
	
\smallskip	

Now we prove \eqref{est:1/w}. Using \eqref{ODE_F}, \eqref{eq:energy_conservation_dy} and \eqref{dEC}, we compute that 
\Be	\label{w(Z)_s:h}
\begin{split}
& \frac{d}{ds} w_\beta (\Z (s;t,x,p) ) 
\\& = \beta w_\beta ( \Z (s;t,x,p) ) \times
\frac{d}{ds} \Big( \sqrt{(m c)^2 + |\P (s;t,x,p)|^2} 
\\& \qquad \qquad \qquad \qquad \qquad \qquad
+ \frac{1}{c} \big( e \Phi_h (\X (s;t,x,p)) + m g \X_{3} (s;t,x,p) \big) \Big)
\\& =  \beta w_\beta (\Z (s;t,x,p) ) \big( - \frac{e}{c} \nabla_x \Psi (s, \X (s;t,x,p)) \cdot \V (s;t,x,p) \big).
\\&  = \beta w_\beta (\Z  (s;t,x,p) ) 	 
\frac{e}{c} \big( \p_t \Psi (s, \X  (s;t,x,p)) - \frac{d}{ds} \Psi (s, \X (s;t,x,p)) \big),
\end{split}
\Ee 
where the last equality follows from
\Be \label{dsPsi}
\frac{d}{ds} \Psi (s, \X (s;t,x,p)) 
= \p_t \Psi (s, \X (s;t,x,p)) + \nabla_x \Psi (s, \X  (s;t,x,p)) \cdot \V (s;t,x,p).
\Ee
Together with \eqref{dDTE}, we have 
\Be	\notag
\begin{split}
& \frac{d}{ds} w_\beta (\Z (s;t,x,p) ) 
\\& = w_\beta (\Z  (s;t,x,p) ) 
\big( - \beta \frac{e}{c} (-\Delta_0)^{-1}  (\nabla_x \cdot b ) (s, \X (s;t,x,p)) - \beta \frac{e}{c} \frac{d}{ds} \Psi (s, \X (s;t,x,p)) \big).
\end{split}
\Ee 
This shows that, for $s, s^\prime \in [t-\tB (t,x,p), t +t_{\mathbf{F} } (t,x,p)]$,  
\Be \label{est:w_h}
\begin{split}
\frac{w_\beta  (\Z  (s;t,x,p) ) }{w _\beta (\Z  (s^\prime;t,x,p)) }
& = e^{ - \beta \frac{e}{c} \int^s_{s^\prime} (-\Delta_0)^{-1} (\nabla_x \cdot b  )(\tau, \X (\tau;t,x,p)) \dd \tau } e^{ - \beta \frac{e}{c} \int^s_{s^\prime} \frac{d}{d \tau} \Psi  (\tau , \X (\tau ;t,x,p)) \dd \tau }
\\& = e^{ - \beta \frac{e}{c} \int^s_{s^\prime} (-\Delta_0)^{-1} (\nabla_x \cdot b  )(\tau, \X (\tau;t,x,p)) \dd \tau } e^{ - \beta \frac{e}{c} ( \Psi  (s , \X (s ;t,x,p)) - \Psi  (s^\prime , \X (s^\prime ;t,x,p)) ) }.
\end{split}
\Ee
From \eqref{Uest:DxPsi} and the Dirichlet boundary condition of \eqref{Poisson_f}, together with Corollary \ref{cor:max_X_dy}, we get, for $s, s^\prime \in [\max \{0, t - \tB  (t,x,p) \}, t + \tF (t,x,p)]$,
\Be \label{est:expo_w}
\begin{split}
& \big| \Psi  (s, \X (s;t,x,p)) - \Psi (s^\prime,\X  (s^\prime;t,x,p)) \big| 
\\& = \Big| \int^{\X_3 (s;t,x,p)}_{0} \Psi_{x_3} (s, (\X_{\|}  (s;t,x,p), \tau) ) \dd \tau 
- \int^{\X_3 (s';t,x,p)}_{0} \Psi_{x_3} (s^\prime, (\X_{\|} (s^\prime;t,x,p), \tau) ) \dd \tau \Big| 
\\& \leq \sup_{0 \leq t < \infty}\| \nabla_x \Psi (t) \|_{L^\infty(\O)} \times \big( \X_3 (s;t,x,p) + \X_3 (s';t,x,p) \big)
\\& \leq \frac{m g}{24 e} (\frac{4 c}{m g} |p| + 3 x_3).
\end{split}
\Ee 
Inputting $s^\prime = t$ in \eqref{est:w_h}, together with \eqref{est:beta_b_dy}, \eqref{est1:w_ell}, \eqref{est:expo_w_ell} and \eqref{est:expo_w}, we derive that
\Be \notag
\begin{split}
& w_\beta(\Z  (s;t,x,p)) 
\\& \geq w_\beta(t,x,p) e^{- \beta \frac{e}{c} \| (-\Delta_0)^{-1} (\nabla_x \cdot b ) \|_{L^\infty_{t,x}} 
\big( \frac{ \frac{4 c}{m g} |p| + 4 x_{3} }{c_1} + \frac{8}{m g} |p| + 2 \big) }  
e^{ - \beta \frac{e}{c} \big( \Psi  (s , \X (s ;t,x,p)) - \Psi  (s^\prime , \X (s^\prime ;t,x,p)) \big) }
\\& \geq e^{ \beta \big( \sqrt{(m c)^2 + |p|^2} + \frac{m g}{2 c} x_3 \big) } e^{- \beta \frac{e}{c} \| (-\Delta_0)^{-1} (\nabla_x \cdot b ) \|_{L^\infty_{t,x}} 
\big( \frac{ \frac{4 c}{m g} |p| + 4 x_{3} }{c_1} + \frac{8}{m g} |p| + 2 \big) } e^{ - \beta \frac{e}{c} \big( \frac{m g}{24 e} (\frac{4 c}{m g} |p| + 3 x_3) \big) }
\\& \geq e^{- 2 \beta \frac{e}{c} \| (-\Delta_0)^{-1} (\nabla_x \cdot b ) \|_{L^\infty_{t,x}} } e^{\frac{\beta}{2} \sqrt{(m c)^2 + |p|^2} } e^{\frac{m g}{4 c} \beta x_3} e^{ - \beta \frac{m g}{24 c} (\frac{4 c}{m g} |p| + 3 x_3) }
\\& \geq e^{- 2 \beta \frac{e}{c} \| (-\Delta_0)^{-1} (\nabla_x \cdot b ) \|_{L^\infty_{t,x}} } e^{\frac{\beta}{4} \sqrt{(m c)^2 + |p|^2} } e^{\frac{m g}{8 c} \beta x_3},
\end{split}
\Ee
and thus, we conclude \eqref{est:1/w}.
\end{proof}

\begin{lemma} \label{lem:Ddb}

Recall $\mathfrak{G}$ in Lemma \ref{lemma:G} and $b (t, x)$ in \eqref{def:flux}, then
\Be \label{D^-1_Db}
\begin{split}
(-\Delta_0)^{-1} (\nabla_x \cdot b ) (t,x)    
= \int_{\O} b (t, y) \cdot \nabla_y \mathfrak{G} \dd y.
\end{split}
\Ee
Moreover, if
$\sup\limits_{0 \leq t < \infty } \| e^{ \frac{\beta}{2} \big( \sqrt{(m_{\pm} c)^2 + |p|^2} + \frac{m_{\pm} g}{2 c} x_3 \big) } f_{\pm} (t) \|_{\infty} < \infty$,
then
\Be \label{est:D^-1_Db}
\begin{split}
& \sup_{0 \leq t < \infty} \| (-\Delta_0)^{-1}  (\nabla_x \cdot b  (t,x)) \|_{L^\infty (\O)} 
\\& \leq \frac{ 8 \pi^2 \hat{e} c }{\beta^3} \sum\limits_{i = \pm} \sup_{0 \leq t < \infty } \| e^{ \frac{\beta}{2} \big( \sqrt{(m_i c)^2 + |p|^2} + \frac{m_i g}{2 c} x_3 \big) } f_i (t) \|_{\infty} (1 + \frac{4 c}{\hat{m} g \beta}),
\end{split}
\Ee
where $\hat{m} = \min \{ m_+, m_- \}$ and $\hat{e} = \max \{ e_+, e_- \}$.
\end{lemma}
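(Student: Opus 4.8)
The plan is to prove Lemma \ref{lem:Ddb} in two stages: first establish the representation formula \eqref{D^-1_Db}, then derive the pointwise bound \eqref{est:D^-1_Db} by estimating the velocity integral defining the flux $b$ and convolving against $\nabla_y \mathfrak{G}$.

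\textbf{Step 1: The representation formula.} Starting from the fact (recorded earlier, cf. Lemma \ref{lemma:G} and \eqref{phi_rho}) that $(-\Delta_0)^{-1} g(x) = \int_\O g(y)\, \mathfrak{G}(x,y)\,\dd y$ with the Dirichlet Green's function $\mathfrak{G}$, I would write $(-\Delta_0)^{-1}(\nabla_x \cdot b)(t,x) = \int_\O (\nabla_y \cdot b(t,y))\, \mathfrak{G}(x,y)\,\dd y$ and integrate by parts in $y$. The boundary term vanishes because $\mathfrak{G}(x,\cdot)$ satisfies the zero Dirichlet condition on $\p\O$ — this is exactly why the Dirichlet Green's function is the right object here. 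One subtlety worth a sentence: $b(t,\cdot)$ has enough decay in $x_3$ (from the exponential weight in the hypothesis) and $\nabla_y \mathfrak{G}$ is integrable, so the integration by parts is justified; the singularity of $\nabla_y\mathfrak{G}$ at $y = x$ is locally integrable in $\R^3$ and causes no trouble. This yields \eqref{D^-1_Db}.

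\textbf{Step 2: Bounding the flux $b$.} From \eqref{def:flux}, $|b(t,y)| \le \sum_{i=\pm} e_i \int_{\R^3} |v_i|\, |f_i(t,y,p)|\,\dd p \le \hat e \sum_{i=\pm} \int_{\R^3} c\, |f_i(t,y,p)|\,\dd p$ since $|v_i| \le c$. Inserting the weight from the hypothesis, $|f_i(t,y,p)| \le e^{-\frac{\beta}{2}(\sqrt{(m_i c)^2 + |p|^2} + \frac{m_i g}{2c} y_3)} \sup_t \|e^{\frac{\beta}{2}(\cdots)} f_i(t)\|_\infty$, so the $p$-integral is controlled by $\int_{\R^3} e^{-\frac{\beta}{2}\sqrt{(m_i c)^2 + |p|^2}}\,\dd p \lesssim c^3/\beta^3$ (a standard computation — the $\sqrt{(m_i c)^2+|p|^2}$ behaves like $|p|$ for large $|p|$, giving $\Gamma$-function constants; I would just cite the constant $8\pi^2$ after absorbing $c$ appropriately). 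Thus $|b(t,y)| \lesssim \frac{\hat e c}{\beta^3} e^{-\frac{\beta m_i g}{4c} y_3}$ (summed over species).

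\textbf{Step 3: Convolving against $\nabla_y \mathfrak{G}$.} Now $|(-\Delta_0)^{-1}(\nabla_x\cdot b)(t,x)| \le \int_\O |b(t,y)|\, |\nabla_y \mathfrak{G}(x,y)|\,\dd y$, and using the bound $|\nabla_y \mathfrak{G}(x,y)| \lesssim |x-y|^{-2}$ (plus the image-charge contribution, which only helps), I would split the integral by $y_3$: the weight $e^{-\frac{\beta m_i g}{4c} y_3}$ makes the $y_3$-integral converge on scale $\sim c/(\beta m_i g)$, while the $y_\parallel$-integral of $|x-y|^{-2}$ over $\R^2$ at fixed vertical separation contributes a bounded factor. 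Carrying this out gives a bound proportional to $\frac{\hat e c}{\beta^3}\big(1 + \frac{c}{\hat m g \beta}\big)$ times the weighted norm of $f$, matching the claimed \eqref{est:D^-1_Db} up to tracking the explicit constant $8\pi^2$ and the factor $(1 + \frac{4c}{\hat m g \beta})$.

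\textbf{Main obstacle.} The genuinely delicate point is Step 3: getting the stated \emph{explicit} constant requires care in how the $|x-y|^{-2}$ kernel interacts with the exponential vertical decay — one must avoid the naive $L^1$ bound on $\nabla_y\mathfrak{G}$ (which would be infinite over the unbounded slab) and instead exploit that $|b|$ decays in $y_3$ so that the effective integration region is a slab of thickness $O(c/(\hat m g\beta))$, over which $\int |x-y|^{-2}\,\dd y$ is finite with a computable constant. The factor $(1 + \frac{4c}{\hat m g\beta})$ in the answer is the signature of this slab-thickness trade-off: the "$1$" comes from the near-field $|x-y| \lesssim 1$ and the second term from the far-field controlled by the vertical weight. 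Everything else — the integration by parts, the $|v|\le c$ bound, the Gaussian-type $p$-integral — is routine.
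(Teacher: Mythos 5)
Your Steps 1 and 2 coincide with the paper's argument: the representation \eqref{D^-1_Db} is obtained exactly as you say, by writing $(-\Delta_0)^{-1}(\nabla_x\cdot b)=-\int_\O \mathfrak{G}\,(\nabla_y\cdot b)\,\dd y$ and integrating by parts using the Dirichlet condition, and the flux bound $|b(t,y)|\lesssim \frac{\hat e c}{\beta^2}\sum_i\|\cdots f_i\|_\infty\, e^{-\frac{\hat m g}{4c}\beta y_3}$ follows from $|v|\le c$ and the weighted $p$-integral, just as in \eqref{est:b}.

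Step 3, however, contains a genuine error. You claim that with $|\nabla_y\mathfrak{G}(x,y)|\lesssim|x-y|^{-2}$ "the $y_\parallel$-integral of $|x-y|^{-2}$ over $\R^2$ at fixed vertical separation contributes a bounded factor." It does not: $\int_{\R^2}\frac{\dd y_\parallel}{|x_\parallel-y_\parallel|^2+|x_3-y_3|^2}=\int_0^\infty\frac{2\pi r\,\dd r}{r^2+d^2}$ diverges logarithmically as $r\to\infty$, and the exponential decay of $b$ in $y_3$ does nothing to repair a divergence in the horizontal directions. Restricting to a slab of thickness $O(c/(\hat m g\beta))$ does not help for the same reason. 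The missing ingredient is the image-charge cancellation in $\mathfrak{G}=C_1(|x-y|^{-1}-|\tilde x-y|^{-1})$: as in \eqref{est1:G_x}--\eqref{est:Green_x}, one has the refined bound $|\nabla_y\mathfrak{G}(x,y)|\lesssim\min\big(\tfrac{y_3}{|x-y|^3},\tfrac{1}{|x-y|^2}\big)$, and the correct split is by $|x-y|$, not by $y_3$: use $|x-y|^{-2}$ on $\{|x-y|\le 1\}$ (integrable singularity, contributing the "$1$"), and $y_3|x-y|^{-3}$ on $\{|x-y|\ge 1\}$, where $\int_{r\ge1}r^{-3}\,r\,\dd r<\infty$ in $y_\parallel$ and the leftover factor $y_3$ is absorbed by $e^{-\frac{\hat m g}{4c}\beta y_3}$, producing the $\frac{4c}{\hat m g\beta}$ term. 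This is precisely how the paper concludes, by following the proof of \eqref{est:nabla_phi} in Lemma \ref{lem:rho_to_phi}. So the factor $(1+\frac{4c}{\hat m g\beta})$ is not a near-field/far-field split in $y_3$ as you describe, but a near-field/far-field split in $|x-y|$ that relies essentially on the image term you dismissed as "only helping."
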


\begin{proof}

From \eqref{phi_rho}, we directly get
\be \notag
(-\Delta_0)^{-1}  (\nabla_x \cdot b ) (t,x)   
= - \int_{\O} \mathfrak{G} (\nabla \cdot b) (t, y) \dd y.
\ee
Using zero Dirichlet boundary condition of \eqref{Dphi} and integration by parts, we get
\be \notag
\int_{\O} \mathfrak{G} (\nabla \cdot b) (t, y) \dd y
= - \int_{\O} b (t, y)   \cdot  \nabla_y \mathfrak{G} \dd y,
\ee
and thus derive \eqref{D^-1_Db}.

\smallskip

Next, we compute that
\be \notag
\begin{split}
|b (t,x)| 
& = \big| \int_{\R^3} (v_+ e_+ f_+ + v_{-} e_{-} f_{-} ) \dd p \big|
\\& \leq \big| \int_{\R^3} v_+ e_+ f_+ \dd p \big| + \big| \int_{\R^3} v_{-} e_{-} f_{-} \dd p \big|
\\& = \big| \int_{\R^3} \frac{e_+ p}{\sqrt{ m_{+}^2 + |p|^2/ c^2}}  f_+ \dd p \big| + \big| \int_{\R^3} \frac{e_- p}{\sqrt{ m_{-}^2 + |p|^2/ c^2}} f_{-} \dd p \big|.
\end{split}
\ee	
From $\sup\limits_{0 \leq t < \infty } \| e^{ \frac{\beta}{2} \big( \sqrt{(m_{\pm} c)^2 + |p|^2} + \frac{m_{\pm} g}{2 c} x_3 \big) } f_{\pm} (t) \|_{\infty} < \infty$, we derive that
\Be \notag
\begin{split}
& | b (t,x) |  
\\& \leq e^{-\frac{m_+ g}{4 c} \beta x_3} \int_{\R^3} \frac{e_+ |p|}{\sqrt{ m_{+}^2 + |p|^2/ c^2}}  e^{ - \frac{\beta}{2} \sqrt{(m_+ c)^2 + |p|^2}} \dd p \sup_{0 \leq t < \infty } \| e^{ \frac{\beta}{2} \big( \sqrt{(m_+ c)^2 + |p|^2} + \frac{m_+ g}{2 c} x_3 \big) } f_+ (t) \|_{\infty}
\\& + e^{-\frac{m_- g}{4 c} \beta x_3} \int_{\R^3} \frac{e_- |p|}{\sqrt{ m_{-}^2 + |p|^2/ c^2}}  e^{ - \frac{\beta}{2} \sqrt{(m_- c)^2 + |p|^2}} \dd p \sup_{0 \leq t < \infty } \| e^{ \frac{\beta}{2} \big( \sqrt{(m_- c)^2 + |p|^2} + \frac{m_- g}{2 c} x_3 \big) } f_- (t) \|_{\infty}
\\& \leq e^{-\frac{m_+ g}{4 c} \beta x_3} e_+ c  \int_{\R^3}  e^{ \frac{\beta}{2} |p|} \dd p \sup_{0 \leq t < \infty } \| e^{ \frac{\beta}{2} \big( \sqrt{(m_+ c)^2 + |p|^2} + \frac{m_+ g}{2 c} x_3 \big) } f_+ (t) \|_{\infty}
\\& \ \ \ \ + e^{-\frac{m_- g}{4 c} \beta x_3} e_- c \int_{\R^3}   e^{ \frac{\beta}{2} |p|} \dd p \sup_{0 \leq t < \infty } \| e^{ \frac{\beta}{2} \big( \sqrt{(m_- c)^2 + |p|^2} + \frac{m_- g}{2 c} x_3 \big) } f_- (t) \|_{\infty}.
\end{split}
\Ee
Note that $\int_{\R^3}   e^{ \frac{\beta}{2} |p|} \dd p \leq \frac{8 \pi^2}{\beta^3}$. By setting $\hat{m} = \min \{ m_+, m_- \}$ and $\hat{e} = \max \{ e_+, e_- \}$, we get
\Be \label{est:b}
\begin{split} 
| b (t,x) |  
& \leq e^{-\frac{m_+ g}{4 c} \beta x_3}  \frac{ 8 \pi^2 e_+ c }{\beta^3} \sup_{0 \leq t < \infty } \| e^{ \frac{\beta}{2} \big( \sqrt{(m_+ c)^2 + |p|^2} + \frac{m_+ g}{2 c} x_3 \big) } f_+ (t) \|_{\infty} 
\\& \ \ \ \ + e^{-\frac{m_- g}{4 c} \beta x_3}  \frac{ 8 \pi^2 e_- c }{\beta^3} \sup_{0 \leq t < \infty } \| e^{ \frac{\beta}{2} \big( \sqrt{(m_- c)^2 + |p|^2} + \frac{m_- g}{2 c} x_3 \big) } f_- (t) \|_{\infty}
\\& \leq \frac{ 8 \pi^2 \hat{e} c }{\beta^2} \sum\limits_{i = \pm} \sup_{0 \leq t < \infty } \| e^{ \frac{\beta}{2} \big( \sqrt{(m_i c)^2 + |p|^2} + \frac{m_i g}{2 c} x_3 \big) } f_i (t) \|_{\infty} \times e^{- \frac{\hat{m} g}{4 c} \beta x_3}.
\end{split}
\Ee 
Recall from \eqref{D^-1_Db}, we have
\be \notag
| (-\Delta_0)^{-1}  (\nabla_x \cdot b ) (t,x) | 
= \big| \int_{\O} b (t, y)   \cdot  \nabla_y \mathfrak{G} \dd y \big|.
\ee
Following the proof of \eqref{est:nabla_phi} in Lemma \ref{lem:rho_to_phi}, together with \eqref{est:b}, we conclude \eqref{est:D^-1_Db}.
\end{proof}

\begin{prop} \label{prop:decay}

Suppose the condition \eqref{Uest:DPhi}, \eqref{Uest:DxPsi} and \eqref{Uest:Dxphi_F} hold. 
Let $\hat{m} = \min\{m_+, m_- \}$, we further assume that for $g, \beta>0$,
\be \label{Bootstrap_f_first}
\sum\limits_{i = \pm} \sup_{0 \leq t < \infty } \| e^{ \frac{\beta}{2} \big( \sqrt{(m_i c)^2 + |p|^2} + \frac{m_i g}{2 c} x_3 \big) } f_i (t) \|_{\infty} 
\leq \frac{ \beta^2 \ln (2) }{ 8 \big( \pi \max \{ e_+, e_- \} \big)^2 (1 + \frac{4 c}{\hat{m} g \beta})}.
\ee	
Then 
\be \label{est:I}
| \mathcal{I}_{\pm} (t,x,p) |
\leq e^{2 + \frac{m_{\pm} g}{24} \beta} \| \w_{\pm, \beta, 0 }   f_{\pm, 0 } \|_{L^\infty_{x,p}} 
e^{- \frac{m_{\pm} g}{24} \beta t } 
e^{ - \frac{\beta}{4} \sqrt{(m_{\pm} c)^2 + |p|^2} } e^{ - \frac{m_{\pm} g}{8 c} \beta x_3}.
\ee
Moreover, let $\lambda = \frac{g}{48} \hat{m} \beta$, $\nu = \frac{g}{16 c} \hat{m} \beta$. Assume that
\be \notag
\sup\limits_{s \in [0, t] } e^{ \lambda s} \| e^{ \nu x_3} \varrho (s) \|_{L^\infty(\O)} < \infty.
\ee 
Then
\be \label{est:N}
\begin{split}
| \mathcal{N}_{\pm} (t,x,p) |
& \leq  e^{- \frac{\beta}{8} \sqrt{(m_{\pm} c)^2 + |p|^2} } e^{- \frac{m_{\pm} g}{16 c} \beta x_3} e^{ - \lambda t} 
\\& \ \ \ \ \times e^{ \lambda } 4 e_{\pm} \sup\limits_{s \in [0, t] } e^{ \lambda s} \| e^{ \nu x_3} \varrho (s) \|_{L^\infty(\O)} (1 + \frac{1}{\nu}) \| w_{\pm, \beta} \nabla_p h_{\pm} \|_{\infty}.
\end{split}
\ee
\end{prop}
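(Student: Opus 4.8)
The plan is to estimate the two pieces of the Lagrangian decomposition \eqref{form:f} separately, tracing each backward along the dynamic characteristics $\Z_\pm$ of \eqref{ODE_F} and converting decay in $(|p|,x_3)$ into decay in $t$ by means of the backward/forward exit time bounds of Section~\ref{sec:char}. The first thing I would record is that the smallness hypothesis \eqref{Bootstrap_f_first}, inserted into Lemma~\ref{lem:Ddb}, forces
\[
\beta\,\tfrac{e_\pm}{c}\,\big\|(-\Delta_0)^{-1}(\nabla_x\cdot b)\big\|_{L^\infty_{t,x}}\le \ln 2\le 1 ,
\]
which both verifies the hypothesis \eqref{est:beta_b_dy} needed to invoke Lemma~\ref{lem:w/w} (the remaining inequality $1\le\beta/2$ being part of the standing assumptions $g\beta\gg1$) and makes the weight–distortion factors $e^{2\beta\frac{e_\pm}{c}\|(-\Delta_0)^{-1}(\nabla_x\cdot b)\|}$ appearing in \eqref{est:1/w_h}--\eqref{est:1/w} bounded by $4<e^2$.

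For $\mathcal{I}_\pm$ I would argue as follows. On $\{t\le\tBp(t,x,p)\}$ the characteristic reaches the initial slice, so $|\mathcal{I}_\pm(t,x,p)|\le\|\w_{\pm,\beta,0}f_{\pm,0}\|_{L^\infty_{x,p}}\big/\w_{\pm,\beta}(0,\Z_\pm(0;t,x,p))$. The second line of \eqref{est:1/w_h} with $s=0$ bounds $\w_{\pm,\beta}(0,\Z_\pm(0;t,x,p))^{-1}$ by $4\,e^{-\frac\beta2\sqrt{(m_\pm c)^2+|p|^2}}e^{-\frac{m_\pm g}{4c}\beta x_3}$. I then split this exponent in half: half matches the decay rate written in \eqref{est:I}, and the other half generates the time decay, because $t\le\tBp$ forces $t-\tBp(t,x,p)<0$, and Corollary~\ref{cor:max_X_dy} then gives $t\le c_1^{-1}\big(\tfrac{4c}{m_\pm g}|p|+3x_3\big)+\tfrac4{m_\pm g}|p|+1$, whence $\tfrac{m_\pm g}{24}\beta(t-1)\le\tfrac\beta4\sqrt{(m_\pm c)^2+|p|^2}+\tfrac{m_\pm g}{8c}\beta x_3$. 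Collecting the factors and using $4\,e^{\frac{m_\pm g}{24}\beta}\le e^{2+\frac{m_\pm g}{24}\beta}$ yields \eqref{est:I}.

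For $\mathcal{N}_\pm$ I would bound the integrand of the Duhamel term: inside, $|\nabla_p h_\pm(\Z_\pm(s;t,x,p))|\le\|w_{\pm,\beta}\nabla_p h_\pm\|_\infty\big/w_{\pm,\beta}(\Z_\pm(s;t,x,p))$, and \eqref{est:1/w} controls the weight by $4\,e^{-\frac\beta4\sqrt{(m_\pm c)^2+|p|^2}}e^{-\frac{m_\pm g}{8c}\beta x_3}$ uniformly for $s\in[\max\{0,t-\tBp\},t]$, while the field is estimated from $-\Delta\Psi=\varrho$, $\Psi|_{\p\O}=0$ by the elliptic bound \eqref{est:nabla_phi} of Lemma~\ref{lem:rho_to_phi}: $|\nabla_x\Psi(s,\cdot)|\lesssim\|e^{\nu x_3}\varrho(s)\|_{L^\infty(\O)}(1+\tfrac1\nu)\le e^{-\lambda s}\sup_{s'\in[0,t]}e^{\lambda s'}\|e^{\nu x_3}\varrho(s')\|_{L^\infty(\O)}(1+\tfrac1\nu)$, with $\nu=\tfrac{g\hat m\beta}{16c}$. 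Pulling these factors out leaves $\int_{\max\{0,t-\tBp\}}^{t}e^{-\lambda s}\,\dd s$, which — together with half of the $(|p|,x_3)$ decay — I convert into $e^{\lambda}e^{-\lambda t}$: when $t-\tBp\ge0$ one has $\int_{t-\tBp}^{t}e^{-\lambda s}\,\dd s\le\lambda^{-1}e^{-\lambda t}e^{\lambda\tBp}$, and the growth $e^{\lambda\tBp}$ is absorbed by $e^{-\frac\beta8\sqrt{(m_\pm c)^2+|p|^2}}e^{-\frac{m_\pm g}{16c}\beta x_3}$ using the bound on $\tBp$ in Proposition~\ref{lem:tB}; when $t-\tBp<0$ one uses $\int_0^t e^{-\lambda s}\,\dd s\le\lambda^{-1}$ together with the time bound of Corollary~\ref{cor:max_X_dy} exactly as for $\mathcal{I}_\pm$, the $``{+}1"$ there producing the factor $e^\lambda$. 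This delivers \eqref{est:N}.

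The routine part is the weight bookkeeping; the delicate point, which I expect to be the main obstacle, is the exponent accounting in the last step: one must spend the Gaussian-type decay in $(|p|,x_3)$ \emph{simultaneously} to recover the full rates written in \eqref{est:I}--\eqref{est:N}, to produce the temporal factor $e^{-\lambda t}$, and to absorb the trajectory-length growth $e^{\lambda\tBp}$, which is only possible because $\lambda=\tfrac{g\hat m\beta}{48}$ is small relative to $\beta$ — that is, because of the gravitational dominance $g\beta\gg1$, entering quantitatively through $c_1=\min\{g/4\sqrt2,\,c/\sqrt{10}\}$. Checking that the numerical fractions $\tfrac14,\tfrac18,\tfrac1{16},\tfrac1{24},\tfrac1{48}$ line up in all cases is where the bulk of the computation lies.
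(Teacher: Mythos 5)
Your proposal is correct and follows essentially the same route as the paper's proof: verify \eqref{est:beta_b_dy} from \eqref{Bootstrap_f_first} via Lemma \ref{lem:Ddb}, bound $\mathcal{I}_\pm$ through the weight at $s=0$ using \eqref{est:1/w_h} and convert half of the $(|p|,x_3)$ decay into $e^{-\frac{m_\pm g}{24}\beta t}$ via Corollary \ref{cor:max_X_dy} (with $4e^{\frac{m_\pm g}{24}\beta}\le e^{2+\frac{m_\pm g}{24}\beta}$), and bound $\mathcal{N}_\pm$ by combining \eqref{est:1/w}, the elliptic estimate \eqref{est:nabla_phi} applied to the weighted density, and the exit-time bounds of Proposition \ref{lem:tB} to absorb $e^{\lambda \tB}$. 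The only (immaterial) deviation is that you bound $\int_{\max\{0,t-\tB\}}^{t}e^{-\lambda s}\,\dd s$ by $\lambda^{-1}e^{-\lambda t}e^{\lambda\tB}$ rather than by $\tB\,e^{-\lambda t}e^{\lambda\tB}$ as in the paper, which changes nothing since both prefactors are absorbed in the same way.
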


\begin{proof} 

We abuse the notation as in \eqref{abuse} in the proof.
From \eqref{est:D^-1_Db} and \eqref{Bootstrap_f_first}, we derive that
\Be \label{est1:D^-1 Db}
\begin{split}
& \beta \frac{e_{\pm}}{c} \| (-\Delta_0)^{-1} (\nabla_x \cdot b ) \|_{L^\infty_{t,x}}
\\& \leq \beta \frac{e_{\pm}}{c} \frac{ 8 \pi^2 \hat{e} c }{\beta^3} \sum\limits_{i = \pm} \sup_{0 \leq t < \infty } \| e^{ \frac{\beta}{2} \big( \sqrt{(m_i c)^2 + |p|^2} + \frac{m_i g}{2 c} x_3 \big) } f_i (t) \|_{\infty} (1 + \frac{4 c}{\hat{m} g \beta}) \leq \ln (2).
\end{split}
\Ee

\smallskip

First, we prove \eqref{est:I}. From Corollary \ref{cor:max_X_dy}, we have
\be \notag
\mathbf{1}_{t \leq \tB  (t,x,p)} 
\leq \mathbf{1}_{t \leq \frac{1}{\min \{ \frac{g}{4 \sqrt{2}}, \frac{c}{\sqrt{10}} \}} \times \big( \frac{4 c}{m g} |p| + 3 x_{3} \big) + \frac{4}{m g} |p| + 1}.
\ee
This implies that
\be \label{est:1_t<tB}
\begin{split} 
& \mathbf{1}_{t \leq \tB  (t,x,p)} e^{ - \frac{\beta}{4} \sqrt{(m c)^2 + |p|^2} } e^{ - \frac{m g}{8 c} \beta x_3}
\\& \leq \mathbf{1}_{t \leq \frac{1}{\min \{ \frac{g}{4 \sqrt{2}}, \frac{c}{\sqrt{10}} \}} \times \big( \frac{4 c}{m g} |p| + 3 x_{3} \big) + \frac{4}{m g} |p| + 1} 
\times e^{ - \frac{\beta}{4} \sqrt{(m c)^2 + |p|^2} } e^{ - \frac{m g}{8 c} \beta x_3}
\leq e^{- \frac{m g}{24} \beta (t-1) }.
\end{split}	
\ee
Using \eqref{form:I} and \eqref{est1:D^-1 Db}, together with \eqref{est:1_t<tB} and Lemma \ref{lem:w/w}, we bound $\mathcal{I} (t,x,p )$ as
\Be \label{bound:tB/w}
\begin{split} 
| \mathcal{I} (t,x,p ) |
& = \big| \mathbf{1}_{t \leq \tB (t,x,p)} f (0,\X (0;t,x,p), \P (0;t,x,p)) \big|
\\& \leq \|\w_{\beta, 0 }   f_{0  } \|_{L^\infty_{x,p}}    \frac{ \mathbf{1}_{t \leq \tB  (t,x,p)}}{\w _\beta (0,\X  (0;t,x,p),\V  (0;t,x,p))}  
\\& \leq  \|\w_{\beta, 0 }   f_{0 } \|_{L^\infty_{x,p}}    
\frac{\mathbf{1}_{t \leq \tB  (t,x,p)} }{e^{ \frac{\beta}{4} \sqrt{(m c)^2 + |p|^2} } e^{ \frac{m g}{8 c} \beta x_3} }
\times \frac{ e^{2 \beta \frac{e}{c} \| (-\Delta_0)^{-1} (\nabla_x \cdot b ) \|_{L^\infty_{t,x}} } }{ e^{ \frac{\beta}{4} \sqrt{(m c)^2 + |p|^2} + \frac{m g}{8 c} \beta x_3} }
\\& \leq \|\w_{\beta, 0 }   f_{0  } \|_{L^\infty_{x,p}} 
e^{- \frac{m g}{24} \beta (t-1) }
\times \frac{4}{e^{ \frac{\beta}{4} \sqrt{(m c)^2 + |p|^2} + \frac{m g}{8 c} \beta x_3} }
\\& \leq 4 e^{\frac{m g}{24} \beta} \|\w_{\beta, 0 }   f_{0  } \|_{L^\infty_{x,p}} 
e^{- \frac{m g}{24} \beta t } 
e^{ - \frac{\beta}{4} \sqrt{(m c)^2 + |p|^2} - \frac{m g}{8 c} \beta x_3}.
\end{split}	
\Ee
and conclude \eqref{est:I}.

\smallskip

Second, we prove \eqref{est:N}. 
Using \eqref{est:1/w} and \eqref{est1:D^-1 Db}, we obtain 
\Be \label{h_v}
\begin{split}
| \nabla_p h (\Z (s;t,x,p) ) |
& \leq \frac{ 1}{w _\beta (\X (s;t,x,p), \V (s;t,x,p))} \| w_\beta \nabla_p h \|_{\infty}
\\& \leq e^{2 \beta \frac{e}{c} \| (-\Delta_0)^{-1} (\nabla_x \cdot b ) \|_{L^\infty_{t,x}} } e^{- \frac{\beta}{4} \sqrt{(m c)^2 + |p|^2} } e^{- \frac{m g}{8 c} \beta x_3}
\| w_\beta \nabla_p h \|_{\infty} 
\\& \leq 4 e^{- \frac{\beta}{4} \sqrt{(m c)^2 + |p|^2} } e^{- \frac{m g}{8 c} \beta x_3}
\| w_\beta \nabla_p h \|_{\infty}.
\end{split}
\Ee 	
On the other hand, from the assumption $\sup\limits_{s \in [0, t] } e^{ \lambda s} \| e^{ \nu x_3} \varrho (s) \|_{L^\infty(\O)} < \infty$, we have
\be \label{cond:varrho}
| \varrho (s, x) | 
\leq  \big( \sup\limits_{s \in [0, t] } e^{ \lambda s} \| e^{ \nu x_3} \varrho (s) \|_{L^\infty(\O)} \big) 
e^{ - \lambda s} e^{ - \nu x_3},
\text{ for any } (s, x) \in [0, t] \times \O.
\ee
Recall from \eqref{Poisson_f}, we have
\be \notag
- \Delta_x \Psi (t,x) = \varrho (t,x) \text{ in } \R_+ \times \O, \ \
\Psi(t,x) = 0 \text{ on } \R_+ \times \p\O.
\ee 
Applying Lemma \ref{lem:rho_to_phi} on \eqref{cond:varrho}, we obtain that, for any $(s, x) \in [0, t] \times \O$,
\be \label{est:Drho}
| \nabla_x \Psi (s, x) | 
\leq  \big( \sup\limits_{s \in [0, t] } e^{ \lambda s} \| e^{ \nu x_3} \varrho (s) \|_{L^\infty(\O)} \big) 
e^{ - \lambda s} (1 + \frac{1}{\nu}).
\ee
Using \eqref{form:N}, \eqref{h_v} and \eqref{est:Drho}, we bound $\mathcal{N} (t,x,p )$ as
\Be \label{est1:N}
\begin{split}
& | \mathcal{N} (t,x,p ) |
\\& = \big| \int^t_{ \max\{0, t - \tB (t,x,p)\}} e \nabla_x \Psi (s, \X (s;t,x,p)) \cdot \nabla_p h ( \X (s;t,x,p), \P (s;t,x,p)) \dd s \big|
\\& \leq \int^t_{ \max\{0, t - \tB (t,x,p)\}} e | \nabla_x \Psi (s, \X (s;t,x,p)) | | \nabla_p h_{\pm} ( \X (s;t,x,p), \P (s;t,x,p)) | \dd s 
\\& \leq \int^t_{ \max\{0, t - \tB (t,x,p)\}} e \big( \sup\limits_{s \in [0, t] } e^{ \lambda s} \| e^{ \nu x_3} \varrho (s) \|_{L^\infty(\O)} \big) 
e^{ - \lambda s} (1 + \frac{1}{\nu})
\\& \qquad \qquad \qquad \qquad \qquad \times 4 e^{- \frac{\beta}{4} \sqrt{(m c)^2 + |p|^2} } e^{- \frac{m g}{8 c} \beta x_3}
\| w_\beta \nabla_p h \|_{\infty} \dd s 
\\& = 4 e \sup\limits_{s \in [0, t] } e^{ \lambda s} \| e^{ \nu x_3} \varrho (s) \|_{L^\infty(\O)} (1 + \frac{1}{\nu}) \| w_\beta \nabla_p h \|_{\infty}
\\& \qquad \qquad \times 
\underbrace{
e^{- \frac{\beta}{4} \sqrt{(m c)^2 + |p|^2} } e^{- \frac{m g}{8 c} \beta x_3} \int^t_{ \max\{0, t - \tB (t,x,p)\}} e^{ - \lambda s} \dd s}_{\eqref{est1:N}_*}.
\end{split}
\Ee
Using \eqref{est:tB} in Proposition \ref{lem:tB} and following \eqref{bound:tB/w}, we have
\Be \label{est2:N}
\begin{split}
\eqref{est1:N}_*
& \leq e^{- \frac{\beta}{4} \sqrt{(m c)^2 + |p|^2} } e^{- \frac{m g}{8 c} \beta x_3} \tB (t,x,p) e^{ - \lambda (t - \tB  (t,x,p)) }
\\& = \big( e^{- \frac{\beta}{8} \sqrt{(m c)^2 + |p|^2} } e^{- \frac{m g}{16 c} \beta x_3} \tB (t,x,p) e^{ \lambda {\tB  (t,x,p)}} \big) \times
e^{- \frac{\beta}{8} \sqrt{(m c)^2 + |p|^2} } e^{- \frac{m g}{16 c} \beta x_3} e^{ - \lambda t}
\\& \lesssim e^{ \lambda } e^{- \frac{\beta}{8} \sqrt{(m c)^2 + |p|^2} } e^{- \frac{m g}{16 c} \beta x_3} e^{ - \lambda t}.
\end{split}
\Ee
Therefore, we conclude \eqref{est:N} by inputting \eqref{est2:N} into \eqref{est1:N}.
\end{proof}

Now we are ready to prove the main result of this section.

\begin{theorem}[Asymptotic Stability Criterion] \label{theo:AS}

Suppose $(h, \Phi)$ solves \eqref{VP_h}-\eqref{eqtn:Dphi} in the sense of Definition \ref{weak_sol}. 
Suppose $(f, \varrho,   \Psi)$ is a solution to \eqref{eqtn:f}-\eqref{Poisson_f}.
Set $\hat{m} = \min\{m_+, m_- \}$, we assume the following conditions hold: for $g, \beta>0$,
\be \label{Bootstrap_f}
\sum\limits_{i = \pm} \sup_{0 \leq t < \infty } \| e^{ \frac{\beta}{2} \big( \sqrt{(m_i c)^2 + |p|^2} + \frac{m_i g}{2 c} x_3 \big) } f_i (t) \|_{\infty} 
\leq \frac{ \beta^2 \ln (2) }{ 8 \big( \pi \max \{ e_+, e_- \} \big)^2 (1 + \frac{4 c}{\hat{m} g \beta})},
\ee	
and	
\be \label{Bootstrap} 
\begin{split}
\sup_{0 \leq t < \infty}\| \nabla_x \big( \Phi + \Psi (t) \big) \|_{L^\infty(\O)}  
& \leq \min \left\{\frac{m_+}{e_+}, \frac{m_-}{e_-} \right\} \times \frac{g}{2},
\\ \sup_{0 \leq t < \infty} \| \nabla_x \Psi (t) \|_{L^\infty(\O)} & \leq \min \left\{\frac{m_+}{e_+}, \frac{m_-}{e_-} \right\} \times \frac{g}{48}.
\end{split}
\ee
Set $\lambda = \frac{g}{48} \hat{m} \beta$, $\nu = \frac{g}{16 c} \hat{m} \beta$, we further assume that
\Be \label{condition:Dvh}
4 e^{ \lambda } (1 + \frac{1}{\nu}) \big( e^2_+ \| w_{+,\beta} \nabla_p h_+ \|_{\infty} + e^2_- \| w_{-,\beta} \nabla_p h_- \|_{\infty} \big) \frac{512}{\beta^3} \leq \frac{1}{2},
\Ee
and $\w_{\pm, \beta, 0} (x, p)$ defined in \eqref{W_t=0} satisfies
\be \notag 
\|\w_{\pm, \beta, 0} f_{\pm, 0} \|_{L^\infty (\O \times \R^3)} <\infty.
\ee
Then, $(f(t), \varrho(t))$ is bounded by
\Be \label{decay:varrho}
\begin{split}
& \sup_{0 \leq t < \infty} e^{ \lambda t} \| e^{ \nu x_3} \varrho (t, x)\|_{L^\infty(\O)}
\\& \leq \big( e_+ e^{2 + \frac{m_+ g}{24} \beta} \| \w_{+, \beta, 0 } f_{+, 0 } \|_{L^\infty_{x,p}} + e_- e^{2 + \frac{m_- g}{24} \beta} \| \w_{-,\beta, 0 } f_{-, 0 } \|_{L^\infty_{x,p}} \big) \frac{128}{\beta^3},
\end{split}
\Ee 
and
\Be \label{decay_f}
\begin{split}
& \sup_{0 \leq t < \infty} e^{ \lambda t} \|  e^{\frac{\beta}{8} \sqrt{(m_{\pm} c)^2 + |p|^2} + \frac{m_{\pm} g}{16 c} \beta x_3} f_{\pm} (t,x,p) \|_{L^\infty (\O \times \R^3)}
\\& \leq 2 e^2 \big( e^{\frac{m_+ g}{24} \beta} \| \w_{+,\beta, 0 } f_{+, 0 } \|_{L^\infty_{x,p}}
+ e^{\frac{m_- g}{24} \beta} \| \w_{-,\beta, 0 } f_{-, 0 } \|_{L^\infty_{x,p}} \big).
\end{split}
\Ee
Moreover, $ b (t, x)$ defined in \eqref{def:flux} is bounded by
\Be \label{Uest:D^-1_Db}
\sup_{0 \leq t < \infty} \| (-\Delta_0)^{-1}  (\nabla_x \cdot b  (t,x)) \|_{L^\infty (\O)} 
\lesssim \frac{ c }{\max \{ e_+, e_- \} \beta}.
\Ee
\end{theorem}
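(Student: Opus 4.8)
The plan is to run a continuity/Gr\"onwall bootstrap on the two decay norms
\[
\mathcal{M}(t):=\sum_{i=\pm}\sup_{0\le s\le t}e^{\lambda s}\big\|e^{\frac{\beta}{8}\sqrt{(m_ic)^2+|p|^2}+\frac{m_ig}{16c}\beta x_3}f_i(s)\big\|_{L^\infty(\O\times\R^3)},\qquad
\mathcal{R}(t):=\sup_{0\le s\le t}e^{\lambda s}\|e^{\nu x_3}\varrho(s)\|_{L^\infty(\O)},
\]
and then to read off \eqref{decay_f}, \eqref{decay:varrho} and \eqref{Uest:D^-1_Db} from the uniform-in-$t$ bounds this produces. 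First I would verify that we are entitled to invoke Proposition \ref{prop:decay}: the standing assumption \eqref{Bootstrap_f} is exactly its hypothesis \eqref{Bootstrap_f_first}, the field bounds \eqref{Bootstrap} supply what Lemmas \ref{lem:w/w}, \ref{lem:tB} require, and Lemma \ref{lem:Ddb} applied to \eqref{Bootstrap_f} gives simultaneously the smallness $\beta\tfrac{e_\pm}{c}\|(-\Delta_0)^{-1}(\nabla_x\cdot b)\|_{L^\infty_{t,x}}\le\ln2\le1$ used in \eqref{est:beta_b_dy} and the bound \eqref{Uest:D^-1_Db} itself, by plugging the right-hand side of \eqref{Bootstrap_f} into \eqref{est:D^-1_Db} and simplifying (with $\hat e=\max\{e_+,e_-\}$). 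I would also note at the outset that $\mathcal M(t)$ and $\mathcal R(t)$ are finite on every bounded time interval: this is immediate from \eqref{Bootstrap_f} and $\|\w_{\pm,\beta,0}f_{\pm,0}\|_\infty<\infty$ (since $e^{\lambda s}$ is bounded on $[0,t]$), and it is needed both to license the $\mathcal N_\pm$-estimate \eqref{est:N} (which presupposes $\sup_{s\le t}e^{\lambda s}\|e^{\nu x_3}\varrho(s)\|_\infty<\infty$) and to make the absorption step below legitimate.

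Next I would establish the two linear inequalities coupling $\mathcal M$ and $\mathcal R$. From $\varrho=\int_{\R^3}(e_+f_++e_-f_-)\,\dd p$, using $\sqrt{(m_\pm c)^2+|p|^2}\ge|p|$ and the fact that the $x_3$-rate $\frac{m_\pm g}{16c}\beta$ dominates $\nu=\frac{g}{16c}\hat m\beta$ (recall $\hat m=\min\{m_+,m_-\}$), the momentum integral $\int_{\R^3}e^{-\frac{\beta}{8}|p|}\,\dd p\lesssim\beta^{-3}$ gives $\mathcal R(t)\lesssim\frac{e_++e_-}{\beta^3}\,\mathcal M(t)$ with an explicit constant. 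For the reverse coupling I would feed the Lagrangian splitting \eqref{form:f}--\eqref{form:N} and the pointwise bounds \eqref{est:I}, \eqref{est:N} into $f_\pm=\mathcal I_\pm+\mathcal N_\pm$, multiply through by the target weight $e^{\lambda s}e^{\frac{\beta}{8}\sqrt{(m_\pm c)^2+|p|^2}+\frac{m_\pm g}{16c}\beta x_3}$, and use the elementary rate comparisons $\frac{m_\pm g}{24}\beta\ge\lambda$, $\frac{\beta}{4}\ge\frac{\beta}{8}$, $\frac{m_\pm g}{8c}\beta\ge\frac{m_\pm g}{16c}\beta$ — so that the contribution of $\mathcal I_\pm$ carries only nonpositive exponents, while the weights in \eqref{est:N} cancel the target weight exactly. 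Taking $\sup_{x,p}$, then $\sup_{s\le t}$, then $\sum_{\pm}$ yields
\[
\mathcal M(t)\ \le\ C_0\ +\ 4e^{\lambda}\big(1+\tfrac1\nu\big)\Big(e_+\|w_{+,\beta}\nabla_p h_+\|_\infty+e_-\|w_{-,\beta}\nabla_p h_-\|_\infty\Big)\,\mathcal R(t),\qquad
C_0:=\sum_{i=\pm}e^{2+\frac{m_ig}{24}\beta}\|\w_{i,\beta,0}f_{i,0}\|_{L^\infty_{x,p}}.
\]

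The bootstrap then closes. Substituting $\mathcal R(t)\lesssim\frac{e_++e_-}{\beta^3}\mathcal M(t)$ into the displayed inequality gives $\mathcal M(t)\le C_0+\kappa\,\mathcal M(t)$, where $\kappa$ is (up to the universal constant from the momentum integral) equal to $\frac{1}{\beta^3}e^{\lambda}(1+\tfrac1\nu)\big(e_+^2\|w_{+,\beta}\nabla_p h_+\|_\infty+e_-^2\|w_{-,\beta}\nabla_p h_-\|_\infty\big)$ — the squares of $e_\pm$ arising because the charge enters once through $\varrho$ and once through the source $e_\pm\nabla_x\Psi\cdot\nabla_p h_\pm$ — so that hypothesis \eqref{condition:Dvh} is precisely the assertion $\kappa\le\tfrac12$. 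Since $\mathcal M(t)<\infty$, absorption gives $\mathcal M(t)\le 2C_0$ uniformly in $t$, which is \eqref{decay_f}; plugging back into $\mathcal R(t)\lesssim\frac{e_++e_-}{\beta^3}\mathcal M(t)$ gives \eqref{decay:varrho} with the stated explicit constant. (Equivalently, one may first close the bootstrap on $\mathcal R$ alone, $\mathcal R(t)\le D_0+\kappa\,\mathcal R(t)$ with $D_0\lesssim\frac{e_++e_-}{\beta^3}C_0$, and then deduce the bound for $\mathcal M$ from the displayed inequality.) I expect the delicate points to be bookkeeping rather than conceptual: tracking the powers of $e_\pm$ and the universal constants so the contraction constant matches exactly the left-hand side of \eqref{condition:Dvh}; checking every exponent comparison so that multiplying the estimates of Proposition \ref{prop:decay} by the target weight leaves only nonincreasing factors; and keeping the qualitative finiteness of $\mathcal M(t),\mathcal R(t)$ on bounded intervals in place before absorbing, since without it the step $\mathcal M\le C_0+\kappa\mathcal M\Rightarrow\mathcal M\le2C_0$ is not valid. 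None of these requires anything beyond Proposition \ref{prop:decay} and Lemma \ref{lem:Ddb}.
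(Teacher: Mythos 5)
Your proposal is correct and follows essentially the same route as the paper: decompose $f_\pm=\mathcal I_\pm+\mathcal N_\pm$, invoke the pointwise bounds \eqref{est:I}--\eqref{est:N} of Proposition \ref{prop:decay}, close a contraction using \eqref{condition:Dvh}, and read off \eqref{Uest:D^-1_Db} from Lemma \ref{lem:Ddb}; the paper closes the bootstrap on $\mathcal R$ first and then deduces the bound on $f_\pm$, which is exactly the alternative you flag in parentheses. One caveat: in your primary (factorized) route the contraction constant is $\big(\sum_i e_i\big)\big(\sum_j e_j\|w_{j,\beta}\nabla_p h_j\|_\infty\big)$ up to universal factors, which contains cross terms $e_+e_-\|w_{-,\beta}\nabla_p h_-\|_\infty$ not dominated by the purely diagonal left-hand side of \eqref{condition:Dvh}; to get $\kappa\le\tfrac12$ exactly from \eqref{condition:Dvh} you should keep the species index matched (the $e_i$ from $\varrho=\int(e_+f_++e_-f_-)\,\dd p$ pairs with the $e_i$ from \eqref{est:N} for the same $i$), i.e.\ run the bootstrap on $\mathcal R$ directly as in \eqref{est:varrho}--\eqref{est2:varrho}.
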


\begin{proof}

We abuse the notation as in \eqref{abuse} in the proof.
From \eqref{def:varrho}, then for any $(t, x) \in [0, \infty) \times \bar\O$,
\be \notag
| \varrho(t,x) |
\leq e_+ \int_{\R^3} | f_+ (t,x,p) | \dd p + e_- \int_{\R^3} | f_{-} (t,x,p) | \dd p.
\ee
Using \eqref{form:f}, together with \eqref{est:I} and \eqref{est:N}, we derive that for any $(t, x) \in [0, \infty) \times \bar\O$,
\Be \label{est:varrho}
\begin{split}
& e^{ \lambda t} e^{ \nu x_3} |\varrho (t,x)|
\\& \leq e^{ \lambda t} e^{ \nu x_3}
\sum\limits_{i = \pm} e_i \Big\{ \int_{\R^3} | \mathcal{I} _i (t,x,p)| \dd p +  \int_{\R^3} |\mathcal{N}_i (t,x,p) |\dd p \Big\}   
\\& \leq e^{ \lambda t} e^{ \nu x_3}
\sum\limits_{i = \pm} e_i \Big\{ \int_{\R^3} e^{2 + \frac{m_i g}{24} \beta} \| \w_{i, \beta, 0 } f_{i, 0 } \|_{L^\infty_{x,p}} 
e^{- \frac{m_i g}{24} \beta t } 
e^{ - \frac{\beta}{4} \sqrt{(m_i c)^2 + |p|^2} } e^{ - \frac{m_i g}{8 c} \beta x_3} \dd p 
\\& \ \ \ \ + \int_{\R^3} e^{- \frac{\beta}{8} \sqrt{(m_i c)^2 + |p|^2} } e^{- \frac{m_i g}{16 c} \beta x_3} e^{ - \lambda t} e^{ \lambda } 4 e_i \sup\limits_{s \in [0, t] } e^{ \lambda s} \| e^{ \nu x_3} \varrho (s) \|_{L^\infty(\O)} (1 + \frac{1}{\nu}) \| w_{\beta, i} \nabla_p h_i \|_{\infty} \dd p \Big\} 
\\& \leq \sum\limits_{i = \pm} e_i \Big\{ e^{2 + \frac{m_i g}{24} \beta} \| \w_{i, \beta, 0 } f_{i, 0 } \|_{L^\infty_{x,p}}  \int_{\R^3} 
e^{ - \frac{\beta}{4} \sqrt{(m_i c)^2 + |p|^2} } \dd p 
\\& \ \ \ \ + e^{ \lambda } 4 e_i  (1 + \frac{1}{\nu}) \| w_{\beta, i} \nabla_p h_i \|_{\infty} \int_{\R^3} e^{- \frac{\beta}{8} \sqrt{(m_i c)^2 + |p|^2} } \dd p  
\times \sup\limits_{s \in [0, t] } e^{ \lambda s} \| e^{ \nu x_3} \varrho (s) \|_{L^\infty(\O)} \Big\}.
\end{split}	
\Ee
Together with the condition \eqref{condition:Dvh}, we get, for any $(t, x) \in [0, \infty) \times \bar\O$,
\be \label{est2:varrho}
\begin{split}
& \eqref{est:varrho}
\\& \leq \big( e_+ e^{2 + \frac{m_+ g}{24} \beta} \| \w_{+,\beta, 0 } f_{+, 0 } \|_{L^\infty_{x,p}} + e_- e^{2 + \frac{m_- g}{24} \beta} \| \w_{-,\beta, 0 } f_{-, 0 } \|_{L^\infty_{x,p}} \big) \frac{64}{\beta^3}
\\& \ \ \ \ + 4 e^{ \lambda } (1 + \frac{1}{\nu}) \big( e^2_+ \| w_{+,\beta} \nabla_p h_+ \|_{\infty} + e^2_- \| w_{-,\beta} \nabla_p h_- \|_{\infty} \big) \frac{512}{\beta^3} \times \sup\limits_{s \in [0, t] } e^{ \lambda s} \| e^{ \nu x_3} \varrho (s) \|_{L^\infty(\O)}
\\& \leq \big( e_+ e^{2 + \frac{m_+ g}{24} \beta} \| \w_{+,\beta, 0 } f_{+, 0 } \|_{L^\infty_{x,p}} + e_- e^{2 + \frac{m_- g}{24} \beta} \| \w_{-,\beta, 0 } f_{-, 0 } \|_{L^\infty_{x,p}} \big) \frac{64}{\beta^3}
\\& \ \ \ \ + \frac{1}{2} \sup\limits_{s \in [0, t] } e^{ \lambda s} \| e^{ \nu x_3} \varrho (s) \|_{L^\infty(\O)}.
\end{split}	
\Ee
This implies that for any $(t, x) \in [0, \infty) \times \bar\O$,
\be \notag
\frac{1}{2} e^{ \lambda t} e^{ \nu x_3} |\varrho (t,x)|
\leq \big( e_+ e^{2 + \frac{m_+ g}{24} \beta} \| \w_{+,\beta, 0 } f_{+, 0 } \|_{L^\infty_{x,p}} + e_- e^{2 + \frac{m_- g}{24} \beta} \| \w_{-,\beta, 0 } f_{-, 0 } \|_{L^\infty_{x,p}} \big) \frac{64}{\beta^3},
\ee
and thus we conclude \eqref{decay:varrho}.

\smallskip

Next, inputting \eqref{decay:varrho} into \eqref{est:I} and \eqref{est:N}, together with $f_{\pm} (t,x,p) = \mathcal{I}_{\pm} (t,x,p) + \mathcal{N}_{\pm} (t,x,p)$, we derive that for any $(t,x,p) \in [0, \infty) \times  \bar\O \times \R^3$,
\be \notag
\begin{split}
e^{ \lambda t} | f (t,x,p) |
& \leq e^{ \lambda t} \big( | \mathcal{I} (t,x,p) | + | \mathcal{N} (t,x,p) | \big)
\\& \leq e^{2 + \frac{m g}{24} \beta} \| \w_{\beta, 0 }   f_{0 } \|_{L^\infty_{x,p}} 
e^{ - \frac{\beta}{4} \sqrt{(m c)^2 + |p|^2} } e^{ - \frac{m g}{8 c} \beta x_3} 
+ \big( e^{2 + \frac{m_+ g}{24} \beta} \| \w_{+,\beta, 0 } f_{+, 0 } \|_{L^\infty_{x,p}}
\\& \ \ \ \ + e^{2 + \frac{m_- g}{24} \beta} \| \w_{-,\beta, 0 } f_{-, 0 } \|_{L^\infty_{x,p}} \big) e^{- \frac{\beta}{8} \sqrt{(m_{\pm} c)^2 + |p|^2} } e^{- \frac{m_{\pm} g}{16 c} \beta x_3},
\end{split}
\ee
and conclude \eqref{decay_f}.  
Using \eqref{decay_f} and \eqref{est:D^-1_Db} in Lemma \ref{lem:Ddb}, we conclude \eqref{Uest:D^-1_Db}.
\end{proof}

\subsection{A Priori Estimate} 
\label{sec:RD} 

In this section, we establish a priori estimate of $(F, \phi_F)$ solving \eqref{VP_F}, \eqref{Poisson_F}, \eqref{VP_0},  \eqref{Dbc:F} and \eqref{bdry:F}.  
Recall from \eqref{def:F}, $F(t,x,p)= h(x,p)+f(t,x,p)$ where $(h, \rho, \Phi)$ solves \eqref{VP_h}-\eqref{eqtn:Dphi} and $(f, \varrho, \Psi)$ solves \eqref{eqtn:f}-\eqref{Poisson_f} respectively.

Throughout this section, we assume a compatibility condition:
\Be \label{CC_F0=G}
F_{\pm, 0} (x, p) = G_{\pm} (x, p) \ \ \text{on} \ (x, p) \in \gamma_-.
\Ee
Further, we always suppose \eqref{Bootstrap} holds, which is analogous to the condition \eqref{Uest:DPhi} in the case of steady solution $(h, \Phi)$.
Again we utilize the kinetic distance function $\alpha_{\pm, F} (t, x, p)$ of $\eqref{alpha_F}$.
Therefore, we omit the proofs of some results that are similar to Lemmas in Section \ref{sec:RS}. 

\begin{lemma} \label{VL:dyn} 

Let $\alpha_{\pm, F} (t,x,p)$ be defined as in \eqref{alpha_F}.
Suppose the assumption \eqref{Bootstrap} holds. Recall the characteristics $\Z_{\pm} (s;t,x,p) = (\X_{\pm} (s;t,x,p), \V_{\pm} (s;t,x,p) )$ solving \eqref{ODE_F}. For all $(t,x,p) \in \R_+ \times \O \times \R^3$ satisfying $t - \tBp (t,x,p) \geq 0$ and $s \in [ t - \tBp (t,x,p), t]$,
\Be \label{est:alpha:dyn}
\begin{split}
& \alpha_{\pm, F} (s, \X_{\pm} (s;t,x,p), \P_{\pm} (s;t,x,p))
\\& \leq \alpha_{\pm, F} (t,x,p) 
e^{ \big( 4g + \| \p_{x_3}^2  \phi_F \|_{L^\infty(\bar{\O})} + e_{\pm} \| \p_t \p_{x_3} \phi_F (t, x_\parallel , 0) \|_{L^\infty(\p\O)} + \|  \nabla_{x_\parallel} \p_{x_3} \phi_F \|_{L^\infty(\p\O)} \big) |t - s| },
\end{split}
\Ee
and
\Be \label{est:alpha:dyn_lower}
\begin{split}
& \alpha_{\pm, F} (s, \X_{\pm} (s;t,x,p), \P_{\pm} (s;t,x,p))
\\& \geq \alpha_{\pm, F} (t,x,p) 
e^{ - \big( 4g + \| \p_{x_3}^2 \phi_F \|_{L^\infty(\bar{\O})} + e_{\pm} \| \p_t \p_{x_3} \phi_F (t, x_\parallel , 0) \|_{L^\infty(\p\O)} + \| \nabla_{x_\parallel} \p_{x_3} \phi_F \|_{L^\infty(\p\O)} \big) |t - s| }.
\end{split}
\Ee
Furthermore, we have
\begin{align}
& | \vBpn (t,x,p) | 
\notag \\
& \geq \alpha_{\pm, F} (t,x,p) 
e^{  - \big( 4g + \| \p_{x_3}^2 \phi_F \|_{L^\infty(\bar{\O})} + e_{\pm} \| \p_t \p_{x_3} \phi_F (t, x_\parallel , 0) \|_{L^\infty(\p\O)} + \|  \nabla_{x_\parallel} \p_{x_3} \phi_F \|_{L^\infty(\p\O)} \big) |\tBp| }
\label{est1:alpha:dyn} \\
& \geq {\alpha_{\pm, F} (t,x,p) }
 e^{ -\frac{8}{m_{\pm} g} \big( 4g + 2 \| \nabla_x ^2 \phi_F \|_{\infty} + e_{\pm} \| \p_t \p_{x_3} \phi_F (t, x_\parallel , 0) \|_{L^\infty(\p\O)} \big) |\pBp^0 |}, 
\label{est1:xb_x/w:dyn}
\end{align}
where $\pBp^0 = \sqrt{(m_{\pm} c)^2 + |\pBp (t,x,p)|^2}$.
\end{lemma}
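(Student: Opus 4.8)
The plan is to mimic the proof of Lemma \ref{VL} (the steady analogue), carrying the extra time-derivative terms $\p_t \p_{x_3}\phi_F$ that now appear because $\phi_F = \Phi_h + \Psi$ depends on $t$. First I would recall the explicit formula \eqref{alpha_F} for $\alpha_{\pm,F}(t,x,p)$ and, exactly as in \eqref{est1:alpha_steady}--\eqref{este:alpha_steady}, differentiate $\alpha_{\pm,F}$ along the dynamic characteristic flow \eqref{ODE_F}. The key point is that the transport operator applied to $\alpha_{\pm,F}$ is
\[
\big[ \p_t + \V_{\pm}\cdot\nabla_x + \big( e_{\pm}(\tfrac{\V_{\pm}}{c}\times B - \nabla_x\phi_F) - m_{\pm}g\mathbf{e}_3 \big)\cdot\nabla_p \big]\,\alpha_{\pm,F},
\]
and the new feature relative to the steady case is the $\p_t$ acting on the boundary term $2(e_{\pm}\p_{x_3}\phi_F(t,x_\parallel,0)+m_{\pm}g)\tfrac{x_3}{\sqrt{m_{\pm}^2+|p|^2/c^2}}$, which produces a contribution controlled by $e_{\pm}\|\p_t\p_{x_3}\phi_F(t,x_\parallel,0)\|_{L^\infty(\p\O)}$. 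The $\nabla_x$ and $\nabla_p$ parts are handled verbatim as in Lemma \ref{VL}: use the fundamental theorem of calculus in $x_3$ to replace $\p_{x_3}\phi_F(t,x)$ by $\p_{x_3}\phi_F(t,x_\parallel,0)$ at the cost of $\|\p_{x_3}^2\phi_F\|_{L^\infty(\bar\O)}$, bound the parallel gradients by $\|\nabla_{x_\parallel}\p_{x_3}\phi_F\|_{L^\infty(\p\O)}$, use $\V_{\pm}\times B=(B_3\V_{\pm,2},-B_3\V_{\pm,1},0)$ to kill the magnetic term, and invoke \eqref{Bootstrap} for the bound $e_{\pm}\|\nabla_x\phi_F\|_\infty\le \min\{m_+,m_-\}\tfrac g2$.

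The output of this computation is the pointwise differential inequality
\[
\Big| \tfrac{d}{ds}\,\alpha_{\pm,F}(s,\Z_{\pm}(s;t,x,p)) \Big| \le \big( 4g + \|\p_{x_3}^2\phi_F\|_{L^\infty(\bar\O)} + e_{\pm}\|\p_t\p_{x_3}\phi_F(t,x_\parallel,0)\|_{L^\infty(\p\O)} + \|\nabla_{x_\parallel}\p_{x_3}\phi_F\|_{L^\infty(\p\O)} \big)\,\alpha_{\pm,F}(s,\Z_{\pm}(s;t,x,p)).
\]
Applying Gronwall's inequality forward and backward in $s$ from $s=t$ immediately gives both \eqref{est:alpha:dyn} and \eqref{est:alpha:dyn_lower}. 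Then \eqref{est1:alpha:dyn} follows by setting $s = t-\tBp(t,x,p)$ in \eqref{est:alpha:dyn_lower} and noting that at the boundary $\X_{\pm,3}=0$, so $\alpha_{\pm,F}(t-\tBp;\cdot)=|\vBpn(t,x,p)|$ (just as in the steady case where $\alpha_{\pm}(\xb,\pb)=|\vbn|$). Finally, \eqref{est1:xb_x/w:dyn} follows from \eqref{est1:alpha:dyn} by replacing $\tBp$ with the bound coming from Proposition \ref{lem3:tB}, i.e. $\tBp(t,x,p)\le \tfrac{8}{m_{\pm}g}\sqrt{(m_{\pm}c)^2+|\pBp(t,x,p)|^2}=\tfrac{8}{m_{\pm}g}|\pBp^0|$, together with $\|\p_{x_3}^2\phi_F\|_{L^\infty}\le\|\nabla_x^2\phi_F\|_\infty$; the factor $\tfrac{8}{m_{\pm}g}(4g+2\|\nabla_x^2\phi_F\|_\infty+e_{\pm}\|\p_t\p_{x_3}\phi_F\|_{L^\infty(\p\O)})$ in the exponent absorbs the three structural constants.

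The main obstacle is bookkeeping rather than conceptual: I must verify that the $\p_t$ applied to $\alpha_{\pm,F}$ really does produce only the clean term $e_{\pm}\|\p_t\p_{x_3}\phi_F(t,x_\parallel,0)\|_{L^\infty(\p\O)}$ and no other obstruction — this works precisely because the \emph{only} $t$-dependence in $\alpha_{\pm,F}$ sits in the boundary coefficient $e_{\pm}\p_{x_3}\phi_F(t,x_\parallel,0)$, while $|x_3|^2$ and $|v_{\pm,3}|^2$ are $t$-independent. One should also check that Proposition \ref{lem3:tB} is applicable, which requires the hypotheses \eqref{Uest:DPhi}, \eqref{Uest:DxPsi}, \eqref{Uest:Dxphi_F}; these are all subsumed under \eqref{Bootstrap} and the standing assumptions of the theorem, so the chain of implications closes. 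I expect the proof to read: state the differential identity, bound each term, apply Gronwall, and specialize — with the bulk of the work being the term-by-term estimate that I would carry out exactly as in \eqref{est1:alpha_steady}--\eqref{este:alpha_steady} with the single extra $\p_t$-term appended.
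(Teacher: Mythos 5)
Your proposal is correct and follows essentially the same route as the paper: differentiate $\alpha_{\pm,F}$ along the dynamic flow (the sole new term being $\p_t$ acting on the boundary coefficient $e_{\pm}\p_{x_3}\phi_F(t,x_\parallel,0)$), repeat the steady-case term-by-term estimates with the fundamental theorem of calculus in $x_3$, apply Gronwall, evaluate at $s=t-\tBp$ where $\alpha_{\pm,F}=|\vBpn|$, and then invoke Proposition \ref{lem3:tB} to bound $\tBp$ by $\tfrac{8}{m_{\pm}g}|\pBp^0|$. No gaps.
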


\begin{proof}

We abuse the notation as in \eqref{abuse} in the proof.
Recall from \eqref{alpha_F},
\be \notag
\alpha_F (t,x,p) = \sqrt{ |x_3|^2  + |v_3|^2 +  2 ( e \p_{x_3} \phi_F (t, x_\parallel , 0) + m g ) \frac{c x_3}{ p^0 } },
\ee
where $p^0 = \sqrt{ (m c)^2 + |p|^2}$ and $v =  \frac{p}{ \sqrt{ m^2 + |p|^2/ c^2} }$.
From Lemma \ref{VL}, together with the following:
\be
\p_t \alpha_F (t,x,p) = 2 e \p_t \p_{x_3} \phi_F (t, x_\parallel , 0) \frac{c x_3}{ p^0 },
\ee
then we get
\Be \label{est1:alpha_dy}
\begin{split}
& \big[ \p_t + v \cdot \nabla_x + \big( e ( \frac{v}{c} \times B - \nabla_x \phi_F ) - \nabla_x (m g x_3) \big) \cdot \nabla_p \big] \alpha_F (t, x, p)
\\& = \frac{ 2 e \p_t \p_{x_3} \phi_F (t, x_\parallel , 0) \frac{c x_3}{ p^0 } }{ \sqrt{ |x_3|^2  + |v_3|^2 +  2 (\p_{x_3} \phi_F (x_\parallel , 0) + g ) \frac{c x_3}{ p^0 } } }
\\& \ \ \ \ \frac{ + v_3 x_3 
- \left[ v_3 (\frac{c}{p^0} - \frac{(v_3)^2}{ c p^0}) ( e \p_{x_3} \phi_F (x) + m g ) 
- \frac{ (v_3)^2 }{ c p^0 } v_{\|} \cdot e \p_{x_{\|}} \phi_F (x) \right] }{ * }
\\& \ \ \ \ \frac{ + \left[ v_\parallel \cdot \nabla_{x_\parallel} e \p_{x_3} \phi_F   (x_\parallel, 0) \frac{c x_3}{ p^0 } + \frac{c v_3}{ p^0 } ( e \p_{x_3} \phi_F (x_\parallel , 0) + m g ) \right] }{ * }
\\& \ \ \ \ \frac{ + \big( e \p_{x_3} \phi_F (x_\parallel, 0) + m g \big) \frac{ x_3 }{ (p^0)^2} v \cdot \big( e \nabla_x \phi_F (x) + \nabla_x (m g x_3) \big) }{*} .
\end{split}
\Ee
Using the fundamental theorem of calculus in \eqref{est2:alpha_steady}, we obtain
\be \notag
\begin{split}
\eqref{est1:alpha_dy} 
& \leq \frac{ v_3 x_3 + \frac{c v_3}{ p^0 } x_3 \| \p_{x_3} \p_{x_3} \phi_F \|_{L^\infty(\bar{\O})}
+ \frac{(v_3)^3}{ c p^0 } ( e \p_{x_3} \phi_F (x)+ m g ) 
+ \frac{(v_3)^2}{ c p^0 } v_{\|} \cdot e \p_{x_{\|}} \phi_F (x) }{ \sqrt{ |x_3|^2  + |v_3|^2 +  2 (\p_{x_3} \phi_F (x_\parallel , 0) + g ) \frac{x_3}{\langle v \rangle} } }
\\& \ \ \ \ \frac{ +  2 e \| \p_t \p_{x_3} \phi_F (t, x_\parallel , 0) \|_{L^\infty(\p\O)} \frac{c x_3}{ p^0 }
+ \ v_\parallel \cdot \nabla_{x_\parallel} e \p_{x_3} \phi_F (x_\parallel, 0) \frac{c x_3}{ p^0 } }{*}
\\& \ \ \ \ \frac{+ ( e \p_{x_3} \phi_F (x_\parallel, 0) + m g ) \frac{ x_3 }{ (p^0)^2} v \cdot \big( e \nabla_x \phi_F (x) + \nabla_x (m g x_3) \big) }{*}
\\& \leq \frac{  (|x_3|^2  + |v_3|^2) \big( \frac{1}{2} + \frac{c }{2 p^0} \| \p_{x_3} \p_{x_3} \phi_F \|_{L^\infty(\bar{\O})} 
+ \frac{ 3 |v| }{ c p^0 } ( e \| \nabla_{x} \phi_F \|_{L^\infty(\p\O)} + m g ) \big)}{ \sqrt{ |x_3|^2  + |v_3|^2 +  2 (\p_{x_3} \phi_F (x_\parallel , 0) + g ) \frac{x_3}{\langle v \rangle} } }
\\& \ \ \ \ \frac{ +  2 e \| \p_t \p_{x_3} \phi_F (t, x_\parallel , 0) \|_{L^\infty(\p\O)} \frac{c x_3}{ p^0 } + \ 2 |v_\parallel| \|  \nabla_{x_\parallel} \p_{x_3} \phi_F \|_{L^\infty(\p\O)} \frac{c x_3}{ p^0 } }{*} 
\\& \ \ \ \ \frac{ + ( e \p_{x_3} \phi_F (x_\parallel, 0) + m g )  \frac{c x_3}{ p^0 } \big( \frac{ v }{ (p^0)^2} \cdot \nabla_x (e \phi_F (x) + m g x_3) \big) }{*}.
\end{split}
\ee
Using \eqref{Bootstrap}, we have 
\Be\notag
\begin{split}
& \big| \big[ \p_t + v \cdot \nabla_x + \big( e ( \frac{v}{c} \times B - \nabla_x \phi_F ) - \nabla_x (m g x_3) \big) \cdot \nabla_p \big] \alpha_F (t, x, p) \big|
\\& \leq \big( 4g + \| \p_{x_3} \p_{x_3} \phi_F \|_{L^\infty(\bar{\O})} + e \| \p_t \p_{x_3} \phi_F (t, x_\parallel , 0) \|_{L^\infty(\p\O)} + \|  \nabla_{x_\parallel} \p_{x_3} \phi_F \|_{L^\infty(\p\O)} \big) \alpha_F (t,x,p).
\end{split}
\Ee
Using the Gronwall's inequality, we conclude both \eqref{est:alpha:dyn} and \eqref{est:alpha:dyn_lower}. 

\smallskip

For \eqref{est1:alpha:dyn}, we set $s = t - \tB (t,x,p)$ in \eqref{est:alpha:dyn_lower} and get 
\[
\alpha_F (t - \tB (t,x,p), \X (t - \tB (t,x,p);t,x,p),\P (t - \tB (t,x,p);t,x,p)) = | \vBn (t,x,p)|.
\]
Thus we obtain the first inequality \eqref{est1:alpha:dyn}. Together with \eqref{ODE_F} and Proposition \ref{lem3:tB}, we have 
\Be \notag
\tB (t, x, p) \leq \frac{8}{m g} \sqrt{(m c)^2 + |\pB (t,x,p)|^2},
\Ee
and therefore prove \eqref{est1:xb_x/w:dyn}. 
\end{proof}

\smallskip

Now suppose the assumption \eqref{Bootstrap} holds. We refer to Section \ref{sec:RS} and collect some basic properties and estimates on $F_{\pm} (t,x,p)$.

\smallskip

(a)
From \eqref{VP_0} and \eqref{bdry:F}, we rewrite $F_{\pm} (t,x,p)$ as
\Be \label{form:F}
\begin{split} 
F_{\pm} (t,x,p) 
& = \mathbf{1}_{t \leq \tBp (t,x,p) } F_{\pm, 0} (\X_{\pm} (0;t,x,p), \P_{\pm} (0;t,x,p))
\\& \ \ \ \ + \mathbf{1}_{t > \tBp (t,x,p) }  G_{\pm} ( \xBp (t,x,p), \pBp (t,x,p)).
\end{split}
\Ee
Therefore, the derivatives of $F_\pm (t,x,p)$ are give by
\Be \label{DF_x}
\begin{split}
\p_{x_i} F_{\pm} (t,x,p) 
& = \mathbf{1}_{t \leq \tBp (t,x,p) }  
\{ \p_{x_i} \X_{\pm} (0;t,x,p) \cdot \nabla_x F_{\pm, 0} (\Z_{\pm} (0;t,x,p) ) 
\\& \qquad \qquad \qquad  + \p_{x_i} \P_{\pm} (0;t,x,p) \cdot \nabla_p F_{\pm, 0} (\Z_{\pm} (0;t,x,p) ) \}
\\& \ \ \ \ + \mathbf{1}_{t >\tBp (t,x,p) }  
\{ \p_{x_i} \xBp \cdot \nabla_{x_\parallel} G_{\pm} (\xBp (t,x,p), \pBp (t,x,p))
\\& \qquad \qquad \qquad + \p_{x_i} \pBp \cdot \nabla_p G_{\pm} (\xBp (t,x,p), \pBp (t,x,p)) \},
\end{split}
\Ee
and
\Be \label{DF_v}
\begin{split}
\p_{p_i} F_{\pm} (t,x,p) 
& = \mathbf{1}_{t \leq \tBp (t,x,p) }  
\{ \p_{p_i} \X_{\pm} (0;t,x,p) \cdot \nabla_x F_{\pm, 0} (\Z (0;t,x,p) ) 
\\& \qquad \qquad \qquad + \p_{p_i} \P_{\pm} (0;t,x,p) \cdot \nabla_p F_{\pm, 0} (\Z_{\pm} (0;t,x,p) ) \}
\\& \ \ \ \ + \mathbf{1}_{t > \tBp (t,x,p) }  
\{ \p_{p_i} \xBp \cdot \nabla_{x_\parallel} G_{\pm} (\xBp (t,x,p), \pBp (t,x,p))
\\& \qquad \qquad \qquad + \p_{p_i} \pBp \cdot \nabla_p G_{\pm} (\xBp (t,x,p), \pBp (t,x,p)) \},
\end{split}
\Ee
where the characteristics $\Z_{\pm} (s;t,x,p) = (\X_{\pm} (s;t,x,p), \P_{\pm} (s;t,x,p))$ solves \eqref{ODE_F}.

\smallskip

(b)
Analogous to the proof of Lemma \ref{lem:XV_xv}, we can conclude the following:
\begin{align}
|\nabla_{x} \X_{\pm} (s;t,x,p)| & \leq \min \big\{  e^{\frac{|t-s|^2 + 2|t-s| }{2} (\| \nabla_x^2 \phi_F  \|_\infty + e_{\pm} B_3 + m_{\pm} g ) }, e^{ (1 + B_3 + \| \nabla_x ^2 \phi_F  \|_\infty) |t-s|} \big\}, 
\label{est:X_x:dyn} \\
|\nabla_{x} \P_{\pm} (s;t,x,p)| & \leq \min \big\{ |t-s| ( B_3 + \| \nabla_x ^2 \phi_F \|_\infty) e^{ (1+ B_3 + \| \nabla_x ^2 \phi_F \|_\infty) |t-s| },
\notag \\
& \qquad \qquad \qquad \qquad \qquad e^{ (1+ B_3 + \| \nabla_x ^2 \phi_F \|_\infty) |t-s|}
\big\}, 
\label{est:V_x:dyn} \\
|\nabla_{p} \X_{\pm} (s;t,x,p)| &\leq   \min \big\{ |t-s| e^{ (1 + B_3 + \| \nabla_x ^2 \phi_F  \|_\infty) |t-s|},
e^{ (1 + B_3 + \| \nabla_x ^2 \phi_F  \|_\infty) |t-s|}
\big\}, 
\label{est:X_v:dyn} \\
|\nabla_{p} \P_{\pm} (s;t,x,p)| & \leq  \min \big\{ 1 + |t-s| (B_3 + \| \nabla_x ^2 \phi_F \|_{\infty}) e^{(1 + B_3 + \|\nabla_x^2 \phi_F \|_\infty) |t-s|},
\notag \\
& \qquad \qquad \qquad \qquad \qquad e^{ (1 + B_3 + \| \nabla_x ^2 \phi_F  \|_\infty) |t-s|}
\big\},
\label{est:V_v:dyn}
\end{align}
where we denote 
$\| \nabla_x^2\phi_{F} \|_\infty := \sup\limits_{s \in [t-\tB (t,x,p),t]} \| \nabla_x^2\phi_{F} (s, x)  \|_{L^{\infty}(\bar{\O})}$.

\smallskip

(c)
Following the proof of Lemmas \ref{lem:exp_txvb} and \ref{lem:nabla_zb}, together with \eqref{est:X_x:dyn}-\eqref{est:V_v:dyn}, then we have
\Be \label{est:xb_x:dyn}
\begin{split}
& | \p_{x_i} \xBp (t,x,p) | 
\leq \frac{ | \vBp (t,x,p) | }{ | \vBpn (t,x,p) | }  
 \delta_{i3} 
\\& \ \ \ \ + \Big(1 +  \frac{|\vBp (t,x,p)|}{|\vBpn (t,x,p)|}
\frac{|\tBp (t,x,p)|^2}{2} ( \| \nabla_x^2 \phi_F \|_\infty + e_{\pm} B_3 + m_{\pm} g ) \Big) e^{ (1 + B_3 + \| \nabla_x ^2 \phi_F  \|_\infty) |\tBp| },
\end{split}
\Ee
\Be \label{est:xb_v:dyn}
\begin{split}
& |\p_{p_i} \xBp (t,x,p)| 
\leq \frac{|\vBp (t,x,p)| |\tBp (t,x,p)|  }{|\vBpn (t,x,p)| \sqrt{ (m_{\pm})^2 + |p|^2/ c^2}}  
\\& \ \ \ \ + \Big(1 +   \frac{|\vBp (t,x,p)|}{|\vBpn (t,x,p)|}
\frac{|\tBp (t,x,p)|^2}{2} ( \| \nabla_x^2 \phi_F  \|_\infty + e_{\pm} B_3 + m_{\pm} g ) \Big) e^{ (1 + B_3 + \| \nabla_x ^2 \phi_F  \|_\infty) |\tBp| },
\end{split}
\Ee
\Be \label{est:vb_x:dyn}
\begin{split}
& |\p_{x_i} \pBp (t,x,p)| \leq \frac{ |e_{\pm} B_3 + m_{\pm} g| }{|\vBpn (t,x,p)|} \delta_{i3}
\\& \ \ \ \ + \Big(1 +  \frac{|e_{\pm} B_3 + m_{\pm} g|}{|\vBpn (t,x,p)|}
\frac{|\tBp (t,x,p)|^2}{2} ( \| \nabla_x^2 \phi_F  \|_\infty + e_{\pm} B_3 + m_{\pm} g ) \Big) e^{ (1 + B_3 + \| \nabla_x ^2 \phi_F  \|_\infty) | \tBp|},
\end{split}
\Ee
\Be \label{est:vb_v:dyn}
\begin{split}
& |\p_{p_i} \pBp (t,x,p)| \leq \frac{|e_{\pm} B_3 + m_{\pm} g| |\tBp (t,x,p)|  }{|\vBpn (t,x,p)| \sqrt{ (m_{\pm})^2 + |p|^2/ c^2}}  
\\& \ \ \ \ + \Big(1 + \frac{|e_{\pm} B_3 + m_{\pm} g|}{|\vBpn (t,x,p)|}
\frac{|\tBp (t,x,p)|^2}{2} ( \| \nabla_x^2 \phi_F  \|_\infty + e_{\pm} B_3 + m_{\pm}g ) \Big) e^{ (1 + B_3 + \| \nabla_x ^2 \phi_F  \|_\infty) |\tBp|},
\end{split}
\Ee
where
$\| \nabla_x^2\phi_{F} \|_\infty = \sup\limits_{s \in [t-\tB (t,x,p),t]} \| \nabla_x^2\phi_{F} (s, x)  \|_{L^{\infty}(\bar{\O})}$.

\begin{prop} \label{RE:dyn}

Suppose $(F, \phi_F)$ solves \eqref{VP_F}, \eqref{Poisson_F}, \eqref{VP_0}, \eqref{Dbc:F} and \eqref{bdry:F} under the compatibility condition \eqref{CC_F0=G}.
Assume that \eqref{Bootstrap} and \eqref{est:beta_b_dy} hold. Further, we assume 
\be \label{est1:D3tphi_F}
(1 + B_3 + \| \nabla_x ^2 \phi_F  \|_\infty) + \| \p_t \p_{x_3} \phi_F (t, x_\parallel , 0) \|_{L^\infty(\p\O)} 
\leq \frac{\hat{m} g}{24} \tilde{\beta},
\ee
where $\hat{m} = \min \{ m_+, m_- \}$.
Consider $(t,x,p) \in [0, \infty) \times  \bar\O \times \R^3$, then
\Be \label{est:F_v:dyn}
\begin{split} 
& \| e^{ \frac{\tilde{\beta}}{4} \sqrt{(m_{\pm} c)^2 + |p|^2} } e^{ \frac{m_{\pm} g}{8 c} \tilde{\beta} x_3} \nabla_p F_{\pm} (t,x,p) \|_{L^\infty(\O \times \R^3)} 
\\& \lesssim 2 e^{ \frac{m_{\pm} g}{24} \tilde{\beta} }
e^{2 \tilde{\beta} \frac{e_{\pm}}{c} \| (-\Delta_0)^{-1} (\nabla_x \cdot b ) \|_{L^\infty_{t,x}} } \| \w_{\pm, \tilde \beta, 0} \nabla_{x,p} F_{\pm, 0} \|_{L^\infty (\O \times \R^3)}
\\& \ \ \ \ + \big( 1 + \frac{ \| \nabla_x^2 \phi_F \|_\infty + e_{\pm} B_3 + m_{\pm} g}{(m_{\pm} g)^2} \big) \| e^{\tilde{\beta} |p^0_{\pm}|} \nabla_{x_\parallel,p} G_{\pm} \|_{L^\infty (\gamma_-)},
\end{split}
\Ee 
and 
\Be \label{est:F_x:dyn}
\begin{split} 
& e^{ \frac{\tilde{\beta}}{4} \sqrt{(m_{\pm} c)^2 + |p|^2} } e^{ \frac{m_{\pm} g}{8 c} \tilde{\beta} x_3} \big| \nabla_{x_i} F_{\pm} (t,x,p) \big|
\\& \lesssim 2 e^{ \frac{m_{\pm} g}{24} \tilde{\beta} }
e^{2 \tilde{\beta} \frac{e_{\pm}}{c} \| (-\Delta_0)^{-1} (\nabla_x \cdot b ) \|_{L^\infty_{t,x}} } \| \w_{\pm, \tilde \beta, 0}   \nabla_{x,p} F_{\pm, 0} \|_{L^\infty (\O \times \R^3)}
\\& \ \ \ \ + \big( \frac{\delta_{i3}}{\alpha_{\pm, F} (t,x,p)} + 1 + \frac{ \| \nabla_x^2 \phi_F \|_\infty + e_{\pm} B_3 + m_{\pm} g}{(m_{\pm} g)^2} \big) \| e^{\tilde{\beta} |p^0_{\pm}|} \nabla_{x_\parallel,p} G_{\pm} \|_{L^\infty (\gamma_-)},
\end{split}
\Ee 
where 
$\| \nabla_x^2\phi_{F} \|_\infty = \sup\limits_{s \in [t-\tB (t,x,p),t]} \| \nabla_x^2\phi_{F} (s, x)  \|_{L^{\infty}(\bar{\O})}$, and $\alpha_{\pm, F} (t,x,p)$ is defined in \eqref{alpha_F}.
\end{prop}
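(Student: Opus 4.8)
The plan is to mirror the argument for the steady regularity estimate (Proposition \ref{prop:Reg}), but now tracing characteristics for the full solution $F_\pm$ back either to the boundary data $G_\pm$ (when $t > \tBp(t,x,p)$) or to the initial data $F_{\pm,0}$ (when $t \leq \tBp(t,x,p)$), using the representation \eqref{form:F} and the derivative formulas \eqref{DF_x}, \eqref{DF_v}. First I would split $|\nabla_p F_\pm|$ and $|\nabla_x F_\pm|$ according to the indicator functions $\mathbf 1_{t \leq \tBp}$ and $\mathbf 1_{t > \tBp}$. On the set $\{t > \tBp\}$ the estimate is essentially a rerun of the steady case: combine the characteristic derivative bounds \eqref{est:xb_x:dyn}--\eqref{est:vb_v:dyn} with the backward-exit-time bounds from Proposition \ref{lem3:tB} (\eqref{est2:tBpB3} and \eqref{est:x3ppB}), and the kinetic-distance lower bound \eqref{est1:xb_x/w:dyn}--\eqref{est1:alpha:dyn} for $|\vBpn|$; the weight $e^{\tilde\beta|\pBp^0|}$ built from $G_\pm$ absorbs the exponential-in-$|\pB|$ growth exactly as in \eqref{est1:xb_v}--\eqref{est1:vb_v} and \eqref{est1:xb_x}--\eqref{est1:vb_x}, producing the $\alpha_{\pm,F}^{-1}$ and the $(m_\pm g)^{-2}$-type factors, provided $\tilde\beta$ is large enough — which is exactly assumption \eqref{est1:D3tphi_F} (the analogue of \eqref{choice_beta}, now carrying the extra $\|\p_t\p_{x_3}\phi_F\|_{L^\infty(\p\O)}$ term that enters through the dynamic kinetic distance).

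On the set $\{t \leq \tBp\}$ the characteristic reaches the initial slice, so I would bound $|\nabla_{x,p}F_{\pm,0}(\Z(0;t,x,p))|$ by $\|\w_{\pm,\tilde\beta,0}\nabla_{x,p}F_{\pm,0}\|_\infty$ divided by $\w_{\pm,\tilde\beta,0}(\Z(0;t,x,p))$, then control $\w_{\pm,\tilde\beta,0}^{-1}$ along the trajectory using Lemma \ref{lem:w/w} (the estimates \eqref{est:1/w_h}): this gives a factor $e^{2\tilde\beta\frac{e_\pm}{c}\|(-\Delta_0)^{-1}(\nabla_x\cdot b)\|_{L^\infty_{t,x}}}$ times $e^{-\frac{\tilde\beta}{2}\sqrt{(m_\pm c)^2+|p|^2}}e^{-\frac{m_\pm g}{4c}\tilde\beta x_3}$. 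The characteristic-derivative factors $|\nabla_{x,p}\X(0;t,x,p)|$, $|\nabla_{x,p}\P(0;t,x,p)|$ grow at most like $e^{(1+B_3+\|\nabla_x^2\phi_F\|_\infty)t}$ by \eqref{est:X_x:dyn}--\eqref{est:V_v:dyn}; since $\mathbf 1_{t\leq\tBp}$ forces $t$ to be controlled by $|p|$ and $x_3$ via Corollary \ref{cor:max_X_dy}, and \eqref{est1:D3tphi_F} bounds the exponent rate by $\tfrac{\hat m g}{24}\tilde\beta$, a fraction of the Gaussian-type decay $e^{-\frac{\tilde\beta}{2}\sqrt{(m_\pm c)^2+|p|^2}}e^{-\frac{m_\pm g}{4c}\tilde\beta x_3}$ can be spent to kill this growth, leaving the target weight $e^{\frac{\tilde\beta}{4}\sqrt{(m_\pm c)^2+|p|^2}}e^{\frac{m_\pm g}{8c}\tilde\beta x_3}$ and the factor $2e^{\frac{m_\pm g}{24}\tilde\beta}$ (the same bookkeeping as in \eqref{est:1_t<tB}--\eqref{bound:tB/w} of Proposition \ref{prop:decay}). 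Adding the two contributions and taking supremum over $(t,x,p)$ yields \eqref{est:F_v:dyn} and \eqref{est:F_x:dyn}.

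I expect the main obstacle to be the $\{t \leq \tBp\}$ branch: one must carefully balance three competing exponentials — the characteristic-derivative growth $e^{(1+B_3+\|\nabla_x^2\phi_F\|_\infty)|t-s|}$, the weight-ratio growth from Lemma \ref{lem:w/w} (which is linear in $|p|$ and $x_3$ with rate proportional to $\|(-\Delta_0)^{-1}(\nabla_x\cdot b)\|_{L^\infty}$, controlled by \eqref{est:beta_b_dy}), and the backward-time growth — against the fixed $e^{-\tilde\beta\sqrt{(m_\pm c)^2+|p|^2}/2}$ decay coming from $\w_{\pm,\tilde\beta,0}$. The key is that $\tBp$ itself, on the relevant set, is linearly bounded in $|\pB|$ (Proposition \ref{lem3:tB}) and $|\pB|$ is comparable to $|p|$ and $x_3$ by \eqref{est:x3ppB}, so each exponent rate is a controllable multiple of $\tilde\beta$ under \eqref{est1:D3tphi_F}; making the constants fit so that the leftover decay still dominates $\delta_{i3}/\alpha_{\pm,F}$ (which can blow up near $x_3=0$, but only in the $x$-derivative and only with the $\delta_{i3}$ factor) is the delicate part. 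Everything else is a direct transcription of the steady-case computations in Section \ref{sec:RS} with $\Phi_h$ replaced by $\phi_F$, $h$ by $F$, $\tbp$ by $\tBp$, and $w$ by $\w$, plus the boundary/initial split already handled in Section \ref{sec:AS}.
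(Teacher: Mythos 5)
Your proposal follows essentially the same route as the paper's proof: the same split via $\mathbf 1_{t\leq\tBp}$ / $\mathbf 1_{t>\tBp}$ from \eqref{DF_x}--\eqref{DF_v}, the same use of Lemma \ref{lem:w/w} and Corollary \ref{cor:max_X_dy} to absorb the characteristic-derivative growth on the initial-data branch under \eqref{est1:D3tphi_F}, and the same combination of \eqref{est:xb_x:dyn}--\eqref{est:vb_v:dyn} with Propositions \ref{lem2:tB}--\ref{lem3:tB} and Lemma \ref{VL:dyn} on the boundary branch, with the $\delta_{i3}/\alpha_{\pm,F}$ term arising exactly as you describe. No gaps; the bookkeeping you flag as delicate is precisely what the paper carries out in \eqref{est2:F_v1} and \eqref{est1:xb_v:dyn}--\eqref{est1:vb_x:dyn}.
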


\begin{proof}

We follow the proof of \eqref{est:hk_v} and \eqref{est:hk_x} in Proposition \ref{prop:Reg}.
Here, for simplicity, we 
use the notation: $p_\pm^0 = \sqrt{(m_\pm c)^2 + |p|^2}$ in the rest of proof. 
We will also use \eqref{abuse}.

\smallskip

\textbf{Step 1. Proof of \eqref{est:F_v:dyn}.}
From \eqref{DF_v}, we have
\Be \notag
\begin{split}
\p_{p_i} F (t,x,p) 
& = \mathbf{1}_{t \leq \tB(t,x,p) }  
\{ \p_{p_i} \X (0;t,x,p) \cdot \nabla_x F_0 (\Z (0;t,x,p) ) 
\\& \qquad \qquad \qquad \qquad + \p_{p_i} \P (0;t,x,p) \cdot \nabla_p F_0 (\Z (0;t,x,p) ) \}
\\& \ \ \ \ + \mathbf{1}_{t >\tB (t,x,p) }  
\{ \p_{p_i} \xB \cdot \nabla_{x_\parallel} G (\xB (t,x,p), \pB (t,x,p))
\\& \qquad \qquad \qquad \qquad + \p_{p_i} \pB \cdot \nabla_p G (\xB (t,x,p), \pB (t,x,p)) \},
\end{split}
\Ee
This shows that
\begin{align}
& | \nabla_p F (t,x,p) | 
\notag \\
& \leq \mathbf{1}_{t \leq \tB (t,x,p)}
\frac{| \nabla_p \X  (0;t,x,p)| +| \nabla_p \P  (0;t,x,p)|  }{\w_{\tilde \beta} (0, \Z (0;t,x,p) )} \| \w_{\tilde \beta, 0}   \nabla_{x,p} F_0  \|_{L^\infty (\O \times \R^3)} \label{est1:F_v1} \\
& \ \ \ \ +	\mathbf{1}_{t > \tB (t,x,p)} \frac{ | \nabla_p \xB (t,x,p) | + |\nabla_p \pB  (t,x,p)|}{ e^{\tilde{\beta} |\pB^0 (t,x,p)| } } \| e^{\tilde{\beta} |p^0|} \nabla_{x_\parallel,p} G \|_{L^\infty (\gamma_-)}.
\label{est1:F_v2}
\end{align} 

For \eqref{est1:F_v1}, using \eqref{est:X_v:dyn} and \eqref{est:V_v:dyn}, together with \eqref{est:1/w_h} and Corollary \ref{cor:max_X_dy}, we get
\Be \label{est2:F_v1}
\begin{split}
\eqref{est1:F_v1} 
& \leq \mathbf{1}_{t \leq \tB (t,x,p)} e^{2 \tilde{\beta} \frac{e}{c} \| (-\Delta_0)^{-1} (\nabla_x \cdot b ) \|_{L^\infty_{t,x}} } e^{ - \frac{\tilde{\beta}}{2} \sqrt{(m c)^2 + |p|^2} } e^{ - \frac{m g}{4 c} \tilde{\beta} x_3}
\\& \qquad \times 2 e^{ (1 + B_3 + \| \nabla_x ^2 \phi_F  \|_\infty) |t| } \| \w_{\tilde \beta, 0}   \nabla_{x,p} F_0  \|_{L^\infty (\O \times \R^3)}
\\& \leq 2 e^{2 \tilde{\beta} \frac{e}{c} \| (-\Delta_0)^{-1} (\nabla_x \cdot b ) \|_{L^\infty_{t,x}} } e^{ - \frac{\tilde{\beta}}{2} \sqrt{(m c)^2 + |p|^2} } e^{ - \frac{m g}{4 c} \tilde{\beta} x_3} \| \w_{\tilde \beta, 0}   \nabla_{x,p} F_0  \|_{L^\infty (\O \times \R^3)}
\\& \qquad \times e^{ \frac{\tilde{\beta}}{4} \sqrt{(m c)^2 + |p|^2} } e^{ \frac{m g}{8 c} \tilde{\beta} x_3} e^{ \frac{m g}{24} \tilde{\beta} }
\\& \leq 2 e^{ \frac{m g}{24} \tilde{\beta} } e^{2 \tilde{\beta} \frac{e}{c} \| (-\Delta_0)^{-1} (\nabla_x \cdot b ) \|_{L^\infty_{t,x}} } e^{ - \frac{\tilde{\beta}}{4} \sqrt{(m c)^2 + |p|^2} } e^{ - \frac{m g}{8 c} \tilde{\beta} x_3} \| \w_{\tilde \beta, 0}   \nabla_{x,p} F_0  \|_{L^\infty (\O \times \R^3)},
\end{split}
\Ee
where the second last inequality follows from \eqref{est:1_t<tB} and \eqref{est1:D3tphi_F}.

For \eqref{est1:F_v2}, using \eqref{est:xb_v:dyn}, we derive that
\Be \notag
\begin{split}
& \frac{ |\p_{p_i} \xB (t,x,p)| }{ e^{\tilde{\beta} |\pB^0 (t,x,p)|} }
\\& \leq \frac{|\vB  (t,x,p)| | \pBn (t,x, p) | }{|\vBn  (t,x,p)| \sqrt{ m^2 + |p|^2/ c^2}} \Big( \frac{2}{m g}  + \frac{ 16 (\sqrt{c m^2 g} + | \pBn (t,x,p) |)}{c m^3 g^2} 
\\& \qquad \qquad \times \big(1 + \sqrt{(m c)^2 + |\pB (t,x,p)|^2} + 2 \pBn (t,x, p) \big) \Big) e^{- \tilde{\beta} |\pB^0 (t,x,p)|}
\\& \ \ \ \ + \Big( 1 +   \frac{|\vB (t,x,p)| | \pBn (t,x, p) |}{2 |\vBn (t,x,p)|} |\tB (t,x,p)| \Big( \frac{2}{m g}  + \frac{ 16 (\sqrt{c m^2 g} + | \pBn (t,x,p) |)}{c m^3 g^2} 
\\& \qquad \qquad \times \big(1 + \sqrt{(m c)^2 + |\pB (t,x,p)|^2} + 2 \pBn (t,x, p) \big) \Big)
( \| \nabla_x^2 \phi_F  \|_\infty + e B_3 + mg ) \Big) 
\\& \qquad \qquad \qquad \qquad \times e^{ (1 + B_3 + \| \nabla_x ^2 \phi_F  \|_\infty) \tB (t,x,p) } e^{- \tilde{\beta} |\pB^0 (t,x,p)|}.
\end{split}
\Ee
Using Proposition \ref{lem2:tB}, we deduce
\Be \label{est1:xb_v_a:dyn}
\begin{split}
& \frac{ |\p_{p_i} \xB (t,x,p)| }{ e^{\tilde{\beta} |\pB^0 (t,x,p)|} }
\\& \leq \frac{|\pB  (t,x,p)| }{ \sqrt{ m^2 + |p|^2/ c^2}} \Big( \frac{2}{m g}  + \frac{ 16 (\sqrt{c m} + | \pBn (t,x,p) |)}{c m^2 g} 
\\& \qquad \qquad \times \big(1 + \sqrt{(m c)^2 + |\pB (t,x,p)|^2} + 2 \pBn (t,x, p) \big) \Big) e^{- \tilde{\beta} |\pB^0 (t,x,p)|}
\\& \ \ \ \ + \Big( 1 +   \frac{|\pB (t,x,p)|}{2 } |\tB (t,x,p)| \Big( \frac{2}{m g}  + \frac{ 16 (\sqrt{c m} + | \pBn (t,x,p) |)}{c m^2 g} 
\\& \qquad \qquad \times \big(1 + \sqrt{(m c)^2 + |\pB (t,x,p)|^2} + 2 \pBn (t,x, p) \big) \Big)
( \| \nabla_x^2 \phi_F  \|_\infty + e B_3 + mg ) \Big) 
\\& \qquad \qquad \qquad \qquad \times e^{ (1 + B_3 + \| \nabla_x ^2 \phi_F  \|_\infty) \tB (t,x,p) } e^{- \tilde{\beta} |\pB^0 (t,x,p)|}.
\end{split}
\Ee
From Proposition \ref{lem3:tB}, we have
\Be \notag
\begin{split}
\tB (t,x, p) 
& \leq \frac{8}{m g} \sqrt{(m c)^2 + |\pB (t,x,p)|^2}.
\end{split}
\Ee
and
\Be \notag 
|p^0| + \frac{m g}{2 c} x_3
= \sqrt{(m c)^2 + |p|^2} + \frac{m g}{2 c} x_3
\leq \frac{7}{6} |\pB^0 (t,x,p)|.
\ee
Together with \eqref{est1:D3tphi_F}, we further obtain
\Be \label{est1:xb_v:dyn}
\begin{split}
\eqref{est1:xb_v_a:dyn}
& \lesssim \frac{1}{m^2 g} e^{- \frac{7 \tilde{\beta}}{12} |\pB^0 (t,x,p)|} + \big( 1 + \frac{\| \nabla_x^2 \phi_F \|_\infty + e B_3 + mg}{(m g)^2} \big) e^{- \frac{7 \tilde{\beta}}{12} |\pB^0 (t,x,p)|} 
\\& \lesssim \big( 1 + \frac{ \| \nabla_x^2 \phi_F \|_\infty + e B_3 + mg}{(m g)^2} \big) e^{ - \frac{\tilde \beta}{2} \big( |p^0| + \frac{m g}{2 c} x_3 \big) }.
\end{split}
\Ee
Analogously, using \eqref{est:xb_v:dyn}, Propositions \ref{lem2:tB} and \ref{lem3:tB}, we derive that
\Be \label{est1:vb_v:dyn}
\begin{split}
\frac{ |\p_{p_i} \pB (t,x,p)| }{e^{\tilde{\beta} |\pB^0 (t,x,p)|}}
& \lesssim \frac{|e B_3 + m g|}{m^2 g} e^{- \frac{7 \tilde{\beta}}{12} |\pB^0 (t,x,p)|} + \big( 1 + \frac{\| \nabla_x^2 \phi_F \|_\infty + e B_3 + mg}{(m g)^2} \big) e^{- \frac{7 \tilde{\beta}}{12} |\pB^0 (t,x,p)|}
\\& \lesssim \big( 1 + \frac{ \| \nabla_x^2 \phi_F \|_\infty + e B_3 + mg}{(m g)^2} \big) e^{ - \frac{\tilde \beta}{2} \big( |p^0| + \frac{m g}{2 c} x_3 \big) }.
\end{split}
\Ee
Thus, we conclude \eqref{est:F_v:dyn} by inputting \eqref{est1:xb_v:dyn}, \eqref{est1:vb_v:dyn}, and \eqref{est2:F_v1} into \eqref{est1:F_v1} and \eqref{est1:F_v2}.

\smallskip

\textbf{Step 2. Proof of \eqref{est:F_x:dyn}.}
Analogous to step 1, from \eqref{DF_x}, we have
\begin{align}
& | \nabla_{x_i} F (t,x,p) | 
\notag \\
& \leq \mathbf{1}_{t \leq \tB (t,x,p)}
\frac{| \nabla_{x_i} \X  (0;t,x,p)| +| \nabla_{x_i} \P  (0;t,x,p)|  }{\w_{\tilde \beta} (0, \Z (0;t,x,p) )} \| \w_{\tilde \beta, 0}   \nabla_{x,p} F_0  \|_{L^\infty (\O \times \R^3)} \label{est:F_x1} \\
& \ \ \ \ +	\mathbf{1}_{t > \tB (t,x,p)} \frac{ | \nabla_{x_i} \xB (t,x,p) | + |\nabla_{x_i} \pB  (t,x,p)|}{ e^{\tilde{\beta} |\pB^0 (t,x,p)| } } \| e^{\tilde{\beta} |p^0|} \nabla_{x_\parallel,p} G \|_{L^\infty (\gamma_-)}.
\label{est:F_x2}
\end{align} 

For \eqref{est:F_x1}, using \eqref{est:X_x:dyn}, \eqref{est:V_x:dyn} and \eqref{est2:F_v1}, together with \eqref{est:1/w_h}, \eqref{est:1_t<tB}, \eqref{est1:D3tphi_F} and Corollary \ref{cor:max_X_dy}, we get
\Be \label{est1:F_x1}
\begin{split} 
\eqref{est:F_x1}
& \leq \mathbf{1}_{t \leq \tB (t,x,p)} e^{2 \tilde{\beta} \frac{e}{c} \| (-\Delta_0)^{-1} (\nabla_x \cdot b ) \|_{L^\infty_{t,x}} } e^{ - \frac{\tilde{\beta}}{2} \sqrt{(m c)^2 + |p|^2} } e^{ - \frac{m g}{4 c} \tilde{\beta} x_3}
\\& \qquad \times 2 e^{ (1 + B_3 + \| \nabla_x ^2 \phi_F  \|_\infty) |t| } \| \w_{\tilde \beta, 0}   \nabla_{x,p} F_0  \|_{L^\infty (\O \times \R^3)}
\\& \leq 2 e^{ \frac{m g}{24} \tilde{\beta} }
e^{2 \tilde{\beta} \frac{e}{c} \| (-\Delta_0)^{-1} (\nabla_x \cdot b ) \|_{L^\infty_{t,x}} } e^{ - \frac{\tilde{\beta}}{4} \sqrt{(m c)^2 + |p|^2} } e^{ - \frac{m g}{8 c} \tilde{\beta} x_3} \| \w_{\tilde \beta, 0}   \nabla_{x,p} F_0  \|_{L^\infty (\O \times \R^3)}.
\end{split}
\Ee

For \eqref{est:F_x2}, using \eqref{est:xb_x:dyn}, \eqref{est1:xb_x/w:dyn} in Lemma \ref{VL:dyn}, together with Proposition \ref{lem2:tB}, we derive
\Be \label{est1:xb_x_a:dyn}
\begin{split}
& \frac{ |\p_{x_i} \xB (t,x,p)| }{ e^{\tilde{\beta} |\pB^0 (t,x,p)|} }
\\& \leq \frac{ | \vB (t,x,p) | }{ | \vBn (t,x,p) | } \delta_{i3}  e^{- \tilde{\beta} |\pB^0 (t,x,p)|}
\\& \ \ \ \ + \Big( 1 +   \frac{|\vB (t,x,p)| | \pBn (t,x, p) |}{2 |\vBn (t,x,p)|} |\tB (t,x,p)| \Big[ \frac{2}{m g}  + \frac{ 16 (\sqrt{c m} + | \pBn (t,x,p) |)}{c m^2 g} 
\\& \qquad \qquad \times \big(1 + \sqrt{(m c)^2 + |\pB (t,x,p)|^2} + 2 \pBn (t,x, p) \big) \Big]
( \| \nabla_x^2 \phi_F  \|_\infty + e B_3 + mg ) \Big) 
\\& \qquad \qquad \qquad \qquad \times e^{ (1 + B_3 + \| \nabla_x ^2 \phi_F  \|_\infty) \tB (t,x,p) } e^{- \tilde{\beta} |\pB^0 (t,x,p)|}
\\& \leq \frac{ | \vB (t,x,p) | }{ \alpha_F (t,x,p) }  
e^{ \frac{8}{m g} (4g + 2 \| \nabla_x ^2 \phi_F \|_{\infty} + e \| \p_t \p_{x_3} \phi_F (t, x_\parallel , 0) \|_{L^\infty(\p\O)} ) |\pB^0 |} \delta_{i3} e^{- \tilde{\beta} |\pB^0 (t,x,p)|}
\\& \ \ \ \ + \Big( 1 +   \frac{|\pB (t,x,p)|}{2 } |\tB (t,x,p)| \Big[ \frac{2}{m g}  + \frac{ 16 (\sqrt{c m} + | \pBn (t,x,p) |)}{c m^2 g} 
\\& \qquad \qquad \times \big(1 + \sqrt{(m c)^2 + |\pB (t,x,p)|^2} + 2 \pBn (t,x, p) \big) \Big]
( \| \nabla_x^2 \phi_F  \|_\infty + e B_3 + mg ) \Big) 
\\& \qquad \qquad \qquad \qquad \times e^{ (1 + B_3 + \| \nabla_x ^2 \phi_F \|_\infty) \tB (t,x,p) } e^{- \tilde{\beta} |\pB^0 (t,x,p)|}.
\end{split}
\Ee
Together with \eqref{est1:D3tphi_F} and Proposition \ref{lem3:tB}, we further have
\Be \label{est1:xb_x:dyn}
\begin{split}
\eqref{est1:xb_x_a:dyn}
& \lesssim \frac{\delta_{i3}}{\alpha_F (t,x,p)} e^{- \frac{7 \tilde{\beta}}{12} |\pB^0 (t,x,p)|} + \big( 1 + \frac{\| \nabla_x^2 \phi_F \|_\infty + e B_3 + mg}{(m g)^2} \big) e^{- \frac{7 \tilde{\beta}}{12} |\pB^0 (t,x,p)|} 
\\& \lesssim \big( \frac{\delta_{i3}}{\alpha_F (t,x,p)} + 1 + \frac{ \| \nabla_x^2 \phi_F \|_\infty + e B_3 + mg}{(m g)^2} \big) e^{ - \frac{\tilde \beta}{2} \big( |p^0| + \frac{m g}{2 c} x_3 \big) }.
\end{split}
\Ee
Analogous to \eqref{est1:vb_v:dyn}, using \eqref{est:vb_x:dyn}, Propositions \ref{lem2:tB} and \ref{lem3:tB}, we derive that
\Be \label{est1:vb_x:dyn}
\begin{split}
\frac{ |\p_{x_i} \pB (x,p)| }{e^{\tilde{\beta} |\pB^0 (x,p)|}}
& \lesssim \frac{\delta_{i3}}{\alpha_F (t,x,p)}  e^{- \frac{7 \tilde{\beta}}{12} |\pB^0 (x,p)|} + \big( 1 + \frac{\| \nabla_x^2 \phi_F \|_\infty + e B_3 + mg}{(m g)^2} \big) e^{- \frac{7 \tilde{\beta}}{12} |\pB^0 (x,p)|}
\\& \lesssim \big( \frac{\delta_{i3}}{\alpha_F (t,x,p)} + 1 + \frac{ \| \nabla_x^2 \phi_F \|_\infty + e B_3 + mg}{(m g)^2} \big) e^{ - \frac{\tilde \beta}{2} \big( |p^0| + \frac{m g}{2 c} x_3 \big) }.
\end{split}
\Ee
Finally, we conclude \eqref{est:F_x:dyn} by \eqref{est1:F_x1} and inputting \eqref{est1:xb_x:dyn} and \eqref{est1:vb_x:dyn} into \eqref{est:F_x2}.
\end{proof}

\begin{lemma} \label{lem:D3tphi_F}

Suppose that \eqref{Bootstrap} holds. Further, we assume 
\be \label{est:D2xphi_F}
1 + B_3 + \| \nabla_x ^2 \phi_F \|_\infty 
\leq \frac{\hat{m} g}{24} \tilde{\beta},
\ee
where $\hat{m} = \min \{ m_+, m_- \}$.
Consider $(t,x) \in [0, \infty) \times \bar\O$, then
\be \label{est:D.b}
\begin{split}
|\nabla_x \cdot b (t, x)|  
& \lesssim C_1 \frac{ \mathbf{1}_{|x_3| \leq 1} }{ \sqrt{ x_3 } } + C_2  e^{ - \frac{\hat{m} g}{8 c} \tilde{\beta} x_3},
\end{split}
\ee
and
\be \label{est:D3tphi_F} 
| \p_{x_3} \p_t \phi_{F} (t,x)| 
\lesssim C_1 + C_2 ( 1 + \frac{8 c}{\hat{m} g \tilde{\beta}} ),  
\ee
where $\hat{m} = \min\{m_+, m_- \}$, and
\begin{align}
C_{1} & = \frac{ 1 }{{\tilde{\beta}}^3} ( \frac{e_+}{\sqrt{ m^2_+ g  }} + \frac{e_-}{\sqrt{ m^2_- g  }} ),
\label{express:C} \\
C_{2} & = \sum\limits_{i = \pm} \frac{e_i }{{\tilde{\beta}}^3} \Big( 2 e^{ \frac{m_i g}{24} \tilde{\beta} } 
e^{2 \tilde{\beta} \frac{e_i}{c} \| (-\Delta_0)^{-1} (\nabla_x \cdot b ) \|_{L^\infty_{t,x}} } \| \w_{\pm, \tilde \beta, 0}   \nabla_{x,p} F_{\pm, 0} \|_{L^\infty (\O \times \R^3)}
\\& \qquad \qquad \quad + \big( 1 + \frac{ \| \nabla_x^2 \phi_F \|_\infty + e B_3 + m_i g }{(m_i g)^2} \big) \| e^{\tilde{\beta} |p^0|} \nabla_{x_\parallel,p} G_i \|_{L^\infty (\gamma_-)} + \frac{1}{\sqrt{ m_i g}} \Big).
\label{express:D}
\end{align}
\end{lemma}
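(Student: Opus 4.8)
The plan is to bound $\nabla_x\cdot b$ pointwise by moving the divergence inside the momentum integral and inserting the already-established gradient estimates for $F_\pm$ and $h_\pm$, and then to read off $\p_{x_3}\p_t\phi_F$ from the Poisson equation it solves.

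\textbf{Step 1 (pointwise bound on $\nabla_x\cdot b$).} Since $v_\pm=v_\pm(p)$ does not depend on $x$, from \eqref{def:flux} we have $\nabla_x\cdot b(t,x)=\sum_{i=\pm}e_i\int_{\R^3}v_i\cdot\nabla_x f_i(t,x,p)\,\dd p$, where differentiation under the integral is legitimate because $f_\pm$ is continuous across the exit set (the compatibility condition \eqref{CC_F0=G}) and its $x$-gradient is given by the trajectory formula \eqref{DF_x}. Writing $f_i=F_i-h_i$ as in \eqref{def:F}, I would insert the weighted pointwise bound \eqref{est:F_x:dyn} from Proposition \ref{RE:dyn} for $\nabla_x F_i$ and the steady bound \eqref{theo:hk_x} (equivalently \eqref{est:hk_x}) for $\nabla_x h_i$; here \eqref{Bootstrap} supplies the gravity dominance used to bound $\alpha_{\pm,F}$ and $\alpha_\pm$ from below, and \eqref{est:D2xphi_F} gives the $\tilde\beta$-largeness $\frac{8}{\hat m g}(1+B_3+\|\nabla_x^2\phi_F\|_\infty)\le\tilde\beta$ needed to run those estimates. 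After bounding $|v_i|\le c$ and integrating in $p$, there are two contributions: the bulk terms carry the Maxwellian weight $e^{-\frac{\tilde\beta}{4}\sqrt{(m_i c)^2+|p|^2}}e^{-\frac{m_i g}{8c}\tilde\beta x_3}$ and integrate to $O(\tilde\beta^{-3})\,e^{-\frac{m_ig}{8c}\tilde\beta x_3}$, producing the $C_2\,e^{-\frac{\hat m g}{8c}\tilde\beta x_3}$ term; the $\p_{x_3}$-derivative pieces carry the grazing factor $\delta_{i3}/\alpha_{\pm,F}$ (and $\delta_{i3}/\alpha_\pm$), and the split into $|x_3|>1$ versus $|x_3|\le1$ exactly as in \eqref{est:1/alpha} gives $\int_{\R^3}\tfrac{1}{\alpha_{\pm,F}}e^{-\frac{\tilde\beta}{4}\sqrt{(m_ic)^2+|p|^2}}\,\dd p\lesssim 1+\mathbf 1_{|x_3|\le1}\tfrac{1}{\sqrt{m_i g x_3}}$, producing the $C_1\mathbf 1_{|x_3|\le1}/\sqrt{x_3}$ term after tracking constants into \eqref{express:C}. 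This yields \eqref{est:D.b}.

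\textbf{Step 2 (from $\nabla_x\cdot b$ to $\p_{x_3}\p_t\phi_F$).} Since $\Phi_h$ is stationary, $\p_t\phi_F=\p_t\Psi$, and \eqref{identity:Psi_t} records that $\p_t\Psi$ is the Newtonian potential (with zero Dirichlet data on $\p\O$) of $-\nabla_x\cdot b$; equivalently $-\Delta_x(\p_t\phi_F)=\nabla_x\cdot b$ in $\O$ with $\p_t\phi_F=0$ on $\p\O$. Feeding the source bound \eqref{est:D.b} into the gradient elliptic estimate \eqref{est:nabla_phi} of Lemma \ref{lem:rho_to_phi}, applied to the two pieces of the source separately (the bounded, exponentially decaying piece with decay rate $B=\frac{\hat m g}{8c}\tilde\beta$, and the boundary-integrable piece $\propto\mathbf 1_{|x_3|\le1}/\sqrt{x_3}$), gives $|\p_{x_3}\p_t\phi_F(t,x)|\lesssim C_1+C_2(1+B^{-1})=C_1+C_2\big(1+\frac{8c}{\hat m g\tilde\beta}\big)$, which is \eqref{est:D3tphi_F}.

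\textbf{Main obstacle.} The delicate part is Step 1: differentiating $F_\pm$ through the characteristic representation and then integrating the grazing singularity $1/\alpha_{\pm,F}$ against the Maxwellian in $p$ in a way that is simultaneously uniform in $t$ and decays in $x_3$ at the stated rate $\frac{\hat m g}{8c}\tilde\beta$. This rests on the backward-exit-time estimates of Propositions \ref{lem:tB}, \ref{lem2:tB} and \ref{lem3:tB} together with the $\alpha$-comparison of Lemma \ref{VL:dyn}, all of which are already packaged inside \eqref{est:F_x:dyn}; by contrast, once \eqref{est:D.b} is in hand, Step 2 is a direct appeal to Lemma \ref{lem:rho_to_phi}.
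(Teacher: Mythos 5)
There is a genuine gap, and it is the central point of this lemma. Your Step 1 invokes \eqref{est:F_x:dyn} of Proposition \ref{RE:dyn} as a black box, but that estimate is proved under the hypothesis \eqref{est1:D3tphi_F}, which includes a bound on $\| \p_t \p_{x_3} \phi_F (t, x_\parallel , 0) \|_{L^\infty(\p\O)}$ — precisely the quantity that Lemma \ref{lem:D3tphi_F} is supposed to produce, and which is \emph{not} among the hypotheses \eqref{Bootstrap}, \eqref{est:D2xphi_F} of the lemma. The time-derivative term enters because converting the grazing factor $1/|\vBpn|$ into $1/\alpha_{\pm,F}(t,x,p)$ requires the comparison \eqref{est1:xb_x/w:dyn} of Lemma \ref{VL:dyn}, whose exponent contains $e_{\pm}\| \p_t \p_{x_3} \phi_F (t, x_\parallel , 0) \|_{L^\infty(\p\O)}$; without an a priori bound on it, that exponential in $|\pBp^0|$ cannot be absorbed by the weight $e^{-\tilde\beta|\pBp^0|}$. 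So your argument is circular (it is used in Proposition \ref{prop:Unif_D2xDp_dy} exactly to bootstrap this time-derivative bound from a bound on $\|\nabla_x^2\phi_F\|_\infty$ alone). The paper avoids $\alpha_{\pm,F}$ altogether: since the divergence contracts the singular $\delta_{i3}$ terms with $v_3$, it suffices to bound $|v_3|/|\vBpn|$ directly — by $1$ when $v_3\geq 0$ (monotonicity of $\P_{\pm,3}$ along the trajectory) and by $\sqrt{2(m_\pm c)^2+2|\pBp|^2}/\sqrt{m_\pm^2 g x_3}$ when $v_3<0$, via \eqref{est:x3pB3} — see \eqref{est:vb/vbn}; this uses only \eqref{Bootstrap} and \eqref{est:D2xphi_F} and is what makes the stated hypotheses sufficient. (A related remark: the paper also kills the $\nabla_x h_\pm$ contribution exactly through $\int_{\R^3} v_\pm\cdot\nabla_x h_\pm\,\dd p=0$ from \eqref{VP_h}, rather than estimating it via \eqref{theo:hk_x}; your alternative is workable but not the source of the problem.)

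A second, smaller gap is in Step 2: Lemma \ref{lem:rho_to_phi}, estimate \eqref{est:nabla_phi}, is stated for sources satisfying $|\rho(x)|\leq A e^{-Bx_3}$, so it does not apply to the unbounded piece $C_1\mathbf{1}_{|x_3|\leq 1}x_3^{-1/2}$ of \eqref{est:D.b}. For that piece one must argue directly with the Green's function bounds \eqref{est1:G_x}--\eqref{est:Green_x}, integrating $y_3^{-1/2}\big(\ln(1+\tfrac{1}{2|x_3-y_3|^2})+y_3\big)$ over $y_3\in(0,1)$ as in \eqref{est:D.b4}--\eqref{est2:D.b4}; this is elementary but is an extra estimate your appeal to the lemma does not cover.
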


\begin{proof}

We abuse the notation as in \eqref{abuse} in the proof.
First, we prove \eqref{est:D.b}.
From \eqref{VP_h} and the regularity of $h (x,p)$ in \eqref{Uest:wh}, we deduce
\be \label{eq:int_v*h_x=0}
\int_{\R^3} v_{\pm} \cdot \nabla_x h_{\pm} (x,p) \dd p = 0.
\ee
Together with \eqref{def:flux} and $F (t,x,p) = h (x,p) + f (t,x,p)$, we have
\Be \label{est:D.b1}
\begin{split}
|\nabla_x \cdot b (t,x)| 
& = \big| \int_{\R^3} (e_+ v_+ \cdot \nabla_x f_+ (t,x,p) + e_{-} v_{-} \cdot \nabla_x f_{-} (t,x,p) ) \dd p \big|
\\& = \big| \int_{\R^3} (e_+ v_+ \cdot \nabla_x (F_+ - h_+ ) + e_{-} v_{-} \cdot \nabla_x ( F_{-} - h_- ) ) \dd p \big|
\\& = \big| \int_{\R^3} (e_+ v_+ \cdot \nabla_x F_+ + e_{-} v_{-} \cdot \nabla_x F_{-} ) \dd p \big| 
\\& \leq e_+ \int_{\R^3} | v_+ \cdot \nabla_x F_+ (t,x,p) | \dd p + e_- \int_{\R^3} | v_- \cdot \nabla_x F_- (t,x,p) | \dd p.
\end{split}
\Ee 
Recall \eqref{est:F_x1}  and \eqref{est:F_x2} in the proof of \eqref{est:F_x:dyn}, we bound $\nabla_x F (t,x,p)$ by
\begin{align}
& | \nabla_{x_i} F (t,x,p) | 
\notag \\
& \leq \mathbf{1}_{t \leq \tB (t,x,p)}
\frac{| \nabla_{x_i} \X  (0;t,x,p)| +| \nabla_{x_i} \P  (0;t,x,p)|  }{\w_{\tilde \beta} (0, \Z (0;t,x,p) )} \| \w_{\tilde \beta, 0}   \nabla_{x,p} F_0  \|_{L^\infty (\O \times \R^3)} \label{est:F_x1_b} \\
& \ \ \ \ +	\mathbf{1}_{t > \tB (t,x,p)} \frac{ | \nabla_{x_i} \xB (t,x,p) | + |\nabla_{x_i} \pB  (t,x,p)|}{ e^{\tilde{\beta} |\pB^0 (t,x,p)| } } \| e^{\tilde{\beta} |p^0|} \nabla_{x_\parallel,p} G \|_{L^\infty (\gamma_-)}.
\label{est:F_x2_b}
\end{align} 

Following from \eqref{est1:F_x1}, together with \eqref{est:D2xphi_F}, we get
\Be \label{est1:F_x1_b}
\eqref{est:F_x1_b}
\leq 2 e^{ \frac{m g}{24} \tilde{\beta} }
e^{2 \tilde{\beta} \frac{e}{c} \| (-\Delta_0)^{-1} (\nabla_x \cdot b ) \|_{L^\infty_{t,x}} } e^{ - \frac{\tilde{\beta}}{4} \sqrt{(m c)^2 + |p|^2} } e^{ - \frac{m g}{8 c} \tilde{\beta} x_3} \| \w_{\tilde \beta, 0} \nabla_{x,p} F_0  \|_{L^\infty (\O \times \R^3)}.
\Ee
Although we use the assumption \eqref{est1:D3tphi_F} in  \eqref{est1:F_x1}, we remark that the condition \eqref{est:D2xphi_F} is sufficient for the above estimate.

From \eqref{est:xb_x:dyn}, together with \eqref{est1:xb_x_a:dyn} and \eqref{est1:xb_x:dyn}, we have
\Be \label{est1:xb_x:dyn_b}
\begin{split}
& \frac{ |\p_{x_i} \xB (t,x,p)| }{ e^{\tilde{\beta} |\pB^0 (t,x,p)|} }
\\& \leq \frac{ | \vB (t,x,p) | }{ | \vBn (t,x,p) | } \delta_{i3}  e^{- \tilde{\beta} |\pB^0 (t,x,p)|} 
+ \big( 1 + \frac{\| \nabla_x^2 \phi_F \|_\infty + e B_3 + mg}{(m g)^2} \big) e^{- \frac{7 \tilde{\beta}}{12} |\pB^0 (t,x,p)|}.
\end{split}
\Ee

Now we show the upper bound on $\frac{|v_3|}{|\vBn (t,x,p)|}$, which is crucial in the estimate on \eqref{est:D.b1}. Here we split the proof into two cases.

\smallskip

\textbf{\underline{Case 1:} $v_3 \geq 0$.}
From \eqref{eq:sign_V3s^*_dy} and $\P_{3} (s;t,x, p)$ is strictly decreasing along characteristics, we obtain that
\be \notag
0 \leq v_3 \leq \vBn (t,x,p).
\ee
This shows
\be \label{est1:vb/vbn}
\frac{|v_3|}{|\vBn (t,x,p)|} \leq 1.
\ee

\smallskip

\textbf{\underline{Case 2:} $v_3 < 0$.}
Using \eqref{est:x3pB3} in Proposition \ref{lem2:tB}, we have
\Be \notag
0 \leq x_3 \leq \X_3 (s;t,x,p) \leq \frac{2}{m^2 g} (\pBn (t,x, p))^2.
\Ee
Together with $|v| = \big| \frac{p}{\sqrt{ m^2 + |p|^2/ c^2}} \big| \leq c$,, we derive that
\be \label{est2:vb/vbn}
\begin{split}
\frac{|v_3|}{|\vBn (t,x,p)|}
=  \frac{ |v_3| \sqrt{ {m_\pm}^2 + |\pB|^2/ c^2} }{|\pBn (t,x,p)|}
\leq \frac{ \sqrt{ 2 (m_\pm c)^2 + 2 |\pB|^2} }{\sqrt{m^2 g x_3}}.
\end{split}
\ee
Combining \eqref{est1:vb/vbn} with \eqref{est2:vb/vbn}, we deduce 
\be \label{est:vb/vbn}
\frac{|v_3|}{|\vBn (t,x,p)|}
\leq 1 + \frac{ \sqrt{ 2 (m_\pm c)^2 + 2 |\pB|^2} }{\sqrt{m^2 g x_3}}.
\ee

Analogous to \eqref{est1:xb_x:dyn_b}, using \eqref{est:vb_x:dyn} and \eqref{est1:vb_x:dyn}, we derive that
\Be \label{est1:vb_x:dyn_b}
\begin{split}
& \frac{ |\p_{x_i} \pb (x,p)| }{e^{\tilde{\beta} |\pb^0 (x,p)|}}
\\& \lesssim \frac{ |e B_3 + m g| }{|\vBn  (t,x,p)|} \delta_{i3} e^{- \tilde{\beta} |\pB^0 (t,x,p)|} 
+ \big( 1 + \frac{\| \nabla_x^2 \phi_F \|_\infty + e B_3 + mg}{(m g)^2} \big) e^{- \frac{7 \tilde{\beta}}{12} |\pb^0 (x,p)|}.
\end{split}
\Ee
Inputting \eqref{est1:xb_x:dyn_b} and \eqref{est1:vb_x:dyn_b} into \eqref{est:F_x2_b}, together with \eqref{est1:F_x1_b}, we bound 
\be \label{est:F_x_b}
\begin{split}
& | \nabla_{x_i} F (t,x,p) | 
\\& \leq 2 e^{ \frac{m g}{24} \tilde{\beta} }
e^{2 \tilde{\beta} \frac{e}{c} \| (-\Delta_0)^{-1} (\nabla_x \cdot b ) \|_{L^\infty_{t,x}} } e^{ - \frac{\tilde{\beta}}{4} \sqrt{(m c)^2 + |p|^2} } e^{ - \frac{m g}{8 c} \tilde{\beta} x_3} \| \w_{\tilde \beta, 0} \nabla_{x,p} F_0  \|_{L^\infty (\O \times \R^3)}
\\& \ \ \ \ + \Big\{ \frac{ | \vB (t,x,p) | }{ | \vBn (t,x,p) | } \delta_{i3}  e^{- \tilde{\beta} |\pB^0 (t,x,p)|} + \frac{ |e B_3 + m g| }{|\vBn  (t,x,p)|} \delta_{i3} e^{- \tilde{\beta} |\pB^0 (t,x,p)|} 
\\& \qquad \ \ \ \ + 2 \big( 1 + \frac{\| \nabla_x^2 \phi_F \|_\infty + e B_3 + mg}{(m g)^2} \big) e^{- \frac{7 \tilde{\beta}}{12} |\pB^0 (t,x,p)|} \Big\} \times \| e^{\tilde{\beta} |p^0|} \nabla_{x_\parallel,p} G \|_{L^\infty (\gamma_-)}.
\end{split} 
\ee
Using \eqref{est:x3ppB} in Proposition \ref{lem3:tB}, together with \eqref{est:vb/vbn}, we compute
\Be \label{est1:D.b3}
\begin{split}
& \int_{\R^3} |v \cdot \nabla_x F (t,x,p) | \dd p  
\\& \lesssim \Big( 2 e^{ \frac{m g}{24} \tilde{\beta} }
e^{2 \tilde{\beta} \frac{e}{c} \| (-\Delta_0)^{-1} (\nabla_x \cdot b ) \|_{L^\infty_{t,x}} } \| \w_{\tilde \beta, 0}   \nabla_{x,p} F_0  \|_{L^\infty (\O \times \R^3)}
\int_{\R^3} |v| e^{ - \frac{\tilde{\beta}}{4} \sqrt{(m c)^2 + |p|^2} } \dd p
\\& \ \ \ \ + \big( 1 + \frac{ \| \nabla_x^2 \phi_F \|_\infty + e B_3 + mg}{(m g)^2} \big) \| e^{\tilde{\beta} |p^0|} \nabla_{x_\parallel,p} G \|_{L^\infty (\gamma_-)} \int_{\R^3} |v| e^{ - \frac{\tilde{\beta}}{4} \sqrt{(m c)^2 + |p|^2} } \dd p
\\& \ \ \ \ + \int_{\R^3} |v| \big( 1 + \frac{ \sqrt{ 2 (m_\pm c)^2 + 2 |\pB|^2} }{\sqrt{m^2 g x_3}} \big) e^{ - \frac{\tilde{\beta}}{4} \sqrt{(m c)^2 + |p|^2} } \dd p \Big) \times e^{ - \frac{m g}{8 c} \tilde{\beta} x_3}.
\end{split}
\Ee 
Thus, we derive that
\be \notag
\begin{split}
& \int_{\R^3} |v \cdot \nabla_x F (t,x,p)| \dd p  
\\& \lesssim \frac{e^{ - \frac{m g}{8 c} \tilde{\beta} x_3}}{{\tilde{\beta}}^3} \Big( \big[ 2 e^{ \frac{m g}{24} \tilde{\beta} } 
e^{2 \tilde{\beta} \frac{e}{c} \| (-\Delta_0)^{-1} (\nabla_x \cdot b ) \|_{L^\infty_{t,x}} } \| \w_{\tilde \beta, 0}   \nabla_{x,p} F_0  \|_{L^\infty (\O \times \R^3)}
\\& \qquad \qquad \qquad + \big( 1 + \frac{ \| \nabla_x^2 \phi_F \|_\infty + e B_3 + mg}{(m g)^2} \big) \| e^{\tilde{\beta} |p^0|} \nabla_{x_\parallel,p} G \|_{L^\infty (\gamma_-)} \big] + \frac{1}{ \sqrt{ m^2 g  x_3 } } \Big).
\end{split}
\ee
and thus we conclude \eqref{est:D.b}. 

\smallskip

Second, we prove \eqref{est:D3tphi_F}.
Since $\phi_F = \Phi (x) + \Psi (t, x)$, from \eqref{identity:Psi_t} and Lemma \ref{lem:Ddb}, we have 
\Be \label{D3tphi}
\begin{split}
\p_{x_3} \p_t \phi_{F} (t,x)
& = \p_{x_3} \p_t \Psi (t,x)
\\& = - \p_{x_3} (-\Delta_0)^{-1} (\nabla_x \cdot b ) (t,x)
= \int_{\O} \nabla\cdot b (y) \p_{x_3} \mathfrak{G} \dd y .
\end{split} 
\Ee
Using \eqref{est1:G_x}-\eqref{est:Green_x}, we compute 
\Be \label{est:D.b4}
\begin{split}
| \p_{x_3} \p_t \phi_{F} (t,x)| 
& \leq \int_{\O} \big| \nabla\cdot b (y) \big|  \big| \p_{x_3} \mathfrak{G} \big| \dd y
\\& \leq \int_{0}^{\infty} \int_{\R^2} 
\min \big( \frac{y_3}{|x-y|^3}, \frac{1}{|x-y|^2} \big) \times | \nabla \cdot b (y_\parallel, y_3) | \dd y_\parallel \dd y_3
\\& \leq \int_{0}^{1} \big( \frac{ C_1 }{ \sqrt{ y_3 } } + C_2 \big)
\int_{\R^2} \min \big( \frac{y_3}{|x-y|^3}, \frac{1}{|x-y|^2} \big) \dd y_\parallel \dd y_3
\\& \ \ \ \ + \int_{1}^{\infty} C_2  e^{ - \frac{\hat{m} g}{8 c} \tilde{\beta} y_3} \int_{\R^2} \min \big( \frac{y_3}{|x-y|^3}, \frac{1}{|x-y|^2} \big) \dd y_\parallel \dd y_3.
\end{split} 
\Ee
Following \eqref{est1:nabla_phi} and \eqref{est:I_2}, we derive
\Be \label{est1:D.b4}
\begin{split}
\eqref{est:D.b4}
& \leq \int_{0}^{1} \big( \frac{ C_1 }{ \sqrt{ y_3 } } + C_2 \big)
\big( \ln (1+ \frac{1}{2 |x_3 - y_3|^2} ) + y_3 \big) \dd y_3
\\& \ \ \ \ + \int_{1}^{\infty} C_2  e^{ - \frac{\hat{m} g}{8 c} \tilde{\beta} y_3} \big( \ln (1+ \frac{1}{2 |x_3 - y_3|^2} ) + y_3 \big) \dd y_3
\\& \leq \underbrace{\int_{0}^{1} \frac{ C_1 }{ \sqrt{ y_3 } } \big( \ln (1+ \frac{1}{2 |x_3 - y_3|^2} ) + y_3 \big) \dd y_3 }_{\eqref{est1:D.b4}_*}
+ C_2 ( 1 + \frac{8 c}{\hat{m} g \tilde{\beta}} ).
\end{split} 
\Ee

Using the fact that $x^{- \frac{1}{8}} \ln (1 + x) \lesssim 1$ for any $x \geq 2$, we bound 
\be \label{est2:D.b4}
\begin{split}
\eqref{est1:D.b4}_*
& \leq \int_{0}^{1} \frac{ C_1 }{ \sqrt{ y_3 } }  \ln (1+ \frac{1}{2 |x_3 - y_3|^2} ) \dd y_3 + C_1
\\& \leq C_1 + \mathbf{1}_{|x_3| > 2} C_1 + 
\mathbf{1}_{|x_3| \leq 2}
\int_{0}^{1} \frac{ C_1 }{ \sqrt{ y_3 } } \frac{1}{ |x_3 - y_3|^{\frac{1}{4}} } \dd y_3
\lesssim C_1,
\end{split} 
\Ee
where the last inequality follows from
\be \notag
\int_{0}^{1} \frac{ 1 }{ \sqrt{ y_3 } |x_3 - y_3|^{\frac{1}{4}} } \dd y_3 
\leq \int_{0}^{1} |y_3|^{- \frac{3}{4}} \dd y_3
+ \int_{0}^{1} |x_3 - y_3|^{- \frac{3}{4}} \dd y_3
\lesssim 1
\ee
Finally, inputting \eqref{est2:D.b4} into \eqref{est1:D.b4}, we conclude \eqref{est:D3tphi_F}.
\end{proof}

Collecting the results from Proposition \ref{RE:dyn} and Lemma \ref{lem:D3tphi_F}, here we illustrate the regularity estimate for the dynamical problem.
Similar to the steady case, this is important in the uniqueness theorem proved in Section \ref{sec:US}.

\begin{theorem}[Regularity Estimate]
\label{theo:RD}

Suppose $(F_{\pm}, \phi_F)$ solves \eqref{VP_F}, \eqref{Poisson_F}, \eqref{VP_0},  \eqref{Dbc:F} and \eqref{bdry:F} under the compatibility condition \eqref{CC_F0=G}.
Consider 
\be \notag
F_{\pm} (t,x,p) = h_{\pm} (x,p) + f_{\pm} (t,x,p),
\ee 
where $(h_{\pm}, \rho, \Phi)$ solves \eqref{VP_h}-\eqref{eqtn:Dphi} and $(f_{\pm}, \varrho, \Psi)$ solves \eqref{eqtn:f}-\eqref{Poisson_f} respectively.
Assume all conditions in Theorem \ref{theo:RS} and Theorem \ref{theo:AS}.
Further, we assume that \eqref{est:beta_b_dy}, \eqref{est1:D3tphi_F} and
\Be \label{condition:ML}
M \leq \beta e^{ - \frac{m_{\pm} g}{24} \beta}
\ \text{ and } \ 
L \leq \tilde{\beta} e^{ - \frac{m_{\pm} g}{24} \tilde{\beta} }.
\Ee
hold for some $g, \beta, \tilde \beta > 0$.
Consider $\alpha_{\pm, F} (t,x,p)$ defined in \eqref{alpha_F}, then for any $(t,x,p) \in [0, \infty) \times  \bar\O \times \R^3$,
\Be \label{est_final:F_v:dyn}
\begin{split} 
& \| e^{ \frac{\tilde{\beta}}{4} \sqrt{(m_{\pm} c)^2 + |p|^2} } e^{ \frac{m_{\pm} g}{8 c} \tilde{\beta} x_3} \nabla_p F_{\pm} (t,x,p) \|_{L^\infty(\O \times \R^3)} 
\\& \lesssim 2 e^{ 2 + \frac{m_{\pm} g}{24} \tilde{\beta} }
\| \w_{\pm, \tilde \beta, 0}   \nabla_{x,p} F_{\pm, 0} \|_{L^\infty (\O \times \R^3)}
+ \frac{1}{24} \tilde{\beta}
\| e^{\tilde{\beta} |p^0_{\pm}|} \nabla_{x_\parallel,p} G_{\pm} \|_{L^\infty (\gamma_-)},
\end{split}
\Ee 
and 
\Be \label{est_final:F_x:dyn}
\begin{split} 
& e^{ \frac{\tilde{\beta}}{4} \sqrt{(m_{\pm} c)^2 + |p|^2} } e^{ \frac{m_{\pm} g}{8 c} \tilde{\beta} x_3} \big| \nabla_x F (t,x,p) \big|
\\& \lesssim 2 e^{ 2 + \frac{m_{\pm} g}{24} \tilde{\beta}}
\| \w_{\pm, \tilde \beta, 0} \nabla_{x,p} F_{\pm, 0} \|_{L^\infty (\O \times \R^3)}
+ ( \frac{ \mathbf{1}_{|x_3| \leq 1} }{\alpha_{\pm, F} (t,x,p)} + \frac{1}{24} \tilde{\beta} ) \| e^{\tilde{\beta} |p^0_{\pm}|} \nabla_{x_\parallel,p} G_{\pm} \|_{L^\infty (\gamma_-)}.
\end{split}
\Ee 
Moreover, 
\be \label{est_final:phi_F}
\| \nabla_x^2 \phi_F (t) \|_{L^\infty (\bar \O)} + 
\| \p_t  \nabla_x  \phi_F (t) \|_{L^\infty (\bar \O)} 
\lesssim 1.
\ee
\end{theorem}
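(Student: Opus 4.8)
The plan is to assemble Theorem \ref{theo:RD} from three pieces that are already in hand—the conditional $\nabla_{x,p}F_\pm$ bounds of Proposition \ref{RE:dyn}, the flux and time‑derivative bounds of Lemma \ref{lem:D3tphi_F}, and the decay plus $(-\Delta_0)^{-1}(\nabla_x\cdot b)$ bound of Theorem \ref{theo:AS}—and then to run one Dirichlet elliptic closure for $\nabla_x^2\phi_F$ and $\partial_t\nabla_x\phi_F$. First I would verify that the standing hypotheses of Theorem \ref{theo:RD} supply exactly the inputs those three results demand: the conditions of Theorem \ref{theo:RS} give $\|\nabla_x^2\Phi_h\|_\infty\lesssim1$ via \eqref{theo:phi_C2} together with the smallness of $G$ in \eqref{condition:G}, \eqref{condition:G_xv}; the conditions of Theorem \ref{theo:AS}, namely \eqref{Bootstrap}, \eqref{Bootstrap_f} and \eqref{condition:Dvh}, follow from \eqref{condition:ML} combined with \eqref{Uest:DPhi} and \eqref{theo:hk_v}; and \eqref{est:beta_b_dy}, \eqref{est1:D3tphi_F} are assumed outright.

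Second, I would simplify the constants coming out of Proposition \ref{RE:dyn}. By \eqref{Uest:D^-1_Db} and the hypothesis \eqref{est:beta_b_dy} one has $\beta\frac{e_\pm}{c}\|(-\Delta_0)^{-1}(\nabla_x\cdot b)\|_{L^\infty_{t,x}}\le1$, hence $e^{2\tilde\beta\frac{e_\pm}{c}\|(-\Delta_0)^{-1}(\nabla_x\cdot b)\|_{L^\infty_{t,x}}}\le e^{2}$, which turns the first term of \eqref{est:F_v:dyn} and \eqref{est:F_x:dyn} into the claimed $2e^{2+\frac{m_\pm g}{24}\tilde\beta}\|\w_{\pm,\tilde\beta,0}\nabla_{x,p}F_{\pm,0}\|_\infty$. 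Using \eqref{est1:D3tphi_F} to bound $\|\nabla_x^2\phi_F\|_\infty$ and $\|\p_t\p_{x_3}\phi_F(t,x_\parallel,0)\|$, and \eqref{condition:Phi_xx} to guarantee $m_\pm g\tilde\beta\gtrsim1$, the coefficient $1+\frac{\|\nabla_x^2\phi_F\|_\infty+e_\pm B_3+m_\pm g}{(m_\pm g)^2}$ is controlled by a fixed multiple of $\tilde\beta$, with constants depending only on $m_\pm,e_\pm,g,B_3,c$; absorbing numerical factors into $\lesssim$ and writing this multiple as $\tfrac1{24}$ converts \eqref{est:F_v:dyn}, \eqref{est:F_x:dyn} into \eqref{est_final:F_v:dyn}, \eqref{est_final:F_x:dyn}, the kinetic distance $\alpha_{\pm,F}$ being left untouched.

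Third, for \eqref{est_final:phi_F} I would decompose $\phi_F=\Phi_h+\Psi$. The Hessian $\|\nabla_x^2\Phi_h\|_\infty\lesssim1$ is exactly \eqref{theo:phi_C2} with the smallness of $G$. For $\|\nabla_x^2\Psi(t)\|_\infty$ I would apply the elliptic estimate \eqref{est:nabla^2phi} to $-\Delta\Psi=\varrho$: \eqref{decay:varrho} gives $\|\varrho(t)\|_\infty\lesssim1$, and to control $|\varrho(t)|_{C^{0,\delta}}$ for $0<\delta<\tfrac12$ I would bound $\nabla_x\varrho=\int(e_+\nabla_xf_++e_-\nabla_xf_-)\dd p$ by writing $\nabla_xf_\pm=\nabla_xF_\pm-\nabla_xh_\pm$ and inserting \eqref{est_final:F_x:dyn} and \eqref{theo:hk_x}; integrating the kinetic‑distance singularities $\alpha_{\pm,F}^{-1}$ and $\alpha_\pm^{-1}$ against the weights exactly as in \eqref{est:1/alpha} yields $|\nabla_x\varrho(t,x)|\lesssim1+\mathbf 1_{|x_3|\le1}|x_3|^{-1/2}$, and the difference‑quotient argument of Step 2 of Proposition \ref{prop:Reg} turns this into $|\varrho(t)|_{C^{0,\delta}}\lesssim1$, hence $\|\nabla_x^2\Psi(t)\|_\infty\lesssim1$. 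Finally, for $\|\p_t\nabla_x\phi_F(t)\|_\infty=\|\p_t\nabla_x\Psi(t)\|_\infty$ I would use the continuity identity \eqref{identity:Psi_t}, so $\p_t\nabla_x\Psi=-\nabla_x(-\Delta_0)^{-1}(\nabla_x\cdot b)$: the vertical component is precisely \eqref{est:D3tphi_F} of Lemma \ref{lem:D3tphi_F}, and each tangential component is bounded the same way via $\p_{x_i}\p_t\Psi=\int_\O(\nabla_x\cdot b)\,\p_{x_i}\mathfrak G\dd y$, the pointwise kernel bounds from Lemma \ref{lem:rho_to_phi} and Lemma \ref{lemma:G}, and the flux estimate \eqref{est:D.b}; the constants $C_1,C_2$ of \eqref{express:C}–\eqref{express:D} are $O(1)$ since $\|\w_{\pm,\tilde\beta,0}\nabla_{x,p}F_{\pm,0}\|$ and $\|e^{\tilde\beta|p^0_\pm|}\nabla_{x_\parallel,p}G_\pm\|$ are controlled by \eqref{set:M}–\eqref{condition:ML} and $e^{2\tilde\beta\frac{e_\pm}{c}\|(-\Delta_0)^{-1}(\nabla_x\cdot b)\|_{L^\infty_{t,x}}}\le e^2$. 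Collecting the three contributions gives \eqref{est_final:phi_F}.

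\textbf{Main obstacle.} The delicate point is the boundary‑singularity bookkeeping in the third step: one must check that the $\alpha_{\pm,F}^{-1}$ singularity of $\nabla_xF$ near $\p\O$, once integrated in $p$, leaves only the mild $|x_3|^{-1/2}$ behaviour that is still Hölder‑integrable, and that this is exactly compatible with the half‑space Dirichlet elliptic theory (which loses a derivative in $C^{0,\delta}$ but not in $L^\infty$, and whose Green kernel supplies the extra $y_3$ factor needed for the tangential time derivatives). The rest—tracking how the hypotheses \eqref{est:beta_b_dy}, \eqref{est1:D3tphi_F}, \eqref{condition:ML} propagate through the explicit constants of Proposition \ref{RE:dyn} and Lemma \ref{lem:D3tphi_F}—is routine bookkeeping.
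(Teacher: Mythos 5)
Your proposal is correct and follows essentially the same route as the paper: the weighted bounds \eqref{est_final:F_v:dyn}--\eqref{est_final:F_x:dyn} come from Proposition \ref{RE:dyn} once \eqref{est:beta_b_dy} and \eqref{est1:D3tphi_F} tame the constants, and \eqref{est_final:phi_F} is obtained from Lemma \ref{lem:D3tphi_F} plus the elliptic estimate \eqref{est:nabla^2phi} with the $C^{0,\delta}$ ($\delta<1/2$) control of the density coming from the momentum integral of $\nabla_x F$ and the integrable $\mathbf{1}_{|x_3|\le1}|x_3|^{-1/2}$ singularity, exactly as in the difference-quotient argument of Proposition \ref{prop:Reg}. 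The only cosmetic deviations are that the paper works directly with $-\Delta\phi_F=\rho+\varrho(t)$ and the full density of $F$ rather than splitting $\phi_F=\Phi_h+\Psi$, and your explicit treatment of the tangential components of $\p_t\nabla_x\Psi$ via $\p_{x_i}\mathfrak G$ is a slightly fuller account of what the paper leaves to the same Green's-function bounds.
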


\begin{proof}

We abuse the notation as in \eqref{abuse} in the proof. 
Using Proposition \ref{RE:dyn}, together with the conditions \eqref{est:beta_b_dy} and \eqref{est1:D3tphi_F}, we obtain \eqref{est_final:F_v:dyn} and \eqref{est_final:F_x:dyn}.

\smallskip

For \eqref{est_final:phi_F}, recall from Lemma \ref{lem:D3tphi_F}, then for any $(t,x) \in [0, \infty) \times \bar\O$,
\be \label{est_final:D3tphi_F}
| \p_{x_3} \p_t \phi_{F}(t,x)| 
\lesssim C_1 + C_2 ( 1 + \frac{8 c}{\hat{m} g \tilde{\beta}} ),  
\ee
where $\hat{m} = \min \{ m_+, m_- \}$ and
\begin{align}
C_{1} & = \frac{ 1 }{{\tilde{\beta}}^3} ( \frac{e_+}{\sqrt{ m_+ g  }} + \frac{e_-}{\sqrt{ m_- g }} ),
\notag \\
C_{2} & = \sum\limits_{i = \pm} \frac{e_i }{{\tilde{\beta}}^3} \Big( 2 e^{ \frac{m_i g}{24} \tilde{\beta} }  
e^{2 \tilde{\beta} \frac{e_i}{c} \| (-\Delta_0)^{-1} (\nabla_x \cdot b ) \|_{L^\infty_{t,x}} } \| \w_{\pm, \tilde \beta, 0} \nabla_{x, p} F_{\pm, 0} \|_{L^\infty (\O \times \R^3)}
\notag \\
& \qquad \qquad \quad + \big( 1 + \frac{ \| \nabla_x^2 \phi_F \|_\infty + e B_3 + m_i g }{(m_i g)^2} \big) \| e^{\tilde{\beta} |p^0|} \nabla_{x_\parallel,p} G_i \|_{L^\infty (\gamma_-)} + \frac{1}{\sqrt{ m g}} \Big).
\notag
\end{align}
Thus, it remains to estimate $\| \nabla_x^2 \phi_F (t) \|_{L^\infty (\bar \O)}$. We will use a similar idea in the proof of \eqref{est:phi_C2} in Proposition \ref{prop:Reg}. 
	
Recall from \eqref{Poisson_F}, \eqref{eqtn:Dphi} and \eqref{Poisson_f}, we have 
\Be \notag
- \Delta_x \phi_F 
= \rho + \varrho (t ).
\Ee
Using \eqref{Uest:rho} and \eqref{decay:varrho}, we bound 
\Be \label{est:rho:infty} 
\begin{split}
& \| \rho (\cdot ) + \varrho (t, \cdot ) \|_{L^\infty (\O)} 
\\& \leq \frac{1}{\beta} \big( e_+ \| w_{+, \beta} G_+ \|_{L^\infty(\gamma_-)} + e_{-} \| w_{-, \beta} G_- \|_{L^\infty(\gamma_-)} \big)
\\& \ \ \ \ + \frac{32}{\beta^2} \big( e_+ e^{2 + \frac{m_+ g}{24} \beta} \| \w_{+, \beta, 0 } f_{+, 0 } \|_{L^\infty_{x,p}} + e_- e^{2 + \frac{m_- g}{24} \beta} \| \w_{-, \beta, 0 } f_{-, 0 } \|_{L^\infty_{x,p}} \big).
\end{split}
\Ee
Applying \eqref{est:nabla^2phi} in Lemma \ref{lem:rho_to_phi}, together with \eqref{Uest:rho} and \eqref{decay:varrho}, we derive that 
\Be \label{bound:D2phi}
\begin{split}
\| \nabla_x^2 \phi_F ( t ) \|_{L^\infty(\bar \O)}
& \lesssim_{\delta} \|\rho +  \varrho (t ) \|_{L^\infty(\O)} + \| \rho + \varrho (t ) \|_{C^{0,\delta}(\O)} 
\\& \qquad + \frac{1}{\beta} \Big[ \frac{1}{\beta} \big( e_+ \| w_{+, \beta} G_+ \|_{L^\infty(\gamma_-)} + e_{-} \| w_{-, \beta} G_- \|_{L^\infty(\gamma_-)} \big)
\\& \qquad \qquad + \frac{32}{\beta^2} \big( e_+ e^{2 + \frac{m_+ g}{24} \beta} \| \w_{+, \beta, 0 } f_{+, 0 } \|_{L^\infty_{x,p}} + e_- e^{2 + \frac{m_- g}{24} \beta} \| \w_{-, \beta, 0 } f_{-, 0 } \|_{L^\infty_{x,p}} \big) \Big].
\end{split}
\Ee

It remains to control $\| \rho + \varrho (t ) \|_{C^{0,\delta}(\O)}$. We use a similar idea in the proof of \eqref{est:phi_C2} in Proposition \ref{prop:Reg}.
Given any $x \in \O$, consider $0<h<1$ such that $x \pm h \mathbf{e}_i \in \O$ with $1 \leq i \leq 3$.
Then we compute
\Be \label{rho_varrho_DQ}
\begin{split}
& \frac{ \big| \big( \rho(x+ h e_i) + \varrho (t,x+ h e_i) \big) - \big( \rho(x) + \varrho (t,x) \big) \big| }{h^\delta}  
\\& \leq \sum_{i = \pm} \frac{ e_i }{h^\delta} \int^h_0 \big| \nabla_{x_i} \big( \rho (x+ \tau e_i) +  \varrho (x+ \tau e_i) \big) \big| \dd \tau 
\\& \leq \sum_{i = \pm} \frac{ e_i }{h^\delta} \int^h_0 \Big| \int_{\R^3} \nabla_{x_i} F (t, x+ \tau e_i, p) \dd p \Big| \dd \tau.
\end{split}
\Ee
Using \eqref{est_final:F_x:dyn} and \eqref{alpha_F}, we derive that
\Be \label{est:rho:holder}
\begin{split}
& \Big| \int_{\R^3} \nabla_{x_i} F (t, x, p) \dd p \Big|
\\& \lesssim  \int_{\R^3} \Big( 2 e^{ 2 + \frac{m g}{24} \tilde{\beta} } \| \w_{\tilde \beta, 0} \nabla_{x,p} F_0  \|_{L^\infty (\O \times \R^3)}
+ \frac{1}{24} \tilde{\beta} \| e^{\tilde{\beta} |p^0|} \nabla_{x_\parallel,p} G \|_{L^\infty (\gamma_-)} \Big)
e^{ - \frac{\tilde{\beta}}{4} \sqrt{(m c)^2 + |p|^2} } \dd p 
\\& \ \ \ \ + \| e^{\tilde{\beta} |p^0|} \nabla_{x_\parallel,p} G \|_{L^\infty (\gamma_-)} \times \int_{\R^3} \mathbf{1}_{|x_3| \leq 1} \frac{ \delta_{i3} }{\alpha_F (t,x,p)} e^{ - \frac{\tilde{\beta}}{4} \sqrt{(m c)^2 + |p|^2} } \dd p 
\\& \lesssim \frac{1}{{\tilde{\beta}}^3} \big( 2 e^{ 2 + \frac{m g}{24} \tilde{\beta} }
\| \w_{\tilde \beta, 0} \nabla_{x,p} F_0  \|_{L^\infty (\O \times \R^3)}
+ \frac{1}{24} \tilde{\beta} \| e^{\tilde{\beta} |p^0|} \nabla_{x_\parallel,p} G \|_{L^\infty (\gamma_-)} \big)
\\& \ \ \ \ + \frac{1}{{\tilde{\beta}}^3} \| e^{\tilde{\beta} |p^0|} \nabla_{x_\parallel,p} G \|_{L^\infty (\gamma_-)} \times \mathbf{1}_{|x_3| \leq 1} \frac{ \delta_{i3} }{ \sqrt{ m g  x_3 } }.
\end{split}
\Ee 
By picking $0 < \delta < 1/2$, we follow \eqref{est:rho_DQ_12}-\eqref{est2:rho_DQ_3}, then for $0< h < 1 $ and $1 \leq i \leq 3$, 
\be \label{est:rho:holder2}
\frac{1}{h^\delta} \int^h_0 \big( 1 + \mathbf{1}_{|x_3| \leq 1} \frac{\delta_{i3}}{\sqrt{ m g (x \pm \tau \mathbf{e}_i)_3 }} \big) \dd \tau \lesssim_{\delta} 1.
\ee
Using \eqref{condition:ML} and \eqref{est_final:D3tphi_F}, together with \eqref{est:rho:infty}-\eqref{est:rho:holder2}, we conclude \eqref{est_final:phi_F}.
\end{proof}

\subsection{Stability and Uniqueness} \label{Sec:SU}

In this section, we first prove Theorem \ref{theo:UD}: a stability theorem. The idea of the proof is similar to the proof of Proposition \ref{prop:cauchy}. Then we prove the uniqueness in Theorem \ref{theo:UA} which follows from Theorem \ref{theo:UD}.

\begin{theorem}[Stability Theorem]
\label{theo:UD}

Suppose $(F_{1, \pm}, \nabla_x \phi_{F_1})$ and $(F_{2, \pm}, \nabla_x \phi_{F_2})$ solve \eqref{VP_F}, \eqref{Poisson_F}, \eqref{Dbc:F} and \eqref{bdry:F} under the compatibility condition \eqref{CC_F0=G}.
Consider 
\be \notag
F_{1, \pm} (t,x,p) = h_{1, \pm} (x,p) + f_{1, \pm} (t,x,p),
\ee 
where $(h_{1, \pm}, \rho_1, \Phi_1)$ solves \eqref{VP_h}-\eqref{eqtn:Dphi} and $(f_{1, \pm}, \varrho_1, \Psi_1)$ solves \eqref{eqtn:f}-\eqref{Poisson_f} respectively.
We define 
\be \label{w1_dy}
\w_{1, \pm} (t,x,p) = e^{ \frac{1}{12} \tilde{\beta} \big( \sqrt{(m_{\pm} c)^2 + |p|^2} + \frac{1}{c} ( e_{\pm} \phi_{F_1} (x) + m_{\pm} g x_3 ) \big) },
\ee 
and
\be \label{b1_dy}
b_1 (t,x) = \int_{\R^3} \big(v_+ e_+ ( F_{1,+} - h_{1,+} ) + v_{-} e_{-} ( F_{1, -} - h_{1, -}) \big) \dd p.
\ee
Assume both $(F_{1, \pm}, \nabla_x \phi_{F_1})$ and $(F_{2, \pm}, \nabla_x \phi_{F_2})$ satisfy all assumptions in Theorem \ref{theo:RD}, then
\Be \label{est:F1-2}
\begin{split}
& \sum_{i = \pm} \| \w_{1,i} (F_{1,i} - F_{2,i} ) (t) \|_{L^{\infty} (\O \times \R^3)} 
\\& \leq \exp \Big\{ { t \Big( \frac{1}{12} \tilde{\beta} \sup_{s \in [0,t]} \| (-\Delta_0)^{-1} (\nabla_x \cdot b_1(s)) \|_{L^\infty(\O)}
+ \sum_{j = \pm} \sup_{s \in [0,t]} \| \w_{1, j} \nabla_p F_{2, j} (s) \|_{L^{\infty} (\O \times \R^3)} \Big) } \Big\}
\\& \ \ \ \ \times \sum_{i = \pm} \| \w_{1,i} (F_{1,i} - F_{2,i} ) (0) \|_{L^{\infty} (\O \times \R^3)}.
\end{split}
\Ee	
\end{theorem}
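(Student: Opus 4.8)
The plan is to analyze the difference $\mathsf{G}_\pm := F_{1,\pm}-F_{2,\pm}$. Subtracting the two copies of \eqref{VP_F} and using $\phi_{F_i}=\Phi_{h_i}+\Psi_i$, one finds that $\mathsf{G}_\pm$ solves the transport equation
\[
\p_t \mathsf{G}_\pm + v_\pm\cdot\nabla_x \mathsf{G}_\pm + \Big(e_\pm\big(\tfrac{v_\pm}{c}\times B-\nabla_x\phi_{F_1}\big)-\nabla_x(m_\pm g x_3)\Big)\cdot\nabla_p \mathsf{G}_\pm = e_\pm\nabla_x(\phi_{F_1}-\phi_{F_2})\cdot\nabla_p F_{2,\pm}
\]
along the dynamic characteristics $\Z_{1,\pm}$ of the \emph{first} solution, i.e. \eqref{ODE_F} driven by $\nabla_x\phi_{F_1}$; since both $F_{1,\pm}$ and $F_{2,\pm}$ carry the same inflow data $G_\pm$ on $\gamma_-$, we have $\mathsf{G}_\pm\equiv 0$ on $\gamma_-$. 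First I would write the Duhamel/Lagrangian representation of $\mathsf{G}_\pm$ along $\Z_{1,\pm}$; the boundary contribution drops out, leaving an initial term $\mathbf 1_{t\le \tBp}\mathsf{G}_\pm(0,\Z_{1,\pm}(0;t,x,p))$ plus the integral of the source term over $[\max\{0,t-\tBp\},t]$, where $\tBp$ is the backward exit time of $\Z_{1,\pm}$; Propositions \ref{lem:tB}--\ref{lem3:tB} apply to this flow because the hypothesis that both solutions satisfy the assumptions of Theorem \ref{theo:RD} supplies the field bound \eqref{Uest:Dxphi_F} for $\phi_{F_1}$.

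\textbf{Weight transport and the two contributions.} The second step is to propagate the weight $\w_{1,\pm}$ along $\Z_{1,\pm}$. Because $\p_t\phi_{F_1}=\p_t\Psi_1 = -(-\Delta_0)^{-1}(\nabla_x\cdot b_1)$ by the continuity equation \eqref{cont_eqtn}/\eqref{identity:Psi_t}, the conservation identity \eqref{eq:energy_conservation_dy_2} gives $\frac{d}{ds}\w_{1,\pm}(s,\Z_{1,\pm}(s)) = -\tfrac{\tilde\beta}{12}\tfrac{e_\pm}{c}(-\Delta_0)^{-1}(\nabla_x\cdot b_1)(s,\X_{1,\pm}(s))\,\w_{1,\pm}(s,\Z_{1,\pm}(s))$, so that
\[
\frac{\w_{1,\pm}(t,x,p)}{\w_{1,\pm}(s,\Z_{1,\pm}(s;t,x,p))} \le \exp\Big(\tfrac{\tilde\beta}{12}\tfrac{e_\pm}{c}\,(t-s)\sup_{s'\in[0,t]}\|(-\Delta_0)^{-1}(\nabla_x\cdot b_1(s'))\|_{L^\infty(\O)}\Big)=:e^{C(t-s)}.
\]
Multiplying the Lagrangian identity by $\w_{1,\pm}(t,x,p)$ and inserting the weight at the evaluation points, the initial term is bounded by $e^{Ct}\|\w_{1,\pm}\mathsf{G}_\pm(0)\|_{L^\infty}$; in the source term one uses $|\nabla_p F_{2,\pm}(\Z_{1,\pm}(s))|\le \|\w_{1,\pm}\nabla_p F_{2,\pm}(s)\|_{L^\infty}/\w_{1,\pm}(s,\Z_{1,\pm}(s))$, a quantity finite thanks to the regularity bound \eqref{est_final:F_v:dyn} (the exponent $\tilde\beta/12$ being weaker than the $\tilde\beta/4$ there). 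To close the estimate I would control $\nabla_x(\phi_{F_1}-\phi_{F_2})$: it solves $-\Delta_x(\phi_{F_1}-\phi_{F_2})=\int_{\R^3}(e_+\mathsf{G}_++e_-\mathsf{G}_-)\,\dd p$ with zero Dirichlet data, and the lower bound $\w_{1,\pm}(x,p)\ge e^{\frac{\tilde\beta}{12}(\sqrt{(m_\pm c)^2+|p|^2}+\frac{m_\pm g}{2c}x_3)}$ (from \eqref{Bootstrap} and $\phi_{F_1}|_{\p\O}=0$) makes the right-hand side $\lesssim H(s)\,e^{-\frac{\tilde\beta\hat m g}{24 c}x_3}$ with $H(s):=\sum_\pm\|\w_{1,\pm}\mathsf{G}_\pm(s)\|_{L^\infty}$, whence $\|\nabla_x(\phi_{F_1}-\phi_{F_2})(s)\|_{L^\infty}\lesssim H(s)$ by Lemma \ref{lem:rho_to_phi}. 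Substituting and summing over $\pm$ yields $e^{-Ct}H(t)\le H(0)+\int_0^t c_0\big(\sum_{j=\pm}\|\w_{1,j}\nabla_p F_{2,j}(s)\|_{L^\infty}\big)e^{-Cs}H(s)\,\dd s$, and Grönwall's inequality gives the asserted bound, the factors $e_\pm/c$ and $c_0$ being absorbed into the constants displayed in the statement.

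\textbf{Main obstacle.} The delicate point is the simultaneous bookkeeping forced by using the \emph{single} weight $\w_{1,\pm}$: it is built from $\phi_{F_1}$ yet must at once control the $F_1$-characteristic flow (through the conservation law and the exit-time estimates of Propositions \ref{lem2:tB}--\ref{lem3:tB}), ensure integrability in $p$ in the elliptic step, and dominate the $\nabla_p F_{2,\pm}$ regularity that originates from $\phi_{F_2}$; one must check that the small exponent $\tilde\beta/12$ is compatible with all three uses and that the loss of one power of the weight in the Green-function estimate is affordable. A secondary technical issue is justifying the Duhamel representation and the termwise differentiation rigorously at the level of weak solutions, which is handled exactly as in the constructions of Sections \ref{sec:char} and \ref{sec:RD}.
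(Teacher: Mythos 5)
Your proposal is correct and follows essentially the same route as the paper's proof: the paper writes the transport equation directly for the weighted unknown $\w_{1,\pm}(F_{1,\pm}-F_{2,\pm})$ along the $F_1$-characteristics (splitting into the cases $\tB<t$ and $t\le\tB$), while you obtain the identical structure by a Duhamel formula for the unweighted difference combined with the weight-transport identity, which is an equivalent bookkeeping. The remaining ingredients — vanishing boundary term, the Green's-function estimate $\|\nabla_x(\phi_{F_1}-\phi_{F_2})\|_{L^\infty}\lesssim\sum_i\|\w_{1,i}(F_{1,i}-F_{2,i})\|_{L^\infty}$ using the lower bound on $\w_{1,\pm}$, and Gr\"onwall — coincide with the paper's argument.
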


\begin{proof}

We abuse the notation as in \eqref{abuse} in the proof.
From \eqref{ODE_F} and \eqref{tb}, we consider the characteristic $\Z_1 = ( \X_1, \P_1 )$ for $(F_1, \phi_{F_1})$ and the corresponding backward exit time $\tBf (t,x,p)$. 

Now consider $\w_{1, \pm} (t,x,p)$ in \eqref{w1_dy}. 
From \eqref{VP_F}, \eqref{Poisson_F}, \eqref{Dbc:F} and \eqref{bdry:F}, together with \eqref{dEC}, we have
\Be \label{VP_diff_12_dy}
\begin{split}
& \p_t ( \w_{1, \pm} (F_{1, \pm} - F_{2, \pm} )) + v_\pm \cdot \nabla_x ( \w_{1, \pm} (F_{1, \pm} - F_{1, \pm} ))
\\& \ \ \ \ + \Big( e_{\pm} \big( \frac{v_\pm}{c} \times B - \nabla_x \phi_{F_1} \big) - \nabla_x ( m_\pm g x_3) \Big) \cdot \nabla_p ( \w_{1, \pm} (F_{1, \pm} - F_{2, \pm} ))
\\& = - \frac{e_{\pm}}{c} (-\Delta_0)^{-1}  (\nabla_x \cdot b_1 ) \frac{1}{12} \tilde{\beta} \w_{1, \pm} (F_{1, \pm} - F_{2, \pm} )
\\& \ \ \ \ + e_{\pm} \nabla_x ( \phi_{F_1} - \phi_{F_2})
\cdot \nabla_p ( \w_{1, \pm} F_{2, \pm} )
 \ \ \text{in} \ \R_+ \times  \O \times \R^3,
\end{split}
\ee
where $b_1 (t,x)$ is defined in \eqref{b1_dy}. Moreover,
\be \label{VP_diff_bdy_12_dy}
F_1 (t,x,p) - F_2 (t,x,p) =  0 \ \ \text{in} \ \gamma_-,
\Ee
and
\be \label{VP_diff_initial_12_dy}
F_{1, \pm} (0, x, p) - F_{2, \pm} (0, x, p) = F_{1, \pm, 0} (x,p) - F_{2, \pm, 0} (x,p) \ \ \ \ \text{in} \ \O \times \R^3.
\Ee

\smallskip

\textbf{\underline{Case 1:} $\tBf (t,x,p) < t$.}
From \eqref{VP_diff_12_dy} and \eqref{VP_diff_bdy_12_dy}, we get
\Be \label{energy:wF_12_tb<t}
\begin{split}
& \mathbf{1}_{\tBf (t,x,p) < t} \times \w_{1} (F_1- F_2) (t,x,p) 
\\& = \int^t_{t - \tBf (t,x,p)} - \frac{e}{c} (-\Delta_0)^{-1}  (\nabla_x \cdot b_1 ) \frac{1}{12} \tilde{\beta} \w_{1} (F_{1} - F_{2} ) \dd s 
\\& \ \ \ \ + \int^t_{t - \tBf (t,x,p)} e \nabla_x ( \phi_{F_1} - \phi_{F_2})
\cdot \nabla_p ( \w_{1} F_{2, \pm} ) \dd s 
\\& \leq \frac{e}{c} \frac{1}{12} \tilde{\beta} \sup_{s \in [0,t]} \| (-\Delta_0)^{-1} (\nabla_x \cdot b_1(s)) \|_{L^\infty(\O)}
\int^t_{t - \tBf (t,x,p)} 	\| \w_{1} (F_1  - F_2 )(s) \|_{L^{\infty} (\O \times \R^3)} \dd s 
\\& \ \ \ \ + \sup_{s \in [0,t]} \| \w_{1} \nabla_p F_2 (s) \|_{L^{\infty} (\O \times \R^3)} 
\int^t_{t - \tBf (t,x,p)} 
\underbrace{ \| \nabla_x (\phi_{F_1} - \phi_{F_2}) (s) \|_{L^{\infty}_x(\O)} }_{\eqref{energy:wF_12_tb<t}_*}
\dd s.
\end{split}
\Ee
Now we estimate $\eqref{energy:wF_12_tb<t}_*$. From \eqref{Poisson_F} and Lemma \ref{lemma:G}, we have
\Be \label{est:Dphi_12_tb<t}
\begin{split}
& \big| \nabla_x (\phi_{F_1} - \phi_{F_2}) (s, x) \big|
\\& \leq \Big| \int_{\O}  \nabla_x \mathfrak{G} 
\sum_{i = \pm} \int_{\R^3} \frac{e_i}{\w_{1,i} (s,y,p)} \big( \w_{1,i} (F_{1,i} - F_{2,i} ) \big) (s,y,p) \dd p \dd y \Big|
\\& \leq \sum_{i = \pm} \| \w_{1,i} (F_{1,i} - F_{2,i} ) (s) \|_{L^{\infty} (\O \times \R^3)} 
\times \Big\| \int_{\O}  \nabla_x \mathfrak{G} 
\int_{\R^3} \frac{e_i}{\w_{1,i} (s,y,p)}  \dd p \dd y \Big\|_{L^{\infty} _x(\O)}
\\& = \sum_{i = \pm} \Big( \| \w_{1,i} (F_{1,i} - F_{2,i} ) (s) \|_{L^{\infty} (\O \times \R^3)} 
\times \Big\| \int_{\R^3} \int_{\O}  \nabla_x \mathfrak{G} 
 \frac{e_i}{\w_{1,i} (s,y,p)}  \dd y \dd p \Big\|_{L^{\infty} _x(\O)} \Big).
\end{split}
\Ee
Using \eqref{w^F}, together with \eqref{Bootstrap}, we have
\Be \label{w^F_1}
\begin{split}
\w_{1, \pm } (t,x,p) 
& = e^{ \frac{1}{12} \tilde{\beta} \left( \sqrt{(m_{\pm} c)^2 + |p|^2} + \frac{1}{c} \big( e_{\pm} \big( \Phi_1 (x) + \Psi_1 (t,x) \big) + m_{\pm} g x_3 \big) \right) } 
\\& \geq e^{ \frac{1}{12} \tilde{\beta} \big( \sqrt{(m_{\pm} c)^2 + |p|^2} + \frac{m_{\pm} g}{2 c} x_3 \big) }.
\end{split}
\Ee
Inputting \eqref{w^F_1} into \eqref{est:Dphi_12_tb<t}, we derive
\begin{align}
\big| \nabla_x (\phi_{F_1} - \phi_{F_2}) (s, x) \big|
& \leq \sum_{i = \pm} \Big( \| \w_{1,i} (F_{1,i} - F_{2,i} ) (s) \|_{L^{\infty} (\O \times \R^3)} 
\times \int_{\R^3} \frac{e_i}{e^{ \frac{1}{12} \tilde{\beta} \sqrt{(m_{\pm} c)^2 + |p|^2} }} \dd p 
\notag \\
& \qquad \qquad \times \Big\| \int_{\O}  \nabla_x \mathfrak{G} 
e^{ - \frac{1}{12} \tilde{\beta} \frac{m_{\pm} g}{2 c} y_3  } \dd y \Big\|_{L^{\infty} _x(\O)} \Big).
\label{est1:Dphi_12_tb<t}
\end{align}
Applying \eqref{est:nabla_phi} in Lemma \ref{lem:rho_to_phi} on \eqref{est1:Dphi_12_tb<t}, we deduce that
\be
\Big\| \int_{\O}  \nabla_x \mathfrak{G} 
e^{ - \frac{1}{12} \tilde{\beta} \frac{m g}{2 c} y_3  } \dd y \Big\|_{L^{\infty} _x(\O)}
\lesssim 1 + \frac{2 c}{m g}.
\ee
Together with $\int_{\R^3} e_i e^{ - \frac{1}{12} \tilde{\beta} \sqrt{(m_{\pm} c)^2 + |p|^2} \dd p} \dd p \lesssim e_i$, we can compute that
\be \label{est2:Dphi_12_tb<t}
\| \nabla_x (\phi_{F_1} - \phi_{F_2}) (s) \|_{L^{\infty}_x(\O)}
\leq \sum_{i = \pm} \Big( \| \w_{1,i } (F_{1,i} - F_{2,i} ) (s) \|_{L^{\infty} (\O \times \R^3)} 
\times e_i ( 1 + \frac{2 c}{m g } ) \Big).
\ee
Inputting \eqref{est2:Dphi_12_tb<t} into \eqref{energy:wF_12_tb<t}, we get
\Be \label{est3:Dphi_12_tb<t}
\begin{split}
& \| \mathbf{1}_{\tBf (t,x,p) < t} \times \w_{1} (F_1- F_2) (t) \|_{L^{\infty} (\O \times \R^3)}
\\& \lesssim_{e, m, g} \frac{1}{12} \tilde{\beta} \sup_{s \in [0,t]} \| (-\Delta_0)^{-1} (\nabla_x \cdot b_1(s)) \|_{L^\infty(\O)}
\int^t_{t - \tBf (t,x,p)} 	\| \w_{1} (F_1  - F_2 )(s) \|_{L^{\infty} (\O \times \R^3)} \dd s 
\\& \qquad + \sup_{s \in [0,t]} \| \w_{1} \nabla_p F_2 (s) \|_{L^{\infty} (\O \times \R^3)}
\times \int^t_{t - \tBf (t,x,p)} \sum_{i = \pm} \| \w_{1,i} (F_{1,i} - F_{2,i} ) (s) \|_{L^{\infty} (\O \times \R^3)} \dd s.
\end{split}
\Ee

\smallskip

\textbf{\underline{Case 2:} $t \leq \tBf (t,x,p)$.}
Similar to case 1, using \eqref{VP_diff_12_dy} and \eqref{VP_diff_initial_12_dy}, we get
\Be \label{energy:wF_12_tb>t}
\begin{split}
& \mathbf{1}_{t \leq \tBf (t,x,p)} \times \w_{1} (F_1- F_2) (t,x,p) - \w_{1} (F_{1, 0} - F_{2, 0} ) (\Z_1 (0; t,x,p))
\\& = \int^t_{0} - \frac{e}{c} (-\Delta_0)^{-1}  (\nabla_x \cdot b_1 ) \frac{1}{12} \tilde{\beta} \w_{1} (F_{1} - F_{2} ) \dd s 
+ \int^t_{0} e \nabla_x ( \phi_{F_1} - \phi_{F_2})
\cdot \nabla_p ( \w_{1} F_{2, \pm} ) \dd s 
\end{split}
\Ee
Following \eqref{est:Dphi_12_tb<t}-\eqref{est3:Dphi_12_tb<t}, we derive  
\Be \label{est:Dphi_12_tb>t}
\begin{split}
& \mathbf{1}_{t \leq \tBf (t,x,p)} \times \w_{1} (F_1- F_2) (t,x,p) - \w_{1} (F_{1, 0} - F_{2, 0} ) (\Z_1 (0; t,x,p))
\\& \lesssim_{e, m, g} \frac{1}{12} \tilde{\beta} \sup_{s \in [0,t]} \| (-\Delta_0)^{-1} (\nabla_x \cdot b_1(s)) \|_{L^\infty(\O)}
\int^t_{0} 	\| \w_{1} (F_1  - F_2 )(s) \|_{L^{\infty} (\O \times \R^3)} \dd s 
\\& \qquad \ \ + \sup_{s \in [0,t]} \| \w_{1} \nabla_p F_2 (s) \|_{L^{\infty} (\O \times \R^3)}
\\& \qquad \qquad \qquad \times \int^t_{0} \sum_{i = \pm} \| \w_{1,i} (F_{1,i} - F_{2,i} ) (s) \|_{L^{\infty} (\O \times \R^3)} \dd s.
\end{split}
\Ee
This shows that
\Be \label{est1:Dphi_12_tb>t}
\begin{split}
& \| \mathbf{1}_{t \leq \tBf (t,x,p)} \times \w_{1} (F_1- F_2) (t) \|_{L^{\infty} (\O \times \R^3)} - \| \w_{1} (F_1- F_2) (0) \|_{L^{\infty} (\O \times \R^3)}
\\& \lesssim_{e, m, g} \frac{1}{12} \tilde{\beta} \sup_{s \in [0,t]} \| (-\Delta_0)^{-1} (\nabla_x \cdot b_1(s)) \|_{L^\infty(\O)}
\int^t_{0} 	\| \w_{1} (F_1  - F_2 )(s) \|_{L^{\infty} (\O \times \R^3)} \dd s 
\\& \qquad \ \ + \sup_{s \in [0,t]} \| \w_{1} \nabla_p F_2 (s) \|_{L^{\infty} (\O \times \R^3)}
\\& \qquad \qquad \qquad \times \int^t_{0} \sum_{i = \pm} \| \w_{1,i} (F_{1,i} - F_{2,i} ) (s) \|_{L^{\infty} (\O \times \R^3)} \dd s.
\end{split}
\Ee
Combining \eqref{est3:Dphi_12_tb<t} and \eqref{est1:Dphi_12_tb>t}, then for $i = \pm$,
\Be \notag
\begin{split}
& \| \w_{1,i} (F_{1,i}- F_{2,i}) (t) \|_{L^{\infty} (\O \times \R^3)} - \| \w_{1,i} (F_{1,i}- F_{2,i}) (0) \|_{L^{\infty} (\O \times \R^3)}
\\& \lesssim_{e, m, g} \frac{1}{12} \tilde{\beta} \sup_{s \in [0,t]} \| (-\Delta_0)^{-1} (\nabla_x \cdot b_1(s)) \|_{L^\infty(\O)}
\int^t_{0} 	\| \w_{1,i} (F_{1,i}- F_{2,i}) (s) \|_{L^{\infty} (\O \times \R^3)} \dd s 
\\& \qquad \qquad + \sup_{s \in [0,t]} \| \w_{1,i} \nabla_p F_{2, i} (s) \|_{L^{\infty} (\O \times \R^3)}
\times \int^t_{0} \sum_{j = \pm} \| \w_{1,j} (F_{1,j} - F_{2,j} ) (s) \|_{L^{\infty} (\O \times \R^3)} \dd s,
\end{split}
\Ee
Summing up two cases $i = \pm$, we obtain that 
\Be \label{est:Dphi_12}
\begin{split}
& \sum_{i = \pm} \| \w_{1,i} (F_{1,i} - F_{2,i} ) (t) \|_{L^{\infty} (\O \times \R^3)}
- \sum_{i = \pm} \| \w_{1,i} (F_{1,i} - F_{2,i} ) (0) \|_{L^{\infty} (\O \times \R^3)}
\\& \lesssim_{e, m, g} \Big( \frac{1}{12} \tilde{\beta} \sup_{s \in [0,t]} \| (-\Delta_0)^{-1} (\nabla_x \cdot b_1(s)) \|_{L^\infty(\O)}
+ \sum_{j = \pm} \sup_{s \in [0,t]} \| \w_{1, j} \nabla_p F_{2, j} (s) \|_{L^{\infty} (\O \times \R^3)} \Big)
\\& \qquad \qquad \times \int^t_{0} \sum_{j = \pm} \| \w_{1,j} (F_{1,j} - F_{2,j} ) (s) \|_{L^{\infty} (\O \times \R^3)} \dd s.
\end{split}
\Ee
Applying the Gronwall's inequality to \eqref{est:Dphi_12}, we conclude \eqref{est:F1-2}. 
\end{proof}

The uniqueness of the dynamical solution is a direct consequence of Theorem \ref{theo:UD}.

\begin{theorem}[Uniqueness Theorem] 
\label{theo:UA} 

Suppose $(F_{1, \pm}, \nabla_x \phi_{F_1})$ and $(F_{2, \pm}, \nabla_x \phi_{F_2})$ solve \eqref{VP_F}, \eqref{Poisson_F}, \eqref{Dbc:F} and \eqref{bdry:F} under the compatibility condition \eqref{CC_F0=G}.
Assume both follow all assumptions in Theorem \ref{theo:AS} and Theorem \ref{theo:RD}.
Further, we assume that both satisfy \eqref{Uest:wh_dy}, \eqref{Uest:D^-1_Db} and \eqref{est_final:F_v:dyn}.
Then $F_{1, \pm} = F_{2, \pm}$ a.e. in $\R_+ \times \O \times \R^3$ and $\nabla_x \phi_{F_1} = \nabla_x \phi_{F_2}$ a.e. in $\R_+ \times \O$. 
\end{theorem}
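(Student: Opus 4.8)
The plan is to obtain Theorem~\ref{theo:UA} directly from the Stability Theorem~\ref{theo:UD}. The starting observation is that both $(F_{1,\pm},\nabla_x\phi_{F_1})$ and $(F_{2,\pm},\nabla_x\phi_{F_2})$ solve the same initial–boundary value problem with the \emph{same} initial datum $F_{\pm,0}$ (cf.~\eqref{VP_0}), so $F_{1,\pm}(0,\cdot,\cdot)=F_{2,\pm}(0,\cdot,\cdot)$ and hence $\sum_{i=\pm}\|\w_{1,i}(F_{1,i}-F_{2,i})(0)\|_{L^\infty(\O\times\R^3)}=0$, where $\w_{1,\pm}$ is the weight \eqref{w1_dy} attached to the first solution.

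Since the two pairs are assumed to satisfy all hypotheses of Theorem~\ref{theo:RD}, Theorem~\ref{theo:UD} applies and gives the Gronwall-type bound \eqref{est:F1-2} for every $t\ge0$. The only thing that needs checking is that the exponential rate appearing there is finite for each fixed $t$. Its first piece, $\tfrac{1}{12}\tilde\beta\sup_{s\in[0,t]}\|(-\Delta_0)^{-1}(\nabla_x\cdot b_1(s))\|_{L^\infty(\O)}$, is controlled by the assumed bound \eqref{Uest:D^-1_Db} applied to the first solution. For the second piece, $\sum_{j=\pm}\sup_{s\in[0,t]}\|\w_{1,j}\nabla_p F_{2,j}(s)\|_{L^\infty(\O\times\R^3)}$, I would combine the pointwise estimate \eqref{est_final:F_v:dyn} on $\nabla_p F_{2,\pm}$ with the field bound \eqref{Bootstrap} for the first solution, which (together with $\phi_{F_1}|_{\p\O}=0$) yields $-\tfrac12 m_\pm g x_3\le e_\pm\phi_{F_1}(x)\le\tfrac12 m_\pm g x_3$: the $x_3$-growth of $\w_{1,\pm}$ is then at most $e^{\frac{m_\pm g}{8c}\tilde\beta x_3}$, which is exactly absorbed by the factor $e^{-\frac{m_\pm g}{8c}\tilde\beta x_3}$ in \eqref{est_final:F_v:dyn}, while in momentum the weight $e^{\frac{\tilde\beta}{12}\sqrt{(m_\pm c)^2+|p|^2}}$ against $e^{-\frac{\tilde\beta}{4}\sqrt{(m_\pm c)^2+|p|^2}}$ leaves a net decay $e^{-\frac{\tilde\beta}{6}\sqrt{(m_\pm c)^2+|p|^2}}$. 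Hence $\sup_{s\in[0,t]}\|\w_{1,j}\nabla_p F_{2,j}(s)\|_{L^\infty}<\infty$, and the rate in \eqref{est:F1-2} is finite.

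Multiplying this finite exponential by the vanishing initial quantity gives $\sum_{i=\pm}\|\w_{1,i}(F_{1,i}-F_{2,i})(t)\|_{L^\infty(\O\times\R^3)}=0$ for all $t\ge0$; since $\w_{1,\pm}>0$ everywhere, this forces $F_{1,\pm}=F_{2,\pm}$ almost everywhere in $\R_+\times\O\times\R^3$. Consequently the two local charge densities $\int_{\R^3}(e_+F_{1,+}+e_-F_{1,-})\,\dd p$ and $\int_{\R^3}(e_+F_{2,+}+e_-F_{2,-})\,\dd p$ coincide a.e., so $\psi:=\phi_{F_1}-\phi_{F_2}$ satisfies $-\Delta_x\psi=0$ in $\R_+\times\O$ with $\psi=0$ on $\R_+\times\p\O$; uniqueness for the Dirichlet Laplacian then gives $\psi\equiv0$, i.e.\ $\nabla_x\phi_{F_1}=\nabla_x\phi_{F_2}$ a.e.\ in $\R_+\times\O$.

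There is no genuine obstacle beyond bookkeeping: the statement is essentially a corollary of Theorem~\ref{theo:UD}. The one subtlety worth flagging is that the weight $\w_{1,\pm}$ in the stability estimate is built from the \emph{first} solution's potential while it must also control $\nabla_p F_{2,\pm}$ of the \emph{second} solution; this mismatch is harmless precisely because the $x_3$-exponential in $\w_{1,\pm}$ and the one in the a priori bound \eqref{est_final:F_v:dyn} cancel, as explained above, and all the inputs (\eqref{Uest:D^-1_Db} and \eqref{est_final:F_v:dyn}) are part of the hypotheses.
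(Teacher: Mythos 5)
Your argument is correct and follows essentially the same route as the paper: apply the stability estimate \eqref{est:F1-2} of Theorem \ref{theo:UD}, verify the finiteness of its Gronwall rate via \eqref{Uest:D^-1_Db} and the weighted bound \eqref{est_final:F_v:dyn} (your exponent bookkeeping for $\w_{1,\pm}$ matches the paper's), and use that the two solutions share the same initial and boundary data. The only cosmetic difference is that you spell out the conclusion for the potentials via a harmonic-function argument; in the paper's framework this is immediate because $\phi_{F_i}$ is given by the Green's function representation \eqref{phi_rho}, which also sidesteps the need to invoke uniqueness for the Dirichlet problem on the unbounded half space.
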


\begin{proof} 

Suppose $(F_{1, \pm}, \phi_{F_1})$ and $(F_{2, \pm}, \phi_{F_2})$ are two solutions constructed in Theorem \ref{theo:CD} under the same initial and boundary conditions.
From \eqref{est_final:F_v:dyn}, then for any $(t,x,p) \in [0, \infty) \times  \bar\O \times \R^3$ and $i = 1, 2$,
\be \notag
\| e^{ \frac{\beta}{4} \sqrt{(m c)^2 + |p|^2} } e^{ \frac{m g}{8 c} \beta x_3} \nabla_p F_{i, \pm} (t,x,p) \|_{L^\infty(\O \times \R^3)}  
< \infty.
\ee
Consider $\w_{1, \pm} (t,x,p)$ in \eqref{w1_dy}, from \eqref{Uest_final:F_v:dyn}, we get for $i = 1, 2$,
\be \notag
\sup_{s \in [0,t]} \| \w_{1, i} \nabla_p F_{2, i} (s) \|_{L^{\infty} (\O \times \R^3)}
< \infty.
\ee
Furthermore, using \eqref{Uest:D^-1_Db} in Theorem \ref{theo:AS}, we derive
\be \notag
\sup_{s \in [0,t]} \| (-\Delta_0)^{-1} (\nabla_x \cdot b_1(s)) \|_{L^\infty(\O)}
< \infty,
\ee
where $b_1 (t,x)$ is defined in \eqref{b1_dy}.
Since $(F_{1, \pm}, \phi_{F_1})$ and $(F_{2, \pm}, \phi_{F_2})$ share the same initial and boundary conditions, the uniqueness of the dynamical solution directly follows \eqref{est:F1-2} in Theorem \ref{theo:UD}.
\end{proof}

\subsection{Proof of the Main Theorem: Dynamical Problem} \label{sec:EX_DS}

In this section, we show the existence of dynamic solutions in Theorem \ref{theo:CD}. 
We start with showing some uniform-in-$\ell$ estimates on $( F^{\ell+1}, \phi_{F^{\ell}} )$ constructed in Section \ref{sec:DC}.
Then we obtain the existence of dynamic solutions by passing a limit on the sequences.
We follow the same idea of the proofs in Section \ref{sec:EX_SS}, and thus we omit some steps that are similar to Lemmas in Section \ref{sec:EX_SS}. 

\begin{lemma} 
\label{lem:w/w_ell}

Suppose the assumptions \eqref{Bootstrap_ell} and \eqref{Bootstrap_psi_ell} hold. Further, we assume that
\be \label{est:beta_b_dy_ell}
\sup_{0 \leq t < \infty} \beta \frac{e_{\pm}}{c} \| (-\Delta_0)^{-1} (\nabla_x \cdot b^{\ell} ) \|_{L^\infty_{t,x}} 
\leq 1 \leq \frac{\beta}{2}
\ \text{ for all } \ell \in \N.
\ee
Recall $\Z^{\ell+1}_{\pm} (s;t,x,p) $ in \eqref{Zell} and $\w_{\pm, \beta}^{\ell+1} (t,x,p)$ in \eqref{w^ell}. 
Consider $(t,x,p) \in [0, \infty) \times  \bar\O \times \R^3$, then for any $\ell \in \N$ and $s, s^\prime \in [\max \{0, t-\tBp^{\ell+1}  (t,x,p) \}, t + \tFp^{\ell+1} (t,x,p)]$,
\Be \label{est:1/w_h_ell}
\begin{split}
\frac{\w^{\ell+1}_{\pm, \beta}  (s^\prime, \Z^{\ell+1}_{\pm} (s^\prime;t,x,p) )}{\w^{\ell+1}_{\pm, \beta} (s, \Z^{\ell+1}_{\pm} (s;t,x,p))}
& \leq e^{\beta \frac{e_{\pm}}{c} \| (-\Delta_0)^{-1} (\nabla_x \cdot b^{\ell} ) \|_{L^\infty_{t,x}} 
\big( \frac{ \frac{4 c}{m_{\pm} g} |p| + 4 x_{3} }{c_1} + \frac{8}{m_{\pm} g} |p| + 2 \big) }  ,
\\ \frac{1}{\w^{\ell+1}_{\pm, \beta} (s, \Z^{\ell+1}_{\pm} (s;t,x,p))}  
& \leq e^{2 \beta \frac{e_{\pm}}{c} \| (-\Delta_0)^{-1} (\nabla_x \cdot b^{\ell} ) \|_{L^\infty_{t,x}} } e^{ - \frac{\beta}{2} \sqrt{(m_{\pm} c)^2 + |p|^2} } e^{ - \frac{m_{\pm} g}{4 c} \beta x_3},
\end{split}
\Ee
where $c_1 := \min \{ \frac{g}{4 \sqrt{2}}, \frac{c}{\sqrt{10}} \}$.
Moreover, recall $w_{\pm, \beta} (x,p)$ in \eqref{w^h}, then
\Be \label{est:1/w_ell}
\frac{1}{w_{\pm, \beta} ( \Z^{\ell+1}_{\pm} (s;t,x,p))}   
\leq e^{2 \beta \frac{e_{\pm}}{c} \| (-\Delta_0)^{-1} (\nabla_x \cdot b^{\ell} ) \|_{L^\infty_{t,x}} } e^{- \frac{\beta}{4} \sqrt{(m_{\pm} c)^2 + |p|^2} } e^{- \frac{m_{\pm} g}{8 c} \beta x_3}.
\Ee
\end{lemma}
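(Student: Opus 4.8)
The plan is to transcribe the proof of Lemma~\ref{lem:w/w} essentially verbatim, with every quantity carrying the superscript $\ell$ and each ingredient replaced by its iterate analogue. We use the abuse of notation \eqref{abuse} throughout.

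First I would record the transport identity for the iterate weight along the iterate characteristic. Starting from the $\ell$-indexed dynamic energy identity \eqref{dDTE^ell}, exactly as in \eqref{w(Z)_s} one obtains
\begin{equation}\notag
\frac{d}{ds}\,\w^{\ell+1}_{\pm,\beta}\big(s,\Z^{\ell+1}_{\pm}(s;t,x,p)\big)
= \w^{\ell+1}_{\pm,\beta}\big(s,\Z^{\ell+1}_{\pm}(s;t,x,p)\big)\Big(-\beta\frac{e_\pm}{c}(-\Delta_0)^{-1}(\nabla_x\cdot b^{\ell})\big(s,\X^{\ell+1}_{\pm}(s;t,x,p)\big)\Big).
\end{equation}
Integrating between $s$ and $s'$ and estimating $\big|\int_{s'}^{s}(-\Delta_0)^{-1}(\nabla_x\cdot b^{\ell})\,d\tau\big|\le|s-s'|\,\|(-\Delta_0)^{-1}(\nabla_x\cdot b^{\ell})\|_{L^\infty_{t,x}}$ yields the ratio formula. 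To turn $|s-s'|$ into the claimed exponent I would use $|s-s'|\le\tBp^{\ell+1}(t,x,p)+\tFp^{\ell+1}(t,x,p)$ together with the bounds \eqref{est:tB_ell}--\eqref{est:tF_ell} of Lemma~\ref{lem:tB_ell} (the iterate analogues of \eqref{est:tB}--\eqref{est:tF}, valid under \eqref{Bootstrap_ell}); this is the computation \eqref{est:expo_w_ell} carried out with superscripts $\ell$, and it gives the first inequality of \eqref{est:1/w_h_ell}. For the second inequality I would invoke \eqref{Bootstrap_ell} to obtain the lower bound $\w^{\ell+1}_{\pm,\beta}(t,x,p)\ge e^{\beta(\sqrt{(m_\pm c)^2+|p|^2}+\frac{m_\pm g}{2c}x_3)}$ (the iterate of \eqref{est:wbeta}), put $s'=t$ in the ratio formula, and absorb the remaining exponential factor using \eqref{est:beta_b_dy_ell}, precisely as in the passage following \eqref{est:wbeta}.

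The estimate \eqref{est:1/w_ell} for the steady weight $w_{\pm,\beta}$ along $\Z^{\ell+1}_{\pm}$ would follow the second half of the proof of Lemma~\ref{lem:w/w}: compute $\frac{d}{ds}w_{\pm,\beta}(\Z^{\ell+1}_{\pm}(s;t,x,p))$ via the $\ell$-indexed form of \eqref{eq:energy_conservation_dy} (right-hand side $-\frac{e_\pm}{c}\nabla_x\Psi^{\ell}\cdot\V^{\ell+1}_{\pm}$), rewrite $\nabla_x\Psi^{\ell}\cdot\V^{\ell+1}_{\pm}=\frac{d}{ds}\Psi^{\ell}-\p_t\Psi^{\ell}$, and use \eqref{identity:Psi_t_ell}--\eqref{cont_eqtn_ell} to replace $\p_t\Psi^{\ell}$ by $-(-\Delta_0)^{-1}(\nabla_x\cdot b^{\ell})$. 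Integrating from $t$ to $s$ produces two exponential factors: the first is handled as above through \eqref{est:tB_ell} and \eqref{est:beta_b_dy_ell}, while the second, $e^{-\beta\frac{e_\pm}{c}(\Psi^{\ell}(s,\X^{\ell+1}(s))-\Psi^{\ell}(t,x))}$, is controlled by the iterate analogue of \eqref{est:expo_w}: the bound $\|\nabla_x\Psi^{\ell}(t)\|_{L^\infty}\le\frac{\hat{m} g}{48 e}$ from \eqref{Bootstrap_psi_ell} together with the height bound $\X^{\ell+1}_{3}(s;t,x,p)\le\frac{4c}{m_\pm g}|p|+3x_3$, which is the iterate form of Corollary~\ref{cor:max_X_dy} and follows from the $\ell$-indexed versions of \eqref{est:x3_dy} and \eqref{est:x3_dy_tF}. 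Combining with the lower bound on $w_{\pm,\beta}(t,x,p)$ coming from \eqref{Bootstrap_ell} gives \eqref{est:1/w_ell}.

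There is no genuine obstacle beyond verifying that every input used in the proof of Lemma~\ref{lem:w/w} --- the two energy identities, the backward/forward exit-time bounds, the maximal-height bound on $\X_3$, and the continuity equation --- is available at the level of the iterates; this is exactly the content of \eqref{dDTE^ell}, the $\ell$-indexed form of \eqref{eq:energy_conservation_dy}, Lemma~\ref{lem:tB_ell} (applicable under the bootstrap hypotheses \eqref{Bootstrap_ell} and \eqref{Bootstrap_psi_ell}, which replace \eqref{Uest:Dxphi_F} and \eqref{Uest:DxPsi}), and \eqref{identity:Psi_t_ell}. Accordingly I would state that the proof is a line-by-line copy of that of Lemma~\ref{lem:w/w} with all quantities carrying the superscript $\ell$, and omit the repeated computations.
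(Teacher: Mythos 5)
Your proposal is correct and matches the paper's approach: the paper's proof of Lemma \ref{lem:w/w_ell} is literally the one-line statement that the result is a direct consequence of Lemma \ref{lem:tB_ell} and Lemma \ref{lem:w/w}, which is exactly the superscript-$\ell$ transcription you describe. Your verification that each ingredient (the energy identities \eqref{dDTE^ell}, the exit-time bounds of Lemma \ref{lem:tB_ell}, the height bound on $\X^{\ell+1}_{3}$, and the continuity equation \eqref{identity:Psi_t_ell}) is available at the iterate level is precisely what makes the transcription legitimate.
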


\begin{proof}

The lemma is a direct consequence of Lemma \ref{lem:tB_ell} and Lemma \ref{lem:w/w} (see more details in Section \ref{sec:AS}).
\end{proof}

\begin{lemma} \label{lem:wf_ell}

Suppose the assumptions \eqref{Bootstrap_ell} and \eqref{est:beta_b_dy_ell} hold. Further, we assume that
\be \notag
\|  \w_{\pm, \beta, 0} F_{\pm, 0} \|_{L^\infty (\O \times \R^3)} 
+ \| e^{\beta \sqrt{(m_{\pm} c)^2 + |p|^2} } G_{\pm} \|_{L^\infty (\gamma_-)}
+ \| w_{\pm, \beta} h_{\pm} \|_{L^\infty  (\O \times \R^3)} < \infty.
\ee
Consider $(t,x,p) \in [0, \infty) \times  \bar\O \times \R^3$, then for any $\ell \in \N$,
\be \label{Uest:wf}
\begin{split}
& e^{ \frac{\beta}{2} \sqrt{(m_{\pm} c)^2 + |p|^2} } e^{ \frac{m_{\pm} g}{4 c} \beta x_3} | f^{\ell+1}_{\pm} (t,x,p) |  
\\& \leq e^{ 2 \beta \frac{e_{\pm}}{c} \| (-\Delta_0)^{-1} (\nabla_x \cdot b^{\ell} ) \|_{L^\infty_{t,x}} } \times \big( \|  \w_{\pm, \beta, 0} F_{\pm, 0} \|_{L^\infty (\O \times \R^3)} + \| e^{\beta \sqrt{(m_{\pm} c)^2 + |p|^2} } G_{\pm} \|_{L^\infty (\gamma_-)} \big) 
\\& \ \ \ \ + \| w_{\pm, \beta} h_{\pm} \|_{L^\infty  (\O \times \R^3)}.
\end{split} 
\ee
\end{lemma}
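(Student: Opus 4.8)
The plan is to sidestep the source term $e_{\pm}\nabla_x\Psi^{\ell}\cdot\nabla_p h_{\pm}$ appearing in \eqref{eqtn:fell} by estimating $F^{\ell+1}_{\pm}=h_{\pm}+f^{\ell+1}_{\pm}$ instead of $f^{\ell+1}_{\pm}$. Indeed, by \eqref{eqtn:Fell}--\eqref{bdry_initial:Fell}, $F^{\ell+1}_{\pm}$ solves the homogeneous Vlasov equation with field $\Phi+\Psi^{\ell}$, hence is invariant along the iteration characteristics $\Z^{\ell+1}_{\pm}$ in \eqref{Zell}--\eqref{ODEell}. By the triangle inequality $|f^{\ell+1}_{\pm}|\le|F^{\ell+1}_{\pm}|+|h_{\pm}|$, so it suffices to bound $e^{\frac{\beta}{2}\sqrt{(m_{\pm}c)^2+|p|^2}}e^{\frac{m_{\pm}g}{4c}\beta x_3}|F^{\ell+1}_{\pm}(t,x,p)|$ by the first term on the right of \eqref{Uest:wf}, and $e^{\frac{\beta}{2}\sqrt{(m_{\pm}c)^2+|p|^2}}e^{\frac{m_{\pm}g}{4c}\beta x_3}|h_{\pm}(x,p)|$ by $\|w_{\pm,\beta}h_{\pm}\|_{\infty}$.

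The $h_{\pm}$ contribution is immediate: recalling that the steady potential $\Phi_h$ satisfies \eqref{Uest:DPhi}, the argument of \eqref{est:phih} gives $|e_{\pm}\Phi_h(x)|\le\frac12 m_{\pm}g x_3$, so $w_{\pm,\beta}(x,p)\ge e^{\beta(\sqrt{(m_{\pm}c)^2+|p|^2}+\frac{m_{\pm}g}{2c}x_3)}\ge e^{\frac{\beta}{2}\sqrt{(m_{\pm}c)^2+|p|^2}}e^{\frac{m_{\pm}g}{4c}\beta x_3}$, and dividing $|w_{\pm,\beta}h_{\pm}|$ by $w_{\pm,\beta}$ yields the claimed bound.

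For $F^{\ell+1}_{\pm}$, I would trace $(t,x,p)$ backward along $\Z^{\ell+1}_{\pm}$ and distinguish the two familiar cases. If $t\le\tBp^{\ell+1}(t,x,p)$, invariance gives $F^{\ell+1}_{\pm}(t,x,p)=F_{\pm,0}(\Z^{\ell+1}_{\pm}(0;t,x,p))$, so $|F^{\ell+1}_{\pm}(t,x,p)|\le\|\w_{\pm,\beta,0}F_{\pm,0}\|_{\infty}/\w^{\ell+1}_{\pm,\beta}(0,\Z^{\ell+1}_{\pm}(0;t,x,p))$ after using $\w^{\ell+1}_{\pm,\beta}(0,\cdot)=\w_{\pm,\beta,0}$ from \eqref{w_initial}; if $t>\tBp^{\ell+1}(t,x,p)$, invariance together with \eqref{bdry_initial:Fell} gives $F^{\ell+1}_{\pm}(t,x,p)=G_{\pm}(\xBp^{\ell+1},\pBp^{\ell+1})$, and since $\w^{\ell+1}_{\pm,\beta}$ reduces to $e^{\beta\sqrt{(m_{\pm}c)^2+|p|^2}}$ on $\p\O$ by \eqref{w_bdry}, $|F^{\ell+1}_{\pm}(t,x,p)|\le\|e^{\beta\sqrt{(m_{\pm}c)^2+|p|^2}}G_{\pm}\|_{L^\infty(\gamma_-)}/\w^{\ell+1}_{\pm,\beta}(t-\tBp^{\ell+1},\Z^{\ell+1}_{\pm}(t-\tBp^{\ell+1};t,x,p))$. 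In both cases the reciprocal weight is controlled by the second inequality of \eqref{est:1/w_h_ell} (Lemma \ref{lem:w/w_ell}, valid under exactly the standing hypotheses \eqref{Bootstrap_ell} and \eqref{est:beta_b_dy_ell}) evaluated at $s=0$, resp. $s=t-\tBp^{\ell+1}$ — both lying in the admissible range $[\max\{0,t-\tBp^{\ell+1}\},\,t+\tFp^{\ell+1}]$ — which bounds it by $e^{2\beta\frac{e_{\pm}}{c}\|(-\Delta_0)^{-1}(\nabla_x\cdot b^{\ell})\|_{L^\infty_{t,x}}}e^{-\frac{\beta}{2}\sqrt{(m_{\pm}c)^2+|p|^2}}e^{-\frac{m_{\pm}g}{4c}\beta x_3}$. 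Multiplying through by $e^{\frac{\beta}{2}\sqrt{(m_{\pm}c)^2+|p|^2}}e^{\frac{m_{\pm}g}{4c}\beta x_3}$ and combining the two cases produces the first term of \eqref{Uest:wf}.

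The only genuinely nontrivial point is the reformulation in terms of $F^{\ell+1}_{\pm}$ — which trades the awkward source integral for a clean trace-back to the data — together with the weight-ratio estimate of Lemma \ref{lem:w/w_ell}; once those are in hand the argument is pure bookkeeping and there is no real obstacle. The one thing I would verify carefully is that the three finiteness hypotheses ($\|\w_{\pm,\beta,0}F_{\pm,0}\|_{\infty}$, $\|e^{\beta\sqrt{(m_{\pm}c)^2+|p|^2}}G_{\pm}\|_{L^\infty(\gamma_-)}$, $\|w_{\pm,\beta}h_{\pm}\|_{\infty}$ finite) are exactly what makes each supremum above well-defined.
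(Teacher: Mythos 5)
Your proposal is correct and follows essentially the same route as the paper: decompose $|f^{\ell+1}_{\pm}|\le|F^{\ell+1}_{\pm}|+|h_{\pm}|$, absorb the $h_{\pm}$ term via the lower bound $w_{\pm,\beta}\ge e^{\beta(\sqrt{(m_{\pm}c)^2+|p|^2}+\frac{m_{\pm}g}{2c}x_3)}$, and bound $F^{\ell+1}_{\pm}$ by tracing back along $\Z^{\ell+1}_{\pm}$ to the initial data or boundary data, using \eqref{w_initial}, \eqref{w_bdry}, and the second inequality of \eqref{est:1/w_h_ell}. No gaps.
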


\begin{proof}

For the sake of simplicity, we abuse the notation as in \eqref{abuse}.
From \eqref{def:Fell}, we have
\Be \label{f=F-h_ell}
f^{\ell+1}(t,x,p)| 
= | F^{\ell+1} (t,x,p) - h(x,p) |
\leq | F^{\ell+1} (t,x,p) | + | h(x,p) |.
\Ee
Using the condition \eqref{Bootstrap_ell}, together with $w_\beta(x,p)$ in \eqref{w^h}, we get 
\Be \notag
w_{\beta} (x,p) \geq e^{ \beta \big( \sqrt{(m c)^2 + |p|^2} + \frac{m g}{2 c} x_3 \big) },
\Ee
and thus
\be \label{decom:wf}
f^{\ell+1}(t,x,p)|  
\leq | F^{\ell+1} (t,x,p) | + e^{ - \beta \big( \sqrt{(m c)^2 + |p|^2} + \frac{m g}{2 c} x_3 \big) } \| w_\beta h  \|_\infty.
\ee
	
Now we do the estimate on $| F^{\ell+1} (t,x,p) |$ in \eqref{decom:wf}. From \eqref{eqtn:Fell} and the characteristics \eqref{Z1}-\eqref{ODEell}, we obtain that, for $s \in [ \max\{0, t- t^{\ell+1 }_{\mathbf{B} } (t,x,p)\}, t- t^{\ell+1 }_{\mathbf{F} } (t,x,p)]$, 
\Be\begin{split}\notag
\frac{d}{ds} 
 F^{\ell+1}  (s, \mathcal{Z} ^{\ell+1} (s;t,x,p) )
 =0.
\end{split}\Ee
Using the initial and boundary conditions in \eqref{bdry_initial:Fell}, we derive that 
\be \notag
\begin{split}
F^{\ell+1} (t,x,p) 
& = \mathbf{1}_{ t \leq t_{\mathbf{B}}^{\ell+1} (t,x,p) } 
F_{0} (\mathcal{Z} ^{\ell +1} (0;t,x,p)) 
\\& \qquad + \mathbf{1}_{t > t_{\mathbf{B}}^{\ell+1  } (t,x,p)}
G ( \mathcal{Z}^{\ell+1} (t - \tB^{\ell +1} (t,x,p);t,x,p) ), 
\end{split} 
\ee 
and thus
\be \label{form:Fell}
\begin{split}
| F^{\ell+1} (t,x,p) |
& \leq \mathbf{1}_{ t \leq t_{\mathbf{B}}^{\ell+1} (t,x,p) } 
| F_{0} (\mathcal{Z} ^{\ell +1} (0;t,x,p)) |
\\& \ \ \ \ + \mathbf{1}_{t > t_{\mathbf{B}}^{\ell+1  } (t,x,p)}
| G ( \mathcal{Z}^{\ell+1} (t - \tB^{\ell +1} (t,x,p);t,x,p) ) |.
\end{split} 
\ee 

For the first term in the right-hand side of \eqref{form:Fell}, we rewrite it as
\be \notag
\mathbf{1}_{ t \leq t_{\mathbf{B}}^{\ell+1} (t,x,p) } | F_{0} (\mathcal{Z} ^{\ell +1} (0;t,x,p)) |  
= \frac{ \mathbf{1}_{ t \leq t_{\mathbf{B}}^{\ell+1} (t,x,p) }}{ \w^{\ell+1}_{\beta}  (0,\mathcal{Z} ^{\ell+1} (0;t,x,p)) } \w^{\ell+1} _{\beta, 0} | F_{0 }  (\mathcal{Z} ^{\ell+1} (0; t,x,p)) |.
\ee
From \eqref{w_initial} and \eqref{est:1/w_h_ell} in Lemma \ref{lem:w/w_ell}, we derive 
\Be \label{est:Fell1}
\begin{split}
& \mathbf{1}_{ t \leq t_{\mathbf{B}}^{\ell+1} (t,x,p) } | F_{0} (\mathcal{Z} ^{\ell +1} (0;t,x,p)) |  
\\& \leq \frac{1}{\w^{\ell+1}_{\beta} (0,\Z^{\ell+1} (0;t,x,v))} \|  \w_{ \beta, 0 } F_{0 } \|_{L^\infty_{x,v}}
\\& \leq e^{2 \beta \frac{e}{c} \| (-\Delta_0)^{-1} (\nabla_x \cdot b^{\ell} ) \|_{L^\infty_{t,x}} } e^{ - \frac{\beta}{2} \sqrt{(m c)^2 + |p|^2} } e^{ - \frac{m g}{4 c} \beta x_3} \times \|  \w_{ \beta, 0} F_{0 } \|_{L^\infty_{x,v}}.
\end{split}
\Ee

For the second term in the right-hand side of \eqref{form:Fell}, we rewrite it as
\be \notag
\begin{split}
& \mathbf{1}_{t > t_{\mathbf{B}}^{\ell+1  } (t,x,p)}
| G ( \mathcal{Z}^{\ell+1} (t - \tB^{\ell +1} (t,x,p);t,x,p) ) |  
\\& = \frac{ \mathbf{1}_{ t > t_{\mathbf{B}}^{\ell+1} (t,x,p) }}{ \w^{\ell+1}_{\beta} ( \mathcal{Z}^{\ell+1} (t - \tB^{\ell +1} (t,x,p);t,x,p) ) } \w^{\ell+1}_{\beta} | G ( \mathcal{Z}^{\ell+1} (t - \tB^{\ell +1} (t,x,p);t,x,p) ) |.
\end{split}
\ee
From \eqref{w_bdry} and \eqref{est:1/w_h_ell} in Lemma \ref{lem:w/w_ell}, we derive 
\Be \label{est:Fell2}
\begin{split}
& \mathbf{1}_{t > t_{\mathbf{B}}^{\ell+1  } (t,x,p)}
| G ( \mathcal{Z}^{\ell+1} (t - \tB^{\ell +1} (t,x,p);t,x,p) ) |   
\\& \leq \frac{1}{ \w^{\ell+1}_{\beta} ( \mathcal{Z}^{\ell+1} (t - \tB^{\ell +1} (t,x,p);t,x,p) ) } \| \w^{\ell+1}_{\beta} G \|_{L^\infty (\gamma_-)}
\\& = \frac{1}{ \w^{\ell+1}_{\beta} ( \mathcal{Z}^{\ell+1} (t - \tB^{\ell +1} (t,x,p);t,x,p) ) } \| e^{\beta \sqrt{(m_{\pm} c)^2 + |p|^2} } G \|_{L^\infty (\gamma_-)}
\\& \leq e^{2 \beta \frac{e}{c} \| (-\Delta_0)^{-1} (\nabla_x \cdot b^{\ell} ) \|_{L^\infty_{t,x}} } e^{ - \frac{\beta}{2} \sqrt{(m c)^2 + |p|^2} } e^{ - \frac{m g}{4 c} \beta x_3} \times \| e^{\beta \sqrt{(m_{\pm} c)^2 + |p|^2} } G \|_{L^\infty (\gamma_-)}.
\end{split}
\Ee
Inputting \eqref{est:Fell1} and \eqref{est:Fell2} into \eqref{form:Fell}, we obtain that
\be \notag
\begin{split}
& e^{ \frac{\beta}{2} \sqrt{(m c)^2 + |p|^2} } e^{ \frac{m g}{4 c} \beta x_3} | F^{\ell+1} (t,x,p) |
\\& \leq e^{2 \beta \frac{e}{c} \| (-\Delta_0)^{-1} (\nabla_x \cdot b^{\ell} ) \|_{L^\infty_{t,x}} } \times \big( \|  \w_{ \beta, 0} F_{0 } \|_{L^\infty_{x,v}} + \| e^{\beta \sqrt{(m_{\pm} c)^2 + |p|^2} } G \|_{L^\infty (\gamma_-)} \big).
\end{split}
\ee 
Together with \eqref{decom:wf}, we conclude \eqref{Uest:wf}.
\end{proof}

\begin{lemma} \label{lem:D^-1 Db_ell}

Consider $b^{\ell} (t, x)$ defined in \eqref{def:flux_ell}-\eqref{identity:Psi_t_ell}, then for any $\ell \in \N$,
\Be \label{D^-1 Db_ell}
\begin{split}
(-\Delta_0)^{-1} (\nabla_x \cdot b^\ell) (t,x) 
= - \int_{\O} \mathfrak{G} (x,y) \nabla  \cdot b^\ell (y) \dd y 
= \int_{\O} b^\ell (y) \cdot \nabla_y \mathfrak{G} (x,y) \dd y.
\end{split}
\Ee
Moreover, if
$\sup\limits_{0 \leq t < \infty } \| e^{ \frac{\beta}{2} \big( \sqrt{(m_{\pm} c)^2 + |p|^2} + \frac{m_{\pm} g}{2 c} x_3 \big) } f^\ell_{\pm} (t) \|_{\infty} < \infty$,
then
\Be \label{est:D^-1 Db_ell}
\begin{split}
& \sup_{0 \leq t < \infty} \| (-\Delta_0)^{-1}  (\nabla_x \cdot b^\ell (t,x)) \|_{L^\infty (\O)} 
\\& \leq \frac{ 8 \pi^2 \hat{e} c }{\beta^2} \sum\limits_{i = \pm} \sup_{0 \leq t < \infty } \| e^{ \frac{\beta}{2} \big( \sqrt{(m_i c)^2 + |p|^2} + \frac{m_i g}{2 c} x_3 \big) } f^\ell_i (t) \|_{L^\infty(\O \times \R^3)} (1 + \frac{4 c}{\hat{m} g \beta}),
\end{split}
\Ee
where $\hat{m} = \min \{ m_+, m_- \}$ and $\hat{e} = \max \{ e_+, e_- \}$.
\end{lemma}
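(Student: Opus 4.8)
The plan is to repeat, essentially verbatim, the argument that proved Lemma~\ref{lem:Ddb}, with $b$ replaced by $b^{\ell}$ and $f_{\pm}$ by $f^{\ell}_{\pm}$, invoking the sequential objects \eqref{def:flux_ell}--\eqref{identity:Psi_t_ell} in place of \eqref{def:flux}--\eqref{identity:Psi_t}. For the identity \eqref{D^-1 Db_ell}: by the Green's function representation \eqref{phi_rho} one has $(-\Delta_0)^{-1}(\nabla_x\cdot b^{\ell})(t,x)=-\int_{\O}\mathfrak{G}(x,y)\,\nabla\cdot b^{\ell}(y)\,\dd y$; integrating by parts in $y$ and using that $\mathfrak{G}(x,\cdot)$ vanishes on $\p\O$ (so the boundary term drops out), this equals $\int_{\O}b^{\ell}(y)\cdot\nabla_y\mathfrak{G}(x,y)\,\dd y$, which is exactly \eqref{D^-1 Db_ell}. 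This step is purely the chain rule plus one integration by parts and carries no $\ell$-dependence.

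Next I would derive a pointwise bound on $b^{\ell}$. From \eqref{def:flux_ell}, $|b^{\ell}(t,x)|\le e_+\int_{\R^3}|v_+|\,|f^{\ell}_+|\,\dd p+e_-\int_{\R^3}|v_-|\,|f^{\ell}_-|\,\dd p$; using $|v_{\pm}|=\big|p/\sqrt{m_{\pm}^2+|p|^2/c^2}\big|\le c$, factoring out the weighted sup-norm $\sup_t\big\|e^{\frac{\beta}{2}(\sqrt{(m_i c)^2+|p|^2}+\frac{m_i g}{2c}x_3)}f^{\ell}_i(t)\big\|_{\infty}$, and bounding $\int_{\R^3}e^{-\frac{\beta}{2}\sqrt{(m_i c)^2+|p|^2}}\,\dd p\le\int_{\R^3}e^{-\frac{\beta}{2}|p|}\,\dd p\le\frac{8\pi^2}{\beta^3}$ exactly as in the derivation of \eqref{est:b}, one obtains
\[
|b^{\ell}(t,x)|\le\frac{8\pi^2\hat e\,c}{\beta^{2}}\sum_{i=\pm}\sup_{0\le t<\infty}\big\|e^{\frac{\beta}{2}(\sqrt{(m_i c)^2+|p|^2}+\frac{m_i g}{2c}x_3)}f^{\ell}_i(t)\big\|_{\infty}\;e^{-\frac{\hat m g}{4c}\beta x_3},
\]
with $\hat m=\min\{m_+,m_-\}$ and $\hat e=\max\{e_+,e_-\}$, i.e.\ the exact $\ell$-analogue of \eqref{est:b}.

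Finally I would plug this estimate into \eqref{D^-1 Db_ell} and apply the same kernel bound used to prove \eqref{est:nabla_phi} in Lemma~\ref{lem:rho_to_phi}: since the right-hand side above is a constant times $e^{-\frac{\hat m g}{4c}\beta x_3}$, that computation contributes the extra factor $1+\frac1B$ where $B=\frac{\hat m g}{4c}\beta$ is the exponential decay rate, i.e.\ $1+\frac{4c}{\hat m g\beta}$; taking $\sup_t$ yields \eqref{est:D^-1 Db_ell}. I expect no genuine obstacle here: the only two points requiring a line of care are the vanishing of the boundary term in the integration by parts (immediate from the Dirichlet condition on $\mathfrak{G}$) and the bookkeeping of the constant together with the exponential decay rate when passing from $|b^{\ell}|$ to $|(-\Delta_0)^{-1}(\nabla_x\cdot b^{\ell})|$ — both handled exactly as in the proof of Lemma~\ref{lem:Ddb}, and uniformly in $\ell$ because none of the steps uses any property specific to a given iterate.
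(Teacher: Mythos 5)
Your proposal is correct and coincides with the paper's treatment: the paper omits the proof of this lemma entirely, stating that it follows directly from Lemma \ref{lem:Ddb}, which is exactly the argument you reproduce (Green's function identity with the Dirichlet boundary term vanishing, the $\ell$-analogue of \eqref{est:b}, and the kernel bound of Lemma \ref{lem:rho_to_phi} with decay rate $B=\frac{\hat m g}{4c}\beta$). Your constant bookkeeping, yielding $\frac{8\pi^2\hat e c}{\beta^2}\,(1+\frac{4c}{\hat m g\beta})$, matches the statement being proved.
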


\begin{proof}

Here we omit the proof since it directly follows from Lemma \ref{lem:Ddb}.
\end{proof}

\begin{prop} \label{prop:DC}

Consider $M$ in \eqref{set:M} as follows:
\Be \notag
M = 2 \sum\limits_{i = \pm} \big(
\| \w_{i, \beta, 0 } F_{i, 0} \|_{L^\infty (\O \times \R^3)}
+  \| e^{\beta \sqrt{(m_{i} c)^2 + |p|^2} } G_i \|_{L^\infty (\gamma_-)} \big).
\Ee
Suppose that \eqref{choice:g} holds for $\beta, g>0$.
From $\nabla_x \Phi$ in \eqref{VP_h}-\eqref{eqtn:Dphi} and $\Psi^{\ell}$ in \eqref{eqtn:fell}-\eqref{bdry:Psi_fell}, then for any $\ell \in \N$,
\be \label{Bootstrap_ell_2} 
\begin{split}
\sup_{0 \leq t < \infty} \| \nabla_x \big( \Phi + \Psi^{\ell} (t) \big) \|_{L^\infty(\O)}  
& \leq \min \left\{\frac{m_+}{e_+}, \frac{m_-}{e_-} \right\} \times \frac{g}{2},
\\ \sup_{0 \leq t < \infty}\| \nabla_x \Psi^{\ell} (t) \|_{L^\infty(\O)} & \leq \min \left\{\frac{m_+}{e_+}, \frac{m_-}{e_-} \right\} \times \frac{g}{48}.
\end{split}
\ee
Moreover, from $b^\ell$ in \eqref{def:flux_ell}-\eqref{identity:Psi_t_ell} and $f^{\ell+1}_{\pm}$ in \eqref{eqtn:fell}-\eqref{bdry:Psi_fell}, then for any $\ell \in \N$,
\begin{align}
e^{ 2 \beta \frac{e_{\pm}}{c} \| (-\Delta_0)^{-1} (\nabla_x \cdot b^{\ell} ) \|_{L^\infty_{t,x}} } \leq 2, &
\label{est:e^bell} \\
\sup_{0 \leq t < \infty} \| \varrho^{\ell} (t, x) \|_{L^\infty(\O)}
\leq \frac{2 M}{\beta^2} \big( e_+ e^{ - \frac{m_+ g}{4 c} \beta x_3} + e_- e^{ - \frac{m_+ g}{4 c} \beta x_3} \big), &
\label{Uest:varrhofell} \\
\sup_{0 \leq t < \infty} \| e^{ \frac{\beta}{2} \sqrt{(m_{\pm} c)^2 + |p|^2} } e^{ \frac{m_{\pm} g}{4 c} \beta x_3} f^{\ell+1}_{\pm} (t,x,p)  \|_{L^\infty  (\O \times \R^3)} \leq M. &
\label{Uest:wfell}
\end{align}
\end{prop}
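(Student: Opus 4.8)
The plan is to prove all four estimates \eqref{Bootstrap_ell_2}--\eqref{Uest:wfell} simultaneously by induction on $\ell$, following the proof of Proposition \ref{prop:Unif_steady} for the steady problem. The $\ell$-indexed versions of the main tools are already available: Lemma \ref{lem:tB_ell} bounds the backward/forward exit times along the $\ell$-th characteristics, Lemma \ref{lem:w/w_ell} controls the ratios of the weight $\w^{\ell+1}_{\pm,\beta}$ along trajectories, Lemma \ref{lem:wf_ell} gives the pointwise representation bound \eqref{Uest:wf} for $f^{\ell+1}_\pm$, and Lemma \ref{lem:D^-1 Db_ell} estimates $(-\Delta_0)^{-1}(\nabla_x\cdot b^\ell)$ by the weighted sup-norm of $f^\ell_\pm$; the hypotheses of these lemmas are precisely \eqref{Bootstrap_ell_2} and \eqref{est:beta_b_dy_ell} (equivalent to \eqref{est:e^bell}) at the relevant levels. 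For the base case $\ell=0$ one has $f^0_\pm=0$, $b^0=0$, $\varrho^0=0$, $\Psi^0=0$, so \eqref{est:e^bell} and \eqref{Uest:varrhofell} are trivial, \eqref{Bootstrap_ell_2} reduces to the steady bound \eqref{Uest:DPhi} of Theorem \ref{theo:CS} together with $\nabla_x\Psi^0=0$, and \eqref{Uest:wfell} for $f^1_\pm$ follows from Lemma \ref{lem:wf_ell} with $b^0=0$, the bound $\|w_{\pm,\beta}h_\pm\|\le\|e^{\beta\sqrt{(m_\pm c)^2+|p|^2}}G_\pm\|_{L^\infty(\gamma_-)}$ from \eqref{Uest:wh}, and the definition \eqref{set:M} of $M$.

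For the inductive step, assume \eqref{Bootstrap_ell_2}--\eqref{Uest:wfell} for $0\le\ell\le k$. First I would establish \eqref{est:e^bell} and \eqref{Uest:varrhofell} at level $\ell=k+1$: the bound \eqref{Uest:wfell} at level $k$ controls $\sup_t\|e^{\frac\beta2(\sqrt{(m_\pm c)^2+|p|^2}+\frac{m_\pm g}{2c}x_3)}f^{k+1}_\pm\|$ by $M$, so Lemma \ref{lem:D^-1 Db_ell} gives that $\|(-\Delta_0)^{-1}(\nabla_x\cdot b^{k+1})\|_{L^\infty}$ is controlled, up to a constant depending on $e_\pm$, by $\frac{c}{\beta^2}M(1+\frac{4c}{\hat m g\beta})$, and integrating \eqref{Uest:wfell} at level $k$ in the momentum variable yields the density bound \eqref{Uest:varrhofell}. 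The smallness condition $M\le\beta e^{-m_\pm g\beta/24}$ from \eqref{choice:g} is then used to conclude $2\beta\frac{e_\pm}{c}\|(-\Delta_0)^{-1}(\nabla_x\cdot b^{k+1})\|\le\ln 2$, i.e. \eqref{est:e^bell} at $\ell=k+1$ (which also re-supplies \eqref{est:beta_b_dy_ell}).

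Next I would prove \eqref{Bootstrap_ell_2} at $\ell=k+1$. Since $-\Delta\Psi^{k+1}=\varrho^{k+1}$ with zero Dirichlet data, applying the elliptic estimate \eqref{est:nabla_phi} with the exponential weight $e^{\frac{\hat m g}{4c}\beta x_3}$ and the density bound \eqref{Uest:varrhofell} just obtained gives $\|\nabla_x\Psi^{k+1}\|_{L^\infty}\lesssim \frac{M}{\beta^2}(1+\frac{2c}{\hat m g\beta})$, which is $\le\min\{\tfrac{m_+}{e_+},\tfrac{m_-}{e_-}\}\tfrac{g}{48}$ by \eqref{choice:g}; treating $\Phi+\Psi^{k+1}$ as the potential of the combined density $\rho+\varrho^{k+1}$ and using \eqref{Uest:rho} together with the smallness of $\varrho^{k+1}$ yields the bound $\le\min\{\cdots\}\tfrac g2$ for $\nabla_x(\Phi+\Psi^{k+1})$. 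Finally, with \eqref{Bootstrap_ell_2} and \eqref{est:beta_b_dy_ell} now available at $\ell=k+1$, Lemma \ref{lem:wf_ell} applies at index $k+1$ and gives \eqref{Uest:wf} for $f^{k+2}_\pm$ in terms of $b^{k+1}$; combining $e^{2\beta e_\pm\|(-\Delta_0)^{-1}(\nabla_x\cdot b^{k+1})\|/c}\le 2$ from \eqref{est:e^bell}, the bound $\|w_{\pm,\beta}h_\pm\|\le\|e^{\beta\sqrt{(m_\pm c)^2+|p|^2}}G_\pm\|_{L^\infty(\gamma_-)}$ from \eqref{Uest:wh}, and the built-in factor of two in \eqref{set:M} closes \eqref{Uest:wfell} at $\ell=k+1$ with the same constant $M$, completing the induction.

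The main obstacle is this last point: closing \eqref{Uest:wfell} with the \emph{same} constant $M$ rather than one that degrades geometrically in $\ell$. The amplification factor $e^{2\beta e_\pm\|(-\Delta_0)^{-1}(\nabla_x\cdot b^\ell)\|/c}$ in Lemma \ref{lem:wf_ell} must be pinned close enough to $1$ that, after adding the $\|w_{\pm,\beta}h_\pm\|$ contribution, the total still fits under $M$; this is exactly what the exponential smallness of $M$ in \eqref{choice:g} buys, while the factor $2$ in $M=2\sum_{i=\pm}(\cdots)$ provides the necessary buffer. A subsidiary point requiring care is that the field bound $\|\nabla_x(\Phi+\Psi^{k+1})\|\le\min\{\cdots\}\tfrac g2$ should be obtained at the level of the combined density $\rho+\varrho^{k+1}$, exploiting that $\varrho^{k+1}$ is a lower-order perturbation controlled by $M/\beta^2$, rather than by naively adding the separate bounds on $\nabla_x\Phi$ and $\nabla_x\Psi^{k+1}$.
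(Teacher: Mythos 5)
Your proposal is correct and follows essentially the same route as the paper's proof: a simultaneous induction on $\ell$ in which Lemma \ref{lem:wf_ell} together with \eqref{Uest:wh} and the built-in factor of two in \eqref{set:M} closes the weighted bound \eqref{Uest:wfell}, Lemma \ref{lem:D^-1 Db_ell} combined with the exponential smallness of $M$ from \eqref{choice:g} yields \eqref{est:e^bell}, integration in $p$ yields \eqref{Uest:varrhofell}, and Lemma \ref{lem:rho_to_phi} yields \eqref{Bootstrap_ell_2}. Your ordering of the inductive step (deriving the $b^{k+1}$, $\varrho^{k+1}$, $\Psi^{k+1}$ bounds from the hypothesis on $f^{k+1}$ before closing the bound on $f^{k+2}$) is merely a cyclic shift of the paper's chain and, if anything, fixes the paper's slightly loose indexing, while your remark that the first line of \eqref{Bootstrap_ell_2} should be obtained through the combined density $\rho+\varrho^{k+1}$ is precisely what the paper's terse appeal to the proof of \eqref{Uest:DPhi^k} amounts to.
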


\begin{proof}

Under the initial setting $f^0_{\pm} = 0$ and $(\varrho^0, \nabla_x \Psi^0) = (0, \mathbf{0})$, together with \eqref{Uest:DPhi} on $\nabla_x \Phi$, we check that \eqref{Bootstrap_ell_2}-\eqref{Uest:varrhofell} hold for $\ell = 0$.
Since $\nabla_x \Psi^0 = \mathbf{0}$, using the invariance of $h_{\pm}$ and $w_{\pm, \beta}$ from \eqref{VP_h}-\eqref{eqtn:Dphi} along the characteristics and \eqref{Uest:wh}, together with Lemma \ref{lem:wf_ell} and \eqref{est:e^bell} holding for $\ell = 0$, we deduce \eqref{Uest:wfell} holds for $\ell = 0$.

\smallskip

Now we prove this by induction. 
We abuse the notation as in \eqref{abuse} in the following proof.
Assume a positive integer $k > 0$ and suppose that \eqref{Bootstrap_ell_2}-\eqref{Uest:wfell} hold for $0 \leq \ell \leq k$.
Since \eqref{choice:g} holds for $\beta, g>0$, together with \eqref{Uest:wf} in Lemma \ref{lem:wf_ell}, we derive that 
\be \label{est:wf}
\begin{split}
& e^{ \frac{\beta}{2} \sqrt{(m c)^2 + |p|^2} } e^{ \frac{m g}{4 c} \beta x_3} | f^{k+1} (t,x,p) |  
\\& \leq e^{ 2 \beta \frac{e}{c} \| (-\Delta_0)^{-1} (\nabla_x \cdot b^{k} ) \|_{L^\infty_{t,x}} } \times \big( \|  \w_{ \beta, 0} F_{0 } \|_{L^\infty (\O \times \R^3)} + \| w_{\beta} G \|_{L^\infty (\gamma_-)} \big) 
+ \| w_\beta h \|_{L^\infty  (\O \times \R^3)}
\\& \leq 2 \big( \|  \w_{ \beta, 0} F_{0 } \|_{L^\infty (\O \times \R^3)} + \| w_{\beta} G \|_{L^\infty (\gamma_-)} \big) + \| w_\beta h \|_{L^\infty  (\O \times \R^3)} \leq M,
\end{split} 
\ee
where the last line follows from \eqref{est:e^bell} when $\ell = k$. Thus, \eqref{Uest:wfell} holds for $\ell = k+1$.

Next, using \eqref{est:D^-1 Db_ell} in Lemma \ref{lem:D^-1 Db_ell} and \eqref{Uest:wfell} for $\ell = k+1$, together with \eqref{choice:g}, we deduce
\Be \notag
\begin{split}
& e^{ 2 \beta \frac{e}{c} \| (-\Delta_0)^{-1} (\nabla_x \cdot b^{k+1} ) \|_{L^\infty_{t,x}} }
\\& \leq
e^{ 2 \beta \frac{e}{c} \frac{ 8 \pi^2 \hat{e} c }{\beta^2} \sum\limits_{i = \pm} \sup_{0 \leq t < \infty } \| e^{ \frac{\beta}{2} \big( \sqrt{(m_i c)^2 + |p|^2} + \frac{m_i g}{2 c} x_3 \big) } f^{k+1}_i (t) \|_{L^\infty(\O \times \R^3)} (1 + \frac{4 c}{\hat{m} g \beta}) }  
\leq e^{ \frac{32 \pi^2 \hat{e} e}{\beta} M } \leq 2,
\end{split} 
\Ee
and thus, we prove \eqref{est:e^bell} for $\ell = k+1$.	

Finally, following the proof of \eqref{Uest:DPhi^k}, we derive the first line of \eqref{Bootstrap_ell_2}. 
Note from \eqref{varrhoell} and \eqref{Poisson_fell}, we have \be \notag
- \Delta \Psi^\ell (t, x) = \varrho^{\ell} (t, x) = \int_{\R^3} ( e_+ f^{\ell}_+ + e_{-} f^{\ell}_{-} ) \dd p.
\ee
Applying \eqref{Uest:wfell} for $\ell = k+1$, we get
\be \notag
\begin{split}
& | \varrho^{\ell} (t, x) |
\\& = \big| \int_{\R^3} ( e_+ f^{k+1}_+ + e_{-} f^{k+1}_{-} ) \dd p \big|
\\& \leq \int_{\R^3} e_+ | f^{k+1}_+ | \dd p + \int_{\R^3} e_{-} | f^{k+1}_{-} | \dd p
\\& \leq \| e^{ \frac{\beta}{2} \sqrt{(m c)^2 + |p|^2} } e^{ \frac{m g}{4 c} \beta x_3} f^{k+1}(t,x,p)  \|_{L^\infty  (\O \times \R^3)}
\times \sum\limits_{i = \pm} e^{ - \frac{m_i g}{4 c} \beta x_3} \int_{\R^3} e_i e^{ - \frac{\beta}{2} \sqrt{(m_i c)^2 + |p|^2} } \dd p 
\\& \leq \frac{2 M}{\beta^2} \big( e_+ e^{ - \frac{m_+ g}{4 c} \beta x_3} + e_- e^{ - \frac{m_+ g}{4 c} \beta x_3} \big).
\end{split} 
\ee
Thus we conclude \eqref{Uest:varrhofell} for $\ell = k+1$.
Together with \eqref{est:nabla_phi} in Lemma \ref{lem:rho_to_phi}, we conclude the second line of \eqref{Bootstrap_ell_2} for $\ell = k+1$.

\smallskip

Now we have proved \eqref{est:e^bell} and \eqref{Uest:wfell} hold for $\ell = k+1$. Therefore, we complete the proof by induction.
\end{proof}

\begin{prop} \label{prop:Unif_D2xDp_dy}

Suppose the condition \eqref{condition:beta} holds for some $g, \beta > 0$.
Moreover, the condition \eqref{condition:G} and 
\be \label{condition:F0_G_dy}
\| \w_{\pm, \tilde \beta, 0}  \nabla_{x,p} F_{\pm, 0} \|_{L^\infty (\O \times \R^3)} +
\| e^{{\tilde \beta } \sqrt{(m_{\pm} c)^2 + |p|^2}} \nabla_{x_\parallel, p} G_{\pm} (x,p) \|_{L^\infty (\gamma_-)} < \tilde{\beta} e^{ - \frac{m_{\pm} g}{24} \tilde{\beta} },
\ee
hold for some $\beta \geq \tilde \beta >0$.
Then $(F^{\ell+1}_{\pm}, \nabla_x \phi_{F^\ell} )$ from the construction also satisfies the following uniform-in-$\ell$ estimates:
\be \label{Uest:DDPhi^l_dy}
\sup_{0 \leq t < \infty} 
\big\{ (1 + B_3 + \| \nabla_x ^2 \phi_{F^{\ell}}  \|_\infty) + \| \p_t \p_{x_3} \phi_{F^\ell} (t, x_\parallel , 0) \|_{L^\infty(\p\O)} \big\}
\leq \frac{\hat{m} g}{24} \tilde{\beta},
\ee
and
\Be \label{Uest:h_v^l_dy}
\begin{split} 
& \sup_{0 \leq t < \infty} \| e^{ \frac{\tilde{\beta}}{4} \sqrt{(m_{\pm} c)^2 + |p|^2} } e^{ \frac{m_{\pm} g}{8 c} \tilde{\beta} x_3} \nabla_p F^{\ell+1}_{\pm} (t,x,p) \|_{L^\infty(\O \times \R^3)} 
\\& \lesssim 4 e^{ \frac{m_{\pm} g}{24} \tilde{\beta} } 
\| \w_{\pm, \tilde \beta, 0}  \nabla_{x,p} F_{\pm, 0} \|_{L^\infty (\O \times \R^3)}
+ \frac{m_{\pm} g}{24} \tilde{\beta} \| e^{\tilde{\beta} |p^0_{\pm}|} \nabla_{x_\parallel,p} G_{\pm} \|_{L^\infty (\gamma_-)},
\end{split}
\Ee 
where $\hat{m} = \min \{ m_+, m_- \}$.
\end{prop}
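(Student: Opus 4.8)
The argument is by induction on $\ell$, mirroring the proof of Proposition~\ref{prop:Unif_D2xDp} but with the steady a priori estimates of Section~\ref{sec:RS} replaced by their dynamic counterparts from Section~\ref{sec:RD} --- Proposition~\ref{RE:dyn}, Lemma~\ref{lem:D3tphi_F}, and the $\|\nabla_x^2\phi_F\|_\infty$-part of Theorem~\ref{theo:RD} --- and with the uniform-in-$\ell$ bounds \eqref{Bootstrap_ell_2}, \eqref{est:e^bell}, \eqref{Uest:varrhofell}, \eqref{Uest:wfell} of Proposition~\ref{prop:DC} together with the exit-time bounds of Lemma~\ref{lem:tB_ell} fed in at every step. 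For the base case $\ell=0$ one has $\Psi^0\equiv0$, hence $\phi_{F^0}=\Phi$ and $\p_t\p_{x_3}\phi_{F^0}\equiv0$, so \eqref{Uest:DDPhi^l_dy} at $\ell=0$ is the steady bound \eqref{Uest:Phi_xx} (equivalently \eqref{est_final:phi_C2}) supplied by Theorem~\ref{theo:CS} under \eqref{condition:beta}--\eqref{condition:G}; and since $F^1_\pm$ solves \eqref{eqtn:Fell}--\eqref{bdry_initial:Fell} with the time-independent field $\nabla_x\Phi$ and with $b^0\equiv0$ (because $f^0\equiv0$), Proposition~\ref{RE:dyn} applied with $\phi_F=\Phi$ and $\|(-\Delta_0)^{-1}(\nabla_x\cdot b^0)\|_\infty=0$ gives \eqref{Uest:h_v^l_dy} at $\ell=0$.

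For the inductive step, assume \eqref{Uest:DDPhi^l_dy} and \eqref{Uest:h_v^l_dy} hold for $0\le\ell\le k$, and first prove \eqref{Uest:DDPhi^l_dy} at level $k+1$. The iterate $F^{k+1}_\pm$ solves \eqref{eqtn:Fell} driven by $\nabla_x\phi_{F^k}=\nabla_x(\Phi+\Psi^k)$, so the trajectory-derivative estimates of Section~\ref{sec:RD} (Lemma~\ref{VL:dyn} and \eqref{est:X_x:dyn}--\eqref{est:V_v:dyn}, \eqref{est:xb_x:dyn}--\eqref{est:vb_v:dyn}), which use only the bootstrap \eqref{Bootstrap}, apply verbatim to $\Z^{k+1}_\pm$ and $(\tBp^{k+1},\xBp^{k+1},\pBp^{k+1})$ with $\phi_F$ read as $\phi_{F^k}$; the inductive hypothesis at level $k$ furnishes \eqref{est:D2xphi_F} and \eqref{est1:D3tphi_F} for $\phi_{F^k}$, Proposition~\ref{prop:DC} furnishes \eqref{Bootstrap} (via \eqref{Bootstrap_ell_2}), the factor bound $e^{ 2 \tilde{\beta} \frac{e_{\pm}}{c} \| (-\Delta_0)^{-1} (\nabla_x \cdot b^{k} ) \|_{L^\infty_{t,x}} }\le 2$ (via \eqref{est:e^bell} and $\tilde\beta\le\beta$), and the $M,L$-smallness \eqref{condition:ML} (via \eqref{choice:g} and \eqref{condition:F0_G_dy}). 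Running the proofs of Proposition~\ref{RE:dyn}, Lemma~\ref{lem:D3tphi_F} and the elliptic part of Theorem~\ref{theo:RD} for $F^{k+1}$ --- reading the flux factors at the correct iteration level exactly as in Lemmas~\ref{lem:w/w_ell}, \ref{lem:wf_ell}, using $\int v_\pm\cdot\nabla_x h_\pm\,\dd p=0$ so that $\nabla_x\cdot b^{k+1}$ only sees $\nabla_x F^{k+1}_\pm$, and using \eqref{Uest:varrhofell} for $\varrho^{k+1}$ together with the steady bound on $\nabla_x\rho$ from Theorem~\ref{theo:RS} --- one bounds $\|\nabla_x^2\phi_{F^{k+1}}(t)\|_\infty$ and $\|\p_t\p_{x_3}\phi_{F^{k+1}}(t,x_\parallel,0)\|_{L^\infty(\p\O)}$ by quantities that, under \eqref{condition:beta}, \eqref{condition:G}, \eqref{choice:g}, \eqref{condition:F0_G_dy} (in particular $g\tilde\beta\gg1$), sum to at most $\frac{\hat{m} g}{24}\tilde\beta-(1+B_3)$; this is \eqref{Uest:DDPhi^l_dy} at level $k+1$.

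With \eqref{Uest:DDPhi^l_dy} now available at level $k+1$, the bound \eqref{Uest:h_v^l_dy} at level $k+1$ follows by applying Proposition~\ref{RE:dyn} (estimate \eqref{est:F_v:dyn}) once more to $F^{k+1}_\pm$ with $\phi_F=\phi_{F^k}$: its hypotheses \eqref{Bootstrap}, \eqref{est:beta_b_dy}, \eqref{est1:D3tphi_F} are, respectively, \eqref{Bootstrap_ell_2}, \eqref{est:beta_b_dy_ell}, and \eqref{Uest:DDPhi^l_dy} at level $k$. Replacing the bootstrap factor $e^{ 2 \tilde{\beta} \frac{e_{\pm}}{c} \| (-\Delta_0)^{-1} (\nabla_x \cdot b^{k} ) \|_{L^\infty_{t,x}} }$ by $2$ (via \eqref{est:e^bell}) turns the prefactor $2 e^{m_\pm g\tilde\beta/24}$ of $\|\w_{\pm, \tilde \beta, 0}\nabla_{x,p}F_{\pm,0}\|$ into $4e^{m_\pm g\tilde\beta/24}$, while the level-$k$ bound $\|\nabla_x^2\phi_{F^k}\|_\infty+e_\pm B_3+m_\pm g\lesssim m_\pm g\,\tilde\beta$ (again using $g\tilde\beta\gg1$) turns the coefficient $1+\frac{\|\nabla_x^2\phi_{F^k}\|_\infty+e_\pm B_3+m_\pm g}{(m_\pm g)^2}$ of $\|e^{\tilde\beta|p^0_\pm|}\nabla_{x_\parallel,p}G_\pm\|$ into one that is $\lesssim\frac{m_\pm g}{24}\tilde\beta$; this is precisely \eqref{Uest:h_v^l_dy} at level $k+1$, closing the induction.

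The main obstacle is quantitative, not conceptual: one must check that the various ``$\lesssim1$''-type outputs for $\|\nabla_x^2\phi_{F^{k+1}}\|_\infty$ and $\|\p_t\p_{x_3}\phi_{F^{k+1}}(t,x_\parallel,0)\|_{L^\infty(\p\O)}$ assembled from Lemma~\ref{lem:D3tphi_F} and the H\"older-modulus estimate genuinely fall below the self-consistent threshold $\frac{\hat{m} g}{24}\tilde\beta-(1+B_3)$, so that the induction truly closes; this is exactly what pins down the sharp forms of \eqref{condition:beta}, \eqref{condition:G}, \eqref{choice:g}, \eqref{condition:F0_G_dy}. A secondary, purely bookkeeping difficulty is keeping the iteration indices consistent --- $F^{k+1}$ flows along the characteristics generated by $\phi_{F^k}$ and hence by the flux $b^k$, while its own flux is $b^{k+1}$ --- so that each quoted bound (the exit times from Lemma~\ref{lem:tB_ell}, the factor bounds from Proposition~\ref{prop:DC}, the weighted derivative bounds from Proposition~\ref{RE:dyn}) is invoked at the right level.
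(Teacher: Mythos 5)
Your overall strategy --- induction on $\ell$, first deriving the field bound \eqref{Uest:DDPhi^l_dy} at level $k+1$ from the regularity of $F^{k+1}_\pm$ along the characteristics generated by $\phi_{F^k}$ (via Proposition~\ref{RE:dyn}, Lemma~\ref{lem:D3tphi_F}, and the H\"older/elliptic argument of Theorem~\ref{theo:RD}, with the uniform bounds of Proposition~\ref{prop:DC} and Lemma~\ref{lem:tB_ell} supplying the bootstrap and flux factors) and then feeding the new field bound into \eqref{est:F_v:dyn} --- is exactly the paper's proof. However, in the second half of your inductive step the indices do not close: you apply \eqref{est:F_v:dyn} to $F^{k+1}_\pm$ with $\phi_F=\phi_{F^k}$, using \eqref{Uest:DDPhi^l_dy} at level $k$ as the hypothesis \eqref{est1:D3tphi_F}. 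That produces a bound on $\nabla_p F^{k+1}_\pm$, i.e. \eqref{Uest:h_v^l_dy} at level $\ell=k$ (recall that \eqref{Uest:h_v^l_dy} at level $\ell$ concerns $F^{\ell+1}_\pm$), which is already contained in the inductive hypothesis. The statement to be proved, \eqref{Uest:h_v^l_dy} at level $k+1$, concerns $\nabla_p F^{k+2}_\pm$, whose characteristics are driven by $\phi_{F^{k+1}}$; so \eqref{est:F_v:dyn} must be applied to $F^{k+2}_\pm$ with $\phi_F=\phi_{F^{k+1}}$, the freshly established \eqref{Uest:DDPhi^l_dy} at level $k+1$ supplying \eqref{est1:D3tphi_F} --- which is precisely why the two halves of the step are ordered as they are, and is what the paper does. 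The repair is mechanical (shift every index in your second half up by one), but as written the induction never establishes the level-$(k+1)$ derivative bound, despite your opening phrase ``with \eqref{Uest:DDPhi^l_dy} now available at level $k+1$'', which is then not used.

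A smaller imprecision is in the base case: you assert that \eqref{Uest:DDPhi^l_dy} at $\ell=0$ \emph{is} the steady bound \eqref{Uest:Phi_xx}. But \eqref{Uest:Phi_xx} only yields $1+B_3+\|\nabla_x^2\Phi\|_\infty\le\frac{\hat m g}{8}\tilde\beta$, whereas \eqref{Uest:DDPhi^l_dy} requires the stronger threshold $\frac{\hat m g}{24}\tilde\beta$. To obtain the factor $24$ one must invoke the quantitative estimate \eqref{est_final:phi_C2} (equivalently \eqref{theo:phi_C2}) together with the smallness of the boundary and initial data imposed in \eqref{condition:G} and \eqref{condition:F0_G_dy}, which is how the paper argues; since you also cite \eqref{est_final:phi_C2} and these conditions, this is a matter of making the reduction explicit rather than a missing idea.
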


\begin{proof}

We abuse the notation as in \eqref{abuse} in the proof.
From Theorem \ref{theo:RS}, there exists a solution $(h_{\pm}, \rho, \Phi)$ solving \eqref{VP_h}-\eqref{eqtn:Dphi}, where it satisfies \eqref{theo:rho_x}-\eqref{theo:hk_x} under all conditions in Theorem \ref{theo:RS}.
Using $h_{\pm} (x, p)$ and $\Phi (x)$, together with the construction in \eqref{eqtn:fell}-\eqref{bdry:Psi_fell} and \eqref{def:Fell}-\eqref{Poisson_Fell}, we obtain
$(F^{\ell}_{\pm}, \phi_{F^{\ell}})$ for any $\ell \in \N$.

\smallskip

\textbf{Step 1. }
Under the initial setting $f^0_{\pm} = 0$ and $(\varrho^0,  \nabla_x \Psi^0) = (0, \mathbf{0})$, together with \eqref{theo:phi_C2} and \eqref{condition:F0_G_dy}, we deduce that $\phi_{F^0} = \Phi$, and thus \eqref{Uest:DDPhi^l_dy} holds for $\ell = 0$.
From the construction, $F^1_{\pm}$ is the solution to \eqref{eqtn:Fell} and \eqref{bdry_initial:Fell} with $\nabla_x \Psi^0 = \mathbf{0}$, that is,
\be \label{VP^0_dy}
\begin{split}
& \p_t F^{1}_{\pm}  + v_\pm \cdot \nabla_x F^{1}_{\pm} + \Big( e_{\pm} \big( \frac{v_\pm}{c} \times B - \nabla_x \Phi  \big) - \nabla_x ( m_\pm g x_3) \Big) \cdot \nabla_p F^{1}_{\pm} 
= 0 \ \ \text{in} \ \R_+ \times  \O \times \R^3,
\\& F^{1}_{\pm} (0, x, p) = F_{\pm, 0} 
\ \ \text{in} \ \O \times \R^3,
\ \
F^{1}_{\pm} (t, x, p) = G_{\pm} (x,p)
\ \ \text{on} \ \gamma_-.
\end{split}
\ee
From \eqref{Z1} and \eqref{ODE1}, we consider the characteristic $(\X^1, \P^1)$ for \eqref{VP^0_dy} and the corresponding $(\tB^1, \xB^1, \pB^1)$.
Note that we only work with Vlasov equations under the assumption \eqref{Bootstrap}.
By replacing $\phi_F$ with $\Phi$, Lemma \ref{VL:dyn} and \eqref{est:xb_x:dyn}-\eqref{est:vb_v:dyn} hold for the characteristic $(\X^1, \P^1)$ and the corresponding $(\tB^1, \xB^1, \pB^1)$, since $\nabla_x \Phi$ satisfies the assumption \eqref{Bootstrap}.

Analogously, from \eqref{est:e^bell} and \eqref{Uest:DDPhi^l_dy} holding for $\ell = 0$, we check that $\| \nabla^2_x \Phi \|_{\infty}$ satisfies the assumptions \eqref{est:beta_b_dy} and \eqref{est1:D3tphi_F} in Proposition \ref{RE:dyn}.
Following the proof of \eqref{est:F_v:dyn} in Proposition \ref{RE:dyn} and Theorem \ref{theo:RD}, together with \eqref{condition:F0_G_dy}, we deduce \eqref{Uest:h_v^l_dy} holds for $\ell = 0$.

\smallskip

\textbf{Step 2. }
Now we prove this by induction. Assume a positive integer $k > 0$ and suppose that \eqref{Uest:DDPhi^l_dy} and \eqref{Uest:h_v^l_dy} hold for $0 \leq \ell \leq k$.
From \eqref{def:Fell}-\eqref{Poisson_Fell}, then for $\ell=k+1$, 
\be \label{rho_phi^k+1_dy}
\begin{split}	
- \Delta \phi_{F^{k+1}} (t, x) = \int_{\R^3} ( e_+ F^{k+1}_+ + e_{-} F^{k+1}_{-} ) \dd p & \ \ \text{in} \ \R_+ \times \O, \\ \phi_{F^{k+1}} (t, x) =0  & \ \ \text{on} \ \R_+ \times \p\O,
\end{split}
\ee
and
\be \label{h^k+2_dy}
\begin{split}
& \p_t F^{k+2}_{\pm}  + v_\pm \cdot \nabla_x F^{k+2}_{\pm} + \Big( e_{\pm} \big( \frac{v_\pm}{c} \times B - \nabla_x ( \Phi + \Psi^{k+1} ) \big) - \nabla_x ( m_\pm g x_3) \Big) \cdot \nabla_p F^{k+2}_{\pm} 
\\& \ \ \ \ = 0 \ \ \text{in} \ \R_+ \times  \O \times \R^3,
\end{split}
\ee
with
\be \label{bdry_initial:Fk+2_dy}
F^{k+2}_{\pm} (0, x, p) = F_{\pm, 0} 
\ \ \text{in} \ \O \times \R^3,
\ \
F^{k+2}_{\pm} (t, x, p) = G_{\pm} (x,p)
\ \ \text{on} \ \gamma_-.
\ee

On the other side, from the construction, $F^{k+1}_{\pm}$ is the solution to \eqref{VP_F}, \eqref{Poisson_F}, \eqref{VP_0},  \eqref{Dbc:F} and \eqref{bdry:F} with $\nabla_x \phi_F = \nabla_x ( \Phi + \Psi^{k} )$, that is,
\be \label{VP^k_dy}
\begin{split}
& \p_t F^{k+1}_{\pm}  + v_\pm \cdot \nabla_x F^{k+1}_{\pm} + \Big( e_{\pm} \big( \frac{v_\pm}{c} \times B - \nabla_x ( \Phi + \Psi^{k} ) \big) - \nabla_x ( m_\pm g x_3) \Big) \cdot \nabla_p F^{k+1}_{\pm} 
\\& \ \ \ \ = 0 \ \ \text{in} \ \R_+ \times  \O \times \R^3.
\end{split}
\ee
Recall \eqref{Bootstrap_ell_2} and \eqref{est:e^bell} in Proposition \ref{prop:DC}, then for any $\ell \geq 0$,
\be \label{est:Phi_x^0_dy}
\begin{split}
& \sup_{0 \leq t < \infty} \| \nabla_x \big( \Phi + \Psi^{\ell} (t) \big) \|_{L^\infty(\O)}  
\leq \min \left\{\frac{m_+}{e_+}, \frac{m_-}{e_-} \right\} \times \frac{g}{2},
\\& \sup_{0 \leq t < \infty}\| \nabla_x \Psi^{\ell} (t) \|_{L^\infty(\O)} 
\leq \min \left\{\frac{m_+}{e_+}, \frac{m_-}{e_-} \right\} \times \frac{g}{48}
\\& \sup_{0 \leq t < \infty} e^{ 2 \beta \frac{e}{c} \| (-\Delta_0)^{-1} (\nabla_x \cdot b^{\ell} ) \|_{L^\infty(\O)} } \leq 2.
\end{split}
\ee
From \eqref{Zell} and \eqref{ODEell}, we consider the characteristic $(\X^{k+1}, \P^{k+1})$ for \eqref{VP^k_dy} and the corresponding $(\tB^{k+1}, \xB^{k+1}, \pB^{k+1})$.
Similar as the initial case, by replacing $\phi_F$ with $\Phi + \Psi^k$, Lemma \ref{VL:dyn} and \eqref{est:xb_x:dyn}-\eqref{est:vb_v:dyn} hold for for $(X^{k+1}, P^{k+1})$ and $(\tb^{k+1}, \xb^{k+1}, \pb^{k+1})$ due to \eqref{est:Phi_x^0_dy}.

\smallskip

\textbf{Step 3. }
First, we show \eqref{Uest:DDPhi^l_dy} holds for $\ell = k+1$. Note that \eqref{Uest:DDPhi^l_dy} holds for $\ell = k$, that is, 
\be \label{Uest:DDPhi^k_dy}
(1 + B_3 + \| \nabla_x ^2 \phi_{F^k}  \|_\infty) + \| \p_t \p_{x_3} \phi_{F^k} (t, x_\parallel , 0) \|_{L^\infty(\p\O)} 
\leq \frac{\hat{m} g}{24} \tilde{\beta}.
\ee
By replacing $\phi_F$ with $\phi_{F^k}$ and from \eqref{est:Phi_x^0_dy}, we follow the proof of \eqref{est:F_x:dyn} in Proposition \ref{RE:dyn}, and derive that
\Be \label{est:rho_x^k+1_dy}
\begin{split} 
& e^{ \frac{\tilde{\beta}}{4} \sqrt{(m c)^2 + |p|^2} } e^{ \frac{m g}{8 c} \tilde{\beta} x_3} \big| \nabla_{x_i} F^{k+1} (t,x,p) \big|
\\& \lesssim 2 e^{ \frac{m g}{24} \tilde{\beta} }
e^{2 \tilde{\beta} \frac{e}{c} \| (-\Delta_0)^{-1} (\nabla_x \cdot b ) \|_{L^\infty_{t,x}} } \| \w_{\tilde \beta, 0}   \nabla_{x,p} F_0  \|_{L^\infty (\O \times \R^3)}
\\& \ \ \ \ + \big( \frac{\delta_{i3}}{\alpha_F (t,x,p)} + 1 + \frac{ \| \nabla_x^2 \phi_{F^k} \|_\infty + e B_3 + mg}{(m g)^2} \big) \| e^{\tilde{\beta} |p^0|} \nabla_{x_\parallel,p} G \|_{L^\infty (\gamma_-)},
\end{split}
\Ee 
Consider $F = F^{k+1}$. Using \eqref{est:Phi_x^0_dy} and \eqref{est:rho_x^k+1_dy}, we follow the proof of \eqref{est:D3tphi_F} in Lemma \ref{lem:D3tphi_F} and deduce that
\be \label{est:D3tphi_F_dy} 
| \p_{x_3} \p_t \phi_{F^{k+1}}(t,x)| 
\lesssim \frac{ 1 }{{\tilde{\beta}}^3} + \frac{ 1 }{{\tilde{\beta}}^3} ( 1 + \frac{8 c}{\hat{m} g \tilde{\beta}} ).
\ee
On the other hand, from \eqref{est:Phi_x^0_dy} and \eqref{est:rho_x^k+1_dy}, we follow the proof of \eqref{est_final:phi_F} in Theorem \ref{theo:RD} and obtain that
\Be \notag
\| \nabla_x^2 \phi_{F^{k+1}} (t) \|_{L^\infty (\bar \O)}
\lesssim \| \w_{\tilde \beta, 0}   \nabla_{x,v} F_0  \|_{L^\infty (\O \times \R^3)} 
+  \| e^{\tilde{\beta} |p^0|} \nabla_{x_\parallel,p} G   \|_{L^\infty (\gamma_-)}.
\Ee
Under the condition \eqref{condition:F0_G_dy}, the above and \eqref{est:D3tphi_F_dy} show that
\be \label{est1:DDPhi^k+1_dy}
(1 + B_3 + \| \nabla_x ^2 \phi_{F^{k+1}}  \|_\infty) + \| \p_t \p_{x_3} \phi_{F^{k+1}} (t, x_\parallel , 0) \|_{L^\infty(\p\O)} 
\leq \frac{\hat{m} g}{24} \tilde{\beta},
\ee
and \eqref{Uest:DDPhi^l_dy} holds for $\ell = k+1$.

\smallskip

\textbf{Step 4. }
Second, we show \eqref{Uest:h_v^l_dy} holds for $\ell = k+1$.
Note that \eqref{est:Phi_x^0_dy} and \eqref{Uest:DDPhi^l_dy} holding for $\ell = k+1$.
By replacing $\phi_F$ with $\phi_{F^{k+1}}$, we follow the proof of \eqref{est:F_v:dyn} in Proposition \ref{RE:dyn}, and get
\Be \notag
\begin{split} 
& \sup_{0 \leq t < \infty} \| e^{ \frac{\tilde{\beta}}{4} \sqrt{(m c)^2 + |p|^2} } e^{ \frac{m g}{8 c} \tilde{\beta} x_3} \nabla_p F^{k+2} (t,x,p) \|_{L^\infty(\O \times \R^3)} 
\\& \lesssim 2 e^{ \frac{m g}{24} \tilde{\beta} }
e^{2 \tilde{\beta} \frac{e}{c} \| (-\Delta_0)^{-1} (\nabla_x \cdot b ) \|_{L^\infty_{t,x}} } \| \w_{\tilde \beta, 0}   \nabla_{x,p} F_0  \|_{L^\infty (\O \times \R^3)}
\\& \ \ \ \ + \big( 1 + \frac{ \| \nabla_x^2 \phi_{F^{k+1}} \|_\infty + e B_3 + mg}{(m g)^2} \big) \| e^{\tilde{\beta} |p^0|} \nabla_{x_\parallel,p} G \|_{L^\infty (\gamma_-)}.
\end{split}
\Ee 
From the condition \eqref{condition:F0_G_dy}, we deduce that \eqref{Uest:h_v^l_dy} holds for $\ell = k+1$.

\smallskip

Now we have proved \eqref{Uest:DDPhi^l_dy} and \eqref{Uest:h_v^l_dy} hold for $\ell = k+1$. Therefore, we complete the proof by induction.
\end{proof}

Similar to Proposition \ref{prop:cauchy}, using the estimates  in Proposition \ref{prop:Unif_D2xDp_dy}, we show that $\{ F^{\ell+1}_{\pm} \}^{\infty}_{\ell=0}$ and $\{ \nabla_x \phi_{F^\ell} \}^{\infty}_{\ell=0}$ from the construction are both Cauchy sequences.

\begin{prop} \label{prop:cauchy_dy}

Consider $M$ and $L$ in \eqref{set:M} and \eqref{set:L}. Further, we assume that the conditions \eqref{condition:G} and 
\Be \label{condition2:F0_G_dy}
M \leq \beta e^{ - \frac{m_{\pm} g}{24} \beta}
\ \text{ and } \ 
L \leq \tilde{\beta} e^{ - \frac{m_{\pm} g}{24} \tilde{\beta} }.
\Ee
hold for some $g, \beta, \tilde \beta > 0$.
Then for any $\ell \geq 1$, $F^{\ell}_{\pm}$ from the construction satisfies that there exists some $\bar \beta > 0$,  
\be \label{est:h_cauchy_dy}
\begin{split}
& \sup_{0 \leq t < \infty} \Big( \| e^{ \frac{3 \bar{\beta}}{4} \big( \sqrt{(m_+ c)^2 + |p|^2} + \frac{1}{2 c} m_+ g x_3 \big) } (F^{\ell+1}_{+} - F^{\ell}_{+} ) \|_{L^{\infty} (\O \times \R^3)} 
\\& \qquad \qquad + \| e^{ \frac{3 \bar{\beta}}{4} \big( \sqrt{(m_- c)^2 + |p|^2} + \frac{1}{2 c} m_- g x_3 \big) } (F^{\ell+1}_{-} - F^{\ell}_{-} ) \|_{L^{\infty} (\O \times \R^3)} \Big)
\\& \leq \frac{1}{2} \sup_{0 \leq t < \infty} \Big( \| e^{ \frac{3 \bar{\beta}}{4} \big( \sqrt{(m_+ c)^2 + |p|^2} + \frac{1}{2 c} m_+ g x_3 \big) } (F^\ell_{+} - F^{\ell-1}_{+} ) \|_{L^{\infty} (\O \times \R^3)} 
\\& \qquad \qquad \qquad + \| e^{ \frac{3 \bar{\beta}}{4} \big( \sqrt{(m_- c)^2 + |p|^2} + \frac{1}{2 c} m_- g x_3 \big) } (F^\ell_{-} - F^{\ell-1}_{-} ) \|_{L^{\infty} (\O \times \R^3)}  \Big).
\end{split}
\Ee
Furthermore, $\{ F^{\ell+1}_{\pm} \}^{\infty}_{\ell=0}$, and $\{ \varrho^\ell \}^{\infty}_{\ell=0}$, $\{ \nabla_x \phi_{F^\ell} \}^{\infty}_{\ell=0}$ from the construction are Cauchy sequences in $L^{\infty} (\R_+ \times \O \times \R^3)$ and $L^{\infty} (\R_+ \times \O)$ respectively.
\end{prop}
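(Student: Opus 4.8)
The plan is to run the same contraction scheme used for the steady problem in Proposition \ref{prop:cauchy}, but along the dynamic characteristics $\Z^{\ell+1}_\pm$ of \eqref{Zell}--\eqref{ODEell} and with the dynamic weight $\w^{\ell+1}_{\pm}$ of \eqref{w^ell}, absorbing the extra bookkeeping caused by the fact that $\w^{\ell+1}_{\pm}$ is no longer invariant along $\Z^{\ell+1}_\pm$ (cf. \eqref{dDTE^ell}). First I would set up the difference identity: subtracting the evolution equations \eqref{eqtn:Fell} at levels $\ell$ and $\ell+1$ (equivalently \eqref{eqtn:fell}), and using that the two transport operators differ only by $-e_\pm\nabla_x(\Psi^\ell-\Psi^{\ell-1})\cdot\nabla_p$ while $\nabla_p h_\pm+\nabla_p f^\ell_\pm=\nabla_p F^\ell_\pm$, one sees that $F^{\ell+1}_\pm-F^\ell_\pm$ solves a transport equation driven by the field $\Phi+\Psi^\ell$ with source $e_\pm\nabla_x(\Psi^\ell-\Psi^{\ell-1})\cdot\nabla_p F^\ell_\pm$, and that by \eqref{bdry_initial:Fell} it vanishes on $\gamma_-$ and at $t=0$ (both levels carry data $G_\pm$ and $F_{\pm,0}$). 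Integrating along $\Z^{\ell+1}_\pm$ and noting that the value at the foot of the characteristic (the boundary point at $t-\tBp^{\ell+1}$ or the initial slice, see Definition \ref{def:tB_l}) is zero, I get
\[
(F^{\ell+1}_\pm-F^\ell_\pm)(t,x,p)=\int_{\max\{0,\,t-\tBp^{\ell+1}(t,x,p)\}}^{t}e_\pm\,\nabla_x(\Psi^\ell-\Psi^{\ell-1})\bigl(s,\X^{\ell+1}_\pm(s;t,x,p)\bigr)\cdot\nabla_p F^\ell_\pm\bigl(\Z^{\ell+1}_\pm(s;t,x,p)\bigr)\,ds .
\]

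Next I would bound the integrand. For the momentum derivative I use the uniform-in-$\ell$ estimate \eqref{Uest:h_v^l_dy} of Proposition \ref{prop:Unif_D2xDp_dy} evaluated at $\Z^{\ell+1}_\pm(s)$, together with the propagation estimate for $\w^{\ell+1}_{\pm,\bar\beta}$ along $\Z^{\ell+1}_\pm$ derived from \eqref{dDTE^ell} and the uniform bound \eqref{est:e^bell} of Proposition \ref{prop:DC} (which keeps $e^{2\beta\frac{e_\pm}{c}\|(-\Delta_0)^{-1}(\nabla_x\cdot b^\ell)\|_{L^\infty_{t,x}}}\le 2$); here $\bar\beta$ is chosen as a small fixed fraction of $\tilde\beta$ so that, after multiplication by $\w^{\ell+1}_{\pm,\bar\beta}$, the $\X^{\ell+1}_{\pm,3}(s)$-weight still gives net exponential decay in $|\P^{\ell+1}_\pm(s)|$ and $\X^{\ell+1}_{\pm,3}(s)$. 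For the field I use $-\Delta(\Psi^\ell-\Psi^{\ell-1})=\varrho^\ell-\varrho^{\ell-1}$ with $\varrho^\ell-\varrho^{\ell-1}=\int_{\R^3}(e_+(F^\ell_+-F^{\ell-1}_+)+e_-(F^\ell_--F^{\ell-1}_-))\,dp$ (the $h_\pm$ parts cancel), and run the elliptic estimate \eqref{est:nabla_phi} of Lemma \ref{lem:rho_to_phi} exactly as in \eqref{est:rho_12^l+1}--\eqref{est2:phi_12_x^l+1}, integrating out $p$ against a suitable momentum weight, to bound $\sup_s\|\nabla_x(\Psi^\ell-\Psi^{\ell-1})(s)\|_{L^\infty}$ by a constant times $\sum_{\pm}\sup_s\|e^{\frac{3\bar\beta}{4}(\sqrt{(m_\pm c)^2+|p|^2}+\frac{1}{2c}m_\pm gx_3)}(F^\ell_\pm-F^{\ell-1}_\pm)(s)\|_\infty$.

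Then I would close the loop. Estimating $\tBp^{\ell+1}\sup_s\bigl(1/\w^{\ell+1}_{\pm,\bar\beta}(s,\Z^{\ell+1}_\pm(s))\bigr)$ via the exit-time bound \eqref{est:tB_ell} of Lemma \ref{lem:tB_ell} as in \eqref{est:tb/w^l+1} yields a factor $\lesssim \frac{1}{m_\pm g\bar\beta}\,e^{-\frac{3\bar\beta}{4}(\sqrt{(m_\pm c)^2+|p|^2}+\frac{1}{2c}m_\pm gx_3)}$. Feeding the bounds of the previous paragraph into the difference identity, multiplying by $e^{\frac{3\bar\beta}{4}(\sqrt{(m_\pm c)^2+|p|^2}+\frac{1}{2c}m_\pm gx_3)}$, taking the supremum over $(x,p)$ and over $t\in[0,\infty)$, and summing over the two species, I reach a recursion $\sum_\pm\sup_t\|e^{\frac{3\bar\beta}{4}(\cdots)}(F^{\ell+1}_\pm-F^\ell_\pm)\|_\infty\le C(M,L)\sum_\pm\sup_t\|e^{\frac{3\bar\beta}{4}(\cdots)}(F^\ell_\pm-F^{\ell-1}_\pm)\|_\infty$; the smallness hypotheses \eqref{condition2:F0_G_dy} force $C(M,L)\le\tfrac12$, which is \eqref{est:h_cauchy_dy}. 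Iterating the contraction and controlling the first difference $\sup_t\|e^{\frac{3\bar\beta}{4}(\cdots)}(F^{2}_\pm-F^{1}_\pm)\|_\infty$ by the uniform bound \eqref{Uest:wfell} of Proposition \ref{prop:DC} shows $\{F^{\ell+1}_\pm\}$ is Cauchy in $L^\infty(\R_+\times\O\times\R^3)$; the momentum integral and the elliptic estimate then transfer the same geometric decay to $\{\varrho^\ell\}$ and $\{\nabla_x\phi_{F^\ell}\}$ in $L^\infty(\R_+\times\O)$.

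The main obstacle, absent in the steady case, is precisely that $\w^{\ell+1}_{\pm,\bar\beta}$ is not conserved along $\Z^{\ell+1}_\pm$, so the clean cancellation of weights available in the proof of Proposition \ref{prop:cauchy} is lost: one must absorb the accumulated growth $e^{\pm\beta\frac{e_\pm}{c}\|(-\Delta_0)^{-1}(\nabla_x\cdot b^\ell)\|(\cdots)}$ uniformly in $\ell$ and $t$ — which is exactly what \eqref{est:e^bell} of Proposition \ref{prop:DC} provides — and tune $\bar\beta$ against $\tilde\beta$ carefully enough that the $\X^{\ell+1}_{\pm,3}$-weight in \eqref{Uest:h_v^l_dy} is not overwhelmed by the growth of $\w^{\ell+1}_{\pm,\bar\beta}$ in $x_3$. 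Verifying that the resulting contraction constant genuinely lies below $\tfrac12$, rather than merely being finite, is the delicate point, and it is what the conditions \eqref{condition2:F0_G_dy} are designed to guarantee.
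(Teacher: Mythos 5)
Your proposal follows essentially the same route as the paper's proof: the same Duhamel identity along $\Z^{\ell+1}_\pm$ with zero boundary/initial data, the same use of Proposition \ref{prop:Unif_D2xDp_dy} for $\nabla_p F^{\ell}_\pm$ together with \eqref{est:e^bell} and the choice $\bar\beta\sim\tilde\beta$ (the paper takes $\bar\beta=\tilde\beta/6$) to handle the non-invariant weight, the same exit-time bound from Lemma \ref{lem:tB_ell} and elliptic estimate from Lemma \ref{lem:rho_to_phi} for $\nabla_x(\Psi^{\ell}-\Psi^{\ell-1})$, and the same closing of the contraction via \eqref{condition2:F0_G_dy} and transfer to $\{\varrho^{\ell}\}$, $\{\nabla_x\phi_{F^{\ell}}\}$. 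The argument is correct and matches the paper's proof.
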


\begin{proof}

We abuse the notation as in \eqref{abuse} in the proof.
We first prove \eqref{est:h_cauchy_dy}. 
Given $\ell \geq 1$, from the construction in \eqref{def:Fell}-\eqref{Poisson_Fell}, we have 
\Be \label{VP_diff^l+1_dy}
\begin{split}
& \p_t (F^{\ell+1}_{\pm} - F^{\ell}_{\pm}) + v_\pm \cdot \nabla_x (F^{\ell+1}_{\pm} - F^{\ell}_{\pm}) 
\\& \ \ \ \ + \Big( e_{\pm} \big( \frac{v_\pm}{c} \times B - \nabla_x ( \Phi + \Psi^{\ell} ) \big) - \nabla_x ( m_\pm g x_3) \Big) \cdot \nabla_p (F^{\ell+1}_{\pm} - F^{\ell}_{\pm})
\\& = e_{\pm} \nabla_x ( \Psi^{\ell} - \Psi^{\ell-1})
\cdot \nabla_p F^{\ell}_{\pm}
 \ \ \text{in} \ \R_+ \times  \O \times \R^3,
\end{split}
\ee
with 
\be \label{VP_diff_bdy^l+1_dy}
F^{\ell+1}_{\pm} (t,x,p) - F^{\ell}_{\pm} (t,x,p) = 0 \ \ \text{in} \ \gamma_-,
\Ee
and
\be \label{VP_diff_initial^l+1_dy}
F^{\ell+1}_{\pm} (0, x, p) - F^{\ell}_{\pm} (0, x, p) = 0 \ \ \ \ \text{in} \ \O \times \R^3.
\Ee
Following \eqref{Zell}, \eqref{ODEell} and \eqref{def:tB_l}, we consider the characteristic $(\X^{\ell+1}, \P^{\ell+1})$ for \eqref{VP_diff^l+1_dy} and the corresponding backward exit time $\tB^{\ell+1} (t,x,p)$. Thus, from \eqref{VP_diff^l+1_dy}, \eqref{VP_diff_bdy^l+1_dy} and \eqref{VP_diff_initial^l+1_dy}, we get
\Be \label{diff:h^l+1_dy}
\begin{split}
& (F^{\ell+1} - F^{\ell} ) (t,x,p) 
\\& = \int^t_{ \max\{0, t - \tB^{\ell+1} (t,x,p) \}} 
e \nabla_x ( \Psi^{\ell} - \Psi^{\ell-1})
\cdot \nabla_p F^{\ell} ( \X^{\ell+1} (s;t,x,p), \P^{\ell+1} (s;t,x,p)) \dd s.
\end{split}
\Ee
Using \eqref{Uest:h_v^l_dy} in Proposition \ref{prop:Unif_D2xDp_dy}, together with the condition \eqref{condition2:F0_G_dy}, then for any $\ell \geq 1$,
\be \label{est:h_v^l+1_dy}
\begin{split}
& e^{ \frac{\tilde \beta}{4} |p^0|} e^{  \frac{\tilde \beta m g}{8 c} x_3} | \nabla_p F^{\ell} (t,x,p)| 
\\& \lesssim 4 e^{ \frac{m g}{24} \tilde{\beta} } 
\| \w_{\tilde \beta, 0}  \nabla_{x,p} F_0  \|_{L^\infty (\O \times \R^3)}
+  \frac{m g}{24} \tilde{\beta} \| e^{\tilde{\beta} |p^0|} \nabla_{x_\parallel,p} G \|_{L^\infty (\gamma_-)}.
\end{split}
\ee
From \eqref{Bootstrap_ell_2} in Proposition \ref{prop:DC}, then for any $k \geq 1$,
\be \label{est:phi_x^l_dy}
\sup_{0 \leq t < \infty} \| \nabla_x \big( \Phi + \Psi^{k} (t) \big) \|_{L^\infty(\O)}
\leq  \min \big( \frac{m_{+}}{e_{+}}, \frac{m_{-}}{e_{-}} \big) \times \  \frac{g}{2}. 
\ee
Recall $\w^{\ell+1}_{\pm, \beta} (t,x,p)$ defined in \eqref{w^ell}, from \eqref{est:phi_x^l_dy} we get for any $\ell \geq 1$,
\be \label{upper_w^l+1_dy}
\begin{split}
\w^{\ell+1}_{\pm, \beta} (t,x,p) 
& = e^{ \beta \left( \sqrt{(m_{\pm} c)^2 + |p|^2} + \frac{1}{c} \big( e \big( \Phi (x) + \Psi^\ell (t,x) \big) + m_{\pm} g x_3 \big) \right) }
\\& \leq e^{ \beta \left( \sqrt{(m_{\pm} c)^2 + |p|^2} + \frac{3}{2 c} ( m_{\pm} g x_3 ) \right) },
\end{split}
\ee
and
\Be \label{lower_w^l+1_dy}
\w^{\ell +1}_{\bar \beta} (t,x,p) 
\geq e^{ \bar \beta \big( \sqrt{(m c)^2 + |p|^2} + \frac{1}{2 c} m g x_3 \big) }.
\Ee
By setting $\bar{\beta} = \frac{\tilde \beta}{6}$, from \eqref{upper_w^l+1_dy}, we can rewrite \eqref{est:h_v^l+1_dy} as
\be \label{est1:h_v^l+1_dy}
\begin{split}
& \w^{\ell+1}_{\pm, \bar{\beta}} (t,x,p)  | \nabla_p F^{\ell}_{\pm} (t,x,p)| 
\\& \lesssim 4 e^{ \frac{m g}{24} \tilde{\beta} } 
\| \w_{\tilde \beta, 0}  \nabla_{x,p} F_0  \|_{L^\infty (\O \times \R^3)}
+  \frac{m g}{24} \tilde{\beta} \| e^{\tilde{\beta} |p^0|} \nabla_{x_\parallel,p} G \|_{L^\infty (\gamma_-)}, 
\text{ for any } \ell \geq 1.
\end{split}
\ee
Thus, we obtain that
\Be \label{bound:diff_h^l+1_dy}
\begin{split}
& |\eqref{diff:h^l+1_dy}| 
\\& \leq e \| \nabla_x \Psi^{\ell} - \nabla_x \Psi^{\ell-1} \|_\infty 
\int^t_{ \max\{0, t - \tB^{\ell+1} (t,x,p) \}} 
\big| \nabla_p F^{\ell} ( \X^{\ell+1} (s;t,x,p), \P^{\ell+1} (s;t,x,p)) \big| \dd s
\\& \leq e \| \nabla_x \Psi^{\ell} - \nabla_x \Psi^{\ell-1} \|_\infty \| \w^{\ell+1}_{\bar\beta} \nabla_p F^{\ell} \|_\infty 
\int^t_{ \max\{0, t - \tB^{\ell+1} (t,x,p)\}}  \frac{1}{ \w^{\ell+1}_{\bar\beta} ( \Z^{\ell+1} (s;t,x,p) ) } \dd s
\\& \leq e \| \nabla_x \Psi^{\ell} - \nabla_x \Psi^{\ell-1} \|_\infty \| \w^{\ell+1}_{ \bar{\beta} } \nabla_p F^{\ell} \|_\infty 
\\& \qquad \qquad \times
\underbrace{\tB^{\ell+1} (t, x, p) \sup_{ s \in [t - \tB^{\ell+1} (t,x,p), t]}   \left( \frac{1}{ \w^{\ell+1}_{\bar{\beta}} ( \Z^{\ell+1} (s;t,x,p) ) } \right )}_{\eqref{bound:diff_h^l+1_dy}_*}.
\end{split}
\Ee
Furthermore, from \eqref{est:phi_x^l_dy} and Lemma \ref{lem:tB_ell}, we obtain
\Be \label{est:tb^l+1_dy}
\tB^{\ell + 1} (t,x,p) 
\lesssim \frac{4 c + 4}{m g} |p| + 3 x_{3} + 1.
\Ee
Using \eqref{est:tb^l+1_dy} and \eqref{est:1/w_h}, together with \eqref{est:e^bell} and \eqref{lower_w^l+1_dy}, we get 
\Be \label{est:tb/w^l+1_dy}
\begin{split}
\eqref{bound:diff_h^l+1_dy}_* 
& \leq \big(  \frac{4 c + 4}{m g} |p| + 3 x_{3} + 1 \big)
e^{2 \beta \frac{e}{c} \| (-\Delta_0)^{-1} (\nabla_x \cdot b ) \|_{L^\infty_{t,x}} } e^{ - \frac{\beta}{2} \sqrt{(m c)^2 + |p|^2} } e^{ - \frac{m g}{4 c} \beta x_3}
\\& \leq 2 \big(  \frac{4 c + 4}{m g} |p| + 3 x_{3} + 1 \big) e^{ - \bar \beta \big( \sqrt{(m c)^2 + |p|^2} + \frac{1}{2 c} m g x_3 \big) }
\\& \leq \frac{48}{m g \bar \beta} e^{ - \frac{3 \bar \beta}{4} \big( \sqrt{(m c)^2 + |p|^2} + \frac{1}{2 c} m g x_3 \big) }.
\end{split}
\Ee
Applying \eqref{est:tb/w^l+1_dy} and \eqref{bound:diff_h^l+1_dy} into \eqref{diff:h^l+1_dy}, we have
\be \label{bound2:diff_h^l+1_dy}
\begin{split}
& | F^{\ell+1} (t,x,p) - F^{\ell} (t,x,p) | 
\\& \leq e \| \nabla_x \Psi^{\ell} - \nabla_x \Psi^{\ell-1} \|_\infty \| \w^{\ell+1}_{ \bar{\beta} } \nabla_p F^{\ell} \|_\infty \times \frac{48}{m g \bar \beta} e^{ - \frac{3 \bar \beta}{4} \big( \sqrt{(m c)^2 + |p|^2} + \frac{1}{2 c} m g x_3 \big) }.
\end{split}
\ee
This shows that
\be \label{bound3:diff_h^l+1_dy}
\begin{split}
& e^{ \frac{3 \bar \beta}{4} \big( \sqrt{(m c)^2 + |p|^2} + \frac{1}{2 c} m g x_3 \big) } | F^{\ell+1} (t,x,p) - F^{\ell} (t,x,p) | 
\\& \leq \frac{48 e}{m g \bar \beta} \| \nabla_x \Psi^{\ell} - \nabla_x \Psi^{\ell-1} \|_\infty \| \w^{\ell+1}_{ \bar{\beta} } \nabla_p F^{\ell} \|_\infty.
\end{split}
\ee

On the other hand, from \eqref{Uest:varrhofell} in Proposition \ref{prop:DC} and $M$ in \eqref{set:M}, we obtain
\be \label{Uest:rho_12^l+1_dy}
\begin{split}
| \varrho^{\ell} (x) |
& \leq \frac{2 M}{\beta^2} \big( e_+ e^{ - \frac{m_+ g}{4 c} \beta x_3} + e_- e^{ - \frac{m_+ g}{4 c} \beta x_3} \big),
\\ | \varrho^{\ell-1} (x) |
& \leq \frac{2 M}{\beta^2} \big( e_+ e^{ - \frac{m_+ g}{4 c} \beta x_3} + e_- e^{ - \frac{m_+ g}{4 c} \beta x_3} \big).
\end{split}
\ee
Consider $\hat{m} = \min \{ m_{-},  m_{+} \}$, then we pick $\beta'$ as follows:
\be \label{def:beta'^l+1_dy}
0 < \beta' =  \min\{ \frac{\bar \beta}{4}, \beta \} \times \hat{m}.
\ee
Hence, $\beta' \leq \beta \hat{m}$ and \eqref{Uest:rho_12^l+1_dy} shows that
\be \notag
\| e^{\beta' \frac{g}{2c} x_3} \varrho^{\ell} (x) \|_\infty
\leq \frac{2 M}{\beta^2} < \infty, \ \
\| e^{\beta' \frac{g}{2c} x_3} \varrho^{\ell-1} (x) \|_\infty
\leq \frac{2 M}{\beta^2} < \infty.
\ee
Set $A = \| e^{\beta' \frac{g}{2c} x_3} (\varrho^{\ell} - \varrho^{\ell-1} ) \|_\infty$ and $B = \beta' \frac{g}{2c}$, we get 
\be \notag
| \varrho^{\ell} - \varrho^{\ell-1} | \leq A e^{- B x_3}.
\ee
Using \eqref{est:nabla_phi} in Lemma \ref{lem:rho_to_phi}, together with $- \Delta ( \Psi^{\ell} - \Psi^{\ell-1} ) = \varrho^{\ell} - \varrho^{\ell-1}$, we obtain
\Be \label{est:phi_12_x^l+1_dy}
\| \nabla_x \Psi^{\ell} - \nabla_x \Psi^{\ell-1} \|_\infty 
\leq \mathfrak{C} (1 + \frac{2 c}{\beta^\prime g} )
\| e^{\beta' \frac{g}{2c} x_3} ( \varrho^{\ell} - \varrho^{\ell-1} ) \|_\infty.
\Ee
Now setting $\hat{\beta} = \frac{\beta'}{\min \{ m_{-},  m_{+} \}}$, we bound $| \varrho^{\ell} - \varrho^{\ell-1} |$ as
\Be \notag
\begin{split}
| \varrho^{\ell} - \varrho^{\ell-1} |
& = \big| \int_{\R^3} e_+ (F^{\ell}_{+} - F^{\ell-1}_{+} ) + e_{-} ( F^{\ell}_{-} - F^{\ell-1}_{-} ) \dd p \big|
\\& \leq \sum\limits_{i = \pm} e_i \| \w^{\ell}_{i, \hat{\beta}} (F^{\ell}_{i} - F^{\ell-1}_{i} ) \|_{L^{\infty} (\O \times \R^3)} \times \int_{\R^3} \frac{1}{ \w^{\ell}_{i, \hat{\beta}}} \dd p.
\end{split}
\Ee
This, together with \eqref{lower_w^l+1_dy}, shows that
\Be \notag
\begin{split}
| \varrho^{\ell} - \varrho^{\ell-1} |
& \leq \sum\limits_{i = \pm} e_i \| \w^{\ell}_{i, \hat{\beta}} (F^{\ell}_{i} - F^{\ell-1}_{i} ) \|_{L^{\infty} (\O \times \R^3)} \times \frac{1}{\hat{\beta}} e^{- \hat{\beta} \frac{m _i g}{2c} x_3}
\\& \leq \sum\limits_{i = \pm} e_i \| \w^{\ell}_{i, \hat{\beta}} (F^{\ell}_{i} - F^{\ell-1}_{i} ) \|_{L^{\infty} (\O \times \R^3)} \times \frac{1}{\hat{\beta}} e^{- \beta' \frac{g}{2c} x_3}.
\end{split}
\Ee
Note that \eqref{def:beta'^l+1} implies $\hat{\beta} \leq \frac{\bar{\beta}}{4}$. Further, using \eqref{upper_w^l+1_dy}, we derive that
\Be \label{est:e^beta*rho_12^l+1_dy}
e^{\beta' \frac{g}{2c} x_3} | \varrho^{\ell} - \varrho^{\ell-1} |
\leq \sum\limits_{i = \pm} \frac{e_i}{\hat{\beta}} \| e^{ \hat{\beta} \big( \sqrt{(m_i c)^2 + |p|^2} + \frac{3}{2 c} m_i g x_3 \big) }  (F^{\ell}_{i} - F^{\ell-1}_{i} ) \|_{L^{\infty} (\O \times \R^3)}.
\Ee
Inputting \eqref{est:e^beta*rho_12^l+1_dy} into \eqref{est:phi_12_x^l+1_dy}, we obtain
\Be \label{est2:phi_12_x^l+1_dy}
\begin{split}
& \| \nabla_x \Phi^{\ell} - \nabla_x \Phi^{\ell-1} \|_\infty 
\\& \leq \mathfrak{C} (1 + \frac{2 c}{\beta^\prime g} ) \frac{1}{\hat{\beta}} \Big(
e_+ \| e^{ \frac{\bar{\beta}}{4} \big( \sqrt{(m_+ c)^2 + |p|^2} + \frac{3}{2 c} m_+ g x_3 \big) } (F^{\ell}_{+} - F^{\ell-1}_{+}) \|_{L^{\infty} (\O \times \R^3)}
\\& \qquad \qquad \qquad \qquad + e_- \| e^{ \frac{\bar{\beta}}{4} \big( \sqrt{(m_- c)^2 + |p|^2} + \frac{3}{2 c} m_- g x_3 \big) } (F^{\ell}_{-} - F^{\ell-1}_{-}) \|_{L^{\infty} (\O \times \R^3)}  \Big).
\end{split}
\Ee
Using \eqref{bound3:diff_h^l+1_dy}, together with \eqref{est2:phi_12_x^l+1_dy}, we get
\be \label{bound4:diff_h^l+1_dy}
\begin{split}
& e^{ \frac{3 \bar \beta}{4} \big( \sqrt{(m_+ c)^2 + |p|^2} + \frac{1}{2 c} m_+ g x_3 \big) } |F^{\ell+1}_{+} (t, x, p) - F^{\ell}_{+} (t, x, p)|
\\& \ \ \ \ + e^{ \frac{3 \bar \beta}{4} \big( \sqrt{(m_- c)^2 + |p|^2} + \frac{1}{2 c} m_- g x_3 \big) } |F^{\ell+1}_{-} (t, x, p) - F^{\ell}_{-} (t, x, p)|
\\& \leq \mathfrak{C} (1 + \frac{2 c}{\beta^\prime g} ) \frac{1}{\hat{\beta}} \Big(
e_+ \| e^{ \frac{\bar{\beta}}{4} \big( \sqrt{(m_+ c)^2 + |p|^2} + \frac{3}{2 c} m_+ g x_3 \big) } (F^{\ell}_{+} - F^{\ell-1}_{+}) \|_{L^{\infty} (\O \times \R^3)}
\\& \qquad \qquad \qquad \qquad + e_- \| e^{ \frac{\bar{\beta}}{4} \big( \sqrt{(m_- c)^2 + |p|^2} + \frac{3}{2 c} m_- g x_3 \big) } (F^{\ell}_{-} - F^{\ell-1}_{-}) \|_{L^{\infty} (\O \times \R^3)}  \Big)
\\& \ \ \ \ \times \Big[ \frac{48 e_+}{m g \bar \beta} \| \w^{\ell+1}_{+, \bar{\beta} } \nabla_p F^{\ell}_{+} \|_\infty + \frac{48 e_-}{m g \bar \beta} \| \w^{\ell+1}_{-, \bar{\beta} } \nabla_p F^{\ell}_{-} \|_\infty \Big].
\end{split}
\ee
Together with $\beta'$ in \eqref{def:beta'^l+1_dy} and $\hat{\beta} = \frac{\beta'}{\min \{ m_{-},  m_{+} \}}$, we get
\be \label{bound5:diff_h^l+1_dy}
\begin{split}
\eqref{bound4:diff_h^l+1_dy}
& \leq \mathfrak{C} (1 + \frac{2 c}{\beta^\prime g} ) \frac{ \max\{ e_+, e_- \} }{\hat{\beta}} \Big[ \frac{48 e_+}{m g \bar \beta} \| \w^{\ell+1}_{+, \bar{\beta} } \nabla_p F^{\ell}_{+} \|_\infty + \frac{48 e_-}{m g \bar \beta} \| \w^{\ell+1}_{-, \bar{\beta} } \nabla_p F^{\ell}_{-} \|_\infty \Big]
\\& \ \ \ \ \times \Big( \| e^{ \frac{3 \bar{\beta}}{4} \big( \sqrt{(m_+ c)^2 + |p|^2} + \frac{1}{2 c} m_+ g x_3 \big) } (F^{\ell}_{+} - F^{\ell-1}_{+} ) \|_{L^{\infty} (\O \times \R^3)} 
\\& \qquad \qquad \qquad + \| e^{ \frac{3 \bar{\beta}}{4} \big( \sqrt{(m_- c)^2 + |p|^2} + \frac{1}{2 c} m_- g x_3 \big) } (F^{\ell}_{-} - F^{\ell-1}_{-}) \|_{L^{\infty} (\O \times \R^3)}  \Big)
\\& \lesssim_{\bar \beta, \beta'} \frac{1}{ \tilde{\beta} }
\big\{ \| \w^{\ell+1}_{+, \bar{\beta} } \nabla_p F^{\ell}_{+} \|_\infty + \| \w^{\ell+1}_{-, \bar{\beta} } \nabla_p F^{\ell}_{-} \|_\infty \big\}
\\& \qquad \qquad \times \Big( \| e^{ \frac{3 \bar{\beta}}{4} \big( \sqrt{(m_+ c)^2 + |p|^2} + \frac{1}{2 c} m_+ g x_3 \big) } (F^{\ell}_{+} - F^{\ell-1}_{+} ) \|_{L^{\infty} (\O \times \R^3)} 
\\& \qquad \qquad \qquad + \| e^{ \frac{3 \bar{\beta}}{4} \big( \sqrt{(m_- c)^2 + |p|^2} + \frac{1}{2 c} m_- g x_3 \big) } (F^{\ell}_{-} - F^{\ell-1}_{-} ) \|_{L^{\infty} (\O \times \R^3)}  \Big).
\end{split}
\ee
Under the condition \eqref{condition2:F0_G_dy}, together with \eqref{est1:h_v^l+1_dy}, we derive 
\be \notag
\begin{split}
& \w^{\ell+1}_{\pm, \bar{\beta}} (t,x,p)  | \nabla_p F^{\ell}_{\pm} (t,x,p)| 
\\& \lesssim 4 e^{ \frac{m_{\pm} g}{24} \tilde{\beta} } 
\| \w_{\tilde \beta, 0}  \nabla_{x,p} F_0  \|_{L^\infty (\O \times \R^3)}
+  \frac{m_{\pm} g}{24} \tilde{\beta} \| e^{\tilde{\beta} |p^0|} \nabla_{x_\parallel,p} G \|_{L^\infty (\gamma_-)}
\leq \frac{1}{4}.
\end{split}
\ee 
Thus, we bound \eqref{bound5:diff_h^l+1_dy} by
\be \label{bound6:diff_h^l+1_dy}
\begin{split}
\eqref{bound5:diff_h^l+1_dy} 
& \leq \frac{1}{2} \Big( \| e^{ \frac{3 \bar{\beta}}{4} \big( \sqrt{(m_+ c)^2 + |p|^2} + \frac{1}{2 c} m_+ g x_3 \big) } (F^{\ell}_{+} (t, x, p) - F^{\ell-1}_{+}) (t, x, p) \|_{L^{\infty} (\O \times \R^3)} 
\\& \qquad \ \ + \| e^{ \frac{3 \bar{\beta}}{4} \big( \sqrt{(m_- c)^2 + |p|^2} + \frac{1}{2 c} m_- g x_3 \big) } (F^{\ell}_{-} (t, x, p) - F^{\ell-1}_{-} (t, x, p) ) \|_{L^{\infty} (\O \times \R^3)}  \Big).
\end{split}
\ee
Finally, together with \eqref{bound4:diff_h^l+1_dy}-\eqref{bound6:diff_h^l+1_dy}, we derive, for any $0 \leq t < \infty$,
\Be \notag
\begin{split}
& \| e^{ \frac{3 \bar{\beta}}{4} \big( \sqrt{(m_+ c)^2 + |p|^2} + \frac{1}{2 c} m_+ g x_3 \big) } (F^{\ell+1}_{+} (t, x, p) - F^{\ell}_{+} (t, x, p) ) \|_{L^{\infty} (\O \times \R^3)} 
\\& \qquad \qquad + \| e^{ \frac{3 \bar{\beta}}{4} \big( \sqrt{(m_- c)^2 + |p|^2} + \frac{1}{2 c} m_- g x_3 \big) } (F^{\ell+1}_{-} (t, x, p) - F^{\ell}_{-} (t, x, p) ) \|_{L^{\infty} (\O \times \R^3)}
\\& \leq \frac{1}{2} \Big( \| e^{ \frac{3 \bar{\beta}}{4} \big( \sqrt{(m_+ c)^2 + |p|^2} + \frac{1}{2 c} m_+ g x_3 \big) } (F^{\ell}_{+} (t, x, p) - F^{\ell-1}_{+} (t, x, p) ) \|_{L^{\infty} (\O \times \R^3)} 
\\& \qquad \qquad + \| e^{ \frac{3 \bar{\beta}}{4} \big( \sqrt{(m_- c)^2 + |p|^2} + \frac{1}{2 c} m_- g x_3 \big) } (F^{\ell}_{-} (t, x, p) - F^{\ell-1}_{-} (t, x, p) ) \|_{L^{\infty} (\O \times \R^3)}  \Big),
\end{split}
\Ee
and conclude \eqref{est:h_cauchy_dy}. 

\smallskip

Next, using the condition \eqref{condition:beta} and \eqref{Uest:wh^k} in Proposition \ref{prop:Unif_steady}, then for any $k \geq 0$,
\be
\| w^{k+1}_{\pm, \beta} h_{\pm}^{k+1} \|_{L^\infty (\bar \O \times \R^3)} 
\leq \| e^{ \beta \sqrt{(m_{\pm} c)^2 + |p|^2}} G_{\pm} \|_{L^\infty (\gamma_-)} < \infty.
\ee
Since $\bar{\beta} = \frac{\tilde \beta}{6}$ and $0 < \tilde \beta \leq \beta$, we obtain that for any $k \geq 2$,
\be \label{bound_weight_h_k-k-1}
\begin{split}
& \| e^{ \frac{3 \bar{\beta}}{4} \big( \sqrt{(m_{\pm} c)^2 + |p|^2} + \frac{1}{2 c} m_{\pm} g x_3 \big) } (h^{k}_{{\pm}} - h^{k-1}_{{\pm}} ) \|_{L^{\infty} (\O \times \R^3)} 
\leq 2 \| w^{k+1}_{\pm, \beta} h_{\pm}^{k+1}   \|_{L^\infty (\bar \O \times \R^3)} < \infty.
\end{split}
\ee
Consider $M$ in \eqref{set:M}, from \eqref{Uest:wfell} in Proposition \ref{prop:DC}, similarly we have for any $k \geq 2$,
\be \label{bound_weight_f_k-k-1}
\sup_{0 \leq t < \infty} \| e^{ \frac{3 \bar{\beta}}{4} \big( \sqrt{(m_{\pm} c)^2 + |p|^2} + \frac{1}{2 c} m_{\pm} g x_3 \big) } (f^{k}_{{\pm}} - f^{k-1}_{{\pm}} ) \|_{L^{\infty} (\O \times \R^3)} 
\leq 2 M < \infty.
\ee
Combining \eqref{bound_weight_h_k-k-1} with \eqref{bound_weight_f_k-k-1}, we derive that for any $k \geq 2$,
\be \label{bound_weight_F_k-k-1}
\sup_{0 \leq t < \infty} \| e^{ \frac{3 \bar{\beta}}{4} \big( \sqrt{(m_{\pm} c)^2 + |p|^2} + \frac{1}{2 c} m_{\pm} g x_3 \big) } (F^{k}_{{\pm}} - F^{k-1}_{{\pm}} ) \|_{L^{\infty} (\O \times \R^3)} 
\leq 2 M < \infty.
\ee
Using \eqref{bound_weight_F_k-k-1}, together with \eqref{est:h_cauchy_dy}, we derive for any $k \geq 0$,
\Be \label{bound7:diff_h^l+1_dy}
\begin{split}
& \sup_{0 \leq t < \infty} \Big( \| e^{ \frac{3 \bar{\beta}}{4} \big( \sqrt{(m_+ c)^2 + |p|^2} + \frac{1}{2 c} m_+ g x_3 \big) } (F^{k+1}_{+} (t, x, p) - F^{k}_{+} (t,x, p)) \|_{L^{\infty} (\O \times \R^3)} 
\\& \qquad \qquad + \| e^{ \frac{3 \bar{\beta}}{4} \big( \sqrt{(m_- c)^2 + |p|^2} + \frac{1}{2 c} m_- g x_3 \big) } (F^{k+1}_{-} (t, x, p) - F^{k}_{-} (t, x, p) ) \|_{L^{\infty} (\O \times \R^3)} \Big)
\\& \leq \frac{1}{2^{k-1}} \sup_{0 \leq t < \infty} \Big( \| e^{ \frac{3 \bar{\beta}}{4} \big( \sqrt{(m_+ c)^2 + |p|^2} + \frac{1}{2 c} m_+ g x_3 \big) } (F^{2}_{+} (t, x, p) - F^{1}_{+} (t, x, p) ) \|_{L^{\infty} (\O \times \R^3)} 
\\& \qquad \qquad \qquad \ \ + \| e^{ \frac{3 \bar{\beta}}{4} \big( \sqrt{(m_- c)^2 + |p|^2} + \frac{1}{2 c} m_- g x_3 \big) } (F^{2}_{-} (t, x, p) - F^{1}_{-} (t, x, p) ) \|_{L^{\infty} (\O \times \R^3)}  \Big)
\\& \leq \frac{1}{2^{k-1}} 2M.
\end{split}
\Ee
Hence, we conclude that $\{ F^{\ell+1} \}^{\infty}_{\ell=0}$ forms a Cauchy sequences in $L^{\infty} (\O \times \R^3)$.

\smallskip

Now inputting \eqref{bound7:diff_h^l+1_dy} into \eqref{est:e^beta*rho_12^l+1_dy}, we deduce for any $k \geq 0$,
\Be \label{est:cauchy_rho_dy}
\begin{split}
& e^{\beta' \frac{g}{2c} x_3} | \varrho^{k} (t, x) - \varrho^{k-1} (t, x) |
\\& \leq e_+ \| e^{ \frac{\bar{\beta}}{4} \big( \sqrt{(m_+ c)^2 + |p|^2} + \frac{3}{2 c} m_+ g x_3 \big) }  (F^{k}_{+} - F^{k-1}_{+} ) \|_{L^{\infty} (\O \times \R^3)} \times \frac{1}{\hat{\beta}}
\\& \ \ \ \ + e_- \| e^{ \frac{\bar{\beta}}{4} \big( \sqrt{(m_- c)^2 + |p|^2} + \frac{3}{2 c} m_- g x_3 \big) } (F^{k}_{-} - F^{k-1}_{-}) \|_{L^{\infty} (\O \times \R^3)} \times \frac{1}{\hat{\beta}}
\\& \leq \frac{2 M}{2^{k-1}} \times \frac{e_+ + e_-}{\hat{\beta}}.
\end{split}
\Ee
Similarly, we input \eqref{bound7:diff_h^l+1_dy} into \eqref{est2:phi_12_x^l+1_dy}, and obtain for any $k \geq 0$,
\Be \label{est:cauchy_phi_x_dy}
\begin{split}
& \| \nabla_x \Psi^{k} (t, x) - \nabla_x \Psi^{k-1} (t, x) \|_{L^\infty (\O)} 
\\& \leq \mathfrak{C} (1 + \frac{2 c}{\beta^\prime g} ) \frac{1}{\hat{\beta}} \Big(
e_+ \| e^{ \frac{\bar{\beta}}{4} \big( \sqrt{(m_+ c)^2 + |p|^2} + \frac{3}{2 c} m_+ g x_3 \big) } (F^{k}_{+} - F^{k-1}_{+}) \|_{L^{\infty} (\O \times \R^3)}
\\& \qquad \qquad \qquad \qquad + e_- \| e^{ \frac{\bar{\beta}}{4} \big( \sqrt{(m_- c)^2 + |p|^2} + \frac{3}{2 c} m_- g x_3 \big) } (F^{k}_{-} - F^{k-1}_{-}) \|_{L^{\infty} (\O \times \R^3)}  \Big)
\\& \leq \frac{2 M}{2^{k-1}}  \mathfrak{C} (1 + \frac{2 c}{\beta^\prime g} ) \frac{e_+ + e_-}{\hat{\beta}}.
\end{split}
\Ee
Thus, we deduce that $\{ \varrho^\ell \}^{\infty}_{\ell=0}$ and $\{ \nabla_x \Psi^\ell \}^{\infty}_{\ell=0}$ are both Cauchy sequences in $L^{\infty} (R_+ \times \O)$.
Since $\phi_{F^\ell} = \Phi (x) + \Psi^\ell$ with $ \Phi (x) \in L^{\infty} (\O)$ for any $\ell \geq 0$, we conclude that $\{ \nabla_x \phi_{F^\ell} \}_{\ell=0}$ is also a Cauchy sequence in $L^{\infty} (R_+ \times \O)$.
\end{proof}

Finally, using the Cauchy sequences of $L^\infty$-spaces in Proposition \ref{prop:cauchy_dy}, we construct a weak solution of $(F_{\pm}, \phi_F)$ solving \eqref{VP_F}, \eqref{Poisson_F}, \eqref{VP_0},  \eqref{Dbc:F} and \eqref{bdry:F}.

\begin{proof}[\textbf{Proof of Theorem \ref{theo:CD}}]

Besides the assumption in Theorem \ref{theo:CS}, we further assume \eqref{choice:g} which implies the condition \eqref{condition:Dvh} in Theorem \ref{theo:AS} and \eqref{condition:ML} in Theorem \ref{theo:RD}.
Hence, we can apply Theorems \ref{theo:AS} and \ref{theo:RD} in the following proof.

\smallskip

\textbf{Step 1. Regularity: Proof of \eqref{Uest:wh_dy}-\eqref{Uest:DxPsi} and \eqref{Uest:Dxphi_F}-\eqref{Uest:D2xD3tphi_F}.}
From \eqref{est:h_cauchy_dy}, \eqref{est:cauchy_rho_dy} and \eqref{est:cauchy_phi_x_dy}, the arguments of the Cauchy sequences of $L^\infty$-spaces in Proposition \ref{prop:cauchy_dy}, there exists 
\be \notag
F_{\pm} (t, x, p) \in L^\infty (\R_+ \times \bar \O \times \R^3)
\ \text{ and } \ \varrho(t, x), \nabla_x \phi_F (t, x) \in L^\infty (\R_+ \times \bar \O) 
\ \text{ with } \ 
\phi_F = 0 \ \ \text{on} \ \p\O,
\ee
such that as $k \to \infty$,
\be \label{weakconv_whst_dy}
e^{ \frac{3 \bar{\beta}}{4} ( \sqrt{(m_{\pm} c)^2 + |p|^2} + \frac{1}{2 c} m_{\pm} g x_3) } F^{k}_{\pm}
\to e^{ \frac{3 \bar{\beta}}{4} ( \sqrt{(m_{\pm} c)^2 + |p|^2} + \frac{1}{2 c} m_{\pm} g x_3 ) } F_{\pm}
\ \text{ in } \ L^\infty (\R_+ \times \bar \O \times \R^3),
\ee
and
\begin{align}
e^{\beta' \frac{g}{2c} x_3} \varrho^{k} \to e^{\beta' \frac{g}{2c} x_3} \varrho 
& \ \text{ in } \ L^\infty (\R_+ \times \bar \O) \ \text{ as } \ k \to \infty,
\label{weakconv_rhost_dy} \\
\nabla_x \phi_{F^k} \to \nabla_x \phi_F
& \ \text{ in } \ L^\infty (\R_+ \times \bar \O) \ \text{ as } \ k \to \infty,
\label{ae_converge_par_Phist_dy}
\end{align}
where $\bar{\beta} = \frac{\tilde \beta}{2}$ and $\beta' =  \min\{ \frac{\bar \beta}{4}, \beta \} \times \min \{ m_{-},  m_{+} \}$. This shows that
\be \label{strong_conv_h_rho_dy}
\begin{split}
F^{k}_{\pm} \to F_{\pm} 
&\ \text{ in } \ L^\infty (R_+ \times \bar \O \times \R^3) 
\ \text{ as } \ k \to \infty,
\\ \varrho^{k} \to \varrho 
& \ \text{ in } \ L^\infty (R_+ \times \bar \O)
\ \text{ as } \ k \to \infty.
\end{split}
\ee
Using \eqref{Bootstrap_ell_2} in Proposition \ref{prop:DC}, together with \eqref{eqtn:phiFell} and the $L^\infty$ convergence in \eqref{ae_converge_par_Phist_dy}, we prove that $\nabla_x \phi_F$ and $\nabla_x \Psi$ satisfy \eqref{Uest:Dxphi_F} and \eqref{Uest:DxPsi} respectively. 

Furthermore, the $L^{\infty}$ convergence $\varrho^k \to \varrho$ in \eqref{strong_conv_h_rho_dy} also implies
\be \notag
| \varrho |_{C^{0,\delta}(\O)} \leq \sup_{k \in \N} |\varrho^{k+1} |_{C^{0,\delta}(\O)}.
\ee
Together with \eqref{Uest:varrhofell}, \eqref{Uest:DDPhi^l_dy} and \eqref{est:nabla^2phi}, we deduce that $\| \nabla_x ^2 \phi_F \|_\infty $ satisfies \eqref{est:D2xphi_F}.
From Lemma \ref{lem:D3tphi_F}, this implies $| \p_{x_3} \p_t \phi_{F} (t,x)| $ satisfies \eqref{est:D3tphi_F}.
Therefore,  we conclude \eqref{Uest:D2xD3tphi_F}.

Similarly, from \eqref{form:Fell} and $F^k_{\pm} = h_{\pm} + f^k_{\pm}$ for any $k \geq 1$ in \eqref{def:Fell}, the $L^\infty$ convergence in \eqref{strong_conv_h_rho_dy} implies that
\be \label{strong_conv_f_rho_dy}
f^{k}_{\pm} \to f_{\pm} 
\ \text{ in } \ L^\infty (R_+ \times \bar \O \times \R^3) 
\ \text{ as } \ k \to \infty.
\ee
Using \eqref{Uest:wh}, together with \eqref{Uest:wfell}, we obtain \eqref{Uest:wh_dy}.

\smallskip

\textbf{Step 2. Existence.}
We omit the proof of showing $(F_{\pm}, \phi_F)$ obtained in 
\eqref{weakconv_whst_dy}-\eqref{ae_converge_par_Phist_dy} is the solution to \eqref{VP_F}, \eqref{Poisson_F}, \eqref{VP_0},  \eqref{Dbc:F} and \eqref{bdry:F} in the sense of Definition \ref{weak_sol_dy}, since it follows the same weak convergence argument of the proof of Theorem \ref{theo:CS}.

\smallskip

\textbf{Step 3. Regularity: Proof of \eqref{Uest_final:F_v:dyn}-\eqref{Uest_final:F_x:dyn}.}
Since $f_{\pm} = F_{\pm} - h_{\pm}$, from \eqref{est_final:hk_v} and \eqref{est_final:hk_x} in Theorem \ref{theo:CS}, together with \eqref{choice:g} and \eqref{Uest:D2xD3tphi_F}, \eqref{Uest:D^-1_Db} and Theorem \ref{theo:RD}, we conclude $f_{\pm}$ satisfies \eqref{Uest_final:F_v:dyn} and \eqref{Uest_final:F_x:dyn}.

\smallskip

\textbf{Step 4. Stability and Uniqueness: Proof of \eqref{Udecay:f}-\eqref{Udecay:DxPsi}.}
Finally, under the assumption \eqref{choice:g}, using Theorem \ref{theo:AS}, together with \eqref{Uest:wfell} and the $L^{\infty}$ convergence $f^k \to f$ in \eqref{strong_conv_f_rho_dy}, we prove that $(f(t), \varrho(t))$ satisfies \eqref{Udecay:f} and \eqref{Udecay:varrho}.
Applying \eqref{Udecay:varrho} in \eqref{est:nabla_phi} of Lemma \ref{lem:rho_to_phi}, we obtain \eqref{Udecay:DxPsi}.
On the other hand, from \eqref{Uest:wh_dy}, \eqref{Uest_final:F_v:dyn} and \eqref{Uest:D^-1_Db} in Theorem \ref{theo:AS}, we apply Theorem \ref{theo:UA}, and thus conclude the uniqueness of the solution $(F, \phi_F)$.
\end{proof}


\section{Asymptotic Behavior of Solutions as \texorpdfstring{$|x_\parallel | \rightarrow \infty$}{x->infinity}}
\label{sec:asymptotic}

Recall from Theorem \ref{theo:CS} that we established the existence and uniqueness of a stationary solution under the inflow boundary condition. Specifically, there exists a unique solution $(h_{\pm}, \rho, \Phi)$ to \eqref{VP_h}-\eqref{eqtn:Dphi} in the sense of Definition \ref{weak_sol}.
We now do some asymptotic analysis on this solution, beginning by introducing two inflow boundary conditions: for any $(x, p) \in \gamma_-$,
\be \notag
\begin{split}
\text{Isothermal J\"uttner distribution: }
& \frac{1}{ |e_{\pm}| m^2_{\pm} } e^{ - \frac{1}{2} \sqrt{(m_{\pm} c)^2 + |p|^2} },
\\
\text{Non-isothermal J\"uttner distribution: }
& \frac{1}{ |e_{\pm}| m^2_{\pm} T_{\pm} (x) } e^{- \frac{1}{2 T_{\pm} (x)} \sqrt{(m_{\pm} c)^2 + |p|^2} },
\end{split}
\ee
where $T_\pm (x)$ represent the non-isothermal wall temperature.

Before proving the main result, we outline the idea of the proof. Assume that the non-isothermal wall temperature $T_\pm(x)$ approaches a unit temperature as $|x_{\|}| \to \infty$.
Theorem \ref{theo:CS} establishes the existence of two solutions, $h_{I, \pm}$ and $h_{\pm}$, corresponding to the isothermal and non-isothermal J\"uttner distributions, respectively. Note that both solutions satisfy \eqref{Uest:wh}-\eqref{Uest:DPhi}.

Consider the difference $\mathrm{d}_{\pm} (x,p) = h_{I, \pm} (x,p) - h_{\pm} (x,p)$. 
Let $(X_{\pm} (s;x,p), P_{\pm} (s;x,p))$ denote the characteristics in the non-isothermal case, we have
\Be \notag
\begin{split}
\mathrm{d}_{\pm} (x,p) 
& = h_{\pm} (\xbp, \pbp) - h_{I, \pm} (\xbp, \pbp)
\\& \ \ \ \ + \int^0_{-\tbp} e \nabla \phi (X_{\pm} (s;x,p)) \cdot \nabla_p h_{I, \pm} (X_{\pm} (s;x,p), P_{\pm} (s;x,p)) \dd s.
\end{split}
\Ee
Next, we evaluate the weight function $\langle x_\parallel \rangle^3 | \hat{w}_{\pm} \mathrm{d}_{\pm} (x,p) |$ defined in \eqref{def:hat_w_shock}.
Following the steps in the proof of Theorem \ref{theo:US} and leveraging the regularity of $\nabla_p h_{I, \pm}$, we establish a bound for the weight function and conclude the asymptotic behavior of solutions as $|x_\parallel| \to \infty$.

\begin{theorem}
\label{thm:asymptotic_behavior}

Suppose that $(h_{I, \pm}, \rho_I, \Phi_I)$ and $(h_{\pm}, \rho, \Phi)$ are the unique solutions to \eqref{VP_h}-\eqref{eqtn:Dphi} in the sense of Definition \ref{weak_sol} under isothermal and non-isothermal J\"uttner distribution, respectively.
Assume that they both satisfy \eqref{Uest:wh}-\eqref{Uest:DPhi} and 
\be \label{condition:T_pm}
|T_{\pm} (x) - 1| \leq \frac{1}{(20 + |x|)^{4}}
\ \text{ with } \
x \in \p\O.
\ee
Further, we assume that there exists a constant $\beta'$, such that
\be \notag
2 (e_+ + e_-) < (\beta')^3 \leq \frac{1}{128}.
\ee
Then 
\Be
\begin{split}
| \nabla (\Phi - \Phi_I ) | 
& \lesssim \langle x  \rangle^{-3},
\\
\sum\limits_{i = \pm}
\| \langle x_\parallel \rangle^3 w_{i, \beta'} ( h_{I, i} - h_i ) \|_{L^{\infty} (\O \times \R^3)} 
& \lesssim 1,
\end{split}
\Ee
where $\langle x_\parallel \rangle = \sqrt{1+ |x_\parallel|^2}$ and 
$w_{\pm, \beta'} (x,p)$ defined in \eqref{w^h}.
\end{theorem}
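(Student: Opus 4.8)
The plan is to carry out a weighted version of the uniqueness comparison of Theorem~\ref{theo:US}, now keeping the boundary mismatch of the two J\"uttner profiles and coupling the resulting bound on $h_{I,\pm}-h_{\pm}$ back to $\nabla(\Phi-\Phi_I)$ through the Poisson equation in a contraction scheme. Both $h_{I,\pm}$ and $h_{\pm}$ are the solutions furnished by Theorem~\ref{theo:CS}, so they obey \eqref{Uest:wh}--\eqref{Uest:DPhi} as well as the velocity-derivative bound \eqref{est_final:hk_v}; fix $\beta'$ as in the hypothesis and, shrinking it, so small that $\beta'<\tilde\beta/6$, which forces $w_{\pm,\beta'}$ at a point to be dominated by the weight $e^{\frac{\tilde\beta}{2}\sqrt{(m_\pm c)^2+|p|^2}+\frac{\tilde\beta m_\pm g}{4c}x_3}$ appearing in \eqref{est_final:hk_v} (using $|e_\pm\Phi|\le\tfrac12 m_\pm g x_3$ from \eqref{est:phih}). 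Writing $\mathrm d_\pm:=h_{I,\pm}-h_\pm$, subtracting the two stationary Vlasov equations and integrating along the characteristics $(X_\pm(s;x,p),P_\pm(s;x,p))$ of \eqref{ODE_h} built from the non-isothermal potential $\Phi$ yields
\be\notag
\mathrm d_\pm(x,p)=\big[J_{I,\pm}-J_\pm\big]\big(\xbp,\pbp\big)+\int_{-\tbp(x,p)}^{0} e_\pm\nabla(\Phi_I-\Phi)\big(X_\pm(s;x,p)\big)\cdot\nabla_p h_{I,\pm}\big(X_\pm(s;x,p),P_\pm(s;x,p)\big)\,\dd s,
\ee
with $J_{I,\pm},J_\pm$ the two J\"uttner boundary profiles. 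Since $w_{\pm,\beta'}$ is invariant along \eqref{ODE_h} (Remark~\ref{rmk:w_dy}(a)) and reduces to $e^{\beta'\sqrt{(m_\pm c)^2+|\pbp|^2}}$ on $\partial\Omega$, it suffices to bound $\langle x_\parallel\rangle^3 w_{\pm,\beta'}(x,p)$ times each of the two terms on the right.

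For the boundary term I would record, by differentiating $T\mapsto\frac1T e^{-\frac{q^0}{2T}}$ and using $|T_\pm(y)-1|\le 20^{-4}$ for $y\in\partial\Omega$ (from \eqref{condition:T_pm}), the pointwise inequality $|J_{I,\pm}-J_\pm|(y,q)\lesssim |T_\pm(y)-1|\,(1+q_\pm^0)\,e^{-\frac{q_\pm^0}{2(1+20^{-4})}}$; since $\beta'<\tfrac12$ this gives $w_{\pm,\beta'}(x,p)|J_{I,\pm}-J_\pm|(\xbp,\pbp)\lesssim |T_\pm(\xbp)-1|\lesssim (20+|\xbp|)^{-4}$, using $\xbp\in\partial\Omega$. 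As $\tbp(x,p)\le\frac{4}{m_\pm g}|\pbp|$ by Proposition~\ref{lem:tb} and $|V_{\pm,\parallel}|\le c$, we have $|x_\parallel-\xbp|\le c\,\tbp\lesssim \frac{c}{m_\pm g}\sqrt{(m_\pm c)^2+|\pbp|^2}$, hence $\langle x_\parallel\rangle^3\lesssim\langle\xbp\rangle^3+\big(\sqrt{(m_\pm c)^2+|\pbp|^2}\big)^3$; multiplying, $\langle\xbp\rangle^3(20+|\xbp|)^{-4}\lesssim 1$ while the extra momentum power is absorbed by the residual exponential in $w_{\pm,\beta'}|J_{I,\pm}-J_\pm|$, so this term is $O(1)$. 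For the field-difference integral I run a bootstrap: assume $|\nabla(\Phi_I-\Phi)(y)|\le A\langle y\rangle^{-3}$. By invariance of $w_{\pm,\beta'}$ and the choice $\beta'<\tilde\beta/6$, \eqref{est_final:hk_v} gives $w_{\pm,\beta'}(x,p)\,|\nabla_p h_{I,\pm}(X_\pm(s),P_\pm(s))|\lesssim e^{-(\frac{\tilde\beta}{2}-\beta')P_\pm(s)^0}$; along the trajectory $\langle x_\parallel\rangle\lesssim\langle X_\pm(s)\rangle+\frac{c}{m_\pm g}\sqrt{(m_\pm c)^2+|\pbp|^2}$ and, by Lemma~\ref{lem:conservation_law} together with \eqref{est:phih} and \eqref{est:x3}, $P_\pm(s)^0\gtrsim\sqrt{(m_\pm c)^2+|\pbp|^2}$ (here $c$ large is used) and $\tbp$ is controlled by the same quantity. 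Inserting these, every polynomial prefactor is swallowed by $e^{-(\frac{\tilde\beta}{2}-\beta')P_\pm(s)^0}$ and one obtains $\langle x_\parallel\rangle^3 w_{\pm,\beta'}(x,p)\int_{-\tbp}^{0}|e_\pm\nabla(\Phi_I-\Phi)(X_\pm(s))|\,|\nabla_p h_{I,\pm}(X_\pm(s),P_\pm(s))|\,\dd s\lesssim A$. Combining, $\mathcal D:=\sum_{i=\pm}\|\langle x_\parallel\rangle^3 w_{i,\beta'}\mathrm d_i\|_{L^\infty(\O\times\R^3)}\le C_1+C_2 A$.

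Finally I close the loop through Poisson's equation. From $\rho-\rho_I=\int_{\R^3}(e_+\mathrm d_++e_-\mathrm d_-)\dd p$ and $\int_{\R^3}w_{\pm,\beta'}(x,p)^{-1}\dd p\lesssim(\beta')^{-3}e^{-\beta'\frac{m_\pm g}{2c}x_3}$ (as in \eqref{est3:rho_k+1}) I get $\|\langle x_\parallel\rangle^3 e^{\nu x_3}(\rho-\rho_I)\|_{L^\infty(\O)}\lesssim\frac{e_++e_-}{(\beta')^3}\mathcal D$ for a suitable $\nu>0$; applying the half-space elliptic estimate of Lemma~\ref{lem:rho_to_phi} (in a form that also propagates the transverse decay of the source) to $-\Delta(\Phi-\Phi_I)=\rho-\rho_I$ with zero Dirichlet data then gives $|\nabla(\Phi-\Phi_I)(x)|\lesssim\langle x\rangle^{-3}\frac{e_++e_-}{(\beta')^3}\mathcal D$, i.e.\ $A\lesssim\frac{e_++e_-}{(\beta')^3}(C_1+C_2A)$. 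The hypotheses $2(e_++e_-)<(\beta')^3\le\frac1{128}$ are calibrated precisely so that the coefficient of $A$ is $<1$ once the implied constants (depending only on $m_\pm,e_\pm,g,c,\beta',\tilde\beta$ and the boundary norms, and absorbed by the margin in $(\beta')^3\le\frac1{128}$) are accounted for; hence $A\lesssim 1$ and $\mathcal D\lesssim 1$, which are the two asserted bounds. To make the bootstrap rigorous one either truncates to slabs $\{|x_\parallel|\le N\}$ or iterates the affine map $A\mapsto\frac{e_++e_-}{(\beta')^3}(C_1+C_2A)$ from $A=0$. I expect the main obstacle to be exactly this last elliptic step: the decay rate $3$ is borderline for the three-dimensional Newtonian kernel, so deducing $|\nabla(\Phi-\Phi_I)(x)|\lesssim\langle x\rangle^{-3}$ from a source decaying like $\langle y_\parallel\rangle^{-3}e^{-\nu y_3}$ in the Dirichlet half-space requires a careful near-field/far-field splitting of $\int|\nabla_x\mathfrak G(x,y)|\,|\rho-\rho_I|(y)\,\dd y$, in tandem with tight bookkeeping of how $\langle x_\parallel\rangle$, $\langle\xbp\rangle$, $\langle X_{\pm}(s)\rangle$, $|\pbp|$ and $x_3$ are linked along the characteristics so that the power $3$ survives.
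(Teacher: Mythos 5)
Your overall scheme is the one the paper uses: the Duhamel formula along the characteristics of the non-isothermal field $\Phi$, the weight $\langle x_\parallel\rangle^3 w_{\pm,\beta'}$ (invariant along \eqref{ODE_h} and equal to $e^{\beta'\sqrt{(m_\pm c)^2+|\pb|^2}}$ at the boundary), the boundary term controlled through \eqref{condition:T_pm} after trading $\langle x_\parallel\rangle$ for $\langle \xb\rangle$ and $|\pb|$ via $\tb\lesssim|\pb|$, the field term controlled through the $\nabla_p h_{I,\pm}$ bound of Theorem~\ref{theo:CS}, and the closing absorption via $2(e_++e_-)<(\beta')^3$. The genuine gap is the step you yourself flag and defer: you invoke Lemma~\ref{lem:rho_to_phi} ``in a form that also propagates the transverse decay of the source,'' but no such form exists in the paper — that lemma only gives uniform bounds on $\nabla\phi$ from exponential decay in $x_3$, with no gain in $x_\parallel$. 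The decisive estimate, namely
\begin{equation*}
\int_{\O}\frac{e^{-\beta'\frac{m g}{2c}y_3}}{\langle y_\parallel\rangle^{3}}\,\big|\nabla_x\mathfrak{G}(X(s;x,p),y)\big|\,\dd y\;\lesssim\;\frac{1}{\langle X_\parallel(s;x,p)\rangle^{3}},
\end{equation*}
is exactly the content of \eqref{est:x^awh_*}, and proving it is the heart of the paper's argument: one splits $|X-y|<1$ versus $|X-y|\ge 1$, uses the kernel bounds $|\nabla_x\mathfrak G|\lesssim\min\{y_3|x-y|^{-3},|x-y|^{-2}\}$ from \eqref{est:G_x_shock}, and exploits $\frac{1}{\langle y_\parallel\rangle\langle X_\parallel-y_\parallel\rangle}\lesssim\frac{1}{\langle X_\parallel\rangle}\big(\frac{1}{\langle y_\parallel\rangle}+\frac{1}{\langle X_\parallel-y_\parallel\rangle}\big)$; the rate $3$ is in fact not borderline, since $3>2$ makes $\langle\cdot\rangle^{-3}$ integrable on $\R^2$ and the vertical weight $y_3 e^{-\beta'\frac{mg}{2c}y_3}$ is summable. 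Without this lemma your chain $A\lesssim\frac{e_++e_-}{(\beta')^3}(C_1+C_2A)$ is not established, so the proof is incomplete at its central point.

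Two further remarks. First, the paper avoids your bootstrap on $A=\sup\langle y\rangle^3|\nabla(\Phi-\Phi_I)|$ altogether: it substitutes the Green's-function representation of $\nabla(\Phi-\Phi_I)$ directly into the Duhamel integral, so the self-bound is on the single quantity $\sum_i\|\langle x_\parallel\rangle^3 w_{i,\beta'}(h_{I,i}-h_i)\|_\infty$ with coefficient $\lesssim (e_++e_-)/(\beta')^3<1/2$, and the decay of $\nabla(\Phi-\Phi_I)$ then falls out as a corollary; this sidesteps the a-priori finiteness/slab-truncation issue your iteration from $A=0$ must confront. Second, your intermediate claim $P_\pm(s)^0\gtrsim\sqrt{(m_\pm c)^2+|\pb|^2}$ is delicate: at the vertical apex $P^0$ can drop to roughly $m_\pm c$ plus the parallel part, and the inequality only survives with a constant of the form $(1-3/c)$, i.e.\ it silently uses $c>3$. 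The paper instead absorbs the weight purely through the conservation law (with $\beta'$ small relative to $\tilde\beta$, cf.\ \eqref{est3:x^awh}) and controls the trajectory-length factor $(1+cs)^3$ by $\tb\lesssim p^0+x_3$ together with the surviving exponential $e^{-\frac18(p^0+\frac{mg}{2c}x_3)}$, which is the more robust bookkeeping you would need to adopt.
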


\begin{proof}

In the proof, we adopt the abuse of notation about $\pm$ in \eqref{abuse}.

\textbf{Step 1.}
From the assumption, $(h_{I, \pm}, \rho_I, \Phi_I)$ solves the following VP system with an isothermal boundary condition:
\Be \label{VP_h_iso}
\begin{split}
v_\pm \cdot \nabla_x h_{I, \pm}
+ \big( e_{\pm} ( \frac{v_\pm}{c} \times B 
- \nabla_x \Phi_I ) - \nabla_x (m_{\pm} g x_3) \big) \cdot \nabla_p h_{I, \pm} = 0,
\\
\Delta \Phi_I (x) = \rho_I (x) = \int_{\R^3} ( e_+ h_{I, +} + e_{-} h_{I, -} ) \dd p
\ \text{  with  } \ 
\Phi_I |_{\p\O} = 0,
\\
h_{I, \pm} (x,p) |_{\gamma_-} 
= \frac{1}{ |e_{\pm}| m^2_{\pm} } e^{ - \frac{1}{2} \sqrt{(m_{\pm} c)^2 + |p|^2} }.
\end{split}
\Ee
Simultaneously, $(h_{\pm}, \rho, \Phi)$ solves the following VP system with a non-isothermal boundary condition:
\Be \label{VP_h_non}
\begin{split}
v_\pm \cdot \nabla_x h_{\pm}
+ \big( e_{\pm} ( \frac{v_\pm}{c} \times B 
- \nabla_x \Phi ) - \nabla_x (m_{\pm} g x_3) \big) \cdot \nabla_p h_{\pm} = 0,
\\
\Delta \Phi (x) = \rho (x) = \int_{\R^3} ( e_+ h_{+} + e_{-} h_{-} ) \dd p
\ \text{ with } \
\Phi |_{\p\O} = 0,
\\ 
h_{\pm} (x,p) |_{\gamma_-} 
= \frac{1}{ e_{\pm} m^2_{\pm} T_{\pm} (x) } e^{- \frac{1}{2 T_{\pm} (x)} \sqrt{(m_{\pm} c)^2 + |p|^2} }.
\end{split}
\Ee
Define two functions $\mathrm{d}_{\pm} (x, p)$ and $\phi (x)$ as
\Be
\begin{split}
\mathrm{d}_{\pm} (x,p) 
& := h_{\pm} (x, p) - h_{I, \pm} (x,p)
\ \ \text{in} \ \O \times \R^3,
\\ \phi (x) 
& := \Phi (x) - \Phi_I (x)
\ \ \text{in} \ \O.
\end{split}
\Ee
Under direct computation, $(\mathrm{d}_{\pm}, \phi)$ solves the following system:  
\Be \label{VP_diff_F}
\begin{split}
v_\pm \cdot \nabla_x \mathrm{d}_{\pm}
+ \big( e_{\pm} ( \frac{v_\pm}{c} \times B 
- \nabla_x \Phi ) - \nabla_x (m_{\pm} g x_3) \big) \cdot \nabla_p \mathrm{d}_{\pm} = e_{\pm} \nabla \phi \cdot \nabla_p h_{I, \pm},
\\ \Delta \phi = \int_{\R^3} ( e_+ \mathrm{d}_{+} + e_{-} \mathrm{d}_{-} ) \dd p
\ \text{ with } \
\phi |_{\p\O} = 0, 
\\ \mathrm{d}_{\pm} (x,p)|_{\gamma_-} 
= \frac{1}{ e_{\pm} m^2_{\pm} T_{\pm} (x) } e^{- \frac{1}{2 T_{\pm} (x)} \sqrt{(m_{\pm} c)^2 + |p|^2} } - \frac{1}{ e_{\pm} m^2_{\pm} } e^{ - \frac{1}{2} \sqrt{(m_{\pm} c)^2 + |p|^2} }. 
\end{split}
\Ee 
Consider the characteristics $Z_{\pm} (s;x,p) = (X_{\pm} (s;x,p), P_{\pm} (s;x,p))$ for \eqref{VP_diff_F}: 
\Be \label{char:diff_F}
\begin{split} 
\frac{d X_{\pm} (s;x,p) }{d s} 
& = V_{\pm} (s;x,p) = \frac{P_{\pm} (s;x,p)}{\sqrt{m^2_{\pm} + |P_{\pm} (s;x,p)|^2 / c^2}}, \\
\frac{d P_{\pm} (s;x,p) }{d s} 
& =  {e_{\pm}} ( V_{\pm} (s;x,p) \times \frac{B}{c} - \nabla_x \Phi (X_{\pm} (s;x,p) )) - {m_{\pm}} g \mathbf{e}_3,
\end{split}
\Ee
where $\Phi$ solves \eqref{VP_h_non}. At $s = 0$, we have
\[
Z_{\pm} (0; x, p) = (X_{\pm} (0; x, p), P_{\pm} (0; x,p)) = (x_{\pm}, p_{\pm}) = z_{\pm}.
\]
Along the characteristics \eqref{char:diff_F}, we further consider the backward exit time $\tbp$ and backward exit position and momentum as follows:
\[
(\xbp, \pbp ) = (X_{\pm} (-\tbp; x, p), P_{\pm} (-\tbp; x,p)).
\]
For any $(x, p) \in \O \times \R^3$, we have
\Be \label{eq:delta}
\begin{split}
\mathrm{d} (x,p) 
& = h (\xb, \pb) - h_{I} (\xb, \pb)
\\& \ \ \ \ + \int^0_{-\tb(x,p)} e \nabla \phi (X(s;x,p)) \cdot \nabla_p h_{I} (X(s;x,p), P(s;x,p)) \dd s.
\end{split}
\Ee
Following Lemma \ref{lem:conservation_law}, we get for all $s \in [-\tb (x,p), 0]$,
\Be \label{char:phi}
\begin{split}
& \sqrt{(m c)^2 + |P (s;x,p)|^2} + \frac{1}{c} \big( e \Phi (X (s;x,p)) + m g X_{3} (s;x,p) \big)
\\& = \sqrt{(m c)^2 + |\pb (x,p)|^2}. 
\end{split}
\Ee  
From \eqref{Uest:DPhi} in Theorem \ref{theo:CS}, we have 
\Be \label{condition:phi}
\| \nabla_x \Phi \|_{L^\infty (\bar{\O})} 
\leq \min \big( \frac{m_{+}}{e_{+}}, \frac{m_{-}}{e_{-}} \big) \times \frac{g}{2}. 
\Ee 
Using \eqref{est:tb^h} in Lemma \ref{lem:tb}, we have
\Be \label{est:tb_shock}
\tb (x,p) 
\leq \frac{4}{m g} \big( \sqrt{(m c)^2 + |p|^2} + \frac{3}{2c} m g x_{3} \big),
\Ee
and
\Be \label{est2:tb_shock}
\tb(x,p) 
\leq \frac{4}{m g} |\pb (x, p)|.
\Ee

\smallskip

\textbf{Step 2.}
Now we introduce the following weight: 
\Be \label{def:hat_w_shock}
\hat{w}_{\pm} (x,p) 
:= w_{\pm, \beta'} (x,p)
= e^{ \beta' \big( \sqrt{(m_{\pm} c)^2 + |p|^2} + \frac{1}{c} ( e_{\pm} \Phi (x) + m_{\pm} g x_3 ) \big) }.
\Ee
From \eqref{char:phi}, we have
\be \label{est:hat_w_conservation}
\hat{w}_{\pm} (x,p) = e^{ \beta' \sqrt{(m_{\pm} c)^2 + |\pbp (x,p)|^2} }.
\ee
Together with \eqref{eq:delta}, we derive a bound 
\Be \label{est:x^awh}
\begin{split}
& \langle x_\parallel \rangle^3 | \hat{w} \mathrm{d} (x,p) | 
\\& \leq \langle x_\parallel \rangle^3 e^{ \beta' \sqrt{(m c)^2 + |\pb (x,p)|^2} } | h (\xb, \pb) - h_I (\xb, \pb) |
\\& \ \ \ \ + 
\underbrace{
\langle x_\parallel \rangle^3 e^{ \beta' \left( \sqrt{(m c)^2 + |p|^2} + \frac{1}{c} ( e \Phi (x) + m g x_3 ) \right) } \int^0_{-\tb(x,p)} e \nabla \phi (X) \cdot \nabla_p h_{I} (X, P) \dd s
}_{\eqref{est:x^awh}^*}. 
\end{split}
\Ee
From \eqref{theo:hk_v}, we compute
\be \label{est2:x^awh}
\begin{split}
\eqref{est:x^awh}^*
& \lesssim \langle x_\parallel \rangle^3 e^{ \beta' \left( \sqrt{(m c)^2 + |p|^2} + \frac{1}{c} ( e \Phi (x) + m g x_3 ) \right) } 
\\& \ \ \ \ \times \int^0_{-\tb(x,p)}  |\nabla \phi (X(s;x,p))|  
e^{ - \frac{\tilde \beta}{2} \sqrt{(m c)^2 + | P (s:x,p) |^2}} e^{- \frac{\tilde \beta m g}{4 c} X_3 (s:x,p)} \dd s.
\end{split}
\ee
Suppose that $4 \beta' \leq \tilde \beta = 1$.
Together with \eqref{char:phi} and \eqref{condition:phi}, we bound
\be \label{est3:x^awh}
\begin{split}
\eqref{est2:x^awh}
& \lesssim \langle x_\parallel \rangle^3 \int^0_{-\tb(x,p)}  |\nabla \phi (X(s;x,p))|  
e^{ - \frac{\tilde \beta}{4} \sqrt{(m c)^2 + | P (s:x,p) |^2}} e^{  - \frac{\tilde \beta m g}{8 c} X_3 (s:x,p)} \dd s
\\& \lesssim \langle x_\parallel \rangle^3 \int^0_{-\tb(x,p)}  |\nabla \phi (X(s;x,p))|  
e^{- \frac{1}{4} \big( \sqrt{(m c)^2 + | P (s:x,p) |^2} + \frac{m g}{2 c} X_3(s;x,p) \big) } \dd s.
\end{split}
\ee
Recall from \eqref{GreenF} and Lemma \ref{lem:rho_to_phi}, 
\Be \label{GreenF_shock}
\nabla \phi (x) = \nabla_x \mathfrak{G} * \int_{\R^3} ( e_+ \mathrm{d}_{+} (x, p) + e_{-} \mathrm{d}_{-} (x, p) ) \dd p,
\Ee
and 
\Be \label{est:G_x_shock}
|\nabla_x \mathfrak{G} (x,y)| 
\lesssim  \min \left\{ 
\frac{y_3}{|x-y|^3}, \frac{1}{|x-y|^2}
\right\}.
\Ee
From \eqref{GreenF_shock}, we have
\Be \label{est:phi_x}
\begin{split}
& | \nabla \phi (X(s;x,p)) |
\\& \leq \sum\limits_{i = \pm} e_i \int_{\O} \big| \nabla_x \mathfrak{G} (X(s;x,p), y) \big| \int_{\R^3} \big| \mathrm{d}_i (y, p^\prime ) \big| \dd p^\prime \dd y
\\& \leq \sum\limits_{i = \pm} e_i \int_{\O} \big| \nabla_x \mathfrak{G} (X(s;x,p), y) \big| \int_{\R^3} \frac{1}{ \langle y_\parallel \rangle^3 \hat{w}_i (y,p^\prime) } \langle y_\parallel \rangle^3 | \hat{w}_i \mathrm{d}_i (y, p^\prime ) | \dd p^\prime \dd y 
\\& \leq \sum\limits_{i = \pm} e_i \| \langle x_\parallel \rangle^3 \hat{w}_i \mathrm{d}_i (x, p ) \|_{L^{\infty} (\O \times \R^3)} \int_{\O} \big| \nabla_x \mathfrak{G} (X(s;x,p), y) \big|  \int_{\R^3} \frac{1}{ \langle y_\parallel \rangle^3 \hat{w}_i (y,p^\prime) } \dd p^\prime \dd y
\\& \lesssim \sum\limits_{i = \pm} \frac{e_i}{ (\beta')^3 } \| \langle x_\parallel \rangle^3 \hat{w}_i \mathrm{d}_i (x, p ) \|_{L^{\infty} (\O \times \R^3)} \times \int_{\O} \frac{e^{- \beta' \frac{m_i g}{2 c} y_3}}{\langle y_\parallel \rangle^3 } \big| \nabla_x \mathfrak{G} (X(s;x,p), y) \big| 
 \dd y,
\end{split}
\Ee
where the last inequality follows from \eqref{condition:phi} and the zero Dirichlet boundary.
Inputting \eqref{est:phi_x} into \eqref{est3:x^awh}, we derive that
\be \label{est4:x^awh}
\begin{split}
\eqref{est3:x^awh} 
& \lesssim \langle x_\parallel \rangle^3 \int^0_{-\tb (x,p)} e^{- \frac{1}{4} \big( \sqrt{(m c)^2 + | P (s:x,p) |^2} + \frac{m g}{2 c} X_{3} (s;x,p) \big) } \dd s
\\& \ \ \ \ \times \sum\limits_{i = \pm} 
\Big\{ \frac{e_i}{ (\beta')^3 } \| \langle x_\parallel \rangle^3 \hat{w}_i \mathrm{d}_i (x, p ) \|_{L^{\infty} (\O \times \R^3)}
\int_{\O} \frac{ e^{- \beta' \frac{m_i g}{2 c} y_3} }{\langle y_\parallel \rangle^3 } | \nabla_x \mathfrak{G}(X(s;x,p), y)| \dd y \Big\}.
\end{split}
\ee

\smallskip

\textbf{Step 3.}
Now we claim that for any $s \in [-\tb (x,p), 0]$,
\Be \label{est:x^awh_*}
\int_{\O} \frac{ e^{- \beta' \frac{m g}{2 c} y_3} }{\langle y_\parallel \rangle^3} | \nabla_x \mathfrak{G}(X(s;x,p), y)| \dd y
\lesssim \frac{1}{\langle  X_\parallel (s;x,p) \rangle^3}.
\Ee
To prove the claim, we split the integration on the left-hand side of \eqref{est:x^awh_*} into two parts:
\be \label{est5:x^awh}
\begin{split}
& \int_{\O} \frac{1_{|X(s;x,p) - y| < 1}}{\langle y_\parallel \rangle^3} 
e^{- \beta' \frac{m g}{2 c} y_3} |  \nabla_x \mathfrak{G}(X(s;x,p), y)| \dd y
\\& \ \ \ \ + \int_0^\infty \int_{\R^2} \frac{1_{|X(s;x,p) - y| \geq 1}}{\langle y_\parallel \rangle^3} 
e^{- \beta' \frac{m g}{2 c} y_3} |  \nabla_x \mathfrak{G}(X(s;x,p), y)| \dd y_\parallel \dd y_3
\\& \lesssim 
\underbrace{
\int_{\O} \frac{1_{|X(s;x,p) - y| < 1}}{\langle y_\parallel \rangle^3} 
e^{- \beta' \frac{m g}{2 c} y_3} \frac{1}{|X (s;x,p) - y|^2} \dd y
}_{\eqref{est5:x^awh}_1}
\\& \ \ \ \ + 
\underbrace{
\int_0^\infty \int_{\R^2} \frac{1_{|X(s;x,p) - y| \geq 1}}{\langle y_\parallel \rangle^3} 
e^{- \beta' \frac{m g}{2 c} y_3} \frac{y_3}{|X (s;x,p) - y|^3} \dd y_\parallel \dd y_3
}_{\eqref{est5:x^awh}_2},
\end{split}
\ee
where the last inequality follows from \eqref{est:G_x_shock}.

For $\eqref{est5:x^awh}_1$, assume that $|X(s;x,p) - y| < 1$, we have
\be
|X_{\parallel} (s;x,p) - y_{\parallel}| + |X_3 (s;x,p) - y_3| \lesssim |X(s;x,p) - y| < 1.
\ee
This implies that
\be
| y_{\parallel} | = 
| y_{\parallel} - X_{\parallel} (s;x,p) + X_{\parallel} (s;x,p) |
\geq | X_{\parallel} (s;x,p) | - 1,
\ee
and thus
\be
\langle y_{\parallel} \rangle \gtrsim \langle X_{\parallel} (s;x,p) \rangle.
\ee
Together with the spherical coordinates, we obtain
\be \label{est:est5:x^awh_1}
\begin{split}
& \int_{\O} \frac{1_{|X(s;x,p) - y| < 1}}{\langle y_\parallel \rangle^3} 
e^{- \beta' \frac{m g}{2 c} y_3} \frac{1}{|X (s;x,p) - y|^2} \dd y
\\& \lesssim \frac{1}{\langle  X_\parallel (s;x,p) \rangle^3} \int_{\O} e^{- \beta' \frac{m g}{2 c} y_3} \frac{1_{|X(s;x,p) - y| < 1}}{|X (s;x,p) - y|^2} \dd y 
\\& \lesssim \frac{1}{\langle  X_\parallel (s;x,p) \rangle^3} \int^{1}_{0} e^{- \beta' \frac{m g}{2 c} y_3} \dd r \lesssim \frac{1}{\langle  X_\parallel (s;x,p) \rangle^3}.
\end{split}
\ee
Thus, we deduce that
\be \label{est2:est5:x^awh_1}
\eqref{est5:x^awh}_1 
\lesssim \frac{1}{\langle  X_\parallel (s;x,p) \rangle^3}.
\ee

For $\eqref{est5:x^awh}_2$, we bound
\be \label{est:est5:x^awh_2}
\begin{split}
\eqref{est5:x^awh}_2
& = \int_0^\infty e^{- \beta' \frac{m g}{2 c} y_3} y_3 \int_{\R^2} \frac{1_{|X(s;x,p) - y| \geq 1}}{\langle y_\parallel \rangle^3} 
 \frac{1}{|X (s;x,p) - y|^3} \dd y_\parallel \dd y_3 
\\& \lesssim \int_0^\infty e^{- \beta' \frac{m g}{2 c} y_3} y_3 \int_{\R^2} \frac{1}{\langle y_\parallel \rangle^3} 
\frac{1}{ \langle X_\parallel (s;x,p) - y_\parallel \rangle^3 } \dd y_\parallel \dd y_3, 
\end{split}
\ee
where the second line follows from
\be
|X (s;x,p) - y| \gtrsim \langle X_\parallel (s;x,p) - y_\parallel \rangle
\ \text{ when } \
|X(s;x,p) - y| \geq 1.
\ee
Under direct computation, we have
\be \label{est:<X_12>}
\langle X_\parallel (s;x,p) \rangle
\lesssim \langle X_\parallel (s;x,p) - y_\parallel \rangle + \langle y_\parallel \rangle.
\ee
From \eqref{est:<X_12>}, we get
\be \notag
\begin{split}
\langle y_\parallel \rangle
\langle X_\parallel (s;x,p) - y_\parallel \rangle
& \gtrsim \big( \langle X_\parallel (s;x,p) - y_\parallel \rangle + \langle y_\parallel \rangle \big) \times \min \{\langle y_\parallel \rangle, \langle X_\parallel (s;x,p) - y_\parallel \rangle \}
\\& \gtrsim \langle X_\parallel (s;x,p) \rangle \times \min \{\langle y_\parallel \rangle, \langle X_\parallel (s;x,p) - y_\parallel \rangle \},
\end{split}
\ee
and thus
\be \label{est2:<X_12>}
\begin{split}
\frac{1}{\langle y_\parallel \rangle
\langle X_\parallel (s;x,p) - y_\parallel \rangle}
& \lesssim 
\frac{1}{\langle X_\parallel (s;x,p) \rangle}
\times \frac{1}{\min \{\langle y_\parallel \rangle, \langle X_\parallel (s;x,p) - y_\parallel \rangle \}}
\\& \lesssim 
\frac{1}{\langle X_\parallel (s;x,p) \rangle} \times  
\Big( \frac{1}{\langle y_\parallel \rangle} +
\frac{1}{\langle X_\parallel (s;x,p) - y_\parallel \rangle} \Big).
\end{split}
\ee
Inputting \eqref{est2:<X_12>} into \eqref{est:est5:x^awh_2}, we obtain
\be \label{est2:est5:x^awh_2}
\begin{split}
& \int_0^\infty e^{- \beta' \frac{m g}{2 c} y_3} y_3 \int_{\R^2} \frac{1}{\langle y_\parallel \rangle^3} 
\frac{1}{ \langle X_\parallel (s;x,p) - y_\parallel \rangle^3 } \dd y_\parallel \dd y_3
\\& \lesssim \frac{1}{\langle X_\parallel (s;x,p) \rangle^3} \int_0^\infty e^{- \beta' \frac{m g}{2 c} y_3} y_3 \int_{\R^2} \Big( \frac{1}{\langle y_\parallel \rangle} +
\frac{1}{\langle X_\parallel (s;x,p) - y_\parallel \rangle} \Big)^3 \dd y_\parallel \dd y_3
\\& \lesssim \frac{1}{\langle X_\parallel (s;x,p) \rangle^3} \int_0^\infty e^{- \beta' \frac{m g}{2 c} y_3} y_3 \int_{\R^2} \big( \frac{1}{\langle y_\parallel \rangle^3} +
\frac{1}{\langle X_\parallel (s;x,p) - y_\parallel \rangle^3} \big) \dd y_\parallel \dd y_3
\end{split}
\ee
Using the cylindrical coordinates $(y_1, y_2, y_3) =  ( r \cos \varphi, r \sin \varphi, y_3)$, together with change of coordinates $z_\parallel = X_\parallel (s;x,p) - y_\parallel$, we derive that
\be \label{est3:est5:x^awh_2}
\begin{split}
& \int_0^\infty e^{- \beta' \frac{m g}{2 c} y_3} y_3 \int_{\R^2} \big( \frac{1}{\langle y_\parallel \rangle^3} +
\frac{1}{\langle X_\parallel (s;x,p) - y_\parallel \rangle^3} \big) \dd y_\parallel \dd y_3
\\& \lesssim \int_0^\infty e^{- \beta' \frac{m g}{2 c} y_3} y_3 \dd y_3\int^{\infty}_{0} \frac{r}{1 + r^3} \dd r \lesssim \frac{2 c}{\beta' m g}.
\end{split}
\ee
Applying the above into \eqref{est2:est5:x^awh_2}, we get
\be \label{est4:est5:x^awh_2}
\eqref{est5:x^awh}_2
\lesssim \frac{2 c}{\beta' m g} \frac{1}{\langle X_\parallel (s;x,p) \rangle^3}.
\ee
Combining with \eqref{est2:est5:x^awh_1} and \eqref{est4:est5:x^awh_2}, we conclude \eqref{est:x^awh_*}.

Inputting \eqref{est:x^awh_*} into \eqref{est4:x^awh}, we derive that 
\be \label{est8:x^awh}
\begin{split}
\eqref{est4:x^awh}
& \lesssim \Big( \frac{e_+}{ (\beta')^3 } \| \langle x_\parallel \rangle^3 \hat{w}_+ \mathrm{d}_+ (x, p ) \|_{L^{\infty} (\O \times \R^3)} + \frac{e_-}{ (\beta')^3 } \| \langle x_\parallel \rangle^3 \hat{w}_- \mathrm{d}_- (x, p ) \|_{L^{\infty} (\O \times \R^3)} \Big)
\\& \qquad \times 
\underbrace{
\int^0_{-\tb(x,p)} e^{- \frac{1}{4} \big( \sqrt{(m c)^2 + | P (s:x,p) |^2} + \frac{m g}{2 c} X_{3} (s;x,p) \big) }  
\frac{\langle x_\parallel \rangle^3}{\langle  X_{\parallel} (s;x,p) \rangle^3}
\dd s
}_{\eqref{est8:x^awh}_*}.
\end{split}
\ee

Now we consider the relation between $\langle x_\parallel \rangle$ and $\langle  X_\parallel (s;x,p) \rangle$ as follows:
\Be \notag
\begin{split}
X (s;x,p) 
= x + \int^s_0 V (\tau; x,p) \dd \tau
= x + \int^s_0 \frac{P (\tau;x,p)}{\sqrt{m^2 + |P (\tau;x,p)|^2 / c^2}} \dd \tau.
\end{split}
\Ee
Thus, we obtain that
\be \label{est:|x_12|}
| x_{\parallel} |
\leq | X_{\parallel} (s;x,p) | + c s.
\ee
Inputting \eqref{est:|x_12|} into $\eqref{est8:x^awh}_*$, together with $\langle y \rangle \geq 1$ for any $y \in \R^3$, we have
\be \label{est:est8:x^awh_*}
\begin{split}
\eqref{est8:x^awh}_*
& = \int^0_{-\tb(x,p)} e^{- \frac{1}{4} \big( \sqrt{(m c)^2 + | P (s:x,p) |^2} + \frac{m g}{2 c} X_{3} (s;x,p) \big) }   
\frac{\langle x_\parallel \rangle^3}{\langle  X_\parallel (s;x,p) \rangle^3}
\dd s
\\& \lesssim \int^0_{-\tb(x,p)} e^{- \frac{1}{4} \big( \sqrt{(m c)^2 + | P (s:x,p) |^2} + \frac{m g}{2 c} X_{3} (s;x,p) \big) }  
(1 + c s )^3 \dd s.
\end{split}
\ee
Recall from \eqref{char:phi}, for all $s \in [-\tb (x,p), 0]$,
\Be 
\begin{split}
& \sqrt{(m c)^2 + |P (s;x,p)|^2} + \frac{1}{c} \big( e \Phi (X (s;x,p)) + m g X_{3} (s;x,p) \big)
\\& = \sqrt{(m c)^2 + |p|^2} + \frac{1}{c} \big( e \Phi (x) + m g x_{3} \big). 
\end{split}
\Ee  
Together with \eqref{condition:phi}, for all $s \in [-\tb (x,p), 0]$,
\Be 
\begin{split}
& \sqrt{(m c)^2 + | P (s;x,p) |^2} + \frac{m g}{2 c} X_{3} (s;x,p) 
\\& \geq \frac{1}{2} \Big( \sqrt{(m c)^2 + |P (s;x,p)|^2} + \frac{1}{c} \big( e \Phi (X (s;x,p)) + m g X_{3} (s;x,p) \big) \Big)
\\& = \frac{1}{2} \big( \sqrt{(m c)^2 + |p|^2} + \frac{1}{c} \big( e \Phi (x) + m g x_{3} \big) \big)
\geq \frac{1}{2} \big( \sqrt{(m c)^2 + |p|^2} + \frac{m g}{2 c} x_{3} \big). 
\end{split}
\Ee
Using $\tb(x,p) \leq \frac{4}{m g} \big( \sqrt{(m c)^2 + |p|^2} + \frac{3}{2c} m g x_{3} \big)$ in \eqref{est:tb_shock}, we get
\be \label{est2:est8:x^awh_*}
\begin{split}
\eqref{est:est8:x^awh_*}
& \lesssim e^{- \frac{1}{8} \big( \sqrt{(m c)^2 + |p|^2} + \frac{m g}{2 c} x_{3} \big)} 
\int^0_{-\tb(x,p)}  
(1 + c s )^3
\dd s
\\& \lesssim e^{- \frac{1}{8} \big( \sqrt{(m c)^2 + |p|^2} + \frac{m g}{2 c} x_{3} \big)} \times
\big( 1 + |\tb(x,p)|^4 \big) \lesssim 1.
\end{split}
\ee 
This shows that $\eqref{est8:x^awh}_* \lesssim 1$, and thus
\be \label{est6:x^awh}
\eqref{est8:x^awh}
\lesssim \frac{e_+}{ (\beta')^3 } \| \langle x_\parallel \rangle^3 \hat{w}_+ \mathrm{d}_+ (x, p ) \|_{L^{\infty} (\O \times \R^3)} + \frac{e_-}{ (\beta')^3 } \| \langle x_\parallel \rangle^3 \hat{w}_- \mathrm{d}_- (x, p ) \|_{L^{\infty} (\O \times \R^3)},
\ee
Applying \eqref{est6:x^awh} into \eqref{est:x^awh}, we get
\Be \label{est7:x^awh}
\begin{split}
& \langle x_\parallel \rangle^3 | \hat{w} \mathrm{d} (x,p) | 
\\& \lesssim \langle x_\parallel \rangle^3 e^{ \beta' \sqrt{(m c)^2 + |\pb (x,p)|^2} } | h (\xb, \pb) - h_I (\xb, \pb) |
\\& \ \ \ \ + \Big( \frac{e_+}{ (\beta')^3 } \| \langle x_\parallel \rangle^3 \hat{w}_+ \mathrm{d}_+ (x, p ) \|_{L^{\infty} (\O \times \R^3)} + \frac{e_-}{ (\beta')^3 } \| \langle x_\parallel \rangle^3 \hat{w}_- \mathrm{d}_- (x, p ) \|_{L^{\infty} (\O \times \R^3)} \Big). 
\end{split}
\Ee

\smallskip

\textbf{Step 4.}
From \eqref{char:phi}, \eqref{est:|x_12|} and \eqref{est2:tb_shock}, we get
\be \label{est2:|x_12|}
| x_{\parallel} | 
\leq | \xb | + c | \tb(x,p) | 
\lesssim | \xb | + \frac{4 c}{m g} |\pb (x, p)|,
\ee
and thus
\be \label{est3:|x_12|}
\langle x_{\parallel} \rangle
\lesssim 1 + | \xb | + \frac{4 c}{m g} |\pb (x, p)|.
\ee
From the assumption \eqref{condition:T_pm}, we get that
\be \label{cond:T(xb)}
|\frac{1}{T (\xb)} - 1| \leq \frac{1}{(10 + |\xb| )^{4}} \leq \frac{1}{10}.
\ee

\smallskip

\textbf{\underline{Case 1:} $|\xb| \leq |\pb|$.}
From \eqref{cond:T(xb)}, we have
\Be \notag
\begin{split}
& | h (\xb, \pb) - h_I (\xb, \pb) |
\\& = \Big| \frac{1}{ e m^2 T (\xb) } e^{- \frac{1}{2 T (\xb)} \sqrt{(m c)^2 + |\pb|^2} } - \frac{1}{ e m^2 } e^{- \frac{1}{2 } \sqrt{(m c)^2 + |\pb|^2} } \Big|
\lesssim e^{ - \frac{1}{3} \sqrt{(m c)^2 + |\pb|^2} }.
\end{split}
\ee
Using $\beta' \leq \frac{1}{4}$ and \eqref{est3:|x_12|}, together with $|\xb| \leq |\pb|$, we obtain
\be \label{est:T(xb)}
\begin{split}
& \langle x_\parallel \rangle^3 e^{ \beta' \sqrt{(m c)^2 + |\pb (x,p)|^2} } | h (\xb, \pb ) - h_I (\xb, \pb) |
\\& \leq (1 + | \pb |)^3 e^{\frac{1}{4} \sqrt{(m c)^2 + |\pb |^2} } e^{- \frac{1}{3} \sqrt{(m c)^2 + |\pb |^2} } \lesssim 1.
\end{split}
\ee

\smallskip

\textbf{\underline{Case 2:} $|\xb| > |\pb|$.}
Now we consider
\Be \notag
\begin{split}
| h (\xb, \pb) - h_I (\xb, \pb) |
& = \Big| \frac{1}{ e m^2 T (\xb) } e^{- \frac{1}{2 T (\xb)} \sqrt{(m c)^2 + |\pb|^2} } - \frac{1}{ e m^2 } e^{- \frac{1}{2 } \sqrt{(m c)^2 + |\pb|^2} } \Big|
\\& = \frac{1}{ e m^2 } e^{- \frac{1}{2} \sqrt{(m c)^2 + |\pb|^2} } \times \Big| \frac{ e^{ \frac{1}{2 } \sqrt{(m c)^2 + |\pb|^2} (1 - \frac{1}{T(\xb)}) } }{ T (\xb) } - 1 \Big|.
\end{split}
\Ee
Together with \eqref{est3:|x_12|}, \eqref{est:T(xb)} and $|\xb| > |\pb|$, we have
\Be \label{est:x^awh_muT-mu}
\begin{split}
& \langle x_\parallel \rangle^3 e^{ \beta' \sqrt{(m c)^2 + |\pb (x,p)|^2} } | h (\xb, \pb ) - h_I (\xb, \pb) |
\\& \lesssim (1 + | \xb |)^3 e^{ \beta' \sqrt{(m c)^2 + |\pb (x,p)|^2} } | h (\xb, \pb ) - h_I (\xb, \pb) |
\\& \lesssim 1 + 
\underbrace{
| \xb |^3 e^{- \frac{1}{4} \sqrt{(m c)^2 + |\pb |^2} } \times \Big| \frac{ e^{ \frac{1}{2 } \sqrt{(m c)^2 + |\pb|^2} (1 - \frac{1}{T(\xb)}) } }{ T (\xb) } - 1 \Big|
}_{\eqref{est:x^awh_muT-mu}_*}.
\end{split}
\Ee
From \eqref{cond:T(xb)}, we have
\be \label{equ:T(xb)}
\Big| \frac{ e^{ \frac{1}{2 } \sqrt{(m c)^2 + |\pb|^2} (1 - \frac{1}{T(\xb)}) } }{ T (\xb) } \Big|
\leq \Big| e^{ \frac{1}{2} \sqrt{(m c)^2 + |\pb|^2} \frac{1}{( 10 + |\xb| )^4 } } \big( 1 + (10 + |\xb| )^{-4} \big) \Big|.
\ee
Since $|\xb| > |\pb|$, we bound
\be \notag
\Big| e^{ \frac{1}{2} \sqrt{(m c)^2 + |\pb|^2} \frac{1}{( 10 + |\xb| )^4 } } \big( 1 + (10 + |\xb| )^{-4} \big) \Big|
\leq \Big| e^{ \frac{m c}{ 20 ( 10 + |\xb| )^{3} } } \big( 1 + (10 + |\xb| )^{-4} \big) \Big|.
\ee
Using the Taylor expansion, we get
\be \notag
e^{ \frac{m c}{ 20 ( 10 + |\xb| )^{3} } }
\leq 1 + \frac{m c}{ 10 ( 10 + |\xb| )^{3} },
\ee
and thus
\be \label{est3:T(xb)}
\Big| e^{ \frac{1}{2} \sqrt{(m c)^2 + |\pb|^2} \frac{1}{( 10 + |\xb| )^4 } } \big( 1 + (10 + |\xb| )^{-4} \big) \Big|
\lesssim 1 + \frac{m c}{ 10} ( 10 + |\xb| )^{-3}.
\ee

Analogously, the Taylor expansion also implies that
\be \label{est4:T(xb)}
1 - \frac{m c}{ 10} ( 10 + |\xb| )^{-3} 
\lesssim \Big| \frac{ e^{ \frac{1}{2 } \sqrt{(m c)^2 + |\pb|^2} (1 - \frac{1}{T(\xb)}) } }{ T (\xb) } \Big|.
\ee
Using \eqref{est3:T(xb)} and \eqref{est4:T(xb)}, we obtain
\be \label{est5:T(xb)}
1 - \frac{m c}{ 10} ( 10 + |\xb| )^{-3}
\lesssim \Big| \frac{ e^{ \frac{1}{2 } \sqrt{(m c)^2 + |\pb|^2} (1 - \frac{1}{T(\xb)}) } }{ T (\xb) } \Big|
\lesssim 1 + \frac{m c}{ 10} ( 10 + |\xb| )^{-3}.
\ee
This shows that
\be \label{est7:T(xb)}
\Big| \frac{ e^{ \frac{1}{2 } \sqrt{(m c)^2 + |\pb|^2} (1 - \frac{1}{T(\xb)}) } }{ T (\xb) } - 1 \Big|
\lesssim ( 10 + |\xb| )^{-3}.
\ee
From \eqref{est5:T(xb)} and \eqref{est7:T(xb)}, we compute
\be \label{est6:T(xb)}
\begin{split}
& \eqref{est:x^awh_muT-mu}_*
\leq | \xb |^3 \times \Big| \frac{ e^{ \frac{1}{2 } \sqrt{(m c)^2 + |\pb|^2} (1 - \frac{1}{T(\xb)}) } }{ T (\xb) } - 1 \Big|
\lesssim | \xb |^3 \times ( 10 + |\xb| )^{-3} \lesssim 1.
\end{split}
\ee
Thus, we derive that 
\be \label{est9:T(xb)}
\langle x_\parallel \rangle^3 e^{ \beta' \sqrt{(m c)^2 + |\pb (x,p)|^2} } | h (\xb, \pb ) - h_I (\xb, \pb) | 
\lesssim 1.
\ee

\smallskip

\textbf{Step 5.}
Combining \eqref{est:T(xb)} and \eqref{est9:T(xb)}, together with \eqref{est7:x^awh}, we obtain that
\be \notag
\begin{split}
& \langle x_\parallel \rangle^3 | \hat{w} \mathrm{d} (x,p) | 
\\& \lesssim 1 + \Big( \frac{e_+}{ (\beta')^3 } \| \langle x_\parallel \rangle^3 \hat{w}_+ \mathrm{d}_+ (x, p ) \|_{L^{\infty} (\O \times \R^3)} + \frac{e_-}{ (\beta')^3 } \| \langle x_\parallel \rangle^3 \hat{w}_- \mathrm{d}_- (x, p ) \|_{L^{\infty} (\O \times \R^3)} \Big). 
\end{split}
\ee
Consider the above inequality in both $+$ and $-$ cases, we get
\be \notag
\begin{split}
& \langle x_\parallel \rangle^3 | \hat{w}_+ \mathrm{d}_+ (x,p) | + \langle x_\parallel \rangle^3 | \hat{w}_- \mathrm{d}_- (x,p) | 
\\& \lesssim 1 + \Big( \frac{e_+}{ (\beta')^3 } \| \langle x_\parallel \rangle^3 \hat{w}_+ \mathrm{d}_+ (x, p ) \|_{L^{\infty} (\O \times \R^3)} + \frac{e_-}{ (\beta')^3 } \| \langle x_\parallel \rangle^3 \hat{w}_- \mathrm{d}_- (x, p ) \|_{L^{\infty} (\O \times \R^3)} \Big). 
\end{split}
\ee
From the assumption $2 (e_+ + e_-) < (\beta')^3$, we conclude
\Be \label{est9:x^awh}
\sum\limits_{i = \pm}
\| \langle x_\parallel \rangle^3 \hat{w}_i \mathrm{d}_i (x, p ) \|_{L^{\infty} (\O \times \R^3)}
\lesssim 1.
\Ee
Inputting \eqref{est9:x^awh} and \eqref{est:x^awh_*} into \eqref{est:phi_x}, we derive that
\Be \label{est2:phi_x}
\begin{split}
& | \nabla \phi (X(s;x,p)) |
\\& \leq \sum\limits_{i = \pm} \frac{e_i}{ (\beta')^3 } \| \langle x_\parallel \rangle^3 \hat{w}_i \mathrm{d}_i (x, p ) \|_{L^{\infty} (\O \times \R^3)} \times \int_{\O}  \frac{e^{- \beta' \frac{m_i g}{2 c} y_3}}{\langle y_\parallel \rangle^3 } \big| \nabla_x \mathfrak{G}(X(s;x,p), y) \big| \dd y
\\& \lesssim \sum\limits_{i = \pm} \frac{e_i}{ (\beta')^3 } \| \langle x_\parallel \rangle^3 \hat{w}_i \mathrm{d}_i (x, p ) \|_{L^{\infty} (\O \times \R^3)} \times \frac{1}{\langle  X_\parallel (s;x,p) \rangle^3}.
\end{split}
\ee
This can be rewritten as
\Be \notag
|\nabla \phi | \lesssim \langle x  \rangle^{-3},
\Ee
and we conclude this theorem.
\end{proof}

\begin{remark}
\label{rmk:iso_explicit_solution}

Consider the Vlasov-Poisson system \eqref{VP_h}-\eqref{eqtn:Dphi} with $m_+ = m_-$ and $e_+ = - e_-$.

\smallskip

$(a)$
There exists an explicit solution to the isothermal case \eqref{VP_h_iso} given by
\be \label{eq:iso_explicit_solution}
h_{I, \pm} (x,p) 
= \frac{1}{ |e_{\pm}| m^2_{\pm} } e^{ - \frac{1}{2} \big( \sqrt{(m_{\pm} c)^2 + |p|^2} + \frac{1}{c} m_{\pm} g x_3  \big)  }.
\ee
Under direct computation, we have 
\be \notag
\Delta \Phi_I (x) = \rho_I (x)
= \int_{\R^3} ( e_+ h_{I, +} + e_{-} h_{I, -} ) \dd p = 0
\ \text{ with } \ 
\Phi_I |_{\p\O} = 0.
\ee
Thus, $\Phi_I (x) \equiv 0$ is a solution. 
From \eqref{est:hat_w_conservation}, we conclude $(h_{I, \pm}, \rho_I, \Phi_I)$ is a solution to \eqref{VP_h_iso}.

\smallskip

$(b)$
Furthermore, suppose $(h_{\pm}, \rho, \Phi)$ is the unique solution to \eqref{VP_h}-\eqref{eqtn:Dphi} in the sense of Definition \ref{weak_sol} under non-isothermal J\"uttner distribution.
Assume $(h_{\pm}, \rho, \Phi)$ satisfy \eqref{Uest:wh}-\eqref{Uest:DPhi} and 
\be \notag
|T_{\pm} (x) - 1| \leq \frac{1}{(20 + |x| )^{4}}
\ \text{ with } \
x \in \p\O.
\ee
Consider $h_{I, \pm} (x, p)$ in \eqref{eq:iso_explicit_solution}, then
\Be \notag
\begin{split}
| \nabla \Phi | 
& \lesssim \langle x  \rangle^{-3},
\\
\sum\limits_{i = \pm}
\| \langle x_\parallel \rangle^3 w_{i, \beta'} ( h_{I, i} - h_i ) \|_{L^{\infty} (\O \times \R^3)} 
& \lesssim 1,
\end{split}
\Ee
where $\langle x_\parallel \rangle = \sqrt{1+ |x_\parallel|^2}$ and $w_{\pm, \beta'} (x,p)$ in \eqref{w^h}.
\end{remark}

\section{Specular Boundary Condition}
\label{sec:specular}

In this section, we consider two species of relativistic Vlasov system \eqref{VP_F}-\eqref{Dbc:F} under the mixture of inflow boundary and specular boundary conditions as follows:
\Be \label{bdry:F_spec}
F_{\pm} (t,x,p) = G_{\pm} (x,p) + \varepsilon F_{\pm} (t,x, \tilde{p})
\ \ \text{on} \ (x, p) \in \gamma_-,
\Ee
where $\tilde{p} = (p_1, p_2, - p_3)$.

\subsection{Steady Solutions}
\label{sec:ss_spec}

We start with the steady problem to \eqref{VP_F}-\eqref{Dbc:F} and \eqref{bdry:F_spec} for $F_\pm (x,p) = h_{\pm} (x,p)$ and $\phi_F (x) = \Phi_h (x)$:
\begin{align}
v_\pm \cdot \nabla_x h_{\pm}
+ \big( e_{\pm} ( \frac{v_\pm}{c} \times B 
- \nabla_x \Phi_h ) - \nabla_x (m_{\pm} g x_3) \big) \cdot \nabla_p h_{\pm} = 0 \ \ & \text{in} \ \O \times \R^3, \label{VP_h_spec} \\
h_{\pm} (x,p) = G_{\pm} (x,p) + \varepsilon h_{\pm} (x, \tilde{p}) 
\ \ & \text{in} \ \gamma_-, \label{bdry:h_spec}
\end{align}
where $B= (0,0, B_3)$, $\tilde{p} = (p_1, p_2, - p_3)$, and the relativistic velocity $v_\pm$ is defined in \eqref{totalE}.
A steady electric potential is determined by solving:
\Be \label{eqtn:Dphi_spec}
\begin{split}
- \Delta_x \Phi_h (x) = \rho_h (x) \ \ \text{in } \O, \ \ \text{and} \  \ 
\Phi_h  = 0 \ \ \text{on } \p\O,
\end{split}
\Ee
where the steady local charge density is given by
\Be \label{def:rho_spec}
\rho_h (x) = \int_{\R^3} ( e_+ h_+ + e_{-} h_{-} ) \dd p. 
\Ee
For simplicity, we often let $(-\Delta_0)^{-1} \rho_h$ denote $\Phi_h$ solving \eqref{eqtn:Dphi_spec}. 
Moreover, for the reader's convenience, we write down the characteristics $Z_{\pm} (s;x,p) = (X_{\pm} (s;x,p), P_{\pm} (s;x,p))$ for the steady problem \eqref{VP_h_spec} which is the same as \eqref{VP_h}: 
\Be
\begin{split} \label{ODE_h_spec}
\frac{d X_{\pm} (s;x,p) }{d s} & = V_{\pm} (s;x,p) = \frac{P_{\pm} (s;x,p)}{\sqrt{m^2_{\pm} + |P_{\pm} (s;x,p)|^2 / c^2}}, \\
\frac{d P_{\pm} (s;x,p) }{d s} & =  {e_{\pm}} ( V_{\pm} (s;x,p) \times \frac{B}{c} - \nabla_x \Phi_h (X_{\pm} (s;x,p) )) - {m_{\pm}} g \mathbf{e}_3,
\end{split}
\Ee
where $\Phi_h $ solves \eqref{eqtn:Dphi} and $\mathbf{e}_3= (0,0,1)^{\intercal}$. At $s = 0$, we have
\[
Z_{\pm} (0; x, p) = (X_{\pm} (0; x, p), P_{\pm} (0; x,p)) = (x, p) = z_{\pm}.
\]

\medskip

Similar to Definition \ref{weak_sol}, we provide a precise definition of weak solutions to the steady problem \eqref{VP_h_spec}-\eqref{def:rho_spec}.

\begin{definition} \label{weak_sol_spec}

(a) We say that $(h_{\pm}, \nabla_x  \Phi_h) \in \big( L^2_{loc}(\O \times \R^3) \; \cap \; L^2_{loc}(\p\O \times \R^3; \dd \gamma) \big) \times L^2_{loc}(\O \times \R^3)$ is a weak solution of \eqref{VP_h_spec} and \eqref{bdry:h_spec}, if all terms below are bounded and the following condition holds
for any test function $\psi \in C^\infty_c (\bar \O \times \R^3)$, 
\Be \label{weak_form_spec}
\begin{split}
& \iint_{\O \times \R^3} h_{\pm} (x,p) v \cdot \nabla_x \psi(x, p) \dd p \dd x 
\\& \ \ \ \ - \iint_{\O \times \R^3} h_{\pm} (x, p) \big( e_{\pm} ( \frac{v_\pm}{c} \times B 
- \nabla_x \Phi_h ) - \nabla_x (m_{\pm} g x_3) \big) \cdot \nabla_p \psi (x,p)  \dd p \dd x 
\\& = \int_{ \p\O \times \{ p_3 <0 \}} h_{\pm} (x,p) \psi(x,p) \dd \gamma - \int_{ \p\O \times \{ p_3 > 0 \}} G_{\pm} (x,p) \psi(x,p) \dd \gamma 
\\& \ \ \ \ - \int_{ \p\O \times \{ p_3 > 0 \}} \varepsilon h_{\pm} (x, \tilde{p}) \psi(x,p) \dd \gamma,
\end{split}
\Ee
where $\dd \gamma := |v_3| \dd S_x \dd p$ denotes the phase boundary measure.
	
(b) We say that $(h_{\pm}, \rho_h) \in L^2_{loc}(\O \times \R^3) \times L^2_{loc}(\O  )$ is a weak solution of \eqref{def:rho_spec}, if all terms below are bounded and the following condition holds for any test function $\vartheta \in H^1_0 (\O) \cap C^\infty_c (\bar \O)$, 
\Be \label{weak_form_2_spec}
\int_{\O} \vartheta (x) \rho_h (x) \dd x 
= \int_{\O} \vartheta (x) \int _{\R^3} ( e_+ h_{+} (x, p) + e_{-} h_{-} (x, p) ) \dd p \dd x.
\Ee
	
(c) We say that $(\rho_h, \Phi_h) \in L^2_{loc}(\O  ) \times W^{1,2}_{loc} (\O)$ is a weak solution of \eqref{eqtn:Dphi_spec}, if all terms below are bounded and the following condition holds for any test function $\varphi \in H^1_0 (\O) \cap C^\infty_c (\bar \O)$, 
\Be \label{weak_form_3_spec}
\int_{\O} \nabla_x \Phi_h \cdot \nabla_x \varphi \dd x 
= \int_{\O} \rho_h \varphi \dd x.
\Ee
\end{definition}

To state the main theorem for the steady problem, we recall $w_{\pm} (x,p)$ in Definition \ref{w^h} and $\alpha_{\pm} (x, p)$ in Definition \ref{alpha}. Now we state the main theorem of the steady problem under the boundary condition \eqref{bdry:h_spec} as follows.

\begin{theorem} \label{theo:CS_spec}

Assume there exists a constant $\px > 0$ such that 
\be \label{condition:G_support_spec}
G_{\pm} (x, p) = 0 
\ \text{ for any } \
|p| \geq \px,
\ee
and suppose $\| e^{ \beta \sqrt{(m_{\pm} c)^2 + |p|^2} }G_{\pm} \|_{L^\infty(\gamma_-)} < \infty$ and $g, \beta > 0$ with $g \beta \gg 1$ satisfy the following condition:
\Be \label{condition:beta_spec}
\beta \geq
\frac{\mathfrak{C} }{\min \big( \frac{m_{+}}{e_{+}}, \frac{m_{-}}{e_{-}} \big) \times \  \frac{g}{2}} \big( e_{\pm} \| e^{ \beta \sqrt{(m_{\pm} c)^2 + |p|^2} } G_{\pm} \|_{L^\infty(\gamma_-)} \big) \times
\big( 1 + \frac{2 c}{ \beta g  } \frac{1}{\min\{m_+, m_- \}} \big),
\Ee
where $\mathfrak{C}>0$ is defined in \eqref{est:nabla_phi}. 
Furthermore, let $\beta \geq \tilde \beta >0$ ($g \tilde \beta \gg1 $) satisfy the
following condition:
\Be \label{condition:tilde_beta_spec}
\begin{split}
& \big( e_+ \| e^{ \beta \sqrt{(m_{+} c)^2 + |p|^2} } G_+ \|_{L^\infty(\gamma_-)} + e_{-} \| e^{ \beta \sqrt{(m_{-} c)^2 + |p|^2}} G_- \|_{L^\infty(\gamma_-)} \big)
\\& \ \ \ \ \times \log \Big( e+ \| e^{\tilde \beta \sqrt{(m_{+} c)^2 + |p|^2}} \nabla_{x_\parallel, p} G_+ \|_{L^\infty (\gamma_-)} + \| e^{\tilde \beta \sqrt{(m_{-} c)^2 + |p|^2}} \nabla_{x_\parallel, p} G_- \|_{L^\infty (\gamma_-)} \Big) 
\\& \leq \beta (\frac{m_+ + m_- }{8} g\tilde  \beta - (1 + B_3) ),
\end{split}	
\Ee
and
\be \label{condition:G_xv_spec}
(1 + \frac{\tilde{\beta}}{m g} ) \| e^{\tilde \beta \sqrt{(m_{\pm} c)^2 + |p|^2}} \nabla_{x_\parallel, p} G_{\pm} \|_{L^\infty (\gamma_-)} \leq \frac{1}{4}.
\ee
Finally, suppose $\varepsilon$ in the boundary condition \eqref{bdry:h_spec} satisfies that	
\be \label{condition:epilon_G_spec}
\varepsilon \big( 1 + \tilde{\beta} \big) {e^{\frac{\tilde{\beta}}{2} \sqrt{(m c)^2 + |\px|^2}}} \leq \frac{1}{4}.
\ee	
Then there exists a unique solution $(h_{\pm}, \rho, \Phi_h)$ to \eqref{VP_h_spec}-\eqref{def:rho_spec} in the sense of Definition \ref{weak_sol}.  More-
over, the following estimates hold:
\begin{align}
& \| w_{\pm, \beta} h_{\pm} \|_{L^\infty (\bar \O \times \R^3)} 
\leq \| e^{ \beta\sqrt{(m_{\pm} c)^2 + |p|^2} } G_{\pm} \|_{L^\infty (\gamma_-)},
\label{Uest:wh_spec} \\
& | \rho (x) | \leq \frac{1}{\beta} \big( e_+ \| w_{+, \beta} G_+ \|_{L^\infty(\gamma_-)} e^{-  \beta \frac{ m_+ }{2 c} g x_3} + e_{-} \| w_{-, \beta} G_- \|_{L^\infty(\gamma_-)} e^{- \beta \frac{ m_{-} }{2 c} g x_3} \big), 
\label{Uest:rho_spec} \\
& \| \nabla_x \Phi_h \|_{L^\infty (\bar{\O})} \leq  \min \big( \frac{m_{+}}{e_{+}}, \frac{m_{-}}{e_{-}} \big) \times \  \frac{g}{2},
\label{Uest:DPhi_spec} \\
& \frac{8}{m g} (1 + B_3 + \| \nabla_x ^2 \Phi _h \|_\infty) \leq \tilde \beta \leq \beta.
\label{Uest:Phi_xx_spec}
\end{align}
In addition, $h_\pm$ satisfies the following:
\be \label{Uest:h_support_spec}
h_{\pm} (x, p) = 0 
\ \text{ for any } \
x \in \p\O
\text{ and }
|p| \geq \px,
\ee
and $\rho$ and $\nabla_x \Phi _h$ satisfy the following bounds:
\begin{align}
& e^{ \frac{\tilde \beta m g}{4 c} x_3 } |\p_{x_i} \rho  (x)|   
\lesssim \| e^{\tilde \beta \sqrt{(m_+ c)^2 + |p|^2}} \nabla_{x_\parallel, p} G_+ \|_{L^\infty (\gamma_-)} \times \Big( 1 + \mathbf{1}_{|x_3| \leq 1} \frac{1}{\sqrt{ m_+ g x_3 }} \Big)
\notag \\
& \qquad \qquad \qquad \qquad + \| e^{\tilde \beta \sqrt{(m_- c)^2 + |p|^2}} \nabla_{x_\parallel, p} G_- \|_{L^\infty (\gamma_-)} \times \Big( 1 + \mathbf{1}_{|x_3| \leq 1} \frac{1}{\sqrt{ m_- g x_3 }} \Big),
\label{est_final:rho_x_spec} \\
& \| \nabla_x^2 \Phi_h  \|_\infty
\lesssim \frac{1}{\beta} \big( e_+ \| w_{+, \beta} G_+ \|_{L^\infty(\gamma_-)} + e_{-} \| w_{-, \beta} G_- \|_{L^\infty(\gamma_-)} \big)
\notag \\
& \ \ \times \log \big( e+ \| e^{\tilde \beta \sqrt{(m_+ c)^2 + |p|^2}} \nabla_{x_\parallel, p} G_+ \|_{L^\infty (\gamma_-)} + \| e^{\tilde \beta \sqrt{(m_- c)^2 + |p|^2}} \nabla_{x_\parallel, p} G_- \|_{L^\infty (\gamma_-)} \big).
\label{est_final:phi_C2_spec}
\end{align}
Furthermore, given $\alpha_\pm (x, p)$ defined in $\eqref{alpha}$, then $h_{\pm}$ satisfies that
\begin{align}
& e^{ \frac{\tilde \beta}{2}|p^0_{\pm}|} e^{  \frac{\tilde \beta m_{\pm} g}{4 c} x_3} | \nabla_p h_{\pm} (x,p)| 
\notag \\
& \lesssim \big( 1 + \frac{ \| \nabla_x^2 \Phi _h \|_\infty + e B_3 + m_{\pm} g}{(m_{\pm} g)^2} \big) \| e^{\tilde \beta \sqrt{(m_{\pm} c)^2 + |p|^2}} \nabla_{x_\parallel, p} G_{\pm} \|_{L^\infty (\gamma_-)},
\label{est_final:hk_v_spec} \\
& e^{ \frac{\tilde \beta}{2} |p^0_{\pm} |} e^{  \frac{\tilde \beta m_{\pm} g}{4 c} x_3} | \nabla_x h_{\pm} (x,p)|  
\notag \\
& \lesssim \big( \frac{\delta_{i3}}{\alpha_{\pm} (x,p)} + \frac{ \| \nabla_x^2 \Phi _h\|_\infty + e B_3 + m_{\pm} g}{(m_{\pm} g)^2} \big) \| e^{\tilde \beta \sqrt{(m_{\pm} c)^2 + |p|^2}} \nabla_{x_\parallel, p} G_{\pm} \|_{L^\infty (\gamma_-)}.
\label{est_final:hk_x_spec}
\end{align}
\end{theorem}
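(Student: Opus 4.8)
The plan is to run the same four-stage scheme used for the pure-inflow steady problem (Theorem \ref{theo:CS}) --- construction by iteration, uniform-in-$\ell$ estimates, Cauchy convergence, existence and uniqueness --- while threading an additional induction that propagates the momentum support. First I would construct a sequence $(h^{\ell+1}_\pm,\rho^\ell,\Phi^\ell)$ exactly as in \eqref{eqtn:hk}--\eqref{bdry:phik}, the only change being that the boundary data in the $(\ell+1)$-st Vlasov solve is $G_\pm(x,p)+\varepsilon\, h^\ell_\pm(x,\tilde p)$ on $\gamma_-$. The Lagrangian representation of $h^{\ell+1}_\pm$ then traces back along the characteristics \eqref{ODE_h_spec} to the first incoming footprint $(\xb,\pb)\in\gamma_-$ and reads $h^{\ell+1}_\pm(x,p)=G_\pm(\xb,\pb)+\varepsilon\,h^\ell_\pm(\xb,\widetilde{\pb})$; iterating in $\ell$ (or, equivalently, solving the specular relation within each step by a Neumann series over the bounce map, which converges since $\varepsilon<1$) produces in the limit an $\varepsilon$-weighted sum over successive bounce footprints.

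The core new input is the uniform-in-$\ell$ induction, in which one proves, in addition to the analogues of \eqref{Uest:h^k}--\eqref{Uest:DPhi^k}, the two extra facts: (i) $h^{\ell+1}_\pm$ is supported in $\{|p|\le\px\}$ at $x\in\p\O$ and, through the conservation law of Lemma \ref{lem:conservation_law}, in a $x_3$-dependent ball in the interior. This follows because specular reflection preserves $|p|$ and, evaluating the conservation law at $x_3=0$ where $\Phi^\ell=0$, the momentum magnitude at every bounce footprint equals $|\pb|$; since $G_\pm$ vanishes for $|p|\ge\px$ by \eqref{condition:G_support_spec}, every surviving term has $|\pb|\le\px$. (ii) A weighted bound of the form \eqref{Uest:wh_spec}: the weight $w^{\ell+1}_{\pm,\beta}$ is constant along characteristics including across reflections (it depends only on the conserved energy and $|p|$ is unchanged at a bounce), so one gets $\|w^{\ell+1}_{\pm,\beta}h^{\ell+1}_\pm\|_{L^\infty}\le\|e^{\beta\sqrt{(m_\pm c)^2+|p|^2}}G_\pm\|_{L^\infty(\gamma_-)}+\varepsilon\|w^{\ell}_{\pm,\beta}h^{\ell}_\pm\|_{L^\infty}$, which closes under \eqref{condition:epilon_G_spec} --- the factor $e^{\frac{\tilde\beta}{2}\sqrt{(m c)^2+|\px|^2}}$ there is exactly the weight at the support radius $\px$ that must be dominated when $\varepsilon$ multiplies a reflected value. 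Together with the elliptic estimate of Lemma \ref{lem:rho_to_phi} and \eqref{condition:beta_spec} this gives \eqref{Uest:rho_spec}--\eqref{Uest:DPhi_spec}, i.e. gravity dominance, so Propositions \ref{lem:tb} and \ref{lem2:tb} apply; crucially, combined with $|\pb|\le\px$ and the peak-height bound \eqref{est:x3pb3}, every leg between consecutive bounces has travel time bounded uniformly in terms of $\px,g$.

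With gravity dominance and uniform leg times in hand, I would adapt Proposition \ref{prop:Reg}, Theorem \ref{theo:RS} and Proposition \ref{prop:Unif_D2xDp}. Differentiating $h^{\ell+1}_\pm$ produces the first-bounce terms already treated in the inflow case (via Lemmas \ref{lem:exp_txvb}--\ref{lem:nabla_zb}) plus a reflected contribution carrying $\varepsilon$ and evaluated at momenta $\le\px$; the reflection map contributes only a harmless $p_3$ sign flip, and the Jacobians of the $k$-th footprint grow at most geometrically in $k$ with a ratio depending only on $\px,g,B_3,\|\nabla^2_x\Phi\|_\infty$ (each leg contributes a fixed factor because its travel time is bounded). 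Using $\varepsilon(1+\tilde\beta)e^{\frac{\tilde\beta}{2}\sqrt{(m c)^2+|\px|^2}}\le\frac14$ from \eqref{condition:epilon_G_spec}, the reflected part is absorbed into a self-consistent bound for $\|w^{\ell+1}_{\pm,\bar\beta}\nabla_{x,p}h^{\ell+1}_\pm\|_{L^\infty}$, which together with \eqref{condition:tilde_beta_spec}, \eqref{condition:G_xv_spec} yields \eqref{Uest:Phi_xx_spec}, \eqref{est_final:rho_x_spec}--\eqref{est_final:hk_x_spec}. Then, following Proposition \ref{prop:cauchy}, $h^{\ell+1}_\pm-h^\ell_\pm$ solves a Vlasov equation with source $e_\pm\nabla_x(\Phi^\ell-\Phi^{\ell-1})\cdot\nabla_p h^\ell_\pm$ and specular boundary data $\varepsilon(h^\ell_\pm-h^{\ell-1}_\pm)(\cdot,\tilde\cdot)$; using the uniform $\nabla_p h^\ell$ bound, the smallness of the Poisson coupling coming from $g\beta\gg1$, and the smallness of $\varepsilon$, one gets a factor-$\tfrac12$ contraction in the weighted sup-norm, hence Cauchy sequences in $L^\infty$. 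Passing to the limit, $(h_\pm,\rho,\Phi_h)$ solves \eqref{VP_h_spec}--\eqref{def:rho_spec} weakly in the sense of Definition \ref{weak_sol_spec} --- the extra outgoing boundary integral $\int_{\gamma_+}\varepsilon h_\pm(x,\tilde p)\psi\,\dd\gamma$ converging by $L^\infty$ convergence plus the uniform support bound --- and \eqref{Uest:wh_spec}--\eqref{Uest:DPhi_spec}, \eqref{Uest:h_support_spec} follow from the uniform estimates; uniqueness is obtained as in Theorem \ref{theo:US}, the specular term again controlled by the smallness of $\varepsilon$ and the weighted bound \eqref{est_final:hk_v_spec}.

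The main obstacle will be the regularity step across the bounces: in the inflow case the characteristic terminates at its first boundary hit and the derivative identities close immediately, whereas here one must control the $x$- and $p$-derivatives of the $k$-th bounce footprint uniformly enough in $k$ that the series $\sum_k\varepsilon^{k-1}(\text{Jacobian of }k\text{ legs})$ converges and can be absorbed. The resolution is precisely the compact-support hypothesis \eqref{condition:G_support_spec}: since $|\pb^{(k)}|\equiv|\pb|\le\px$ and gravity dominates, each leg is a bounded excursion whose travel time is controlled by $\px$ through Propositions \ref{lem:tb} and \ref{lem2:tb}, so it contributes a fixed multiplicative constant, and \eqref{condition:epilon_G_spec} is the quantitative statement that $\varepsilon$ times that constant is less than one, making both the geometric series and the absorption into the self-consistent derivative bound work.
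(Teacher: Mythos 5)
Your proposal is correct and follows essentially the same route as the paper: the same iteration \eqref{eqtn:hk_spec}--\eqref{bdry:phik_spec} with previous-iterate specular data, the same induction propagating the momentum support through the conservation law and the $|p|$-preservation at reflection (Proposition \ref{prop:Unif_steady_spec}), the same absorption of the $\varepsilon$-weighted reflected derivative terms using $\mathbf{1}_{|\pbp|<\px}$, the bounded leg time $\tbp\lesssim |\pbp|/(m_\pm g)$, and \eqref{condition:epilon_G_spec} (Proposition \ref{prop:Reg_spec}, Theorem \ref{theo:RS_spec}), followed by the same contraction/Cauchy and uniqueness arguments with a $3/4$-type contraction. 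Your multi-bounce Neumann-series framing is just the unrolled form of the paper's single-reflection self-consistent absorption, and your only slight imprecision is attributing \eqref{condition:epilon_G_spec} to the zeroth-order weighted bound, which in fact closes for any $\varepsilon<1$ since the weight is invariant across reflection; the exponential factor in $\px$ is needed only at the level of the derivative estimates, exactly as you use it later.
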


\smallskip


To prove Theorem \ref{theo:CS_spec}, we use approaches similar to those employed in the steady problem, focusing solely on the inflow boundary condition \eqref{VP_h}-\eqref{def:rho}.
In the following, we provide a brief overview of the steps outlined in this subsection.


We begin by constructing the sequences $(h^{\ell+1}_{\pm}, \rho^{\ell}, \nabla_x \Phi^{\ell})$ for $\ell \geq 1$ in \eqref{eqtn:hk_spec}-\eqref{bdry:phik_spec}. 
This construction closely mirrors that under pure inflow boundary conditions, with the primary difference being the boundary conditions applied to $h^{\ell+1}_{\pm}$.

Fortunately, the change in boundary conditions does not affect the characteristic trajectory determined by the Vlasov equations \eqref{VP_h} or \eqref{VP_h_spec}, so all properties established in Section \ref{sec:char} remain valid for the steady problem \eqref{VP_h_spec}-\eqref{def:rho_spec}. In particular, the conservation laws in Lemma \ref{lem:conservation_law}, together with the assumption $G_{\pm} (x, p) = 0$ for any $|p| \geq \px > 0$ and the use of mathematical induction, allows us to demonstrate that for every $\ell \in \N$,
\be \notag
h^{\ell+1}_{\pm} (x, p) = 0 
\ \text{ for any } \
x \in \p\O
\text{ and }
|p| \geq \px.
\ee
Thus, $\{ h^{\ell}_{\pm} \}^{\infty}_{\ell=0}$ has uniformly compact support with respect to $p$ (see Proposition \ref{prop:Unif_steady_spec}).

Similar to the case with pure inflow boundary conditions, we establish a priori estimates for $(h_{\pm}, \rho,  \Phi )$ solving \eqref{VP_h_spec}-\eqref{def:rho_spec}.
Due to the specular boundary condition, some regularity terms are altered. For example: consider $\tpbp = (p_{\mathbf{b}, \pm, 1}, p_{\mathbf{b}, \pm, 2}, - p_{\mathbf{b}, \pm, 3})$, then
\Be \notag
\begin{split}
\nabla_{x, p} h_{\pm} (x,p)
& = \nabla_{x, p} \xbp (x,p) \cdot \nabla_{x_\parallel} G_{\pm} (\xbp, \pbp) + \nabla_{x, p} \pbp (x,p) \cdot \nabla_p G_{\pm} (\xbp, \pbp)
\\& \ \ \ \ + \varepsilon \nabla_{x, p} \xbp (x,p) \cdot \nabla_{x_\parallel} h_{\pm} (\xbp, \tpbp) + \varepsilon \nabla_{x, p} \tpbp (x,p) \cdot \nabla_p h_{\pm} (\xbp, \tpbp).
\end{split}
\Ee
Suppose $h_{\pm} (x,p)$ has compact support with respect to $p$. Using the condition \eqref{condition:epilon_G_spec} on $\varepsilon$, along with the computation of the characteristic trajectory, we deduce the key estimate on $| \nabla_{x,p} h_{\pm} |$ as presented in Proposition \ref{prop:Reg_spec} and Theorem \ref{theo:RS_spec}.
The rest of the steps follow similarly from Sections \ref{sec:US} and \ref{sec:EX_SS}.
We conclude the existence and the uniqueness of the steady solution.

\subsubsection{Construction} \label{sec:CS_spec} 

Now we construct solutions to the steady problem \eqref{VP_h_spec}-\eqref{def:rho_spec} via the following sequences: for any $\ell \in \N$,
\begin{align}
v_\pm \cdot \nabla_x h^{\ell+1}_{\pm} 
+ \big( e_{\pm} ( \frac{v_\pm}{c} \times B 
- \nabla_x \Phi^\ell ) - \nabla_x (m_{\pm} g x_3) \big) \cdot \nabla_p h^{\ell+1}_{\pm} = 0 & \ \ \text{in} \ \O \times \R^3, \label{eqtn:hk_spec} \\
h^{\ell+1}_{\pm} (x,p) = G_{\pm} (x,p) + \varepsilon h^{\ell}_{\pm} (x, \tilde{p})  & \ \ \text{on} \ \gamma_-, \label{bdry:hk_spec} \\	
\rho^\ell = \int_{\R^3} ( e_+ h^\ell_+ + e_{-} h^\ell_{-} ) \dd p & \ \ \text{in} \ \O, \label{eqtn:rhok_spec} \\
- \Delta \Phi^\ell = \rho^\ell & \ \ \text{in} \ \O, \label{eqtn:phik_spec} \\
\Phi^\ell =0  & \ \ \text{on} \ \p\O, \label{bdry:phik_spec}
\end{align}
where $\tilde{p} = (p_1, p_2, - p_3)$ and the initial setting $h^0_{\pm} = 0$ and $(\rho^0,  \nabla_x \Phi^0) = (0, \mathbf{0})$.

\smallskip 

First, we construct $h^1_{\pm}$ solving \eqref{eqtn:hk_spec} and \eqref{bdry:hk_spec} from $h^0_{\pm} = 0$ and $(\rho^0,  \nabla_x \Phi^0) = (0, \mathbf{0})$. We consider the Lagrangian formulation of the following characteristics:
\Be \label{z1_spec}
\begin{split}
\big( X^{1}_{\pm} (t;x,p), P^{1}_{\pm} (t;x,p) \big) |_{t=0} = (x, p) \ \ \text{at} \ t = 0, &
\\ \frac{d X^1_{\pm} }{d t} = V^1_{\pm} = \frac{P^1_{\pm}}{\sqrt{m^2_{\pm} + |P^1_{\pm}|^2 / c^2}}, \ \
\frac{d P^1_{\pm} }{d t} =  {e_{\pm}} V^1_{\pm} \times \frac{B}{c} - {m_{\pm}} g \mathbf{e}_3. &
\end{split}
\Ee
Then from \eqref{eqtn:rhok_spec}, as long as it is well-defined, we obtain
\[
\rho^1 (x) = \int_{\R^3} ( e_+ h^1_+ + e_{-} h^1_{-} ) \dd p.
\]
Using the Green's function in \eqref{phi_rho}, together with \eqref{eqtn:phik_spec}, we derive $\Phi^1$ and $\nabla_x \Phi^1$ by
\[
- \Delta \Phi^1 (x) = \rho^1 (x).
\]

\smallskip

Second, we construct $(h^{\ell+1}_{\pm}, \rho^{\ell}, \nabla_x \Phi^{\ell})$ solving \eqref{eqtn:hk_spec}-\eqref{bdry:phik_spec} for $\ell \geq 1$ by iterating the process. Given $\nabla_x \Phi^{\ell}$, we construct $h^{\ell+1}_{\pm}$ along the characteristics $(X^{\ell+1}_{\pm}, P^{\ell+1}_{\pm})$ as follows:
\be \label{VP_h^l+1_spec}
\big( X^{\ell+1}_{\pm} (t;x,p)，P^{\ell+1}_{\pm} (t;x,p) \big) |_{t=0} = (x, p) \ \ \text{at} \ t = 0,
\ee
and   
\Be \label{bdry:h^l+1_spec}
\begin{split}
& \frac{d X^{\ell+1}_{\pm} }{d t} = V^{\ell+1}_{\pm} = \frac{P^{\ell+1}_{\pm}}{\sqrt{m^2_{\pm} + |P^{\ell+1}_{\pm}|^2 / c^2}}, 
\\& \frac{d P^{\ell+1}_{\pm} }{d t} =  {e_{\pm}} ( V^{\ell+1}_{\pm} \times \frac{B}{c} - \nabla_x \Phi^{\ell} (X^{\ell+1}_{\pm} )) - {m_{\pm}} g \mathbf{e}_3. 
\end{split}
\Ee
Using the Peano theorem, together with the boundness of $\rho^\ell$ (see \eqref{Uest:rho^k_spec}) and the continuity of $\nabla_x \Phi^\ell$ (see \eqref{phi_rho}), we conclude the existence of solutions (not necessarily unique). 
Then we obtain $\rho^{\ell+1}$ from \eqref{eqtn:rhok_spec} and solve $\nabla_x \Phi^{\ell+1}$ from \eqref{eqtn:phik_spec} and \eqref{bdry:phik_spec}.

Recall that the change in boundary conditions doesn't influence the characteristic trajectory, and thus the characteristics in \eqref{VP_h^l+1_spec} and \eqref{bdry:h^l+1_spec} are the same as the characteristics in \eqref{VP_h^l+1} and \eqref{bdry:h^l+1}.
Hence, for every iteration $\ell \geq 0$, suppose $(X^{\ell}, P^{\ell})$ exists we reuse the backward exit time $\tbp^\ell$, position $\xbp^{\ell}$, and momentum $\pbp^{\ell}$ defined in \eqref{def:tb_l}.
Given $\nabla_x \Phi^\ell$, the weak solution $h^{\ell+1}_{\pm}$ to \eqref{eqtn:hk_spec}-\eqref{bdry:hk_spec} satisfies
\Be \label{form:h^k_spec}
\begin{split}
h^{\ell+1}_{\pm} (x,p) 
= h^{\ell+1}_{\pm}  (X_{\pm}^{\ell+1}  ( -t;x,p), P^{\ell+1}_{\pm} (-t;x,p) )  \ \ \text{for all} \ t \in [0, \tbp^{\ell+1} (x,p)].
\end{split}
\Ee
Moreover, we reuse the weight function $w^{\ell+1}_{\pm, \beta}$ defined in \eqref{def:w^k} as follows:
\Be \label{def:w^k_spec}
w^{\ell+1}_{\pm, \beta} (x,p) 
= e^{ \beta \left( \sqrt{(m_{\pm} c)^2 + |p|^2} + \frac{1}{c} ( e_{\pm} \Phi^{\ell} (x) + m_{\pm} g x_3 ) \right) }
\ \ \text{in} \ \O \times \R^3,
\Ee
The weight function is invariant along the characteristics, and at the boundary it satisfies
\Be \label{wk:bdry_spec}
w^{\ell+1}_{\pm, \beta} (x, p) 
= e^{ \beta \sqrt{(m_{\pm} c)^2 + |p|^2} } 
\ \ \text{on} \ \p\O.
\Ee

Using \eqref{bdry:hk_spec}, \eqref{form:h^k_spec}, and \eqref{wk:bdry_spec}, as long as $\tb^{\ell+1} (x,p) < \infty$, then we have  
\Be \label{form:h^k_G_spec}
\begin{split} 
& w^{\ell+1}_{\pm, \beta} (x,p) h^{\ell+1}_{\pm} (x,p) 
\\& = w^{\ell+1}_{\pm, \beta} (\xbp^{\ell+1} (x,p), \pbp^{\ell+1} (x,p) ) \big\{ G_{\pm} (\xbp^{\ell+1} (x,p), \pbp^{\ell+1} (x,p) ) + \varepsilon h^{\ell}_{\pm} (\xbp^{\ell+1} (x,p), \tpbp^{\ell+1} (x,p) ) \big\}
\\&	= e^{ \beta \sqrt{(m_{\pm} c)^2 + | \pbp^{\ell+1} (x,p)|^2} } \big\{ G_{\pm} (\xbp^{\ell+1} (x,p), \pbp^{\ell+1} (x,p) ) + \varepsilon h^{\ell}_{\pm} (\xbp^{\ell+1} (x,p), \tpbp^{\ell+1} (x,p) ) \big\}.
\end{split}	
\Ee
where $\tpbp^{\ell+1} = (p_{\mathbf{b}, \pm, 1}, p_{\mathbf{b}, \pm, 2}, - p_{\mathbf{b}, \pm, 3})$.

\begin{prop} \label{prop:Unif_steady_spec}

Suppose the condition \eqref{condition:beta_spec} holds for some $g, \beta > 0$.
Then, under the above construction $(h^{\ell+1}_{\pm}, \rho^\ell, \nabla_x \Phi^\ell)$ satisfies the following uniform-in-$\ell$ estimates:
\begin{align}
& \| h_{\pm}^{\ell+1} \|_{L^\infty (\bar \O \times \R^3)} 
\leq (1 + \varepsilon ) \|  G_{\pm} \|_{L^\infty (\gamma_-)},
\label{Uest:h^k_spec} 
\\& \| w^{\ell+1}_{\pm, \beta} h_{\pm}^{\ell+1}   \|_{L^\infty (\bar \O \times \R^3)} 
\leq (1 + \varepsilon ) \| e^{ \beta \sqrt{(m_{\pm} c)^2 + |p|^2}} G_{\pm} \|_{L^\infty (\gamma_-)}, \label{Uest:wh^k_spec}
\\& 
| \rho^{\ell} (x) |
\leq \frac{e_+}{\beta} \| e^{ \beta \sqrt{(m_{+} c)^2 + |p|^2}} G_+ \|_{L^\infty(\gamma_-)} e^{-  \beta \frac{ m_+ }{2 c} g x_3} + \frac{e_-}{\beta} \| e^{ \beta \sqrt{(m_{-} c)^2 + |p|^2}} G_- \|_{L^\infty(\gamma_-)} e^{- \beta \frac{ m_{-} }{2 c} g x_3}, \label{Uest:rho^k_spec}
\\& 
\| \nabla_x \Phi^ \ell \|_{L^\infty (\bar{\O})} 
\leq  \min \big( \frac{m_{+}}{e_{+}}, \frac{m_{-}}{e_{-}} \big) \times \  \frac{g}{2}. 
\label{Uest:DPhi^k_spec}
\end{align} 
Furthermore, $\Phi^\ell \in H^1_{0, \text{loc}} (\O)$ and 
\Be\label{lower_w_st_spec}
w^{\ell+1}_{\pm, \beta} (x,p) 
\geq e^{ \beta \left( \sqrt{(m_{\pm} c)^2 + |p|^2} + \frac{m_{\pm}}{2 c} g x_3 \right) }.
\Ee
In addition, suppose the condition \eqref{condition:G_support_spec} holds for some $\px > 0$, then for any $\ell \in \N$,
\be \label{Uest:h^k_support_spec}
h^{\ell+1}_{\pm} (x, p) = 0 
\ \text{ for any } \
x \in \p\O
\text{ and }
|p| \geq \px.
\ee
\end{prop}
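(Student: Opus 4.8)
The plan is to argue by induction on $\ell$, running in parallel with the proof of Proposition~\ref{prop:Unif_steady} (the pure-inflow case): assuming \eqref{Uest:h^k_spec}--\eqref{lower_w_st_spec} together with the compact-support property \eqref{Uest:h^k_support_spec} at all levels $0\le\ell\le k$, I would establish them at level $k+1$. The base level $\ell=0$ is immediate from the initialization $h^0_\pm\equiv 0$, $(\rho^0,\nabla_x\Phi^0)=(0,\mathbf 0)$; note $h^1_\pm$ is transported along the field-free characteristics \eqref{z1_spec} from the datum $G_\pm$, since the reflected term $\varepsilon h^0_\pm(x,\tilde p)$ in \eqref{bdry:hk_spec} vanishes. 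The only two genuinely new features compared with Proposition~\ref{prop:Unif_steady} are (i) the contribution of $\varepsilon h^\ell_\pm(x,\tilde p)$ in the boundary condition, which must not amplify the $L^\infty$ norm from one iteration to the next, and (ii) the propagation of compact support in $p$ at the boundary, \eqref{Uest:h^k_support_spec}.

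For the $L^\infty$ and weighted $L^\infty$ bounds I would use the Lagrangian representation \eqref{form:h^k_G_spec}. Since the weight \eqref{def:w^k_spec} reduces to $e^{\beta\sqrt{(m_\pm c)^2+|\pbp^{k+1}(x,p)|^2}}$ at the boundary point $\xbp^{k+1}\in\partial\O$ by \eqref{wk:bdry_spec}, and $|\tpbp^{k+1}|=|\pbp^{k+1}|$, the reflected contribution is controlled by $e^{\beta\sqrt{(m_\pm c)^2+|\pbp^{k+1}|^2}}|h^k_\pm(\xbp^{k+1},\tpbp^{k+1})|=w^k_{\pm,\beta}(\xbp^{k+1},\tpbp^{k+1})|h^k_\pm(\xbp^{k+1},\tpbp^{k+1})|\le\|w^k_{\pm,\beta}h^k_\pm\|_{L^\infty}$. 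Hence \eqref{form:h^k_G_spec} yields the recursion $\|w^{k+1}_{\pm,\beta}h^{k+1}_\pm\|_{L^\infty}\le\|e^{\beta\sqrt{(m_\pm c)^2+|p|^2}}G_\pm\|_{L^\infty(\gamma_-)}+\varepsilon\|w^k_{\pm,\beta}h^k_\pm\|_{L^\infty}$, and the analogous unweighted recursion $\|h^{k+1}_\pm\|_{L^\infty}\le\|G_\pm\|_{L^\infty(\gamma_-)}+\varepsilon\|h^k_\pm\|_{L^\infty}$. Seeded by $h^0_\pm\equiv0$, these give by induction a geometric series in $\varepsilon$, which since $\varepsilon\ll1$ sums to a uniform-in-$\ell$ constant, producing \eqref{Uest:h^k_spec} and \eqref{Uest:wh^k_spec}. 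As in Proposition~\ref{prop:Unif_steady}, this step is interlocked with \eqref{Uest:DPhi^k_spec}: the representation \eqref{form:h^k_G_spec} requires $\tbp^{k+1}(x,p)<\infty$, which follows from \eqref{Uest:DPhi^k_spec} at level $k$ through Proposition~\ref{lem:tb}.

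The remaining estimates \eqref{Uest:rho^k_spec}, \eqref{Uest:DPhi^k_spec}, $\Phi^\ell\in H^1_{0,\text{loc}}(\O)$ and the lower bound \eqref{lower_w_st_spec} are then obtained exactly as in Proposition~\ref{prop:Unif_steady}: integrate $|h^{k+1}_\pm|\le\|w^{k+1}_{\pm,\beta}h^{k+1}_\pm\|_{L^\infty}/w^{k+1}_{\pm,\beta}$ in $p$, using that the Dirichlet condition $\Phi^k|_{\partial\O}=0$ and \eqref{Uest:DPhi^k_spec} at level $k$ force $w^{k+1}_{\pm,\beta}(x,p)\ge e^{\beta(\sqrt{(m_\pm c)^2+|p|^2}+\frac{m_\pm}{2c}g x_3)}$, which gives \eqref{lower_w_st_spec} and \eqref{Uest:rho^k_spec} at level $k+1$; then apply the elliptic estimate \eqref{est:nabla_phi} of Lemma~\ref{lem:rho_to_phi} with constant $\mathfrak C$, so that \eqref{condition:beta_spec} closes $\|\nabla_x\Phi^{k+1}\|_{L^\infty(\bar\O)}\le\min\big(\frac{m_+}{e_+},\frac{m_-}{e_-}\big)\frac g2$. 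The reflected term enters here only through the (uniformly bounded) constant obtained in \eqref{Uest:wh^k_spec}, so nothing new is required.

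Finally, for the boundary compact-support property \eqref{Uest:h^k_support_spec} the key observation is that along the steady characteristics the total energy $\sqrt{(m_\pm c)^2+|P_\pm(s)|^2}+\frac1c(e_\pm\Phi^{k}(X_\pm(s))+m_\pm g X_{\pm,3}(s))$ is conserved (Lemma~\ref{lem:conservation_law}), and when evaluated at a boundary point it equals $\sqrt{(m_\pm c)^2+|p|^2}$ because there $\Phi^{k}=0$ and $X_{\pm,3}=0$; hence for $x\in\partial\O$ one has $|\pbp^{k+1}(x,p)|=|\tpbp^{k+1}(x,p)|=|p|$. I would then induct: $h^1_\pm|_{\gamma_-}=G_\pm$ vanishes for $|p|\ge\px$ by \eqref{condition:G_support_spec}, and for $x\in\partial\O$, $|p|\ge\px$, combining \eqref{form:h^k_spec} with \eqref{bdry:hk_spec} gives $h^{k+1}_\pm(x,p)=G_\pm(\xbp^{k+1},\pbp^{k+1})+\varepsilon h^k_\pm(\xbp^{k+1},\tpbp^{k+1})$ with $\xbp^{k+1}\in\partial\O$ and $|\pbp^{k+1}|=|\tpbp^{k+1}|=|p|\ge\px$; the first term vanishes by \eqref{condition:G_support_spec} and the second by the inductive hypothesis \eqref{Uest:h^k_support_spec} at level $k$, so $h^{k+1}_\pm(x,p)=0$. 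I expect the main obstacle to be precisely the bookkeeping in this last step --- tracking that each specular reflection at the boundary leaves $|p|$ unchanged, which is what prevents the boundary condition from spreading the $p$-support across iterations --- together with the need to discard the measure-zero grazing set $\{p_{\mathbf b,\pm,3}=0\}$ on which the Lagrangian identification of boundary values degenerates; both are handled as in the body of the paper, while the geometric-series control of the $L^\infty$ norm is elementary once the recursion above is set up.
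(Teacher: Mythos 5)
Your proposal is correct and follows essentially the same route as the paper: induction on $\ell$, the Lagrangian representation \eqref{form:h^k_G_spec} with the reflected term controlled via $|\tpbp^{\ell+1}|=|\pbp^{\ell+1}|$ and the boundary normalization \eqref{wk:bdry_spec} of the weight, the elliptic estimate of Lemma \ref{lem:rho_to_phi} closed by \eqref{condition:beta_spec}, and the conservation law of Lemma \ref{lem:conservation_law} (which forces $|\pbp^{\ell+1}(x,p)|=|p|$ for $x\in\p\O$) to propagate the compact support \eqref{Uest:h^k_support_spec}. The only cosmetic difference is that your geometric-series recursion yields the uniform constant $(1-\varepsilon)^{-1}$ rather than the stated $(1+\varepsilon)$; this is harmless for all subsequent uses (and the paper's own induction is no sharper on this point).
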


\begin{proof}

Under the initial setting $h^0_{\pm} = 0$ and $(\rho^0, \nabla_x \Phi^0) = (0, \mathbf{0})$, together with the invariance of $h^1_{\pm}$ and the weight function $w^{1}_{\pm, \beta} (x,p)$ along the characteristics \eqref{z1_spec}, we deduce \eqref{Uest:h^k_spec}-\eqref{Uest:DPhi^k_spec} and \eqref{Uest:h^k_support_spec} hold for $\ell = 0$.

\smallskip

Now we prove this by induction. In the following proof, we adopt \text{the abuse of notation about $\pm$} in \eqref{abuse}.
Assume a positive integer $k > 0$ and suppose that \eqref{Uest:h^k_spec}-\eqref{Uest:DPhi^k_spec} hold for $0 \leq \ell \leq k$.
From \eqref{eqtn:rhok_spec}, together with \eqref{est1:rho_k+1}, then for $\ell = k + 1$, 
\be \label{est1:rho_k+1_spec}
\begin{split}
| \rho^{k+1} (x) |
\leq \Big|\int_{\R^3} w^{k+1}_{+, \beta}  h^{k+1}_{+} (x,p) \frac{e_{+}}{w^{k+1}_{+, \beta} (x,p)} \dd p \Big| 
+ \Big|\int_{\R^3} w^{k+1}_{-, \beta}  h^{k+1}_{-} (x,p) \frac{e_{-}}{w^{k+1}_{-, \beta} (x,p)} \dd p \Big|
\end{split}
\ee
Using \eqref{Uest:wh^k_spec} for $\ell=k$, we have
\be \label{est2:rho_k+1_spec}
\Big|\int_{\R^3} w^{k+1}_{\beta} h^{k+1} (x,p) \frac{e}{w^{k+1}_{\beta} (x,p)} \dd p \Big| 
\leq (1 + \varepsilon ) \| e^{ \beta \sqrt{(m c)^2 + |p|^2}} G \|_{L^\infty(\gamma_-)} \int_{\R^3} \frac{e}{w^{k+1}_{\beta} (x, p)} \dd p.
\ee
Using \eqref{Uest:wh^k_spec}, \eqref{Uest:DPhi^k_spec} for $\ell=k$ and following \eqref{est3:rho_k+1}, we conclude \eqref{Uest:rho^k_spec} for $\ell=k+1$.

Using \eqref{phi_rho}, we obtain $\Phi^{k+1} \in H^1_{0, \text{loc}} (\O)$ which is the weak solution to \eqref{eqtn:phik_spec}-\eqref{bdry:phik_spec}.
Applying \eqref{est:nabla_phi} in Lemma \ref{lem:rho_to_phi} with $A, B$ as follows:
\be \notag
A = \frac{1 + \varepsilon }{\beta} \big( e_+ \| e^{ \beta \sqrt{(m_{+} c)^2 + |p|^2}} G_+ \|_{L^\infty(\gamma_-)} + e_{-} \| e^{ \beta \sqrt{(m_{-} c)^2 + |p|^2}} G_- \|_{L^\infty(\gamma_-)} \big),
\ \text{and } \
B =  \beta \frac{ \hat{m} }{2 c} g,
\ee
with $\hat{m} = \min ( m_{+}, m_{-} )$, together with the assumption of $\beta$ in \eqref{condition:beta_spec}, then we conclude that \eqref{Uest:DPhi^k_spec} holds for $\ell=k+1$. 

Since $\| \nabla_x \Phi^{k} \|_{L^\infty (\bar \O)} \leq  \min \big( \frac{m_{+}}{e_{+}}, \frac{m_{-}}{e_{-}} \big) \times \  \frac{g}{2}$ holds, 
using \eqref{est:tb^h} in Proposition \ref{lem:tb}, we derive
\Be \notag
\tb^{k+1}(x,p) \leq \frac{4}{m g} \big( \sqrt{(m c)^2 + |p|^2} + \frac{3}{2c} m g x_{3} \big) < \infty
\ \ \text{for all} \
(x, p) \in \bar \O \times \R^3.
\Ee  
From \eqref{form:h^k_spec}, \eqref{form:h^k_G_spec} for $\ell=k$, together with $|\pbp^{\ell+1}| = |\tpbp^{\ell+1}|$, we show 
\Be \label{est3:rho_k+1_spec}
\begin{split}
h^{\ell+1}_{\pm} (x,p) 
= G_{\pm} (\xbp^{\ell+1} (x,p), \pbp^{\ell+1} (x,p) ) + \varepsilon h^{\ell}_{\pm} (\xbp^{\ell+1} (x,p), \tpbp^{\ell+1} (x,p) ),
\end{split}
\Ee
and
\Be \notag
\begin{split} 
& w^{\ell+1}_{\pm, \beta} (x,p) h^{\ell+1}_{\pm} (x,p) 
\\&	= e^{ \beta \sqrt{(m_{\pm} c)^2 + | \pbp^{\ell+1} (x,p)|^2} } \big\{ G_{\pm} (\xbp^{\ell+1} (x,p), \pbp^{\ell+1} (x,p) ) + \varepsilon h^{\ell}_{\pm} (\xbp^{\ell+1} (x,p), \tpbp^{\ell+1} (x,p) ) \big\}
\\&	= e^{ \beta \sqrt{(m_{\pm} c)^2 + | \pbp^{\ell+1} (x,p)|^2} } G_{\pm} (\xbp^{\ell+1} (x,p), \pbp^{\ell+1} (x,p) ) 
\\& \ \ \ \ + \varepsilon e^{ \beta \sqrt{(m_{\pm} c)^2 + | \tpbp^{\ell+1} (x,p)|^2} } h^{\ell}_{\pm} (\xbp^{\ell+1} (x,p), \tpbp^{\ell+1} (x,p) ).
\end{split}	
\Ee
Together with \eqref{Uest:h^k_spec}, \eqref{Uest:wh^k_spec} for $\ell=k$, we conclude \eqref{Uest:h^k_spec} and \eqref{Uest:wh^k_spec} for $\ell=k+1$.
Furthermore, from \eqref{est3:rho_k+1_spec} and \eqref{Uest:h^k_support_spec} for $\ell=k$, together with Lemma \ref{lem:conservation_law}, we conclude \eqref{Uest:h^k_support_spec} for $\ell=k+1$.

\smallskip

Now we have proved \eqref{Uest:h^k_spec}-\eqref{Uest:DPhi^k_spec} and \eqref{Uest:h^k_support_spec} hold for $\ell = k+1$. Therefore, we complete the proof by induction. Using \eqref{Uest:DPhi^k_spec} and the zero Dirichlet boundary condition \eqref{bdry:phik_spec}, we can check that $\Phi^\ell \in H^1_{0, \text{loc}} (\O)$ and $\eqref{lower_w_st_spec}$ holds.
\end{proof}

\subsubsection{A Priori Estimate} \label{sec:RS_spec}

In this section, we establish a priori estimate of $(h , \rho,  \Phi )$ solving \eqref{VP_h_spec}-\eqref{eqtn:Dphi_spec}. 
Further, we always suppose \eqref{Uest:DPhi_spec} holds, which is the same as the condition \eqref{Uest:DPhi} in the case of the steady problem only under the effect of the inflow boundary conditions \eqref{VP_h}-\eqref{def:rho}.
Again we utilize the kinetic distance function $\alpha_{\pm} (x, p)$ defined in $\eqref{alpha}$.

Since the characteristics in \eqref{ODE_h_spec} are the same as the characteristics in \eqref{ODE_h}, Lemmas \ref{VL} - \ref{lem:nabla_zb} hold for the the characteristics $Z_{\pm} (t;x,p) = (X_{\pm} (t;x,p), P_{\pm} (t;x,p) )$ solving \eqref{ODE_h_spec}, and we omit the proofs of these results.

\begin{prop} \label{prop:Reg_spec}

Suppose $(h_{\pm}, \rho , \nabla_x \Phi)$ solve \eqref{VP_h_spec}-\eqref{eqtn:Dphi_spec} in the sense of Definition \ref{weak_sol_spec}. Suppose the condition \eqref{Uest:DPhi_spec} holds. 
Assume that there exists $\px > 0$ such that 
\be \label{condition:h_support_spec}
h_{\pm} (x, p) = 0 
\ \text{ for any } \
|p| \geq \px.
\ee
Further, there exists $\beta, \tilde \beta >0$ such that
\be \label{choice_beta_spec}
\| w_{\pm, \beta} h_{\pm} \|_{L^\infty (\bar \O \times \R^3)} 
\leq (1 + \varepsilon ) \| e^{ \beta \sqrt{(m_{\pm} c)^2 + |p|^2}} G_{\pm} \|_{L^\infty (\gamma_-)},
\ee
and
\be \notag
\| e^{{\tilde \beta } \sqrt{(m_{\pm} c)^2 + |p|^2}} \nabla_{x_\parallel, p} G_{\pm} (x,p) \|_{L^\infty (\gamma_-)} < \infty,
\ee
and let $\hat{m} = \min ( m_{+}, m_{-} )$ satisfy
\Be \label{choice_tbeta_spec}
\frac{8}{ \hat{m} g} (1 + B_3 + \| \nabla_x ^2 \Phi  \|_\infty) \leq \frac{\tilde \beta}{2}.
\Ee
Finally, suppose $\varepsilon$ in the boundary condition \eqref{bdry:h_spec} satisfies that	
\be \label{condition2:epilon_G_spec}
\varepsilon \big( 1 + \tilde{\beta} \big) {e^{\frac{\tilde{\beta}}{2} \sqrt{(m c)^2 + |\px|^2}}} \leq \frac{1}{4}.
\ee	
Then, for  $(x, p) \in \bar \O \times \R^3$, 
\Be \label{est:rho_x_spec}
\begin{split}
e^{ \frac{\tilde \beta \hat{m} g}{4 c} x_3 } |\nabla_{x_i} \rho  (x)|   
& \lesssim \| e^{\tilde \beta \sqrt{(m_+ c)^2 + |p|^2}} \nabla_{x_\parallel, p} G_+ \|_{L^\infty (\gamma_-)}  
\times \Big(
1 + \mathbf{1}_{|x_3| \leq 1} \frac{1}{\sqrt{ m_+ g x_3 }}
\Big)
\\& \ \ \ \ + \| e^{\tilde \beta \sqrt{(m_- c)^2 + |p|^2}} \nabla_{x_\parallel, p} G_- \|_{L^\infty (\gamma_-)}  
\times \Big(
1 + \mathbf{1}_{|x_3| \leq 1} \frac{1}{\sqrt{ m_- g x_3 }}
\Big),
\end{split}
\Ee
and
\Be \label{est:phi_C2_spec}
\begin{split}
\| \nabla_x^2 \Phi  \|_\infty
& \lesssim \frac{1}{\beta} \big( e_+ \| w_{+, \beta} G_+ \|_{L^\infty(\gamma_-)} + e_{-} \| w_{-, \beta} G_- \|_{L^\infty(\gamma_-)} \big)
\\& \ \ \ \ + \sum\limits_{i = \pm} \| e^{\tilde \beta \sqrt{(m_i c)^2 + |p|^2}} \nabla_{x_\parallel, p} G_i \|_{L^\infty (\gamma_-)}.
\end{split}	
\Ee
Moreover, 
\Be \label{est:hk_v_spec}
\begin{split}
& e^{ \frac{\tilde \beta}{2}|p^0_{\pm}|} e^{  \frac{\tilde \beta m_{\pm} g}{4 c} x_3} | \nabla_p h_{\pm} (x,p)| 
\\& \lesssim \big( 1 + \frac{ \| \nabla_x^2 \Phi  \|_\infty + e B_3 + m_{\pm} g}{(m_{\pm} g)^2} \big) \| e^{\tilde \beta \sqrt{(m_{\pm} c)^2 + |p|^2}} \nabla_{x_\parallel, p} G_{\pm} \|_{L^\infty (\gamma_-)},
\end{split}
\Ee
and
\Be \label{est:hk_x_spec}
\begin{split}
& e^{ \frac{\tilde \beta}{2}|p^0_{\pm}|} e^{  \frac{\tilde \beta m_{\pm} g}{4 c} x_3} | \nabla_x h_{\pm} (x,p)|  
\\& \lesssim \big( \frac{\delta_{i3}}{\alpha_{\pm} (x,p)} + \frac{ \| \nabla_x^2 \Phi  \|_\infty + e B_3 + m_{\pm} g}{(m_{\pm} g)^2} \big) \| e^{\tilde \beta \sqrt{(m_{\pm} c)^2 + |p|^2}} \nabla_{x_\parallel, p} G_{\pm} \|_{L^\infty (\gamma_-)}.
\end{split}
\Ee
\end{prop}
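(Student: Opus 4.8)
The plan is to follow the proof of Proposition~\ref{prop:Reg} almost line by line, the only genuinely new feature being the extra boundary term $\varepsilon h_\pm(x,\tilde p)$, which turns the chain-rule formula for $\nabla_{x,p}h_\pm$ into a \emph{self-referential} identity that is closed by the smallness assumption~\eqref{condition2:epilon_G_spec}. First I would observe that the characteristic system~\eqref{ODE_h_spec} is identical to~\eqref{ODE_h}, so that under hypothesis~\eqref{Uest:DPhi_spec} (the same as~\eqref{Uest:DPhi}) Lemmas~\ref{VL},~\ref{lem:XV_xv},~\ref{lem:exp_txvb},~\ref{lem:nabla_zb},~\ref{lem:conservation_law} and Propositions~\ref{lem:tb},~\ref{lem2:tb} transfer verbatim to the trajectories $Z_\pm(s;x,p)$ attached to~\eqref{VP_h_spec} and to $(\tbp,\xbp,\pbp)$. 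In particular $\tpbp=(p_{\mathbf b,\pm,1},p_{\mathbf b,\pm,2},-p_{\mathbf b,\pm,3})$ differs from $\pbp$ only by a reflection of the third component, so $\nabla_{x,p}\tpbp$ obeys exactly the bounds~\eqref{est:vb_x}--\eqref{est:vb_v} stated for $\nabla_{x,p}\pbp$. From the weak formulation together with~\eqref{form:h^k_G_spec} passed to the limit, the solution satisfies the Lagrangian identity $h_\pm(x,p)=G_\pm(\xbp,\pbp)+\varepsilon\,h_\pm(\xbp,\tpbp)$, hence
\begin{align*}
\nabla_{x,p}h_\pm(x,p) &=\nabla_{x,p}\xbp\cdot\nabla_{x_\parallel}G_\pm(\xbp,\pbp)+\nabla_{x,p}\pbp\cdot\nabla_{p}G_\pm(\xbp,\pbp)\\
&\quad+\varepsilon\big(\nabla_{x,p}\xbp\cdot\nabla_{x_\parallel}h_\pm(\xbp,\tpbp)+\nabla_{x,p}\tpbp\cdot\nabla_{p}h_\pm(\xbp,\tpbp)\big).
\end{align*}

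Next I would estimate the two groups of terms separately. The inflow group is handled exactly as in Steps~3--4 of the proof of Proposition~\ref{prop:Reg}, producing the right-hand sides of~\eqref{est:hk_v_spec} and~\eqref{est:hk_x_spec}. For the specular group the key remark is that on the support of $h_\pm(\xbp,\tpbp)$ one has $|\tpbp|=|\pbp|\le\px$, so by the conservation law~\eqref{eq:tb_conservation}, $\sqrt{(m_\pm c)^2+|p|^2}+\tfrac{m_\pm g}{2c}x_3\le\sqrt{(m_\pm c)^2+|\pbp|^2}\le\sqrt{(m_\pm c)^2+|\px|^2}$; this simultaneously bounds the exponential trajectory factors $e^{C|\pbp|}$ occurring in~\eqref{est1:xb_x/w} and~\eqref{est:xb_x}--\eqref{est:vb_v}, and shows that the target weight $e^{\frac{\tilde\beta}{2}(|p^0_\pm|+\frac{m_\pm g}{2c}x_3)}$ is dominated by $e^{\frac{\tilde\beta}{2}\sqrt{(m_\pm c)^2+|\pbp|^2}}$. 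Since $\nabla_{x_\parallel}h_\pm$ and $\nabla_{p}h_\pm$ carry no kinetic-distance singularity (only the normal derivative does, through the $\delta_{i3}$ term), multiplying by $\nabla_{x_i}\xbp$ respectively $\nabla_{x_i}\tpbp$ reproduces the usual $\delta_{i3}/\alpha_\pm(x,p)+(m_\pm g)^{-2}(\|\nabla_x^2\Phi\|_\infty+eB_3+m_\pm g)$ structure. Putting this together, the specular group is bounded by $\varepsilon(1+\tilde\beta)e^{\frac{\tilde\beta}{2}\sqrt{(m_\pm c)^2+|\px|^2}}$ times that same structure times a weighted supremum of $\nabla_{x_\parallel,p}h_\pm$; by~\eqref{condition2:epilon_G_spec} the prefactor is $\le\tfrac14$, so after summing over $\pm$, multiplying by $e^{\frac{\tilde\beta}{2}|p^0_\pm|}e^{\frac{\tilde\beta m_\pm g}{4c}x_3}$ and taking suprema, the specular contribution is absorbed into the left-hand side, leaving exactly~\eqref{est:hk_v_spec}--\eqref{est:hk_x_spec}. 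The slightly stronger requirement~\eqref{choice_tbeta_spec}, with $\tilde\beta/2$ in place of $\tilde\beta$, is what leaves the room for this absorption.

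Finally, \eqref{est:rho_x_spec} would follow by inserting~\eqref{est:hk_x_spec} into $\nabla_{x_i}\rho=\int(e_+\nabla_{x_i}h_++e_-\nabla_{x_i}h_-)\,dp$ and integrating the $\delta_{i3}/\alpha_\pm$ term exactly as in~\eqref{est:1/alpha}, and then \eqref{est:phi_C2_spec} would follow from the $C^{0,\delta}$ bound on $\rho$ obtained as in~\eqref{est:rho_Hol}, the pointwise bound on $|\rho|$ deduced from~\eqref{choice_beta_spec} and~\eqref{lower_w_st_spec} as in~\eqref{est1:rho_Hol}, and the Schauder estimate~\eqref{est:nabla^2phi} of Lemma~\ref{lem:rho_to_phi}.

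I expect the main obstacle to be the self-referential estimate for $\nabla_{x,p}h_\pm$: one must check that evaluating $\nabla h_\pm$ at the reflected boundary point $(\xbp,\tpbp)$ loses neither the exponential momentum/altitude weight (this is where the conservation law is used, to trade $x_3$-growth for $|\pbp|$-growth) nor worsens the $\alpha_\pm$-singularity (this is where the compact-support hypothesis~\eqref{condition:h_support_spec} keeps the trajectory factors $e^{C|\pbp|}$ bounded, together with the fact that tangential and momentum derivatives are regular). Only once this structural match is verified does~\eqref{condition2:epilon_G_spec} close the loop by a routine absorption. A secondary, more technical point --- handled as in Proposition~\ref{prop:Unif_D2xDp_dy} --- is to run the absorption argument first on the iterates $h^{\ell+1}_\pm$, whose weighted derivative norms are a priori finite by compact support, and only then pass to the limit, so that the weighted suprema used in the bootstrap are legitimately finite throughout.
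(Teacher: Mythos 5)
Your proposal is correct and follows essentially the same route as the paper: the Lagrangian identity $h_\pm=G_\pm(\xbp,\pbp)+\varepsilon h_\pm(\xbp,\tpbp)$ differentiated along the (unchanged) characteristics, the conservation law together with the compact support $|\tpbp|=|\pbp|\le\px$ to tame the exponential trajectory factors and the weights, the absorption of the $\varepsilon$-prefactor via \eqref{condition2:epilon_G_spec} in the weighted sup of the tangential and momentum derivatives (which, as you note, are the only derivatives of $h_\pm$ entering the specular term and carry no $1/\alpha_\pm$ singularity), and then the normal-derivative, $\nabla_x\rho$, and $\nabla_x^2\Phi$ bounds deduced exactly as in Proposition \ref{prop:Reg}. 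The paper likewise closes the self-referential estimate only in the regular-derivative norms and treats $\partial_{x_3}h_\pm$ afterwards, so your plan matches its proof in all essentials.
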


\begin{proof}	

For the sake of simplicity, we abuse the notation as in \eqref{abuse}.
Moreover, we use the notation: $p_\pm^0 = \sqrt{(m_\pm c)^2 + |p|^2}$ in the rest of proof.
From \eqref{form:h^k_G_spec}, we get 
\be \notag
h (x,p) = G (\xb, \pb) + \varepsilon h (\xb, \tpb),
\ee
where $\tpb = (p_{\mathbf{b}, 1}, p_{\mathbf{b}, 2}, - p_{\mathbf{b}, 3})$. By taking the derivative on $h (x,p)$, we have
\Be \label{form:nabla_h_spec}
\begin{split}
\nabla_{x, p} h (x,p)
& = \frac{\nabla_{x, p} \xb (x,p)}{e^{\tilde{\beta} |\pb^0 (x,p)| }} \cdot e^{\tilde{\beta} |\pb^0| } \nabla_{x_\parallel} G (\xb, \pb) + \frac{\nabla_{x, p} \pb (x,p)}{e^{\tilde{\beta} |\pb^0 (x,p)|}} \cdot e^{\tilde{\beta} |\pb^0|} \nabla_p G (\xb, \pb)
\\& \ \ \ \ + \varepsilon \nabla_{x, p} \xb (x,p) \cdot \nabla_{x_\parallel} h (\xb, \tpb) + \varepsilon \nabla_{x, p} \tpb (x,p) \cdot \nabla_p h (\xb, \tpb).
\end{split}
\Ee
Together with Lemma \ref{lem:conservation_law} and $|\pb| = |\tpb|$, we further get
\Be \label{weight_form:nabla_h_spec}
\begin{split}
& e^{\frac{\tilde{\beta}}{2} \big( |p^0| + \frac{1}{c} ( e_{\pm} \Phi (x) + m_{\pm} g x_3 ) \big)} \nabla_{x, p} h (x,p)
\\& = \frac{\nabla_{x, p} \xb (x,p)}{e^{\frac{\tilde{\beta}}{2} |\pb^0 (x,p)| }} \cdot e^{\tilde{\beta} |\pb^0| } \nabla_{x_\parallel} G (\xb, \pb) + \frac{\nabla_{x, p} \pb (x,p)}{e^{\frac{\tilde{\beta}}{2} |\pb^0 (x,p)|}} \cdot e^{\tilde{\beta} |\pb^0|} \nabla_p G (\xb, \pb)
\\& \ \ \ \ + \varepsilon \nabla_{x, p} \xb (x,p) \cdot e^{\frac{\tilde{\beta}}{2} |\tpb^0| } \nabla_{x_\parallel} h (\xb, \tpb) + \varepsilon \nabla_{x, p} \tpb (x,p) \cdot e^{\frac{\tilde{\beta}}{2} |\tpb^0| } \nabla_p h (\xb, \tpb).
\end{split}
\Ee

\smallskip

\textbf{Step 1. Proof of \eqref{est:hk_v_spec}.} 
From \eqref{condition:h_support_spec}, \eqref{weight_form:nabla_h_spec} and $\tpb = (p_{\mathbf{b}, 1}, p_{\mathbf{b}, 2}, - p_{\mathbf{b}, 3})$, we get
\be \label{est1:hk_v_spec}
\begin{split}
& e^{\frac{\tilde{\beta}}{2} \big( |p^0| + \frac{1}{c} ( e_{\pm} \Phi (x) + m_{\pm} g x_3 ) \big)} | \nabla_{p} h (x,p) | 
\\& \leq \Big( \frac{ | \nabla_p \xb (x,p) |}{e^{\frac{\tilde{\beta}}{2} |\pb^0 (x,p)|}} + \frac{ |\nabla_p \pb (x,p)|}{e^{\frac{\tilde{\beta}}{2} |\pb^0 (x,p)|}} \Big) \| e^{\tilde{\beta} |p^0|}   \nabla_{x_\parallel, p} G \|_{L^\infty (\gamma_-)}
\\& \ \ \ \ + \Big( \mathbf{1}_{|\pb| < \px} \varepsilon | \nabla_{p} \xb | \Big) \| e^{\frac{\tilde{\beta}}{2} \big( |p^0| + \frac{1}{c} ( e_{\pm} \Phi (x) + m_{\pm} g x_3 ) \big)} \nabla_{x_\parallel} h (x, p) \|_{L^\infty (\bar \O \times \R^3)}
\\& \ \ \ \ + \Big( \mathbf{1}_{|\pb| < \px} \varepsilon | \nabla_{p} \pb | \Big) \| e^{\frac{\tilde{\beta}}{2} \big( |p^0| + \frac{1}{c} ( e_{\pm} \Phi (x) + m_{\pm} g x_3 ) \big)} \nabla_{p} h (x, p) \|_{L^\infty (\bar \O \times \R^3)},
\end{split}
\Ee
and
\be \label{est1:hk_x_spec}
\begin{split}
& e^{\frac{\tilde{\beta}}{2} \big( |p^0| + \frac{1}{c} ( e_{\pm} \Phi (x) + m_{\pm} g x_3 ) \big)} | \nabla_{x} h (x,p) | 
\\& \leq \Big( \frac{ | \nabla_x \xb (x,p) |}{e^{\frac{\tilde{\beta}}{2} |\pb^0 (x,p)|}} + \frac{ |\nabla_x \pb (x,p)|}{e^{\frac{\tilde{\beta}}{2} |\pb^0 (x,p)|}} \Big) \| e^{\tilde{\beta} |p^0|}   \nabla_{x_\parallel, p} G \|_{L^\infty (\gamma_-)}
\\& \ \ \ \ + \Big( \mathbf{1}_{|\pb| < \px} \varepsilon | \nabla_{x} \xb | \Big) \| e^{\frac{\tilde{\beta}}{2} \big( |p^0| + \frac{1}{c} ( e_{\pm} \Phi (x) + m_{\pm} g x_3 ) \big)} \nabla_{x_\parallel} h (x, p) \|_{L^\infty (\bar \O \times \R^3)}
\\& \ \ \ \ + \Big( \mathbf{1}_{|\pb| < \px} \varepsilon | \nabla_{x} \pb | \Big) \| e^{\frac{\tilde{\beta}}{2} \big( |p^0| + \frac{1}{c} ( e_{\pm} \Phi (x) + m_{\pm} g x_3 ) \big)} \nabla_{p} h (x, p) \|_{L^\infty (\bar \O \times \R^3)}.
\end{split}
\Ee
Combining \eqref{est1:hk_v_spec} and \eqref{est1:hk_x_spec}, we derive that
\begin{align}
& e^{\frac{\tilde{\beta}}{2} \big( |p^0| + \frac{1}{c} ( e_{\pm} \Phi (x) + m_{\pm} g x_3 ) \big)} \big( | \nabla_{p} h (x,p) | + | \nabla_{x_\parallel} h (x,p) | \big) 
\notag \\
& \leq \Big( \frac{ | \nabla_p \xb (x,p) |}{e^{\frac{\tilde{\beta}}{2} |\pb^0 (x,p)|}} + \frac{ |\nabla_p \pb (x,p)|}{e^{\frac{\tilde{\beta}}{2} |\pb^0 (x,p)|}} \Big) \| e^{\tilde{\beta} |p^0|}  \nabla_{x_\parallel, p} G \|_{L^\infty (\gamma_-)}
\label{est1:hk_v+x_spec_1} \\
& \ \ \ \ + \Big( \frac{ | \nabla_{x_\parallel} \xb (x,p) |}{e^{\frac{\tilde{\beta}}{2} |\pb^0 (x,p)|}} + \frac{ |\nabla_{x_\parallel} \pb (x,p)|}{e^{\frac{\tilde{\beta}}{2} |\pb^0 (x,p)|}} \Big) \| e^{\tilde{\beta} |p^0|}  \nabla_{x_\parallel, p} G \|_{L^\infty (\gamma_-)}
\label{est1:hk_v+x_spec_2} \\
& \ \ \ \ + \mathbf{1}_{|\pb| < \px} \Big( \varepsilon | \nabla_{p} \xb | + \varepsilon | \nabla_{x_\parallel} \xb | \Big) 
\| e^{\frac{\tilde{\beta}}{2} \big( |p^0| + \frac{1}{c} ( e_{\pm} \Phi (x) + m_{\pm} g x_3 ) \big)} \nabla_{x_\parallel} h (x, p) \|_{L^\infty (\bar \O \times \R^3)}
\label{est1:hk_v+x_spec_3} \\
& \ \ \ \ + \mathbf{1}_{|\pb| < \px} \Big( \varepsilon | \nabla_{p} \pb | + \varepsilon | \nabla_{x_\parallel} \pb | \Big) 
\| e^{\frac{\tilde{\beta}}{2} \big( |p^0| + \frac{1}{c} ( e_{\pm} \Phi (x) + m_{\pm} g x_3 ) \big)} \nabla_{p} h (x, p) \|_{L^\infty (\bar \O \times \R^3)}.
\label{est1:hk_v+x_spec_4}
\end{align}
Replacing $\tilde{\beta}$ with $\tilde{\beta} / 2$ in \eqref{est1:xb_v} and \eqref{est1:vb_v}, together with \eqref{choice_tbeta_spec}, we get
\be \label{est1:xb+pb_v_spec}
\frac{ | \nabla_p \xb (x,p) |}{e^{\frac{\tilde{\beta}}{2} |\pb^0 (x,p)|}} + \frac{ |\nabla_p \pb (x,p)|}{e^{\frac{\tilde{\beta}}{2} |\pb^0 (x,p)|}} 
\lesssim 1 + \frac{ \| \nabla_x^2 \Phi  \|_\infty + e B_3 + mg}{(m g)^2}.
\ee
Analogously, replacing $\tilde{\beta}$ with $\tilde{\beta} / 2$ in \eqref{est1:xb_x} and \eqref{est1:vb_x}, together with \eqref{choice_tbeta_spec}, we get
\be \label{est1:xb+pb_x12_spec}
\frac{ | \nabla_{x_\parallel} \xb (x,p) |}{e^{\frac{\tilde{\beta}}{2} |\pb^0 (x,p)|}} + \frac{ |\nabla_{x_\parallel} \pb (x,p)|}{e^{\frac{\tilde{\beta}}{2} |\pb^0 (x,p)|}} 
\lesssim \frac{ \| \nabla_x^2 \Phi  \|_\infty + e B_3 + mg}{(m g)^2}.
\ee
Inputting \eqref{est1:xb+pb_v_spec} and \eqref{est1:xb+pb_x12_spec} into \eqref{est1:hk_v+x_spec_1} and \eqref{est1:hk_v+x_spec_2} respectively, we obtain
\be \label{est2:hk_v+x_spec}
\eqref{est1:hk_v+x_spec_1} + \eqref{est1:hk_v+x_spec_2}
\lesssim \big( 1 + \frac{ \| \nabla_x^2 \Phi  \|_\infty + e B_3 + m_{\pm} g}{(m_{\pm} g)^2} \big) \| e^{\tilde \beta \sqrt{(m_{\pm} c)^2 + |p|^2}} \nabla_{x_\parallel, p} G_{\pm} \|_{L^\infty (\gamma_-)}.
\ee

On the other hand, \eqref{est1:xb+pb_v_spec} and \eqref{est1:xb+pb_x12_spec}, together with \eqref{condition2:epilon_G_spec}, show
\be \label{est3:hk_v+x_spec_1}
\begin{split}
& \mathbf{1}_{|\pb| < \px} \Big( \varepsilon | \nabla_{p} \xb | + \varepsilon | \nabla_{x_\parallel} \xb | \Big) 
\\& \lesssim \mathbf{1}_{|\pb| < \px} \varepsilon \big( 1 + \frac{ \| \nabla_x^2 \Phi  \|_\infty + e B_3 + mg}{(m g)^2} \big) {e^{\frac{\tilde{\beta}}{2} |\pb^0 (x,p)|}}
\\& \lesssim \varepsilon \big( 1 + \frac{ \| \nabla_x^2 \Phi  \|_\infty + e B_3 + mg}{(m g)^2} \big) {e^{\frac{\tilde{\beta}}{2} \sqrt{(m c)^2 + |\px|^2}}} \leq \frac{1}{4},
\end{split}
\ee
and
\be \label{est3:hk_v+x_spec_2}
\mathbf{1}_{|\pb| < \px} \Big( \varepsilon | \nabla_{p} \pb | + \varepsilon | \nabla_{x_\parallel} \pb | \Big) 
\lesssim \varepsilon \big( 1 + \frac{ \| \nabla_x^2 \Phi \|_\infty + e B_3 + mg}{(m g)^2} \big) {e^{\frac{\tilde{\beta}}{2} \sqrt{(m c)^2 + |\px|^2}}} \leq \frac{1}{4}.
\ee
Inputting \eqref{est3:hk_v+x_spec_1} and \eqref{est3:hk_v+x_spec_2} into \eqref{est1:hk_v+x_spec_3} and \eqref{est1:hk_v+x_spec_4} respectively, we obtain
\be \label{est4:hk_v+x_spec}
\begin{split}
\eqref{est1:hk_v+x_spec_3} + \eqref{est1:hk_v+x_spec_4}
& \leq \frac{1}{4} 
\Big( \| e^{\frac{\tilde{\beta}}{2} \big( |p^0| + \frac{1}{c} ( e_{\pm} \Phi (x) + m_{\pm} g x_3 ) \big)} \nabla_{x_\parallel} h (x, p) \|_{L^\infty (\bar \O \times \R^3)} 
\\& \qquad \ \ + \| e^{\frac{\tilde{\beta}}{2} \big( |p^0| + \frac{1}{c} ( e_{\pm} \Phi (x) + m_{\pm} g x_3 ) \big)} \nabla_{p} h (x, p) \|_{L^\infty (\bar \O \times \R^3)} \Big).
\end{split}
\ee
Combining \eqref{est2:hk_v+x_spec} and \eqref{est4:hk_v+x_spec}, we obtain that
\be \label{est5:hk_v+x_spec}
\begin{split}
& \| e^{\frac{\tilde{\beta}}{2} \big( |p^0| + \frac{1}{c} ( e_{\pm} \Phi (x) + m_{\pm} g x_3 ) \big)} \nabla_{x_{\parallel}, p} h (x, p) \|_{L^\infty (\bar \O \times \R^3)} 
\\& \lesssim \big( 1 + \frac{ \| \nabla_x^2 \Phi  \|_\infty + e B_3 + m_{\pm} g}{(m_{\pm} g)^2} \big) \| e^{\tilde \beta \sqrt{(m_{\pm} c)^2 + |p|^2}} \nabla_{x_\parallel, p} G_{\pm} \|_{L^\infty (\gamma_-)}
\\& \ \ \ \ + \frac{1}{4} 
\Big( \| e^{\frac{\tilde{\beta}}{2} \big( |p^0| + \frac{1}{c} ( e_{\pm} \Phi (x) + m_{\pm} g x_3 ) \big)} \nabla_{x_\parallel} h (x, p) \|_{L^\infty (\bar \O \times \R^3)} 
\\& \qquad \ \ + \| e^{\frac{\tilde{\beta}}{2} \big( |p^0| + \frac{1}{c} ( e_{\pm} \Phi (x) + m_{\pm} g x_3 ) \big)} \nabla_{p} h (x, p) \|_{L^\infty (\bar \O \times \R^3)} \Big).
\end{split}
\ee
Uniting two weight $L^{\infty}$ estimates on $\nabla_{x_{\parallel}} h$ and $\nabla_{p} h$ in \eqref{est5:hk_v+x_spec}, we have
\be \label{est6:hk_v+x_spec}
\begin{split}
& \frac{1}{2} 
\Big( \| e^{\frac{\tilde{\beta}}{2} \big( |p^0| + \frac{1}{c} ( e_{\pm} \Phi (x) + m_{\pm} g x_3 ) \big)} \nabla_{x_\parallel} h (x, p) \|_{L^\infty (\bar \O \times \R^3)} 
\\& \qquad \ \ + \| e^{\frac{\tilde{\beta}}{2} \big( |p^0| + \frac{1}{c} ( e_{\pm} \Phi (x) + m_{\pm} g x_3 ) \big)} \nabla_{p} h (x, p) \|_{L^\infty (\bar \O \times \R^3)} \Big) 
\\& \lesssim 2 \big( 1 + \frac{ \| \nabla_x^2 \Phi  \|_\infty + e B_3 + m_{\pm} g}{(m_{\pm} g)^2} \big) \| e^{\tilde \beta \sqrt{(m_{\pm} c)^2 + |p|^2}} \nabla_{x_\parallel, p} G_{\pm} \|_{L^\infty (\gamma_-)}.
\end{split}
\ee
Furthermore, the condition \eqref{Uest:DPhi_spec} and Lemma \ref{lem:conservation_law} show that
\be \label{lower:vb_spec}
\pb^0 (x,p) = \sqrt{(m_{\pm} c)^2 + |\pb (x,p)|^2} \geq \sqrt{(m_{\pm} c)^2 + |p|^2} + \frac{m g}{2 c} x_3.
\ee
Together with \eqref{est6:hk_v+x_spec} and \eqref{lower:vb_spec}, we conclude \eqref{est:hk_v_spec}.

\smallskip

\textbf{Step 2. Proof of \eqref{est:hk_x_spec}.} 
The estimate \eqref{est6:hk_v+x_spec} indicates that
\be \notag
\begin{split}
& \frac{1}{2} 
\| e^{\frac{\tilde{\beta}}{2} \big( |p^0| + \frac{1}{c} ( e_{\pm} \Phi (x) + m_{\pm} g x_3 ) \big)} \nabla_{x_\parallel} h (x, p) \|_{L^\infty (\bar \O \times \R^3)} 
\\& \lesssim 2 \big( 1 + \frac{ \| \nabla_x^2 \Phi  \|_\infty + e B_3 + m_{\pm} g}{(m_{\pm} g)^2} \big) \| e^{\tilde \beta \sqrt{(m_{\pm} c)^2 + |p|^2}} \nabla_{x_\parallel, p} G_{\pm} \|_{L^\infty (\gamma_-)}.
\end{split}
\ee
Hence, we only need to bound the following:
\be \notag
e^{ \frac{\tilde \beta}{2}|p^0_{\pm}|} e^{  \frac{\tilde \beta m_{\pm} g}{4 c} x_3} | \nabla_{x_3} h_{\pm} (x,p)|.
\ee
Again from \eqref{est1:hk_x_spec}, we get
\be \label{est1:hk_x3_spec}
\begin{split}
& e^{\frac{\tilde{\beta}}{2} \big( |p^0| + \frac{1}{c} ( e_{\pm} \Phi (x) + m_{\pm} g x_3 ) \big)} | \nabla_{x_3} h (x,p) | 
\\& \leq \Big( \frac{ | \nabla_{x_3} \xb (x,p) |}{e^{\frac{\tilde{\beta}}{2} |\pb^0 (x,p)|}} + \frac{ |\nabla_{x_3} \pb (x,p)|}{e^{\frac{\tilde{\beta}}{2} |\pb^0 (x,p)|}} \Big) \| e^{\tilde{\beta} |p^0|}   \nabla_{x_\parallel, p} G \|_{L^\infty (\gamma_-)}
\\& \ \ \ \ + \Big( \mathbf{1}_{|\pb| < \px} \varepsilon | \nabla_{x_3} \xb | \Big) \| e^{\frac{\tilde{\beta}}{2} \big( |p^0| + \frac{1}{c} ( e_{\pm} \Phi (x) + m_{\pm} g x_3 ) \big)} \nabla_{x_\parallel} h (x, p) \|_{L^\infty (\bar \O \times \R^3)}
\\& \ \ \ \ + \Big( \mathbf{1}_{|\pb| < \px} \varepsilon | \nabla_{x_3} \pb | \Big) \| e^{\frac{\tilde{\beta}}{2} \big( |p^0| + \frac{1}{c} ( e_{\pm} \Phi (x) + m_{\pm} g x_3 ) \big)} \nabla_{p} h (x, p) \|_{L^\infty (\bar \O \times \R^3)}.
\end{split}
\Ee
Replacing $\tilde{\beta}$ with $\tilde{\beta} / 2$ in \eqref{est1:xb_x} and \eqref{est1:vb_x}, together with \eqref{choice_tbeta_spec}, we get
\be \label{est1:xb+pb_x3_spec}
\frac{ | \nabla_{x_3} \xb (x,p) |}{e^{\frac{\tilde{\beta}}{2} |\pb^0 (x,p)|}} + \frac{ |\nabla_{x_3} \pb (x,p)|}{e^{\frac{\tilde{\beta}}{2} |\pb^0 (x,p)|}} 
\lesssim \frac{1}{\alpha (x,p)} + \frac{ \| \nabla_x^2 \Phi  \|_\infty + e B_3 + mg}{(m g)^2}.
\ee
Together with \eqref{est6:hk_v+x_spec}, we obtain
\be \label{est2:hk_x3_spec}
\begin{split}
& e^{\frac{\tilde{\beta}}{2} \big( |p^0| + \frac{1}{c} ( e_{\pm} \Phi (x) + m_{\pm} g x_3 ) \big)} | \nabla_{x_3} h (x,p) | 
\\& \leq \Big( \frac{\delta_{i3}}{\alpha (x,p)} + \frac{ \| \nabla_x^2 \Phi  \|_\infty + e B_3 + mg}{(m g)^2} \Big) \| e^{\tilde{\beta} |p^0|}   \nabla_{x_\parallel, p} G \|_{L^\infty (\gamma_-)}
\\& \ \ \ \ + \Big( \mathbf{1}_{|\pb| < \px} \varepsilon | \nabla_{x_3} \xb | + \mathbf{1}_{|\pb| < \px} \varepsilon | \nabla_{x_3} \pb | \Big) 
\\& \qquad \ \ \times \big( 1 + \frac{ \| \nabla_x^2 \Phi  \|_\infty + e B_3 + m_{\pm} g}{(m_{\pm} g)^2} \big) \| e^{\tilde \beta \sqrt{(m_{\pm} c)^2 + |p|^2}} \nabla_{x_\parallel, p} G_{\pm} \|_{L^\infty (\gamma_-)}.
\end{split}
\Ee

On the other hand, using \eqref{est2:hk_x3_spec} and \eqref{condition2:epilon_G_spec}, we obtain
\be \label{est3:hk_x3_spec}
\begin{split}
& \mathbf{1}_{|\pb| < \px} \Big( \varepsilon | \nabla_{x_3} \xb | + \varepsilon | \nabla_{x_3} \pb | \Big) 
\\& \lesssim \mathbf{1}_{|\pb| < \px} \varepsilon \big( \frac{1}{\alpha (x,p)} + \frac{ \| \nabla_x^2 \Phi  \|_\infty + e B_3 + mg}{(m g)^2} \big) {e^{\frac{\tilde{\beta}}{2} |\pb^0 (x,p)|}}
\lesssim \frac{1}{\alpha (x,p)} + \frac{1}{4}.
\end{split}
\ee
Inputting \eqref{est3:hk_x3_spec} into \eqref{est2:hk_x3_spec}, we derive that
\be \label{est4:hk_x3_spec}
\begin{split}
& e^{\frac{\tilde{\beta}}{2} \big( |p^0| + \frac{1}{c} ( e_{\pm} \Phi (x) + m_{\pm} g x_3 ) \big)} | \nabla_{x_3} h (x,p) | 
\\& \leq \Big( \frac{1}{\alpha (x,p)} + \frac{ \| \nabla_x^2 \Phi  \|_\infty + e B_3 + mg}{(m g)^2} \Big) \| e^{\tilde{\beta} |p^0|}   \nabla_{x_\parallel, p} G \|_{L^\infty (\gamma_-)}
\\& \ \ \ \ + \Big( \frac{1}{\alpha (x,p)} + \frac{1}{4} \Big) \times \big( 1 + \frac{ \| \nabla_x^2 \Phi  \|_\infty + e B_3 + m_{\pm} g}{(m_{\pm} g)^2} \big) \| e^{\tilde \beta \sqrt{(m_{\pm} c)^2 + |p|^2}} \nabla_{x_\parallel, p} G_{\pm} \|_{L^\infty (\gamma_-)}.
\end{split}
\Ee
Together with \eqref{lower:vb_spec}, we conclude \eqref{est:hk_x_spec}.

\smallskip

\textbf{Step 3. Proof of \eqref{est:rho_x_spec}.}
Recall \eqref{est1:hk_x_spec} and Lemma \ref{lem:conservation_law}, we have
\be \label{est2:hk_x_spec}
\begin{split}
| \nabla_{x} h (x,p) | 
& \leq \Big( \frac{ | \nabla_x \xb (x,p) |}{e^{\tilde{\beta} |\pb^0 (x,p)|}} + \frac{ |\nabla_x \pb (x,p)|}{e^{\tilde{\beta} |\pb^0 (x,p)|}} \Big) \| e^{\tilde{\beta} |p^0|}   \nabla_{x_\parallel, p} G \|_{L^\infty (\gamma_-)}
\\& \ \ \ \ + \Big( \frac{ \mathbf{1}_{|\pb| < \px} \varepsilon | \nabla_{x} \xb | }{{e^{\frac{\tilde{\beta}}{2} |\pb^0 (x,p)|}}} \Big) \| e^{\frac{\tilde{\beta}}{2} \big( |p^0| + \frac{1}{c} ( e_{\pm} \Phi (x) + m_{\pm} g x_3 ) \big)} \nabla_{x_\parallel} h (x, p) \|_{L^\infty (\bar \O \times \R^3)}
\\& \ \ \ \ + \Big( \frac{ \mathbf{1}_{|\pb| < \px} \varepsilon | \nabla_{x} \pb | }{{e^{\frac{\tilde{\beta}}{2} |\pb^0 (x,p)|}}} \Big) \| e^{\frac{\tilde{\beta}}{2} \big( |p^0| + \frac{1}{c} ( e_{\pm} \Phi (x) + m_{\pm} g x_3 ) \big)} \nabla_{p} h (x, p) \|_{L^\infty (\bar \O \times \R^3)}.
\end{split}
\Ee
From \eqref{def:rho_spec} and \eqref{est2:hk_x_spec}, together with \eqref{est6:hk_v+x_spec}, we have
\begin{align}
& \ \ \ \ | \nabla_{x_i} \rho (x) | 
\notag
\\& = \Big| \int_{\R^3} e_+ \nabla_{x_i} h_+ (x,p) + e_{-} \nabla_{x_i} h_{-} (x,p) \dd p \Big| 
\notag
\\& \lesssim \sum_{j = \pm} e_{j} \| e^{\tilde \beta |p^0_j|} \nabla_{x_\parallel, p} G_j \|_{L^\infty (\gamma_-)}  
\times \bigg\{ 
\int_{\R^3} \frac{| \nabla_{x_i} \xbj (x,p) |}{ e^{\tilde \beta |\pbj^0 (x,p)| }} \dd p
 + \int_{\R^3} \frac{| \nabla_{x_i} \pbj (x,p) |}{ e^{\tilde \beta |\pbj^0 (x,p)| }} \dd p 
\bigg\} 
\label{est1:rho_x_spec_1}
\\& \ \ \ \ + \sum_{j = \pm} e_{j} \| e^{\tilde \beta |p^0_j|} \nabla_{x_\parallel, p} G_j \|_{L^\infty (\gamma_-)}  
\times \int_{\R^3} \frac{ \mathbf{1}_{|\pb| < \px} \varepsilon | \nabla_{x_i} \xbj | }{{e^{\frac{\tilde{\beta}}{2} |\pbj^0 (x,p)|}}} \dd p
\label{est1:rho_x_spec_2}
\\& \ \ \ \ + \sum_{j = \pm} e_{j} \| e^{\tilde \beta |p^0_j|} \nabla_{x_\parallel, p} G_j \|_{L^\infty (\gamma_-)}  
\times \int_{\R^3} \frac{ \mathbf{1}_{|\pb| < \px} \varepsilon | \nabla_{x_i} \pbj | }{{e^{\frac{\tilde{\beta}}{2} |\pbj^0 (x,p)|}}} \dd p.
\label{est1:rho_x_spec_3}
\end{align}

Following the estimates in \eqref{est1:rho_x}-\eqref{est2:rho_x}, we derive that
\be \label{est2:rho_x_spec_1}
\eqref{est1:rho_x_spec_1}
\lesssim \sum_{j = \pm} \| e^{\tilde \beta |p^0_j|^2} \nabla_{x_\parallel, p} G_j \|_{L^\infty (\gamma_-)}  
\times e^{ - \frac{\tilde \beta m_j g}{4 c} x_3 } \Big(
1 + \mathbf{1}_{|x_3| \leq 1} \frac{\delta_{i3}}{\sqrt{ m_j g x_3 }}
\Big).
\ee
Using \eqref{est:xb_x/w1}-\eqref{est2:xb_x/w}, together with  $\tilde{\beta}$ in \eqref{choice_tbeta_spec} and $\varepsilon$ in \eqref{condition2:epilon_G_spec}, we get 
\Be \label{est:xb_x/w_spec}
\begin{split}
\frac{ \mathbf{1}_{|\pb| < \px} \varepsilon | \nabla_{x_i} \xbj | }{{e^{\frac{\tilde{\beta}}{2} |\pbj^0 (x,p)|}}}
\lesssim
\frac{ \delta_{i3} }{\alpha (x,p)} e^{ - \frac{\tilde \beta}{2} \big( |p^0| + \frac{m g}{2 c} x_3 \big) } 
+ e^{- \frac{\tilde \beta}{2} \big( |p^0| + \frac{m g}{2 c} x_3 \big)}.
\end{split}
\Ee
Following \eqref{est:1/alpha}-\eqref{est5:xb_x/w}, we have
\be \label{est1:xb_x/w_spec}
\int_{\R^3} \frac{ \mathbf{1}_{|\pb| < \px} \varepsilon | \nabla_{x_i} \xb | }{{e^{\frac{\tilde{\beta}}{2} |\pbj^0 (x,p)|}}} \dd p
\lesssim e^{ - \frac{\tilde \beta m g}{4 c} x_3 } \Big(
1 + \mathbf{1}_{|x_3| \leq 1} \frac{ \delta_{i3} }{\sqrt{ m g x_3 }}
\Big).
\ee
From \eqref{est:vb_x/w}-\eqref{est1:vb_x/w}, together with  $\tilde{\beta}$ in \eqref{choice_tbeta_spec} and $\varepsilon$ in \eqref{condition2:epilon_G_spec}, we get 
\Be \label{est1:vb_x/w_spec}
\int_{\R^3} \frac{ \mathbf{1}_{|\pb| < \px} \varepsilon | \nabla_{x_i} \pb | }{{e^{\frac{\tilde{\beta}}{2} |\pbj^0 (x,p)|}}} \dd p
\lesssim e^{ - \frac{\tilde \beta m g}{4 c} x_3 } \Big(
1 + \mathbf{1}_{|x_3| \leq 1} \frac{\delta_{i3}}{\sqrt{ m g x_3 }}
\Big).
\Ee

Inputting \eqref{est1:xb_x/w_spec} and \eqref{est1:vb_x/w_spec} into \eqref{est1:rho_x_spec_2} and \eqref{est1:rho_x_spec_3}, together with \eqref{est2:rho_x_spec_1}, we have
\Be \notag
\begin{split}
| \nabla_{x_i} \rho (x) | 
\lesssim \sum_{j = \pm} \| e^{\tilde \beta |p^0_j|^2} \nabla_{x_\parallel, p} G_j \|_{L^\infty (\gamma_-)}  
\times e^{ - \frac{\tilde \beta m_j g}{4 c} x_3 } \Big(
1 + \mathbf{1}_{|x_3| \leq 1} \frac{\delta_{i3}}{\sqrt{ m_j g x_3 }}
\Big).
\end{split}
\Ee 
and conclude \eqref{est:rho_x_spec}.

\smallskip

\textbf{Step 4. Proof of \eqref{est:phi_C2_spec}.}
From the estimate on $| \nabla_{x_i} \rho (x) |$ in \eqref{est:rho_x_spec}, following \eqref{rho_DQ}-\eqref{est:rho_Hol} we obtain
\Be \label{est:rho_Hol_spec}
\begin{split}
|\rho |_{C^{0,\delta}(\O)} 
\lesssim_\delta
\sum_{i = \pm} \| e^{\tilde \beta |p^0| } \nabla_{x_\parallel, p} G_i \|_{L^\infty (\gamma_-)}.
\end{split}
\Ee
On the other hand, from \eqref{est1:rho_Hol} and \eqref{choice_beta_spec}, we have
\Be \label{est1:rho_Hol_spec}
\begin{split}
| \rho (x) | 
& = \Big| \int_{\R^3} e_+ h_+ (x,p) + e_{-} h_{-} (x,p) \dd p \Big| 
\\& \leq \sum_{i = \pm} e_{i} \int_{\R^3} \frac{1}{w_{i, \beta} (x, p)} \dd p \times \| w_{i, \beta} G_i \|_{L^\infty(\gamma_-)}
\\& \leq \frac{1}{\beta} \big( e_+ \| w_{+, \beta} G_+ \|_{L^\infty(\gamma_-)} e^{-  \beta \frac{ m_+ }{2 c} g x_3} + e_{-} \| w_{-, \beta} G_- \|_{L^\infty(\gamma_-)} e^{- \beta \frac{ m_{-} }{2 c} g x_3} \big).
\end{split}
\Ee 
Using \eqref{est:nabla^2phi} and \eqref{est:rho_Hol_spec}, together with \eqref{est1:rho_Hol_spec}, we derive
\Be  \notag
\begin{split}
\| \nabla_x^2 \Phi \|_{L^\infty(\O)} 
& \lesssim_\delta \|\rho \|_{L^\infty(\O)} + \|\rho \|_{C^{0,\delta}(\O)} + \frac{1}{\beta} \big( e_+ \| w_{+, \beta} G_+ \|_{L^\infty(\gamma_-)} + e_{-} \| w_{-, \beta} G_- \|_{L^\infty(\gamma_-)} \big),
\end{split}
\Ee 
and thus conclude \eqref{est:phi_C2_spec}.
\end{proof}

Collecting the results in Proposition \ref{prop:Reg_spec},
we conclude the following regularity estimate.

\begin{theorem}[Regularity Estimate] \label{theo:RS_spec}
 
Suppose $(h, \rho_h, \Phi_h )$ solves \eqref{VP_h_spec}-\eqref{eqtn:Dphi_spec} in the sense of Definition \ref{weak_sol_spec}. Suppose \eqref{Uest:DPhi_spec} holds. 
Assume that there exists $\px > 0$ such that 
\be \notag
h_{\pm} (x, p) = 0 
\ \text{ for any } \
|p| \geq \px.
\ee
There exists $\beta, \tilde \beta >0$ such that
\be \notag
\| w_{\pm, \beta} h_{\pm} \|_{L^\infty (\bar \O \times \R^3)} 
\leq (1 + \varepsilon ) \| e^{ \beta \sqrt{(m_{\pm} c)^2 + |p|^2}} G_{\pm} \|_{L^\infty (\gamma_-)},
\ee
and
\be \notag
\| e^{{\tilde \beta } \sqrt{(m_{\pm} c)^2 + |p|^2}} \nabla_{x_\parallel, p} G_{\pm} (x,p) \|_{L^\infty (\gamma_-)} < \infty,
\ee 
Furthermore, let $\beta,\tilde \beta >0$ satisfy  
\Be \label{condition:G_spec}
\begin{split}
& \frac{1}{\beta} \big( e_+ \| e^{ \beta \sqrt{(m_{+} c)^2 + |p|^2} } G_+ \|_{L^\infty(\gamma_-)} + e_{-} \| e^{ \beta \sqrt{(m_{\-} c)^2 + |p|^2}} G_- \|_{L^\infty(\gamma_-)} \big)
\\& \ \ \ \ + \| e^{\tilde \beta \sqrt{(m_{+} c)^2 + |p|^2}} \nabla_{x_\parallel, p} G_+ \|_{L^\infty (\gamma_-)} + \| e^{\tilde \beta \sqrt{(m_{-} c)^2 + |p|^2}} \nabla_{x_\parallel, p} G_- \|_{L^\infty (\gamma_-)}
\\& \leq \frac{m_{\pm} g}{16} \tilde \beta - (1 + B_3).
\end{split}	
\Ee
and
\Be \label{condition:Phi_xx_spec}
\frac{8}{m_{\pm} g} (1 + B_3 + \| \nabla_x ^2 \Phi  \|_\infty) \leq \frac{\tilde \beta}{2} \leq \frac{\beta}{2}.
\Ee
Finally, suppose $\varepsilon$ in the boundary condition \eqref{bdry:h_spec} satisfies that	
\be \notag
\varepsilon \big( 1 + \frac{ \| \nabla_x^2 \Phi  \|_\infty + e B_3 + mg}{(m g)^2} \big) {e^{\frac{\tilde{\beta}}{2} \sqrt{(m c)^2 + |\px|^2}}} \leq \frac{1}{4}.
\ee	
Then, $h, \rho$ and $\nabla_x \Phi $ are locally Lipschitz continuous and the following bounds hold:
\Be \label{theo:rho_x_spec}
\begin{split}
e^{ \frac{\tilde \beta m g}{4 c} x_3 } |\p_{x_i} \rho  (x)|   
& \lesssim \| e^{\tilde \beta \sqrt{(m_+ c)^2 + |p|^2}} \nabla_{x_\parallel, p} G_+ \|_{L^\infty (\gamma_-)}  
\times \Big(
1 + \mathbf{1}_{|x_3| \leq 1} \frac{1}{\sqrt{ m_+ g x_3 }}
\Big)
\\& \ \ \ \ + \| e^{\tilde \beta \sqrt{(m_- c)^2 + |p|^2}} \nabla_{x_\parallel, p} G_- \|_{L^\infty (\gamma_-)}  
\times \Big(
1 + \mathbf{1}_{|x_3| \leq 1} \frac{1}{\sqrt{ m_- g x_3 }}
\Big),
\end{split}
\Ee
and
\Be \label{theo:phi_C2_spec}
\begin{split}
\| \nabla_x^2 \Phi  \|_\infty
& \lesssim \frac{1}{\beta} \big( e_+ \| w_{+, \beta} G_+ \|_{L^\infty(\gamma_-)} + e_{-} \| w_{-, \beta} G_- \|_{L^\infty(\gamma_-)} \big)
\\& \ \ \ \ + \| e^{\tilde \beta \sqrt{(m_+ c)^2 + |p|^2}} \nabla_{x_\parallel, p} G_+ \|_{L^\infty (\gamma_-)} + \| e^{\tilde \beta \sqrt{(m_- c)^2 + |p|^2}} \nabla_{x_\parallel, p} G_- \|_{L^\infty (\gamma_-)}.
\end{split}	
\Ee
Furthermore,
\Be \label{theo:hk_v_spec}
\begin{split}
& e^{ \frac{\tilde \beta}{2}|p^0|} e^{  \frac{\tilde \beta m_{\pm} g}{4 c} x_3} | \nabla_p h_{\pm} (x,p)| 
\\& \lesssim \big( 1 + \frac{ \| \nabla_x^2 \Phi  \|_\infty + e B_3 + mg}{(m g)^2} \big) \| e^{\tilde \beta \sqrt{(m_{\pm} c)^2 + |p|^2}} \nabla_{x_\parallel, p} G_{\pm} \|_{L^\infty (\gamma_-)},
\end{split}
\Ee
and
\Be \label{theo:hk_x_spec}
\begin{split}
& e^{ \frac{\tilde \beta}{2}|p^0|} e^{  \frac{\tilde \beta m_{\pm} g}{4 c} x_3} | \nabla_x h_{\pm} (x,p)|  
\\& \lesssim \big( \frac{\delta_{i3}}{\alpha  (x,p)} + \frac{ \| \nabla_x^2 \Phi  \|_\infty + e B_3 + mg}{(m g)^2} \big) \| e^{\tilde \beta \sqrt{(m_{\pm} c)^2 + |p|^2}} \nabla_{x_\parallel, p} G_{\pm} \|_{L^\infty (\gamma_-)}.
\end{split}
\Ee
\end{theorem}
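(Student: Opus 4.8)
The plan is to deduce Theorem \ref{theo:RS_spec} directly from Proposition \ref{prop:Reg_spec}, in exactly the same way Theorem \ref{theo:RS} was deduced from Proposition \ref{prop:Reg} in the pure–inflow setting. The only real work is verifying that the hypotheses assumed in Theorem \ref{theo:RS_spec} — the compact support of $h_\pm$ in $p$, the bound \eqref{Uest:DPhi_spec} on $\nabla_x\Phi$, the weighted $L^\infty$ bound \eqref{choice_beta_spec} on $w_{\pm,\beta}h_\pm$, the conditions \eqref{condition:G_spec} and \eqref{condition:Phi_xx_spec}, and the smallness of $\varepsilon$ — imply the hypotheses \eqref{condition:h_support_spec}, \eqref{choice_beta_spec}, \eqref{choice_tbeta_spec}, \eqref{condition2:epilon_G_spec} required by Proposition \ref{prop:Reg_spec}.

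First I would observe that \eqref{condition:Phi_xx_spec} is literally the hypothesis \eqref{choice_tbeta_spec} of Proposition \ref{prop:Reg_spec}; here the factor $\tilde\beta/2$ (rather than $\tilde\beta$) enters because in the specular case regularity is propagated along a characteristic that may bounce, so the weight carried by the reflected boundary data is $e^{\frac{\tilde\beta}{2}(|p^0|+\frac1c(e_\pm\Phi+m_\pm gx_3))}$, as in \eqref{weight_form:nabla_h_spec}. Correspondingly, the condition \eqref{condition:G_spec} has $\frac{m_\pm g}{16}\tilde\beta$ on the right — half of the $\frac{m_\pm g}{8}\tilde\beta$ appearing in the pure–inflow condition \eqref{condition:G} — which is what is needed so that, after applying \eqref{est:phi_C2_spec}, one gets $\|\nabla_x^2\Phi\|_\infty \le \frac{m_\pm g}{16}\tilde\beta-(1+B_3)$, hence \eqref{condition:Phi_xx_spec} is consistent with the a posteriori bound and the estimates are not circular. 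I would also record that the $\varepsilon$–hypothesis of Theorem \ref{theo:RS_spec} implies \eqref{condition2:epilon_G_spec}, since once \eqref{condition:Phi_xx_spec} holds one has $1+\tilde\beta\gtrsim 1+\frac{\|\nabla_x^2\Phi\|_\infty+eB_3+m_\pm g}{(m_\pm g)^2}$, the factor multiplying $\varepsilon$ in \eqref{condition2:epilon_G_spec}.

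With the hypotheses checked, Proposition \ref{prop:Reg_spec} yields \eqref{est:rho_x_spec}–\eqref{est:hk_x_spec}, which are word–for–word \eqref{theo:rho_x_spec}–\eqref{theo:hk_x_spec}. Local Lipschitz continuity of $h_\pm$, $\rho$ and $\nabla_x\Phi$ then follows from the pointwise derivative bounds just obtained: \eqref{theo:hk_v_spec}–\eqref{theo:hk_x_spec} give $\nabla_{x,p}h_\pm\in L^\infty_{\mathrm{loc}}$ (the only singularity is the locally integrable $\alpha_\pm^{-1}\delta_{i3}$ near $\{x_3=0\}$), \eqref{theo:rho_x_spec} gives $\nabla_x\rho\in L^\infty_{\mathrm{loc}}$ with an integrable $|x_3|^{-1/2}$ singularity so that $\rho\in C^{0,\delta}(\O)$ for $\delta<1/2$ exactly as in Step 2 of the proof of Proposition \ref{prop:Reg}, and then the elliptic estimate \eqref{est:nabla^2phi} upgrades this to $\nabla_x^2\Phi\in L^\infty$, hence $\nabla_x\Phi$ Lipschitz.

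I do not expect any genuine obstacle: the content of the theorem is already in Proposition \ref{prop:Reg_spec}, and the specular reflection has been absorbed there through the splitting \eqref{weight_form:nabla_h_spec} and the absorption of the $\varepsilon$–terms into the left–hand side under \eqref{condition2:epilon_G_spec}. The only thing requiring care — and it is pure bookkeeping — is keeping track of the factor–of–two adjustments ($\tilde\beta/2$ versus $\tilde\beta$, $\frac{m_\pm g}{16}$ versus $\frac{m_\pm g}{8}$) so that the chain of smallness conditions \eqref{choice_beta_spec}, \eqref{condition:G_spec}, \eqref{condition:Phi_xx_spec}, \eqref{condition2:epilon_G_spec} closes consistently. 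I would therefore state that the conditions \eqref{condition:G_spec} and \eqref{condition:Phi_xx_spec} imply \eqref{choice_beta_spec}–\eqref{est:phi_C2_spec} for the specular problem and omit the remaining computations, as they follow directly from Proposition \ref{prop:Reg_spec}, mirroring the treatment of Theorem \ref{theo:RS}.
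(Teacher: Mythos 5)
Your proposal is correct and follows essentially the same route as the paper: the paper's proof simply notes that \eqref{condition:G_spec} (together with \eqref{condition:Phi_xx_spec} and the stated $\varepsilon$-smallness) yields the hypotheses \eqref{choice_beta_spec}--\eqref{condition2:epilon_G_spec} of Proposition \ref{prop:Reg_spec}, and then cites that proposition for \eqref{theo:rho_x_spec}--\eqref{theo:hk_x_spec}. Your extra bookkeeping on the $\tilde\beta/2$ versus $\tilde\beta$ factors and the Lipschitz conclusion is consistent with the paper's argument (note only that the $\varepsilon$-condition of the theorem is exactly the quantity used inside the proof of Proposition \ref{prop:Reg_spec}, so no comparison between $1+\tilde\beta$ and $1+\frac{\|\nabla_x^2\Phi\|_\infty+eB_3+m_\pm g}{(m_\pm g)^2}$ is actually needed).
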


\begin{proof}

Using the condition \eqref{condition:G_spec}, we can compute that \eqref{choice_beta_spec}-\eqref{est:phi_C2_spec} hold.
We omit the rest of the proof, since it follows directly from Proposition \ref{prop:Reg_spec}.
\end{proof}

\subsubsection{Uniqueness Theorem} \label{sec:US_spec}

In this section, we prove Theorem \ref{theo:US_spec}: a conditional uniqueness theorem subjected to a regularity of the solutions. 

\begin{theorem}[Uniqueness Theorem] \label{theo:US_spec}

Let $(h_{1, \pm}, \rho_1, \Phi_1)$ and $(h_{2, \pm}, \rho_2, \Phi_2)$ solve \eqref{VP_h_spec}-\eqref{eqtn:Dphi_spec} in the sense of Definition \ref{weak_sol_spec}. 
Assume both follow all assumptions in Theorem \ref{theo:CS_spec} and satisfy \eqref{Uest:wh_spec}-\eqref{Uest:DPhi_spec}. 
Further, assume there exist $\bar \e, \bar \beta > 0$, such that
\Be \label{condition_unique_spec}
\| w_{\pm, \bar \beta} \nabla_p h_{i, \pm} \|_\infty < \bar \e \beta^2 
\ \ \text{for }  i = 1, 2,
\Ee
where $w_{\pm, \bar \beta}(x,v)$ is defined in \eqref{w^h}.
Then $h_{1, \pm} = h_{2, \pm}$ a.e. in $\bar \O \times \R^3$ and $\Phi_1 = \Phi_2$ a.e. in $\bar \O$. 
\end{theorem}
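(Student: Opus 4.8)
The plan is to mimic the proof of Theorem \ref{theo:US}, adapting the energy-difference argument to accommodate the extra specular boundary term. Suppose $(h_{1,\pm},\rho_1,\Phi_1)$ and $(h_{2,\pm},\rho_2,\Phi_2)$ both solve \eqref{VP_h_spec}-\eqref{eqtn:Dphi_spec}. Setting $\mathrm{d}_\pm := h_{1,\pm}-h_{2,\pm}$ and $\phi := \Phi_1-\Phi_2$, the difference satisfies the transport equation
\be \notag
v_\pm \cdot \nabla_x \mathrm{d}_{\pm} + \big( e_{\pm} ( \tfrac{v_\pm}{c} \times B - \nabla_x \Phi_1 ) - \nabla_x (m_{\pm} g x_3) \big) \cdot \nabla_p \mathrm{d}_{\pm} = e_{\pm} \nabla_x \phi \cdot \nabla_p h_{2, \pm} \ \ \text{in} \ \O \times \R^3,
\ee
but now with the boundary condition $\mathrm{d}_\pm(x,p) = \varepsilon\,\mathrm{d}_\pm(x,\tilde p)$ on $\gamma_-$ rather than $\mathrm{d}_\pm = 0$. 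First I would track the characteristics $Z_{1,\pm}$ of $(h_{1,\pm},\Phi_1)$: along a characteristic starting from $(x,p)$ we travel backward to the boundary, hit the specular reflection, and repeat. Because $|p|$ is bounded at each boundary hit by the compact-support property \eqref{Uest:h^k_support_spec}/\eqref{Uest:h_support_spec} combined with the conservation law (Lemma \ref{lem:conservation_law}) and the gravity dominance \eqref{Uest:DPhi_spec}, the number of bounces before the trajectory ``exits'' (in the sense of being traced to a point where $h$ is determined by $G$, here $G\equiv\ldots$ so actually by the vacuum) is finite and controlled.

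The key steps, in order, are: (1) write the Duhamel/Lagrangian representation of $\mathrm{d}_\pm$ accumulating contributions from each sub-arc between consecutive specular bounces, each contribution being a factor $\varepsilon$ times the source integral $\int e\nabla_x\phi\cdot\nabla_p h_{2,\pm}$ along that arc; (2) bound each source integral exactly as in Theorem \ref{theo:US}, using \eqref{est:tb^h}, the weight invariance \eqref{w:invar}, the lower bounds \eqref{lower_w} on $w_{\bar\beta}$, and the hypothesis \eqref{condition_unique_spec} on $\|w_{\pm,\bar\beta}\nabla_p h_{2,\pm}\|_\infty$, obtaining a bound of the form $\tfrac{C}{mg\bar\beta}\,\|\nabla_x\phi\|_\infty\,\bar\e\beta^2\,e^{-\frac{3\bar\beta}{4}(\cdots)}$ per arc; (3) sum the geometric series in $\varepsilon$ over the bounces, using $\varepsilon<1$ (guaranteed by \eqref{condition:epilon_G_spec}) and the finite-bounce bound, to get a weighted $L^\infty$ estimate $\|e^{\frac{3\bar\beta}{4}(\cdots)}\mathrm{d}_\pm\|_\infty \lesssim \tfrac{1}{mg\bar\beta}\,\|\nabla_x\phi\|_\infty\,\bar\e\beta^2$; (4) convert $\|\nabla_x\phi\|_\infty$ back into the weighted $L^\infty$ norm of $\mathrm{d}_\pm$ via the elliptic estimate \eqref{est:nabla_phi} of Lemma \ref{lem:rho_to_phi} applied to $-\Delta\phi = \int(e_+\mathrm{d}_++e_-\mathrm{d}_-)\,dp$, exactly mimicking \eqref{est:rho_12}-\eqref{est2:phi_12_x}; (5) choose $\beta'$, $\hat\beta$ as in \eqref{def:beta'} and absorb, concluding that if $\bar\e$ is small then the weighted norm of $\mathrm{d}_\pm$ is at most half itself, forcing $\mathrm{d}_\pm\equiv0$, hence $\phi\equiv0$.

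The main obstacle will be step (3): controlling the specular bouncing. Under pure inflow the trajectory terminates at the first boundary hit; here it may reflect many times, and naively each reflection preserves $|p|$ while the travel time between bounces could in principle grow. The resolution is that the weight $w_{\bar\beta}$ is still invariant along each free arc and takes the value $e^{\bar\beta\sqrt{(mc)^2+|p_{\mathbf b}|^2}}$ at the boundary, which is \emph{unchanged} under $p_{\mathbf b}\mapsto\tilde p_{\mathbf b}$; combined with the $\varepsilon$-contraction at each bounce, the accumulated contribution is a convergent geometric series even without a uniform bound on the number of bounces. I would either invoke the uniform compact-support-in-$p$ propagation (Proposition \ref{prop:DC_spec}-type argument) to cap the momentum along the whole trajectory, or argue directly that the $n$-th bounce contributes $O(\varepsilon^n)$ with the constant independent of $n$ because of weight invariance. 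A secondary subtlety, as in Theorem \ref{theo:US}, is that $\|w_{\pm,\bar\beta}\nabla_p h_{i,\pm}\|_\infty<\infty$ is a hypothesis here (it is \emph{verified} for the constructed solution by \eqref{est_final:hk_v_spec}, but must be assumed for the abstract uniqueness statement); I would only need it for $i=2$, just as in the original proof, and handle the $i=1$ case symmetrically.
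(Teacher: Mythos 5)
Your proposal is correct in substance, but it treats the specular boundary term by a genuinely different route than the paper. You unroll the reflection backward through arbitrarily many bounces, bound the source contribution of the $n$-th arc uniformly in $n$ (the key facts being weight invariance along each arc and that $|\pbp|$ --- hence the boundary weight $e^{\bar\beta\sqrt{(m_\pm c)^2+|\pbp|^2}}$ and the per-arc travel time $\tbp\le\frac{4}{m_\pm g}|\pbp|$ --- is unchanged under $\pbp\mapsto\tpbp$), and then sum the geometric series in $\varepsilon$, the $\varepsilon^N$ remainder vanishing because the weighted sup-norm is finite. The paper instead stops at the \emph{first} boundary hit: it keeps the reflected value $\varepsilon\,(h_{1}-h_{2})(\xbp,\tpbp)$ as an unknown, bounds it by $\varepsilon$ times the global weighted $L^\infty$ norm of the difference (using $|\tpbp|=|\pbp|$ and the conservation law so the boundary weight dominates the interior weight), bounds the single source integral exactly as in Theorem \ref{theo:US}, and arrives at a contraction with factor $\tfrac12+\varepsilon$, concluded via $\varepsilon\le\frac14$ from \eqref{condition:epilon_G_spec}; this one-step absorption is shorter and avoids any bookkeeping of bounces, while your series argument gives a more explicit representation at the cost of checking uniformity across arcs, which your weight-invariance observation does supply. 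One caution: your preliminary claim that the backward trajectory is, after finitely many bounces, traced to a region where $h$ is determined by the inflow data is not accurate in the steady setting (the backward broken trajectory never terminates), and the compact support in $p$ of \eqref{Uest:h_support_spec} is a property of the constructed solution rather than a hypothesis of this abstract uniqueness statement; fortunately your alternative geometric-series argument needs neither, so you should rely on that route. As you note, and as in the paper, the hypothesis \eqref{condition_unique_spec} is only needed for one of the two solutions, the other case being symmetric.
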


\begin{proof}

We only prove the case when the condition \eqref{condition_unique_spec} holds for $i= 2$, since the case when \eqref{condition_unique_spec} holds for  $i = 1$ can be deduced in a similar way. 
For the sake of simplicity, we abuse the notation as in \eqref{abuse}.

\smallskip

Since both $(h_{1, \pm}, \rho_1, \Phi_1)$ and $(h_{2, \pm}, \rho_2, \Phi_2)$ solve \eqref{VP_h_spec}-\eqref{eqtn:Dphi_spec}, we have 
\Be \label{VP_diff_spec}
\begin{split}
& v_\pm \cdot \nabla_x ( h_{1, \pm} - h_{2, \pm} )
+ \big( e_{\pm} ( \frac{v_\pm}{c} \times B 
- \nabla_x \Phi_1 ) - \nabla_x (m_{\pm} g x_3) \big) \cdot \nabla_p ( h_{1, \pm} - h_{2, \pm} ) 
\\& = e_{\pm} \nabla_x (\Phi_1 - \Phi_2 ) \cdot \nabla_p h_{2, \pm} \ \ \text{in} \ \O \times \R^3, 
\end{split}
\ee
and
\be \label{VP_diff_bdy_spec}
h_1 (x,p) - h_2 (x,p) = \varepsilon h_1 (x, \tilde{p}) - \varepsilon h_2 (x, \tilde{p})  
\ \ \text{in} \ \gamma_-,
\Ee
where
\Be \notag
v_\pm =  c \frac{p}{p^0_\pm} = \frac{p}{\sqrt{ m_\pm^2 + |p|^2/ c^2}}.
\Ee
Suppose $(X (s;x,p), P (s;x,p))$ is the characteristics in \eqref{ODE_h_spec} with $\Phi_h = \Phi_1$. Then, from \eqref{VP_diff_spec} and \eqref{VP_diff_bdy_spec}, we get
\begin{align}
& (h_1- h_2) (x,p) \notag
\\& = \varepsilon h_1 (\xb, \tpb) - \varepsilon h_2 (\xb, \tpb) 
\label{diff1:h_spec}
\\& \ \ \ \ + \int^0_{-\tb(x, p)} e (\nabla_x\Phi_1(X (s;x,p)) -\nabla_x \Phi_2(X (s;x,p))) \cdot \nabla_p h_2(X (s;x,p), P (s;x,p)) \dd s, 
\label{diff2:h_spec}
\end{align}
where $\tb(x,p)$ is the backward exit time of the characteristics $(X (s;x,p), P (s;x,p))$.
From the condition \eqref{condition_unique_spec}, together with \eqref{bound:diff_h} and \eqref{est:tb/w}, we obtain that
\Be \label{bound:diff_h_spec}
\begin{split}
|\eqref{diff2:h_spec}| 
& \leq e \| \nabla_x \Phi_1 - \nabla_x \Phi_2 \|_\infty \int^0_{-\tb(x, p)} | \nabla_p h_2(X (s;x,p), P (s;x,p)) | \dd s
\\& \leq e \| \nabla_x \Phi_1 - \nabla_x \Phi_2 \|_\infty \| w_{\bar\beta} \nabla_p h_2 \|_\infty 
\frac{48}{m g \bar \beta} e^{ - \frac{3 \bar \beta}{4} \big( \sqrt{(m c)^2 + |p|^2} + \frac{1}{2 c} m g x_3 \big) }.
\end{split}
\Ee
Inputting \eqref{bound:diff_h_spec} into \eqref{diff1:h_spec} and \eqref{diff2:h_spec}, we have
\be \label{bound2:diff_h_spec}
\begin{split}
& | h_1 (x, p) - h_2 (x, p) | 
\\& \leq  \varepsilon | h_1 (\xb, \tpb) - h_2 (\xb, \tpb) | 
\\& \ \ \ \ + \frac{48 e}{m g \bar \beta} \| \nabla_x \Phi_1 - \nabla_x \Phi_2 \|_\infty \| w_{\bar\beta} \nabla_p h_2 \|_\infty e^{ - \frac{3 \bar \beta}{4} \big( \sqrt{(m c)^2 + |p|^2} + \frac{1}{2 c} m g x_3 \big) }.
\end{split}
\ee
From \eqref{Uest:DPhi_spec}, together with Lemma \ref{lem:conservation_law} and $|\pb| = |\tpb|$, \eqref{bound2:diff_h_spec} shows that
\be \label{bound3:diff_h_spec}
\begin{split}
& e^{ \frac{3 \bar \beta}{4} \big( \sqrt{(m c)^2 + |p|^2} + \frac{1}{2 c} m g x_3 \big) } |h_1 (x, p) - h_2 (x, p)|
\\& \leq \varepsilon e^{ \frac{3 \bar \beta}{4} \big( \sqrt{(m c)^2 + |\tpb|^2} \big) } | h_1 (\xb, \tpb) - h_2 (\xb, \tpb) |
\\& \ \ \ \ + \frac{48 e}{m g \bar \beta}  \| \nabla_x \Phi_1 - \nabla_x \Phi_2 \|_\infty \| w_{\bar\beta} \nabla_p h_2 \|_\infty.
\end{split}
\ee
Note that both $(h_{1, \pm}, \rho_1, \Phi_1)$ and $(h_{2, \pm}, \rho_2, \Phi_2)$ satisfy \eqref{Uest:wh_spec}-\eqref{Uest:DPhi_spec}.
Following \eqref{est2:phi_12_x}, we bound \eqref{bound3:diff_h_spec} by
\be \label{bound4:diff_h_spec}
\begin{split}
& e^{ \frac{3 \bar \beta}{4} \big( \sqrt{(m_+ c)^2 + |p|^2} + \frac{1}{2 c} m_+ g x_3 \big) } |h_{1, +} (x, p) - h_{2, +} (x, p)|
\\& \ \ \ \ + e^{ \frac{3 \bar \beta}{4} \big( \sqrt{(m_- c)^2 + |p|^2} + \frac{1}{2 c} m_- g x_3 \big) } |h_{1, -} (x, p) - h_{2, -} (x, p)|
\\& \leq \varepsilon \Big(
\| e^{ \frac{3 \bar{\beta}}{4} \big( \sqrt{(m_+ c)^2 + |p|^2} + \frac{3}{2 c} m_+ g x_3 \big) } (h_{1, +} - h_{2, +} ) \|_{L^{\infty} (\bar \O \times \R^3)}
\\& \qquad \ \ + \| e^{ \frac{3 \bar{\beta}}{4} \big( \sqrt{(m_- c)^2 + |p|^2} + \frac{3}{2 c} m_- g x_3 \big) } (h_{1, -} - h_{2, -} ) \|_{L^{\infty} (\bar \O \times \R^3)}  \Big)
\\& \ \ \ \ + \mathfrak{C} (1 + \frac{2 c}{\beta^\prime g} ) \frac{1}{\hat{\beta}} \Big(
e_+ \| e^{ \frac{\bar{\beta}}{4} \big( \sqrt{(m_+ c)^2 + |p|^2} + \frac{3}{2 c} m_+ g x_3 \big) } (h_{1, +} - h_{2, +} ) \|_{L^{\infty} (\bar \O \times \R^3)}
\\& \qquad \qquad \qquad \qquad \ \ + e_- \| e^{ \frac{\bar{\beta}}{4} \big( \sqrt{(m_- c)^2 + |p|^2} + \frac{3}{2 c} m_- g x_3 \big) } (h_{1, -} - h_{2, -} ) \|_{L^{\infty} (\bar \O \times \R^3)}  \Big)
\\& \qquad \ \ \times \big( \frac{48 e_+}{m g \bar \beta} \| w_{+, \bar\beta} \nabla_p h_{2, +} \|_\infty + \frac{48 e_-}{m g \bar \beta} \| w_{-, \bar\beta} \nabla_p h_{2, -} \|_\infty \big).
\end{split}
\ee
Recall $\beta'$ in \eqref{def:beta'}, $\hat{\beta} = \frac{\beta'}{\min \{ m_{-},  m_{+} \}}$ and the condition \eqref{condition_unique_spec}. Following the steps in \eqref{bound5:diff_h} and \eqref{bound6:diff_h}, we get
\be \label{bound6:diff_h_spec}
\begin{split}
& e^{ \frac{3 \bar \beta}{4} \big( \sqrt{(m_+ c)^2 + |p|^2} + \frac{1}{2 c} m_+ g x_3 \big) } |h_{1, +} (x, p) - h_{2, +} (x, p)|
\\& \ \ \ \ + e^{ \frac{3 \bar \beta}{4} \big( \sqrt{(m_- c)^2 + |p|^2} + \frac{1}{2 c} m_- g x_3 \big) } |h_{1, -} (x, p) - h_{2, -} (x, p)|
\\& \leq (\frac{1}{2} + \varepsilon ) \times \Big(
\| e^{ \frac{3 \bar{\beta}}{4} \big( \sqrt{(m_+ c)^2 + |p|^2} + \frac{3}{2 c} m_+ g x_3 \big) } (h_{1, +} - h_{2, +} ) \|_{L^{\infty} (\bar \O \times \R^3)}
\\& \qquad \qquad \qquad \qquad + \| e^{ \frac{3 \bar{\beta}}{4} \big( \sqrt{(m_- c)^2 + |p|^2} + \frac{3}{2 c} m_- g x_3 \big) } (h_{1, -} - h_{2, -} ) \|_{L^{\infty} (\bar \O \times \R^3)}  \Big).
\end{split}
\ee
Using \eqref{bound6:diff_h_spec}, together with the condition \eqref{condition:epilon_G_spec} on $\varepsilon$, we derive 
\Be \notag
\begin{split}
& \| e^{ \frac{3 \bar{\beta}}{4} \big( \sqrt{(m_+ c)^2 + |p|^2} + \frac{1}{2 c} m_+ g x_3 \big) } (h_{1, +} - h_{2, +} ) \|_{L^{\infty} (\bar \O \times \R^3)} 
\\& \qquad \qquad + \| e^{ \frac{3 \bar{\beta}}{4} \big( \sqrt{(m_- c)^2 + |p|^2} + \frac{1}{2 c} m_- g x_3 \big) } (h_{1, -} - h_{2, -} ) \|_{L^{\infty} (\bar \O \times \R^3)}
\\& \leq \frac{3}{4} \Big( \| e^{ \frac{3 \bar{\beta}}{4} \big( \sqrt{(m_+ c)^2 + |p|^2} + \frac{1}{2 c} m_+ g x_3 \big) } (h_{1, +} - h_{2, +} ) \|_{L^{\infty} (\bar \O \times \R^3)} 
\\& \qquad \qquad + \| e^{ \frac{3 \bar{\beta}}{4} \big( \sqrt{(m_- c)^2 + |p|^2} + \frac{1}{2 c} m_- g x_3 \big) } (h_{1, -} - h_{2, -} ) \|_{L^{\infty} (\bar \O \times \R^3)}  \Big),
\end{split}
\Ee
and thus we conclude the uniqueness. 
\end{proof}

\subsubsection{Proof of the Main Theorem: Stationary Problem} \label{sec:EX_SS_spec}

\begin{prop} \label{prop:Unif_D2xDp_spec}

Suppose all assumptions in Theorem \ref{theo:CS_spec} hold for some $g, \beta, \tilde \beta > 0$. 
Then $(h^{\ell+1}_{\pm}, \rho^\ell, \nabla_x  \Phi^\ell)$ from the construction satisfies the following uniform-in-$\ell$ estimates:
\be \label{Uest:DDPhi^l_spec}
\frac{8}{m_{\pm} g} (1 + B_3 + \| \nabla_x ^2 \Phi^{\ell} \|_\infty) 
\leq \frac{\tilde \beta}{2},
\ee
and
\be \label{Uest:h_v^l_spec}
\begin{split}
& e^{ \frac{\tilde \beta}{2}|p^0_{\pm}|} e^{  \frac{\tilde \beta m_{\pm} g}{4 c} x_3} | \nabla_p h^{\ell+1}_{\pm} (x,p)| 
\\& \lesssim \big( 1 + \frac{ \| \nabla_x^2 \Phi^{\ell} \|_\infty + e B_3 + m_{\pm} g}{(m_{\pm} g)^2} \big) \| e^{\tilde \beta \sqrt{(m_{\pm} c)^2 + |p|^2}} \nabla_{x_\parallel, p} G_{\pm} \|_{L^\infty (\gamma_-)},
\end{split}
\ee
and
\be \label{Uest:h_x^l_spec}
\begin{split}
& e^{ \frac{\tilde \beta}{2}|p^0_{\pm}|} e^{ \frac{\tilde \beta m_{\pm} g}{4 c} x_3} | \nabla_x h^{\ell+1}_{\pm} (x,p)| 
\\& \lesssim \big( \frac{\delta_{i3}}{\alpha_{\pm} (x,p)} + \frac{ \| \nabla_x^2 \Phi^{\ell} \|_\infty + e B_3 + m_{\pm} g}{(m_{\pm} g)^2} \big) \| e^{\tilde \beta \sqrt{(m_{\pm} c)^2 + |p|^2}} \nabla_{x_\parallel, p} G_{\pm} \|_{L^\infty (\gamma_-)}.
\end{split}
\ee
\end{prop}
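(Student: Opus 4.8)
\textbf{Proof proposal for Proposition \ref{prop:Unif_D2xDp_spec}.}

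The plan is to run the same induction-on-$\ell$ engine used for the pure-inflow case in Proposition \ref{prop:Unif_D2xDp}, but now feeding it the specular-boundary regularity estimate of Proposition \ref{prop:Reg_spec} (equivalently Theorem \ref{theo:RS_spec}) in place of Proposition \ref{prop:Reg}. First I would verify the base case $\ell=0$: with the initial setting $h^0_{\pm}=0$ and $(\rho^0,\nabla_x\Phi^0)=(0,\mathbf{0})$, the condition \eqref{condition:G_spec} (which is implied by \eqref{condition:tilde_beta_spec} together with \eqref{condition:beta_spec}) forces $\|\nabla_x^2\Phi^0\|_\infty=0$, so \eqref{Uest:DDPhi^l_spec} holds trivially for $\ell=0$; and $h^1_{\pm}$ solves the Vlasov equation with field $\nabla_x\Phi^0=0$ together with the specular-plus-inflow boundary condition, so by replacing $\Phi_h$ with $\Phi^0$ in Proposition \ref{prop:Reg_spec} — noting that Lemmas \ref{VL}--\ref{lem:nabla_zb} are unaffected by the change of boundary condition since the characteristics \eqref{ODE_h_spec} coincide with \eqref{ODE_h} — one obtains \eqref{Uest:h_v^l_spec} and \eqref{Uest:h_x^l_spec} for $\ell=0$. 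Here I would use the hypothesis \eqref{condition:epilon_G_spec} and the fact (from Proposition \ref{prop:Unif_steady_spec}, \eqref{Uest:h^k_support_spec}) that $h^1_{\pm}$ has $p$-support in $|p|\le\px$ to check that the smallness condition on $\varepsilon$ required by Proposition \ref{prop:Reg_spec} (namely \eqref{condition2:epilon_G_spec}) is met with the relevant $\|\nabla_x^2\Phi^0\|_\infty$ absorbed harmlessly.

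The inductive step is the heart of the matter. Assuming \eqref{Uest:DDPhi^l_spec}--\eqref{Uest:h_x^l_spec} hold for $0\le\ell\le k$, I would first pass from $h^{k+1}_{\pm}$ to $\rho^{k+1}$ and then $\Phi^{k+1}$: using the $\nabla_x h^{k+1}_{\pm}$ bound \eqref{Uest:h_x^l_spec} at level $k$, integrate in $p$ exactly as in Step 1 of the proof of Proposition \ref{prop:Reg_spec} to get the weighted $C^{0,\delta}$ bound on $\rho^{k+1}$, then apply the elliptic estimate \eqref{est:nabla^2phi} of Lemma \ref{lem:rho_to_phi} together with the uniform $L^\infty$ bound on $\rho^{k+1}$ from \eqref{Uest:rho^k_spec} to control $\|\nabla_x^2\Phi^{k+1}\|_\infty$. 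Under \eqref{condition:G_spec} this yields \eqref{Uest:DDPhi^l_spec} for $\ell=k+1$. Then, with $\|\nabla_x^2\Phi^{k+1}\|_\infty$ now controlled, I would apply Proposition \ref{prop:Reg_spec} to $h^{k+2}_{\pm}$ (the solution of the Vlasov equation with field $\nabla_x\Phi^{k+1}$ and the specular-plus-inflow condition), whose characteristics, exit times and the identities of Lemmas \ref{VL}--\ref{lem:nabla_zb} all transfer verbatim with $\Phi_h\rightsquigarrow\Phi^{k+1}$; this delivers \eqref{Uest:h_v^l_spec} and \eqref{Uest:h_x^l_spec} for $\ell=k+1$, completing the induction. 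The $p$-compact-support property \eqref{Uest:h^k_support_spec} at every level, already established in Proposition \ref{prop:Unif_steady_spec}, is what makes the $\varepsilon$-smallness hypothesis \eqref{condition2:epilon_G_spec} in Proposition \ref{prop:Reg_spec} applicable at each step — this is the one genuinely new ingredient relative to the pure-inflow argument.

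I expect the main obstacle to be bookkeeping rather than a new idea: specifically, ensuring that the constant in the $\varepsilon$-smallness condition \eqref{condition2:epilon_G_spec} that Proposition \ref{prop:Reg_spec} requires is met \emph{uniformly in $\ell$}, i.e. that the factor $1+\frac{\|\nabla_x^2\Phi^{\ell}\|_\infty + eB_3 + mg}{(mg)^2}$ multiplying $\varepsilon e^{\frac{\tilde\beta}{2}\sqrt{(mc)^2+|\px|^2}}$ stays bounded along the iteration. This is where \eqref{Uest:DDPhi^l_spec} — itself part of the induction — must be invoked before applying Proposition \ref{prop:Reg_spec} at level $k+1$, so the two estimates \eqref{Uest:DDPhi^l_spec} and \eqref{Uest:h_v^l_spec}--\eqref{Uest:h_x^l_spec} have to be carried through the induction together in the right order (first close \eqref{Uest:DDPhi^l_spec} at level $k+1$ using level-$k$ derivative bounds, then close the derivative bounds at level $k+1$). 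Once that ordering is respected, every remaining estimate is a direct citation of Proposition \ref{prop:Reg_spec}, Proposition \ref{prop:Unif_steady_spec}, and Lemma \ref{lem:rho_to_phi}, so I would present the proof compactly as ``induction, with the base case as above and the inductive step a verbatim rerun of the proofs of Propositions \ref{prop:Reg_spec} and \ref{prop:Unif_steady_spec} with $\Phi_h$ replaced by $\Phi^\ell$,'' rather than reproducing the integral computations.
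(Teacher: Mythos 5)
Your proposal matches the paper's proof essentially verbatim: the same induction, the same ordering (close \eqref{Uest:DDPhi^l_spec} at level $k+1$ from the level-$k$ derivative bounds via the density/Hölder/elliptic chain, then close \eqref{Uest:h_v^l_spec}--\eqref{Uest:h_x^l_spec} by rerunning Proposition \ref{prop:Reg_spec} with $\Phi_h$ replaced by $\Phi^{k+1}$), and the same observation that the unchanged characteristics let Lemmas \ref{VL}--\ref{lem:nabla_zb} transfer while the compact $p$-support from Proposition \ref{prop:Unif_steady_spec} validates the $\varepsilon$-smallness hypothesis at each step. No gaps; this is the paper's argument.
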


\begin{proof}

Under the initial setting $h^0_{\pm} = 0$ and $(\rho^0, \nabla_x \Phi^0) = (0,0)$, together with the condition \eqref{condition:tilde_beta_spec}, we deduce that $\| \nabla^2_x \Phi^0 \|_{\infty} = 0$, and thus \eqref{Uest:DDPhi^l_spec} holds for $\ell = 0$. From the construction, $h^1_{\pm}$ is the solution to \eqref{VP_h_spec} and \eqref{bdry:h_spec} with $\nabla_x \Phi_h = \nabla_x \Phi^0 = 0$, that is,
\be \label{VP^0_spec}
\begin{split}
& v_\pm \cdot \nabla_x h^1_{\pm}
+ \big( e_{\pm} ( \frac{v_\pm}{c} \times B 
- \nabla_x \Phi^0 ) - \nabla_x (m_{\pm} g x_3) \big) \cdot \nabla_p h^1_{\pm} = 0 \ \ \text{in} \ \O \times \R^3, 
\\& h^1_{\pm} (x,p) = G_{\pm} (x,p) \ \ \text{in} \ \gamma_-. 
\end{split}
\ee

From \eqref{z1_spec}, we consider the characteristic $(X^1, P^1)$  and the corresponding $(\tb^1, \xb^1, \pb^1)$ for \eqref{VP^0_spec}.
Following the steps in the proof of Theorem \ref{prop:Unif_D2xDp}, we deduce that Lemmas \ref{VL}-\ref{lem:nabla_zb} hold for the characteristic $(X^1, P^1)$ and the corresponding $(\tb^1, \xb^1, \pb^1)$.

Moreover, since all assumptions in Theorem \ref{theo:CS_spec} hold, one can check that $\| \nabla^2_x \Phi^0 \|_{\infty}$ satisfies the assumption \eqref{choice_tbeta_spec} in Proposition \ref{prop:Reg_spec}.
From Proposition \ref{prop:Reg_spec}, we bound $\| \nabla^2_x \Phi^0 \|_{\infty}$ by
\Be \notag
\begin{split}
\| \nabla_x^2 \Phi^0 \|_\infty
& \lesssim \frac{1}{\beta} \big( e_+ \| w_{+, \beta} G_+ \|_{L^\infty(\gamma_-)} + e_{-} \| w_{-, \beta} G_- \|_{L^\infty(\gamma_-)} \big)
\\& \ \ \ \ + \big( \| e^{\tilde \beta \sqrt{(m_+ c)^2 + |p|^2}} \nabla_{x_\parallel, p} G_+ \|_{L^\infty (\gamma_-)} + \| e^{\tilde \beta \sqrt{(m_- c)^2 + |p|^2}} \nabla_{x_\parallel, p} G_- \|_{L^\infty (\gamma_-)}
\big) .
\end{split}	
\Ee
Following the steps 1 and 2 (proof of \eqref{est:hk_v_spec} and \eqref{est:hk_x_spec}) in the proof of Proposition \ref{prop:Reg_spec}, we deduce that \eqref{Uest:h_v^l_spec} and \eqref{Uest:h_x^l_spec} hold for $\ell = 0$.

\smallskip

Now we prove this by induction. 
In the following proof, we abuse the notation as in \eqref{abuse}.
Assume a positive integer $k > 0$ and suppose that \eqref{Uest:DDPhi^l_spec} and \eqref{Uest:h_v^l_spec} hold for $0 \leq \ell \leq k$.
From \eqref{eqtn:hk_spec}-\eqref{bdry:phik_spec}, then for $\ell=k+1$, 
\be \label{rho_phi^k+1_spec}
\begin{split}	
\rho^{k+1} = \int_{\R^3} ( e_+ h^{k+1}_+ + e_{-} h^{k+1}_{-} ) \dd p & \ \ \text{in} \ \O, \\
- \Delta  \Phi^{k+1} = \rho^{k+1} & \ \ \text{in} \ \O, \\
\Phi^{k+1} =0  & \ \ \text{on} \ \p\O,
\end{split}
\ee
and
\be \label{h^k+2_spec}
\begin{split}
& v_\pm \cdot \nabla_x h^{k+2}_{\pm} 
+ \big( e_{\pm} ( \frac{v_\pm}{c} \times B 
- \nabla_x \Phi^{k+1} ) - \nabla_x (m_{\pm} g x_3) \big) \cdot \nabla_p h^{k+2}_{\pm} = 0 \ \ \text{in} \ \O \times \R^3, 
\\& h^{k+2}_{\pm} (x,p) = G_{\pm} (x,p) + \varepsilon h^{k+1}_{\pm} (x, \tilde{p}) \ \ \text{on} \ \gamma_-.
\end{split}
\ee
On the other side, from the construction, $h^{k+1}_{\pm}$ is the solution to \eqref{VP_h_spec} and \eqref{bdry:h_spec} with $\nabla_x \Phi_h = \nabla_x \Phi^{k}$, that is,
\be \label{VP^k_spec}
\begin{split}
& v_\pm \cdot \nabla_x h^{k+1}_{\pm}
+ \big( e_{\pm} ( \frac{v_\pm}{c} \times B 
- \nabla_x \Phi^k ) - \nabla_x (m_{\pm} g x_3) \big) \cdot \nabla_p h^{k+1}_{\pm} = 0 \ \ \text{in} \ \O \times \R^3, 
\\& h^{k+1}_{\pm} (x,p) = G_{\pm} (x,p) + \varepsilon h^{k}_{\pm} (x, \tilde{p}) \ \ \text{on} \ \gamma_-. 
\end{split}
\ee
Recall \eqref{Uest:DPhi^k_spec} in Proposition \ref{prop:Unif_steady_spec}, then for any $\ell \geq 0$,
\be \label{est:Phi_x^0_spec}
\| \nabla_x \Phi^ \ell \|_{L^\infty (\bar{\O})} 
\leq  \min \big( \frac{m_{+}}{e_{+}}, \frac{m_{-}}{e_{-}} \big) \times \  \frac{g}{2}.
\ee 
From \eqref{VP_h^l+1_spec}, \eqref{bdry:h^l+1_spec} and \eqref{def:tb_l}, we consider the characteristic $(X^{k+1}, P^{k+1})$ for \eqref{VP^k_spec} and the corresponding $(\tb^{k+1}, \xb^{k+1}, \pb^{k+1})$.
Similar as the initial case, by replacing $\Phi_h$ with $\Phi^k$, Lemmas \ref{VL}-\ref{lem:nabla_zb} hold for $(X^{k+1}, P^{k+1})$ and $(\tb^{k+1}, \xb^{k+1}, \pb^{k+1})$.

\smallskip

First, we show \eqref{Uest:DDPhi^l_spec} holds for $\ell = k+1$.
Since \eqref{Uest:h_x^l_spec} holds for $\ell = k$, we have
\be \notag
\begin{split}
& e^{ \frac{\tilde \beta}{2}|p^0_{\pm}|} e^{ \frac{\tilde \beta m_{\pm} g}{4 c} x_3} | \nabla_x h^{k+1}_{\pm} (x,p)| 
\\& \lesssim \big( \frac{\delta_{i3}}{\alpha_{\pm} (x,p)} + \frac{ \| \nabla_x^2 \Phi^{k} \|_\infty + e B_3 + m_{\pm} g}{(m_{\pm} g)^2} \big) \| e^{\tilde \beta \sqrt{(m_{\pm} c)^2 + |p|^2}} \nabla_{x_\parallel, p} G_{\pm} \|_{L^\infty (\gamma_-)}.
\end{split}
\Ee 
Recall from \eqref{rho_phi^k+1_spec}, we bound $\nabla_x \rho^{k+1} (x)$ by
\Be \notag
\begin{split}
& \ \ \ \ | \nabla_x \rho^{k+1} (x) | 
\\& = \Big| \int_{\R^3} e_+ \nabla_x h^{k+1}_+ (x,p) + e_{-} \nabla_x h^{k+1}_{-} (x,p) \dd p \Big|.
\end{split}
\Ee 
Applying Lemma \ref{lem:nabla_zb} on $(\tb^{k+1}, \xb^{k+1}, \pb^{k+1})$, together with the step 3 (Proof of \eqref{est:rho_x_spec}) in the proof of Proposition \ref{prop:Reg_spec}, we deduce that
\Be \label{est:rho_x^k+1_spec}
\begin{split}
e^{ \frac{\tilde \beta m g}{4 c} x_3 } |\p_{x_i} \rho^{k+1} (x)|   
& \lesssim \| e^{\tilde \beta \sqrt{(m_+ c)^2 + |p|^2}} \nabla_{x_\parallel, p} G_+ \|_{L^\infty (\gamma_-)}  
\times \Big(
1 + \mathbf{1}_{|x_3| \leq 1} \frac{1}{\sqrt{ m_+ g x_3 }}
\Big)
\\& \ \ \ \ + \| e^{\tilde \beta \sqrt{(m_- c)^2 + |p|^2}} \nabla_{x_\parallel, p} G_- \|_{L^\infty (\gamma_-)}  
\times \Big(
1 + \mathbf{1}_{|x_3| \leq 1} \frac{1}{\sqrt{ m_- g x_3 }}
\Big),
\end{split}
\Ee
Using \eqref{est:rho_x^k+1_spec}, together with the step 4 (Proof of \eqref{est:phi_C2_spec}) in the proof of Proposition \ref{prop:Reg_spec}, we bound
\Be \notag
\begin{split}
\| \nabla_x^2 \Phi^{k+1} \|_\infty
& \lesssim \frac{1}{\beta} \big( e_+ \| w_{+, \beta} G_+ \|_{L^\infty(\gamma_-)} + e_{-} \| w_{-, \beta} G_- \|_{L^\infty(\gamma_-)} \big)
\\& \ \ \ \ + \big( \| e^{\tilde \beta \sqrt{(m_+ c)^2 + |p|^2}} \nabla_{x_\parallel, p} G_+ \|_{L^\infty (\gamma_-)} + \| e^{\tilde \beta \sqrt{(m_- c)^2 + |p|^2}} \nabla_{x_\parallel, p} G_- \|_{L^\infty (\gamma_-)}
\big).
\end{split}	
\Ee
Under the condition \eqref{condition:tilde_beta_spec}, this shows that
\be \label{est1:DDPhi^k+1_spec}
\frac{8}{m g} (1 + B_3 + \| \nabla_x ^2 \Phi^{k+1} \|_\infty) \leq \tilde \beta,
\ee
and \eqref{Uest:DDPhi^l_spec} holds for $\ell = k+1$.

\smallskip

Second, we show \eqref{Uest:h_v^l_spec} and \eqref{Uest:h_x^l_spec} hold for $\ell = k+1$.
From \eqref{VP_h^l+1_spec}, \eqref{bdry:h^l+1_spec} and \eqref{def:tb_l}, we consider the characteristic $(X^{k+2}, P^{k+2})$ for \eqref{h^k+2_spec} and the corresponding $(\tb^{k+2}, \xb^{k+2}, \pb^{k+2})$.
From \eqref{h^k+2_spec} and \eqref{form:nabla_h_spec}, we have
\Be \notag
\begin{split}
\nabla_{x, p} h^{k+2} (x,p)
& = \nabla_{p} \xb^{k+2} (x,p) \cdot \nabla_{x_\parallel} G (\xb^{k+2}, \pb^{k+2}) + \nabla_{p} \pb^{k+2} (x,p) \cdot \nabla_p G (\xb^{k+2}, \pb^{k+2})
\\& \ \ \ \ + \varepsilon \nabla_{x, p} \xb (x,p) \cdot \nabla_{x_\parallel} h^{k+1} (\xb, \tpb) + \varepsilon \nabla_{x, p} \tpb (x,p) \cdot \nabla_p h^{k+1} (\xb, \tpb).
\end{split}
\Ee
Similarly, by replacing $\Phi_h$ with $\Phi^{k+1}$, Lemmas \ref{VL}-\ref{lem:nabla_zb} hold for the characteristic $(X^{k+2}, P^{k+2})$ and $(\tb^{k+2}, \xb^{k+2}, \pb^{k+2})$.
Applying Lemma \ref{lem:nabla_zb} on $(\tb^{k+2}, \xb^{k+2}, \pb^{k+2})$ and the upper bound on $\| \nabla_x ^2 \Phi^{k+1} \|_\infty$ from \eqref{est1:DDPhi^k+1_spec}, together with the steps 1 and 2 (proof of \eqref{est:hk_v_spec} and \eqref{est:hk_x_spec}) in the proof of Proposition \ref{prop:Reg_spec}, we conclude that \eqref{Uest:h_v^l_spec} and \eqref{Uest:h_x^l_spec} holds for $\ell = k+1$.

\smallskip

Now we have proved \eqref{Uest:DDPhi^l_spec}, \eqref{Uest:h_v^l_spec} and \eqref{Uest:h_x^l_spec} hold for $\ell = k+1$. Therefore, we complete the proof by induction.
\end{proof}

\begin{prop} \label{prop:cauchy_spec}

Suppose all assumptions in Theorem \ref{theo:CS_spec} hold for some $g, \beta, \tilde \beta > 0$. 
Then for any $\ell \geq 1$, $h^{\ell}_{\pm}$ from the construction satisfies that there exists some $\bar \beta > 0$, 
\be \label{est:h_cauchy_spec}
\begin{split}
& \| e^{ \frac{3 \bar{\beta}}{4} \big( \sqrt{(m_+ c)^2 + |p|^2} + \frac{1}{2 c} m_+ g x_3 \big) } (h^{\ell+1}_{+} - h^{\ell}_{+} ) \|_{L^{\infty} (\bar \O \times \R^3)} 
\\& \qquad \qquad + \| e^{ \frac{3 \bar{\beta}}{4} \big( \sqrt{(m_- c)^2 + |p|^2} + \frac{1}{2 c} m_- g x_3 \big) } (h^{\ell+1}_{-} - h^{\ell}_{-} ) \|_{L^{\infty} (\bar \O \times \R^3)}
\\& \leq \frac{3}{4} \Big( \| e^{ \frac{3 \bar{\beta}}{4} \big( \sqrt{(m_+ c)^2 + |p|^2} + \frac{1}{2 c} m_+ g x_3 \big) } (h^\ell_{+} - h^{\ell-1}_{+} ) \|_{L^{\infty} (\bar \O \times \R^3)} 
\\& \qquad \qquad + \| e^{ \frac{3 \bar{\beta}}{4} \big( \sqrt{(m_- c)^2 + |p|^2} + \frac{1}{2 c} m_- g x_3 \big) } (h^\ell_{-} - h^{\ell-1}_{-} ) \|_{L^{\infty} (\bar \O \times \R^3)}  \Big).
\end{split}
\Ee
Furthermore, $\{ h^{\ell+1}_{\pm} \}^{\infty}_{\ell=0}$, and $\{ \rho^\ell \}^{\infty}_{\ell=0}$, $\{ \nabla_x \Phi^\ell \}^{\infty}_{\ell=0}$ from the construction are Cauchy sequences in $L^{\infty} (\O \times \R^3)$ and $L^{\infty} (\O)$ respectively.
\end{prop}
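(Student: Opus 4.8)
The plan is to reproduce the argument of Proposition \ref{prop:cauchy}, carrying along the extra term generated by specular reflection on $\gamma_-$. Given $\ell\geq 1$, from the construction \eqref{eqtn:hk_spec}--\eqref{bdry:phik_spec} the difference $h^{\ell+1}_{\pm}-h^{\ell}_{\pm}$ solves, along the characteristics $(X^{\ell+1}_{\pm},P^{\ell+1}_{\pm})$ for the field $\nabla_x\Phi^{\ell}$, the transport equation with source $e_{\pm}\nabla_x(\Phi^{\ell}-\Phi^{\ell-1})\cdot\nabla_p h^{\ell}_{\pm}$, now with the boundary condition $h^{\ell+1}_{\pm}(x,p)-h^{\ell}_{\pm}(x,p)=\varepsilon\big(h^{\ell}_{\pm}(x,\tilde p)-h^{\ell-1}_{\pm}(x,\tilde p)\big)$ on $\gamma_-$. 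Integrating along the characteristic back to the exit point yields
\[
(h^{\ell+1}_{\pm}-h^{\ell}_{\pm})(x,p)=\varepsilon\big(h^{\ell}_{\pm}-h^{\ell-1}_{\pm}\big)(\xbp^{\ell+1},\tpbp^{\ell+1})+\int_{-\tbp^{\ell+1}(x,p)}^{0}e_{\pm}\big(\nabla_x\Phi^{\ell}-\nabla_x\Phi^{\ell-1}\big)\cdot\nabla_p h^{\ell}_{\pm}\big(X^{\ell+1}_{\pm},P^{\ell+1}_{\pm}\big)\,\dd s .
\]

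For the integral term I would run the chain of estimates from the proof of Proposition \ref{prop:cauchy} verbatim: set $\bar\beta=\tilde\beta/6$, use the uniform-in-$\ell$ bound on $w^{\ell+1}_{\pm,\bar\beta}|\nabla_p h^{\ell}_{\pm}|$ from Proposition \ref{prop:Unif_D2xDp_spec}, the bound $\tbp^{\ell+1}(x,p)\leq\frac{4}{m g}\big(\sqrt{(m c)^2+|p|^2}+\frac{3}{2c}m g x_3\big)$ from Proposition \ref{lem:tb} applied with $\Phi_h=\Phi^{\ell}$, and the lower bound $w^{\ell+1}_{\pm,\bar\beta}\geq e^{\bar\beta(\sqrt{(m c)^2+|p|^2}+\frac{1}{2c}m g x_3)}$ from Proposition \ref{prop:Unif_steady_spec} to absorb the polynomial factor $\tbp^{\ell+1}$ into the exponential weight. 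Then, as in \eqref{est:phi_12_x^l+1}, apply Lemma \ref{lem:rho_to_phi} to $-\Delta(\Phi^{\ell}-\Phi^{\ell-1})=\rho^{\ell}-\rho^{\ell-1}$ to control $\|\nabla_x\Phi^{\ell}-\nabla_x\Phi^{\ell-1}\|_\infty$ by the weighted sup norm of $\rho^{\ell}-\rho^{\ell-1}$, and bound the latter by the weighted sup norm of $h^{\ell}_{\pm}-h^{\ell-1}_{\pm}$ through the bounds on $w^{\ell}_{\pm,\hat\beta}$. Exactly as in \eqref{bound4:diff_h^l+1}--\eqref{bound6:diff_h^l+1}, the smallness hypothesis \eqref{condition:G_xv_spec} on $w^{\ell+1}_{\pm,\bar\beta}|\nabla_p h^{\ell}_{\pm}|$ makes this contribution at most $\tfrac12\sum_{i=\pm}\|e^{\frac{3\bar\beta}{4}(\sqrt{(m_i c)^2+|p|^2}+\frac{1}{2c}m_i g x_3)}(h^{\ell}_i-h^{\ell-1}_i)\|_{L^\infty}$.

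For the specular term I would invoke the conservation law of Lemma \ref{lem:conservation_law} along the characteristics for $\Phi^{\ell}$, namely $\sqrt{(m c)^2+|\pbp^{\ell+1}(x,p)|^2}=\sqrt{(m c)^2+|p|^2}+\frac1c\big(e_{\pm}\Phi^{\ell}(x)+m_{\pm}g x_3\big)$, together with $|e_{\pm}\Phi^{\ell}(x)|\leq\frac12 m_{\pm}g x_3$ from \eqref{Uest:DPhi^k_spec} and $|\pbp^{\ell+1}|=|\tpbp^{\ell+1}|$, to get $e^{\frac{3\bar\beta}{4}(\sqrt{(m c)^2+|p|^2}+\frac{1}{2c}m_{\pm}g x_3)}\leq e^{\frac{3\bar\beta}{4}\sqrt{(m c)^2+|\tpbp^{\ell+1}|^2}}$. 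Since $\xbp^{\ell+1}\in\p\O$ has vanishing third coordinate, the right-hand side equals the weight $e^{\frac{3\bar\beta}{4}(\sqrt{(m c)^2+|p|^2}+\frac{1}{2c}m_{\pm}g x_3)}$ evaluated at $(\xbp^{\ell+1},\tpbp^{\ell+1})$, so this term is bounded by $\varepsilon\,\|e^{\frac{3\bar\beta}{4}(\sqrt{(m_{\pm} c)^2+|p|^2}+\frac{1}{2c}m_{\pm}g x_3)}(h^{\ell}_{\pm}-h^{\ell-1}_{\pm})\|_{L^\infty}$, and $\varepsilon\leq\frac14$ by \eqref{condition:epilon_G_spec}. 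Summing the two contributions over $\pm$ gives the contraction \eqref{est:h_cauchy_spec} with factor $\tfrac12+\varepsilon\leq\tfrac34<1$.

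Finally, with the contraction established, I would iterate it and use the uniform-in-$\ell$ bounds of Proposition \ref{prop:Unif_steady_spec} — in particular $\|w^{k+1}_{\pm,\beta}h^{k+1}_{\pm}\|_\infty\leq(1+\varepsilon)\|e^{\beta\sqrt{(m_{\pm} c)^2+|p|^2}}G_{\pm}\|_\infty$ and $0<\bar\beta\leq\beta$ — to bound the second-difference weighted norms, so that $\sum_\ell(3/4)^\ell<\infty$ shows $\{h^{\ell+1}_{\pm}\}$ is Cauchy in the weighted $L^\infty$ space, hence in $L^\infty(\O\times\R^3)$. The Cauchy property of $\{\rho^{\ell}\}$ in $L^\infty(\O)$ then follows by integrating in $p$ against the weight, and that of $\{\nabla_x\Phi^{\ell}\}$ by applying Lemma \ref{lem:rho_to_phi} to $-\Delta(\Phi^{\ell}-\Phi^{\ell-1})=\rho^{\ell}-\rho^{\ell-1}$. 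I do not expect a serious obstacle here, since the scheme parallels both Proposition \ref{prop:cauchy} and Theorem \ref{theo:US_spec}; the one point requiring care is the bookkeeping of the contraction constant — one must be sure the specular term lands against the weighted $L^\infty$ norm with coefficient exactly $\varepsilon$ and that the Poisson/source term contributes coefficient $\tfrac12$ rather than $1$, which is precisely where the smallness assumptions \eqref{condition:G_xv_spec} and \eqref{condition:epilon_G_spec} enter.
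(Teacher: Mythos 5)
Your proposal is correct and follows essentially the same route as the paper's proof: the same characteristic decomposition into a source integral (handled exactly as in Proposition \ref{prop:cauchy}, contributing the factor $\tfrac12$ via \eqref{condition:G_xv_spec}) and a specular boundary term (handled via the conservation law, $|\pbp^{\ell+1}|=|\tpbp^{\ell+1}|$, and the vanishing of $x_3$ on $\p\O$, contributing the factor $\varepsilon\leq\tfrac14$ from \eqref{condition:epilon_G_spec}), summing to the contraction constant $\tfrac34$. The concluding Cauchy arguments for $h^{\ell}_{\pm}$, $\rho^{\ell}$, and $\nabla_x\Phi^{\ell}$ also match the paper's.
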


\begin{proof}

For the sake of simplicity, we abuse the notation as in \eqref{abuse}.
We first prove \eqref{est:h_cauchy_spec}. 
Given $\ell \geq 1$, from the construction in \eqref{eqtn:hk_spec}-\eqref{bdry:phik_spec}, we have 
\Be \label{VP_diff^l+1_spec}
\begin{split}
& v_\pm \cdot \nabla_x ( h^{\ell+1}_{\pm} - h^{\ell}_{\pm} )
+ \big( e_{\pm} ( \frac{v_\pm}{c} \times B 
- \nabla_x \Phi^{\ell} ) - \nabla_x (m_{\pm} g x_3) \big) \cdot \nabla_p ( h^{\ell+1}_{\pm} - h^{\ell}_{\pm} ) 
\\& = e_{\pm} \nabla_x (\Phi^{\ell} - \Phi^{\ell-1} ) \cdot \nabla_p h^{\ell}_{\pm} \ \ \text{in} \ \O \times \R^3, 
\end{split}
\ee
with 
\be \label{VP_diff_bdy^l+1_spec}
h^{\ell+1}_{\pm} (x,p) - h^{\ell}_{\pm} (x,p) = \varepsilon h^{\ell}_{\pm} (x, \tilde{p}) - \varepsilon h^{\ell-1}_{\pm} (x, \tilde{p})  
\ \ \text{in} \ \gamma_-,
\Ee
where
\Be \notag
v_\pm =  c \frac{p}{p^0_\pm} = \frac{p}{\sqrt{ m_\pm^2 + |p|^2/ c^2}}.
\Ee
Following \eqref{VP_h^l+1_spec}, \eqref{bdry:h^l+1_spec} and \eqref{def:tb_l}, we consider the characteristic $(X^{\ell+1}, P^{\ell+1})$ for \eqref{VP_diff^l+1_spec}
and the corresponding backward exit time $\tb^{\ell+1} (x,p)$. Thus, from \eqref{VP_diff^l+1_spec} and \eqref{VP_diff_bdy^l+1_spec},
\begin{align}
& (h^{\ell+1} - h^{\ell} ) (x,p) \notag
\\& = \varepsilon h^{\ell} (\xb, \tpb) - \varepsilon h^{\ell-1} (\xb, \tpb) 
\label{diff1:h^l+1_spec}
\\& \ \ \ \ + \int^0_{-\tb^{\ell+1} (x, p)} e (\nabla_x \Phi^{\ell} - \nabla_x \Phi^{\ell-1} ) \cdot \nabla_p h^{\ell} ( X^{\ell+1} (s;x,p), P^{\ell+1} (s;x,p)) \dd s.
\label{diff2:h^l+1_spec}
\end{align}
Using Proposition \ref{prop:Unif_D2xDp_spec}, together with the conditions \eqref{condition:beta_spec} and \eqref{condition:tilde_beta_spec}, then for any $\ell \geq 1$,
\be \label{est:h_v^l+1_spec}
\begin{split}
& e^{ \frac{\tilde \beta}{2} |p^0|} e^{  \frac{\tilde \beta m g}{4 c} x_3} | \nabla_p h^{\ell}_{\pm} (x,p)| 
\\& \lesssim \big( 1 + \frac{ \| \nabla_x^2 \Phi^{\ell} \|_\infty + e B_3 + mg}{(m g)^2} \big) \| e^{\tilde \beta \sqrt{(m_{\pm} c)^2 + |p|^2}} \nabla_{x_\parallel, p} G_{\pm} \|_{L^\infty (\gamma_-)}
\\& \lesssim (1 + \frac{\tilde{\beta}}{m g} ) \| e^{\tilde \beta \sqrt{(m_{\pm} c)^2 + |p|^2}} \nabla_{x_\parallel, p} G_{\pm} \|_{L^\infty (\gamma_-)}.
\end{split}
\ee
From \eqref{Uest:DPhi^k_spec} in Proposition \ref{prop:Unif_steady_spec}, then for any $\ell \geq 1$,
\be \label{est:phi_x^l_spec}
\| \nabla_x \Phi^{\ell} \|_{L^\infty (\bar{\O})} 
\leq  \min \big( \frac{m_{+}}{e_{+}}, \frac{m_{-}}{e_{-}} \big) \times \  \frac{g}{2}. 
\ee
Following \eqref{bound:diff_h_spec}, together with \eqref{bound:diff_h} and \eqref{est:tb/w}, we obtain that for any $\ell \geq 1$,
\Be \label{bound:diff_h^l+1_spec}
\begin{split}
& |\eqref{diff2:h^l+1_spec}| 
\leq \frac{48 e}{m g \bar \beta} \| \nabla_x \Phi^{\ell} - \nabla_x \Phi^{\ell-1} \|_\infty \| w^{\ell+1}_{ \bar{\beta} } \nabla_p h^{\ell} \|_\infty e^{ - \frac{3 \bar \beta}{4} \big( \sqrt{(m c)^2 + |p|^2} + \frac{1}{2 c} m g x_3 \big) }.
\end{split}
\ee
Inputting \eqref{bound:diff_h^l+1_spec} into \eqref{diff1:h^l+1_spec} and \eqref{diff2:h^l+1_spec}, we have
\be \label{bound2:diff_h^l+1_spec}
\begin{split}
& | h^{\ell+1} (x, p) - h^{\ell} (x, p) | 
\\& \leq  \varepsilon | h^{\ell} (\xb, \tpb) - h^{\ell-1} (\xb, \tpb) | 
\\& \ \ \ \ + \frac{48 e}{m g \bar \beta} \| \nabla_x \Phi^{\ell} - \nabla_x \Phi^{\ell-1} \|_\infty \| w^{\ell+1}_{ \bar{\beta} } \nabla_p h^{\ell} \|_\infty e^{ - \frac{3 \bar \beta}{4} \big( \sqrt{(m c)^2 + |p|^2} + \frac{1}{2 c} m g x_3 \big) }.
\end{split}
\ee
From \eqref{est:phi_x^l_spec}, together with Lemma \ref{lem:conservation_law} and $|\pb| = |\tpb|$, \eqref{bound2:diff_h^l+1_spec} shows that
\be \label{bound3:diff_h^l+1_spec}
\begin{split}
& e^{ \frac{3 \bar \beta}{4} \big( \sqrt{(m c)^2 + |p|^2} + \frac{1}{2 c} m g x_3 \big) } | h^{\ell+1} (x, p) - h^{\ell} (x, p) | 
\\& \leq \varepsilon e^{ \frac{3 \bar \beta}{4} \big( \sqrt{(m c)^2 + |\tpb|^2} \big) } | h^{\ell} (\xb, \tpb) - h^{\ell-1} (\xb, \tpb) | 
\\& \ \ \ \ + \frac{48 e}{m g \bar \beta}  \| \nabla_x \Phi^{\ell} - \nabla_x \Phi^{\ell-1} \|_\infty \| w^{\ell+1}_{ \bar{\beta} } \nabla_p h^{\ell} \|_\infty.
\end{split}
\ee

Consider $\hat{m} = \min \{ m_{-},  m_{+} \}$ and $\bar{\beta} = \frac{\tilde \beta}{6}$, then we pick $\beta'$ as follows:
\be \label{def:beta'^l+1_spec}
0 < \beta' =  \min\{ \frac{\bar \beta}{4}, \beta \} \times \hat{m}.
\ee
Following \eqref{est2:phi_12_x}, we bound \eqref{bound3:diff_h^l+1_spec} by
\be \label{bound4:diff_h^l+1_spec}
\begin{split}
& e^{ \frac{3 \bar \beta}{4} \big( \sqrt{(m_+ c)^2 + |p|^2} + \frac{1}{2 c} m_+ g x_3 \big) } |h^{\ell+1}_{+} (x, p) - h^{\ell}_{+} (x, p)|
\\& \ \ \ \ + e^{ \frac{3 \bar \beta}{4} \big( \sqrt{(m_- c)^2 + |p|^2} + \frac{1}{2 c} m_- g x_3 \big) } |h^{\ell+1}_{-} (x, p) - h^{\ell}_{-} (x, p)|
\\& \leq \varepsilon \Big(
\| e^{ \frac{3 \bar{\beta}}{4} \big( \sqrt{(m_+ c)^2 + |p|^2} + \frac{3}{2 c} m_+ g x_3 \big) } (h^{\ell}_{+} - h^{\ell-1}_{+} ) \|_{L^{\infty} (\bar \O \times \R^3)}
\\& \qquad \ \ + \| e^{ \frac{3 \bar{\beta}}{4} \big( \sqrt{(m_- c)^2 + |p|^2} + \frac{3}{2 c} m_- g x_3 \big) } (h^{\ell}_{-} - h^{\ell-1}_{-} ) \|_{L^{\infty} (\bar \O \times \R^3)}  \Big)
\\& \ \ \ \ + \mathfrak{C} (1 + \frac{2 c}{\beta^\prime g} ) \frac{1}{\hat{\beta}} \Big(
e_+ \| e^{ \frac{\bar{\beta}}{4} \big( \sqrt{(m_+ c)^2 + |p|^2} + \frac{3}{2 c} m_+ g x_3 \big) } (h^{\ell}_{+} - h^{\ell-1}_{+} ) \|_{L^{\infty} (\bar \O \times \R^3)}
\\& \qquad \qquad \qquad \qquad \ \ + e_- \| e^{ \frac{\bar{\beta}}{4} \big( \sqrt{(m_- c)^2 + |p|^2} + \frac{3}{2 c} m_- g x_3 \big) } (h^{\ell}_{-} - h^{\ell-1}_{-} ) \|_{L^{\infty} (\bar \O \times \R^3)}  \Big)
\\& \qquad \ \ \times \big( \frac{48 e_+}{m g \bar \beta} \| w^{\ell+1}_{+, \bar\beta} \nabla_p h^{\ell}_{+} \|_\infty + \frac{48 e_-}{m g \bar \beta} \| w^{\ell+1}_{-, \bar\beta} \nabla_p h^{\ell}_{-} \|_\infty \big).
\end{split}
\ee
Using $\bar{\beta} = \frac{\tilde \beta}{6}$ and the condition \eqref{condition:G_xv_spec}, together with \eqref{est:h_v^l+1_spec}, we derive for any $\ell \geq 1$,
\be \notag
w^{\ell+1}_{\pm, \bar{\beta}} (x,p)  | \nabla_p h^{\ell}_{\pm} (x,p)| 
\lesssim (1 + \frac{\tilde{\beta}}{m g} ) \| e^{\tilde \beta \sqrt{(m_{\pm} c)^2 + |p|^2}} \nabla_{x_\parallel, p} G_{\pm} \|_{L^\infty (\gamma_-)} \leq \frac{1}{4}.
\ee 
Thus, we bound \eqref{bound4:diff_h^l+1_spec} by
\be \label{bound6:diff_h^l+1_spec}
\begin{split}
\eqref{bound4:diff_h^l+1_spec} 
& \leq ( \frac{1}{2} + \varepsilon ) \Big( \| e^{ \frac{3 \bar{\beta}}{4} \big( \sqrt{(m_+ c)^2 + |p|^2} + \frac{1}{2 c} m_+ g x_3 \big) } (h^{\ell}_{+} - h^{\ell-1}_{+}) \|_{L^{\infty} (\O \times \R^3)} 
\\& \qquad \qquad \qquad + \| e^{ \frac{3 \bar{\beta}}{4} \big( \sqrt{(m_- c)^2 + |p|^2} + \frac{1}{2 c} m_- g x_3 \big) } (h^{\ell}_{-} - h^{\ell-1}_{-} ) \|_{L^{\infty} (\O \times \R^3)}  \Big).
\end{split}
\ee
Finally, using the condition \eqref{condition:epilon_G_spec}, together with \eqref{bound4:diff_h^l+1_spec}-\eqref{bound6:diff_h^l+1_spec}, we derive that
\Be \notag
\begin{split}
& \| e^{ \frac{3 \bar{\beta}}{4} \big( \sqrt{(m_+ c)^2 + |p|^2} + \frac{1}{2 c} m_+ g x_3 \big) } (h^{\ell+1}_{+} (x, p) - h^{\ell}_{+} (x, p)) \|_{L^{\infty} (\O \times \R^3)} 
\\& \qquad \qquad + \| e^{ \frac{3 \bar{\beta}}{4} \big( \sqrt{(m_- c)^2 + |p|^2} + \frac{1}{2 c} m_- g x_3 \big) } (h^{\ell+1}_{-} (x, p) - h^{\ell}_{-} (x, p) ) \|_{L^{\infty} (\O \times \R^3)}
\\& \leq \frac{3}{4} \Big( \| e^{ \frac{3 \bar{\beta}}{4} \big( \sqrt{(m_+ c)^2 + |p|^2} + \frac{1}{2 c} m_+ g x_3 \big) } (h^{\ell}_{+} - h^{\ell-1}_{+} ) \|_{L^{\infty} (\O \times \R^3)} 
\\& \qquad \qquad + \| e^{ \frac{3 \bar{\beta}}{4} \big( \sqrt{(m_- c)^2 + |p|^2} + \frac{1}{2 c} m_- g x_3 \big) } (h^{\ell}_{-} - h^{\ell-1}_{-} ) \|_{L^{\infty} (\O \times \R^3)}  \Big),
\end{split}
\Ee
and conclude \eqref{est:h_cauchy_spec}. 

\smallskip

Next, using the condition \eqref{condition:beta_spec} and \eqref{Uest:wh^k_spec} in Proposition \ref{prop:Unif_steady_spec}, together with \eqref{est:h_cauchy} and \eqref{bound7:diff_h^l+1}, we derive for any $\ell \geq 0$,
\Be \label{bound7:diff_h^l+1_spec}
\begin{split}
& \| e^{ \frac{3 \bar{\beta}}{4} \big( \sqrt{(m_+ c)^2 + |p|^2} + \frac{1}{2 c} m_+ g x_3 \big) } (h^{\ell+1}_{+} (x, p) - h^{\ell}_{+} (x, p)) \|_{L^{\infty} (\O \times \R^3)} 
\\& \qquad \qquad + \| e^{ \frac{3 \bar{\beta}}{4} \big( \sqrt{(m_- c)^2 + |p|^2} + \frac{1}{2 c} m_- g x_3 \big) } (h^{\ell+1}_{-} (x, p) - h^{\ell}_{-} (x, p) ) \|_{L^{\infty} (\O \times \R^3)}
\\& \leq \big( \frac{3}{4} \big)^{\ell-1} \Big( \| e^{ \frac{3 \bar{\beta}}{4} \big( \sqrt{(m_+ c)^2 + |p|^2} + \frac{1}{2 c} m_+ g x_3 \big) } (h^{2}_{+} - h^{1}_{+} ) \|_{L^{\infty} (\O \times \R^3)} 
\\& \qquad \qquad \qquad + \| e^{ \frac{3 \bar{\beta}}{4} \big( \sqrt{(m_- c)^2 + |p|^2} + \frac{1}{2 c} m_- g x_3 \big) } (h^{2}_{-} - h^{1}_{-} ) \|_{L^{\infty} (\O \times \R^3)}  \Big)
\\& \leq 2 \big( \frac{3}{4} \big)^{\ell-1} \big( \| e^{ \beta \sqrt{(m_{+} c)^2 + |p|^2}} G_{+} \|_{L^\infty +  (\gamma_-)} + \| e^{ \beta \sqrt{(m_{-} c)^2 + |p|^2}} G_{-} \|_{L^\infty (\gamma_-)} \big).
\end{split}
\Ee
Hence, we conclude that $\{ h^{\ell+1} \}^{\infty}_{\ell=0}$ forms a Cauchy sequences in $L^{\infty} (\O \times \R^3)$.

\smallskip

Now following \eqref{est:cauchy_rho}, together with \eqref{bound7:diff_h^l+1_spec}, we deduce for any $\ell \geq 0$,
\Be \notag
\begin{split}
& e^{\beta' \frac{g}{2c} x_3} | \rho^{\ell} - \rho^{\ell-1} |
\\& \leq e_+ \| e^{ \frac{\bar{\beta}}{4} \big( \sqrt{(m_+ c)^2 + |p|^2} + \frac{3}{2 c} m_+ g x_3 \big) }  (h^{\ell}_{+} - h^{\ell-1}_{+} ) \|_{L^{\infty} (\O \times \R^3)} \times \frac{1}{\hat{\beta}}
\\& \ \ \ \ + e_- \| e^{ \frac{\bar{\beta}}{4} \big( \sqrt{(m_- c)^2 + |p|^2} + \frac{3}{2 c} m_- g x_3 \big) } (h^{\ell}_{-} - h^{\ell-1}_{-}) \|_{L^{\infty} (\O \times \R^3)} \times \frac{1}{\hat{\beta}}
\\& \leq 2 \big( \frac{3}{4} \big)^{\ell-1} \big( \| e^{ \beta \sqrt{(m_{+} c)^2 + |p|^2}} G_{+} \|_{L^\infty +  (\gamma_-)} + \| e^{ \beta \sqrt{(m_{-} c)^2 + |p|^2}} G_{-} \|_{L^\infty (\gamma_-)} \big) \times \frac{e_+ + e_-}{\hat{\beta}}.
\end{split}
\Ee
Similarly, following \eqref{est:cauchy_phi_x}, together with \eqref{bound7:diff_h^l+1_spec}, we obtain for any $\ell \geq 0$,
\Be \notag
\begin{split}
& \| \nabla_x \Phi^{\ell} - \nabla_x \Phi^{\ell-1} \|_\infty 
\\& \leq \mathfrak{C} (1 + \frac{2 c}{\beta^\prime g} ) \frac{1}{\hat{\beta}} \Big(
e_+ \| e^{ \frac{\bar{\beta}}{4} \big( \sqrt{(m_+ c)^2 + |p|^2} + \frac{3}{2 c} m_+ g x_3 \big) } (h^{\ell}_{+} - h^{\ell-1}_{+}) \|_{L^{\infty} (\O \times \R^3)}
\\& \qquad \qquad \qquad \qquad + e_- \| e^{ \frac{\bar{\beta}}{4} \big( \sqrt{(m_- c)^2 + |p|^2} + \frac{3}{2 c} m_- g x_3 \big) } (h^{\ell}_{-} - h^{\ell-1}_{-}) \|_{L^{\infty} (\O \times \R^3)}  \Big)
\\& \leq 2 \big( \frac{3}{4} \big)^{\ell-1} \mathfrak{C} (1 + \frac{2 c}{\beta^\prime g} ) \frac{e_+ + e_-}{\hat{\beta}}
 \big( \| e^{ \beta \sqrt{(m_{+} c)^2 + |p|^2}} G_{+} \|_{L^\infty +  (\gamma_-)} + \| e^{ \beta \sqrt{(m_{-} c)^2 + |p|^2}} G_{-} \|_{L^\infty (\gamma_-)} \big).
\end{split}
\Ee
Therefore, we conclude that $\{ \rho^\ell \}^{\infty}_{\ell=0}$ and $\{ \nabla_x \Phi^\ell \}^{\infty}_{\ell=0}$ are both Cauchy sequences in $L^{\infty} (\O)$.
\end{proof}

Finally, using the Cauchy sequences of $L^\infty$-spaces in Proposition \ref{prop:cauchy_spec}, we construct a weak solution of $(h_{\pm}, \rho, \Phi)$ solving \eqref{VP_h_spec}-\eqref{eqtn:Dphi_spec}. 

\begin{proof}[\textbf{Proof of Theorem \ref{theo:CS_spec}}]

From the assumption, then for some $g, \beta > 0$,
\be \notag
\| e^{ \beta \sqrt{(m_{\pm} c)^2 + |p|^2} } G_{\pm} \|_{L^\infty(\gamma_-)} < \infty, 
\ee
and for some $\beta \geq \tilde{\beta} > 0$,
\be \notag
\| e^{{\tilde \beta } \sqrt{(m_{\pm} c)^2 + |p|^2}} \nabla_{x_\parallel, p} G_{\pm} (x,p) \|_{L^\infty (\gamma_-)} < \infty,
\ee
and the conditions \eqref{condition:G_support_spec}-\eqref{condition:epilon_G_spec} hold for $g, \beta, \tilde{\beta} > 0$.

\smallskip

\textbf{Step 1. Regularity: Proof of \eqref{Uest:wh_spec}-\eqref{Uest:DPhi_spec} and \eqref{Uest:h_support_spec}.}
From the arguments of the Cauchy sequences of $L^\infty$-spaces in Proposition \ref{prop:cauchy_spec}, there exists 
\be \notag
h_{\pm} (x, p) \in L^\infty (\bar \O \times \R^3),
\ \text{ and } \ \rho(x), \nabla_x \Phi (x) \in L^\infty (\bar \O) 
\ \text{ with } \ 
\Phi= 0 \ \ \text{on} \ \p\O,
\ee
such that as $k \to \infty$,
\begin{align}
e^{ \frac{3 \bar{\beta}}{4} ( \sqrt{(m_{\pm} c)^2 + |p|^2} + \frac{1}{2 c} m_{\pm} g x_3) } h^{k}_{\pm}
\to e^{ \frac{3 \bar{\beta}}{4} ( \sqrt{(m_{\pm} c)^2 + |p|^2} + \frac{1}{2 c} m_{\pm} g x_3 ) } h_{\pm}
& \ \text{ in } \ L^\infty (\bar \O \times \R^3),
\label{weakconv_whst_spec} \\
e^{\beta' \frac{g}{2c} x_3} \rho^{k} \to e^{\beta' \frac{g}{2c} x_3} \rho 
& \ \text{ in } \ L^\infty (\bar \O),
\label{weakconv_rhost_spec} \\
\nabla_x \Phi^k \to \nabla_x \Phi
& \ \text{ in } \ L^\infty (\bar \O),
\label{ae_converge_par_Phist_spec}
\end{align} 
where $\bar{\beta} = \frac{\tilde \beta}{6}$ and $\beta' =  \min\{ \frac{\bar \beta}{4}, \beta \} \times \min \{ m_{-},  m_{+} \}$. This clearly implies that
\be \label{strong_conv_h_rho_spec}
\begin{split}
h^{k}_{\pm} \to h_{\pm}
&\ \text{ in } \ L^\infty (\bar \O \times \R^3) 
\ \text{ as } \ k \to \infty, 
\\ \rho^{k} \to \rho 
& \ \text{ in } \ L^\infty (\bar \O)
\ \text{ as } \ k \to \infty.
\end{split}
\ee
Using Proposition \ref{prop:Unif_steady_spec}, together with the $L^\infty$ convergence in \eqref{weakconv_whst_spec}-\eqref{ae_converge_par_Phist_spec}, we prove $(h_{\pm}, \rho, \nabla_x \Phi)$ satisfies \eqref{Uest:rho_spec}, \eqref{Uest:DPhi_spec} and \eqref{Uest:h_support_spec}.

Then, from zero Dirichlet boundary condition of \eqref{bdry:phik_spec} and $\Phi= 0$ on $\p\O$, together with \eqref{ae_converge_par_Phist}, we obtain
\be \label{ae_converge_Phist_spec}
\Phi^{k} (x) \to \Phi(x)
\ \text{ a.e. in $\O$}
\ \ \text{as} \ \ k \rightarrow \infty.
\ee
We consider the weight functions $w_{\pm} (x, p)$ as follows:
\Be \notag
w_{\pm, \beta} (x,p) = e^{ \beta \left( \sqrt{(m_{\pm} c)^2 + |p|^2} + \frac{1}{c} ( e_{\pm} \Phi (x) + m_{\pm} g x_3 ) \right) }.
\Ee
Using \eqref{ae_converge_Phist_spec}, we have
\be \label{wi_conv_w_spec}
w^{k}_{\pm, \beta} (x,p) = e^{ \beta \left( \sqrt{(m_{\pm} c)^2 + |p|^2} + \frac{1}{c} ( e_{\pm} \Phi^{k} (x) + m_{\pm} g x_3 ) \right) }
\rightarrow  w_{\pm, \beta} (x,p)
\ \text{ a.e. in $\O$}
\ \ \text{as} \ \ k \rightarrow \infty.
\ee
From \eqref{strong_conv_h_rho_spec} and \eqref{wi_conv_w_spec}, we get that
\Be \notag
w^{k}_{\pm, \beta} (x,p) h^k_{\pm}
\to w_{\pm, \beta} (x,p) h_{\pm}
\ \text{ a.e. in $\O$}
\ \ \text{as} \ \ k \rightarrow \infty.
\Ee
Thus, we conclude \eqref{Uest:wh_spec} by using \eqref{Uest:wh^k_spec}.

\smallskip

\textbf{Step 2. Existence.}
Next we will prove that $(h_{\pm}, \rho, \Phi)$ obtained in 
\eqref{weakconv_whst_spec}-\eqref{ae_converge_par_Phist_spec} is the solution to \eqref{VP_h_spec}-\eqref{eqtn:Dphi_spec} in the sense of Definition \ref{weak_sol_spec}.

Recall from the construction and Proposition \ref{prop:Unif_steady_spec}, for any $k \in \N$, $h^{k}_{\pm} (x,p)$ is the weak solution to \eqref{eqtn:hk_spec} and \eqref{bdry:hk_spec} with the field containing $\nabla_x \Phi^{k}$.
Therefore, given any $k \in \N$, then for any $\psi \in  C^\infty_{c} (\bar \O \times \R^3)$,
\begin{align}
& \ \ \ \ \iint_{\O \times \R^3} h^{k+1}_{\pm} v_{\pm} \cdot \nabla_x \psi \dd p \dd x 
- \big( \int_{\gamma_+} h^{k+1}_{\pm} \psi \dd \gamma
- \int_{\gamma_-} G_{\pm} \psi \dd \gamma 
- \int_{\gamma_-} \varepsilon h^{k}_{\pm} (x, \tilde{p}) \psi(x,p) \dd \gamma \big)
\label{weak_stk_spec} \\
& = \iint_{\O \times \R^3} h^{k+1}_{\pm} \big( e_{\pm} ( \frac{v_{\pm}}{c} \times B 
- \nabla_x \Phi^{k} ) - \nabla_x (m_{\pm} g x_3) \big) \cdot \nabla_p \psi \dd p \dd x, 
\label{weak_stk_2_spec}
\end{align}
Moreover, from \eqref{eqtn:phik} and \eqref{bdry:phik}, then for any $\varphi \in H^1_0 (\O) \cap C^\infty_c (\bar \O)$, 
\Be \label{weak_Poisson_k_spec}
\int_{\O} \nabla_x \Phi^{k} \cdot \nabla_x \varphi \dd x 
= \int_{\O} \rho^{k} \varphi \dd x.
\Ee
Suppose two test functions $\psi (x, p), \varphi (x)$ have compact support in $x$ as follows:
\Be \notag
\begin{split}
\text{spt}_x (\psi)&:= \overline{  \{x \in \O:  \psi(x, p) \neq 0 \ \text{for some } p \in \R^3\}} \subset\subset \O,
\\ \text{spt} (\varphi)&:= \overline{\{x \in \O:  \varphi(x) \neq0 \}} \subset\subset \O.
\end{split}
\Ee

For the first term in \eqref{weak_stk_spec}, from $\psi \in  C^\infty_{c} (\bar \O \times \R^3)$ and $L^{\infty}$ convergence $h^{k}_{\pm} \to h_{\pm}$ in \eqref{strong_conv_h_rho_spec}, we derive that
\Be \notag
\iint_{\O \times \R^3} h^{k + 1}_{\pm} v_{\pm} \cdot \nabla_x \psi \dd p \dd x
\rightarrow 
\iint_{\O \times \R^3} h_{\pm} v_{\pm} \cdot \nabla_x \psi \dd p \dd x
\ \text{ as } \ k \to \infty.
\Ee
Analogously, we deduce the convergence of every term in \eqref{weak_stk_spec} and \eqref{weak_stk_2_spec}.
From the convergence of every other term in \eqref{weak_stk_spec} and \eqref{weak_stk_2_spec}, we conclude that $(h_{\pm}, \nabla_x \Phi)$ solves \eqref{weak_form_spec}, that is, it is a weak solution of \eqref{VP_h_spec}-\eqref{bdry:h_spec}.

The rest of the proof directly follows from step 2 in the proof of Theorem \ref{theo:CS}, therefore we omit it.

\smallskip

\textbf{Step 3. Regularity: Proof of \eqref{Uest:Phi_xx_spec} and \eqref{est_final:rho_x_spec}-\eqref{est_final:hk_x_spec}.}
Recall from \eqref{strong_conv_h_rho_spec}, we have the $L^{\infty}$ convergence $\rho^k \to \rho$. This implies that
\be \notag
|\rho |_{C^{0,\delta}(\O)} \leq \sup_{k \in \N} |\rho^{k+1} |_{C^{0,\delta}(\O)}.
\ee
Together with \eqref{Uest:rho_spec} and \eqref{est:nabla^2phi}, we conclude that $\Phi$ constructed in \eqref{ae_converge_Phist_spec} satisfies \eqref{Uest:Phi_xx_spec}.
From Theorem \ref{theo:RS_spec}, together with \eqref{condition:tilde_beta_spec}, \eqref{condition:G_xv_spec} and \eqref{Uest:Phi_xx_spec}, one can check that the solution $(h_{\pm}, \rho, \nabla_x \Phi)$ satisfies \eqref{est_final:rho_x_spec}, \eqref{est_final:phi_C2_spec}, \eqref{est_final:hk_v_spec}, and \eqref{est_final:hk_x_spec}.

\smallskip

\textbf{Step 4. Uniqueness.}
Finally, using the weighted bound on $| \nabla_p h_{\pm} (x,p)|$ in \eqref{est_final:hk_v_spec}, we apply Theorem \ref{theo:US_spec}, and thus conclude the uniqueness of the solution $(h_{\pm}, \rho, \Phi)$.
\end{proof}

\subsection{Dynamic Solutions}
\label{sec:DS_spec}

We consider the dynamical problem \eqref{VP_F}-\eqref{Dbc:F} and \eqref{bdry:F_spec} for perturbation solutions around the steady solution $(h_\pm, \Phi_h)$ of Theorem \ref{theo:CS_spec}. 
Specifically,  the solutions are given by $F_\pm (t,x,p) = h_\pm (x,p) + f_\pm (t,x,p)$  and $\phi_F(t,x) = \Phi_h(x) + \Psi (t,x)$, where $(f_\pm, \Psi)$ solves the following equations:
\be \label{eqtn:f_spec}
\begin{split}
& \p_t f_{\pm}  + v_\pm \cdot \nabla_x f_{\pm} + \Big( e_{\pm} \big( \frac{v_\pm}{c} \times B - \nabla_x ( \Phi_h + \Psi ) \big) - \nabla_x ( m_\pm g x_3) \Big) \cdot \nabla_p f_{\pm} 
\\& = e_{\pm} \nabla_x \Psi \cdot \nabla_p h_{\pm} 
\ \ \text{in} \ \R_+ \times \O \times \R^3. 
\end{split}
\ee

For the initial condition of \eqref{VP_0}, we let 
\be \label{def:F_0_spec}
F_{\pm, 0} (x,p) = h_{\pm} (x,p) + f_{\pm, 0} (x,p) \ \ \text{in} \  \O \times \R^3,
\ee
which leads to the initial condition for $f_\pm$ given by
\be \label{f_0_spec}
\begin{split}
f_{\pm} (0,x,p) = f_{\pm, 0} (x,p) 
\ \ \text{in} \  \O \times \R^3.
\end{split}
\ee
The boundary conditions become pure specular boundary conditions:
\Be \label{bdry:f_spec} 
f_{\pm} (t,x,p) = \varepsilon f_{\pm} (t,x,\tilde{p}) \ \ \text{in} \ \R_+ \times \gamma_-,
\Ee
where $\tilde{p} = (p_1, p_2, - p_3)$.
An electric potential corresponding to the dynamical perturbation is determined by solving:
\be \label{Poisson_f_spec}
- \Delta_x	\Psi (t,x)
= \varrho (t,x) \ \  \text{in} \ \R_+ \times \O, 
\ \text{and }
\Psi(t,x) = 0  \ \   \text{on} \ \R_+ \times \p\O. ,
\ee
where a local charge density of the dynamical fluctuation is given by
\Be \label{def:varrho_spec}
\varrho(t,x) 
=  \int_{\R^3} ( e_+ f_+ + e_{-} f_{-} ) \dd p.
\Ee
For simplicity, we often let $(-\Delta_0)^{-1} \varrho$ denote $\Psi$ solving \eqref{Poisson_f_spec}. 
Moreover, for the reader's convenience, we write down the characteristics $\Z_{\pm} (s;t,x,p) = ( \X_{\pm} (s;t,x,p), \P_{\pm} (s;t,x,p) )$ for the dynamic problem \eqref{eqtn:f_spec} which is the same as \eqref{eqtn:f}: 
\Be \label{ODE_F_spec}
\begin{split}
\frac{d \X_{\pm} (s;t,x,p) }{ d s} & = \V_{\pm} (s;t,x,p) = \frac{\P_{\pm} (s;t,x,p)}{\sqrt{m^2_{\pm} + |\P_{\pm} (s;t,x,p)|^2 / c^2}}, \\
\frac{d \P_{\pm} (s;t,x,p)}{d s} & = {e_{\pm}} \big( \V_{\pm} (s;t,x,p) \times \frac{B}{c} - \nabla_x \Psi (s, \X(s;t,x,p)) 
\\& \qquad \qquad - \nabla_x \Phi_h (\X(s;t,x,p)) \big) - {m_{\pm}}  g  \mathbf{e}_3,
\end{split}
\Ee 
where $\Psi$ and $\Phi_h$ solve \eqref{Poisson_f} and \eqref{eqtn:Dphi}, respectively. At $s=t$, we have
\be \notag
\Z_{\pm} (t;t,x,p) = (\X_{\pm} (t;t,x,p), \P_{\pm} (t;t,x,p))  = (x, p) = z_{\pm}.
\ee

\medskip

Next, we provide a precise definition of weak solutions to the dynamical problem.

\begin{definition}
\label{weak_sol_dy_spec} 

Suppose $(h_{\pm}, \rho_h, \Phi_h)$ solves \eqref{VP_h_spec}-\eqref{def:rho_spec} in the sense of Definition \ref{weak_sol_spec}.  
	
(a) We say that $(f_{\pm}, \nabla_x \Psi) \in \big( L^2_{loc}(\R_+ \times \O \times \R^3) \; \cap \; L^2_{loc}(\R_+ \times \p\O \times \R^3; \dd \gamma) \big) \times L^2_{loc}(\R_+ \times \O \times \R^3)$ is a weak solution of \eqref{eqtn:f_spec}, \eqref{f_0_spec}, and \eqref{bdry:f_spec}, if all terms below are bounded and  and the following condition holds for any $t \in R_+$ and test function $\psi \in C^\infty_c (\R_+ \times \bar \O \times \R^3)$, 
\Be \label{weak_form_dy_spec}
\begin{split}
& \iint_{\R_+ \times \O \times \R^3} f_{\pm} (t,x,p) \p_t \psi(t, x, p) \dd p \dd x \dd t
+ \iint_{\R_+ \times \O \times \R^3} f_{\pm} (t,x,p) v \cdot \nabla_x \psi(t, x, p) \dd p \dd x \dd t
\\& - \iint_{\R_+ \times \O \times \R^3} f_{\pm} (t, x, p) \big( e_{\pm} ( \frac{v_\pm}{c} \times B 
- \nabla_x (\Phi + \Psi) ) - \nabla_x (m_{\pm} g x_3) \big) \cdot \nabla_p \psi (t,x,p)  \dd p \dd x \dd t
\\& = \iint_{\R_+ \times \p\O \times \{ p_3 <0 \}} f_{\pm} (t,x,p) \psi(t,x,p) \dd \gamma \dd t 
- \iint_{ \O \times \R^3} f_{\pm, 0} (x,p) \psi(0, x,p) \dd p \dd x
\\& - \iint_{\R_+ \times \p\O \times \{ p_3 > 0 \}} \varepsilon f_{\pm} (t, x, \tilde{p}) \psi(t,x,p) \dd \gamma,
\end{split}
\Ee
where $\dd \gamma := |v_3| \dd S_x \dd p$ denotes the boundary measure.
	
(b) We say that $(f_{\pm}, \varrho) \in L^2_{loc}(R_+ \times \O \times \R^3) \times L^2_{loc} (R_+ \times \O)$ is a weak solution of \eqref{def:varrho_spec}, if all terms below are bounded and the following condition holds for any $t \in R_+$ and test function $\vartheta \in H^1_0 (\O) \cap C^\infty_c (\bar \O)$, 
\Be \label{weak_form_2_dy_spec}
\int_{\O} \vartheta (x) \varrho (t, x) \dd x 
= \int_{\O} \vartheta (x) \int _{\R^3} ( e_+ f_{+} (t, x, p) + e_{-} f_{-} (t, x, p) ) \dd p \dd x.
\Ee
	
(c) We say that $(\varrho, \Psi) \in L^2_{loc}(\O  ) \times W^{1,2}_{loc} (\O)$ is a weak solution of \eqref{Poisson_f_spec}, if all terms below are bounded and the following condition holds for any $t \in R_+$ and test function $\varphi \in H^1_0 (\O) \cap C^\infty_c (\bar \O)$, 
\Be \label{weak_form_3_dy_spec}
\int_{\O} \nabla_x \Psi (t, x) \cdot \nabla_x \varphi (x) \dd x 
= \int_{\O} \varrho (t, x) \varphi (x) \dd x.
\Ee
\end{definition}

Recall $M, L$ and $\lambda$ defined in \eqref{set:M}, \eqref{set:L} and \eqref{lambda}, respectively.
Now we state the main theorem of the dynamic problem.

\begin{theorem}[Main theorem of the Dynamic problem]
\label{theo:CD_spec}

We assume that all assumptions in Theorem \ref{theo:CS_spec} hold with $g > 0, \beta \geq \tilde \beta >0$. 
Furthermore, assume there exists a constant $\tpx > 1$ such that the initial data $f_{\pm, 0} (x,p)$ in \eqref{f_0_spec} satisfies
\be \label{condition:f_0_spec}
\begin{split}
f_{\pm, 0} (x,p) = 0
\ \text{ for any } \
\frac{c}{m_{\pm} g} |p| + \frac{3}{4} x_{3} + 1 \geq \tpx.
\end{split}
\ee
and suppose that
\Be \label{choice:g_spec}
M \leq \beta e^{ - \frac{m_{\pm} g}{24} \beta}
\ \text{ and } \ 
L \leq \min \{ \tilde{\beta} e^{ - \frac{m_{\pm} g}{24} \tilde{\beta} }, \frac{1}{1024} \beta^2 e^{ - \frac{m_{\pm} g}{48} \beta } \}.
\Ee
Finally, suppose $\varepsilon$ in the boundary condition \eqref{bdry:f_spec} satisfies that	
\be \label{condition:epilon_G_dy_spec}
2 \varepsilon 
< \exp \Big\{ - \lambda \Big( \big( \frac{1}{\min \{ \frac{g}{4 \sqrt{2}}, \frac{c}{\sqrt{10}} \}} + 1 \big) \times \tpx + 1 \Big) \Big\}.
\ee	
Then there exists a unique solution $(f_\pm, \Psi)$ to \eqref{eqtn:f_spec}-\eqref{Poisson_f_spec} in the sense of Definition \ref{weak_sol_dy_spec}. 
Moreover, the following estimates hold:
\begin{align}
& \sup_{0 \leq t < \infty} \| e^{ \frac{\beta}{2} \sqrt{(m c)^2 + |p|^2} } e^{ \frac{m g}{4 c} \beta x_3} f_{\pm} (t,x,p)  \|_{L^\infty  (\O \times \R^3)} \leq M, 
\label{Uest:wh_dy_spec} \\
& \sup_{0 \leq t < \infty} \|   \nabla_x  \Psi \|_{L^\infty (\bar{\O})}  
\leq \min \left\{\frac{m_+}{e_+}, \frac{m_-}{e_-} \right\} \times \frac{g}{48}.
\label{Uest:DxPsi_spec}
\end{align}
In addition, $\phi_F =  \Phi_h + \Psi$ satisfies the following bounds:
\begin{align}
& \sup_{0 \leq t < \infty} \| \nabla_x \phi_F \|_{L^\infty (\bar{\O})} 
\leq \min \left\{\frac{m_+}{e_+}, \frac{m_-}{e_-} \right\} \times \frac{g}{2},
\label{Uest:Dxphi_F_spec} \\ 
& (1 + B_3 + \| \nabla_x ^2 \phi_F  \|_\infty) + \| \p_t \p_{x_3} \phi_F (t, x_\parallel , 0) \|_{L^\infty(\p\O)} 
\leq \frac{\hat{m} g}{24} \tilde{\beta}.  
\label{Uest:D2xD3tphi_F_spec}
\end{align}	
Furthermore, given $\alpha_{\pm, F} (t, x, p)$ defined in $\eqref{alpha_F}$, then $f_{\pm}$ satisfies that
\begin{align}
& \| e^{ \frac{\tilde{\beta}}{4} \sqrt{(m_{\pm} c)^2 + |p|^2} } e^{ \frac{m_{\pm} g}{8 c} \tilde{\beta} x_3} \nabla_p f_{\pm} (t,x,p) \|_{L^\infty(\O \times \R^3)} 
\lesssim 2 e^{ 2 } \tilde \beta + \frac{1}{24} {\tilde{\beta}}^2,
\label{Uest_final:F_v:dyn_spec} \\
& e^{ \frac{\tilde{\beta}}{4} \sqrt{(m_{\pm} c)^2 + |p|^2} } e^{ \frac{m_{\pm} g}{8 c} \tilde{\beta} x_3} \big| \nabla_x f_{\pm} (t,x,p) \big|
\lesssim 2 e^{ 2 } \tilde{\beta} + ( \frac{ \mathbf{1}_{|x_3| \leq 1} }{\alpha_{\pm, F} (t,x,p)} + \frac{1}{24} \tilde{\beta} ) \tilde{\beta}.
\label{Uest_final:F_x:dyn_spec}	
\end{align}
and
\be \label{Uest:f_support_spec} 
f^{\ell+1}_{\pm} (t, x, p) = 0
\ \text{ for any } \
\frac{4 c}{m_{\pm} g} |p| + 3 x_{3} \geq \tpx,
\ee
Finally, the solution $(f(t), \varrho(t), \nabla_x \Psi(t))$ exhibits exponential decay as follows:
\begin{align}
& \sup_{0 \leq t < \infty} e^{ \lambda t} \|  e^{\frac{\beta}{8} \sqrt{(m_{\pm} c)^2 + |p|^2} + \frac{m_{\pm} g}{16 c} \beta x_3} f_{\pm} (t ) \|_{L^\infty (\O \times \R^3)}
\lesssim \beta, 
\label{Udecay:f_spec} \\
& \sup_{0 \leq t < \infty} e^{ \lambda t} \| e^{  \frac{3 \lambda }{c}  x_3} \varrho (t)\|_{L^\infty(\O)}
\lesssim \frac{e_+ + e_-}{\beta^2}, 
\label{Udecay:varrho_spec} \\
& \sup_{0 \leq t < \infty} e^{ \lambda t} \|\nabla_x \Psi(t)\|_{L^\infty (\O)}
\lesssim \big( \frac{e_+ + e_-}{\beta^2} \big) \times (1 + \frac{16}{g \beta}). 
\label{Udecay:DxPsi_spec}
\end{align} 
\end{theorem}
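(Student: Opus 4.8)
The plan is to follow the architecture of Sections~\ref{sec:DC}--\ref{sec:EX_DS} essentially verbatim, since the steady solution $(h_\pm,\Phi_h)$ furnished by Theorem~\ref{theo:CS_spec} already carries all the estimates those sections use, and to isolate the one genuinely new ingredient forced by specular reflection: a uniform-in-$\ell$ compact support in $p$ for the iterates $\{f^{\ell+1}_\pm\}$. First I would construct the sequence $(f^{\ell+1}_\pm,\varrho^\ell,\nabla_x\Psi^\ell)$ exactly as in \eqref{eqtn:fell}--\eqref{bdry:Psi_fell}, but with the absorbing boundary condition replaced by $f^{\ell+1}_\pm(t,x,p)=\varepsilon f^\ell_\pm(t,x,\tilde p)$ on $\gamma_-$, starting from $f^0_\pm=0$ and $(\varrho^0,\nabla_x\Psi^0)=(0,\mathbf 0)$; the Peano-type solvability of the characteristics \eqref{ODE_F} and of the Poisson problem is unchanged because these only involve the transport operator, which is insensitive to the boundary law.

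Next I would reprove the analog of Proposition~\ref{prop:DC}: the uniform gravity-dominance bounds $\sup_t\|\nabla_x(\Phi_h+\Psi^\ell)\|_{L^\infty(\O)}\le\min\left\{\frac{m_+}{e_+},\frac{m_-}{e_-}\right\}\frac g2$ and $\sup_t\|\nabla_x\Psi^\ell\|_{L^\infty(\O)}\le\min\left\{\frac{m_+}{e_+},\frac{m_-}{e_-}\right\}\frac g{48}$, together with the weighted $L^\infty$ bound $\sup_t\|e^{\frac\beta2\sqrt{(m_\pm c)^2+|p|^2}}e^{\frac{m_\pm g}{4c}\beta x_3}f^{\ell+1}_\pm\|_\infty\le M$. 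This is again run by induction, the only change being that in the Lagrangian representation \eqref{Lform:f} the boundary contributes, at each specular bounce, a factor $\varepsilon$ times $f^\ell_\pm$ evaluated at the reflected momentum $\tilde p$; since $|\tilde p|=|p|$ the steady weight $w_{\pm,\beta}$ is invariant under reflection, so the factor $\varepsilon<1$ combined with the backward exit time bound of Proposition~\ref{lem:tB} and Corollary~\ref{cor:max_X_dy} keeps the bounce contributions summable — precisely the role of \eqref{condition:epilon_G_dy_spec}. The asymptotic stability criterion Theorem~\ref{theo:AS} then applies to each $f^{\ell+1}_\pm$ to produce the exponential-in-time decay, with the extra specular term absorbed by the $\varepsilon$-smallness and the $e^{-\lambda t}$ factor.

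The heart of the argument, and the main obstacle, is Proposition~\ref{prop:DC_spec}: the iterates have uniformly compact $p$-support, i.e.\ $f^{\ell+1}_\pm(t,x,p)=0$ whenever $\frac{4c}{m_\pm g}|p|+3x_3\ge\tpx$. I would prove this by induction on $\ell$, the case $\ell=0$ being \eqref{condition:f_0_spec} together with $f^0_\pm=0$. For the inductive step I trace $\Z^{\ell+1}_\pm(s;t,x,p)$ backward: it either reaches the initial slice $\{s=0\}$, where $f^{\ell+1}_\pm$ equals $f_{\pm,0}$ and therefore vanishes unless the support parameter there is $<\tpx$, or it first strikes $\gamma_-$, where $f^{\ell+1}_\pm=\varepsilon f^\ell_\pm(\cdot,\tilde p)$ and the argument descends one bounce using the inductive hypothesis. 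The difficulty is that gravity can pump energy into the parallel momentum between and after bounces, so $|p|$ can grow along the backward flight; the cure is Corollary~\ref{cor:max_X_dy}, which bounds $\X_{\pm,3}(s;t,x,p)$ and, when $t-\tBp<0$, the elapsed time $t$ itself, by $\frac{4c}{m_\pm g}|p|+3x_3$ plus constants, so the support parameter cannot exceed $\tpx$ backward in time, while the $\varepsilon$-smallness \eqref{condition:epilon_G_dy_spec}, calibrated against $\lambda\big((\min\{g/4\sqrt2,c/\sqrt{10}\}^{-1}+1)\tpx+1\big)$, makes the geometric series over the number of bounces converge and renders the whole scheme a contraction on the relevant weighted $L^\infty$ space.

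With the uniform compact support in hand the rest is a transcription of the inflow case: (i) the a priori regularity bounds on $\nabla_{x,p}f^{\ell+1}_\pm$ and on $\nabla^2_x\phi_{F^\ell}$, $\partial_t\nabla_x\phi_{F^\ell}$ follow the proof of Theorem~\ref{theo:RD}, the only new terms $\nabla_{x,p}\big(\varepsilon f^\ell_\pm(\cdot,\tilde p)\big)$ from each bounce being absorbed via the compact support (which keeps $e^{\frac{\tilde\beta}{2}|\pB^0|}$ bounded on the support) and the $\varepsilon$-smallness, exactly as in the steady Proposition~\ref{prop:Reg_spec}; (ii) the Cauchy-sequence argument of Proposition~\ref{prop:cauchy_dy}, now with contraction constant strictly below one by \eqref{choice:g_spec} and \eqref{condition:epilon_G_dy_spec}, gives strong convergence of $\{f^{\ell+1}_\pm\}$, $\{\varrho^\ell\}$, $\{\nabla_x\Psi^\ell\}$; (iii) passing to the limit reproduces the weak formulation of Definition~\ref{weak_sol_dy_spec} by the same test-function argument as in the proof of Theorem~\ref{theo:CD}, and the estimates \eqref{Uest:wh_dy_spec}--\eqref{Uest:f_support_spec} and the decay \eqref{Udecay:f_spec}--\eqref{Udecay:DxPsi_spec} pass to the limit from the uniform-in-$\ell$ bounds; (iv) uniqueness follows from a stability estimate of the type of Theorems~\ref{theo:UD}--\ref{theo:UA}, where the difference of two solutions acquires an extra boundary term $\varepsilon(F_1-F_2)(\cdot,\tilde p)$ that is once more controlled by \eqref{condition:epilon_G_dy_spec} before closing the Gronwall argument.
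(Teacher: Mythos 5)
Your overall architecture coincides with the paper's: construct the iterates with the specular boundary law, prove gravity dominance and weighted $L^\infty$ bounds by induction, apply the asymptotic stability criterion with the extra specular term $\mathcal{S}_\pm$ absorbed via \eqref{condition:epilon_G_dy_spec}, establish uniform compact $p$-support, then run the regularity, Cauchy-sequence, and stability/uniqueness arguments as in the inflow case. The descent in the iteration index at each bounce (so that after at most $\ell+1$ reflections one lands on $f^0_\pm=0$ or on the initial slice) is also how the paper terminates the backward trace.

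However, the mechanism you give for the central step --- propagation of the compact momentum support --- has a genuine gap. Corollary~\ref{cor:max_X_dy} bounds the peak height $\X_{\pm,3}$ and the elapsed time of a \emph{single} free-flight segment in terms of the momentum at the endpoint of that segment; it gives no lower bound on $|\P_\pm(0;t,x,p)|$ in terms of $|p|$ across a multi-bounce trajectory, which is what the support statement requires ($f_{\pm,0}$ must be seen to vanish at the foot of the characteristic). Likewise, the $\varepsilon$-smallness of \eqref{condition:epilon_G_dy_spec} controls the \emph{magnitude} of the bounce contributions in the decay estimate (that is its role in Theorem~\ref{theo:AS_spec}), but a smallness factor cannot produce identical vanishing outside a compact set. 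What actually closes this step in the paper is the modified conservation law \eqref{dDTE^ell_2_spec}: the quantity $\sqrt{(m_\pm c)^2+|\P^{\ell+1}_\pm|^2}+\tfrac1c(e_\pm\Phi(\X^{\ell+1}_\pm)+m_\pm g\,\X^{\ell+1}_{\pm,3})$ is continuous across specular reflections (since $|\tilde p|=|p|$ and $x_3=0$ there) and changes along the flow only through $-\tfrac{e_\pm}{c}\nabla_x\Psi^\ell\cdot\V^{\ell+1}_\pm$; the inductive hypothesis \eqref{Udecay:DxPsiell_spec} gives $\|\nabla_x\Psi^{\ell}(s)\|_{L^\infty}\lesssim e^{-\lambda s}$, so the cumulative energy input over the entire multi-bounce backward trajectory is bounded by $\int_0^\infty e^{-\lambda s}\,\mathrm{d}s\lesssim 1/\lambda$, uniformly in $t$ and in the number of bounces. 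This is also why the support bound and the decay estimates must be proved in a single interlocking induction (Proposition~\ref{prop:DC_spec}): the decay of $\nabla_x\Psi^\ell$ at level $\ell$ is needed for the support at level $\ell+1$, and the support at level $\ell$ is needed to invoke Theorem~\ref{theo:AS_spec} and obtain the decay. Without this energy argument your inductive step does not go through, since $\int_0^t\|\nabla_x\Psi^\ell(s)\|\,\mathrm{d}s$ would otherwise grow linearly in $t$ and the support parameter would drift.
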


\smallskip

To prove Theorem \ref{theo:CD_spec}, we follow steps similar to those used in the dynamic problem, considering only the inflow boundary condition \eqref{VP_F}-\eqref{bdry:F}.
In the following, we provide a brief overview of the steps outlined in this subsection.

Given a steady solution $(h_{\pm}, \nabla_x \Phi)$ to \eqref{VP_h_spec}-\eqref{eqtn:Dphi_spec}, we construct the sequences $(f^{\ell+1}_{\pm}, \varrho^{\ell}, \nabla_x \Psi^{\ell})$ for $\ell \geq 1$ in \eqref{eqtn:fell_spec}-\eqref{bdry:Psi_fell_spec}. 
Compared to the case with pure inflow boundary conditions, the only difference is the boundary conditions applied to $f^{\ell+1}_{\pm}$.

We remark that all properties in Section \ref{sec:char}, Lemma \ref{lem:w/w} and Lemma \ref{lem:Ddb} also apply to the dynamic problem \eqref{eqtn:f_spec}-\eqref{Poisson_f_spec} involving specular boundary conditions, as the change in boundary conditions does not affect the characteristic trajectory determined by the Vlasov equations \eqref{eqtn:f_spec}.

Next, suppose $f_{\pm} (t,x,p)$ is a solution to \eqref{eqtn:f_spec}-\eqref{Poisson_f_spec} and it satisfies that
\be \label{scheme:f_support_spec}
f_{\pm} (t, x, p) = 0
\ \text{ for any } \
\frac{4 c}{m_{\pm} g} |p| + 3 x_{3} \geq \tpx.
\ee
The specular boundary condition, together with the dominance of the downward gravity in the field and the weighted $L^\infty$ estimate on $f^{\ell+1}_{\pm} (t,x,p)$, allows us to conclude the asymptotic stability in Theorem \ref{theo:AS_spec}. 
Then we establish a priori estimate of $(F, \phi_F)$ solving \eqref{VP_F}-\eqref{Dbc:F} and \eqref{bdry:F_spec}.
Due to the specular boundary condition, some regularity terms are
altered (see Section \ref{sec:RD_spec}).
Thanks to the fact that the steady solution $h_{\pm} (x,p)$ has compact support with respect to $p$, and by using the assumption \eqref{scheme:f_support_spec} and the condition \eqref{condition:epilon_G_dy_spec}, we derive the weighted $L^\infty$ estimate on $\nabla_{x, p} F_{\pm}$ in Theorem \ref{theo:RD_spec}.

After that, we establish the key property that the sequence $\{ f^{\ell}_{\pm} \}^{\infty}_{\ell=0}$ satisfies that for every $\ell \geq 1$,
\be \label{scheme:f_l_support_spec}
f^{\ell}_{\pm} (t, x, p) = 0
\ \text{ for any } \
\frac{4 c}{m_{\pm} g} |p| + 3 x_{3} \geq \tpx.
\ee
Different from the steady problem, each dynamic weight is not invariant along the dynamic characteristics. For example, for any $s \in [ t - \tBp^{\ell+1} (t,x,p), t + \tFp^{\ell+1} (t,x,p)]$,
\Be \notag
\begin{split}
& \ \ \ \ \frac{d}{ds} \Big( \sqrt{(m_{\pm} c)^2 + |\P^{\ell+1}_{\pm} (s;t,x,p)|^2} + \frac{1}{c} \big( e_{\pm} \Phi (\X^{\ell+1}_{\pm} (s;t,x,p)) + m_{\pm} g \X^{\ell+1}_{\pm, 3} (s;t,x,p) \big) \Big) 
\\& \qquad = - \frac{e_{\pm}}{c} \nabla_x \Psi^{\ell} (s, \X^{\ell+1}_{\pm} (s;t,x,p)) \cdot \V^{\ell+1}_{\pm} (s;t,x,p).
\end{split}
\Ee 
Moreover, the specular boundary condition allows the trajectory to bounce infinitely.
This asks us to provide a better control on $\nabla_x \Psi^{\ell}$ to ensure that $|\P^{\ell+1}_{\pm}|$ remains uniformly bounded with respect to time $t$.
Using the asymptotic stability in Theorem \ref{theo:AS_spec} and mathematical induction, in Proposition \ref{prop:DC_spec} we obtain that
\be \notag
\sup_{0 \leq t < \infty} e^{ \lambda t} \|\nabla_x \Psi^{\ell} (t,x) \|_{L^\infty (\O)}
\lesssim \big( \frac{e_+ + e_-}{\beta^2} \big) \times (1 + \frac{16}{g \beta}),
\text{ for every }
\ell \in \N
.
\ee
This leads to the uniform-in-$\ell$ control on $f^{\ell}_{\pm}$ as presented in \eqref{scheme:f_l_support_spec}.
The remaining steps to establish existence and uniqueness are analogous to those for dynamic solutions under inflow boundary conditions. 

\subsubsection{Construction} \label{sec:DC_spec}

Suppose $(h_{\pm}, \nabla_x \Phi)$ is the steady solution to \eqref{VP_h_spec}-\eqref{eqtn:Dphi_spec}. We construct solutions to the dynamic problem \eqref{eqtn:f_spec}-\eqref{Poisson_f_spec} via the following sequences: for any $\ell \in \N$,
\be \label{eqtn:fell_spec}
\begin{split}
& \p_t f^{\ell+1}_{\pm}  + v_\pm \cdot \nabla_x f^{\ell+1}_{\pm} + \Big( e_{\pm} \big( \frac{v_\pm}{c} \times B - \nabla_x ( \Phi + \Psi^{\ell} ) \big) - \nabla_x ( m_\pm g x_3) \Big) \cdot \nabla_p f^{\ell+1}_{\pm} 
\\& \ \ \ \ = e_{\pm} \nabla_x \Psi^{\ell} \cdot \nabla_p h_{\pm} \ \ \text{in} \ \R_+ \times  \O \times \R^3,
\end{split}
\ee
and
\begin{align}
f^{\ell+1}_{\pm} (t,x,p) = \varepsilon f^{\ell}_{\pm} (t,x,\tilde{p})
& \ \ \text{on} \ \R_+ \times \gamma_-,
\label{bdry:fell_spec} \\
f^{\ell+1}_{\pm, 0} (x, p) = F_{\pm, 0} - h_{\pm} 
& \ \ \text{in} \ \{ 0 \} \times \O \times \R^3, 
\label{initial:fell_spec} \\
\varrho^{\ell} (t, x) 
=  \int_{\R^3} ( e_+ f^{\ell}_+ + e_{-} f^{\ell}_{-} ) \dd p 
& \ \ \text{in} \ \R_+ \times \O, &
\label{varrhoell_spec}	\\
- \Delta \Psi^\ell (t, x) = \varrho^\ell (t, x) 
& \ \ \text{in} \ \R_+ \times \O,
\label{Poisson_fell_spec} \\
\Psi^\ell (t, x) = 0 
& \ \ \text{on} \ \R_+ \times \p\O,
\label{bdry:Psi_fell_spec}
\end{align}
where $\tilde{p} = (p_1, p_2, - p_3)$ and the initial setting $f^0_{\pm} = 0$ and $(\varrho^0,  \nabla_x \Psi^0) = (0, \mathbf{0})$.

\smallskip

First, we construct $f^1_{\pm}$ solving \eqref{eqtn:fell_spec}-\eqref{initial:fell_spec} from $(\varrho^0, \nabla_x \Psi^0) = (0, \mathbf{0})$. 
We consider the Lagrangian formulation of the following characteristics:
\Be \label{Z1_spec}
\Z^{1}_{\pm} (s;t,x,p) = ( \X^{1}_{\pm} (s;t,x,p), \P^{1}_{\pm} (s;t,x,p) ),
\Ee
which solves
\Be \notag 
\big( \X^{1}_{\pm} (t;t,x,p), \P^{1}_{\pm} (t;t,x,p) \big) = (x, p), 
\Ee
and
\Be \label{ODE1_spec}
\\ \frac{d \X^1_{\pm}}{d t} = \V^1_{\pm} = \frac{ \P^1_{\pm} }{\sqrt{m^2_{\pm} + |P^1_{\pm}|^2 / c^2}}, \ \
\frac{d \P^1_{\pm} }{d t} = e_{\pm} \big( \V^1_{\pm} \times \frac{B}{c} - \nabla_x \Phi \big) - {m_{\pm}} g \mathbf{e}_3. 
\Ee
Then from \eqref{varrhoell_spec}, as long as it is well-defined, we obtain
\[
\varrho^{1} (t, x) 
=  \int_{\R^3} ( e_+ f^{1}_+ + e_{-} f^{1}_{-} ) \dd p.
\]
Using \eqref{phi_rho} and Lemma \ref{lemma:G}, together with \eqref{Poisson_fell_spec} and \eqref{bdry:Psi_fell_spec}, we derive $\Psi^1$ and $\nabla_x \Psi^1$ by
\be \notag
- \Delta \Psi^1 (t, x) = \varrho^1 (t, x).
\ee

\smallskip

Second, we construct $(f^{\ell+1}_{\pm}, \varrho^{\ell}, \nabla_x \Psi^{\ell})$ solving \eqref{eqtn:fell_spec}-\eqref{bdry:Psi_fell_spec} for $\ell \geq 1$ by iterating the process. Given $\nabla_x \Psi^{\ell}$, we construct $f^{\ell+1}_{\pm}$ along the characteristics as follows:
\Be \label{Zell_spec}
\Z^{\ell+1}_{\pm} (s;t,x,p) 
= (\X^{\ell+1}_{\pm} (s;t,x,p), \V^{\ell+1}_{\pm} (s;t,x,p) ), 
\Ee
which solves
\Be \notag 
\big( \X^{\ell+1}_{\pm} (t;t,x,p), \P^{\ell+1}_{\pm} (t;t,x,p) \big) = (x, p), 
\Ee
and
\Be \label{ODEell_spec}
\begin{split}
& \frac{d \X^{\ell+1}_{\pm} }{ds} = \V^{\ell+1}_{\pm} = \frac{ \P^{\ell+1}_{\pm} }{\sqrt{m^2_{\pm} + |P^{\ell+1}_{\pm}|^2 / c^2}},   
\\& \frac{d \P^{\ell+1}_{\pm} }{ds} = e_{\pm} \big( \V^{\ell+1}_{\pm} \times \frac{B}{c} - \nabla_x ( \Phi + \Psi^\ell ) \big) - {m_{\pm}} g \mathbf{e}_3.
\end{split}
\Ee
Using the Peano theorem, together with the bound of $\varrho^\ell$ (see Proposition \ref{prop:DC_spec}) and the continuity of $\nabla_x \Psi^\ell$ (see \eqref{phi_rho}), we conclude the existence of solutions (not necessarily unique). 
Then we obtain $\varrho^{\ell+1}$ from \eqref{varrhoell_spec} and solve $\nabla_x \Psi^{\ell+1}$ from \eqref{Poisson_fell_spec} and \eqref{bdry:Psi_fell_spec}.

\smallskip

Next, since the change in boundary conditions doesn't influence the characteristic trajectory, the characteristics in \eqref{Zell_spec} and \eqref{ODEell_spec} are the same as the characteristics in \eqref{Zell} and \eqref{ODEell}.
Hence, for every iteration $\ell \geq 0$, suppose $(\X^{\ell}, \P^{\ell})$ exists we reuse the backward exit time $\tBp^\ell$, position $\xBp^{\ell}$, and momentum $\pBp^{\ell}$ defined in \eqref{def:tB_l}.
Similar to \eqref{def:flux_ell}-\eqref{identity:Psi_t_ell}, we define the following: for any $\ell \in \N$,
\begin{align}
b^{\ell} (t,x) := \int_{\R^3} (v_+ e_+ f^{\ell}_+ + v_{-} e_{-} f^{\ell}_{-} ) \dd p \ \ \text{in} \ \R_+ \times \O,\label{def:flux_ell_spec} \\
\p_t\varrho^\ell + \nabla_x \cdot b^\ell =0 \ \ \text{in} \ \R_+ \times \O,
\label{cont_eqtn_ell_spec} \\
\p_t \Psi^\ell (t,x)  = (-\Delta_0)^{-1} \p_t \varrho^\ell(t,x)  = - (-\Delta_0)^{-1}  (\nabla_x \cdot b^\ell) (t,x)\ \ \text{in} \ \R_+ \times \O. \label{identity:Psi_t_ell_spec}
\end{align}
In addition, we reuse the weight function defined in \eqref{w^ell} and \eqref{w^ell_t=0} as follows:
\Be \label{w^ell_spec}
\w^{\ell+1}_{\pm} (t,x,p) = \w^{\ell+1}_{\pm, \beta} (t,x,p) = e^{ \beta \left( \sqrt{(m_{\pm} c)^2 + |p|^2} + \frac{1}{c} \big( e \big( \Phi (x) + \Psi^\ell (t,x) \big) + m_{\pm} g x_3 \big) \right) }.
\Ee
and at the initial time $t=0$, 
\Be \label{w^ell_t=0_spec}
\w^{\ell+1}_{\pm} (0,x,p) = \w^{\ell+1}_{\pm, \beta, 0} (x,p)
= e^{ \beta \left( \sqrt{(m_{\pm} c)^2 + |p|^2} + \frac{1}{c} \big( e_{\pm} ( \Phi (x) + \Psi^{\ell} (0,x) ) + m_{\pm} g x_3 \big) \right) }.
\Ee
Analogous to \eqref{dDTE^ell} and \eqref{eq:energy_conservation_dy}, each dynamic weight is not invariant along the dynamic characteristics: for any $s \in [ t - \tBp^{\ell+1} (t,x,p), t + \tFp^{\ell+1} (t,x,p)]$,
\Be \label{dDTE^ell_spec}
\begin{split}
& \frac{d}{ds} \Big( \sqrt{(m_{\pm} c)^2 + |\P^{\ell+1}_{\pm} (s;t,x,p)|^2} + \frac{1}{c} \big( e_{\pm} \big( \Phi (\X^{\ell+1}_{\pm} (s;t,x,p)) 
\\& \qquad + \Psi^{\ell} (s, \X^{\ell+1}_{\pm} (s;t,x,p)) \big) + m_{\pm} g \X^{\ell+1}_{\pm, 3} (s;t,x,p) \big) \Big) 
\\& = \frac{e_{\pm}}{c} \p_t \Psi^{\ell} (s, \X^{\ell+1}_{\pm} (s;t,x,p)) 
\\& = \frac{e_{\pm}}{c} (-\Delta_0)^{-1} \p_t \varrho^\ell(t,x) 
= - \frac{e_{\pm}}{c} (-\Delta_0)^{-1}  (\nabla_x \cdot b^{\ell} ) (s, \X^{\ell+1}_{\pm} (s;t,x,p)),
\end{split}
\Ee
and
\Be \label{dDTE^ell_2_spec}
\begin{split}
& \ \ \ \ \frac{d}{ds} \Big( \sqrt{(m_{\pm} c)^2 + |\P^{\ell+1}_{\pm} (s;t,x,p)|^2} + \frac{1}{c} \big( e_{\pm} \Phi (\X^{\ell+1}_{\pm} (s;t,x,p)) + m_{\pm} g \X^{\ell+1}_{\pm, 3} (s;t,x,p) \big) \Big) 
\\& \qquad = - \frac{e_{\pm}}{c} \nabla_x \Psi^{\ell} (s, \X^{\ell+1}_{\pm} (s;t,x,p)) \cdot \V^{\ell+1}_{\pm} (s;t,x,p).
\end{split}
\Ee 
Analogous to \eqref{w_bdry} and \eqref{w_initial}, we derive that
\begin{align}
\w^{\ell+1}_{\pm, \beta} (t,x,p) 
= e^{\beta \sqrt{(m_{\pm} c)^2 + |p|^2} } 
& \ \ \text{on} \ (t,x,p) \in \R_+ \times \p\O \times \R^3,
\label{w_bdry_spec} \\
\w^{\ell+1}_{\pm, \beta} (0,x,p) = \w_{\pm, \beta, 0} (x, p)
& \ \ \text{in} \ (x,p) \in \bar \O \times \R^3.
\label{w_initial_spec}
\end{align}

\smallskip

Finally, we define the following: for any $\ell \in \N$,
\be \label{def:Fell_spec}
F^{\ell+1}_{\pm} (t,x,p) = h_{\pm} (x,p) + f^{\ell+1}_{\pm} (t,x,p).
\ee
Under direct computation, we obtain that, for any $\ell \in \N$,
\be \label{eqtn:Fell_spec}
\begin{split}
& \p_t F^{\ell+1}_{\pm}  + v_\pm \cdot \nabla_x F^{\ell+1}_{\pm} + \Big( e_{\pm} \big( \frac{v_\pm}{c} \times B - \nabla_x ( \Phi + \Psi^{\ell} ) \big) - \nabla_x ( m_\pm g x_3) \Big) \cdot \nabla_p F^{\ell+1}_{\pm} 
\\& \ \ \ \ = 0 \ \ \text{in} \ \R_+ \times  \O \times \R^3,
\end{split}
\ee
and
\be \label{bdry_initial:Fell_spec}
F^{\ell+1}_{\pm} (0, x, p) = F_{\pm, 0} 
\ \ \text{in} \ \O \times \R^3,
\ \
F^{\ell+1}_{\pm} (t, x, p) = G_{\pm} (x,p) + \varepsilon F^{\ell}_{\pm} (t,x,\tilde{p})
\ \ \text{on} \ \gamma_-.
\ee
We also define 
\be \label{eqtn:phiFell_spec}
\phi_{F^{\ell}} (t, x) = \Phi (x) + \Psi^{\ell} (t, x),
\ee
which solves the following Poisson equation: 
\Be \label{Poisson_Fell_spec}
\begin{split}
- \Delta \phi_{F^{\ell}} (t, x)
= \int_{\R^3} ( e_+ F^{\ell}_+ + e_{-} F^{\ell}_{-} ) \dd p
\ \ \text{in} \ \R_+ \times \O,
\ \
\phi_{F^{\ell}} (t, x) = 0 \ \ \text{on} \ \R_+ \times \p\O.
\end{split}
\Ee

We remark that for any $\ell \in \N$, $F^{\ell+1}_{\pm}$ and $f^{\ell+1}_{\pm}$ share the same characteristic $\Z$ in \eqref{Z1_spec}, \eqref{ODE1_spec} and \eqref{Zell_spec}, \eqref{ODEell_spec}. Therefore, they share the same backward exit time, position, and momentum for every iteration.
Furthermore, one can check that Lemma \ref{lem:tB_ell} holds for the backward and forward exit time to the characteristics $\Z^{\ell+1}_{\pm} (s;t,x,p) = (\X^{\ell+1}_{\pm} (s;t,x,p), \P^{\ell+1}_{\pm} (s;t,x,p) )$ solving \eqref{eqtn:fell_spec}, and we omit the proof.

\subsubsection{Asymptotic Stability Criterion} \label{sec:AS_spec}

In this section, we always assume $f_{\pm} (t,x,p)$ is a Lagrangian solution to \eqref{eqtn:f_spec}-\eqref{Poisson_f_spec} for a given  $\nabla_p h$.
The main purpose of this section is to prove Theorem \ref{theo:AS_spec}, which shows a conditional asymptotic stability result of the dynamic perturbation. 

Further, we always suppose \eqref{Uest:DPhi_spec}, \eqref{Uest:DxPsi_spec} and \eqref{Uest:Dxphi_F_spec} hold, which are the same conditions as in the case of the dynamic problem only under the effect of the inflow boundary conditions \eqref{eqtn:f}-\eqref{Poisson_f}.
Since the characteristics in \eqref{ODE_F_spec} are the same as the characteristics in \eqref{ODE_F}, Lemmas \ref{lem:w/w} and \ref{lem:Ddb} hold for the the characteristics $\Z_{\pm} (s;t,x,p) = ( \X_{\pm} (s;t,x,p), \P_{\pm} (s;t,x,p) )$ solving \eqref{ODE_F_spec}, and we omit the proofs of these results.

From the boundary condition \eqref{f_0_spec}, together with \eqref{Lform:f}, the Lagrangian formulation of $f$ is 
\Be \label{form:f_spec}
f_{\pm} (t,x,p) 
= \mathcal{I}_{\pm} (t,x,p) 
+ \mathcal{N}_{\pm} (t,x,p)
+ \mathcal{S}_{\pm} (t,x,p), 
\Ee 
where 
\be \label{form:I_spec}
\mathcal{I}_{\pm} (t,x,p ) 
:= \mathbf{1}_{t \leq \tB (t,x,p)} f_{\pm} (0, \Z_{\pm} (0;t,x,p) ),  
\ee
and
\be \label{form:N_spec}
\mathcal{N}_{\pm} (t,x,p ) 	
:= \int^t_{ \max\{0, t - \tB (t,x,p)\}} e_{\pm} \nabla_x \Psi (s, \X_{\pm} (s;t,x,p)) \cdot \nabla_p h_{\pm} ( \Z_{\pm} (0;t,x,p) ) \dd s,
\ee
and
\be \label{form:S_spec}
\mathcal{S}_{\pm} (t,x,p ) 	
:= \mathbf{1}_{t > \tB (t,x,p)} \varepsilon f_{\pm} (t - \tBp (t,x,p), \xBp (t,x,p), \tpBp (t,x,p) ), 
\ee
where $\tpBp = (p_{\mathbf{B}, \pm, 1}, p_{\mathbf{B}, \pm, 2}, - p_{\mathbf{B}, \pm, 3})$.

\smallskip

We start with the estimates on $\mathcal{I}_{\pm} (t,x,p)$ and $\mathcal{N}_{\pm} (t,x,p)$.

\begin{prop} 
\label{prop:decay_spec}

Suppose the condition \eqref{Uest:DPhi_spec}, \eqref{Uest:DxPsi_spec} and \eqref{Uest:Dxphi_F_spec} hold. 
Let $\hat{m} = \min\{m_+, m_- \}$, we further assume that for $g, \beta>0$,
\be \label{Bootstrap_f_first_spec}
\sum\limits_{i = \pm} \sup_{0 \leq t < \infty } \| e^{ \frac{\beta}{2} \big( \sqrt{(m_i c)^2 + |p|^2} + \frac{m_i g}{2 c} x_3 \big) } f_i (t) \|_{\infty} 
\leq \frac{ \beta^2 \ln (2) }{ 8 \big( \pi \max \{ e_+, e_- \} \big)^2 (1 + \frac{4 c}{\hat{m} g \beta})}.
\ee	
Then 
\be \label{est:I_spec}
| \mathcal{I}_{\pm} (t,x,p) |
\leq e^{2 + \frac{m_{\pm} g}{24} \beta} \| \w_{\pm, \beta, 0 }   f_{\pm, 0 } \|_{L^\infty_{x,p}} 
e^{- \frac{m_{\pm} g}{24} \beta t } 
e^{ - \frac{\beta}{4} \sqrt{(m_{\pm} c)^2 + |p|^2} } e^{ - \frac{m_{\pm} g}{8 c} \beta x_3}.
\ee
Moreover, let $\lambda = \frac{g}{48} \hat{m} \beta$, $\nu = \frac{g}{16 c} \hat{m} \beta$. Assume that
\be \notag
\sup\limits_{s \in [0, t] } e^{ \lambda s} \| e^{ \nu x_3} \varrho (s) \|_{L^\infty(\O)} < \infty.
\ee 
Then
\be \label{est:N_spec}
\begin{split}
| \mathcal{N}_{\pm} (t,x,p) |
& \leq  e^{- \frac{\beta}{8} \sqrt{(m_{\pm} c)^2 + |p|^2} } e^{- \frac{m_{\pm} g}{16 c} \beta x_3} e^{ - \lambda t} 
\\& \ \ \ \ \times e^{ \lambda } 4 e_{\pm} \sup\limits_{s \in [0, t] } e^{ \lambda s} \| e^{ \nu x_3} \varrho (s) \|_{L^\infty(\O)} (1 + \frac{1}{\nu}) \| w_{\pm, \beta} \nabla_p h_{\pm} \|_{\infty}.
\end{split}
\ee
\end{prop}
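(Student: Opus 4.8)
\textbf{Proof proposal for Proposition \ref{prop:decay_spec}.}

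The plan is to reduce everything to Proposition \ref{prop:decay} (the analogous statement in the pure-inflow setting) by observing that neither $\mathcal{I}_{\pm}$ nor $\mathcal{N}_{\pm}$ "sees" the boundary condition. Indeed, the specular term $\mathcal{S}_{\pm}$ in the Duhamel decomposition \eqref{form:f_spec} is entirely decoupled from $\mathcal{I}_{\pm}$ in \eqref{form:I_spec} and $\mathcal{N}_{\pm}$ in \eqref{form:N_spec}: the former depends only on the initial data $f_{\pm,0}$ evaluated along the backward characteristic, while the latter depends only on $\nabla_x\Psi$, $\nabla_p h_{\pm}$, and the characteristic $\Z_{\pm}$ solving \eqref{ODE_F_spec}. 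Since \eqref{ODE_F_spec} is identical to \eqref{ODE_F}, all the characteristic-level estimates from Section \ref{sec:char}—in particular Proposition \ref{lem:tB} (giving \eqref{est:tB}), Corollary \ref{cor:max_X_dy}, and Lemma \ref{lem:w/w} (giving \eqref{est:1/w_h} and \eqref{est:1/w})—carry over verbatim. Likewise Lemma \ref{lem:Ddb} applies, so under the smallness hypothesis \eqref{Bootstrap_f_first_spec} (which is literally \eqref{Bootstrap_f_first}) we get the bound $\beta\frac{e_{\pm}}{c}\|(-\Delta_0)^{-1}(\nabla_x\cdot b)\|_{L^\infty_{t,x}}\le \ln(2)$, exactly as in \eqref{est1:D^-1 Db}.

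First I would establish \eqref{est:I_spec}. Starting from \eqref{form:I_spec}, bound $|\mathcal{I}_{\pm}(t,x,p)|\le \mathbf{1}_{t\le\tBp(t,x,p)}\,\|\w_{\pm,\beta,0}f_{\pm,0}\|_{L^\infty_{x,p}}\,\big(\w_{\pm,\beta}(0,\Z_{\pm}(0;t,x,p))\big)^{-1}$. Using Corollary \ref{cor:max_X_dy} to control the indicator $\mathbf{1}_{t\le\tBp}$ by an exponential decay in $t$ as in \eqref{est:1_t<tB}, the second bound of \eqref{est:1/w_h} together with $\beta\frac{e_{\pm}}{c}\|(-\Delta_0)^{-1}(\nabla_x\cdot b)\|_{L^\infty_{t,x}}\le\ln 2$ (so $e^{2\beta\frac{e_{\pm}}{c}\|\cdots\|}\le 4$), and the assumption \eqref{Uest:DxPsi_spec}$=$\eqref{Uest:DxPsi}, the computation is word-for-word \eqref{bound:tB/w}. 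This yields \eqref{est:I_spec} with the correct constant $e^{2+\frac{m_{\pm}g}{24}\beta}$ (absorbing the $4$).

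Next I would establish \eqref{est:N_spec}, again mirroring the second half of the proof of Proposition \ref{prop:decay}. From \eqref{form:N_spec}, use \eqref{est:1/w} (with the factor $e^{2\beta\frac{e_{\pm}}{c}\|\cdots\|}\le 4$) to bound $|\nabla_p h_{\pm}(\Z_{\pm}(s;t,x,p))|$ by $4\,e^{-\frac{\beta}{4}\sqrt{(m_{\pm}c)^2+|p|^2}}e^{-\frac{m_{\pm}g}{8c}\beta x_3}\|w_{\pm,\beta}\nabla_p h_{\pm}\|_\infty$; use the hypothesis on $\varrho$ together with Lemma \ref{lem:rho_to_phi} applied to \eqref{Poisson_f_spec} to bound $|\nabla_x\Psi(s,\cdot)|$ by $\big(\sup_{s\in[0,t]}e^{\lambda s}\|e^{\nu x_3}\varrho(s)\|_{L^\infty}\big)e^{-\lambda s}(1+\tfrac1\nu)$; then the time integral $\int^t_{\max\{0,t-\tBp\}}e^{-\lambda s}\,\mathrm ds \le \tBp(t,x,p)\,e^{-\lambda(t-\tBp(t,x,p))}$, and \eqref{est:tB} from Proposition \ref{lem:tB} lets one absorb the polynomial factor $\tBp\,e^{\lambda\tBp}$ into $e^{\lambda}$ after splitting off a quarter of the Gaussian weight, exactly as in \eqref{est1:N}--\eqref{est2:N}. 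This gives \eqref{est:N_spec}.

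The main obstacle here is essentially bookkeeping rather than mathematics: one must verify that the specular boundary condition \eqref{bdry:f_spec} truly does not enter the definitions of $\mathcal{I}_{\pm}$ and $\mathcal{N}_{\pm}$—which it does not, since those two pieces only involve the behavior of $f$ between bounces (the free-transport and self-consistent-field contributions), with the specular reflection fully quarantined in $\mathcal{S}_{\pm}$—and that every hypothesis of Proposition \ref{prop:decay} is implied by the hypotheses stated here. Granting that, the proposition follows by literally invoking the proof of Proposition \ref{prop:decay}, so I would simply write: \emph{the bounds \eqref{est:I_spec} and \eqref{est:N_spec} follow exactly as in the proof of Proposition \ref{prop:decay}, since $\mathcal{I}_{\pm}$ and $\mathcal{N}_{\pm}$ are unaffected by the change of boundary condition and all the underlying characteristic and elliptic estimates (Propositions \ref{lem:tB}, \ref{lem2:tB}, Corollary \ref{cor:max_X_dy}, Lemmas \ref{lem:w/w}, \ref{lem:Ddb}, \ref{lem:rho_to_phi}) apply verbatim.}
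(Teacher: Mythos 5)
Your proposal is correct and matches the paper's approach: the paper simply states that the result follows directly from Proposition \ref{prop:decay}, which is exactly the reduction you carry out, noting that $\mathcal{I}_{\pm}$ and $\mathcal{N}_{\pm}$ are untouched by the specular boundary condition and that the characteristic and elliptic estimates apply verbatim. Your write-up just makes explicit the bookkeeping the paper leaves implicit.
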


\begin{proof} 

We omit the proof, since it follows directly from Proposition \ref{prop:decay}.
\end{proof}

Now we are ready to prove the main result of this section.

\begin{theorem}[Asymptotic Stability Criterion] \label{theo:AS_spec}
 
Suppose $(h, \Phi)$ solves \eqref{VP_h_spec}-\eqref{eqtn:Dphi_spec} in the sense of Definition \ref{weak_sol_spec}. 
Suppose $(f, \varrho, \Psi)$ is a solution to \eqref{eqtn:f_spec}-\eqref{Poisson_f_spec}.
Set $\hat{m} = \min\{m_+, m_- \}$, we assume the following conditions hold: for $g, \beta>0$,
\be \label{Bootstrap_f_spec}
\sum\limits_{i = \pm} \sup_{0 \leq t < \infty } \| e^{ \frac{\beta}{2} \big( \sqrt{(m_i c)^2 + |p|^2} + \frac{m_i g}{2 c} x_3 \big) } f_i (t) \|_{\infty} 
\leq \frac{ \beta^2 \ln (2) }{ 8 \big( \pi \max \{ e_+, e_- \} \big)^2 (1 + \frac{4 c}{\hat{m} g \beta})},
\ee	
and	
\be \label{Bootstrap_spec} 
\begin{split}
\sup_{0 \leq t < \infty}\| \nabla_x \big( \Phi + \Psi (t) \big) \|_{L^\infty(\O)}  
& \leq \min \left\{\frac{m_+}{e_+}, \frac{m_-}{e_-} \right\} \times \frac{g}{2},
\\ \sup_{0 \leq t < \infty} \| \nabla_x \Psi (t) \|_{L^\infty(\O)} & \leq \min \left\{\frac{m_+}{e_+}, \frac{m_-}{e_-} \right\} \times \frac{g}{48}.
\end{split}
\ee
Set $\lambda = \frac{g}{48} \hat{m} \beta$, $\nu = \frac{g}{16 c} \hat{m} \beta$, we further assume that
\Be \label{condition:Dvh_spec}
8 e^{ \lambda } (1 + \frac{1}{\nu}) \big( e^2_+ \| w_{+,\beta} \nabla_p h_+ \|_{\infty} + e^2_- \| w_{-,\beta} \nabla_p h_- \|_{\infty} \big) \frac{512}{\beta^3} \leq \frac{1}{2},
\Ee
and suppose there exists $\tpx > 1$ such that 
\be \label{condition2:f_support_spec} 
f_{\pm} (t, x, p) = 0
\ \text{ for any } \
\frac{4 c}{m_{\pm} g} |p| + 3 x_{3} \geq \tpx,
\ee
and $\w_{\pm, \beta, 0} (x, p)$ defined in \eqref{W_t=0} satisfies
\be \notag 
\|\w_{\pm, \beta, 0} f_{\pm, 0} \|_{L^\infty (\O \times \R^3)} <\infty.
\ee
Finally, suppose $\varepsilon$ in the boundary condition \eqref{bdry:f_spec} satisfies that	
\be \label{condition2:tilde_epilon_spec}
2 \varepsilon 
< \exp \Big\{ - \lambda \Big( \big( \frac{1}{\min \{ \frac{g}{4 \sqrt{2}}, \frac{c}{\sqrt{10}} \}} + 1 \big) \times \tpx + 1 \Big) \Big\}.
\ee
Then, $(f(t), \varrho(t), \Psi(t))$ is bounded by
\Be \label{decay_f_spec}
\begin{split}
& \sup_{0 \leq t < \infty} e^{ \lambda t} \|  e^{\frac{\beta}{8} \sqrt{(m_{\pm} c)^2 + |p|^2} + \frac{m_{\pm} g}{16 c} \beta x_3} f_{\pm} (t,x,p) \|_{L^\infty (\O \times \R^3)}
\\& \leq 3 e^2 \big( e^{\frac{m_+ g}{24} \beta} \| \w_{+,\beta, 0 } f_{+, 0 } \|_{L^\infty_{x,p}}
+ e^{\frac{m_- g}{24} \beta} \| \w_{-,\beta, 0 } f_{-, 0 } \|_{L^\infty_{x,p}} \big).
\end{split}
\Ee
and
\begin{align}
& \sup_{0 \leq t < \infty} e^{ \lambda t} \| e^{ \nu x_3} \varrho (t, x)\|_{L^\infty(\O)}
\leq \sum\limits_{i = \pm} e_i e^{2 + \frac{m_i g}{24} \beta} \| \w_{i, \beta, 0 } f_{i, 0 } \|_{L^\infty_{x,p}} \frac{1024}{\beta^3}, 
\label{decay:varrho_spec} \\
& \sup_{0 \leq t < \infty} e^{ \lambda t} \|\nabla_x \Psi(t)\|_{L^\infty (\O)}
\lesssim \sum\limits_{i = \pm} e_i e^{2 + \frac{m_i g}{24} \beta} \| \w_{i, \beta, 0 } f_{i, 0 } \|_{L^\infty_{x,p}} \frac{1024}{\beta^3}. 
\label{decay:psi_spec}
\end{align}
Moreover, the flux $b (t, x)$ defined in \eqref{def:flux} is bounded by
\Be \label{Uest:D^-1_Db_spec}
\sup_{0 \leq t < \infty} \| (-\Delta_0)^{-1}  (\nabla_x \cdot b  (t,x)) \|_{L^\infty (\O)} 
\lesssim \frac{ c }{\max \{ e_+, e_- \} \beta}.
\Ee
\end{theorem}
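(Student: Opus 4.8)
The plan is to follow the structure of the proof of Theorem \ref{theo:AS} essentially verbatim, with two modifications forced by the specular boundary term $\mathcal{S}_{\pm}$ in the Lagrangian representation \eqref{form:f_spec}. First I would observe that the decomposition $f_\pm = \mathcal{I}_\pm + \mathcal{N}_\pm + \mathcal{S}_\pm$ holds, and that Proposition \ref{prop:decay_spec} already controls $\mathcal{I}_\pm$ and $\mathcal{N}_\pm$ exactly as in the pure-inflow case (since the characteristics are unchanged, so is Lemma \ref{lem:w/w} and the exit-time control). The only genuinely new object is $\mathcal{S}_\pm(t,x,p) = \mathbf{1}_{t > \tBp} \varepsilon f_\pm(t - \tBp, \xBp, \tpBp)$. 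The key point is that $|\tpBp| = |\pBp|$, so the weighted norm at the bounce point equals the weighted norm at $(t-\tBp, \xBp)$; combined with the compact-support hypothesis \eqref{condition2:f_support_spec}, the bounce occurs within a bounded strip, so $\tBp$ (the time back to the bounce) is bounded by $\big(\frac{1}{\min\{g/(4\sqrt2),c/\sqrt{10}\}}+1\big)\tpx + 1$ via Corollary \ref{cor:max_X_dy} / Lemma \ref{lem:tB_ell}. This is precisely the quantity appearing in the exponent of \eqref{condition2:tilde_epilon_spec}.

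Next I would set up the bootstrap for the decay. Define $N(t) := \sup_{s\in[0,t]} e^{\lambda s}\|e^{\nu x_3}\varrho(s)\|_{L^\infty(\O)}$ and run the same estimate as in \eqref{est:varrho}--\eqref{est2:varrho}: integrating $|\varrho| \le e_+\int|f_+| + e_-\int|f_-|$ against the Jüttner weight, the contributions of $\mathcal{I}$ and $\mathcal{N}$ reproduce the two terms in \eqref{decay:varrho_spec} (the factor $1024$ rather than $128$ coming from the strengthened smallness condition \eqref{condition:Dvh_spec} with constant $8$ instead of $4$, which leaves a slack of $\tfrac14$ rather than $\tfrac12$ to absorb the new term). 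For $\mathcal{S}_\pm$, I would use $e^{\frac{\beta}{8}\sqrt{(mc)^2+|p|^2}+\frac{mg}{16c}\beta x_3}|\mathcal{S}_\pm(t,x,p)| \le \varepsilon\, e^{\lambda \tBp}\cdot e^{-\lambda t}\cdot \sup_{s\le t} e^{\lambda s}\big(\text{weighted }f_\pm\big)$, where I invoke the already-decaying bound on $f_\pm$ itself (either bootstrapping the $L^\infty$ decay of $f$ jointly with $\varrho$, or — cleaner — proving \eqref{decay_f_spec} first and feeding it in). Using \eqref{condition2:tilde_epilon_spec} gives $\varepsilon e^{\lambda\tBp} \le \tfrac12$, so the $\mathcal{S}$-term is absorbed with a coefficient $\tfrac12$ on the right, closing the Grönwall-type inequality $N(t) \le C_0 + \tfrac12 N(t)$ and yielding \eqref{decay:varrho_spec}. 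Then \eqref{decay:psi_spec} follows from \eqref{est:nabla_phi} in Lemma \ref{lem:rho_to_phi} applied to \eqref{Poisson_f_spec}, and \eqref{Uest:D^-1_Db_spec} follows from \eqref{est:D^-1_Db} in Lemma \ref{lem:Ddb} exactly as \eqref{Uest:D^-1_Db} was deduced from \eqref{decay_f}.

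For \eqref{decay_f_spec} itself, I would substitute \eqref{decay:varrho_spec} back into \eqref{est:I_spec} and \eqref{est:N_spec}, handle $\mathcal{S}_\pm$ by the bound just described, and sum the three contributions: $\mathcal{I}$ gives the leading $e^2 e^{mg\beta/24}\|\w f_0\|$ term, $\mathcal{N}$ contributes another such term (absorbed via \eqref{condition:Dvh_spec}), and $\mathcal{S}$ contributes a third, so the constant $3e^2$ in \eqref{decay_f_spec} replaces the $2e^2$ of \eqref{decay_f}. One has to be slightly careful to run this as a simultaneous fixed-point argument for $\sup_t e^{\lambda t}\|e^{\frac{\beta}{8}\cdot}f\|$ and $N(t)$, since $\mathcal{S}$ couples $f$ to itself; the contraction constant stays $\le \tfrac12$ because $\varepsilon e^{\lambda\tBp}\le\tfrac12$ and the $\mathcal{N}$-coefficient is $\le\tfrac12$ by \eqref{condition:Dvh_spec}.

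The main obstacle I anticipate is the self-referential nature of the specular term: unlike the pure-inflow case where $f$ at the boundary is zero, here $\mathcal{S}_\pm$ expresses $f$ in terms of $f$ at an earlier time, so the asymptotic-stability estimate cannot be closed term-by-term but must be phrased as a single Grönwall/fixed-point inequality in the decaying norm. The compact-support hypothesis \eqref{condition2:f_support_spec} is what makes $\tBp$ (hence $\varepsilon e^{\lambda\tBp}$) uniformly bounded — without it the bounce time could grow and the geometric series $\sum \varepsilon^k e^{\lambda\sum\tBp}$ would diverge — so I would want to state clearly at the outset that under \eqref{condition2:f_support_spec} every backward characteristic returns to the boundary within the fixed strip $\{\frac{4c}{mg}|p|+3x_3 < \tpx\}$, invoke Lemma \ref{lem:tB_ell}/Corollary \ref{cor:max_X_dy} for the exit-time bound, and only then run the estimates. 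Everything else is a routine transcription of Section \ref{sec:AS}.
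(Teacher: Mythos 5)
Your proposal is correct and follows essentially the same route as the paper's proof: the paper likewise treats $\mathcal{S}_\pm$ by combining the compact-support hypothesis \eqref{condition2:f_support_spec} with Corollary \ref{cor:max_X_dy} to bound $\tBp$, uses \eqref{condition2:tilde_epilon_spec} to get $\varepsilon e^{\lambda \tBp}\le \tfrac12$, absorbs the resulting self-referential term into the weighted sup of $f$, and then closes the coupled inequality for $f$ and $\varrho$ via \eqref{condition:Dvh_spec}, exactly as you describe. Your accounting of the constants ($1024$ vs.\ $128$, the factor $8$ in \eqref{condition:Dvh_spec}, and $3e^2$ vs.\ $2e^2$) also matches the paper.
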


\begin{proof}

We abuse the notation as in \eqref{abuse} in the proof.
From \eqref{form:f_spec}, together with \eqref{est:I_spec} and \eqref{est:N_spec} in Proposition \ref{prop:decay_spec}, we derive for any $(t,x,p) \in [0, \infty) \times  \bar\O \times \R^3$,
\be \label{est2:varrho_spec}
\begin{split}
& e^{ \lambda t} | f (t,x,p) |
\\& \leq e^{ \lambda t} \big( | \mathcal{I} (t,x,p) | + | \mathcal{N} (t,x,p) | + | \mathcal{S} (t,x,p) | \big)
\\& \leq e^{2 + \frac{m g}{24} \beta} \| \w_{\beta, 0 }   f_{0 } \|_{L^\infty_{x,p}} 
e^{ - \frac{\beta}{4} \sqrt{(m c)^2 + |p|^2} } e^{ - \frac{m g}{8 c} \beta x_3} 
\\& \ \ \ \ + 4 e^{ \lambda } (1 + \frac{1}{\nu}) e_{\pm} \| w_{\beta} \nabla_p h \|_{\infty} e^{- \frac{\beta}{8} \sqrt{(m c)^2 + |p|^2} } e^{- \frac{m g}{16 c} \beta x_3} \times \sup\limits_{s \in [0, t] } e^{ \lambda s} \| e^{ \nu x_3} \varrho (s) \|_{L^\infty(\O)} 
\\& \ \ \ \ + \mathbf{1}_{t > \tB (t,x,p)} 
\underbrace{
\varepsilon e^{ \lambda \tB} | e^{ \lambda (t - \tB (t,x,p)} f (t - \tB (t,x,p), \xB (t,x,p), \tpB (t,x,p) ) |
}_{\eqref{est2:varrho_spec}_*}.
\end{split}
\ee
From Corollary \ref{cor:max_X_dy} and the condition \eqref{condition2:f_support_spec}, then
\Be \notag
\begin{split}
\tBp (t,x,p) 
& \leq \Big( \frac{1}{\min \{ \frac{g}{4 \sqrt{2}}, \frac{c}{\sqrt{10}} \}} + 1 \Big) \times \tpx + 1
\ \text{ for any } \
\frac{4 c}{m_{\pm} g} |p| + 3 x_{3} < \tpx.
\end{split}
\Ee
Together with the condition \eqref{condition2:tilde_epilon_spec}, this implies that for any $\frac{4 c}{m_{\pm} g} |p| + 3 x_{3} < \tpx$,
\be \label{est6:varrho_spec}
\begin{split}
\eqref{est2:varrho_spec}_*
& \leq 
\varepsilon e^{ \lambda \tB} \sup_{0 \leq t < \infty} e^{ \lambda t} \| e^{\frac{\beta}{8} \sqrt{(m c)^2 + |p|^2} + \frac{m g}{16 c} \beta x_3} f (t,x,p) \|_{L^\infty (\O \times \R^3)} e^{- \frac{\beta}{8} \sqrt{(m c)^2 + |p|^2} } e^{- \frac{m g}{16 c} \beta x_3}
\\& \leq \frac{1}{2} \sup_{0 \leq t < \infty} e^{ \lambda t} \| e^{\frac{\beta}{8} \sqrt{(m c)^2 + |p|^2} + \frac{m g}{16 c} \beta x_3} f (t,x,p) \|_{L^\infty (\O \times \R^3)} e^{- \frac{\beta}{8} \sqrt{(m c)^2 + |p|^2} } e^{- \frac{m g}{16 c} \beta x_3}.
\end{split}
\ee
Inputting \eqref{est6:varrho_spec} into \eqref{est2:varrho_spec}, together with \eqref{condition2:f_support_spec}, we derive 
\be \label{est3:varrho_spec}
\begin{split}
& \sup_{0 \leq t < \infty} e^{ \lambda t} \|  e^{\frac{\beta}{8} \sqrt{(m c)^2 + |p|^2} + \frac{m g}{16 c} \beta x_3} f (t,x,p) \|_{L^\infty (\O \times \R^3)}
\\& \leq 2 e^{2 + \frac{m g}{24} \beta} \| \w_{\beta, 0 }   f_{0 } \|_{L^\infty_{x,p}}  
+ \underbrace{
8 e^{ \lambda } (1 + \frac{1}{\nu}) e_{\pm} \| w_{\beta} \nabla_p h \|_{\infty}
}_{\eqref{est3:varrho_spec}_*}
\times \sup\limits_{s \in [0, t] } e^{ \lambda s} \| e^{ \nu x_3} \varrho (s) \|_{L^\infty(\O)}.
\end{split}
\ee

On the other hand, from \eqref{def:varrho_spec} we obtain for any $(t, x) \in [0, \infty) \times \bar\O$,
\be \notag
| \varrho(t,x) |
\leq e_+ \int_{\R^3} | f_+ (t,x,p) | \dd p + e_- \int_{\R^3} | f_{-} (t,x,p) | \dd p.
\ee
This, together with \eqref{est2:varrho_spec}, shows that for any $(t, x) \in [0, \infty) \times \bar\O$,
\Be \label{est:varrho_spec}
\begin{split}
& e^{ \lambda t} e^{ \nu x_3} |\varrho (t,x)|
\\& \leq e^{ \lambda t} e^{ \nu x_3}
\sum\limits_{i = \pm} e_i \int_{\R^3} | f_i (t,x,p) | \dd p   
\\& \leq \sum\limits_{i = \pm} e_i 
 \int_{\R^3} \frac{e^{ \nu x_3}}{e^{\frac{\beta}{8} \sqrt{(m_i c)^2 + |p|^2} + \frac{m_i g}{16 c} \beta x_3}} \dd p 
\times \sup_{0 \leq t < \infty} e^{ \lambda t} \|  e^{\frac{\beta}{8} \sqrt{(m_i c)^2 + |p|^2} + \frac{m_i g}{16 c} \beta x_3} f_i (t,x,p) \|_{L^\infty (\O \times \R^3)}
\\& \leq \sum\limits_{i = \pm} e_i \frac{512}{\beta^3} 
\big( e^{2 + \frac{m_i g}{24} \beta} \| \w_{i, \beta, 0 } f_{i, 0 } \|_{L^\infty_{x,p}} + \eqref{est3:varrho_spec}_* \times \sup\limits_{s \in [0, t] } e^{ \lambda s} \| e^{ \nu x_3} \varrho (s) \|_{L^\infty(\O)} \big)
\\& \leq \sum\limits_{i = \pm} e_i \frac{512}{\beta^3} e^{2 + \frac{m_i g}{24} \beta} \| \w_{i, \beta, 0 } f_{i, 0 } \|_{L^\infty_{x,p}} + \sum\limits_{i = \pm} e_i \frac{512}{\beta^3} \eqref{est3:varrho_spec}_* \times \sup\limits_{s \in [0, t] } e^{ \lambda s} \| e^{ \nu x_3} \varrho (s) \|_{L^\infty(\O)}.
\end{split}	
\Ee
From \eqref{est:varrho_spec} and the condition \eqref{condition:Dvh_spec}, we have
\be \label{est4:varrho_spec}
\sup\limits_{s \in [0, t] } e^{ \lambda s} \| e^{ \nu x_3} \varrho (s) \|_{L^\infty(\O)}
\leq \frac{1024}{\beta^3} \sum\limits_{i = \pm} e_i e^{2 + \frac{m_i g}{24} \beta} \| \w_{i, \beta, 0 } f_{i, 0 } \|_{L^\infty_{x,p}}.
\ee
Inputting \eqref{est4:varrho_spec} into \eqref{est3:varrho_spec}, we derive 
\be \label{est5:varrho_spec}
\begin{split}
& \sup_{0 \leq t < \infty} e^{ \lambda t} \|  e^{\frac{\beta}{8} \sqrt{(m c)^2 + |p|^2} + \frac{m g}{16 c} \beta x_3} f (t,x,p) \|_{L^\infty (\O \times \R^3)}
\\& \leq 2 e^{2 + \frac{m g}{24} \beta} \| \w_{\beta, 0 }   f_{0 } \|_{L^\infty_{x,p}}  
+ \sum\limits_{i = \pm} e^{2 + \frac{m_i g}{24} \beta} \| \w_{i, \beta, 0 } f_{i, 0 } \|_{L^\infty_{x,p}}.
\end{split}
\ee
Hence, we conclude \eqref{decay_f_spec} and \eqref{decay:varrho_spec}.
Applying \eqref{decay:varrho_spec} in \eqref{est:nabla_phi} of Lemma \ref{lem:rho_to_phi}, we obtain \eqref{decay:psi_spec}.
Finally, using \eqref{decay_f_spec}, together with \eqref{est:D^-1_Db} in Lemma \ref{lem:Ddb}, we conclude \eqref{Uest:D^-1_Db_spec}.
\end{proof}

\subsubsection{A Priori Estimate} 
\label{sec:RD_spec} 

In this section, we establish a priori estimate of $(F, \phi_F)$ solving \eqref{VP_F}, \eqref{Poisson_F}, \eqref{VP_0}, \eqref{Dbc:F} and \eqref{bdry:F_spec}.
Recall from \eqref{def:F}, $F(t,x,p)= h(x,p)+f(t,x,p)$ where $(h, \rho, \Phi)$ solves \eqref{VP_h_spec}-\eqref{eqtn:Dphi_spec} and $(f, \varrho, \Psi)$ solves \eqref{eqtn:f_spec}-\eqref{Poisson_f_spec}, respectively.

Throughout this section, we assume a compatibility condition:
\Be \label{CC_F0=G_spec}
F_{\pm, 0} (x, p) = G_{\pm} (x, p) + \varepsilon F_{\pm, 0} (x, \tilde{p})
\ \ \text{on} \ (x, p) \in \gamma_-,
\Ee
where $\tilde{p} = (p_1, p_2, - p_3)$.
Further, we always suppose \eqref{Bootstrap_spec} holds, which is analogous to the condition \eqref{Bootstrap} in the case of the dynamic problem only under the effect of the inflow boundary conditions \eqref{eqtn:f}-\eqref{Poisson_f}.

Suppose the assumption \eqref{Bootstrap_spec} holds. 
Since the characteristics in \eqref{ODE_F_spec} are the same as the characteristics in \eqref{ODE_F}, Lemma \ref{VL:dyn} and \eqref{est:X_x:dyn}-\eqref{est:vb_v:dyn} hold for the the characteristics $\Z_{\pm} (s;t,x,p) = ( \X_{\pm} (s;t,x,p), \P_{\pm} (s;t,x,p) )$ solving \eqref{ODE_F_spec}, and we omit the proof.
For the reader's convenience, we refer to Section \ref{sec:RD} and collect some basic properties and estimates on $F_{\pm} (t,x,p)$ under the mixture of inflow boundary and specular boundary conditions \eqref{bdry:F_spec} as follows.

\smallskip

(a)
From \eqref{VP_0} and \eqref{bdry:F_spec}, we rewrite $F_{\pm} (t,x,p)$ as
\Be \label{form:F_spec}
\begin{split} 
F_{\pm} (t,x,p) 
& = \mathbf{1}_{t \leq \tBp (t,x,p) } F_{\pm, 0} (\X_{\pm} (0;t,x,p), \P_{\pm} (0;t,x,p))
\\& \ \ \ \ + \mathbf{1}_{t > \tBp (t,x,p) }  G_{\pm} ( \xBp (t,x,p), \pBp (t,x,p))
\\& \ \ \ \ + \mathbf{1}_{t > \tBp (t,x,p) } \varepsilon F_{\pm} (t - \tBp (t,x,p), \xBp (t,x,p), \tpBp (t,x,p)),
\end{split}
\Ee
where $\tpBp = (p_{\mathbf{B}, \pm, 1}, p_{\mathbf{B}, \pm, 2}, - p_{\mathbf{B}, \pm, 3})$.
Therefore, the partial derivatives of $F_\pm (t,x,p)$ follow
\Be \label{DF_xv_spec}
\begin{split}
& \p_{x_i, p_i} F_{\pm} (t,x,p) 
\\& = \mathbf{1}_{t \leq \tBp (t,x,p) }  
\{ \p_{x_i, p_i} \X_{\pm} (0;t,x,p) \cdot \nabla_x F_{\pm, 0} (\Z_{\pm} (0;t,x,p) ) 
\\& \qquad \qquad \qquad \qquad + \p_{x_i, p_i} \P_{\pm} (0;t,x,p) \cdot \nabla_p F_{\pm, 0} (\Z_{\pm} (0;t,x,p) ) \}
\\& \ \ \ \ + \mathbf{1}_{t >\tBp (t,x,p) }  
\{ \p_{x_i, p_i} \xBp \cdot \nabla_{x_\parallel} G_{\pm} (\xBp (t,x,p), \pBp (t,x,p))
\\& \qquad \qquad \qquad \qquad \quad + \p_{x_i, p_i} \pBp \cdot \nabla_p G_{\pm} (\xBp (t,x,p), \pBp (t,x,p)) \}
\\& \ \ \ \ + \mathbf{1}_{t >\tBp (t,x,p) } \varepsilon 
\{ - \p_{x_i, p_i} \tBp \times \p_{t} F_{\pm} (t - \tBp (t,x,p), \xBp (t,x,p), \tpBp (t,x,p))
\\& \qquad \qquad \qquad \qquad \qquad + \p_{x_i, p_i} \xBp \cdot \nabla_{x_\parallel} F_{\pm} (t - \tBp (t,x,p), \xBp (t,x,p), \tpBp (t,x,p))
\\& \qquad \qquad \qquad \qquad \qquad + \p_{x_i, p_i} \tpBp \cdot \nabla_p F_{\pm} (t - \tBp (t,x,p), \xBp (t,x,p), \tpBp (t,x,p)) \},
\end{split}
\Ee
where the characteristics $\Z_{\pm} (s;t,x,p) = (\X_{\pm} (s;t,x,p), \P_{\pm} (s;t,x,p))$ solves \eqref{ODE_F_spec}.

\smallskip

(b)
We denote $\| \nabla_x^2\phi_{F} \|_\infty := \sup\limits_{s \in [t-\tB (t,x,p),t]} \| \nabla_x^2\phi_{F} (s, x)  \|_{L^{\infty}(\bar{\O})}$. 
Analogous to \eqref{est:X_x:dyn}-\eqref{est:V_v:dyn}, we conclude that
\begin{align}
|\nabla_{x} \X_{\pm} (s;t,x,p)| & \leq \min \big\{  e^{\frac{|t-s|^2 + 2|t-s| }{2} (\| \nabla_x^2 \phi_F  \|_\infty + e_{\pm} B_3 + m_{\pm} g ) }, e^{ (1 + B_3 + \| \nabla_x ^2 \phi_F  \|_\infty) |t-s|} \big\}, 
\label{est:X_x:dyn_spec} \\
|\nabla_{x} \P_{\pm} (s;t,x,p)| & \leq \min \big\{ |t-s| ( B_3 + \| \nabla_x ^2 \phi_F \|_\infty) e^{ (1+ B_3 + \| \nabla_x ^2 \phi_F \|_\infty) |t-s| },
\notag \\
& \qquad \qquad \qquad \qquad \qquad e^{ (1+ B_3 + \| \nabla_x ^2 \phi_F \|_\infty) |t-s|}
\big\}, 
\label{est:V_x:dyn_spec} \\
|\nabla_{p} \X_{\pm} (s;t,x,p)| &\leq   \min \big\{ |t-s| e^{ (1 + B_3 + \| \nabla_x ^2 \phi_F  \|_\infty) |t-s|},
e^{ (1 + B_3 + \| \nabla_x ^2 \phi_F  \|_\infty) |t-s|}
\big\}, 
\label{est:X_v:dyn_spec} \\
|\nabla_{p} \P_{\pm} (s;t,x,p)| & \leq  \min \big\{ 1 + |t-s| (B_3 + \| \nabla_x ^2 \phi_F \|_{\infty}) e^{(1 + B_3 + \|\nabla_x^2 \phi_F \|_\infty) |t-s|},
\notag \\
& \qquad \qquad \qquad \qquad \qquad e^{ (1 + B_3 + \| \nabla_x ^2 \phi_F  \|_\infty) |t-s|}
\big\}.
\label{est:V_v:dyn_spec}
\end{align}

\smallskip

(c)
From the proof of Lemma \ref{lem:exp_txvb}, we get
\Be \notag
\begin{split}
& - \p_{x_i} \tBp (t,x,p) \vBpn (x,p) + \p_{x_i} X_{\pm, 3} (t - \tBp (t,x,p); t,x,p ) = 0,
\\& - \p_{p_i} \tBp (t,x,p) \vBpn (x,p) + \p_{p_i} X_{\pm, 3} (t - \tBp (t,x,p); t,x,p ) = 0.
\end{split}
\ee
Together with Lemma \ref{lem:nabla_zb}, this shows that
\Be \label{est:tb_x:dyn_spec}
\begin{split}
& | \p_{x_i} \tBp (t,x,p) | 
\\& \leq \frac{1}{ | \vBpn (t,x,p) | }  
\Big( \delta_{i3} + \frac{|\tBp (t,x,p)|^2}{2} ( \| \nabla_x^2 \phi_F \|_\infty + e_{\pm} B_3 + m_{\pm} g ) \Big) e^{ (1 + B_3 + \| \nabla_x ^2 \phi_F  \|_\infty) |\tBp| },
\end{split}
\Ee
\Be \label{est:tb_v:dyn_spec}
\begin{split}
& |\p_{p_i} \tBp (t,x,p)| 
\leq \frac{|\tBp (t,x,p)|  }{|\vBpn (t,x,p)| \sqrt{ (m_{\pm})^2 + |p|^2/ c^2}}  
\\& \ \ \ \ + \frac{1}{|\vBpn (t,x,p)|}
\frac{|\tBp (t,x,p)|^2}{2} ( \| \nabla_x^2 \phi_F  \|_\infty + e_{\pm} B_3 + m_{\pm} g ) e^{ (1 + B_3 + \| \nabla_x ^2 \phi_F  \|_\infty) |\tBp| }.
\end{split}
\Ee

\smallskip

(d)
Furthermore, analogous to \eqref{est:xb_x:dyn}-\eqref{est:vb_v:dyn}, we conclude that
\Be \label{est:xb_x:dyn_spec}
\begin{split}
& | \p_{x_i} \xBp (t,x,p) | 
\leq \frac{ | \vBp (t,x,p) | }{ | \vBpn (t,x,p) | }  
 \delta_{i3} 
\\& \ \ \ \ + \Big(1 +  \frac{|\vBp (t,x,p)|}{|\vBpn (t,x,p)|}
\frac{|\tBp (t,x,p)|^2}{2} ( \| \nabla_x^2 \phi_F \|_\infty + e_{\pm} B_3 + m_{\pm} g ) \Big) e^{ (1 + B_3 + \| \nabla_x ^2 \phi_F  \|_\infty) |\tBp| },
\end{split}
\Ee
\Be \label{est:xb_v:dyn_spec}
\begin{split}
& |\p_{p_i} \xBp (t,x,p)| 
\leq \frac{|\vBp (t,x,p)| |\tBp (t,x,p)|  }{|\vBpn (t,x,p)| \sqrt{ (m_{\pm})^2 + |p|^2/ c^2}}  
\\& \ \ \ \ + \Big(1 +   \frac{|\vBp (t,x,p)|}{|\vBpn (t,x,p)|}
\frac{|\tBp (t,x,p)|^2}{2} ( \| \nabla_x^2 \phi_F  \|_\infty + e_{\pm} B_3 + m_{\pm} g ) \Big) e^{ (1 + B_3 + \| \nabla_x ^2 \phi_F  \|_\infty) |\tBp| },
\end{split}
\Ee
\Be \label{est:vb_x:dyn_spec}
\begin{split}
& |\p_{x_i} \pBp (t,x,p)| \leq \frac{ |e_{\pm} B_3 + m_{\pm} g| }{|\vBpn (t,x,p)|} \delta_{i3}
\\& \ \ \ \ + \Big(1 +  \frac{|e_{\pm} B_3 + m_{\pm} g|}{|\vBpn (t,x,p)|}
\frac{|\tBp (t,x,p)|^2}{2} ( \| \nabla_x^2 \phi_F  \|_\infty + e_{\pm} B_3 + m_{\pm} g ) \Big) e^{ (1 + B_3 + \| \nabla_x ^2 \phi_F  \|_\infty) | \tBp|},
\end{split}
\Ee
\Be \label{est:vb_v:dyn_spec}
\begin{split}
& |\p_{p_i} \pBp (t,x,p)| \leq \frac{|e_{\pm} B_3 + m_{\pm} g| |\tBp (t,x,p)|  }{|\vBpn (t,x,p)| \sqrt{ (m_{\pm})^2 + |p|^2/ c^2}}  
\\& \ \ \ \ + \Big(1 + \frac{|e_{\pm} B_3 + m_{\pm} g|}{|\vBpn (t,x,p)|}
\frac{|\tBp (t,x,p)|^2}{2} ( \| \nabla_x^2 \phi_F  \|_\infty + e_{\pm} B_3 + m_{\pm}g ) \Big) e^{ (1 + B_3 + \| \nabla_x ^2 \phi_F  \|_\infty) |\tBp|}.
\end{split}
\Ee

\begin{prop} 
\label{RE:dyn_spec}

Suppose $(F, \phi_F)$ solves \eqref{VP_F}, \eqref{Poisson_F}, \eqref{VP_0}, \eqref{Dbc:F} and \eqref{bdry:F_spec} under the compatibility condition \eqref{CC_F0=G_spec}.
Assume that \eqref{Bootstrap_spec} and 
\be \label{est:beta_b_dy_spec}
\beta \frac{e_{\pm}}{c} \| (-\Delta_0)^{-1} (\nabla_x \cdot b ) \|_{L^\infty_{t,x}} 
\leq 1 \leq \frac{\beta}{2}.
\ee
Further, we assume 
\be \label{est1:D3tphi_F_spec}
(1 + B_3 + \| \nabla_x ^2 \phi_F  \|_\infty) + \| \p_t \p_{x_3} \phi_F (t, x_\parallel , 0) \|_{L^\infty(\p\O)} 
\leq \frac{\hat{m} g}{24} \tilde{\beta},
\ee
where $\hat{m} = \min \{ m_+, m_- \}$.
Suppose there exists $\tpx > 1$ such that 
\be \label{condition3:F_support_spec} 
f_{\pm} (t, x, p) = 0
\ \text{ for any } \
\frac{4 c}{m_{\pm} g} |p| + 3 x_{3} \geq \tpx.
\ee
Set $\lambda = \frac{g}{48} \hat{m} \beta$, suppose $\varepsilon$ in the boundary condition \eqref{bdry:F_spec} satisfies that	
\be \label{condition3:tilde_epilon_spec}
2 \varepsilon 
< \exp \Big\{ - \lambda \Big( \big( \frac{1}{\min \{ \frac{g}{4 \sqrt{2}}, \frac{c}{\sqrt{10}} \}} + 1 \big) \times \tpx + 1 \Big) \Big\}.
\ee
Finally, suppose $\w_{\pm, \beta, 0} (x, p)$ defined in \eqref{W_t=0} satisfies
\be \notag 
\|\w_{\pm, \beta, 0} f_{\pm, 0} \|_{L^\infty (\O \times \R^3)} <\infty.
\ee
Consider $(t,x,p) \in [0, \infty) \times  \bar\O \times \R^3$, then
\Be \label{est:F_v:dyn_spec}
\begin{split} 
& \| e^{ \frac{\tilde{\beta}}{4} \sqrt{(m_{\pm} c)^2 + |p|^2} } e^{ \frac{m_{\pm} g}{8 c} \tilde{\beta} x_3} \nabla_p F_{\pm} (t,x,p) \|_{L^\infty(\O \times \R^3)} 
\\& \lesssim 24 e^{ \frac{m_{\pm} g}{24} \tilde{\beta} }
e^{2 \tilde{\beta} \frac{e_{\pm}}{c} \| (-\Delta_0)^{-1} (\nabla_x \cdot b ) \|_{L^\infty_{t,x}} } \| \w_{\pm, \tilde \beta, 0} \nabla_{x,p} F_{\pm, 0} \|_{L^\infty (\O \times \R^3)}
\\& \ \ \ \ + 12 \big( 1 + \frac{ \| \nabla_x^2 \phi_F \|_\infty + e_{\pm} B_3 + m_{\pm} g}{(m_{\pm} g)^2} \big) \| e^{\tilde{\beta} |p^0_{\pm}|} \nabla_{x_\parallel,p} G_{\pm} \|_{L^\infty (\gamma_-)},
\end{split}
\Ee 
and 
\Be \label{est:F_x:dyn_spec}
\begin{split} 
& e^{ \frac{\tilde{\beta}}{4} \sqrt{(m_{\pm} c)^2 + |p|^2} } e^{ \frac{m_{\pm} g}{8 c} \tilde{\beta} x_3} \big| \nabla_{x_i} F_{\pm} (t,x,p) \big|
\\& \lesssim \big( \frac{\delta_{i3}}{\alpha_{\pm, F} (t,x,p)} + 1 + \frac{ \| \nabla_x^2 \phi_F \|_\infty + e_{\pm} B_3 + m_{\pm} g}{(m_{\pm} g)^2} \big) 
\\& \qquad \times \Big\{ 24 e^{ \frac{m_{\pm} g}{24} \tilde{\beta} } e^{2 \tilde{\beta} \frac{e_{\pm}}{c} \| (-\Delta_0)^{-1} (\nabla_x \cdot b ) \|_{L^\infty_{t,x}} } \| \w_{\pm, \tilde \beta, 0}   \nabla_{x,p} F_{\pm, 0} \|_{L^\infty (\O \times \R^3)}
\\& \qquad \qquad + 12 \big( 1 + \frac{ \| \nabla_x^2 \phi_F \|_\infty + e_{\pm} B_3 + m_{\pm} g}{(m_{\pm} g)^2} \big) \| e^{\tilde{\beta} |p^0_{\pm}|} \nabla_{x_\parallel,p} G_{\pm} \|_{L^\infty (\gamma_-)} \Big\},
\end{split}
\Ee 
where 
$\| \nabla_x^2\phi_{F} \|_\infty = \sup\limits_{s \in [t-\tB (t,x,p),t]} \| \nabla_x^2\phi_{F} (s, x)  \|_{L^{\infty}(\bar{\O})}$, and $\alpha_{\pm, F} (t,x,p)$ is defined in \eqref{alpha_F}.
\end{prop}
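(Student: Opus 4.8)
The plan is to follow the proof of Proposition~\ref{RE:dyn} almost verbatim for the non-recursive pieces and to add one absorption step to handle the specular contribution. Starting from the Lagrangian representation \eqref{form:F_spec} and its differentiated form \eqref{DF_xv_spec}, I split $\nabla_{x_i,p_i}F_\pm(t,x,p)$ into (i) the initial-data piece carrying $\mathbf 1_{t\le\tBp}$, (ii) the inflow piece carrying $\mathbf 1_{t>\tBp}$ and $\nabla_{x_\parallel,p}G_\pm$, and (iii) the specular piece, the $\varepsilon$-term. Pieces (i) and (ii) are estimated exactly as in the proof of \eqref{est:F_v:dyn}--\eqref{est:F_x:dyn}: for (i) I use the trajectory derivative bounds \eqref{est:X_x:dyn_spec}--\eqref{est:V_v:dyn_spec}, the weight comparison \eqref{est:1/w_h} of Lemma~\ref{lem:w/w}, the time bound $\mathbf 1_{t\le\tBp}e^{-\frac{\tilde\beta}{4}\sqrt{(m_\pm c)^2+|p|^2}}e^{-\frac{m_\pm g}{8c}\tilde\beta x_3}\le e^{-\frac{m_\pm g}{24}\tilde\beta(t-1)}$ coming from Corollary~\ref{cor:max_X_dy}, and the hypothesis \eqref{est1:D3tphi_F_spec}; for (ii) I use \eqref{est:xb_x:dyn_spec}--\eqref{est:vb_v:dyn_spec} together with Propositions~\ref{lem2:tB} and~\ref{lem3:tB}, which bound $\tBp$, $\xBp$ and $\pBp$ in terms of $|\pBp|$, yielding the exponential gain $e^{-\frac{\tilde\beta}{12}|\pBp^0|}$ and, in the $\nabla_x$ case, the singular factor $\delta_{i3}/\alpha_{\pm,F}$ via the lower bound \eqref{est1:xb_x/w:dyn} of Lemma~\ref{VL:dyn} and the comparison \eqref{est:x3ppB}.

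For the specular piece (iii) I first use the compact-support hypothesis \eqref{condition3:F_support_spec}: it forces the relevant $(t,x,p)$ to satisfy $\frac{4c}{m_\pm g}|p|+3x_3<\tpx$, so Corollary~\ref{cor:max_X_dy} gives a uniform bound $\tBp(t,x,p)\le T_\star:=\bigl(\tfrac1{\min\{g/4\sqrt2,\,c/\sqrt{10}\}}+1\bigr)\tpx+1$, and moreover (since $\xBp\in\p\O$, hence $(\xBp)_3=0$, and $|\tpBp|=|\pBp|$) forces $|\pBp|$ to be bounded by a constant depending only on $\tpx$. Consequently the derivative coefficients $|\p_{x_i,p_i}\tBp|$, $|\p_{x_i,p_i}\xBp|$, $|\p_{x_i,p_i}\pBp|$ (and hence $|\p_{x_i,p_i}\tpBp|$) appearing in \eqref{DF_xv_spec} are, by \eqref{est:tb_x:dyn_spec}--\eqref{est:vb_v:dyn_spec}, bounded by a constant times $1/|\vBpn|\lesssim 1/\alpha_{\pm,F}$ (Lemma~\ref{VL:dyn}), the exponential factors $e^{(1+B_3+\|\nabla_x^2\phi_F\|_\infty)\tBp}$ being absorbed into constants by $\tBp\le T_\star$. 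The time derivative $\p_tF_\pm$ at the bounced point is rewritten through the Vlasov equation \eqref{VP_F} as $\p_tF_\pm=-v_\pm\cdot\nabla_xF_\pm-\bigl(e_\pm(\tfrac{v_\pm}{c}\times B-\nabla_x\phi_F)-m_\pm g\mathbf{e}_3\bigr)\cdot\nabla_pF_\pm$, so $|\p_tF_\pm|\lesssim|\nabla_xF_\pm|+|\nabla_pF_\pm|$ with a constant depending on \eqref{Bootstrap_spec} and the assumed bounds. All weights are comparable to constants at the reflected point by the support condition and the conservation law \eqref{eq:energy_conservation_dy} together with \eqref{est:x3ppB} and \eqref{est:beta_b_dy_spec}, so piece (iii) is bounded by $C\,\varepsilon\,e^{\lambda T_\star}$ times the weighted sup-norm $\mathcal D:=\sup_{[0,\infty)\times\bar\O\times\R^3} e^{\frac{\tilde\beta}{4}\sqrt{(m_\pm c)^2+|p|^2}}e^{\frac{m_\pm g}{8c}\tilde\beta x_3}\bigl(|\nabla_pF_\pm|+|\nabla_xF_\pm|\bigr)$, exactly as the factors $\varepsilon e^{\lambda\tB}$ were treated in Theorem~\ref{theo:AS_spec}.

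The closing step is absorption. Taking $\mathcal D$ on both sides of the combined estimates for $\nabla_pF_\pm$ and $\nabla_xF_\pm$, the three-piece decomposition gives
\[
\mathcal D\ \lesssim\ \Xi\ +\ C\varepsilon e^{\lambda T_\star}\,\mathcal D ,
\]
where $\Xi$ denotes the right-hand sides of \eqref{est:F_v:dyn_spec}--\eqref{est:F_x:dyn_spec} with smaller constants. The smallness condition \eqref{condition3:tilde_epilon_spec}, i.e. $2\varepsilon<\exp\{-\lambda((\tfrac1{\min\{g/4\sqrt2,\,c/\sqrt{10}\}}+1)\tpx+1)\}=e^{-\lambda T_\star}$, makes $C\varepsilon e^{\lambda T_\star}\le\tfrac12$ once the harmless constant $C$ from the coefficient bounds \eqref{est:tb_x:dyn_spec}--\eqref{est:vb_v:dyn_spec} and the rewriting of $\p_tF_\pm$ has been tracked; the last term is then absorbed, $\mathcal D\lesssim 2\Xi$, and unwinding the weight $e^{\frac{\tilde\beta}{2}|\pBp^0|}$ against $e^{\frac{\tilde\beta}{4}\sqrt{(m_\pm c)^2+|p|^2}}e^{\frac{m_\pm g}{8c}\tilde\beta x_3}$ via \eqref{est:x3ppB} yields \eqref{est:F_v:dyn_spec}--\eqref{est:F_x:dyn_spec}, with the $\delta_{i3}/\alpha_{\pm,F}$ factor surviving only in the $\nabla_x$ estimate. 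The main obstacle is precisely the self-referential structure of piece (iii): the differentiated Lagrangian identity reproduces $\nabla_{x,p}F_\pm$ (and $\p_tF_\pm$) on the right-hand side at reflected momenta, and a priori $|p|$ could grow through successive bounces; the resolution is that \eqref{condition3:F_support_spec} localizes both the travel time and the reflected momentum uniformly (so only a bounded amount of bouncing occurs), which is exactly what renders all the exponential factors $e^{\lambda T_\star}$ finite constants, and then the quantitative smallness \eqref{condition3:tilde_epilon_spec} of $\varepsilon$ relative to $e^{\lambda T_\star}$ permits the absorption to close; the delicate bookkeeping point is to verify that the constant $C$ does not depend on $\varepsilon$, only on the fixed data bounds.
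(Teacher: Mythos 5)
Your decomposition into initial-data, inflow, and specular pieces, and your treatment of the first two, match the paper's Step 1. The gap is in the absorption step for the specular piece: you close on the quantity $\mathcal D:=\sup e^{\frac{\tilde\beta}{4}\sqrt{(m_\pm c)^2+|p|^2}}e^{\frac{m_\pm g}{8c}\tilde\beta x_3}\bigl(|\nabla_pF_\pm|+|\nabla_xF_\pm|\bigr)$, which includes the full normal derivative $\p_{x_3}F_\pm$. But the conclusion \eqref{est:F_x:dyn_spec} itself shows that $\p_{x_3}F_\pm$ carries the singular factor $\delta_{i3}/\alpha_{\pm,F}$, which blows up on the grazing set $x_3\to0$, $v_3\to0$; so $\mathcal D$ is not finite, the inequality $\mathcal D\lesssim\Xi+C\varepsilon e^{\lambda T_\star}\mathcal D$ cannot be closed, and your crude bound $|\p_tF_\pm|\lesssim|\nabla_xF_\pm|+|\nabla_pF_\pm|$ is exactly where the loss occurs. (Your own closing remark that "the $\delta_{i3}/\alpha_{\pm,F}$ factor survives in the $\nabla_x$ estimate" is inconsistent with $\mathcal D<\infty$.)

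The paper's resolution is structural rather than a matter of constants. When $\p_tF_\pm$ at the bounced point is rewritten via the Vlasov equation, the normal derivative enters only through $v_{\mathbf B,3}\,\p_{x_3}F_\pm = c\,\tpBn\,\p_{x_3}F_\pm/\tpBp^0$, i.e. multiplied by the vertical momentum; and since $(\xBp)_3\equiv0$, the term $\p_{p_i}\xBp\cdot\nabla_x F_\pm$ only produces tangential derivatives. Hence the specular piece is controlled by the three quantities $\|w\nabla_pF_\pm\|_\infty$, $\|w\nabla_{x_\parallel}F_\pm\|_\infty$ and $\|w\,p_3\p_{x_3}F_\pm\|_\infty$ (see \eqref{est1:F_v7_spec} and \eqref{est2:F_v+F_x+p3_F_x3_spec}), each of which \emph{is} finite in weighted $L^\infty$ because the $p_3$ factor cancels the $1/|\vBpn|$ singularity in $\p_{x_3}\tBp$ (cf.\ \eqref{est1:tb_x3:dyn_spec}). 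The absorption is performed on this closed triple with coefficient $3/4$, using \eqref{condition3:F_support_spec} to bound $\tBp$ and $|\pBp|$ uniformly and \eqref{condition3:tilde_epilon_spec} to make the $\varepsilon$-coefficients small; only afterwards is the pointwise bound on $\p_{x_3}F_\pm$ with its $1/\alpha_{\pm,F}$ singularity deduced from the already-absorbed quantities. To repair your argument, replace $|\nabla_xF_\pm|$ in $\mathcal D$ by $|\nabla_{x_\parallel}F_\pm|+|p_3\p_{x_3}F_\pm|$ and keep track of the $v_{\mathbf B,3}$ factor in the Vlasov rewriting of $\p_tF_\pm$; the remainder of your outline then goes through.
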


\begin{proof}

We follow the proof of \eqref{est:F_v:dyn} and \eqref{est:F_x:dyn} in Proposition \ref{RE:dyn}.
Here, for simplicity, we 
use the notation: $p_\pm^0 = \sqrt{(m_\pm c)^2 + |p|^2}$ in the rest of proof. 
We will also use \eqref{abuse}.

\smallskip

\textbf{Step 1. Proof of \eqref{est:F_v:dyn_spec}.}
From \eqref{DF_xv_spec} and \eqref{VP_F}, we have
\Be \notag
\begin{split}
& \p_{p_i} F_{\pm} (t,x,p) 
\\& = \mathbf{1}_{t \leq \tBp (t,x,p) }  
\{ \p_{p_i} \X_{\pm} (0;t,x,p) \cdot \nabla_x F_{\pm, 0} (\Z (0;t,x,p) ) 
\\& \qquad \qquad \qquad \qquad \qquad + \p_{p_i} \P_{\pm} (0;t,x,p) \cdot \nabla_p F_{\pm, 0} (\Z_{\pm} (0;t,x,p) ) \}
\\& \ \ \ \ + \mathbf{1}_{t > \tBp (t,x,p) }  
\{ \p_{p_i} \xBp \cdot \nabla_{x_\parallel} G_{\pm} (\xBp (t,x,p), \pBp (t,x,p))
\\& \qquad \qquad \qquad \qquad \qquad + \p_{p_i} \pBp \cdot \nabla_p G_{\pm} (\xBp (t,x,p), \pBp (t,x,p)) \}
\\& \ \ \ \ + \mathbf{1}_{t >\tBp (t,x,p) } \varepsilon 
\big( - \p_{p_i} \tBp \big) \times \Big\{ c \frac{\tpBp}{\tpBp^0} \cdot \nabla_x F_{\pm} (t - \tBp, \xBp, \tpBp )
\\& \qquad \qquad \qquad + \Big( \frac{\tpBp}{\tpBp^0} \times e_{\pm} B - \nabla_x ( e_{\pm} \phi_F) - m_{\pm} g \mathbf{e}_3 \Big) \cdot \nabla_p F_{\pm} (t - \tBp, \xBp, \tpBp ) \Big\}
\\& \ \ \ \ + \mathbf{1}_{t >\tBp (t,x,p) } \varepsilon 
\{ \p_{p_i} \xBp \cdot \nabla_{x_\parallel} F_{\pm} (t - \tBp, \xBp, \tpBp )
\\& \qquad \qquad \qquad \qquad \qquad + \p_{p_i} \tpBp \cdot \nabla_p F_{\pm} (t - \tBp, \xBp, \tpBp ) \},
\end{split}
\Ee
where $\tpBp = (p_{\mathbf{B}, \pm, 1}, p_{\mathbf{B}, \pm, 2}, - p_{\mathbf{B}, \pm, 3})$.
Together with $|\tpB| = |\pB|$ and the assumption \eqref{Bootstrap_spec}, this shows that
\begin{align}
& | \nabla_p F (t,x,p) | 
\notag \\
& \leq \mathbf{1}_{t \leq \tB (t,x,p)}
\frac{| \nabla_p \X  (0;t,x,p)| +| \nabla_p \P  (0;t,x,p)|  }{\w_{\tilde \beta} (0, \Z (0;t,x,p) )} \| \w_{\tilde \beta, 0}   \nabla_{x,p} F_0  \|_{L^\infty (\O \times \R^3)} \label{est1:F_v1_spec} \\
& \ \ \ \ +	\mathbf{1}_{t > \tB (t,x,p)} \frac{ | \nabla_p \xB (t,x,p) | + |\nabla_p \pB (t,x,p)|}{ e^{\tilde{\beta} |\pB^0 (t,x,p)| } } \| e^{\tilde{\beta} |p^0|} \nabla_{x_\parallel,p} G \|_{L^\infty (\gamma_-)}
\label{est1:F_v2_spec} \\
& \ \ \ \ + \mathbf{1}_{t >\tB (t,x,p) } \varepsilon | \nabla_{p} \tB (t,x,p) | \times \frac{c}{|\tpB^0 (t,x,p)|} \big| \tpBn \p_{x_3} F (t - \tB, \xB, \tpB ) \big|
\label{est1:F_v7_spec} \\ 
& \ \ \ \ + \mathbf{1}_{t >\tB (t,x,p) } \varepsilon 
\frac{ | \nabla_p \tB (t,x,p) | }{ e^{ \frac{\tilde{\beta}}{4} |\pB^0 (t,x,p)| } } \big\{ c + e B_3 + \frac{3}{2} m g \big\}
\label{est1:F_v3_spec} \\ 
& \qquad \qquad \qquad \qquad \qquad \times \sup\limits_{0 \leq t < \infty} \| e^{ \frac{\tilde{\beta}}{4} \sqrt{(m c)^2 + |p|^2} } e^{ \frac{m g}{8 c} \tilde{\beta} x_3} \nabla_{x_\parallel,p} F (t,x,p) \|_{L^\infty (\bar{\O} \times \R^3)}
\label{est1:F_v4_spec}
\\& \ \ \ \ + \mathbf{1}_{t >\tB (t,x,p) } \varepsilon 
\frac{ | \nabla_p \xB (t,x,p) | + |\nabla_p \pB (t,x,p)|}{ e^{ \frac{\tilde{\beta}}{4} |\pB^0 (t,x,p)| } }
\label{est1:F_v5_spec} \\ 
& \qquad \qquad \qquad \qquad \qquad \times \sup\limits_{0 \leq t < \infty} \| e^{ \frac{\tilde{\beta}}{4} \sqrt{(m c)^2 + |p|^2} } e^{ \frac{m g}{8 c} \tilde{\beta} x_3} \nabla_{x_\parallel,p} F (t,x,p) \|_{L^\infty (\bar{\O} \times \R^3)}.
\label{est1:F_v6_spec}
\end{align}
Following from \eqref{est2:F_v1}, \eqref{est1:xb_v:dyn}, and \eqref{est1:vb_v:dyn}, we derive that 
\be \label{est2:F_v12_spec}
\begin{split}
& \eqref{est1:F_v1_spec} + \eqref{est1:F_v2_spec}
\\& \lesssim 2 e^{ \frac{m g}{24} \tilde{\beta} } e^{2 \tilde{\beta} \frac{e}{c} \| (-\Delta_0)^{-1} (\nabla_x \cdot b ) \|_{L^\infty_{t,x}} } e^{ - \frac{\tilde{\beta}}{4} \sqrt{(m c)^2 + |p|^2} } e^{ - \frac{m g}{8 c} \tilde{\beta} x_3} \| \w_{\tilde \beta, 0}   \nabla_{x,p} F_0  \|_{L^\infty (\O \times \R^3)}
\\& \ \ \ \ + \big( 1 + \frac{ \| \nabla_x^2 \phi_F \|_\infty + e B_3 + mg}{(m g)^2} \big) e^{ - \frac{\tilde \beta}{2} \big( |p^0| + \frac{m g}{2 c} x_3 \big) } \| e^{\tilde{\beta} |p^0|} \nabla_{x_\parallel,p} G \|_{L^\infty (\gamma_-)}.
\end{split}
\ee
From \eqref{est:tb_v:dyn_spec}, together with \eqref{est1:xb_v_a:dyn} and Proposition \ref{lem2:tB}, we get
\Be \label{est1:tb_v:dyn_spec}
\begin{split}
& |\p_{p_i} \tB (t,x,p)|
\\& \leq \frac{|\pB^0 (t,x,p)| }{ \sqrt{ (m c)^2 + |p|^2 }} \Big( \frac{2}{m g}  + \frac{ 16 (\sqrt{c m} + | \pBn (t,x,p) |)}{c m^2 g} 
\\& \qquad \qquad \times \big(1 + \sqrt{(m c)^2 + |\pB (t,x,p)|^2} + 2 \pBn (t,x, p) \big) \Big) 
\\& \ \ \ \ + \Big( \frac{|\pB^0 (t,x,p)|}{2 c} |\tB (t,x,p)| \Big( \frac{2}{m g}  + \frac{ 16 (\sqrt{c m} + | \pBn (t,x,p) |)}{c m^2 g} 
\\& \qquad \qquad \times \big(1 + \sqrt{(m c)^2 + |\pB (t,x,p)|^2} + 2 \pBn (t,x, p) \big) \Big)
( \| \nabla_x^2 \phi_F  \|_\infty + e B_3 + mg ) \Big) 
\\& \qquad \qquad \qquad \qquad \times e^{ (1 + B_3 + \| \nabla_x ^2 \phi_F  \|_\infty) \tB (t,x,p) }.
\end{split}
\Ee
Using the assumptions \eqref{condition3:F_support_spec} and \eqref{condition3:tilde_epilon_spec}, together with \eqref{est1:tb_v:dyn_spec}, we obtain that
\be \label{est2:F_v7_spec}
\varepsilon | \nabla_{p} \tB (t,x,p) | \times \frac{c}{|\tpB^0 (t,x,p)|}
\leq \frac{1}{8} e^{ - \frac{\tilde{\beta}}{4} \sqrt{(m c)^2 + |p|^2} } e^{ - \frac{m g}{8 c} \tilde{\beta} x_3}.
\ee
From \eqref{est1:xb_v:dyn} and \eqref{est1:vb_v:dyn}, we further derive
\be \label{est2:F_v35_spec}
\begin{split}
& \eqref{est1:F_v3_spec} + \eqref{est1:F_v5_spec}
\\& = \varepsilon \Big\{
\frac{ | \nabla_p \tB (t,x,p) | }{ e^{ \frac{\tilde{\beta}}{4} |\pB^0 (t,x,p)| } } \big\{ c + e B_3 + \frac{3}{2} m g \big\}
+ \frac{ | \nabla_p \xB (t,x,p) | + |\nabla_p \pB (t,x,p)|}{ e^{ \frac{\tilde{\beta}}{4} |\pB^0 (t,x,p)| } }
\Big\}
\\& \leq \frac{1}{8} e^{ - \frac{\tilde{\beta}}{4} \sqrt{(m c)^2 + |p|^2} } e^{ - \frac{m g}{8 c} \tilde{\beta} x_3}.
\end{split}
\ee
Inputting \eqref{est2:F_v12_spec}, \eqref{est2:F_v7_spec} and \eqref{est2:F_v35_spec} into \eqref{est1:F_v1_spec}-\eqref{est1:F_v6_spec}, we conclude that
\be \label{est2:F_v_spec}
\begin{split}
& | \nabla_p F (t,x,p) | 
\\& \lesssim 2 e^{ \frac{m g}{24} \tilde{\beta} } e^{2 \tilde{\beta} \frac{e}{c} \| (-\Delta_0)^{-1} (\nabla_x \cdot b ) \|_{L^\infty_{t,x}} } e^{ - \frac{\tilde{\beta}}{4} \sqrt{(m c)^2 + |p|^2} } e^{ - \frac{m g}{8 c} \tilde{\beta} x_3} \| \w_{\tilde \beta, 0}   \nabla_{x,p} F_0  \|_{L^\infty (\O \times \R^3)}
\\& \ \ \ \ + \big( 1 + \frac{ \| \nabla_x^2 \phi_F \|_\infty + e B_3 + mg}{(m g)^2} \big) e^{ - \frac{\tilde \beta}{2} \big( |p^0| + \frac{m g}{2 c} x_3 \big) } \| e^{\tilde{\beta} |p^0|} \nabla_{x_\parallel,p} G \|_{L^\infty (\gamma_-)}
\\& \ \ \ \ + \frac{1}{8} e^{ - \frac{\tilde{\beta}}{4} \sqrt{(m c)^2 + |p|^2} } e^{ - \frac{m g}{8 c} \tilde{\beta} x_3} 
\times \sup\limits_{0 \leq t < \infty} \| p_3 \p_{x_3} F (t,x,p) \|_{L^\infty (\bar{\O} \times \R^3)}
\\& \ \ \ \ + \frac{1}{8} e^{ - \frac{\tilde{\beta}}{4} \sqrt{(m c)^2 + |p|^2} } e^{ - \frac{m g}{8 c} \tilde{\beta} x_3}
\times \sup\limits_{0 \leq t < \infty} \| e^{ \frac{\tilde{\beta}}{4} \sqrt{(m c)^2 + |p|^2} } e^{ \frac{m g}{8 c} \tilde{\beta} x_3} \nabla_{x_\parallel,p} F (t,x,p) \|_{L^\infty (\bar{\O} \times \R^3)}
\\& \ \ \ \ + \frac{1}{8} e^{ - \frac{\tilde{\beta}}{4} \sqrt{(m c)^2 + |p|^2} } e^{ - \frac{m g}{8 c} \tilde{\beta} x_3} \times \sup\limits_{0 \leq t < \infty} \| e^{ \frac{\tilde{\beta}}{4} \sqrt{(m c)^2 + |p|^2} } e^{ \frac{m g}{8 c} \tilde{\beta} x_3} \nabla_{x_\parallel,p} F (t,x,p) \|_{L^\infty (\bar{\O} \times \R^3)}.
\end{split} 
\ee

Analogous to \eqref{est1:F_v1_spec}-\eqref{est1:F_v6_spec}, we get
\begin{align}
& | \nabla_{x} F (t,x,p) | 
\notag \\
& \leq \mathbf{1}_{t \leq \tB (t,x,p)}
\frac{| \nabla_{x} \X  (0;t,x,p)| +| \nabla_{x} \P  (0;t,x,p)|  }{\w_{\tilde \beta} (0, \Z (0;t,x,p) )} \| \w_{\tilde \beta, 0}   \nabla_{x,p} F_0  \|_{L^\infty (\O \times \R^3)} \label{est1:F_x1_spec} \\
& \ \ \ \ +	\mathbf{1}_{t > \tB (t,x,p)} \frac{ | \nabla_{x} \xB (t,x,p) | + |\nabla_{x} \pB (t,x,p)|}{ e^{\tilde{\beta} |\pB^0 (t,x,p)| } } \| e^{\tilde{\beta} |p^0|} \nabla_{x_\parallel,p} G \|_{L^\infty (\gamma_-)}
\label{est1:F_x2_spec} \\
& \ \ \ \ + \mathbf{1}_{t >\tB (t,x,p) } \varepsilon | \nabla_{x} \tB (t,x,p) | \times \frac{c}{|\tpB^0 (t,x,p)|} \big| \tpBn \p_{x_3} F (t - \tB, \xB, \tpB ) \big|
\label{est1:F_x7_spec} \\  
& \ \ \ \ + \mathbf{1}_{t >\tB (t,x,p) } \varepsilon 
\frac{ | \nabla_{x} \tB (t,x,p) | }{ e^{ \frac{\tilde{\beta}}{4} |\pB^0 (t,x,p)| } } \big\{ c + e B_3 + \frac{3}{2} m g \big\}
\label{est1:F_x3_spec} \\ 
& \qquad \qquad \qquad \qquad \qquad \times \sup\limits_{0 \leq t < \infty} \| e^{ \frac{\tilde{\beta}}{4} \sqrt{(m c)^2 + |p|^2} } e^{ \frac{m g}{8 c} \tilde{\beta} x_3} \nabla_{x_\parallel,p} F (t,x,p) \|_{L^\infty (\bar{\O} \times \R^3)}
\label{est1:F_x4_spec}
\\& \ \ \ \ + \mathbf{1}_{t >\tB (t,x,p) } \varepsilon 
\frac{ | \nabla_{x} \xB (t,x,p) | + |\nabla_{x} \pB (t,x,p)| }{ e^{ \frac{\tilde{\beta}}{4} |\pB^0 (t,x,p)| } }
\label{est1:F_x5_spec} \\ 
& \qquad \qquad \qquad \qquad \qquad \times \sup\limits_{0 \leq t < \infty} \| e^{ \frac{\tilde{\beta}}{4} \sqrt{(m c)^2 + |p|^2} } e^{ \frac{m g}{8 c} \tilde{\beta} x_3} \nabla_{x_\parallel,p} F (t,x,p) \|_{L^\infty (\bar{\O} \times \R^3)}.
\label{est1:F_x6_spec}
\end{align} 
Following from \eqref{est1:F_x1}, \eqref{est1:xb_x:dyn}, and \eqref{est1:vb_x:dyn}, we derive that 
\be \label{est2:F_x12_spec}
\begin{split}
& \eqref{est1:F_x1_spec} + \eqref{est1:F_x2_spec}
\\& \lesssim 2 e^{ \frac{m g}{24} \tilde{\beta} }
e^{2 \tilde{\beta} \frac{e}{c} \| (-\Delta_0)^{-1} (\nabla_x \cdot b ) \|_{L^\infty_{t,x}} } e^{ - \frac{\tilde{\beta}}{4} \sqrt{(m c)^2 + |p|^2} } e^{ - \frac{m g}{8 c} \tilde{\beta} x_3} \| \w_{\tilde \beta, 0}   \nabla_{x,p} F_0 \|_{L^\infty (\O \times \R^3)}
\\& \ \ \ \ + \big( \frac{\delta_{i3}}{\alpha_{F} (t,x,p)} + 1 + \frac{ \| \nabla_x^2 \phi_F \|_\infty + e B_3 + m g}{(m g)^2} \big) \| e^{\tilde{\beta} |p^0|} \nabla_{x_\parallel,p} G \|_{L^\infty (\gamma_-)}.
\end{split}
\ee
Using \eqref{est:tb_x:dyn_spec}, together with \eqref{est1:xb_x_a:dyn} and Proposition \ref{lem2:tB}, we get 
\Be \label{est1:tb_x12:dyn_spec}
\begin{split}
& |\p_{x_i} \tB (t,x,p)|
\\& \leq \frac{1}{ \alpha_F (t,x,p) }  
e^{ \frac{8}{m g} (4g + 2 \| \nabla_x ^2 \phi_F \|_{\infty} + e \| \p_t \p_{x_3} \phi_F (t, x_\parallel , 0) \|_{L^\infty(\p\O)} ) |\pB^0 |} \delta_{i3}
\\& \ \ \ \ + \Big( \frac{|\pB^0 (t,x,p)|}{2 c} |\tB (t,x,p)| \Big[ \frac{2}{m g}  + \frac{ 16 (\sqrt{c m} + | \pBn (t,x,p) |)}{c m^2 g} 
\\& \qquad \qquad \times \big(1 + \sqrt{(m c)^2 + |\pB (t,x,p)|^2} + 2 \pBn (t,x, p) \big) \Big]
( \| \nabla_x^2 \phi_F  \|_\infty + e B_3 + mg ) \Big) 
\\& \qquad \qquad \qquad \qquad \times e^{ (1 + B_3 + \| \nabla_x ^2 \phi_F \|_\infty) \tB (t,x,p) }.
\end{split}
\Ee
Using the assumptions \eqref{condition3:F_support_spec} and \eqref{condition3:tilde_epilon_spec}, together with \eqref{est1:tb_x12:dyn_spec}, we obtain that
\be \label{est2:F_x12_7_spec}
\varepsilon | \nabla_{x_{\|}} \tB (t,x,p) | \times \frac{c}{|\tpB^0 (t,x,p)|}
\leq \frac{1}{8} e^{ - \frac{\tilde{\beta}}{4} \sqrt{(m c)^2 + |p|^2} } e^{ - \frac{m g}{8 c} \tilde{\beta} x_3}.
\ee
From \eqref{est1:xb_x:dyn} and \eqref{est1:vb_x:dyn}, we further derive
\be \label{est2:F_x12_35_spec}
\begin{split}
& \varepsilon \Big\{
\frac{ | \nabla_{x_{\|}} \tB (t,x,p) | }{ e^{ \frac{\tilde{\beta}}{4} |\pB^0 (t,x,p)| } } \big\{ c + e B_3 + \frac{3}{2} m g \big\}
+ \frac{ | \nabla_{x_{\|}} \xB (t,x,p) | + |\nabla_{x_{\|}} \pB (t,x,p)|}{ e^{ \frac{\tilde{\beta}}{4} |\pB^0 (t,x,p)| } }
\Big\}
\\& \leq \frac{1}{8} e^{ - \frac{\tilde{\beta}}{4} \sqrt{(m c)^2 + |p|^2} } e^{ - \frac{m g}{8 c} \tilde{\beta} x_3}.
\end{split}
\ee
Inputting \eqref{est2:F_x12_spec}, \eqref{est2:F_x12_7_spec} and \eqref{est2:F_x12_35_spec} into \eqref{est1:F_x1_spec}-\eqref{est1:F_x6_spec}, we conclude that
\be \label{est2:F_x_spec}
\begin{split}
& | \nabla_{x_{\|}} F (t,x,p) | 
\\& \lesssim 2 e^{ \frac{m g}{24} \tilde{\beta} } e^{2 \tilde{\beta} \frac{e}{c} \| (-\Delta_0)^{-1} (\nabla_x \cdot b ) \|_{L^\infty_{t,x}} } e^{ - \frac{\tilde{\beta}}{4} \sqrt{(m c)^2 + |p|^2} } e^{ - \frac{m g}{8 c} \tilde{\beta} x_3} \| \w_{\tilde \beta, 0}   \nabla_{x,p} F_0  \|_{L^\infty (\O \times \R^3)}
\\& \ \ \ \ + \big( 1 + \frac{ \| \nabla_x^2 \phi_F \|_\infty + e B_3 + mg}{(m g)^2} \big) e^{ - \frac{\tilde \beta}{2} \big( |p^0| + \frac{m g}{2 c} x_3 \big) } \| e^{\tilde{\beta} |p^0|} \nabla_{x_\parallel,p} G \|_{L^\infty (\gamma_-)}
\\& \ \ \ \ + \frac{1}{8} e^{ - \frac{\tilde{\beta}}{4} \sqrt{(m c)^2 + |p|^2} } e^{ - \frac{m g}{8 c} \tilde{\beta} x_3} 
\times \sup\limits_{0 \leq t < \infty} \| p_3 \p_{x_3} F (t,x,p) \|_{L^\infty (\bar{\O} \times \R^3)}
\\& \ \ \ \ + \frac{1}{8} e^{ - \frac{\tilde{\beta}}{4} \sqrt{(m c)^2 + |p|^2} } e^{ - \frac{m g}{8 c} \tilde{\beta} x_3}
\times \sup\limits_{0 \leq t < \infty} \| e^{ \frac{\tilde{\beta}}{4} \sqrt{(m c)^2 + |p|^2} } e^{ \frac{m g}{8 c} \tilde{\beta} x_3} \nabla_{x_\parallel,p} F (t,x,p) \|_{L^\infty (\bar{\O} \times \R^3)}
\\& \ \ \ \ + \frac{1}{8} e^{ - \frac{\tilde{\beta}}{4} \sqrt{(m c)^2 + |p|^2} } e^{ - \frac{m g}{8 c} \tilde{\beta} x_3} \times \sup\limits_{0 \leq t < \infty} \| e^{ \frac{\tilde{\beta}}{4} \sqrt{(m c)^2 + |p|^2} } e^{ \frac{m g}{8 c} \tilde{\beta} x_3} \nabla_{x_\parallel,p} F (t,x,p) \|_{L^\infty (\bar{\O} \times \R^3)}.
\end{split} 
\ee

On the other hand, using the assumption \eqref{Bootstrap_spec} and $\alpha_{F} (t,x,p)$ defined in \eqref{alpha_F}, we get
\be \notag
\alpha_F (t,x,p) = \sqrt{ |x_3|^2  + |v_3|^2 +  2 ( e \p_{x_3} \phi_F (t, x_\parallel , 0) + m g ) \frac{c x_3}{ p^0 } } 
\geq |v_3|.
\ee
The above, together with \eqref{est1:tb_x12:dyn_spec}, shows that
\Be \label{est1:tb_x3:dyn_spec}
\begin{split}
& \big| p_3 \p_{x_3} \tB (t,x,p) \big|
\\& \leq \frac{p^0}{c}  
e^{ \frac{8}{m g} (4g + 2 \| \nabla_x ^2 \phi_F \|_{\infty} + e \| \p_t \p_{x_3} \phi_F (t, x_\parallel , 0) \|_{L^\infty(\p\O)} ) |\pB^0 |} 
\\& \ \ \ \ + |p_3| \Big( \frac{|\pB^0 (t,x,p)|}{2 c} |\tB (t,x,p)| \Big[ \frac{2}{m g}  + \frac{ 16 (\sqrt{c m} + | \pBn (t,x,p) |)}{c m^2 g} 
\\& \qquad \qquad \times \big(1 + \sqrt{(m c)^2 + |\pB (t,x,p)|^2} + 2 \pBn (t,x, p) \big) \Big]
( \| \nabla_x^2 \phi_F  \|_\infty + e B_3 + mg ) \Big) 
\\& \qquad \qquad \qquad \qquad \times e^{ (1 + B_3 + \| \nabla_x ^2 \phi_F \|_\infty) \tB (t,x,p) }.
\end{split}
\Ee
Analogous to \eqref{est1:xb_x:dyn} and \eqref{est1:vb_x:dyn}, we further derive that
\be \label{est1:xb+pb_x3:dyn_spec}
\begin{split}
& \big| p_3 \p_{x_3} \xB (t,x,p) \big| + \big| p_3 \p_{x_3} \pB (t,x,p) \big|
\\& \leq \Big( \frac{p^0}{c}  + 1 + \frac{ \| \nabla_x^2 \phi_F \|_\infty + e B_3 + m g}{(m g)^2} \Big) e^{\frac{5 \tilde{\beta}}{12} |\pB^0 (t,x,p)|}.
\end{split}
\ee
Now using \eqref{est1:F_x1_spec}-\eqref{est1:F_x6_spec}, together with \eqref{est2:F_x12_spec}, \eqref{est1:tb_x3:dyn_spec} and \eqref{est1:xb+pb_x3:dyn_spec}, we obtain 
\be \label{est1:p3_F_x3_spec}
\begin{split}
& | p_3 \p_{x_3} F (t,x,p) | 
\\& \lesssim 2 e^{ \frac{m g}{24} \tilde{\beta} }
e^{2 \tilde{\beta} \frac{e}{c} \| (-\Delta_0)^{-1} (\nabla_x \cdot b ) \|_{L^\infty_{t,x}} } e^{ - \frac{\tilde{\beta}}{4} \sqrt{(m c)^2 + |p|^2} } e^{ - \frac{m g}{8 c} \tilde{\beta} x_3} \| \w_{\tilde \beta, 0}   \nabla_{x,p} F_0 \|_{L^\infty (\O \times \R^3)}
\\& \ \ \ \ + \big( \frac{p^0}{c} + 1 + \frac{ \| \nabla_x^2 \phi_F \|_\infty + e B_3 + m g}{(m g)^2} \big) \| e^{\tilde{\beta} |p^0|} \nabla_{x_\parallel,p} G \|_{L^\infty (\gamma_-)} 
\\& \ \ \ \ + \varepsilon \Big\{ \frac{p^0}{c}  
e^{ \frac{8}{m g} (4g + 2 \| \nabla_x ^2 \phi_F \|_{\infty} + e \| \p_t \p_{x_3} \phi_F (t, x_\parallel , 0) \|_{L^\infty(\p\O)} ) |\pB^0 |} 
\\& \qquad \qquad + |p_3| \Big( \frac{|\pB^0 (t,x,p)|}{2 c} |\tB (t,x,p)| \Big[ \frac{2}{m g}  + \frac{ 16 (\sqrt{c m} + | \pBn (t,x,p) |)}{c m^2 g} 
\\& \qquad \qquad \times \big(1 + \sqrt{(m c)^2 + |\pB (t,x,p)|^2} + 2 \pBn (t,x, p) \big) \Big]
( \| \nabla_x^2 \phi_F  \|_\infty + e B_3 + mg ) \Big) 
\\& \qquad \qquad \qquad \times e^{ (1 + B_3 + \| \nabla_x ^2 \phi_F \|_\infty) \tB (t,x,p) } \Big\}
\\& \qquad \qquad \times \frac{c}{|\tpB^0 (t,x,p)|} \big| \tpBn \p_{x_3} F (t - \tB, \xB, \tpB ) \big|
\\& \ \ \ \ + \varepsilon \Big\{ \frac{p^0}{c}  
e^{ \frac{8}{m g} (4g + 2 \| \nabla_x ^2 \phi_F \|_{\infty} + e \| \p_t \p_{x_3} \phi_F (t, x_\parallel , 0) \|_{L^\infty(\p\O)} ) |\pB^0 |} 
\\& \qquad \qquad + |p_3| \Big( \frac{|\pB^0 (t,x,p)|}{2 c} |\tB (t,x,p)| \Big[ \frac{2}{m g}  + \frac{ 16 (\sqrt{c m} + | \pBn (t,x,p) |)}{c m^2 g} 
\\& \qquad \qquad \times \big(1 + \sqrt{(m c)^2 + |\pB (t,x,p)|^2} + 2 \pBn (t,x, p) \big) \Big]
( \| \nabla_x^2 \phi_F  \|_\infty + e B_3 + mg ) \Big) 
\\& \qquad \qquad \qquad \times e^{ (1 + B_3 + \| \nabla_x ^2 \phi_F \|_\infty) \tB (t,x,p) } \Big\} \big\{ c + e B_3 + \frac{3}{2} m g \big\}
\\& \qquad \qquad \times \sup\limits_{0 \leq t < \infty} \| e^{ \frac{\tilde{\beta}}{4} \sqrt{(m c)^2 + |p|^2} } e^{ \frac{m g}{8 c} \tilde{\beta} x_3} \nabla_{x_\parallel,p} F (t,x,p) \|_{L^\infty (\bar{\O} \times \R^3)}
\\& \ \ \ \ + \varepsilon 
\Big( \frac{p^0}{c}  + 1 + \frac{ \| \nabla_x^2 \phi_F \|_\infty + e B_3 + m g}{(m g)^2} \Big) e^{\frac{5 \tilde{\beta}}{12} |\pB^0 (t,x,p)|} e^{ - \frac{\tilde{\beta}}{4} |\pB^0 (t,x,p)| }
\\& \qquad \qquad \times \sup\limits_{0 \leq t < \infty} \| e^{ \frac{\tilde{\beta}}{4} \sqrt{(m c)^2 + |p|^2} } e^{ \frac{m g}{8 c} \tilde{\beta} x_3} \nabla_{x_\parallel,p} F (t,x,p) \|_{L^\infty (\bar{\O} \times \R^3)}.
\end{split} 
\ee
From the assumptions \eqref{condition3:F_support_spec} and \eqref{condition3:tilde_epilon_spec}, together with \eqref{est1:xb+pb_x3:dyn_spec} and \eqref{est1:tb_x3:dyn_spec}, we derive that
\be \label{est2:p3_F_x3_spec}
\begin{split}
& | p_3 \p_{x_3} F (t,x,p) | 
\\& \lesssim 2 e^{ \frac{m g}{24} \tilde{\beta} }
e^{2 \tilde{\beta} \frac{e}{c} \| (-\Delta_0)^{-1} (\nabla_x \cdot b ) \|_{L^\infty_{t,x}} } e^{ - \frac{\tilde{\beta}}{4} \sqrt{(m c)^2 + |p|^2} } e^{ - \frac{m g}{8 c} \tilde{\beta} x_3} \| \w_{\tilde \beta, 0}   \nabla_{x,p} F_0 \|_{L^\infty (\O \times \R^3)}
\\& \ \ \ \ + \big( \frac{p^0}{c} + 1 + \frac{ \| \nabla_x^2 \phi_F \|_\infty + e B_3 + m g}{(m g)^2} \big) \| e^{\tilde{\beta} |p^0|} \nabla_{x_\parallel,p} G \|_{L^\infty (\gamma_-)} 
\\& \ \ \ \ + \frac{1}{8} e^{ - \frac{\tilde{\beta}}{4} \sqrt{(m c)^2 + |p|^2} } e^{ - \frac{m g}{8 c} \tilde{\beta} x_3} 
\times \sup\limits_{0 \leq t < \infty} \| p_3 \p_{x_3} F (t,x,p) \|_{L^\infty (\bar{\O} \times \R^3)}
\\& \ \ \ \ + \frac{1}{8} e^{ - \frac{\tilde{\beta}}{4} \sqrt{(m c)^2 + |p|^2} } e^{ - \frac{m g}{8 c} \tilde{\beta} x_3}
\times \sup\limits_{0 \leq t < \infty} \| e^{ \frac{\tilde{\beta}}{4} \sqrt{(m c)^2 + |p|^2} } e^{ \frac{m g}{8 c} \tilde{\beta} x_3} \nabla_{x_\parallel,p} F (t,x,p) \|_{L^\infty (\bar{\O} \times \R^3)}
\\& \ \ \ \ + \frac{1}{8} e^{ - \frac{\tilde{\beta}}{4} \sqrt{(m c)^2 + |p|^2} } e^{ - \frac{m g}{8 c} \tilde{\beta} x_3} \times \sup\limits_{0 \leq t < \infty} \| e^{ \frac{\tilde{\beta}}{4} \sqrt{(m c)^2 + |p|^2} } e^{ \frac{m g}{8 c} \tilde{\beta} x_3} \nabla_{x_\parallel,p} F (t,x,p) \|_{L^\infty (\bar{\O} \times \R^3)}.
\end{split} 
\ee

Combining \eqref{est2:F_v_spec}, \eqref{est2:F_x_spec} and \eqref{est2:p3_F_x3_spec}, we derive that
\be \label{est:F_v+F_x+p3_F_x3_spec}
\begin{split}
& | \nabla_p F (t,x,p) | + | \nabla_{x_{\|}} F (t,x,p) | + | p_3 \p_{x_3} F (t,x,p) |
\\& \lesssim 6 e^{ \frac{m g}{24} \tilde{\beta} } e^{2 \tilde{\beta} \frac{e}{c} \| (-\Delta_0)^{-1} (\nabla_x \cdot b ) \|_{L^\infty_{t,x}} } e^{ - \frac{\tilde{\beta}}{4} \sqrt{(m c)^2 + |p|^2} } e^{ - \frac{m g}{8 c} \tilde{\beta} x_3} \| \w_{\tilde \beta, 0}   \nabla_{x,p} F_0  \|_{L^\infty (\O \times \R^3)}
\\& \ \ \ \ + 3 \big( 1 + \frac{ \| \nabla_x^2 \phi_F \|_\infty + e B_3 + mg}{(m g)^2} \big) e^{ - \frac{\tilde \beta}{2} \big( |p^0| + \frac{m g}{2 c} x_3 \big) } \| e^{\tilde{\beta} |p^0|} \nabla_{x_\parallel,p} G \|_{L^\infty (\gamma_-)}
\\& \ \ \ \ + \frac{3}{8} e^{ - \frac{\tilde{\beta}}{4} \sqrt{(m c)^2 + |p|^2} } e^{ - \frac{m g}{8 c} \tilde{\beta} x_3} 
\\& \qquad \qquad \times \Big\{ \sup\limits_{0 \leq t < \infty} \| p_3 \p_{x_3} F (t,x,p) \|_{L^\infty (\bar{\O} \times \R^3)}
\\& \qquad \qquad \qquad \qquad + \sup\limits_{0 \leq t < \infty} \| e^{ \frac{\tilde{\beta}}{4} \sqrt{(m c)^2 + |p|^2} } e^{ \frac{m g}{8 c} \tilde{\beta} x_3} \nabla_{x_\parallel,p} F (t,x,p) \|_{L^\infty (\bar{\O} \times \R^3)}
\\& \qquad \qquad \qquad \qquad + \sup\limits_{0 \leq t < \infty} \| e^{ \frac{\tilde{\beta}}{4} \sqrt{(m c)^2 + |p|^2} } e^{ \frac{m g}{8 c} \tilde{\beta} x_3} \nabla_{x_\parallel,p} F (t,x,p) \|_{L^\infty (\bar{\O} \times \R^3)} \Big\},
\end{split}
\ee
and this shows that
\be \label{est1:F_v+F_x+p3_F_x3_spec}
\begin{split}
& e^{ \frac{\tilde{\beta}}{4} \sqrt{(m c)^2 + |p|^2} } e^{ \frac{m g}{8 c} \tilde{\beta} x_3} \Big( | \nabla_p F (t,x,p) | + | \nabla_{x_{\|}} F (t,x,p) | + | p_3 \p_{x_3} F (t,x,p) | \Big)
\\& \lesssim 6 e^{ \frac{m g}{24} \tilde{\beta} } e^{2 \tilde{\beta} \frac{e}{c} \| (-\Delta_0)^{-1} (\nabla_x \cdot b ) \|_{L^\infty_{t,x}} } \| \w_{\tilde \beta, 0}   \nabla_{x,p} F_0  \|_{L^\infty (\O \times \R^3)}
\\& \ \ \ \ + 3 \big( 1 + \frac{ \| \nabla_x^2 \phi_F \|_\infty + e B_3 + mg}{(m g)^2} \big) \| e^{\tilde{\beta} |p^0|} \nabla_{x_\parallel,p} G \|_{L^\infty (\gamma_-)}
\\& \ \ \ \ + \frac{3}{4} \times \Big\{ \sup\limits_{0 \leq t < \infty} \| p_3 \p_{x_3} F (t,x,p) \|_{L^\infty (\bar{\O} \times \R^3)}
\\& \qquad \qquad \qquad \qquad + \sup\limits_{0 \leq t < \infty} \| e^{ \frac{\tilde{\beta}}{4} \sqrt{(m c)^2 + |p|^2} } e^{ \frac{m g}{8 c} \tilde{\beta} x_3} \nabla_{x_\parallel,p} F (t,x,p) \|_{L^\infty (\bar{\O} \times \R^3)} \Big\}.
\end{split}
\ee
Therefore, we derive 
\be \label{est2:F_v+F_x+p3_F_x3_spec}
\begin{split}
& \frac{1}{4} \times \Big\{ \sup\limits_{0 \leq t < \infty} \| e^{ \frac{\tilde{\beta}}{4} \sqrt{(m c)^2 + |p|^2} } e^{ \frac{m g}{8 c} \tilde{\beta} x_3} p_3 \p_{x_3} F (t,x,p) \|_{L^\infty (\bar{\O} \times \R^3)}
\\& \qquad \qquad + \sup\limits_{0 \leq t < \infty} \| e^{ \frac{\tilde{\beta}}{4} \sqrt{(m c)^2 + |p|^2} } e^{ \frac{m g}{8 c} \tilde{\beta} x_3} \nabla_{p} F (t,x,p) \|_{L^\infty (\bar{\O} \times \R^3)}
\\& \qquad \qquad + \sup\limits_{0 \leq t < \infty} \| e^{ \frac{\tilde{\beta}}{4} \sqrt{(m c)^2 + |p|^2} } e^{ \frac{m g}{8 c} \tilde{\beta} x_3} \nabla_{x_\parallel} F (t,x,p) \|_{L^\infty (\bar{\O} \times \R^3)} \Big\}
\\& \lesssim 6 e^{ \frac{m g}{24} \tilde{\beta} } e^{2 \tilde{\beta} \frac{e}{c} \| (-\Delta_0)^{-1} (\nabla_x \cdot b ) \|_{L^\infty_{t,x}} } \| \w_{\tilde \beta, 0}   \nabla_{x,p} F_0  \|_{L^\infty (\O \times \R^3)}
\\& \ \ \ \ + 3 \big( 1 + \frac{ \| \nabla_x^2 \phi_F \|_\infty + e B_3 + mg}{(m g)^2} \big) \| e^{\tilde{\beta} |p^0|} \nabla_{x_\parallel,p} G \|_{L^\infty (\gamma_-)}.
\end{split}
\ee
and thus we conclude \eqref{est:F_v:dyn_spec}.

\smallskip

\textbf{Step 2. Proof of \eqref{est:F_x:dyn_spec}.}
From \eqref{est2:F_v+F_x+p3_F_x3_spec}, we directly derive 
\be \label{est3:F_x_spec}
\begin{split}
& \sup\limits_{0 \leq t < \infty} \| e^{ \frac{\tilde{\beta}}{4} \sqrt{(m c)^2 + |p|^2} } e^{ \frac{m g}{8 c} \tilde{\beta} x_3} \nabla_{x_\parallel} F (t,x,p) \|_{L^\infty (\bar{\O} \times \R^3)}
\\& \lesssim 24 e^{ \frac{m g}{24} \tilde{\beta} } e^{2 \tilde{\beta} \frac{e}{c} \| (-\Delta_0)^{-1} (\nabla_x \cdot b ) \|_{L^\infty_{t,x}} } \| \w_{\tilde \beta, 0}   \nabla_{x,p} F_0  \|_{L^\infty (\O \times \R^3)}
\\& \ \ \ \ + 12 \big( 1 + \frac{ \| \nabla_x^2 \phi_F \|_\infty + e B_3 + mg}{(m g)^2} \big) \| e^{\tilde{\beta} |p^0|} \nabla_{x_\parallel,p} G \|_{L^\infty (\gamma_-)}.
\end{split}
\ee
Using \eqref{est1:F_x1_spec}-\eqref{est1:F_x6_spec}, together with \eqref{est1:xb_x:dyn}, \eqref{est1:vb_x:dyn} and  \eqref{est2:F_x12_spec}, we obtain that
\be \label{est4:F_x_spec}
\begin{split}
& e^{ \frac{\tilde{\beta}}{4} \sqrt{(m c)^2 + |p|^2} } e^{ \frac{m g}{8 c} \tilde{\beta} x_3} \big| \nabla_{x_3} F (t,x,p)  \big|
\\& \lesssim 2 e^{ \frac{m g}{24} \tilde{\beta} }
e^{2 \tilde{\beta} \frac{e}{c} \| (-\Delta_0)^{-1} (\nabla_x \cdot b ) \|_{L^\infty_{t,x}} } e^{ - \frac{\tilde{\beta}}{4} \sqrt{(m c)^2 + |p|^2} } e^{ - \frac{m g}{8 c} \tilde{\beta} x_3} \| \w_{\tilde \beta, 0}   \nabla_{x,p} F_0 \|_{L^\infty (\O \times \R^3)}
\\& \ \ \ \ + \big( \frac{\delta_{i3}}{\alpha_{F} (t,x,p)} + 1 + \frac{ \| \nabla_x^2 \phi_F \|_\infty + e B_3 + m g}{(m g)^2} \big) \| e^{\tilde{\beta} |p^0|} \nabla_{x_\parallel,p} G \|_{L^\infty (\gamma_-)}
\\& \ \ \ \ + \varepsilon \Big\{ | \nabla_{x} \tB (t,x,p) |  \frac{c}{|\tpB^0 (t,x,p)|} 
+ \frac{ | \nabla_{x} \tB (t,x,p) | }{ e^{ \frac{\tilde{\beta}}{4} |\pB^0 (t,x,p)| } } \big\{ c + e B_3 + \frac{3}{2} m g \big\}
\\& \qquad \qquad + \frac{ | \nabla_{x} \xB (t,x,p) | + |\nabla_{x} \pB (t,x,p)| }{ e^{ \frac{\tilde{\beta}}{4} |\pB^0 (t,x,p)| } } \Big\}
\\& \qquad \qquad \times \Big\{ \sup\limits_{0 \leq t < \infty} \| e^{ \frac{\tilde{\beta}}{4} \sqrt{(m c)^2 + |p|^2} } e^{ \frac{m g}{8 c} \tilde{\beta} x_3} p_3 \p_{x_3} F (t,x,p) \|_{L^\infty (\bar{\O} \times \R^3)}
\\& \qquad \qquad \qquad + \sup\limits_{0 \leq t < \infty} \| e^{ \frac{\tilde{\beta}}{4} \sqrt{(m c)^2 + |p|^2} } e^{ \frac{m g}{8 c} \tilde{\beta} x_3} \nabla_{p} F (t,x,p) \|_{L^\infty (\bar{\O} \times \R^3)}
\\& \qquad \qquad \qquad + \sup\limits_{0 \leq t < \infty} \| e^{ \frac{\tilde{\beta}}{4} \sqrt{(m c)^2 + |p|^2} } e^{ \frac{m g}{8 c} \tilde{\beta} x_3} \nabla_{x_\parallel} F (t,x,p) \|_{L^\infty (\bar{\O} \times \R^3)} \Big\}.
\end{split}
\ee
Together with \eqref{est1:tb_x12:dyn_spec} and \eqref{est2:F_v+F_x+p3_F_x3_spec}, we derive that
\be \notag
\begin{split}
& e^{ \frac{\tilde{\beta}}{4} \sqrt{(m c)^2 + |p|^2} } e^{ \frac{m g}{8 c} \tilde{\beta} x_3} \big| \nabla_{x_3} F (t,x,p)  \big|
\\& \lesssim 2 e^{ \frac{m g}{24} \tilde{\beta} }
e^{2 \tilde{\beta} \frac{e}{c} \| (-\Delta_0)^{-1} (\nabla_x \cdot b ) \|_{L^\infty_{t,x}} } e^{ - \frac{\tilde{\beta}}{4} \sqrt{(m c)^2 + |p|^2} } e^{ - \frac{m g}{8 c} \tilde{\beta} x_3} \| \w_{\tilde \beta, 0}   \nabla_{x,p} F_0 \|_{L^\infty (\O \times \R^3)}
\\& \ \ \ \ + \big( \frac{\delta_{i3}}{\alpha_{F} (t,x,p)} + 1 + \frac{ \| \nabla_x^2 \phi_F \|_\infty + e B_3 + m g}{(m g)^2} \big) \| e^{\tilde{\beta} |p^0|} \nabla_{x_\parallel,p} G \|_{L^\infty (\gamma_-)}
\\& \ \ \ \ + \varepsilon \Big\{ \frac{1}{ \alpha_F (t,x,p) }  
e^{ \frac{8}{m g} (4g + 2 \| \nabla_x ^2 \phi_F \|_{\infty} + e \| \p_t \p_{x_3} \phi_F (t, x_\parallel , 0) \|_{L^\infty(\p\O)} ) |\pB^0 |}
\\& \qquad \qquad + e^{ (1 + B_3 + \| \nabla_x ^2 \phi_F \|_\infty) \tB (t,x,p) } \Big( \frac{|\pB^0 (t,x,p)|}{2 c} |\tB (t,x,p)| \Big[ \frac{2}{m g}  + \frac{ 16 (\sqrt{c m} + | \pBn (t,x,p) |)}{c m^2 g} 
\\& \qquad \qquad \qquad \times \big(1 + \sqrt{(m c)^2 + |\pB (t,x,p)|^2} + 2 \pBn (t,x, p) \big) \Big]
( \| \nabla_x^2 \phi_F  \|_\infty + e B_3 + mg ) \Big) 
\\& \qquad \qquad + \big( \frac{\delta_{i3}}{\alpha_F (t,x,p)} + 1 + \frac{ \| \nabla_x^2 \phi_F \|_\infty + e B_3 + mg}{(m g)^2} \big) e^{ - \frac{\tilde \beta}{2} \big( |p^0| + \frac{m g}{2 c} x_3 \big) } e^{ \frac{3}{4} \tilde{\beta} |\pB^0 (t,x,p)| } \Big\}
\\& \qquad \qquad \qquad \times \Big\{ 24 e^{ \frac{m g}{24} \tilde{\beta} } e^{2 \tilde{\beta} \frac{e}{c} \| (-\Delta_0)^{-1} (\nabla_x \cdot b ) \|_{L^\infty_{t,x}} } \| \w_{\tilde \beta, 0} \nabla_{x,p} F_0  \|_{L^\infty (\O \times \R^3)}
\\& \qquad \qquad \qquad \qquad + 12 \big( 1 + \frac{ \| \nabla_x^2 \phi_F \|_\infty + e B_3 + mg}{(m g)^2} \big) \| e^{\tilde{\beta} |p^0|} \nabla_{x_\parallel,p} G \|_{L^\infty (\gamma_-)} \Big\},
\end{split}
\ee
and thus we conclude \eqref{est:F_x:dyn_spec} via the assumptions \eqref{condition3:F_support_spec} and \eqref{condition3:tilde_epilon_spec}.
\end{proof}

\begin{lemma} \label{lem:D3tphi_F_spec}

Suppose that \eqref{Bootstrap_spec} holds. Further, we assume 
\be \label{est:D2xphi_F_spec}
1 + B_3 + \| \nabla_x ^2 \phi_F \|_\infty 
\leq \frac{\hat{m} g}{24} \tilde{\beta},
\ee
where $\hat{m} = \min \{ m_+, m_- \}$.
Suppose there exists $\tpx > 1$ such that 
\be \notag
f_{\pm} (t, x, p) = 0
\ \text{ for any } \
\frac{4 c}{m_{\pm} g} |p| + 3 x_{3} \geq \tpx.
\ee
Set $\lambda = \frac{g}{48} \hat{m} \beta$, suppose $\varepsilon$ in the boundary condition \eqref{bdry:F_spec} satisfies that	
\be \notag
2 \varepsilon 
< \exp \Big\{ - \lambda \Big( \big( \frac{1}{\min \{ \frac{g}{4 \sqrt{2}}, \frac{c}{\sqrt{10}} \}} + 1 \big) \times \tpx + 1 \Big) \Big\}.
\ee
Consider $(t,x) \in [0, \infty) \times \bar\O$, then
\be \label{est:D.b_spec}
\begin{split}
|\nabla_x \cdot b (t, x)|  
& \lesssim C_1 \frac{ \mathbf{1}_{|x_3| \leq 1} }{ \sqrt{ x_3 } } + C_2  e^{ - \frac{\hat{m} g}{8 c} \tilde{\beta} x_3},
\end{split}
\ee
and
\be \label{est:D3tphi_F_spec} 
| \p_{x_3} \p_t \phi_{F} (t,x)| 
\lesssim C_1 + C_2 ( 1 + \frac{8 c}{\hat{m} g \tilde{\beta}} ),  
\ee
where $\hat{m} = \min\{m_+, m_- \}$, and
\begin{align}
C_{1} & = \frac{ 1 }{{\tilde{\beta}}^3} ( \frac{e_+}{\sqrt{ m^2_+ g  }} + \frac{e_-}{\sqrt{ m^2_- g  }} ),
\label{express:C_spec} \\
C_{2} & = \sum\limits_{i = \pm} \frac{e_i }{{\tilde{\beta}}^3} \Big( 2 e^{ \frac{m_i g}{24} \tilde{\beta} } 
e^{2 \tilde{\beta} \frac{e_i}{c} \| (-\Delta_0)^{-1} (\nabla_x \cdot b ) \|_{L^\infty_{t,x}} } \| \w_{\pm, \tilde \beta, 0}   \nabla_{x,p} F_{\pm, 0} \|_{L^\infty (\O \times \R^3)}
\\& \qquad \qquad \quad + \big( 1 + \frac{ \| \nabla_x^2 \phi_F \|_\infty + e B_3 + m_i g }{(m_i g)^2} \big) \| e^{\tilde{\beta} |p^0|} \nabla_{x_\parallel,p} G_i \|_{L^\infty (\gamma_-)} + \frac{1}{\sqrt{ m_i g}} \Big).
\label{express:D_spec}
\end{align}
\end{lemma}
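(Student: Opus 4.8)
The plan is to follow the proof of Lemma \ref{lem:D3tphi_F} essentially verbatim, replacing the inflow a priori estimate by its specular counterpart Proposition \ref{RE:dyn_spec}. First I would reduce $\nabla_x\cdot b$ to a momentum integral of $\nabla_x F_\pm$: since $(h_\pm,\rho,\Phi)$ solves the steady problem \eqref{VP_h_spec} and $h_\pm$ is transported along characteristics, $\int_{\R^3}v_\pm\cdot\nabla_x h_\pm\,\dd p=0$ as in \eqref{eq:int_v*h_x=0}, so writing $F_\pm=h_\pm+f_\pm$ gives
\Be \notag
|\nabla_x\cdot b(t,x)|\le e_+\int_{\R^3}|v_+\cdot\nabla_x F_+(t,x,p)|\,\dd p+e_-\int_{\R^3}|v_-\cdot\nabla_x F_-(t,x,p)|\,\dd p.
\Ee
Into this I would insert the pointwise bound \eqref{est:F_x:dyn_spec} of Proposition \ref{RE:dyn_spec}. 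The only term there not already carrying an integrable Gaussian weight is the kinetic-distance singularity $\delta_{i3}/\alpha_{\pm,F}(t,x,p)$, which is multiplied by $|v_3|$ in the flux; exactly as in Lemma \ref{lem:D3tphi_F} (see \eqref{est:vb/vbn}) I would split into the cases $v_3\ge 0$, where monotonicity of $\P_{\pm,3}$ along characteristics gives $|v_3|/|\vBn|\le 1$, and $v_3<0$, where \eqref{est:x3pB3} of Proposition \ref{lem2:tB} yields $x_3\le \tfrac{2}{m^2 g}|\pBn|^2$ and hence $|v_3|/|\vBn|\lesssim \langle\pB\rangle/\sqrt{m^2 g x_3}$.

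Integrating over $p$ and carrying out the $\int_{\R^3}\frac{1}{\alpha_{\pm,F}}$ computation as in \eqref{est:1/alpha} then produces the $\mathbf{1}_{|x_3|\le 1}/\sqrt{x_3}$ term with constant $C_1$, while the remaining Gaussian-weighted contributions integrate to the exponentially decaying term $C_2\,e^{-\hat m g\tilde\beta x_3/(8c)}$ with $C_1,C_2$ exactly as in \eqref{express:C_spec}--\eqref{express:D_spec}; this gives \eqref{est:D.b_spec}. For \eqref{est:D3tphi_F_spec}, since $\phi_F=\Phi+\Psi$ and $\Phi$ is time-independent, $\partial_{x_3}\partial_t\phi_F=\partial_{x_3}\partial_t\Psi=-\partial_{x_3}(-\Delta_0)^{-1}(\nabla_x\cdot b)$ by the continuity identity \eqref{identity:Psi_t}, which I would write as $\int_\O \nabla\cdot b(y)\,\partial_{x_3}\mathfrak{G}(x,y)\,\dd y$ using the Green's function of Lemma \ref{lemma:G} together with the kernel bound $|\partial_{x_3}\mathfrak{G}|\lesssim \min\big(y_3/|x-y|^3,\,1/|x-y|^2\big)$. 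Plugging \eqref{est:D.b_spec} into this convolution and performing the $y$-integration exactly as in \eqref{est:D.b4}--\eqref{est2:D.b4} of Lemma \ref{lem:D3tphi_F} -- the logarithmic singularity of the horizontal slice of the kernel against $1/\sqrt{y_3}$ is integrable near $y_3=0$, and against $e^{-\hat m g\tilde\beta y_3/(8c)}$ it contributes the factor $1+\tfrac{8c}{\hat m g\tilde\beta}$ -- yields the claimed bound.

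The main obstacle is already absorbed into Proposition \ref{RE:dyn_spec}: that is where the $\varepsilon$-dependent specular-reflection terms and the uniform-in-bounces control of the backward momentum $\pB$ are handled, using the compact-support hypothesis $f_\pm(t,x,p)=0$ for $\tfrac{4c}{m_\pm g}|p|+3x_3\ge\tpx$ and the smallness of $\varepsilon$. Granting that estimate, the residual work here is purely the bookkeeping of the boundary singularity in $x_3$ (tracking the $\delta_{i3}/\alpha_{\pm,F}$ term through the momentum integral) and the verification that the Green's-function convolution reproduces precisely the stated form $C_1+C_2\big(1+\tfrac{8c}{\hat m g\tilde\beta}\big)$; both are routine adaptations of the corresponding steps in the proof of Lemma \ref{lem:D3tphi_F}, so no new difficulty is expected beyond careful constant-tracking.
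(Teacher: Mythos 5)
Your overall skeleton matches the paper: the reduction of $\nabla_x\cdot b$ to $\sum_\pm e_\pm\int|v_\pm\cdot\nabla_x F_\pm|\,\dd p$ via \eqref{eq:int_v*h_x=0}, and the whole second half (writing $\p_{x_3}\p_t\phi_F=-\p_{x_3}(-\Delta_0)^{-1}(\nabla_x\cdot b)$, the kernel bound on $\p_{x_3}\mathfrak{G}$, and the $y$-integration as in \eqref{est:D.b4}--\eqref{est2:D.b4}) are exactly the paper's steps. The gap is in the step where you ``insert the pointwise bound \eqref{est:F_x:dyn_spec} of Proposition \ref{RE:dyn_spec}''. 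That estimate is proved under \eqref{est1:D3tphi_F_spec}, which requires a bound on $\|\p_t\p_{x_3}\phi_F(t,x_\parallel,0)\|_{L^\infty(\p\O)}$ in addition to $\|\nabla_x^2\phi_F\|_\infty$ (and it also assumes \eqref{est:beta_b_dy_spec}); indeed the kinetic-distance factor $\delta_{i3}/\alpha_{\pm,F}$ in \eqref{est:F_x:dyn_spec} is generated through Lemma \ref{VL:dyn}, whose Gronwall rate contains $e_\pm\|\p_t\p_{x_3}\phi_F(t,x_\parallel,0)\|_{L^\infty(\p\O)}$ (see \eqref{est1:xb_x/w:dyn} and \eqref{est1:tb_x12:dyn_spec}). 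But Lemma \ref{lem:D3tphi_F_spec} deliberately assumes only \eqref{est:D2xphi_F_spec}, because its conclusion \eqref{est:D3tphi_F_spec} is precisely the estimate that later supplies the $\p_t\p_{x_3}\phi_F$ control in the bootstrap (Proposition \ref{prop:Unif_D2xDp_dy_spec}, Theorem \ref{theo:RD_spec}). Quoting Proposition \ref{RE:dyn_spec} as a black box therefore either makes the argument circular at exactly the point where the lemma is needed, or forces you to add the hypothesis the lemma is meant to produce.

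The paper's proof avoids this by going back to the intermediate decomposition \eqref{est1:F_x1_spec}--\eqref{est1:F_x6_spec} and keeping the boundary factors in the form $1/|\vBpn|$ through \eqref{est:tb_x:dyn_spec}, \eqref{est:xb_x:dyn_spec}, \eqref{est:vb_x:dyn_spec}, which involve only $\|\nabla_x^2\phi_F\|_\infty$; the $\delta_{i3}$-singularity is then paired with the factor $v_3$ coming from the flux and $|v_3|/|\vBpn|$ is controlled by the sign-monotonicity case split \eqref{est:vb/vbn} together with Proposition \ref{lem2:tB} --- this is where the clean $1/\sqrt{x_3}$ in $C_1$ of \eqref{express:C_spec} comes from, with no input on $\p_t\p_{x_3}\phi_F$, and the $\varepsilon$-terms are absorbed using the compact support of $f_\pm$ and the smallness of $\varepsilon$ exactly as in Proposition \ref{RE:dyn_spec} but under the weaker hypothesis (the paper states explicitly that \eqref{est:D2xphi_F_spec} suffices). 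Two smaller inaccuracies in your plan point the same way: the case split \eqref{est:vb/vbn} bounds $|v_3|/|\vBn|$, not $|v_3|/\alpha_{\pm,F}$, so it does not attach to the $\alpha_{\pm,F}$-term of \eqref{est:F_x:dyn_spec}; and if you instead integrate $1/\alpha_{\pm,F}$ as in \eqref{est:1/alpha}, the $1/\sqrt{x_3}$ term comes out multiplied by the initial-data and boundary-data norms of \eqref{est:F_x:dyn_spec} rather than by the stated $C_1$. The fix is to rerun the flux estimate from the intermediate bounds, as the paper does, rather than from the Proposition's final statement.
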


\begin{proof}

We abuse the notation as in \eqref{abuse} in the proof. Analogous to \eqref{eq:int_v*h_x=0} and \eqref{est:D.b1}, we have
\Be \notag
\begin{split}
|\nabla_x \cdot b (t,x)| 
\leq e_+ \int_{\R^3} | v_+ \cdot \nabla_x F_+ (t,x,p) | \dd p + e_- \int_{\R^3} | v_- \cdot \nabla_x F_- (t,x,p) | \dd p.
\end{split}
\Ee 
Recall the proof of \eqref{est:F_x:dyn_spec}, we bound $\nabla_x F (t,x,p)$ by
\be \notag 
\begin{split}
& | \nabla_{x} F (t,x,p) | 
\\& \leq \mathbf{1}_{t \leq \tB (t,x,p)}
\frac{| \nabla_{x} \X  (0;t,x,p)| +| \nabla_{x} \P  (0;t,x,p)|  }{\w_{\tilde \beta} (0, \Z (0;t,x,p) )} \| \w_{\tilde \beta, 0}   \nabla_{x,p} F_0  \|_{L^\infty (\O \times \R^3)} \\& \ \ \ \ +	\mathbf{1}_{t > \tB (t,x,p)} \frac{ | \nabla_{x} \xB (t,x,p) | + |\nabla_{x} \pB (t,x,p)|}{ e^{\tilde{\beta} |\pB^0 (t,x,p)| } } \| e^{\tilde{\beta} |p^0|} \nabla_{x_\parallel,p} G \|_{L^\infty (\gamma_-)}
\\& \ \ \ \ + \mathbf{1}_{t >\tB (t,x,p) } \varepsilon | \nabla_{x} \tB (t,x,p) | \times \frac{c}{|\tpB^0 (t,x,p)|} \big| \tpBn \p_{x_3} F (t - \tB, \xB, \tpB ) \big|
\\& \ \ \ \ + \mathbf{1}_{t >\tB (t,x,p) } \varepsilon 
\frac{ | \nabla_{x} \tB (t,x,p) | }{ e^{ \frac{\tilde{\beta}}{4} |\pB^0 (t,x,p)| } } \big\{ c + e B_3 + \frac{3}{2} m g \big\}
\\& \qquad \qquad \qquad \qquad \qquad \times \sup\limits_{0 \leq t < \infty} \| e^{ \frac{\tilde{\beta}}{4} \sqrt{(m c)^2 + |p|^2} } e^{ \frac{m g}{8 c} \tilde{\beta} x_3} \nabla_{x_\parallel,p} F (t,x,p) \|_{L^\infty (\bar{\O} \times \R^3)}
\\& \ \ \ \ + \mathbf{1}_{t >\tB (t,x,p) } \varepsilon 
\frac{ | \nabla_{x} \xB (t,x,p) | + |\nabla_{x} \pB (t,x,p)| }{ e^{ \frac{\tilde{\beta}}{4} |\pB^0 (t,x,p)| } }
\\& \qquad \qquad \qquad \qquad \qquad \times \sup\limits_{0 \leq t < \infty} \| e^{ \frac{\tilde{\beta}}{4} \sqrt{(m c)^2 + |p|^2} } e^{ \frac{m g}{8 c} \tilde{\beta} x_3} \nabla_{x_\parallel,p} F (t,x,p) \|_{L^\infty (\bar{\O} \times \R^3)}.
\end{split}
\ee
From \eqref{est:tb_x:dyn_spec}, we bound $|\p_{x_i} \tBp (t,x,p)|$ by
\Be \notag
\begin{split}
& | \p_{x_i} \tBp (t,x,p) | 
\\& \leq \frac{1}{ | \vBpn (t,x,p) | }  
\Big( \delta_{i3} + \frac{|\tBp (t,x,p)|^2}{2} ( \| \nabla_x^2 \phi_F \|_\infty + e_{\pm} B_3 + m_{\pm} g ) \Big) e^{ (1 + B_3 + \| \nabla_x ^2 \phi_F  \|_\infty) |\tBp| }.
\end{split}
\Ee
Moreover, following \eqref{est:vb/vbn} we obtain
\be \notag
\frac{|v_3|}{|\vBn (t,x,p)|}
\leq 1 + \frac{ \sqrt{ 2 (m_\pm c)^2 + 2 |\pB|^2} }{\sqrt{m^2 g x_3}}.
\ee
Similar to Lemma \ref{lem:D3tphi_F}, although we use the assumption \eqref{est1:D3tphi_F_spec} in  \eqref{est1:F_x1_spec}, we remark that the condition \eqref{est:D2xphi_F_spec} is sufficient for the above estimate. We omit the rest of the proof, since it follows directly from Lemma \ref{lem:D3tphi_F} and Proposition \ref{RE:dyn_spec}.
\end{proof}

Using the results from Proposition \ref{RE:dyn_spec} and Lemma \ref{lem:D3tphi_F_spec}, we illustrate the regularity estimate for the dynamical problem as follows.

\begin{theorem}[Regularity Estimate]
\label{theo:RD_spec}

Suppose $(F_{\pm}, \phi_F)$ solves \eqref{VP_F}, \eqref{Poisson_F}, \eqref{VP_0}, \eqref{Dbc:F} and \eqref{bdry:F_spec} under the compatibility condition \eqref{CC_F0=G_spec}.
Consider 
\be \notag
F_{\pm} (t,x,p) = h_{\pm} (x,p) + f_{\pm} (t,x,p),
\ee 
where $(h_{\pm}, \rho, \Phi)$ solves \eqref{VP_h_spec}-\eqref{eqtn:Dphi_spec} and $(f_{\pm}, \varrho, \Psi)$ solves \eqref{eqtn:f_spec}-\eqref{Poisson_f_spec} respectively.
Assume all conditions in Theorem \ref{theo:RS_spec} and Theorem \ref{theo:AS_spec}.
Further, we assume that \eqref{est:beta_b_dy_spec}, \eqref{est1:D3tphi_F_spec} and
\Be \label{condition:ML_spec}
M \leq \beta e^{ - \frac{m_{\pm} g}{24} \beta}
\ \text{ and } \ 
L \leq \tilde{\beta} e^{ - \frac{m_{\pm} g}{24} \tilde{\beta} }.
\Ee
hold for some $g, \beta, \tilde \beta > 0$.
Consider $\alpha_{\pm, F} (t,x,p)$ defined in \eqref{alpha_F}, then for any $(t,x,p) \in [0, \infty) \times \bar\O \times \R^3$,
\Be \label{est_final:F_v:dyn_spec}
\begin{split} 
& \| e^{ \frac{\tilde{\beta}}{4} \sqrt{(m_{\pm} c)^2 + |p|^2} } e^{ \frac{m_{\pm} g}{8 c} \tilde{\beta} x_3} \nabla_p F_{\pm} (t,x,p) \|_{L^\infty(\O \times \R^3)} 
\\& \lesssim 2 e^{ 2 + \frac{m_{\pm} g}{24} \tilde{\beta} }
\| \w_{\pm, \tilde \beta, 0}   \nabla_{x,p} F_{\pm, 0} \|_{L^\infty (\O \times \R^3)}
+ \frac{1}{24} \tilde{\beta}
\| e^{\tilde{\beta} |p^0_{\pm}|} \nabla_{x_\parallel,p} G_{\pm} \|_{L^\infty (\gamma_-)},
\end{split}
\Ee 
and 
\Be \label{est_final:F_x:dyn_spec}
\begin{split} 
& e^{ \frac{\tilde{\beta}}{4} \sqrt{(m_{\pm} c)^2 + |p|^2} } e^{ \frac{m_{\pm} g}{8 c} \tilde{\beta} x_3} \big| \nabla_x F (t,x,p) \big|
\\& \lesssim 2 e^{ 2 + \frac{m_{\pm} g}{24} \tilde{\beta}}
\| \w_{\pm, \tilde \beta, 0} \nabla_{x,p} F_{\pm, 0} \|_{L^\infty (\O \times \R^3)}
+ ( \frac{ \mathbf{1}_{|x_3| \leq 1} }{\alpha_{\pm, F} (t,x,p)} + \frac{1}{24} \tilde{\beta} ) \| e^{\tilde{\beta} |p^0_{\pm}|} \nabla_{x_\parallel,p} G_{\pm} \|_{L^\infty (\gamma_-)}.
\end{split}
\Ee 
Moreover, 
\be \label{est_final:phi_F_spec}
\| \nabla_x^2 \phi_F (t) \|_{L^\infty (\bar \O)} + 
\| \p_t  \nabla_x  \phi_F (t) \|_{L^\infty (\bar \O)} 
\lesssim 1.
\ee
\end{theorem}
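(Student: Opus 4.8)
The proof will run in close parallel to that of Theorem \ref{theo:RD}, the regularity estimate for the pure inflow problem, with every ingredient replaced by its specular-boundary counterpart. The plan is: first read off the weighted bounds on $\nabla_{x,p}F_{\pm}$ directly from Proposition \ref{RE:dyn_spec}, then simplify their right-hand sides using the flux bound of Theorem \ref{theo:AS_spec}, and finally upgrade to the estimate on $\nabla_x^2\phi_F$ and $\partial_t\nabla_x\phi_F$ through the Poisson equation and a H\"older-continuity argument.

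First I would check that the hypotheses of Proposition \ref{RE:dyn_spec} are all in force: the bootstrap condition \eqref{Bootstrap_spec}, the smallness \eqref{est:beta_b_dy_spec}, the second-order bound \eqref{est1:D3tphi_F_spec}, the compact-support property \eqref{condition3:F_support_spec} (which is exactly \eqref{condition2:f_support_spec}, part of the assumed conditions of Theorem \ref{theo:AS_spec}), and the $\varepsilon$-smallness \eqref{condition3:tilde_epilon_spec} (which coincides with \eqref{condition2:tilde_epilon_spec}) are all among the hypotheses assumed in the statement. Hence Proposition \ref{RE:dyn_spec} yields \eqref{est:F_v:dyn_spec} and \eqref{est:F_x:dyn_spec}. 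To pass from these to the cleaner bounds \eqref{est_final:F_v:dyn_spec} and \eqref{est_final:F_x:dyn_spec} I would invoke \eqref{Uest:D^-1_Db_spec} from Theorem \ref{theo:AS_spec} to control the flux factor $e^{2\tilde{\beta}\frac{e_{\pm}}{c}\|(-\Delta_0)^{-1}(\nabla_x\cdot b)\|_{L^\infty_{t,x}}}$ by an absolute constant, and use the bound on $\|\nabla_x^2\phi_F\|_\infty$ coming from \eqref{est1:D3tphi_F_spec} (together with $g\tilde\beta\gg1$) to absorb the factor $1+\frac{\|\nabla_x^2\phi_F\|_\infty+eB_3+m_{\pm}g}{(m_{\pm}g)^2}$ into the implied constant. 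This is precisely the simplification used to derive \eqref{est_final:F_v:dyn} from \eqref{est:F_v:dyn} in the inflow case.

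It then remains to establish \eqref{est_final:phi_F_spec}. For the time derivative, $\partial_t\nabla_x\phi_F=\partial_t\nabla_x\Psi$, whose only nontrivial component at the boundary is $\partial_{x_3}\partial_t\phi_F$; Lemma \ref{lem:D3tphi_F_spec} bounds it by $C_1+C_2(1+\tfrac{8c}{\hat{m}g\tilde\beta})$, and the constants $C_1,C_2$ in \eqref{express:C_spec}-\eqref{express:D_spec} are then controlled with \eqref{condition:ML_spec} (which forces $L$, hence $\|\w_{\pm,\tilde\beta,0}\nabla_{x,p}F_{\pm,0}\|_\infty$ and $\|e^{\tilde\beta|p^0_{\pm}|}\nabla_{x_\parallel,p}G_{\pm}\|_\infty$, to be $\lesssim\tilde\beta$) and \eqref{Uest:D^-1_Db_spec}, giving $\lesssim 1$. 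For $\|\nabla_x^2\phi_F(t)\|_\infty$ I would use the Poisson equation $-\Delta_x\phi_F=\rho+\varrho(t)$: the $L^\infty$ norm of $\rho+\varrho(t)$ is controlled by \eqref{Uest:rho_spec} and \eqref{decay:varrho_spec}, while the H\"older seminorm $|\rho+\varrho(t)|_{C^{0,\delta}(\O)}$ for $0<\delta<1/2$ is estimated by a difference-quotient argument: integrating \eqref{est_final:F_x:dyn_spec} in $p$ gives $\big|\int_{\R^3}\nabla_{x_i}F\,\dd p\big|\lesssim 1+\mathbf{1}_{|x_3|\leq1}\frac{\delta_{i3}}{\sqrt{m g x_3}}$, and the $1/\sqrt{x_3}$ singularity near the boundary integrates to a finite H\"older seminorm exactly as in the proof of \eqref{est:phi_C2_spec}. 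Feeding these two pieces into the elliptic estimate of Lemma \ref{lem:rho_to_phi} yields the claimed bound.

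I expect the genuine difficulty to have already been absorbed into Proposition \ref{RE:dyn_spec} and Lemma \ref{lem:D3tphi_F_spec}: under specular reflection the characteristics bounce and $|p|$ may grow after each bounce, so the crucial structural point is that the compact-support hypothesis \eqref{condition3:F_support_spec} renders the backward exit time uniformly bounded on the support of $f_\pm$, and then the $\varepsilon$-smallness \eqref{condition3:tilde_epilon_spec} (paired with that exit-time bound) lets the boundary self-interaction term be absorbed on the left-hand side of the $L^\infty$ estimates. Granting those two results, the only mildly delicate part of the present theorem is the book-keeping in the H\"older argument near $x_3=0$, which is routine and identical to the inflow case.
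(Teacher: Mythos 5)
Your proposal is correct and follows exactly the route the paper intends: the paper's own proof of Theorem \ref{theo:RD_spec} is a one-line reduction to Theorem \ref{theo:RD}, Proposition \ref{RE:dyn_spec} and Lemma \ref{lem:D3tphi_F_spec}, and your argument is precisely a fleshed-out version of that reduction (verification of the hypotheses of Proposition \ref{RE:dyn_spec}, simplification of its right-hand sides via \eqref{Uest:D^-1_Db_spec} and \eqref{est1:D3tphi_F_spec}, and the elliptic/H\"older step for \eqref{est_final:phi_F_spec}).
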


\begin{proof}

Here we omit the proof since it directly follows from Theorem \ref{theo:RD}, Proposition \ref{RE:dyn_spec} and Lemma \ref{lem:D3tphi_F_spec}.
\end{proof}

\subsubsection{Stability and Uniqueness} 
\label{Sec:SU_spec}

In this section, we prove the stability and uniqueness in Theorem \ref{theo:UD_spec} and Theorem \ref{theo:UA_spec}, respectively. The idea of the proof is similar to the proof of Proposition \ref{prop:cauchy} and Theorem \ref{theo:UD}.

\begin{theorem}[Stability Theorem]
\label{theo:UD_spec}

Suppose $(F_{1, \pm}, \nabla_x \phi_{F_1})$ and $(F_{2, \pm}, \nabla_x \phi_{F_2})$ solve \eqref{VP_F}, \eqref{Poisson_F}, \eqref{VP_0}, \eqref{Dbc:F} and \eqref{bdry:F_spec} under the compatibility condition \eqref{CC_F0=G_spec}.
Consider 
\be \notag
F_{1, \pm} (t,x,p) = h_{1, \pm} (x,p) + f_{1, \pm} (t,x,p),
\ee 
where $(h_{1, \pm}, \rho_1, \Phi_1)$ solves \eqref{VP_h_spec}-\eqref{eqtn:Dphi_spec}, and $(f_{1, \pm}, \varrho_1, \Psi_1)$ solves \eqref{eqtn:f_spec}-\eqref{Poisson_f_spec}.
We define 
\be \label{w1_dy_spec}
\w_{1, \pm} (t,x,p) = e^{ \frac{1}{12} \tilde{\beta} \big( \sqrt{(m_{\pm} c)^2 + |p|^2} + \frac{1}{c} ( e_{\pm} \phi_{F_1} (x) + m_{\pm} g x_3 ) \big) },
\ee 
and
\be \label{b1_dy_spec}
b_1 (t,x) = \int_{\R^3} \big(v_+ e_+ ( F_{1,+} - h_{1,+} ) + v_{-} e_{-} ( F_{1, -} - h_{1, -}) \big) \dd p.
\ee
Assume both $(F_{1, \pm}, \nabla_x \phi_{F_1})$ and $(F_{2, \pm}, \nabla_x \phi_{F_2})$ satisfy all assumptions in Theorem \ref{theo:RD_spec}, then
\Be \label{est:F1-2_spec}
\begin{split}
& \sum_{i = \pm} \sup_{s \in [0,t]} \| \w_{1,i} (F_{1,i} - F_{2,i} ) (s) \|_{L^{\infty} (\O \times \R^3)} 
\\& \leq \exp \Big\{ { t \Big( \frac{1}{12} \tilde{\beta} \sup_{s \in [0,t]} \| (-\Delta_0)^{-1} (\nabla_x \cdot b_1(s)) \|_{L^\infty(\O)}
+ \sum_{j = \pm} \sup_{s \in [0,t]} \| \w_{1, j} \nabla_p F_{2, j} (s) \|_{L^{\infty} (\O \times \R^3)} \Big) } \Big\}
\\& \ \ \ \ \times \sum_{i = \pm} \| \w_{1,i} (F_{1,i} - F_{2,i} ) (0) \|_{L^{\infty} (\O \times \R^3)}.
\end{split}
\Ee	
\end{theorem}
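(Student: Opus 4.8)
The plan is to adapt the proof of Theorem~\ref{theo:UD} to the specular setting, isolating the single new contribution coming from the reflection at $\gamma_-$. First I would note that, since $(F_{1,\pm},\phi_{F_1})$ and $(F_{2,\pm},\phi_{F_2})$ solve \eqref{VP_F}, \eqref{Poisson_F}, \eqref{Dbc:F} and \eqref{bdry:F_spec} with the same inflow data $G_\pm$ and, by \eqref{CC_F0=G_spec}, compatible initial data, the difference $F_{1,\pm}-F_{2,\pm}$ satisfies a transport equation along the characteristics $\Z_{1,\pm}=(\X_{1,\pm},\P_{1,\pm})$ of $(F_1,\phi_{F_1})$ whose source is the mean-field difference $-e_\pm\nabla_x(\phi_{F_1}-\phi_{F_2})\cdot\nabla_p F_{2,\pm}$. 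Multiplying by $\w_{1,\pm}$ defined in \eqref{w1_dy_spec} and using the conservation identity \eqref{eq:energy_conservation_dy} (so that $\w_{1,\pm}$ varies along $\Z_{1,\pm}$ only through $-\tfrac{e_\pm}{c}(-\Delta_0)^{-1}(\nabla_x\cdot b_1)$, cf.\ \eqref{dDTE^ell_2_spec}), one checks that $D_\pm:=\w_{1,\pm}(F_{1,\pm}-F_{2,\pm})$ solves \eqref{VP_diff_12_dy}, with boundary relation $D_\pm(t,x,p)=\varepsilon D_\pm(t,x,\tilde p)$ on $\R_+\times\gamma_-$; here I would use $|\tilde p|=|p|$ and $\phi_{F_1}|_{\p\O}=0$ to identify $\w_{1,\pm}(t,x,p)=\w_{1,\pm}(t,x,\tilde p)=e^{\frac{\tilde\beta}{12}\sqrt{(m_\pm c)^2+|p|^2}}$ on the boundary, and the cancellation of the common term $G_\pm$.

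Next I would integrate the equation for $D_\pm$ along $\Z_{1,\pm}$ and split into the cases $t\le\tBf(t,x,p)$ and $\tBf(t,x,p)<t$, exactly as in \eqref{energy:wF_12_tb<t}--\eqref{energy:wF_12_tb>t}. When $t\le\tBf$ the trajectory reaches the initial time, so $|D_\pm(t,x,p)|$ is bounded by $\|D_\pm(0)\|_{L^\infty}$ plus the integrated source. When $\tBf<t$ the trajectory meets $\xBf\in\p\O$ at time $t-\tBf\ge0$ with momentum $\pBf$, and the boundary relation above gives $D_\pm(t-\tBf,\xBf,\pBf)=\varepsilon D_\pm(t-\tBf,\xBf,\tpBf)$, which is therefore bounded by $\varepsilon\sup_{s\in[0,t]}\|D_\pm(s)\|_{L^\infty}$, again plus the integrated source. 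For the source in either case I would estimate $\|\nabla_x(\phi_{F_1}-\phi_{F_2})(s)\|_{L^\infty_x}$ as in \eqref{est:Dphi_12_tb<t}--\eqref{est2:Dphi_12_tb<t}: writing $\phi_{F_1}-\phi_{F_2}=(-\Delta_0)^{-1}\!\int_{\R^3}\big(e_+\tfrac{D_+}{\w_{1,+}}+e_-\tfrac{D_-}{\w_{1,-}}\big)\dd p$, using the lower bound $\w_{1,\pm}(t,x,p)\ge e^{\frac{\tilde\beta}{12}(\sqrt{(m_\pm c)^2+|p|^2}+\frac{m_\pm g}{2c}x_3)}$ from \eqref{Bootstrap_spec}, and applying \eqref{est:nabla_phi} together with Lemma~\ref{lemma:G}, to get $\|\nabla_x(\phi_{F_1}-\phi_{F_2})(s)\|_{L^\infty_x}\lesssim_{e,m,g}\sum_i\|D_i(s)\|_{L^\infty}$. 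Bounding the remaining factor by $\sup_s\|\w_{1,\pm}\nabla_p F_{2,\pm}(s)\|_{L^\infty}$ and controlling the weight's own time variation by $(-\Delta_0)^{-1}(\nabla_x\cdot b_1)$, the integrated source is dominated by $\big(\tfrac{1}{12}\tilde\beta\sup_s\|(-\Delta_0)^{-1}(\nabla_x\cdot b_1)(s)\|_{L^\infty}+\sum_j\sup_s\|\w_{1,j}\nabla_p F_{2,j}(s)\|_{L^\infty}\big)\int_0^t\sum_j\|D_j(s)\|_{L^\infty}\dd s$.

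Assembling the two cases and taking the supremum over $t'\in[0,t]$ on the left yields
\[\sum_i\sup_{s\in[0,t]}\|D_i(s)\|_{L^\infty}\le\varepsilon\sum_i\sup_{s\in[0,t]}\|D_i(s)\|_{L^\infty}+\sum_i\|D_i(0)\|_{L^\infty}+C\int_0^t\sum_j\sup_{\sigma\in[0,s]}\|D_j(\sigma)\|_{L^\infty}\dd s,\]
with $C$ the coefficient above. Since \eqref{condition:epilon_G_dy_spec} forces $2\varepsilon<\exp\{-\lambda(\cdots)\}<1$, in particular $\varepsilon<\tfrac12$, the first term is absorbed into the left-hand side, and Gronwall's inequality then gives \eqref{est:F1-2_spec}. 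The main obstacle relative to the inflow case is exactly the closure of this supremum-in-time estimate: the specular contribution $\varepsilon D_\pm(t-\tBf,\xBf,\tpBf)$ sits at an earlier, trajectory-dependent time $t-\tBf\in[0,t]$ which cannot be pushed below a fixed threshold (the characteristic may bounce repeatedly before reaching the initial data), so it can only be reabsorbed into $\sup_{s\in[0,t]}\|D_\pm(s)\|_{L^\infty}$ — which is why the Gronwall loop must be run on the time-supremum and why the smallness of $\varepsilon$ from \eqref{condition:epilon_G_dy_spec} is indispensable; once that absorption is in place the remaining estimates are those already carried out in Theorem~\ref{theo:UD}.
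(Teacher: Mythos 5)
Your proposal is correct and follows essentially the same route as the paper's proof: the same weighted difference $\w_{1,\pm}(F_{1,\pm}-F_{2,\pm})$ transported along the characteristics of $(F_1,\phi_{F_1})$, the same case split on $t\le\tBf$ versus $\tBf<t$, the same Green's-function bound on $\nabla_x(\phi_{F_1}-\phi_{F_2})$, and the same absorption of the specular term $\varepsilon\sup_{s\in[0,t]}\|\cdot\|$ into the left-hand side before running Gronwall on the time-supremum. Your explicit remark that the reflected trajectory lands at an uncontrolled earlier time, forcing the Gronwall loop to run on $\sup_{s\in[0,t]}$ and making the smallness of $\varepsilon$ essential, is exactly the point the paper's proof exploits.
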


\begin{proof}

We abuse the notation as in \eqref{abuse} in the proof.
From \eqref{ODE_F_spec} and \eqref{tb}, we consider the characteristic $\Z_1 = ( \X_1, \P_1 )$ for $(F_1, \phi_{F_1})$ and the corresponding backward exit time $\tBf (t,x,p)$. 

Now consider $\w_{1, \pm} (t,x,p)$ in \eqref{w1_dy_spec}. Analogous to \eqref{VP_diff_12_dy}, we have
\Be \label{VP_diff_12_dy_spec}
\begin{split}
& \p_t ( \w_{1, \pm} (F_{1, \pm} - F_{2, \pm} )) + v_\pm \cdot \nabla_x ( \w_{1, \pm} (F_{1, \pm} - F_{1, \pm} ))
\\& \ \ \ \ + \Big( e_{\pm} \big( \frac{v_\pm}{c} \times B - \nabla_x \phi_{F_1} \big) - \nabla_x ( m_\pm g x_3) \Big) \cdot \nabla_p ( \w_{1, \pm} (F_{1, \pm} - F_{2, \pm} ))
\\& = - \frac{e_{\pm}}{c} (-\Delta_0)^{-1}  (\nabla_x \cdot b_1 ) \frac{1}{12} \tilde{\beta} \w_{1, \pm} (F_{1, \pm} - F_{2, \pm} )
\\& \ \ \ \ + e_{\pm} \nabla_x ( \phi_{F_1} - \phi_{F_2})
\cdot \nabla_p ( \w_{1, \pm} F_{2, \pm} )
 \ \ \text{in} \ \R_+ \times  \O \times \R^3,
\end{split}
\ee
where $b_1 (t,x)$ is defined in \eqref{b1_dy_spec}. Moreover,
\be \label{VP_diff_bdy_12_dy_spec}
F_1 (t,x,p) - F_2 (t,x,p) =  \varepsilon \big( F_{1} (t, x, \tilde{p}) - F_{2} (t, x, \tilde{p}) \big) \ \ \text{in} \ \R_+ \times \gamma_-,
\Ee
and
\be \label{VP_diff_initial_12_dy_spec}
F_{1, \pm} (0, x, p) - F_{2, \pm} (0, x, p) = F_{1, \pm, 0} (x,p) - F_{2, \pm, 0} (x,p) \ \ \ \ \text{in} \ \O \times \R^3.
\Ee

\smallskip

\textbf{\underline{Case 1:} $\tBf (t,x,p) < t$.}
From \eqref{VP_diff_bdy_12_dy_spec}, we have
\[
\w_{1} (F_1- F_2) (t - \tBf (t,x,p), \xBf, \pBf) 
= \varepsilon \w_{1} (F_1- F_2) (t - \tBf (t,x,p), \xBf, \tpBf). 
\]
Together with \eqref{VP_diff_12_dy_spec}, we further get
\Be \notag
\begin{split}
& \mathbf{1}_{\tBf (t,x,p) < t} \times \w_{1} (F_1- F_2) (t,x,p) - \varepsilon \w_{1} (F_1- F_2) (t - \tBf (t,x,p), \xBf, \tpBf)
\\& = \int^t_{t - \tBf (t,x,p)} - \frac{e}{c} (-\Delta_0)^{-1}  (\nabla_x \cdot b_1 ) \frac{1}{12} \tilde{\beta} \w_{1} (F_{1} - F_{2} ) \dd s 
\\& \ \ \ \ + \int^t_{t - \tBf (t,x,p)} e \nabla_x ( \phi_{F_1} - \phi_{F_2})
\cdot \nabla_p ( \w_{1} F_{2, \pm} ) \dd s 
\\& \leq \frac{e}{c} \frac{1}{12} \tilde{\beta} \sup_{s \in [0,t]} \| (-\Delta_0)^{-1} (\nabla_x \cdot b_1(s)) \|_{L^\infty(\O)}
\int^t_{t - \tBf (t,x,p)} 	\| \w_{1} (F_1  - F_2 )(s) \|_{L^{\infty} (\O \times \R^3)} \dd s 
\\& \ \ \ \ + \sup_{s \in [0,t]} \| \w_{1} \nabla_p F_2 (s) \|_{L^{\infty} (\O \times \R^3)} 
\int^t_{t - \tBf (t,x,p)} 
\| \nabla_x (\phi_{F_1} - \phi_{F_2}) (s) \|_{L^{\infty}_x(\O)}
\dd s.
\end{split}
\Ee
Following \eqref{est:Dphi_12_tb<t}-\eqref{est3:Dphi_12_tb<t} in the proof of Theorem \ref{theo:UD}, we derive
\Be \notag
\begin{split}
& \mathbf{1}_{\tBf (t,x,p) < t} \times \w_{1} (F_1- F_2) (t,x,p) - \varepsilon \w_{1} (F_1- F_2) (t - \tBf (t,x,p), \xBf, \tpBf)
\\& \lesssim_{e, m, g} \frac{1}{12} \tilde{\beta} \sup_{s \in [0,t]} \| (-\Delta_0)^{-1} (\nabla_x \cdot b_1(s)) \|_{L^\infty(\O)}
\int^t_{t - \tBf (t,x,p)} 	\| \w_{1} (F_1  - F_2 )(s) \|_{L^{\infty} (\O \times \R^3)} \dd s 
\\& \qquad + \sup_{s \in [0,t]} \| \w_{1} \nabla_p F_2 (s) \|_{L^{\infty} (\O \times \R^3)}
\times \int^t_{t - \tBf (t,x,p)} \sum_{i = \pm} \| \w_{1,i} (F_{1,i} - F_{2,i} ) (s) \|_{L^{\infty} (\O \times \R^3)} \dd s.
\end{split}
\Ee
This implies that
\Be \label{est3:Dphi_12_tb<t_spec}
\begin{split}
& \sup_{s \in [0,t]} \| \mathbf{1}_{\tBf (s,x,p) < s} \times \w_{1} (F_1- F_2) (s) \|_{L^{\infty} (\O \times \R^3)}
- \varepsilon \sup_{s \in [0,t]} \| \w_{1} (F_1- F_2) (s) \|_{L^{\infty} (\O \times \R^3)}
\\& \lesssim \frac{1}{12} \tilde{\beta} \sup_{s \in [0,t]} \| (-\Delta_0)^{-1} (\nabla_x \cdot b_1(s)) \|_{L^\infty(\O)}
\int^t_{0} \sup_{\tau \in [0, s]}	\| \w_{1} (F_1  - F_2 )(\tau) \|_{L^{\infty} (\O \times \R^3)} \dd s 
\\& \qquad + \sup_{s \in [0,t]} \| \w_{1} \nabla_p F_2 (s) \|_{L^{\infty} (\O \times \R^3)} \int^t_{0} \sum_{i = \pm} \sup_{\tau \in [0, s]} \| \w_{1,i} (F_{1,i} - F_{2,i} ) (\tau) \|_{L^{\infty} (\O \times \R^3)} \dd s.
\end{split}
\Ee

\smallskip

\textbf{\underline{Case 2:} $t \leq \tBf (t,x,p)$.}
Similar to case 1, using \eqref{VP_diff_12_dy_spec} and \eqref{VP_diff_initial_12_dy_spec}, we get
\Be \notag
\begin{split}
& \mathbf{1}_{t \leq \tBf (t,x,p)} \times \w_{1} (F_1- F_2) (t,x,p) - \w_{1} (F_{1, 0} - F_{2, 0} ) (\Z_1 (0; t,x,p))
\\& = \int^t_{0} - \frac{e}{c} (-\Delta_0)^{-1}  (\nabla_x \cdot b_1 ) \frac{1}{12} \tilde{\beta} \w_{1} (F_{1} - F_{2} ) \dd s 
+ \int^t_{0} e \nabla_x ( \phi_{F_1} - \phi_{F_2})
\cdot \nabla_p ( \w_{1} F_{2, \pm} ) \dd s 
\end{split}
\Ee
Following \eqref{est:Dphi_12_tb<t}-\eqref{est3:Dphi_12_tb<t}, we derive  
\Be \notag
\begin{split}
& \mathbf{1}_{t \leq \tBf (t,x,p)} \times \w_{1} (F_1- F_2) (t,x,p) - \w_{1} (F_{1, 0} - F_{2, 0} ) (\Z_1 (0; t,x,p))
\\& \lesssim_{e, m, g} \frac{1}{12} \tilde{\beta} \sup_{s \in [0,t]} \| (-\Delta_0)^{-1} (\nabla_x \cdot b_1(s)) \|_{L^\infty(\O)}
\int^t_{0} 	\| \w_{1} (F_1  - F_2 )(s) \|_{L^{\infty} (\O \times \R^3)} \dd s 
\\& \qquad \ \ + \sup_{s \in [0,t]} \| \w_{1} \nabla_p F_2 (s) \|_{L^{\infty} (\O \times \R^3)} \int^t_{0} \sum_{i = \pm} \| \w_{1,i} (F_{1,i} - F_{2,i} ) (s) \|_{L^{\infty} (\O \times \R^3)} \dd s.
\end{split}
\Ee
This shows that
\Be \notag
\begin{split}
& \| \mathbf{1}_{t \leq \tBf (t,x,p)} \times \w_{1} (F_1- F_2) (t) \|_{L^{\infty} (\O \times \R^3)} - \| \w_{1} (F_1- F_2) (0) \|_{L^{\infty} (\O \times \R^3)}
\\& \lesssim \frac{1}{12} \tilde{\beta} \sup_{s \in [0,t]} \| (-\Delta_0)^{-1} (\nabla_x \cdot b_1(s)) \|_{L^\infty(\O)}
\int^t_{0} 	\| \w_{1} (F_1  - F_2 )(s) \|_{L^{\infty} (\O \times \R^3)} \dd s 
\\& \qquad \ \ + \sup_{s \in [0,t]} \| \w_{1} \nabla_p F_2 (s) \|_{L^{\infty} (\O \times \R^3)} \int^t_{0} \sum_{i = \pm} \| \w_{1,i} (F_{1,i} - F_{2,i} ) (s) \|_{L^{\infty} (\O \times \R^3)} \dd s,
\end{split}
\Ee
and thus
\Be \label{est1:Dphi_12_tb>t_spec}
\begin{split}
& \sup_{s \in [0,t]} \| \mathbf{1}_{s \leq \tBf (s,x,p)} \times \w_{1} (F_1- F_2) (s) \|_{L^{\infty} (\O \times \R^3)} - \| \w_{1} (F_1- F_2) (0) \|_{L^{\infty} (\O \times \R^3)}
\\& \lesssim \frac{1}{12} \tilde{\beta} \sup_{s \in [0,t]} \| (-\Delta_0)^{-1} (\nabla_x \cdot b_1(s)) \|_{L^\infty(\O)}
\int^t_{0} \sup_{\tau \in [0, s]}	\| \w_{1} (F_1  - F_2 )(\tau) \|_{L^{\infty} (\O \times \R^3)} \dd s 
\\& \qquad \ \ + \sup_{s \in [0,t]} \| \w_{1} \nabla_p F_2 (s) \|_{L^{\infty} (\O \times \R^3)} \int^t_{0} \sum_{i = \pm} \sup_{\tau \in [0, s]} \| \w_{1,i} (F_{1,i} - F_{2,i} ) (\tau) \|_{L^{\infty} (\O \times \R^3)} \dd s.
\end{split}
\Ee
Combining \eqref{est3:Dphi_12_tb<t_spec} and \eqref{est1:Dphi_12_tb>t_spec}, then for $i = \pm$,
\Be \notag
\begin{split}
& (1 - \varepsilon) \sup_{s \in [0,t]} \| \w_{1,i} (F_{1,i}- F_{2,i}) (s) \|_{L^{\infty} (\O \times \R^3)} - \| \w_{1,i} (F_{1,i}- F_{2,i}) (0) \|_{L^{\infty} (\O \times \R^3)}
\\& \lesssim \frac{1}{12} \tilde{\beta} \sup_{s \in [0,t]} \| (-\Delta_0)^{-1} (\nabla_x \cdot b_1(s)) \|_{L^\infty(\O)}
\int^t_{0} \sup_{\tau \in [0, s]} \| \w_{1,i} (F_{1,i}- F_{2,i}) (\tau) \|_{L^{\infty} (\O \times \R^3)} \dd s 
\\& \qquad \ \ + \sup_{s \in [0,t]} \| \w_{1,i} \nabla_p F_{2, i} (s) \|_{L^{\infty} (\O \times \R^3)} \int^t_{0} \sup_{\tau \in [0, s]} \sum_{j = \pm} \| \w_{1,j} (F_{1,j} - F_{2,j} ) (\tau) \|_{L^{\infty} (\O \times \R^3)} \dd s.
\end{split}
\Ee
Summing up two cases $i = \pm$, we obtain that 
\Be \label{est:Dphi_12_spec}
\begin{split}
& \sum_{i = \pm} \sup_{s \in [0,t]} \| \w_{1,i} (F_{1,i} - F_{2,i} ) (s) \|_{L^{\infty} (\O \times \R^3)}
\\& \lesssim \sum_{i = \pm} \| \w_{1,i} (F_{1,i} - F_{2,i} ) (0) \|_{L^{\infty} (\O \times \R^3)} 
\\& \ \ \ \ + \Big( \frac{1}{12} \tilde{\beta} \sup_{s \in [0,t]} \| (-\Delta_0)^{-1} (\nabla_x \cdot b_1(s)) \|_{L^\infty(\O)}
+ \sum_{j = \pm} \sup_{s \in [0,t]} \| \w_{1, j} \nabla_p F_{2, j} (s) \|_{L^{\infty} (\O \times \R^3)} \Big)
\\& \qquad \qquad \times \int^t_{0} \sum_{j = \pm} \sup_{\tau \in [0, s]} \| \w_{1,j} (F_{1,j} - F_{2,j} ) (\tau) \|_{L^{\infty} (\O \times \R^3)} \dd s.
\end{split}
\Ee
Applying the Gronwall's inequality to \eqref{est:Dphi_12_spec}, we conclude \eqref{est:F1-2_spec}. 
\end{proof}

The uniqueness of the dynamical solution is a direct consequence of Theorem \ref{theo:UD_spec}.

\begin{theorem}[Uniqueness Theorem] 
\label{theo:UA_spec} 

Suppose $(F_{1, \pm}, \nabla_x \phi_{F_1})$ and $(F_{2, \pm}, \nabla_x \phi_{F_2})$ solve \eqref{VP_F}, \eqref{Poisson_F}, \eqref{VP_0}, \eqref{Dbc:F} and \eqref{bdry:F_spec} under the compatibility condition \eqref{CC_F0=G_spec}.
Assume both follow all assumptions in Theorem \ref{theo:AS_spec} and Theorem \ref{theo:RD_spec}.
Further, we assume that both satisfy \eqref{Uest:wh_dy_spec}, \eqref{Uest:D^-1_Db_spec} and \eqref{est_final:F_v:dyn_spec}.
Then $F_{1, \pm} = F_{2, \pm}$ a.e. in $\R_+ \times \O \times \R^3$ and $\nabla_x \phi_{F_1} = \nabla_x \phi_{F_2}$ a.e. in $\R_+ \times \O$. 
\end{theorem}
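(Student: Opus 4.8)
\medskip

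\noindent\textbf{Proof proposal.} The plan is to obtain Theorem~\ref{theo:UA_spec} as a direct corollary of the stability estimate \eqref{est:F1-2_spec} in Theorem~\ref{theo:UD_spec}, in complete analogy with the way Theorem~\ref{theo:UA} follows from Theorem~\ref{theo:UD}. Let $(F_{1,\pm},\nabla_x\phi_{F_1})$ and $(F_{2,\pm},\nabla_x\phi_{F_2})$ be two solutions satisfying the stated hypotheses, and write $F_{i,\pm} = h_{i,\pm} + f_{i,\pm}$. Since both solutions have the same initial datum $F_{\pm,0}$ in \eqref{VP_0} and obey the same mixed boundary condition \eqref{bdry:F_spec} (same inflow datum $G_\pm$, same $\varepsilon$), we have $F_{1,\pm}(0,\cdot,\cdot)=F_{2,\pm}(0,\cdot,\cdot)$, so the initial difference $\sum_{i=\pm}\|\w_{1,i}(F_{1,i}-F_{2,i})(0)\|_{L^\infty(\O\times\R^3)}$ appearing on the right-hand side of \eqref{est:F1-2_spec} vanishes.

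It therefore suffices to check that the exponential prefactor in \eqref{est:F1-2_spec} is finite for every fixed $t\ge 0$, i.e.\ that $\sup_{s\in[0,t]}\|(-\Delta_0)^{-1}(\nabla_x\cdot b_1(s))\|_{L^\infty(\O)}$ and $\sum_{j=\pm}\sup_{s\in[0,t]}\|\w_{1,j}\nabla_p F_{2,j}(s)\|_{L^\infty(\O\times\R^3)}$ are both finite. The first quantity is exactly controlled by assumption \eqref{Uest:D^-1_Db_spec}, since $b_1$ is the flux \eqref{b1_dy_spec} built from $f_{1,\pm}=F_{1,\pm}-h_{1,\pm}$ and $(F_{1,\pm},\phi_{F_1})$ satisfies all the hypotheses of Theorem~\ref{theo:AS_spec}. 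For the second, observe that the weight $\w_{1,\pm}$ of \eqref{w1_dy_spec} carries the exponent $\tilde\beta/12\le\tilde\beta/4$, and that $\phi_{F_1}$ obeys \eqref{Uest:Dxphi_F_spec} (hence \eqref{Bootstrap_spec}), so $|e_\pm\phi_{F_1}(x)|\le \tfrac12 m_\pm g x_3$ by integration from the boundary and consequently $\w_{1,\pm}(t,x,p)\lesssim e^{\frac{\tilde\beta}{4}\sqrt{(m_\pm c)^2+|p|^2}}\,e^{\frac{m_\pm g}{8c}\tilde\beta x_3}$ uniformly in $(t,x,p)$; combining this with the weighted gradient bound \eqref{est_final:F_v:dyn_spec} valid for $(F_{2,\pm},\phi_{F_2})$ yields $\w_{1,j}(t,x,p)|\nabla_p F_{2,j}(t,x,p)|\lesssim e^{-\frac{\tilde\beta}{6}\sqrt{(m_j c)^2+|p|^2}}\lesssim 1$, hence the supremum is finite. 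Assumption \eqref{Uest:wh_dy_spec} guarantees in addition that the two solutions themselves are finite in the relevant weighted $L^\infty$ norm, so every term entering \eqref{est:F1-2_spec} is well defined.

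With the prefactor finite and the initial difference equal to zero, \eqref{est:F1-2_spec} forces $\sum_{i=\pm}\|\w_{1,i}(F_{1,i}-F_{2,i})(t)\|_{L^\infty(\O\times\R^3)}=0$ for all $t\ge 0$; since $\w_{1,\pm}>0$ everywhere, this gives $F_{1,\pm}=F_{2,\pm}$ a.e.\ in $\R_+\times\O\times\R^3$. Finally, the charge densities $\int_{\R^3}(e_+F_{1,+}+e_-F_{1,-})\,\dd p$ and $\int_{\R^3}(e_+F_{2,+}+e_-F_{2,-})\,\dd p$ then coincide a.e., so subtracting the two copies of \eqref{Poisson_F} with the zero Dirichlet boundary condition \eqref{Dbc:F} and applying the elliptic bound of Lemma~\ref{lem:rho_to_phi} to the difference $\phi_{F_1}-\phi_{F_2}$ yields $\nabla_x\phi_{F_1}=\nabla_x\phi_{F_2}$ a.e.\ in $\R_+\times\O$, completing the proof. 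Because the argument is purely a matter of invoking the already-established a priori estimates, the only genuine point requiring care --- the ``main obstacle'' --- is verifying that every constant entering the Gronwall-type bound \eqref{est:F1-2_spec} is finite; under specular reflection the characteristics bounce and the flux term is a priori suspect, and it is precisely the extra hypotheses \eqref{Uest:wh_dy_spec}, \eqref{Uest:D^-1_Db_spec}, \eqref{est_final:F_v:dyn_spec} (furnished by Theorems~\ref{theo:AS_spec} and \ref{theo:RD_spec}) that resolve this.
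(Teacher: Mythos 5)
Your proposal is correct and follows essentially the same route as the paper: the paper proves Theorem \ref{theo:UA_spec} exactly by invoking the stability estimate \eqref{est:F1-2_spec} of Theorem \ref{theo:UD_spec} with vanishing initial difference, using \eqref{est_final:F_v:dyn_spec} (to bound $\sup_s\|\w_{1,j}\nabla_p F_{2,j}(s)\|_\infty$) and \eqref{Uest:D^-1_Db_spec} (to bound the flux term) to make the Gronwall prefactor finite, just as in the inflow case (Theorem \ref{theo:UA}). Your additional remark recovering $\nabla_x\phi_{F_1}=\nabla_x\phi_{F_2}$ from the Poisson equation via Lemma \ref{lem:rho_to_phi} is a minor, correct elaboration that the paper leaves implicit.
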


\begin{proof} 

We omit the proof, since it follows directly from Theorem \ref{theo:UA} and Theorem \ref{theo:UD_spec}.
\end{proof}

\subsubsection{Proof of the Main Theorem: Dynamical Problem} \label{sec:EX_DS_spec}

In this section, we show the existence of dynamic solutions in Theorem \ref{theo:CD_spec}. 
We follow similar steps used in the dynamic problem only under the effect of the inflow boundary conditions \eqref{VP_F}-\eqref{bdry:F}.

Since the change in boundary conditions doesn't influence the characteristic trajectory determined by the Vlasov equations \eqref{eqtn:f_spec}, we remark that Lemma \ref{lem:w/w_ell} and Lemma \ref{lem:D^-1 Db_ell} hold for the dynamic problem \eqref{eqtn:f_spec}-\eqref{Poisson_f_spec} involving the specular boundary conditions. 

\begin{lemma} 
\label{lem:wf_ell_spec}

Suppose the assumptions \eqref{Bootstrap_ell_2} and \eqref{est:beta_b_dy_ell} hold. Further, we assume that
\be \notag
\|  \w_{\pm, \beta, 0} F_{\pm, 0} \|_{L^\infty (\O \times \R^3)} 
+ \| e^{\beta \sqrt{(m_{\pm} c)^2 + |p|^2} } G_{\pm} \|_{L^\infty (\gamma_-)}
+ \| w_{\pm, \beta} h_{\pm} \|_{L^\infty  (\O \times \R^3)} < \infty.
\ee
Finally, suppose there exists $\tpx > 1$ such that for every $\ell \in \N$,
\be \label{condition4:f_support_spec} 
F^{\ell+1}_{\pm} (t, x, p) 
= 0
\ \text{ for any } \
\frac{4 c}{m_{\pm} g} |p| + 3 x_{3} \geq \tpx.
\ee
Set $\lambda = \frac{g}{48} \hat{m} \beta$, suppose $\varepsilon$ in the boundary condition \eqref{bdry:F_spec} satisfies that	
\be \label{condition4:tilde_epilon_spec}
2 \varepsilon 
< \exp \Big\{ - \lambda \Big( \big( \frac{1}{\min \{ \frac{g}{4 \sqrt{2}}, \frac{c}{\sqrt{10}} \}} + 1 \big) \times \tpx + 1 \Big) \Big\}.
\ee
Consider $(t,x,p) \in [0, \infty) \times  \bar\O \times \R^3$, then for any $\ell \in \N$,
\be \label{Uest:wF_spec}
\begin{split}
& e^{ \frac{\beta}{2} \sqrt{(m_{\pm} c)^2 + |p|^2} } e^{ \frac{m_{\pm} g}{4 c} \beta x_3} | F^{\ell+1}_{\pm} (t,x,p) |  
\\& \leq 2 e \big( \|  \w_{\pm, \beta, 0} F_{\pm, 0} \|_{L^\infty (\O \times \R^3)} + \| e^{\beta \sqrt{(m_{\pm} c)^2 + |p|^2} } G_{\pm} \|_{L^\infty (\gamma_-)} \big),
\end{split} 
\ee
and 
\be \label{Uest:wf_spec}
\begin{split}
& e^{ \frac{\beta}{2} \sqrt{(m_{\pm} c)^2 + |p|^2} } e^{ \frac{m_{\pm} g}{4 c} \beta x_3} | f^{\ell+1}_{\pm} (t,x,p) |  
\\& \leq 2 e \big( \|  \w_{\pm, \beta, 0} F_{\pm, 0} \|_{L^\infty (\O \times \R^3)} + \| e^{\beta \sqrt{(m_{\pm} c)^2 + |p|^2} } G_{\pm} \|_{L^\infty (\gamma_-)} \big) + \| w_{\pm, \beta} h_{\pm} \|_{L^\infty  (\O \times \R^3)}.
\end{split} 
\ee
\end{lemma}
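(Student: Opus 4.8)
The plan is to first reduce \eqref{Uest:wf_spec} to \eqref{Uest:wF_spec}: since $f^{\ell+1}_{\pm}=F^{\ell+1}_{\pm}-h_{\pm}$ and, by \eqref{Bootstrap_ell_2}, $w_{\pm,\beta}(x,p)\ge e^{\beta(\sqrt{(m_{\pm}c)^2+|p|^2}+\frac{m_{\pm}}{2c}gx_3)}$, the contribution of $h_{\pm}$ to the bare weighted norm is exactly $\|w_{\pm,\beta}h_{\pm}\|_{L^\infty(\O\times\R^3)}$, which is the last term in \eqref{Uest:wf_spec}; so everything reduces to the weighted $L^\infty$ bound on $F^{\ell+1}_{\pm}$. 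I would prove \eqref{Uest:wF_spec} by induction on $\ell$, exploiting that $F^{\ell+1}_{\pm}$ is constant along the characteristics \eqref{ODEell_spec} (by \eqref{eqtn:Fell_spec}). Tracing $F^{\ell+1}_{\pm}(t,x,p)$ back to $s=\max\{0,t-\tBp^{\ell+1}(t,x,p)\}$ and inserting the initial/boundary data \eqref{bdry_initial:Fell_spec} gives the representation
$F^{\ell+1}_{\pm}(t,x,p)=\mathbf{1}_{t\le\tBp^{\ell+1}}F_{\pm,0}(\Z^{\ell+1}_{\pm}(0;t,x,p))+\mathbf{1}_{t>\tBp^{\ell+1}}G_{\pm}(\xBp^{\ell+1},\pBp^{\ell+1})+\mathbf{1}_{t>\tBp^{\ell+1}}\,\varepsilon F^{\ell}_{\pm}(t-\tBp^{\ell+1},\xBp^{\ell+1},\tpBp^{\ell+1})$,
with $\tpBp^{\ell+1}=(p_{\mathbf{B},\pm,1},p_{\mathbf{B},\pm,2},-p_{\mathbf{B},\pm,3})$.

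The first two terms are handled verbatim as in the proof of Lemma \ref{lem:wf_ell}: multiply by the bare weight $e^{\frac{\beta}{2}\sqrt{(m_{\pm}c)^2+|p|^2}}e^{\frac{m_{\pm}g}{4c}\beta x_3}$, use \eqref{est:1/w_h_ell} of Lemma \ref{lem:w/w_ell} together with the boundary identity \eqref{w_bdry_spec}, the initial identity \eqref{w_initial_spec}, and the flux smallness \eqref{est:beta_b_dy_ell}, which bounds these two contributions by a fixed multiple of $\|\w_{\pm,\beta,0}F_{\pm,0}\|_{L^\infty(\O\times\R^3)}+\|e^{\beta\sqrt{(m_{\pm}c)^2+|p|^2}}G_{\pm}\|_{L^\infty(\gamma_-)}$. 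The new ingredient, and the main obstacle, is the specular term $\varepsilon F^{\ell}_{\pm}(t-\tBp^{\ell+1},\xBp^{\ell+1},\tpBp^{\ell+1})$. Here I use that $|\tpBp^{\ell+1}|=|\pBp^{\ell+1}|$, so the momentum part of the weight is invariant under reflection, and that $\xBp^{\ell+1}\in\p\O$, where the weight degenerates to $e^{\frac{\beta}{2}\sqrt{(m_{\pm}c)^2+|p|^2}}$; the induction hypothesis then gives $|F^{\ell}_{\pm}(t-\tBp^{\ell+1},\xBp^{\ell+1},\tpBp^{\ell+1})|\le 2e\bigl(\|\w_{\pm,\beta,0}F_{\pm,0}\|+\|e^{\beta\sqrt{(m_{\pm}c)^2+|p|^2}}G_{\pm}\|\bigr)e^{-\frac{\beta}{2}\sqrt{(m_{\pm}c)^2+|\pBp^{\ell+1}|^2}}$.

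It then remains to bound the residual weight ratio $e^{\frac{\beta}{2}\sqrt{(m_{\pm}c)^2+|p|^2}+\frac{m_{\pm}g}{4c}\beta x_3-\frac{\beta}{2}\sqrt{(m_{\pm}c)^2+|\pBp^{\ell+1}|^2}}$. Using the dynamic conservation law \eqref{dDTE^ell_2_spec}, the bound \eqref{est:phih_dy} on $\Phi$, and $\sup_t\|e_{\pm}\nabla_x\Psi^{\ell}(t)\|_{L^\infty}\le\frac{m_{\pm}g}{48}$ from \eqref{Bootstrap_ell_2}, this ratio is at most $e^{\lambda\tBp^{\ell+1}}$ with $\lambda=\frac{g}{48}\hat m\beta$. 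Finally, the compact-support hypothesis \eqref{condition4:f_support_spec} forces $\frac{4c}{m_{\pm}g}|p|+3x_3<\tpx$, so Corollary \ref{cor:max_X_dy} and Lemma \ref{lem:tB_ell} give $\tBp^{\ell+1}\le\bigl(\frac{1}{\min\{g/4\sqrt2,\,c/\sqrt{10}\}}+1\bigr)\tpx+1$, and the smallness \eqref{condition4:tilde_epilon_spec} yields $\varepsilon e^{\lambda\tBp^{\ell+1}}<\tfrac12$. Adding the three contributions closes the induction with the stated constant $2e$; the base case $\ell=0$ is the same, except that $F^{0}_{\pm}=h_{\pm}$ (as $f^{0}_{\pm}=0$), so the reflected term is $\varepsilon h_{\pm}(\cdot,\tilde p)$ and is absorbed using the steady bound \eqref{Uest:wh_spec}. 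The delicate point throughout is that the specular condition makes the trajectory bounce with possibly growing $|p|$, so the estimate genuinely relies on the assumed uniform compact $p$-support of the iterates and on choosing $\varepsilon$ small relative to $e^{\lambda\tpx}$ to defeat the accumulated weight growth; everything else is a routine adaptation of the pure-inflow arguments of Section \ref{sec:DC}.
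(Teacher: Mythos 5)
Your proposal is correct and follows essentially the same route as the paper: the same reduction of \eqref{Uest:wf_spec} to \eqref{Uest:wF_spec} via $f^{\ell+1}=F^{\ell+1}-h$, the same induction on $\ell$ through the three-term Duhamel/characteristic representation, the same treatment of the initial and inflow terms via Lemma \ref{lem:wf_ell}, and the same mechanism (uniform compact $p$-support of the iterates plus the smallness \eqref{condition4:tilde_epilon_spec} of $\varepsilon$) to absorb the specular term with a factor $\tfrac12$ and close the induction at the constant $2e$. The only (harmless) variation is that you control the weight ratio between $(x,p)$ and the backward boundary point by integrating the energy balance in the form \eqref{dDTE^ell_2_spec} with $e_{\pm}\|\nabla_x\Psi^{\ell}\|\le \hat m g/48$ and a bound on $\tBp^{\ell+1}$, whereas the paper uses the equivalent $\p_t\Psi^{\ell}$ form through Lemma \ref{lem:w/w_ell} and \eqref{est:beta_b_dy_ell}; both are instances of Lemma \ref{lem:conservation_law}(b) and lead to the same conclusion.
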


\begin{proof}

For the sake of simplicity, we abuse the notation as in \eqref{abuse}.
From \eqref{def:Fell_spec}, together with \eqref{f=F-h_ell} and \eqref{decom:wf}, we have
\Be \label{decom:wf_spec}
\begin{split}
f^{\ell+1}(t,x,p)| 
& \leq | F^{\ell+1} (t,x,p) | + | h(x,p) |
\\& \leq | F^{\ell+1} (t,x,p) | + e^{ - \beta \big( \sqrt{(m c)^2 + |p|^2} + \frac{m g}{2 c} x_3 \big) } \| w_\beta h  \|_\infty.
\end{split}
\Ee
Under the initial setting $f^0_{\pm} = 0$ and $(\varrho^0,  \nabla_x \Psi^0) = (0, \mathbf{0})$, together with \eqref{Bootstrap_ell_2}, \eqref{est:beta_b_dy_ell} and Lemma \ref{lem:wf_ell}, we check that \eqref{Uest:wF_spec} and \eqref{Uest:wf_spec} hold for $\ell = 0$.

\smallskip 

Now we prove by induction. Assume a positive integer $k > 0$ and suppose that \eqref{Uest:wf_spec} holds for $0 \leq \ell \leq k$. From \eqref{eqtn:Fell_spec}, \eqref{bdry_initial:Fell_spec} and the characteristics \eqref{Z1_spec}-\eqref{ODEell_spec}, we derive
\begin{align}
& | F^{k+1} (t,x,p) |
\leq \mathbf{1}_{ t \leq t_{\mathbf{B}}^{k+1} (t,x,p) } 
| F_{0} (\mathcal{Z} ^{k +1} (0;t,x,p)) |
\label{form1:Fell_spec} \\
& \qquad + \mathbf{1}_{t > t_{\mathbf{B}}^{k+1} (t,x,p)}
| G ( \mathcal{Z}^{k+1} (t - \tB^{k +1} (t,x,p);t,x,p) ) | \label{form2:Fell_spec} \\
& \qquad + \mathbf{1}_{t > t_{\mathbf{B}}^{k+1} (t,x,p)}
\varepsilon 
\underbrace{ | F^{k} (t - \tB^{k +1} (t,x,p), \xB^{k+1} (t,x,p), \tpB^{k+1} (t,x,p)) | }_{\eqref{form3:Fell_spec}_*},
\label{form3:Fell_spec}
\end{align}
where $\tpB^{k+1} = (p_{\mathbf{B}, 1}, p_{\mathbf{B}, 2}, - p_{\mathbf{B}, 3})$.
Since \eqref{Uest:wF_spec} holds for $\ell = k$, we have
\be \label{est1:form3:Fell_spec}
\begin{split}
& e^{ \frac{\beta}{2} \sqrt{(m_{\pm} c)^2 + |\tpB^{k+1} (t,x,p)|^2} } \times \eqref{form3:Fell_spec}_*  
\\& \leq 2 e \big( \|  \w_{\pm, \beta, 0} F_{\pm, 0} \|_{L^\infty (\O \times \R^3)} + \| e^{\beta \sqrt{(m_{\pm} c)^2 + |p|^2} } G_{\pm} \|_{L^\infty (\gamma_-)} \big).
\end{split} 
\ee
Recall $\w_{\pm, \beta}^{\ell+1} (t,x,p)$ in \eqref{w^ell}, together with $|\tpB^{k+1}| = |\pB^{k+1}|$, \eqref{est:1/w_h_ell} and \eqref{Bootstrap_ell_2}, we derive
\be \label{est2:form3:Fell_spec}
\begin{split}
& e^{ \frac{\beta}{2} \sqrt{(m_{\pm} c)^2 + |\tpB^{k+1} (t,x,p)|^2} }
= \frac{\w_{\beta/2}^{k+1} (t - \tB^{k +1} (t,x,p), \xB^{k+1} (t,x,p), \pB^{k+1} (t,x,p))}{\w_{\beta/2}^{k+1} (t,x,p)} \w_{\beta/2}^{k+1} (t,x,p)
\\& \qquad \geq e^{- \frac{\beta}{2} \frac{e_{\pm}}{c} \| (-\Delta_0)^{-1} (\nabla_x \cdot b^{k} ) \|_{L^\infty_{t,x}} 
\big( \frac{ \frac{4 c}{m_{\pm} g} |p| + 4 x_{3} }{c_1} + \frac{8}{m_{\pm} g} |p| + 2 \big) } \w_{\beta/2}^{k+1} (t,x,p)
\\& \qquad \geq e^{- \frac{\beta}{2} \frac{e_{\pm}}{c} \| (-\Delta_0)^{-1} (\nabla_x \cdot b^{k} ) \|_{L^\infty_{t,x}} 
\big( \frac{ \frac{4 c}{m_{\pm} g} |p| + 4 x_{3} }{c_1} + \frac{8}{m_{\pm} g} |p| + 2 \big) } 
e^{ \frac{\beta}{2} \left( \sqrt{(m_{\pm} c)^2 + |p|^2} + \frac{m g}{2 c} g x_3 \right) }.
\end{split} 
\ee
Inputting \eqref{est2:form3:Fell_spec} into \eqref{est1:form3:Fell_spec}, we get
\be \label{est3:form3:Fell_spec}
\begin{split}
& e^{ \frac{\beta}{2} \left( \sqrt{(m_{\pm} c)^2 + |p|^2} + \frac{m g}{2 c} g x_3 \right) } \times \eqref{form3:Fell_spec}_*  
\\& \leq e^{\frac{\beta}{2} \frac{e_{\pm}}{c} \| (-\Delta_0)^{-1} (\nabla_x \cdot b^{k} ) \|_{L^\infty_{t,x}} 
\big( \frac{ \frac{4 c}{m_{\pm} g} |p| + 4 x_{3} }{c_1} + \frac{8}{m_{\pm} g} |p| + 2 \big) } 
\\& \ \ \ \ \times 2 e \big( \|  \w_{\pm, \beta, 0} F_{\pm, 0} \|_{L^\infty (\O \times \R^3)} + \| e^{\beta \sqrt{(m_{\pm} c)^2 + |p|^2} } G_{\pm} \|_{L^\infty (\gamma_-)} \big).
\end{split} 
\ee
From the assumption \eqref{condition4:f_support_spec} and \eqref{condition4:tilde_epilon_spec}, we have
\be \label{est:form3:Fell_spec}
\begin{split}
& \eqref{form3:Fell_spec}
= \mathbf{1}_{t > t_{\mathbf{B}}^{k+1} (t,x,p)}
\varepsilon \times \eqref{form3:Fell_spec}_*
\\& \leq e^{ - \frac{\beta}{2} \left( \sqrt{(m_{\pm} c)^2 + |p|^2} + \frac{m g}{2 c} g x_3 \right) } \times e \big( \|  \w_{\pm, \beta, 0} F_{\pm, 0} \|_{L^\infty (\O \times \R^3)} + \| e^{\beta \sqrt{(m_{\pm} c)^2 + |p|^2} } G_{\pm} \|_{L^\infty (\gamma_-)} \big).
\end{split} 
\ee
On the other hand, following \eqref{est:Fell1} and \eqref{est:Fell2} in the proof of Lemma \ref{lem:wf_ell_spec}, we obtain that
\be \label{est:form12:Fell_spec}
\begin{split}
& \eqref{form1:Fell_spec} + \eqref{form1:Fell_spec}
\\& \leq e^{ - \frac{\beta}{2} \left( \sqrt{(m_{\pm} c)^2 + |p|^2} + \frac{m g}{2 c} g x_3 \right) } 
\times e \big( \|  \w_{\pm, \beta, 0} F_{\pm, 0} \|_{L^\infty (\O \times \R^3)} + \| e^{\beta \sqrt{(m_{\pm} c)^2 + |p|^2} } G_{\pm} \|_{L^\infty (\gamma_-)} \big).
\end{split} 
\ee
Inputting \eqref{est:form12:Fell_spec} and \eqref{est:form3:Fell_spec} into \eqref{form1:Fell_spec}-\eqref{form3:Fell_spec}, we obtain that
\be \notag
\begin{split}
& e^{ \frac{\beta}{2} \sqrt{(m c)^2 + |p|^2} } e^{ \frac{m g}{4 c} \beta x_3} | F^{k+1} (t,x,p) |
\\& \leq 2 e \big( \|  \w_{ \beta, 0} F_{0 } \|_{L^\infty_{x,v}} + \| e^{\beta \sqrt{(m_{\pm} c)^2 + |p|^2} } G \|_{L^\infty (\gamma_-)} \big).
\end{split}
\ee 
Together with \eqref{decom:wf_spec}, we conclude \eqref{Uest:wF_spec} and \eqref{Uest:wf_spec} for $\ell = k+1$. Therefore, we complete the proof by induction.
\end{proof}

\begin{prop} 
\label{prop:DC_spec}

We assume all assumptions in Theorem \ref{theo:CS_spec} with $g, \beta, \tilde \beta>0$. 
Recall $M$ and $L$ defined in \eqref{set:M} and \eqref{set:L}, suppose that
\Be \label{choice2:g_spec}
M \leq \beta e^{ - \frac{m g}{24} \beta}
\ \text{ and } \ 
L \leq \min \{ \tilde{\beta} e^{ - \frac{m g}{24} \tilde{\beta} }, \frac{1}{1024} \beta^2 e^{ - \frac{m g}{48} \beta } \}.
\Ee
Finally, there exists $\tpx > 1$ such that the initial data in \eqref{initial:fell_spec} satisfies
\be \label{condition2:f_0_spec}
\begin{split}
F_{\pm, 0} (x, p) - h_{\pm} (x, p) = 0
\ \text{ for any } \
\frac{c}{m_{\pm} g} |p| + \frac{3}{4} x_{3} + 1 \geq \tpx.
\end{split}
\ee
Set $\lambda = \frac{g}{48} \hat{m} \beta$, suppose $\varepsilon$ in the boundary condition \eqref{bdry:f_spec} satisfies that	
\be \label{condition2:epilon_G_dy_spec}
2 \varepsilon 
< \exp \Big\{ - \lambda \Big( \big( \frac{1}{\min \{ \frac{g}{4 \sqrt{2}}, \frac{c}{\sqrt{10}} \}} + 1 \big) \times \tpx + 1 \Big) \Big\}.
\ee	
From $\nabla_x \Phi$ in \eqref{eqtn:Dphi_spec} and $\Psi^{\ell}$ in \eqref{Poisson_fell_spec}-\eqref{bdry:Psi_fell_spec}, then for any $\ell \in \N$,
\be \label{Bootstrap_ell_2_spec} 
\begin{split}
\sup_{0 \leq t < \infty} \| \nabla_x \big( \Phi + \Psi^{\ell} (t) \big) \|_{L^\infty(\O)}  
& \leq \min \left\{\frac{m_+}{e_+}, \frac{m_-}{e_-} \right\} \times \frac{g}{2},
\\ \sup_{0 \leq t < \infty}\| \nabla_x \Psi^{\ell} (t) \|_{L^\infty(\O)} & \leq \min \left\{\frac{m_+}{e_+}, \frac{m_-}{e_-} \right\} \times \frac{g}{48}.
\end{split}
\ee
From $f^{\ell+1}_{\pm}, \varrho^\ell$ in \eqref{eqtn:fell_spec}-\eqref{bdry:Psi_fell_spec} and $b^\ell$ in \eqref{def:flux_ell_spec}, then for any $\ell \in \N$,
\begin{align}
e^{ 2 \beta \frac{e_{\pm}}{c} \| (-\Delta_0)^{-1} (\nabla_x \cdot b^{\ell} ) \|_{L^\infty_{t,x}} } \leq 2, &
\label{est:e^bell_spec} \\
\sup_{0 \leq t < \infty} \| \varrho^{\ell} (t, x) \|_{L^\infty(\O)}
\leq \frac{2 M}{\beta^2} \big( e_+ e^{ - \frac{m_+ g}{4 c} \beta x_3} + e_- e^{ - \frac{m_+ g}{4 c} \beta x_3} \big), &
\label{Uest:varrhofell_spec} \\
\sup_{0 \leq t < \infty} \| e^{ \frac{\beta}{2} \sqrt{(m_{\pm} c)^2 + |p|^2} } e^{ \frac{m_{\pm} g}{4 c} \beta x_3} f^{\ell+1}_{\pm} (t,x,p) \|_{L^\infty  (\O \times \R^3)} \leq M. &
\label{Uest:wfell_spec}
\end{align}
Moreover, for any $\ell \in \N$,
\begin{align}
& \sup_{0 \leq t < \infty} e^{ \lambda t} \|  e^{\frac{\beta}{8} \sqrt{(m_{\pm} c)^2 + |p|^2} + \frac{m_{\pm} g}{16 c} \beta x_3} f^{\ell+1}_{\pm} (t,x,p) \|_{L^\infty (\O \times \R^3)}
\lesssim \beta, 
\label{Udecay:fell_spec} \\
& \sup_{0 \leq t < \infty} e^{ \lambda t} \| e^{ \frac{3 \lambda }{c}  x_3} \varrho^{\ell} (t,x) \|_{L^\infty(\O)}
\lesssim \frac{e_+ + e_-}{\beta^2}, 
\label{Udecay:varrhoell_spec} \\
& \sup_{0 \leq t < \infty} e^{ \lambda t} \|\nabla_x \Psi^{\ell} (t,x) \|_{L^\infty (\O)}
\lesssim \big( \frac{e_+ + e_-}{\beta^2} \big) \times (1 + \frac{16}{g \beta}). 
\label{Udecay:DxPsiell_spec}
\end{align} 
Finally, for every $\ell \in \N$, 
\be \label{Uest:f_supportell_spec} 
f^{\ell+1}_{\pm} (t, x, p) = 0
\ \text{ for any } \
\frac{4 c}{m_{\pm} g} |p| + 3 x_{3} \geq \tpx.
\ee
\end{prop}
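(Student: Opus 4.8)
\textbf{Proof proposal for Proposition \ref{prop:DC_spec}.} The plan is a single induction on the iteration index $\ell$, mirroring the proof of Proposition \ref{prop:DC} in the inflow case but with the extra work needed to propagate the momentum support \eqref{Uest:f_supportell_spec} through the infinitely many specular bounces. The base case $\ell=0$ follows directly from the initial setting $f^0_\pm=0$, $(\varrho^0,\nabla_x\Psi^0)=(0,\mathbf 0)$: the bound \eqref{Uest:DPhi_spec} on $\nabla_x\Phi$ gives \eqref{Bootstrap_ell_2_spec} and \eqref{est:e^bell_spec}, and then Lemma \ref{lem:wf_ell_spec}, Lemma \ref{lem:D^-1 Db_ell}, and the invariance of $h_\pm$, $w_{\pm,\beta}$ along the characteristics together with \eqref{Uest:wh_spec} yield \eqref{Uest:varrhofell_spec}--\eqref{Uest:wfell_spec} and \eqref{Udecay:fell_spec}--\eqref{Udecay:DxPsiell_spec}; \eqref{Uest:f_supportell_spec} at $\ell=0$ is read off from \eqref{condition2:f_0_spec} together with Corollary \ref{cor:max_X_dy} applied to $\Z^1$.

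For the inductive step, assume \eqref{Bootstrap_ell_2_spec}--\eqref{Uest:f_supportell_spec} hold for $0\le\ell\le k$. I would first establish the compact support \eqref{Uest:f_supportell_spec} at level $k+1$, since all later estimates for the iterates depend on the fact that the backward exit time is uniformly bounded on the support. Fix $(t,x,p)$ with $\frac{4c}{m_\pm g}|p|+3x_3\ge\tpx$ and trace $f^{k+2}_\pm(t,x,p)$ back along the characteristic $\Z^{k+2}_\pm$ of \eqref{ODEell_spec} with field $\Phi+\Psi^{k+1}$. If $t\le\tBp^{k+2}(t,x,p)$ we land on the initial data $F_{\pm,0}-h_\pm$; using the conservation law Lemma \ref{lem:conservation_law} with the smallness $\|e_\pm\nabla_x\Psi^{k+1}\|_\infty\le\frac{m_\pm g}{48}$ from \eqref{Bootstrap_ell_2_spec} (so the $\Psi^{k+1}$-correction to the energy along $\Z^{k+2}$ is dominated, via $|\V|\le c$, by $\frac{m_\pm g}{48}\tBp^{k+2}$, which is in turn controlled by $|p|$ and $x_3$ through Corollary \ref{cor:max_X_dy}), one checks that the endpoint momentum and height satisfy $\frac{c}{m_\pm g}|\P^{k+2}_\pm(0)|+\frac34\X^{k+2}_{\pm,3}(0)+1\ge\tpx$, so that the initial datum vanishes by \eqref{condition2:f_0_spec}; the factors $4$ and $+1$ appearing in \eqref{Uest:f_supportell_spec} versus \eqref{condition2:f_0_spec} are exactly what absorbs the travel-time growth quantified in Corollary \ref{cor:max_X_dy}. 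If instead $t>\tBp^{k+2}(t,x,p)$, the boundary value is $\varepsilon f^{k+1}_\pm(t-\tBp^{k+2},\xBp^{k+2},\tpBp^{k+2})$; since $|\tpBp^{k+2}|=|\pBp^{k+2}|$ and the support region $\{\tfrac{4c}{m_\pm g}|p|+3x_3\ge\tpx\}$ is invariant under the reflection $p\mapsto\tilde p$ at $x_3=0$, the same energy/height bookkeeping shows $\frac{4c}{m_\pm g}|\pBp^{k+2}|+3\xBp^{k+2}_3\ge\tpx$, whence $f^{k+1}_\pm$ vanishes there by the inductive hypothesis \eqref{Uest:f_supportell_spec} at level $k$. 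This gives \eqref{Uest:f_supportell_spec} for $\ell=k+1$.

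With the support in hand, the remaining estimates follow the scheme of Proposition \ref{prop:DC}. Lemma \ref{lem:wf_ell_spec} (whose hypotheses \eqref{condition4:f_support_spec}, \eqref{condition4:tilde_epilon_spec} are now available) together with \eqref{est:e^bell_spec} at level $k$ and the smallness \eqref{choice2:g_spec} gives \eqref{Uest:wfell_spec} at $\ell=k+1$; then Lemma \ref{lem:D^-1 Db_ell} applied to $b^{k+1}$ and \eqref{choice2:g_spec} upgrade this to \eqref{est:e^bell_spec} at $\ell=k+1$; integrating $f^{k+2}_\pm$ in $p$ yields \eqref{Uest:varrhofell_spec}, and Lemma \ref{lem:rho_to_phi} (estimate \eqref{est:nabla_phi}) together with the reasoning in the proof of \eqref{Uest:DPhi^k_spec} gives the two bounds in \eqref{Bootstrap_ell_2_spec} for $\ell=k+1$. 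Finally, for the decay estimates \eqref{Udecay:fell_spec}--\eqref{Udecay:DxPsiell_spec} I would run the argument of Theorem \ref{theo:AS_spec} (equivalently Proposition \ref{prop:decay_spec}) directly on the iterate: the Lagrangian splitting $f^{k+2}_\pm=\mathcal I_\pm+\mathcal N_\pm+\mathcal S_\pm$ has $\mathcal I_\pm$ controlled by the initial data as in \eqref{est:I_spec}, $\mathcal N_\pm$ controlled by $\varrho^{k+1}$ via \eqref{est:N_spec} and \eqref{Udecay:varrhoell_spec} at level $k$, and the specular term $\mathcal S_\pm=\varepsilon f^{k+1}_\pm(t-\tBp^{k+2},\xBp^{k+2},\tpBp^{k+2})$ absorbed by a factor $\tfrac12$ using the uniform bound on $\tBp^{k+2}$ on the support (from \eqref{Uest:f_supportell_spec} at level $k+1$ and Corollary \ref{cor:max_X_dy}) and the smallness condition \eqref{condition2:epilon_G_dy_spec}; feeding this into the Poisson estimate and closing the loop as in Theorem \ref{theo:AS_spec} delivers \eqref{Udecay:fell_spec}--\eqref{Udecay:DxPsiell_spec} at level $k+1$. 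The main obstacle is precisely the bootstrap ordering in the inductive step: one must secure the uniform-in-$\ell$ momentum support \emph{before} the weighted $L^\infty$ and decay bounds, because without it the specular term $\mathcal S_\pm$ (with a potentially unbounded backward exit time) cannot be closed with a constant smallness $\varepsilon$; the calibration of $\tpx$ and of the two support radii in \eqref{condition2:f_0_spec} and \eqref{Uest:f_supportell_spec}, together with Corollary \ref{cor:max_X_dy} and the conservation law, is what makes this ordering work.
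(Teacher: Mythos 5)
Your induction scheme — establish the momentum support first, then the weighted $L^\infty$ bound via Lemma \ref{lem:wf_ell_spec}, then the decay via Theorem \ref{theo:AS_spec} — matches the paper, but the way you close the support property \eqref{Uest:f_supportell_spec} through the specular reflection has a genuine gap. You handle the bounce case by a \emph{single} reflection: you claim the reflected boundary point satisfies $\frac{4c}{m_\pm g}|\pBp|+3\xBp_3\ge\tpx$ and then invoke the induction hypothesis for $f^{k+1}$ there. Since $\xBp_3=0$, this requires $\frac{4c}{m_\pm g}|\pBp|\ge\tpx$, but energy conservation together with \eqref{est:phih} only gives $\sqrt{(m_\pm c)^2+|\pBp|^2}\ge\sqrt{(m_\pm c)^2+|p|^2}+\frac{m_\pm g}{2c}x_3-(\text{drift})$, so the height contribution $3x_3$ in the hypothesis converts into only about $\frac{4c}{m_\pm g}\cdot\frac{m_\pm g}{2c}x_3=2x_3$ worth of boundary momentum when $e_\pm\Phi\approx-\frac{m_\pm g}{2}x_3$. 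A point with small $|p|$ and $2x_3<\tpx\le 3x_3$ then lies in the region where $f^{k+2}$ must vanish while its backward boundary trace does not lie in the region where the hypothesis forces $f^{k+1}$ to vanish: the set $\{\frac{4c}{m_\pm g}|p|+3x_3\ge\tpx\}$ is not a superlevel set of the conserved energy and is not preserved by one backward flight to the wall. The paper never applies the induction hypothesis at an intermediate bounce; instead it unwinds \emph{every} reflection, descending one iteration level per bounce (since \eqref{bdry:fell_spec} couples level $\ell+1$ to level $\ell$, the chain $f^{k+1}\to f^{k}\to\cdots$ terminates at $f^0=0$ or at the initial plane), and only compares the starting region with the initial-data region \eqref{condition2:f_0_spec}, whose coefficients are a factor of $4$ smaller — exactly the slack needed to absorb the energy-to-coordinates distortion.

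The second problem is your bound on the energy drift: you use $\|e_\pm\nabla_x\Psi^{k+1}\|_\infty\cdot\tBp\le\frac{m_\pm g}{48}\tBp$ with $\tBp$ controlled by Corollary \ref{cor:max_X_dy}. Since $\tBp\gtrsim x_3/c_1$ with $c_1=\min\{\frac{g}{4\sqrt2},\frac{c}{\sqrt{10}}\}$, this yields a drift of order $\frac{m_\pm g}{48\,c_1}x_3$, which is not dominated by the energy's height contribution $\frac{m_\pm g}{2c}x_3$ when $c_1=\frac{g}{4\sqrt 2}\ll c$; the loss is a constant multiple of the energy with an uncontrolled constant, and it is in any case not what the additive ``$+1$'' in \eqref{condition2:f_0_spec} is calibrated against. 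The paper instead inserts the time-decay \eqref{Udecay:DxPsiell_spec} of $\nabla_x\Psi^\ell$ from the previous induction levels, so the total drift over the entire multi-bounce backward trajectory is bounded by $\frac{e_\pm^2}{\beta^2}(1+\frac{16}{g\beta})\int_0^\infty e^{-\lambda s}\,\dd s=O(1)$, uniformly in $(t,x,p)$ and in the number of bounces (see \eqref{est:energy_k_spec}); it is this $O(1)$ quantity that the ``$+1$'' absorbs. Both corrections — unwinding all bounces down to level $0$, and replacing the sup-norm-times-travel-time estimate by the decay-integrated one — are needed before the rest of your argument (which otherwise follows the paper) can proceed.
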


\begin{proof}

Under the initial setting $f^0_{\pm} = 0$ and $(\varrho^0, \nabla_x \Psi^0) = (0, \mathbf{0})$, together with \eqref{condition2:f_0_spec} and \eqref{Uest:DPhi_spec} on $\nabla_x \Phi$, we check that \eqref{Bootstrap_ell_2_spec}-\eqref{Uest:varrhofell_spec} hold for $\ell = 0$.
Since $\nabla_x \Psi^0 = \mathbf{0}$, then \eqref{bdry:Psi_fell_spec} and \eqref{dDTE^ell_2_spec} show for any $s \in [ t - \tBp^{1} (t,x,p), t + \tFp^{1} (t,x,p)]$,
\Be \notag
\begin{split}
& \frac{d}{ds} \Big( \sqrt{(m_{\pm} c)^2 + |\P^{1}_{\pm} (s;t,x,p)|^2} + \frac{1}{c} \big( e_{\pm} \Phi (\X^{1}_{\pm} (s;t,x,p)) + m_{\pm} g \X^{1}_{\pm, 3} (s;t,x,p) \big) \Big) = 0.
\end{split}
\Ee
This, together with the initial condition \eqref{condition2:f_0_spec} and  $f^0_{\pm} = 0$, shows that 
\be \notag
f^{1}_{\pm} (t, x, p) = 0
\ \text{ for any } \
\frac{4 c}{m_{\pm} g} |p| + 3 x_{3} \geq \tpx,
\ee
and thus \eqref{Uest:f_supportell_spec} holds for $\ell = 0$.
Using \eqref{Uest:wh_spec}, \eqref{Uest:h_support_spec} and \eqref{condition2:epilon_G_dy_spec}, together with \eqref{est:e^bell_spec} and \eqref{Uest:f_supportell_spec} holding for $\ell = 0$, we apply Lemma \ref{lem:wf_ell_spec} and deduce \eqref{Uest:wfell_spec} holds for $\ell = 0$.
Since \eqref{Bootstrap_ell_2_spec}, \eqref{Uest:wfell_spec}, \eqref{Uest:f_supportell_spec} hold for $\ell = 0$, Theorem \ref{theo:AS_spec} shows that \eqref{Udecay:fell_spec}-\eqref{Udecay:DxPsiell_spec} hold for $\ell = 0$.

\smallskip

Now we prove this by induction. 
We abuse the notation as in \eqref{abuse} in the following proof.
Assume a positive integer $k > 0$ and suppose that \eqref{Bootstrap_ell_2_spec}-\eqref{Uest:f_supportell_spec} hold for $0 \leq \ell \leq k$.
Then \eqref{dDTE^ell_2_spec} shows that for any $s \in [ t - \tB^{k+1} (t,x,p), t + \tF^{k+1} (t,x,p)]$,
\Be \notag
\begin{split}
& \ \ \ \ \frac{d}{ds} \Big( \sqrt{(m c)^2 + |\P^{k+1} (s;t,x,p)|^2} + \frac{1}{c} \big( e \Phi (\X^{k+1} (s;t,x,p)) + m g \X^{k+1}_{3} (s;t,x,p) \big) \Big) 
\\& \qquad = - \frac{e}{c} \nabla_x \Psi^{k} (s, \X^{k+1} (s;t,x,p)) \cdot \V^{k+1} (s;t,x,p).
\end{split}
\Ee
This implies that for any $(t, x, p) \in \R_+ \times \O \times \R^3$,
\Be \notag
\begin{split}
& \sqrt{(m c)^2 + |p|^2} + \frac{1}{c} \big( e \Phi (x) + m g x_3 \big)
\\& \leq \mathbf{1}_{t > t_{\mathbf{B}}^{k+1} (t,x,p)} \sqrt{(m c)^2 + |\pB^{k+1} (t,x,p)|^2}
\\& \ \ \ \ + \mathbf{1}_{t \leq t_{\mathbf{B}}^{k+1} (t,x,p)} \Big( \sqrt{(m c)^2 + |\P^{k+1} (0;t,x,p)|^2} + \frac{1}{c} \big( e \Phi (\X^{k+1} (0;t,x,p)) + m g \X^{k+1}_{3} (0;t,x,p) \big) \Big) 
\\& \ \ \ \ + e \int^{t}_{\max \{0, t - t_{\mathbf{B}}^{k+1} (t,x,p) \} } \| \nabla_x \Psi^{k} (s, \X^{k+1} (s;t,x,p)) \|_{L^\infty (\O)} \dd s.
\end{split}
\Ee
Since \eqref{Udecay:DxPsiell_spec} holds for $\ell = k$ and $|\tpB^{k+1}| = |\pB^{k+1}|$, we get
\Be \label{est:energy_k_spec}
\begin{split}
& \sqrt{(m c)^2 + |p|^2} + \frac{1}{c} \big( e \Phi (x) + m g x_3 \big)
\\& \leq \mathbf{1}_{t > t_{\mathbf{B}}^{k+1} (t,x,p)} \sqrt{(m c)^2 + |\tpB^{k+1} (t,x,p)|^2}
\\& \ \ \ \ + \mathbf{1}_{t \leq t_{\mathbf{B}}^{k+1} (t,x,p)} \Big( \sqrt{(m c)^2 + |\P^{k+1} (0;t,x,p)|^2} + \frac{1}{c} \big( e \Phi (\X^{k+1} (0;t,x,p)) + m g \X^{k+1}_{3} (0;t,x,p) \big) \Big) 
\\& \ \ \ \ + \frac{e^2}{\beta^2} (1 + \frac{16}{g \beta}) \int^{t}_{\max \{0, t - t_{\mathbf{B}}^{k+1} (t,x,p) \} } e^{ - \lambda s} \dd s.
\end{split}
\Ee
If $t \leq t_{\mathbf{B}}^{k+1} (t,x,p)$, then \eqref{est:energy_k_spec}, the initial condition \eqref{condition2:f_0_spec} and $\int^{\infty}_{0} e^{ - \lambda s} \dd s \lesssim 1$ show that 
\be \notag
f^{k+1}_{\pm} (t, x, p) = 0
\ \text{ for any } \
\frac{4 c}{m_{\pm} g} |p| + 3 x_{3} \geq \tpx.
\ee
If $t > t_{\mathbf{B}}^{k+1} (t,x,p)$, we denote 
\[
(t^{(1)}, x^{(1)}, p^{(1)}) := (t - t_{\mathbf{B}}^{k+1} (t,x,p), \xB^{k+1} (t,x,p), \tpB^{k+1} (t,x,p)).
\] 
From the boundary condition \eqref{bdry:fell_spec}, we consider the characteristics when $\ell = k$. Hence, \eqref{dDTE^ell_2_spec} shows that for any $s \in [ t - \tB^{k} (t^{(1)}, x^{(1)}, p^{(1)}), t + \tF^{k} (t^{(1)}, x^{(1)}, p^{(1)})]$,
\Be \notag
\begin{split}
& \frac{d}{ds} \Big( \sqrt{(m c)^2 + |\P^{k} (s;t^{(1)}, x^{(1)}, p^{(1)})|^2} + \frac{1}{c} \big( e \Phi (\X^{k} (s;t^{(1)}, x^{(1)}, p^{(1)})) + m g \X^{k}_{3} (s;t^{(1)}, x^{(1)}, p^{(1)}) \big) \Big) 
\\& \qquad = - \frac{e}{c} \nabla_x \Psi^{k-1} (s, \X^{k} (s;t^{(1)}, x^{(1)}, p^{(1)})) \cdot \V^{k} (s;t^{(1)}, x^{(1)}, p^{(1)}).
\end{split}
\Ee
Analogous to \eqref{est:energy_k_spec}, we derive
\Be \notag
\begin{split}
& \sqrt{(m c)^2 + |p^{(1)}|^2}
\\& \leq \mathbf{1}_{t^{(1)} > t_{\mathbf{B}}^{k} (t^{(1)}, x^{(1)}, p^{(1)})} \sqrt{(m c)^2 + |\tpB^{k} (t^{(1)}, x^{(1)}, p^{(1)})|^2}
\\& \ \ \ \ + \mathbf{1}_{t^{(1)} \leq t_{\mathbf{B}}^{k} (t^{(1)}, x^{(1)}, p^{(1)})} \Big( \sqrt{(m c)^2 + |\P^{k} (0;t,x,p)|^2} 
+ \frac{1}{c} \big( e \Phi (\X^{k} (0;t^{(1)}, x^{(1)}, p^{(1)})) 
\\& \qquad \ \ + m g \X^{k+1}_{3} (0;t^{(1)}, x^{(1)}, p^{(1)}) \big) \Big) 
+ \frac{e^2}{\beta^2} (1 + \frac{16}{g \beta}) \int^{t^{(1)}}_{\max \{0, t^{(1)} - t_{\mathbf{B}}^{k} (t^{(1)}, x^{(1)}, p^{(1)}) \} } e^{ - \lambda s} \dd s.
\end{split}
\Ee
If $t^{(1)} \leq t_{\mathbf{B}}^{k} (t^{(1)}, x^{(1)}, p^{(1)})$, from \eqref{est:energy_k_spec} we derive
\Be \label{est:energy_k-1_spec}
\begin{split}
& \sqrt{(m c)^2 + |p|^2} + \frac{1}{c} \big( e \Phi (x) + m g x_3 \big)
\\& \leq \mathbf{1}_{t^{(1)} \leq t_{\mathbf{B}}^{k} (t^{(1)}, x^{(1)}, p^{(1)})} \Big( \sqrt{(m c)^2 + |\P^{k} (0;t,x,p)|^2} 
+ \frac{1}{c} \big( e \Phi (\X^{k} (0;t^{(1)}, x^{(1)}, p^{(1)})) 
\\& \qquad \ \ + m g \X^{k+1}_{3} (0;t^{(1)}, x^{(1)}, p^{(1)}) \big) \Big) 
+ \frac{e^2}{\beta^2} (1 + \frac{16}{g \beta}) \int^{t}_{0} e^{ - \lambda s} \dd s.
\end{split}
\Ee
This, together with the initial condition \eqref{condition2:f_0_spec} and $\int^{\infty}_{0} e^{ - \lambda s} \dd s \lesssim 1$ show that 
\be \notag
f^{k+1}_{\pm} (t, x, p) = 0
\ \text{ for any } \
\frac{4 c}{m_{\pm} g} |p| + 3 x_{3} \geq \tpx.
\ee
If $t^{(1)} > t_{\mathbf{B}}^{k} (t^{(1)}, x^{(1)}, p^{(1)})$, we denote 
\[
(t^{(2)}, x^{(2)}, p^{(2)}) := (t - t_{\mathbf{B}}^{k} (t^{(1)}, x^{(1)}, p^{(1)}), \xB^{k} (t^{(1)}, x^{(1)}, p^{(1)}), \tpB^{k} (t^{(1)}, x^{(1)}, p^{(1)})).
\]
Analogously, from the boundary condition \eqref{bdry:fell_spec} we consider the characteristics when $\ell = k-1$.
Following this iteration and using the initial setting $f^0_{\pm} = 0$ and $(\varrho^0, \nabla_x \Psi^0) = (0, \mathbf{0})$, we conclude that \eqref{Uest:f_supportell_spec} holds for $\ell = k+1$.

Finally, from \eqref{Uest:wh_spec}, \eqref{Uest:h_support_spec} and \eqref{condition2:epilon_G_dy_spec}, together with \eqref{est:e^bell_spec} and \eqref{Uest:f_supportell_spec} holding for $\ell = k$, we apply Lemma \ref{lem:wf_ell_spec} and deduce \eqref{Uest:wfell_spec} holds for $\ell = k+1$.
Using \eqref{Bootstrap_ell_2_spec}, \eqref{Uest:wfell_spec}, \eqref{Uest:f_supportell_spec} holding for $\ell = k$, Theorem \ref{theo:AS_spec} shows that \eqref{Udecay:fell_spec}-\eqref{Udecay:DxPsiell_spec} hold for $\ell = k+1$.
We omit the details of this part of the proof, since it follows from the proof in Theorem \ref{prop:DC}.

\smallskip

Now we have proved \eqref{Bootstrap_ell_2_spec}-\eqref{Uest:f_supportell_spec} hold for $\ell = k+1$. Therefore, we complete the proof by induction.
\end{proof}

\begin{prop} 
\label{prop:Unif_D2xDp_dy_spec}

We assume all assumptions in Theorem \ref{theo:CS_spec} and Proposition \ref{prop:DC_spec} with $g, \beta, \tilde \beta>0$. 
Then $(F^{\ell+1}_{\pm}, \nabla_x \phi_{F^\ell} )$ from the construction also satisfies the following uniform-in-$\ell$ estimates:
\be \label{Uest:DDPhi^l_dy_spec}
\sup_{0 \leq t < \infty} 
\big\{ (1 + B_3 + \| \nabla_x ^2 \phi_{F^{\ell}}  \|_\infty) + \| \p_t \p_{x_3} \phi_{F^\ell} (t, x_\parallel , 0) \|_{L^\infty(\p\O)} \big\}
\leq \frac{\hat{m} g}{24} \tilde{\beta},
\ee
and
\Be \label{Uest:h_v^l_dy_spec}
\begin{split} 
& \sup_{0 \leq t < \infty} \| e^{ \frac{\tilde{\beta}}{4} \sqrt{(m_{\pm} c)^2 + |p|^2} } e^{ \frac{m_{\pm} g}{8 c} \tilde{\beta} x_3} \nabla_p F^{\ell+1}_{\pm} (t,x,p) \|_{L^\infty(\O \times \R^3)} 
\\& \lesssim 4 e^{ \frac{m_{\pm} g}{24} \tilde{\beta} } 
\| \w_{\pm, \tilde \beta, 0}  \nabla_{x,p} F_{\pm, 0} \|_{L^\infty (\O \times \R^3)}
+ \frac{m_{\pm} g}{24} \tilde{\beta} \| e^{\tilde{\beta} |p^0_{\pm}|} \nabla_{x_\parallel,p} G_{\pm} \|_{L^\infty (\gamma_-)},
\end{split}
\Ee 
where $\hat{m} = \min \{ m_+, m_- \}$.
\end{prop}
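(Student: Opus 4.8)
The plan is to argue by induction on $\ell$, in close parallel with the proof of Proposition \ref{prop:Unif_D2xDp_dy} for the pure inflow problem, feeding in at each stage the uniform decay and compact-support estimates of Proposition \ref{prop:DC_spec}. For the base case $\ell=0$ we use the initialization $f^0_\pm=0$, $(\varrho^0,\nabla_x\Psi^0)=(0,\mathbf 0)$, so that $\phi_{F^0}=\Phi$ is the steady self-consistent potential produced by Theorem \ref{theo:CS_spec}: then \eqref{est_final:phi_C2_spec} gives the $C^2$ bound on $\Phi$ while $\p_t\p_{x_3}\Phi\equiv 0$, so \eqref{Uest:DDPhi^l_dy_spec} holds at $\ell=0$ under \eqref{condition:tilde_beta_spec}; moreover $F^1_\pm$ solves \eqref{eqtn:Fell_spec}, \eqref{bdry_initial:Fell_spec} with field $\nabla_x\Phi$ alone, so replacing $\phi_F$ by $\Phi$ in Proposition \ref{RE:dyn_spec} — whose support hypothesis \eqref{condition3:F_support_spec} is $f^1_\pm\equiv 0$ for large momenta, guaranteed by \eqref{Uest:f_supportell_spec}, and whose $\varepsilon$-smallness hypothesis \eqref{condition3:tilde_epilon_spec} is implied by \eqref{condition2:epilon_G_dy_spec} — yields \eqref{Uest:h_v^l_dy_spec} at $\ell=0$.

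For the inductive step, assume \eqref{Uest:DDPhi^l_dy_spec}--\eqref{Uest:h_v^l_dy_spec} for $0\le\ell\le k$. Proposition \ref{prop:DC_spec} supplies the uniform bounds $\sup_t\|\nabla_x(\Phi+\Psi^\ell)\|_\infty\le\min\{m_+/e_+,m_-/e_-\}g/2$, $\sup_t\|\nabla_x\Psi^\ell\|_\infty\le\min\{m_+/e_+,m_-/e_-\}g/48$, the estimate $e^{2\beta(e_\pm/c)\|(-\Delta_0)^{-1}(\nabla_x\cdot b^\ell)\|_{L^\infty_{t,x}}}\le 2$, and — decisively for the specular setting — the uniform compact-support property \eqref{Uest:f_supportell_spec} together with the decay \eqref{Udecay:DxPsiell_spec}. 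First I would establish \eqref{Uest:DDPhi^l_dy_spec} at $\ell=k+1$: the inductive hypothesis at $\ell=k$ furnishes hypotheses \eqref{est:beta_b_dy_spec} and \eqref{est1:D3tphi_F_spec} (with $\phi_F$ replaced by $\phi_{F^k}$) for the triple $(F^{k+1}_\pm,\phi_{F^{k+1}})$, so Proposition \ref{RE:dyn_spec} controls $\nabla_x F^{k+1}_\pm$; then, via the elliptic estimate of Lemma \ref{lem:rho_to_phi} applied to $-\Delta\phi_{F^{k+1}}=\int(e_+F^{k+1}_++e_-F^{k+1}_-)\,\dd p$ together with the H\"older-seminorm argument reproduced in the proof of \eqref{est_final:phi_F_spec} in Theorem \ref{theo:RD_spec}, one bounds $\|\nabla_x^2\phi_{F^{k+1}}\|_\infty$, and Lemma \ref{lem:D3tphi_F_spec} bounds $\|\p_t\p_{x_3}\phi_{F^{k+1}}(t,x_\|,0)\|_{L^\infty(\p\O)}$; the quantitative conditions \eqref{condition:tilde_beta_spec}, \eqref{condition:G_spec} and \eqref{choice2:g_spec} make these right-hand sides $\le\frac{\hat m g}{24}\tilde\beta$, closing \eqref{Uest:DDPhi^l_dy_spec}.

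Next I would establish \eqref{Uest:h_v^l_dy_spec} at $\ell=k+1$. The function $F^{k+2}_\pm=h_\pm+f^{k+2}_\pm$ solves the Vlasov equation with field $\phi_{F^{k+1}}$, so I apply Proposition \ref{RE:dyn_spec} with $\phi_F=\phi_{F^{k+1}}$ and $(\tB^{k+2},\xB^{k+2},\pB^{k+2})$ in place of $(\tB,\xB,\pB)$: the $C^2$ bound on $\phi_{F^{k+1}}$ just obtained provides hypothesis \eqref{est1:D3tphi_F_spec}, the support hypothesis \eqref{condition3:F_support_spec} for $f^{k+2}_\pm$ is exactly \eqref{Uest:f_supportell_spec} with the uniform cutoff $\tpx$, and the $\varepsilon$-smallness \eqref{condition3:tilde_epilon_spec} follows from \eqref{condition2:epilon_G_dy_spec} precisely because that cutoff is $\ell$-independent. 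Reading off the bound on $\nabla_p F^{k+2}_\pm$ from \eqref{est:F_v:dyn_spec} and matching constants against \eqref{choice2:g_spec} gives \eqref{Uest:h_v^l_dy_spec} at $\ell=k+1$, completing the induction.

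The main obstacle is organizational rather than computational: at every stage one must verify that the iterate $(F^{\ell+1}_\pm,\phi_{F^\ell})$ genuinely satisfies all structural hypotheses of Proposition \ref{RE:dyn_spec}, Lemma \ref{lem:D3tphi_F_spec} and Theorem \ref{theo:RD_spec} — above all the uniform compact-support property $f^{\ell}_\pm(t,x,p)=0$ for $\frac{4c}{m_\pm g}|p|+3x_3\ge\tpx$ and the ensuing smallness of $\varepsilon$, since these are what tame the unbounded growth of $|p|$ generated by repeated specular reflections (difficulty (f) of the introduction). The regularity bounds \eqref{est:F_v:dyn_spec}--\eqref{est:F_x:dyn_spec} carry extra $\varepsilon$-dependent terms coming from the boundary bounce, absorbed only through the exponential factor $e^{-\lambda(\cdots)}$ built into \eqref{condition2:epilon_G_dy_spec}; checking that all such factors remain below $1/2$ after iteration (exactly as in the proof of Proposition \ref{prop:DC_spec}) is the delicate point. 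Everything else reduces to the Gronwall and elliptic-regularity estimates already carried out in Section \ref{sec:RD_spec} and in the proof of Proposition \ref{prop:Unif_D2xDp_dy}.
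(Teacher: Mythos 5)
Your proposal is correct and takes essentially the same route as the paper: the paper's (very terse) proof likewise first records the uniform-in-$\ell$ compact support — combining \eqref{Uest:f_supportell_spec} with \eqref{Uest:h_support_spec} so that $F^{\ell+1}_{\pm}$ itself vanishes for $\frac{4c}{m_{\pm} g}|p|+3x_{3}\ge\tpx$ — and then runs the induction of Proposition \ref{prop:Unif_D2xDp_dy} using Proposition \ref{RE:dyn_spec}, Lemma \ref{lem:D3tphi_F_spec}, Theorem \ref{theo:RD_spec} and Proposition \ref{prop:DC_spec}. Your write-up simply makes explicit the base case and the verification of the support and $\varepsilon$-smallness hypotheses that the paper leaves to the reader.
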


\begin{proof}

From \eqref{Uest:f_supportell_spec} in Proposition \ref{prop:DC_spec} and \eqref{Uest:h_support_spec} in Theorem \ref{theo:CS_spec}, we obtain for any $\ell \in \N$, 
\be \notag
F^{\ell+1}_{\pm} (t, x, p) = f^{\ell+1}_{\pm} (t, x, p) = 0
\ \text{ for any } \
\frac{4 c}{m_{\pm} g} |p| + 3 x_{3} \geq \tpx.
\ee
We omit the rest of the proof, since it follows directly from Proposition \ref{prop:Unif_D2xDp_dy}, Proposition \ref{RE:dyn_spec}, Lemma \ref{lem:D3tphi_F_spec}, Theorem \ref{theo:RD_spec} and Proposition \ref{prop:DC_spec}.
\end{proof}

Using Proposition \ref{prop:DC_spec} and Proposition \ref{prop:Unif_D2xDp_dy_spec}, we show that $\{ F^{\ell+1}_{\pm} \}^{\infty}_{\ell=0}$ and $\{ \nabla_x \phi_{F^\ell} \}^{\infty}_{\ell=0}$ from the construction are both Cauchy sequences.

\begin{prop} 
\label{prop:cauchy_dy_spec}

We assume all assumptions in Theorem \ref{theo:CS_spec} and Proposition \ref{prop:DC_spec} with $g, \beta, \tilde \beta>0$.
Then for any $\ell \geq 1$, $F^{\ell}_{\pm}$ from the construction satisfies that there exists some $\bar \beta > 0$, 
\be \label{est:h_cauchy_dy_spec}
\begin{split}
& \sup_{0 \leq t < \infty} \Big( \| e^{ \frac{3 \bar{\beta}}{4} \big( \sqrt{(m_+ c)^2 + |p|^2} + \frac{1}{2 c} m_+ g x_3 \big) } (F^{\ell+1}_{+} - F^{\ell}_{+} ) \|_{L^{\infty} (\O \times \R^3)} 
\\& \qquad \qquad + \| e^{ \frac{3 \bar{\beta}}{4} \big( \sqrt{(m_- c)^2 + |p|^2} + \frac{1}{2 c} m_- g x_3 \big) } (F^{\ell+1}_{-} - F^{\ell}_{-} ) \|_{L^{\infty} (\O \times \R^3)} \Big)
\\& \leq \frac{1}{2} \sup_{0 \leq t < \infty} \Big( \| e^{ \frac{3 \bar{\beta}}{4} \big( \sqrt{(m_+ c)^2 + |p|^2} + \frac{1}{2 c} m_+ g x_3 \big) } (F^\ell_{+} - F^{\ell-1}_{+} ) \|_{L^{\infty} (\O \times \R^3)} 
\\& \qquad \qquad \qquad + \| e^{ \frac{3 \bar{\beta}}{4} \big( \sqrt{(m_- c)^2 + |p|^2} + \frac{1}{2 c} m_- g x_3 \big) } (F^\ell_{-} - F^{\ell-1}_{-} ) \|_{L^{\infty} (\O \times \R^3)}  \Big).
\end{split}
\Ee
Furthermore, $\{ F^{\ell+1}_{\pm} \}^{\infty}_{\ell=0}$, and $\{ \varrho^\ell \}^{\infty}_{\ell=0}$, $\{ \nabla_x \phi_{F^\ell} \}^{\infty}_{\ell=0}$ from the construction are Cauchy sequences in $L^{\infty} (\R_+ \times \O \times \R^3)$ and $L^{\infty} (\R_+ \times \O)$ respectively.
\end{prop}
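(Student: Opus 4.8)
The plan is to follow the proof of Proposition~\ref{prop:cauchy_dy} (the Cauchy estimate for the dynamic inflow problem) and to incorporate the treatment of the specular reflection term from the proof of Theorem~\ref{theo:AS_spec}. First I would subtract the construction equations \eqref{eqtn:Fell_spec}--\eqref{bdry_initial:Fell_spec} at levels $\ell+1$ and $\ell$: because $F^{\ell+1}_{\pm}$ and $F^{\ell}_{\pm}$ are transported by the fields $\Phi+\Psi^{\ell}$ and $\Phi+\Psi^{\ell-1}$ respectively, the difference $F^{\ell+1}_{\pm}-F^{\ell}_{\pm}$ solves a transport equation along the characteristics $\Z^{\ell+1}_{\pm}$ with source $e_{\pm}\nabla_x(\Psi^{\ell}-\Psi^{\ell-1})\cdot\nabla_p F^{\ell}_{\pm}$, vanishing initial data (since $f^{\ell+1}_{\pm,0}=f^{\ell}_{\pm,0}=F_{\pm,0}-h_{\pm}$), and the inhomogeneous specular boundary condition $F^{\ell+1}_{\pm}-F^{\ell}_{\pm}=\varepsilon\bigl(F^{\ell}_{\pm}(t,x,\tilde p)-F^{\ell-1}_{\pm}(t,x,\tilde p)\bigr)$ on $\gamma_-$. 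Its Lagrangian representation reads, when $t>\tBp^{\ell+1}(t,x,p)$,
\begin{equation} \notag
\begin{split}
(F^{\ell+1}_{\pm}-F^{\ell}_{\pm})(t,x,p)
&= \int^{t}_{t-\tBp^{\ell+1}} e_{\pm}\nabla_x(\Psi^{\ell}-\Psi^{\ell-1})(s,\X^{\ell+1}_{\pm})\cdot\nabla_p F^{\ell}_{\pm}(\Z^{\ell+1}_{\pm}(s;t,x,p))\,\dd s
\\& \qquad + \varepsilon\,(F^{\ell}_{\pm}-F^{\ell-1}_{\pm})(t-\tBp^{\ell+1},\xBp^{\ell+1},\tpBp^{\ell+1}),
\end{split}
\end{equation}
with the obvious modification (only the integral term, starting at $s=0$) when $t\le\tBp^{\ell+1}$.

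Second, I would estimate the integral term exactly as in \eqref{bound:diff_h^l+1_dy}--\eqref{bound3:diff_h^l+1_dy}: bound $\|\nabla_x(\Psi^{\ell}-\Psi^{\ell-1})\|_{\infty}$ by a weighted norm of $\varrho^{\ell}-\varrho^{\ell-1}$ via Lemma~\ref{lem:rho_to_phi}, bound that in turn by the weighted $L^{\infty}$ norm of $F^{\ell}_{\pm}-F^{\ell-1}_{\pm}$ after integrating in $p$ (using the lower bound on $\w^{\ell+1}_{\pm,\bar\beta}$ implied by \eqref{Bootstrap_ell_2_spec}), invoke the uniform-in-$\ell$ control on $\nabla_p F^{\ell}_{\pm}$ from \eqref{Uest:h_v^l_dy_spec} of Proposition~\ref{prop:Unif_D2xDp_dy_spec}, and bound $\tBp^{\ell+1}/\w^{\ell+1}_{\pm,\bar\beta}$ along the trajectory by combining the $\tBp^{\ell+1}$ estimate of Lemma~\ref{lem:tB_ell} with the weight-ratio estimate \eqref{est:1/w_h_ell} of Lemma~\ref{lem:w/w_ell}; the smallness hypotheses on $L$ in \eqref{choice2:g_spec} then make this contribution at most a small fraction (say $\tfrac14$) of $\sup_{s}\|e^{\frac{3\bar\beta}{4}(\cdots)}(F^{\ell}_{\pm}-F^{\ell-1}_{\pm})(s)\|$. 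For the specular term, write it as $\varepsilon$ times a weight ratio times $e^{\frac{3\bar\beta}{4}(\cdots)(t-\tBp^{\ell+1},\xBp^{\ell+1},\tpBp^{\ell+1})}\,|F^{\ell}_{\pm}-F^{\ell-1}_{\pm}|$. Since $\xBp^{\ell+1}\in\p\O$ (so $\Phi(\xBp^{\ell+1})=0$ and $\X^{\ell+1}_{\pm,3}=0$) and $|\tpBp^{\ell+1}|=|\pBp^{\ell+1}|$, the last factor is $\le\sup_{s}\|e^{\frac{3\bar\beta}{4}(\cdots)}(F^{\ell}_{\pm}-F^{\ell-1}_{\pm})(s)\|$; and the weight ratio, controlled by the non-invariance identity \eqref{dDTE^ell_2_spec} together with $\|e_{\pm}\nabla_x\Psi^{\ell}\|_{\infty}\le\hat m g/48$ from \eqref{Bootstrap_ell_2_spec}, is at most $e^{\frac{3\bar\beta}{4}\cdot\frac{mg}{48}\tBp^{\ell+1}}$. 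The decisive input here is the compact momentum support \eqref{Uest:f_supportell_spec} of $f^{\ell}_{\pm}$ (from Proposition~\ref{prop:DC_spec}): on the region $\frac{4c}{m_{\pm}g}|p|+3x_3<\tpx$, Lemma~\ref{lem:tB_ell} gives $\tBp^{\ell+1}\le\bigl(\tfrac{1}{\min\{g/4\sqrt2,\,c/\sqrt{10}\}}+1\bigr)\tpx+1$, so choosing $\bar\beta\le\beta$ and recalling $\lambda=\frac{g\beta}{48}\hat m$, the weight ratio is $\le e^{\lambda\left((\frac{1}{\min\{g/4\sqrt2,\,c/\sqrt{10}\}}+1)\tpx+1\right)}$, and the smallness condition \eqref{condition2:epilon_G_dy_spec} on $\varepsilon$ makes $\varepsilon$ times this $<\tfrac12$. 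Adding the two contributions yields the contraction \eqref{est:h_cauchy_dy_spec}.

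Third, once \eqref{est:h_cauchy_dy_spec} holds, I would conclude exactly as in Propositions~\ref{prop:cauchy_spec} and \ref{prop:cauchy_dy}: iterate to get $\sup_{t}\|e^{\frac{3\bar\beta}{4}(\cdots)}(F^{\ell+1}_{\pm}-F^{\ell}_{\pm})\|\le 2^{-(\ell-1)}\cdot C$, where finiteness of the $\ell=2$ term follows from the uniform bounds \eqref{Uest:wfell_spec} of Proposition~\ref{prop:DC_spec} and \eqref{Uest:wh_spec}; hence $\{F^{\ell+1}_{\pm}\}$ is Cauchy in $L^{\infty}(\R_+\times\O\times\R^3)$. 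Feeding the geometric bound into the $p$-integration and into Lemma~\ref{lem:rho_to_phi} then shows $\{\varrho^{\ell}\}$ and $\{\nabla_x\Psi^{\ell}\}$, hence $\{\nabla_x\phi_{F^{\ell}}\}=\{\nabla_x\Phi+\nabla_x\Psi^{\ell}\}$, are Cauchy in $L^{\infty}(\R_+\times\O)$.

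The main obstacle is the specular boundary term: unlike in the steady problem the dynamic weight is not invariant along the characteristics, so the weight cannot simply be passed through the reflection, and unlike in the pure-inflow dynamic problem the trajectory does not terminate at the boundary but may relay information through arbitrarily many bounces. The resolution depends on two ingredients that must be in place beforehand and in the right order: the uniform-in-$\ell$ compact-$p$-support property \eqref{Uest:f_supportell_spec}, which keeps $\tBp^{\ell+1}$ bounded on the relevant region and hence the weight ratio bounded, and the smallness of $\varepsilon$ in \eqref{condition2:epilon_G_dy_spec}, tuned precisely so that $\varepsilon$ dominates $e^{\lambda\tBp^{\ell+1}}$. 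Everything else is a routine repetition of the inflow dynamic Cauchy argument.
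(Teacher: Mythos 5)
Your proposal is correct and follows essentially the same route as the paper: the same Duhamel decomposition with vanishing initial difference, the inflow-type estimate \`a la Proposition \ref{prop:cauchy_dy} for the source term, and the combination of the uniform compact momentum support \eqref{Uest:f_supportell_spec} with the $\varepsilon$-smallness \eqref{condition2:epilon_G_dy_spec} to absorb the specular boundary term, followed by the standard geometric-series conclusion for $F^{\ell}_{\pm}$, $\varrho^{\ell}$, and $\nabla_x\phi_{F^{\ell}}$. The only (harmless) deviation is that you control the weight ratio along the backward trajectory via the energy-drift identity \eqref{dDTE^ell_2_spec} and the $\nabla_x\Psi^{\ell}$ bound in \eqref{Bootstrap_ell_2_spec}, whereas the paper uses the $\partial_t\Psi^{\ell}$-based ratio estimate \eqref{est:1/w_h_ell} together with \eqref{est:e^bell_spec}; both yield the same bounded factor on the support region.
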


\begin{proof}

We abuse the notation as in \eqref{abuse} in the proof.
To prove \eqref{est:h_cauchy_dy_spec}, from the construction in \eqref{def:Fell_spec}-\eqref{Poisson_Fell_spec}, we have for any $\ell \geq 1$, 
\Be \label{VP_diff^l+1_dy_spec}
\begin{split}
& \p_t (F^{\ell+1}_{\pm} - F^{\ell}_{\pm}) + v_\pm \cdot \nabla_x (F^{\ell+1}_{\pm} - F^{\ell}_{\pm}) 
\\& \ \ \ \ + \Big( e_{\pm} \big( \frac{v_\pm}{c} \times B - \nabla_x ( \Phi + \Psi^{\ell} ) \big) - \nabla_x ( m_\pm g x_3) \Big) \cdot \nabla_p (F^{\ell+1}_{\pm} - F^{\ell}_{\pm})
\\& = e_{\pm} \nabla_x ( \Psi^{\ell} - \Psi^{\ell-1})
\cdot \nabla_p F^{\ell}_{\pm}
 \ \ \text{in} \ \R_+ \times  \O \times \R^3,
\end{split}
\ee
with 
\begin{align}
& F^{\ell+1}_{\pm} (t,x,p) - F^{\ell}_{\pm} (t,x,p) 
= \varepsilon \big( F^{\ell}_{\pm} (t,x, \tilde{p}) - F^{\ell-1}_{\pm} (t,x, \tilde{p}) \big) \ \ \text{in} \ \gamma_-,
\label{VP_diff_bdy^l+1_dy_spec} \\ 
& F^{\ell+1}_{\pm} (0, x, p) - F^{\ell}_{\pm} (0, x, p) = 0 \ \ \ \ \text{in} \ \O \times \R^3,
\label{VP_diff_initial^l+1_dy_spec}
\end{align}
where $\tilde{p} = (p_1, p_2, -p_3)$.
This, together with the characteristic $(\X^{\ell+1}, \P^{\ell+1})$ for \eqref{VP_diff^l+1_dy_spec}, shows
\begin{align}
& (F^{\ell+1} - F^{\ell} ) (t,x,p) 
\notag \\
& = \int^t_{ \max\{0, t - \tB^{\ell+1} (t,x,p) \}} 
e \nabla_x ( \Psi^{\ell} - \Psi^{\ell-1})
\cdot \nabla_p F^{\ell} ( \X^{\ell+1} (s;t,x,p), \P^{\ell+1} (s;t,x,p)) \dd s
\label{diff:h^l+1_dy_1_spec} \\
& \qquad + \mathbf{1}_{t > t_{\mathbf{B}}^{\ell + 1} (t,x,p)} \varepsilon (F^{\ell} - F^{\ell - 1}) (t - t_{\mathbf{B}}^{\ell+1} (t,x,p), \xB^{\ell+1} (t,x,p), \tpB^{\ell+1} (t,x,p)).
\label{diff:h^l+1_dy_2_spec}
\end{align}

For \eqref{diff:h^l+1_dy_2_spec}, we let $\bar{\beta} = \frac{\tilde \beta}{6}$, then
\Be \label{est1:diff:h^l+1_dy_2_spec}
\begin{split}
& \eqref{diff:h^l+1_dy_2_spec}
\\& \leq \varepsilon | (F^{\ell} - F^{\ell - 1}) (t - t_{\mathbf{B}}^{\ell+1} (t,x,p), \xB^{\ell+1} (t,x,p), \tpB^{\ell+1} (t,x,p)) |
\\& \leq \frac{\varepsilon}{e^{ \frac{3 \bar{\beta}}{4} \sqrt{(m_{\pm} c)^2 + |\tpB^{\ell+1} (t,x,p)|^2} }} \sup_{0 \leq t < \infty} \| e^{ \frac{3 \bar{\beta}}{4} \big( \sqrt{(m c)^2 + |p|^2} + \frac{1}{2 c} m g x_3 \big) } (F^\ell - F^{\ell-1} ) \|_{L^{\infty} (\O \times \R^3)}.
\end{split}
\ee
Recall $\w_{\pm, \beta}^{\ell+1} (t,x,p)$ in \eqref{w^ell}, together with $|\tpB^{\ell+1}| = |\pB^{\ell+1}|$, \eqref{est:1/w_h_ell} and \eqref{est2:form3:Fell_spec}, we derive
\be \label{est2:diff:h^l+1_dy_2_spec}
\begin{split}
& e^{ - \frac{3 \bar{\beta}}{4} \sqrt{(m_{\pm} c)^2 + |\tpB^{\ell+1} (t,x,p)|^2} }
\\& = \frac{\w_{3 \bar{\beta} / 4}^{\ell+1} (t,x,p)}{\w_{3 \bar{\beta} / 4}^{k+1} (t - t_{\mathbf{B}}^{\ell+1} (t,x,p), \xB^{\ell+1} (t,x,p), \pB^{\ell+1} (t,x,p))} \frac{1}{\w_{3 \bar{\beta} / 4}^{\ell+1} (t,x,p)}
\\& \leq e^{\frac{3 \bar{\beta}}{4} \frac{e}{c} \| (-\Delta_0)^{-1} (\nabla_x \cdot b^{\ell} ) \|_{L^\infty_{t,x}} 
\big( \frac{ \frac{4 c}{m g} |p| + 4 x_{3} }{c_1} + \frac{8}{m g} |p| + 2 \big) } \frac{1}{\w_{3 \bar{\beta} / 4}^{\ell+1} (t,x,p)}.
\end{split} 
\ee
Using \eqref{Bootstrap_ell_2_spec}, \eqref{est:e^bell_spec} and \eqref{Uest:f_supportell_spec}, we get
\be \label{est3:diff:h^l+1_dy_2_spec}
\begin{split}
\varepsilon e^{ - \frac{3 \bar{\beta}}{4} \sqrt{(m c)^2 + |\tpB^{\ell+1} (t,x,p)|^2} }
& \leq \varepsilon e^{ \frac{ \frac{4 c}{m g} |p| + 4 x_{3} }{c_1} + \frac{8}{m g} |p| + 2 } e^{- \frac{3 \bar{\beta}}{4} \big( \sqrt{(m c)^2 + |p|^2} + \frac{1}{2 c} m g x_3 \big) }.
\end{split} 
\ee
Inputting \eqref{est3:diff:h^l+1_dy_2_spec} into \eqref{est1:diff:h^l+1_dy_2_spec}, together with \eqref{condition2:epilon_G_dy_spec} in Proposition \ref{prop:DC_spec}, we have
\be \label{est:diff:h^l+1_dy_2_spec}
\eqref{diff:h^l+1_dy_2_spec}
\leq \frac{1}{4} e^{- \frac{3 \bar{\beta}}{4} \big( \sqrt{(m c)^2 + |p|^2} + \frac{1}{2 c} m g x_3 \big) } \sup_{0 \leq t < \infty} \| e^{ \frac{3 \bar{\beta}}{4} \big( \sqrt{(m c)^2 + |p|^2} + \frac{1}{2 c} m g x_3 \big) } (F^\ell - F^{\ell-1} ) \|_{L^{\infty} (\O \times \R^3)}.
\ee

For \eqref{diff:h^l+1_dy_1_spec}, following \eqref{bound3:diff_h^l+1_dy}-\eqref{bound6:diff_h^l+1_dy} in Proposition \ref{prop:cauchy_dy}, we can compute
\be \label{est:diff:h^l+1_dy_1_spec}
\eqref{diff:h^l+1_dy_1_spec}
\leq \frac{1}{4} e^{- \frac{3 \bar{\beta}}{4} \big( \sqrt{(m c)^2 + |p|^2} + \frac{1}{2 c} m g x_3 \big) } \sup_{0 \leq t < \infty} \| e^{ \frac{3 \bar{\beta}}{4} \big( \sqrt{(m c)^2 + |p|^2} + \frac{1}{2 c} m g x_3 \big) } (F^\ell - F^{\ell-1} ) \|_{L^{\infty} (\O \times \R^3)}.
\ee
Finally, \eqref{est:diff:h^l+1_dy_1_spec} and \eqref{est:diff:h^l+1_dy_2_spec} show that for any $0 \leq t < \infty$,
\Be \notag
\begin{split}
& \| e^{ \frac{3 \bar{\beta}}{4} \big( \sqrt{(m_+ c)^2 + |p|^2} + \frac{1}{2 c} m_+ g x_3 \big) } (F^{\ell+1}_{+} (t, x, p) - F^{\ell}_{+} (t, x, p) ) \|_{L^{\infty} (\O \times \R^3)} 
\\& \qquad \qquad + \| e^{ \frac{3 \bar{\beta}}{4} \big( \sqrt{(m_- c)^2 + |p|^2} + \frac{1}{2 c} m_- g x_3 \big) } (F^{\ell+1}_{-} (t, x, p) - F^{\ell}_{-} (t, x, p) ) \|_{L^{\infty} (\O \times \R^3)}
\\& \leq \frac{1}{2} \Big( \| e^{ \frac{3 \bar{\beta}}{4} \big( \sqrt{(m_+ c)^2 + |p|^2} + \frac{1}{2 c} m_+ g x_3 \big) } (F^{\ell}_{+} (t, x, p) - F^{\ell-1}_{+} (t, x, p) ) \|_{L^{\infty} (\O \times \R^3)} 
\\& \qquad \qquad + \| e^{ \frac{3 \bar{\beta}}{4} \big( \sqrt{(m_- c)^2 + |p|^2} + \frac{1}{2 c} m_- g x_3 \big) } (F^{\ell}_{-} (t, x, p) - F^{\ell-1}_{-} (t, x, p) ) \|_{L^{\infty} (\O \times \R^3)}  \Big).
\end{split}
\Ee
Therefore, we conclude \eqref{est:h_cauchy_dy_spec}. 
We omit the rest of the proof, since it follows from Proposition \ref{prop:cauchy_dy}.
\end{proof}

Finally, using the Cauchy sequences of $L^\infty$-spaces in Proposition \ref{prop:cauchy_dy_spec}, we construct a weak solution of $(F_{\pm}, \phi_F)$ solving \eqref{VP_F}, \eqref{Poisson_F}, \eqref{VP_0},  \eqref{Dbc:F} and \eqref{bdry:F_spec}.

\begin{proof}[\textbf{Proof of Theorem \ref{theo:CD_spec}}]

Besides the assumptions in Theorem \ref{theo:CS_spec}, we further assume \eqref{choice:g_spec}-\eqref{condition:epilon_G_dy_spec} and thus we can apply Theorems \ref{theo:AS_spec} and \ref{theo:RD_spec} in the following proof.

\smallskip

\textbf{Step 1. Regularity: Proof of \eqref{Uest:wh_dy_spec}-\eqref{Uest:DxPsi_spec} and \eqref{Uest:f_support_spec}-\eqref{Uest:D2xD3tphi_F_spec}.}
From \eqref{est:h_cauchy_dy_spec}, the arguments of the Cauchy sequences of $L^\infty$-spaces in Proposition \ref{prop:cauchy_dy}, there exists 
\be \notag
F_{\pm} (t, x, p) \in L^\infty (\R_+ \times \bar \O \times \R^3)
\ \text{ and } \ 
\varrho(t, x), \nabla_x \phi_F (t, x) \in L^\infty (\R_+ \times \bar \O) 
\ \text{ with } \ 
\phi_F = 0 \ \ \text{on} \ \p\O,
\ee
such that as $k \to \infty$,
\be \label{weakconv_whst_dy_spec}
\begin{split}
e^{ \frac{3 \bar{\beta}}{4} ( \sqrt{(m_{\pm} c)^2 + |p|^2} + \frac{1}{2 c} m_{\pm} g x_3) } F^{k}_{\pm}
\to e^{ \frac{3 \bar{\beta}}{4} ( \sqrt{(m_{\pm} c)^2 + |p|^2} + \frac{1}{2 c} m_{\pm} g x_3 ) } F_{\pm}
& \ \text{ in } \ L^\infty (\R_+ \times \bar \O \times \R^3),
\\ e^{\beta' \frac{g}{2c} x_3} \varrho^{k} \to e^{\beta' \frac{g}{2c} x_3} \varrho 
& \ \text{ in } \ L^\infty (\R_+ \times \bar \O),
\\ \nabla_x \phi_{F^k} \to \nabla_x \phi_F
& \ \text{ in } \ L^\infty (\R_+ \times \bar \O),
\end{split}
\ee
where $\bar{\beta} = \frac{\tilde \beta}{2}$ and $\beta' =  \min\{ \frac{\bar \beta}{4}, \beta \} \times \min \{ m_{-},  m_{+} \}$. This shows that
\be \label{strong_conv_h_rho_dy_spec}
\begin{split}
F^{k}_{\pm} \to F_{\pm} 
&\ \text{ in } \ L^\infty (\R_+ \times \bar \O \times \R^3) 
\ \text{ as } \ k \to \infty,
\\ \varrho^{k} \to \varrho 
& \ \text{ in } \ L^\infty (\R_+ \times \bar \O)
\ \text{ as } \ k \to \infty.
\end{split}
\ee
Using \eqref{Bootstrap_ell_2_spec} in Proposition \ref{prop:DC_spec}, together with \eqref{eqtn:phiFell_spec} and the $L^\infty$ convergence in \eqref{weakconv_whst_dy_spec}, we prove that $\nabla_x \phi_F$ and $\nabla_x \Psi$ satisfy \eqref{Uest:Dxphi_F_spec} and \eqref{Uest:DxPsi_spec} respectively. 

Furthermore, the $L^{\infty}$ convergence $\varrho^k \to \varrho$ in \eqref{strong_conv_h_rho_dy_spec} also implies
\be \notag
| \varrho |_{C^{0,\delta}(\O)} \leq \sup_{k \in \N} |\varrho^{k+1} |_{C^{0,\delta}(\O)}.
\ee
Together with \eqref{Uest:varrhofell_spec}, \eqref{Uest:DDPhi^l_dy_spec} and \eqref{est:nabla^2phi}, we deduce that $\| \nabla_x ^2 \phi_F \|_\infty $ satisfies \eqref{est:D2xphi_F_spec}.
From Lemma \ref{lem:D3tphi_F_spec}, this implies $| \p_{x_3} \p_t \phi_{F} (t,x)| $ satisfies \eqref{est:D3tphi_F_spec}.
Therefore,  we conclude \eqref{Uest:D2xD3tphi_F_spec}.

Similarly, from \eqref{est2:form3:Fell_spec} and $F^k_{\pm} = h_{\pm} + f^k_{\pm}$ for any $k \geq 1$ in \eqref{def:Fell}, the $L^\infty$ convergence in \eqref{strong_conv_h_rho_dy_spec} implies that
\be \label{strong_conv_f_rho_dy_spec}
f^{k}_{\pm} \to f_{\pm} 
\ \text{ in } \ L^\infty (R_+ \times \bar \O \times \R^3) 
\ \text{ as } \ k \to \infty.
\ee
Using \eqref{Uest:wh_spec}, together with \eqref{Uest:wfell_spec}, we obtain \eqref{Uest:wh_dy_spec} and \eqref{Uest:f_support_spec}.

\smallskip

\textbf{Step 2. Existence.}
We omit the proof of showing $(F_{\pm}, \phi_F)$ obtained in 
\eqref{weakconv_whst_dy_spec} is the solution to \eqref{VP_F}, \eqref{Poisson_F}, \eqref{VP_0},  \eqref{Dbc:F} and \eqref{bdry:F_spec} in the sense of Definition \ref{weak_sol_dy_spec}, since it follows the same weak convergence argument of the proof of Theorem \ref{theo:CD}.

\smallskip

\textbf{Step 3. Regularity: Proof of \eqref{Uest_final:F_v:dyn_spec}-\eqref{Uest_final:F_x:dyn_spec}.}
Since $f_{\pm} = F_{\pm} - h_{\pm}$, from \eqref{est_final:hk_v_spec} and \eqref{est_final:hk_x_spec} in Theorem \ref{theo:CS_spec}, together with \eqref{choice:g_spec} and \eqref{Uest:D2xD3tphi_F_spec}, \eqref{Uest:D^-1_Db_spec} and Theorem \ref{theo:RD_spec}, we conclude $f_{\pm}$ satisfies \eqref{Uest_final:F_v:dyn_spec} and \eqref{Uest_final:F_x:dyn_spec}.

\smallskip

\textbf{Step 4. Stability and Uniqueness: Proof of \eqref{Udecay:f_spec}-\eqref{Udecay:DxPsi_spec}.}
Finally, under the assumption \eqref{choice:g_spec}, using Theorem \ref{theo:AS_spec}, together with \eqref{Uest:wfell_spec} and the $L^{\infty}$ convergence $f^k \to f$ in \eqref{strong_conv_f_rho_dy_spec}, we prove that $(f(t), \varrho(t))$ satisfies \eqref{Udecay:f_spec} and \eqref{Udecay:varrho_spec}.
Applying \eqref{Udecay:varrho_spec} in \eqref{est:nabla_phi} of Lemma \ref{lem:rho_to_phi}, we obtain \eqref{Udecay:DxPsi_spec}.
On the other hand, from \eqref{Uest:wh_dy_spec}, \eqref{Uest_final:F_v:dyn_spec} and \eqref{Uest:D^-1_Db_spec} in Theorem \ref{theo:AS_spec}, we apply Theorem \ref{theo:UA_spec}, and thus conclude the uniqueness of the solution $(F, \phi_F)$.
\end{proof}

\appendix


\section{Green's Function in \texorpdfstring{$\R^2 \times [0, \infty)$}{R2*infty}}
\label{Sec:Green} 

In this section, we study the Green's function $\mathfrak{G}$ of the following Poisson equation in the 3D half space:
\Be \label{Dphi}
\begin{split}
\Delta \phi (x) = \rho(x) \ \ & \text{in} \ x \in \O : = \R^2 \times [0, \infty), \\
\phi (x) = 0 \ \ & \text{on} \ x \in \p \O : =  \R^2 \times \{0\},
\end{split}
\Ee
such that $\phi$ solving \eqref{Dphi} takes the form of  
\Be \label{phi_rho}
\phi (x)  = \int_{\R^2 \times [0, \infty)} \mathfrak{G}(x,y) \rho (y ) \dd y.
\Ee

\begin{lemma}[\cite{GT}] \label{lemma:G} 

The Green's function for \eqref{Dphi} takes the form of  
\Be \label{GreenF}
\mathfrak{G}(x,y) =
C_1 \big( \frac{1}{|x-y|} - \frac{1}{|\tilde{x}-y|} \big), \ \ \text{for} \ x,y \in\O,
\Ee
where $\tilde x  = (x_1, x_2, - x_3)$. 
\end{lemma}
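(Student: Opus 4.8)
The plan is to establish \eqref{GreenF} by the classical method of images for the Dirichlet Laplacian on the half-space $\O = \R^2\times[0,\infty)$, and then to verify that the resulting kernel reproduces solutions of \eqref{Dphi} through the representation \eqref{phi_rho}; since the identity is recorded from \cite{GT}, the argument is essentially a direct verification of the two defining properties of a Green's function. First I would recall the fundamental solution of the Laplacian in $\R^3$, namely $E(z) = -\tfrac{1}{4\pi|z|}$, which satisfies $\Delta_z E = \delta_0$ in the sense of distributions. The ansatz is $\mathfrak{G}(x,y) = E(x-y) - E(\tilde x - y)$ with $\tilde x = (x_1,x_2,-x_3)$, i.e. $C_1 = -\tfrac{1}{4\pi}$; here one uses the elementary identity
\[
|\tilde x - y|^2 = (x_1-y_1)^2 + (x_2-y_2)^2 + (x_3+y_3)^2 = |x - \tilde y|^2, \qquad \tilde y = (y_1,y_2,-y_3),
\]
so reflecting the field point or the source point produces the same image term.

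The key steps are then as follows. (i) For fixed $y \in \O$ (so $y_3 > 0$), the map $x \mapsto E(\tilde x - y)$ is harmonic on a neighborhood of $\bar\O$, because its only singularity occurs at $\tilde x = y$, i.e. $x_3 = -y_3 < 0$, which lies strictly in the open lower half-space; hence $\Delta_x \mathfrak{G}(x,y) = \Delta_x E(x-y) = \delta_y(x)$ in $\O$ for each fixed $y$. (ii) For $x \in \p\O$ one has $x_3 = 0$, so $\tilde x = x$ and therefore $|x-y| = |\tilde x - y|$, which gives $\mathfrak{G}(x,y) = 0$, so the homogeneous Dirichlet condition in \eqref{Dphi} is met; symmetry $\mathfrak{G}(x,y) = \mathfrak{G}(y,x)$ is immediate from the formula. (iii) Given (i)--(ii), Green's second identity on $\O \cap B_R$ together with a limiting argument $R \to \infty$ — using the decay $|\mathfrak{G}(x,y)| \lesssim |x-y|^{-1}$, $|\nabla_x \mathfrak{G}(x,y)| \lesssim |x-y|^{-2}$, and sufficient decay of $\rho$ at infinity — shows that $\phi$ defined by \eqref{phi_rho} solves $\Delta\phi = \rho$ in $\O$ with $\phi|_{\p\O} = 0$, and uniqueness within the relevant decay class identifies $\mathfrak{G}$ as the Green's function.

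The only genuinely delicate point is the behavior at spatial infinity: because $\O$ is unbounded, \eqref{Dphi} is uniquely solvable only in a class of functions with a prescribed decay/growth restriction, and the interchange of $\Delta_x$ with the integral in \eqref{phi_rho} must be justified by combining the local integrability of the $|x-y|^{-1}$ singularity with decay of $\rho$. In the present context this is harmless: every charge density appearing later (e.g. $\rho_h$ in \eqref{def:rho}, $\varrho$ in \eqref{def:varrho}) is bounded and carries exponential decay in $x_3$, so convergence of the integral and differentiation under the integral sign follow from standard potential-theoretic estimates. I would therefore invoke \cite{GT} for the underlying half-space Green's function identity and confine the proof to verifying properties (i) and (ii) above, which are the content of the lemma.
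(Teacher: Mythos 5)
Your proof is correct and is precisely the classical method-of-images argument that the paper implicitly invokes by citing \cite{GT}; the paper itself gives no proof of this lemma. Your verification of the three defining properties (harmonicity of the image term in $\O$, vanishing of $\mathfrak{G}$ on $\p\O$, and the Green's-identity representation with the correct normalization $C_1=-\tfrac{1}{4\pi}$), together with the remark on the decay class needed for uniqueness on the unbounded domain, is exactly what is required.
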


Next, we study some elliptic estimates on $\phi$.

\begin{lemma} \label{lem:rho_to_phi}

Suppose $| \rho (x) | \leq A e^{-B x_3}$ for $A, B > 0$. Then $\phi(x)$ in \eqref{phi_rho} satisfies 
\begin{equation} \label{est:nabla_phi}
|\p_{x_j}  \phi (x)|   
\leq \mathfrak{C} A \big( 1 +  \frac{1}{B} \big) \ \ \text{for  } \  x \in \R^2 \times [0, \infty).
\end{equation} 
Moreover, for any $\delta > 0$,
\Be \label{est:nabla^2phi}
\| \nabla_x^2 \phi \|_{L^\infty(\O)} 
\lesssim_\delta  \|\rho \|_{L^\infty(\O)} + \|\rho \|_{C^{0,\delta}(\O)} + \frac{A}{B}.
\Ee  
\end{lemma}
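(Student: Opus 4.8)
The plan is to derive both bounds directly from the explicit Dirichlet Green's function \eqref{GreenF}, differentiating \eqref{phi_rho} under the integral sign (legitimate since $|\rho(y)|\le Ae^{-By_3}$ and the relevant kernels are locally integrable to the needed order) and then estimating the resulting convolution-type integrals. The two inputs I would establish first are pointwise kernel bounds. Differentiating $\mathfrak{G}(x,y)=C_1(|x-y|^{-1}-|\tilde x-y|^{-1})$ and using the identity $|x-y|^2-|\tilde x-y|^2=-4x_3y_3$, one gets $|\nabla_x\mathfrak{G}(x,y)|\lesssim \min\{ y_3|x-y|^{-3},\,|x-y|^{-2}\}$: the tangential (and, at $x_3=0$, the normal) derivatives pick up the extra factor $y_3/|x-y|$ from the near-cancellation of the two poles at the boundary, while $|x-y|^{-2}$ is the trivial global bound; similarly $|\nabla_x^2\mathfrak{G}(x,y)|\lesssim |x-y|^{-3}$.

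For \eqref{est:nabla_phi}: inserting $|\rho(y)|\le Ae^{-By_3}$ gives $|\partial_{x_j}\phi(x)|\le CA\int_{\O}e^{-By_3}\min\{y_3|x-y|^{-3},|x-y|^{-2}\}\,dy$. I would perform the $y_\parallel\in\R^2$ integration for fixed $y_3$, splitting according to $|x-y|\le 1$ (use $|x-y|^{-2}$) and $|x-y|>1$ (use $y_3|x-y|^{-3}$); this yields a bound of the form $\ln(1+|x_3-y_3|^{-2})+y_3$, uniform in $x_\parallel$. Integrating against $e^{-By_3}\,dy_3$ over $[0,\infty)$: the logarithm has an integrable singularity at $y_3=x_3$ and is otherwise $O(1)$, contributing $\lesssim 1+B^{-1}$, and the linear term contributes $\int_0^\infty e^{-By_3}y_3\,dy_3=B^{-2}$. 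Collecting these with a fixed constant $\mathfrak{C}$ produces \eqref{est:nabla_phi}.

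For \eqref{est:nabla^2phi} I would run a Schauder-type argument from the representation $\nabla_x^2\phi(x)=\int_\O \nabla_x^2\mathfrak{G}(x,y)\rho(y)\,dy$, splitting $\O$ into a near region $|x-y|\le R$ and a far region $|x-y|>R$, with $R$ chosen commensurate with $x_3$ when $x$ is close to $\partial\O$. In the near region I use the standard cancellation: write $\rho(y)=(\rho(y)-\rho(x))+\rho(x)$; the $(\rho(y)-\rho(x))$ part is controlled by $\|\rho\|_{C^{0,\delta}}\int_{|x-y|\le R}|x-y|^{-3+\delta}\,dy\lesssim_\delta \|\rho\|_{C^{0,\delta}}$ since the kernel is now integrable, and the $\rho(x)$ part, after the divergence theorem, reduces to boundary integrals of $\nabla_x\mathfrak{G}\cdot\nu$ over the sphere $|x-y|=R$ of total size $\lesssim R^{-2}\cdot R^2\cdot\|\rho\|_{L^\infty}\lesssim\|\rho\|_{L^\infty}$, while the portion of the integration boundary lying on $\partial\O$ contributes nothing because $\mathfrak{G}$ vanishes there. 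In the far region, $|\nabla_x^2\mathfrak{G}(x,y)|\lesssim|x-y|^{-3}$ together with $|\rho(y)|\le Ae^{-By_3}$ gives, after the $y_\parallel$-then-$y_3$ integration (the latter producing a factor $1/B$ from $\int_0^\infty e^{-By_3}\,dy_3$), a bound $\lesssim A/B$. Summing the three contributions yields \eqref{est:nabla^2phi}.

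\textbf{Main obstacle.} The delicate point is the second-derivative estimate up to the boundary: making the Calder\'on--Zygmund/Schauder cancellation work uniformly as $x_3\to 0$. This requires handling the reflected pole $|\tilde x-y|^{-1}$ (whose second derivatives are still only $|x-y|^{-3}$ in size against the H\"older difference, though it never actually blows up for $y\in\O$), choosing $R$ correctly when $x_3$ is small so the ball stays in $\O$, and verifying that the boundary terms produced by the integration by parts either cancel on $\partial\O$ through the homogeneous Dirichlet condition built into $\mathfrak{G}$ or are harmless on the artificial sphere $|x-y|=R$. By comparison, the far-field bookkeeping that produces the $A/B$ contributions, and the entire argument for \eqref{est:nabla_phi}, are routine.
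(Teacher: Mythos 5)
Your argument for \eqref{est:nabla_phi} is essentially the paper's: the same kernel bound $|\nabla_x\mathfrak{G}|\lesssim\min\{y_3|x-y|^{-3},|x-y|^{-2}\}$ obtained from $|x-y|^2-|\tilde x-y|^2=-4x_3y_3$, the same $y_\parallel$-integration producing $\ln(1+|x_3-y_3|^{-2})+y_3$, and the same $y_3$-integration. (One shared looseness: as you yourself compute, the linear term gives $B^{-2}$, which does not sit inside $\mathfrak{C}A(1+1/B)$ for small $B$; the paper's ``$+A$'' at the end of its display has the same issue, and closing it requires integrating the min of the two kernel bounds more carefully in $y_\parallel$ rather than splitting crudely at $r=1$. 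Since you reproduce the paper's step, I only note it.)

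The genuine gap is in your treatment of \eqref{est:nabla^2phi} near the boundary, and it sits exactly where you locate the ``main obstacle.'' First, the claim that the piece of $\p(\{|x-y|\le R\}\cap\O)$ lying on $\p\O$ ``contributes nothing because $\mathfrak{G}$ vanishes there'' is false: after the divergence theorem the $\p\O$-integrand is a first derivative of the kernel, and moreover converting $\p_{x_i}$ into $\p_{y_i}$ flips the sign of the image term when $i=3$, so what appears on $\p\O$ is not $\nabla_x\mathfrak{G}$ but (in the $i=j=3$ case) the combination $\frac{x_3-y_3}{|x-y|^3}+\frac{x_3+y_3}{|\tilde x-y|^3}$, which at $y_3=0$ equals $2x_3/|x-y|^3$ --- the Poisson kernel, nonzero. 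It is controllable, but only via $\int_{\p\O}2x_3|x-y|^{-3}\,dS_y\lesssim1$ uniformly in $x_3$, not by the vanishing of $\mathfrak{G}$. Second, your alternative of taking $R$ ``commensurate with $x_3$'' so that the ball stays inside $\O$ kills the boundary term but breaks the far-field estimate: with $R\sim x_3$ the region $R<|x-y|<1$ contributes $\lesssim\|\rho\|_{L^\infty}\ln(1/x_3)$ (equivalently, your $A/B$ bound becomes $A/(BR)=A/(Bx_3)$), which is unbounded as $x_3\to0$. The paper's resolution keeps the radius fixed and instead subtracts $\chi(|x-y|)\rho(x)$ with a cutoff $\chi$ equal to $1$ on $\{|x-y|\le x_3/2\}$ and supported in $\{|x-y|\le2\}$; the H\"older cancellation then acts on the whole fixed ball, the divergence theorem is applied there, and the resulting $\p\O$ term is estimated by the Poisson-kernel bound above. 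You should replace your boundary-vanishing claim by this estimate (or an equivalent one); without it the case $x_3\le1$ of \eqref{est:nabla^2phi} is not proved.
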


\begin{proof} 

First, we prove \eqref{est:nabla_phi}. Under direct computation,
\Be \label{eq:G_x}
\p_{x_j} \mathfrak{G} = 
\begin{cases}
& C_1 \big( \frac{- x_j + y_j}{|x-y|^3} - \frac{- x_j + y_j}{|\tilde{x}-y|^3} \big), \ \ \text{for } j = 1, 2,  \\[5pt]
& C_1 \big( \frac{- x_3 + y_3}{|x-y|^3} - \frac{ - x_3 - y_3}{|\tilde{x}-y|^3} \big), \ \ \text{for } j = 3.
\end{cases}
\Ee
Setting $|x-y| = r$ and $|\tilde{x}-y| = \tilde{r}$, we obtain for $j = 1, 2$,
\Be \label{est1:G_x}
\begin{split} 
|\p_{x_j} \mathfrak{G}|
\lesssim \big| (x_j - y_j) \big( \frac{ \tilde{r}^3 - r^3 }{ r^3 \tilde{r}^3} \big) \big|
& = \big| (x_j - y_j) \big( \frac{(\tilde{r}^2 - r^2) (\tilde{r}^2 + \tilde{r} r + r^2)}{r^3 \tilde{r}^3 (\tilde{r} + r) } \big) \big|
\\& \lesssim \big| x_3 y_3 (x_j - y_j) \big( \frac{ \tilde{r}^2 + \tilde{r} r + r^2}{r^3 \tilde{r}^3 (\tilde{r} + r) } \big) \big|
\lesssim \big| x_3 y_3 (x_j - y_j) \big( \frac{ \tilde{r} + r }{r^3 \tilde{r}^3 } \big) \big|  \\& \lesssim \frac{ y_3 }{r^3 } = \frac{ y_3 }{|x-y|^3 },
\end{split}
\Ee
where we use $r \leq \tilde{r}$ and $x_3 < |- x_3 - y_3| \leq \tilde{r}$.
For $j = 3$, using \eqref{est1:G_x} we obtain
\Be \label{est2:G_x}
\begin{split}
|\p_{x_3} \mathfrak{G}|
& \lesssim \bigg| x_3 \big( \frac{1}{ r^3} - \frac{1}{\tilde{r}^3} \big) \bigg|
+ \bigg| y_3 \big( \frac{1}{ r^3} + \frac{1}{\tilde{r}^3} \big) \bigg|
\\& \lesssim \frac{ y_3 }{r^3 } + \frac{ y_3 }{r^3 } = \frac{ 2 y_3 }{|x-y|^3 }.
\end{split}
\Ee
On the other hand, since $|x_j - y_j| \leq r$ and $|x_3 + y_3| \leq \tilde{r}$, we also get
\Be \label{est:Green_x}
|\p_{x_j} \mathfrak{G}| \lesssim \frac{1}{|x-y|^2}.
\Ee
 
From \eqref{phi_rho} and \eqref{GreenF}, together with \eqref{est1:G_x}-\eqref{est:Green_x}, then for $1 \leq j \leq 3$,
\Be \label{est1:nabla_phi}
\begin{split}
| \p_{x_j} \phi (x)| 
= | (\p_{x_j} \mathfrak{G} * \rho )(x)|
& \lesssim \int_0^\infty \int_{\R^2}  \min \big( \frac{y_3}{|x-y|^3}, \frac{1}{|x-y|^2} \big) \times A e^{- B y_3} \dd y_\parallel \dd y_3
\\& \lesssim \int_0^\infty A e^{- B y_3} 
\Big( \int_0^{1} \frac{1}{|x-y|^2} r \dd r + \int_1^{\infty} \frac{y_3}{|x-y|^3} r \dd r \Big) \dd y_3
\\& \leq \int_0^\infty A e^{- B y_3} 
\Big( \int_0^{1} \frac{ r \dd r}{r^2 + |x_3 - y_3|^2} + \int_1^{\infty} \frac{ y_3 \dd r}{r^2} \Big) \dd y_3
\\& \lesssim A  \int_0^\infty e^{- B y_3} \Big(
   \ln \big(1+ \frac{1}{2 |x_3 - y_3|^2} \big) + y_3 \Big) \dd y_3
\\& \lesssim  A \left\{ \int_{\max\{0, x_3-1\}}^{ x_3+1} + \int_{x_3+1}^\infty+ \int_{0}^{\max\{0, x_3-1\}} \right\} + A,
\end{split}
\Ee
where we have decomposed $y_3$-integral into three parts. We bound each of them:
\Be \notag
\begin{split}
\int_{\max\{0, x_3-1\}}^{ x_3+1}  \leq 1, 
\ \
\int_{x_3+1}^\infty+ \int_{0}^{\max\{0, x_3-1\}} \lesssim 1/B. 
\end{split}
\Ee
Hence, we derive that 
\Be \label{est:I_2}
| \p_{x_j} \phi (x)|  
\lesssim A + \frac{A}{B},
\Ee
and conclude \eqref{est:nabla_phi}. 

\smallskip

Second, we prove \eqref{est:nabla^2phi}. We split the proof into two cases: $x_3 \leq 1$ and $x_3 > 1$.

\smallskip

\textbf{\underline{Case 1:} $x_3 > 1$.} 
The classical result of the potential theory \cite{GT} shows, for $0 < R \leq 1/2$,
\be \label{est:Green_xixj}
\begin{split}
& \p_{x_i} \p_{x_j} \phi (x)
\\& = \int_{ \{ |x-y| > R \} \; \cap \; \O} \p_{x_i} \p_{x_j} \mathfrak{G} (x, y) \rho (y) \dd y
+ \int_{ \{ |x-y| \leq R \} \; \cap \; \O } \p_{x_i} \p_{x_j} \mathfrak{G} (x, y) \big( \rho (y) - \rho (x) \big) \dd y 
\\& \ \ \ \ + \frac{1}{3} \delta_{i j} \rho (x) - \rho (x) \int_{ \{ |x-y| \leq R \} \; \cap \; \O } \p_{x_i} \p_{x_j} \frac{1}{|\tilde{x}-y|}  \dd y.
\end{split}
\ee
Under direct computation, for $i, j = 1, 2$,
\Be \notag
\begin{split}
\p_{x_i} \p_{x_j} \mathfrak{G} 
& = C_1 \p_{x_i} \big( \frac{- x_j + y_j}{|x-y|^3} - \frac{- x_j + y_j}{|\tilde{x}-y|^3} \big)
\\& = C_1 \Big( - \delta_{i,j} (\frac{1}{|x-y|^3} - \frac{1}{|\tilde{x}-y|^3})
+ (x_j - y_j) \big( \frac{3 (x_i - y_i)}{|x-y|^5}  - \frac{3 (x_i - y_i)}{|\tilde{x}-y|^5} \big) \Big).
\end{split}
\Ee
and $i = 3, j = 1, 2$,
\Be \notag
\begin{split}
\p_{x_i} \p_{x_j} \mathfrak{G} 
& = C_1 \p_{x_i} \big( \frac{- x_j + y_j}{|x-y|^3} - \frac{- x_j + y_j}{|\tilde{x}-y|^3} \big)
\\& = C_1 (x_j - y_j) \Big( \frac{3 (x_3 - y_3)}{|x-y|^5}  - \frac{3 (x_3 + y_3)}{|\tilde{x}-y|^5} \Big).
\end{split}
\Ee

For $j = 3$,
\Be \notag
\p_{x_i} \p_{x_j} \mathfrak{G} 
= 
\begin{cases}
& C_1 \big( 3 (x_3 - y_3) \frac{x_i - y_i}{|x-y|^5} - 3 (x_3 + y_3) \frac{x_i - y_i}{|\tilde{x}-y|^5} \big), \ \ \text{for } i = 1, 2,  \\[5pt]
& C_1 \big( - (\frac{1}{|x-y|^3} - \frac{1}{|\tilde{x}-y|^3}) + 3 \frac{(x_3 - y_3)^2}{|x-y|^5} - 3 \frac{(x_3 + y_3)^2}{|\tilde{x}-y|^5} \big), \ \ \text{for } i = 3.
\end{cases}
\Ee
Setting $|x-y| = r$ and $|\tilde{x}-y| = \tilde{r}$, we can check that 
\be \notag
|x_j - y_j| \leq r \leq \tilde{r}, \ \ 
|x_3 + y_3| \leq \tilde{r}.
\ee
Thus, then for $1 \leq i, j \leq 3$,
\Be \label{est1:Green_xixj}
|\p_{x_i} \p_{x_j} \mathfrak{G} | 
\lesssim \frac{1}{|x-y|^3}.
\Ee
Inputting \eqref{est1:Green_xixj} into \eqref{est:Green_xixj}, together with $0 < R \leq 1/2$, then for $0 < \delta < 1$,
\be \label{est2:Green_xixj}
\begin{split}
& | \p_{x_i} \p_{x_j} \phi (x) |
\\& \leq \int_{ \{ |x-y| > R \} \; \cap \; \O} \frac{1}{|x-y|^3} | \rho (y) | \dd y
+ |\rho |_{C^{0,\delta}(\O)} \int_{\{ |x-y| \leq R \} \; \cap \; \O} \frac{1}{|x-y|^{3 - \delta} } \dd y
+ \frac{4}{3} \|\rho \|_{L^\infty(\O)} 
\\& \lesssim \underbrace{\int_{ \{ |x-y| > R \} \; \cap \; \O} \frac{1}{|x-y|^3} A e^{-B y_3} \dd y}_{\eqref{est2:Green_xixj}_*}
+ |\rho |_{C^{0,\delta}(\O)} R^{\delta}
+ \frac{4}{3} \|\rho \|_{L^\infty(\O)}.
\end{split}
\ee
For $\eqref{est2:Green_xixj}_*$, we have
\be \label{est2:Green_xixj_*}
\begin{split}
\eqref{est2:Green_xixj}_*
& \leq \int_{ \{ |x-y| > R \} \; \cap \; \O} \frac{2}{R^3 + |x-y|^3} A e^{-B y_3} \dd y
\\& \leq A \int_{\R^2} \frac{2}{R^3 + \big( \sqrt{(x_1 -y_1)^2 + (x_2 -y_2)^2} \big)^3 } \dd y_1 \dd y_2 \int^{\infty}_{0} e^{-B y_3} \dd y_3
\leq \frac{A}{B} R^{-1}.
\end{split}
\ee
Together with \eqref{est2:Green_xixj}, we derive 
\be \label{est3:Green_xixj}
\begin{split}
| \p_{x_i} \p_{x_j} \phi (x) |
& \lesssim \frac{A}{B} R^{-1}
+ |\rho |_{C^{0,\delta}(\O)} R^{\delta}
+ \frac{4}{3} \|\rho \|_{L^\infty(\O)}.
\end{split}
\ee
Now picking $R = 1/2$, together with \eqref{est3:Green_xixj}, we get
\be
| \p_{x_i} \p_{x_j} \phi (x) |
\lesssim_{\delta} \frac{A}{B} + \|\rho \|_{L^\infty(\O)} 
+ \|\rho \|_{C^{0,\delta}(\O)},
\ee
and thus conclude \eqref{est:nabla^2phi} in this case.

\smallskip

\textbf{\underline{Case 2:} $x_3 \leq 1$.} 
Now we introduce a smooth cutoff function $\chi = \chi (t) \in [0, 1]$, which satisfies that for any $t$, $|\chi^{\prime} (t)| \leq 1$, and
\be \label{def:chi}
\chi (t) =
\begin{cases}
1, & \text{ if } t \leq \frac{x_3}{2}, \\[5pt]
0, & \text{ if } t \geq 2.
\end{cases}
\ee
Then we write $\p_{x_i} \p_{x_j} \phi (x)$ as follows:
\begin{align}
\p_{x_i} \p_{x_j} \phi (x)
& = \int_{ \{ |x-y| > 4 \} \; \cap \; \O} \p_{x_i} \p_{x_j} \mathfrak{G} (x, y) \rho (y) \dd y
\label{est:Green_xixj_21} \\
& \ \ \ \ + \int_{ \{ |x-y| \leq 4 \} \; \cap \; \O } \p_{x_i} \p_{x_j} \mathfrak{G} (x, y) \big( \rho (y) - \chi (|x-y|) \rho (x) \big) \dd y 
\label{est:Green_xixj_22} \\
& \ \ \ \ + \int_{ \{ |x-y| \leq 4 \} \; \cap \; \O } \p_{x_i} \p_{x_j} \mathfrak{G} (x, y) \chi (|x-y|) \rho (x) \dd y.
\label{est:Green_xixj_23}
\end{align}

For \eqref{est:Green_xixj_21}, we follow \eqref{est2:Green_xixj_*}, and get
\be \label{est2:Green_xixj_21}
| \eqref{est:Green_xixj_21} | \leq \frac{A}{B} R^{-1}.
\ee

For \eqref{est:Green_xixj_22}, using \eqref{est1:Green_xixj} and \eqref{est2:Green_xixj}, we have
\be \label{est2:Green_xixj_22}
\begin{split}
& \big| \int_{ \{ |x-y| \leq 4 \} \; \cap \; \O } \p_{x_i} \p_{x_j} \mathfrak{G} (x, y) \big( \rho (y) - \chi (|x-y|) \rho (x) \big) \dd y \big|
\\& \leq \big| \int_{ \{ |x-y| \leq 4 \} \; \cap \; \O } \p_{x_i} \p_{x_j} \mathfrak{G} (x, y) \big( \rho (y) - \rho (x) \big) \dd y \big|
\\& \ \ \ \ + \big| \int_{ \{ |x-y| \leq 4 \} \; \cap \; \O } \p_{x_i} \p_{x_j} \mathfrak{G} (x, y) \big( \rho (x) - \chi (|x-y|) \rho (x) \big) \dd y \big|
\\& \lesssim |\rho |_{C^{0,\delta}(\O)} 
+ \|\rho \|_{L^\infty(\O)} \underbrace{ \big| \int_{ \{ |x-y| \leq 4 \} \; \cap \; \O } \p_{x_i} \p_{x_j} \mathfrak{G} (x, y) \big( 1 - \chi (|x-y|) \big) \dd y \big| }_{\eqref{est2:Green_xixj_22}_*}
\end{split}
\ee
Under direct computation from \eqref{eq:G_x}, we obtain for $1 \leq i, j \leq 3$, 
\be \label{est:change_x_y}
\begin{split}
\p_{x_i} \p_{x_j} \mathfrak{G} (x, y)
& = C_1 \p_{x_i} \p_{x_j} \big( \frac{1}{|x-y|} - \frac{1}{|\tilde{x}-y|} \big)
\\& = C_1 \p_{y_i} \Big( \p_{x_j} \big( - \frac{1}{|x-y|} \pm \frac{1}{|\tilde{x}-y|} \big) \Big),
\end{split}
\ee
where $\pm$ depends on whether $i, j = 3$. Using Divergence theorem on $\p_{y_i}$, we have
\begin{align}
& \int_{ \{ |x-y| \leq 4 \} \; \cap \; \O } 
\p_{y_i} \Big( \p_{x_j} \big( - \frac{1}{|x-y|} \pm \frac{1}{|\tilde{x}-y|} \big) \Big) \big( 1 - \chi (|x-y|) \big) \dd y 
\notag \\
& = \int_{ \{ |x-y| \leq 4 \} \; \cap \; \p\O } 
\p_{x_j} \big( - \frac{1}{|x-y|} \pm \frac{1}{|\tilde{x}-y|} \big) \big( 1 - \chi (|x-y|) \big) \mathbf{n}_i (y) \dd y
\label{est2:Green_xixj_22_1} \\
& \ \ \ \ + \int_{ \{ |x-y| = 4 \} \; \cap \; \O } 
\p_{x_j} \big( - \frac{1}{|x-y|} \pm \frac{1}{|\tilde{x}-y|} \big) \big( 1 - \chi (|x-y|) \big) \mathbf{n}_i (y) \dd y
\label{est2:Green_xixj_22_2} \\
& \ \ \ \ - \int_{ \{ |x-y| \leq 4 \} \; \cap \; \O } 
\Big( \p_{x_j} \big( - \frac{1}{|x-y|} \pm \frac{1}{|\tilde{x}-y|} \big) \Big) \p_{y_i} \big( 1 - \chi (|x-y|) \big) \dd y.
\label{est2:Green_xixj_22_3}
\end{align}
where $\mathbf{n}_i (y)$ represents the $i$th component of the normal vector at the boundary point $y$.
From \eqref{eq:G_x}, then for any $y \in \p\O$,
\be \notag
\big| \p_{x_j} \big( - \frac{1}{|x-y|}
\pm \frac{1}{|\tilde{x}-y|} \big) \big| \leq \frac{2 x_3}{|x-y|^3},
\ee
and thus
\be \label{est3:Green_xixj_22_1}
\begin{split}
| \eqref{est2:Green_xixj_22_1} |
& \leq \int_{ \{ |x-y| \leq 4 \} \; \cap \; \p\O } 
\frac{2 x_3}{|x-y|^3} \dd y 
\lesssim \int^{4}_{0} \frac{2 x_3}{(r + x_3)^3} r \dd r
\lesssim 1.
\end{split}
\ee
Using \eqref{est:Green_x}, we compute
\be \label{est3:Green_xixj_22_2}
\begin{split}
| \eqref{est2:Green_xixj_22_2} |
& \leq \int_{ \{ |x-y| = 4 \} \; \cap \; \O } 
\frac{1}{|x-y|^2} \dd y \lesssim 1.
\end{split}
\ee
Further, from \eqref{def:chi}, we derive 
\be \label{est3:Green_xixj_22_3}
\begin{split}
| \eqref{est2:Green_xixj_22_3} |
& \leq \int_{ \{ |x-y| \leq 4 \} \; \cap \; \O } 
\frac{1}{|x-y|^2} \big| \p_{y_i} \big( 1 - \chi (|x-y|) \big) \big| \dd y
\\& \leq \int_{ \{ |x-y| \leq 4 \} \; \cap \; \O } 
\frac{1}{|x-y|^2} \big| \p_{y_i} \chi (|x-y|) \big| \dd y
\lesssim 1.
\end{split}
\ee
Together with \eqref{est3:Green_xixj_22_1}, \eqref{est3:Green_xixj_22_2} and \eqref{est3:Green_xixj_22_3}, we obtain $| \eqref{est2:Green_xixj_22}_* | \lesssim 1$, and thus
\be \label{est3:Green_xixj_22}
| \eqref{est:Green_xixj_22} | \lesssim |\rho |_{C^{0,\delta}(\O)} 
+ \|\rho \|_{L^\infty(\O)}. 
\ee

For \eqref{est:Green_xixj_23}, similar to \eqref{est2:Green_xixj_22_1}-\eqref{est2:Green_xixj_22_3}, we can rewrite it as 
\begin{align}
& \int_{ \{ |x-y| \leq 4 \} \; \cap \; \O } \p_{x_i} \p_{x_j} \mathfrak{G} (x, y) \chi (|x-y|) \dd y
\notag \\
& = \int_{ \{ |x-y| \leq 4 \} \; \cap \; \p\O } 
\p_{x_j} \big( - \frac{1}{|x-y|} \pm \frac{1}{|\tilde{x}-y|} \big) \chi (|x-y|) \mathbf{n}_i (y) \dd y
\label{est2:Green_xixj_23_1} \\
& \ \ \ \ + \int_{ \{ |x-y| = 4 \} \; \cap \; \O } 
\p_{x_j} \big( - \frac{1}{|x-y|} \pm \frac{1}{|\tilde{x}-y|} \big) \chi (|x-y|) \mathbf{n}_i (y) \dd y
\label{est2:Green_xixj_23_2} \\
& \ \ \ \ - \int_{ \{ |x-y| \leq 4 \} \; \cap \; \O } 
\p_{x_j} \big( - \frac{1}{|x-y|} \pm \frac{1}{|\tilde{x}-y|} \big) \p_{y_i} \chi (|x-y|) \dd y.
\label{est2:Green_xixj_23_3}
\end{align}
From \eqref{est3:Green_xixj_22_1}, \eqref{est3:Green_xixj_22_2} and \eqref{est3:Green_xixj_22_3}, we deduce that
\be \notag
\big| \int_{ \{ |x-y| \leq 4 \} \; \cap \; \O } \p_{x_i} \p_{x_j} \mathfrak{G} (x, y) \chi (|x-y|) \dd y \big|
\lesssim 1,
\ee
and thus 
\be \label{est3:Green_xixj_23}
| \eqref{est:Green_xixj_23} | 
\lesssim \|\rho \|_{L^\infty(\O)}. 
\ee
Combining \eqref{est2:Green_xixj_21}, \eqref{est3:Green_xixj_22} and \eqref{est3:Green_xixj_23}, we conclude \eqref{est:nabla^2phi} in this case.
\end{proof}

\section*{Acknowledgment} CK thanks Professor Anna Mazzucato for her interest in this work. This project is partly supported by NSF-DMS 1900923, NSF-CAREER 2047681, Simons fellowship in Mathematics, and the Brain Pool fellowship funded by the Korean Ministry of Science and ICT through the National Research Foundation of Korea (NRF-2021H1D3A2A01039047).

\bibliographystyle{abbrv}

\def\cprime{$'$} \def\cprime{$'$}

\end{document}